\scrollmode \setlength{\textheight}{21.5cm}
\newtheorem{thm}{Theorem}[section]
\newtheorem{prop}[thm]{Proposition}
\newtheorem{lemma}[thm]{Lemma}
\newtheorem{cor}[thm]{Corollary}
\theoremstyle{definition}
\newtheorem{definition}[thm]{Definition}
\newtheorem{def-thm}[thm]{Definition-Theorem}
\newtheorem{rem}[thm]{Remark}
\newtheorem{ex}[thm]{Example}
\numberwithin{equation}{section}
\def\ad{\operatorname{ad}}
\def\braid{\operatorname{braid}}
\def\diag{\operatorname{diag}}
\def\dim{\operatorname{dim}}
\def\dual{\operatorname{dual}}
\def\ev{\operatorname{ev}}
\def\Ev{\operatorname{Ev}}
\def\id{\operatorname{id}}
\def\nil{\operatorname{nil}}
\def\Norm{\operatorname{Norm}}
\def\op{\operatorname{op}}
\def\PGL{\operatorname{PGL}}
\def\red{\operatorname{red}}
\def\sgn{\operatorname{sgn}}
\def\sl{\operatorname{sl}}
\def\SL{\operatorname{SL}}
\def\T{\operatorname{T}}
\def\th{\operatorname{th}}
\newcommand{\doublesubscript}[3]{
\displaystyle\mathop{\displaystyle #1_{#2}}_{#3}}
\begin{document}

\title
{Cluster $\mathcal X$-varieties for dual Poisson-Lie groups I}

\author{Renaud Brahami}
\address{Section of Mathematics, University of Geneva, 2-4 rue du Li\`{e}vre, c.p. 64, 1211
Gen\`{e}ve 4, Switzerland
}
\email{Renaud.Brahami@unige.ch}

\maketitle

\begin{abstract}
We associate a family of cluster $\mathcal X$-varieties to the dual
Poisson-Lie group~$G^*$ of a complex semi-simple Lie group $G$ of adjoint type
given with the standard Poisson structure. This family is described by the
$W$-permutohedron associated to the Lie algebra
$\mathfrak g$ of $G$: vertices being labeled by cluster
$\mathcal X$-varieties and edges by new Poisson birational isomorphisms,
on appropriate seed $\mathcal X$-tori, called \emph{saltation}.
The underlying combinatorics is based on a factorization
of the Fomin-Zelevinsky twist maps into mutations and other new Poisson
birational isomorphisms on seed $\mathcal X$-tori called \emph{tropical
mutations} (because they are obtained by a tropicalization of the mutation
formula), associated to an enrichment of the combinatorics on double words
of the Weyl group $W$ of~$G$.
\end{abstract}

\tableofcontents

\section{Introduction}

The rising of the cluster combinatorics goes back to two sources:
Berenstein, Fomin and Zelevinsky from one hand (\cite{FZ1}, \cite{FZ2},
\cite{BFZbruhat},\cite{FZ4}) in their study of total positivity,
and Fock and Goncharov on the other hand (\cite{FGdilog}, \cite{FGcluster},
\cite{FGdilog2}) in their higher Teichm\"{u}ller theory.
These structures quickly spread to diverse mathematical areas
such as: quiver representations, Poisson geometry,
integrable systems, convex polytops, tropical geometry,
and so on.
In this paper and its sequel \cite{RB2}, we use them to sharpen the
geometry of dual Poisson-Lie groups. Therefore, according to the
quantum duality principle \cite{STSPoisson},
these two papers can be seen as the semi-classical
starting point towards a cluster combinatorics describing the quantized universal enveloping
algebra $\mathcal{U}_{q}(\mathfrak{g})$ associated to a complex semi-simple Lie
algebra $\mathfrak{g}$.

Let us recall that a Lie group $G$ given with a Poisson structure is called
\emph{Poisson-Lie group} if the multiplication $m:G\times G\to G$
is a Poisson map, when the set $G\times G$ is given the Poisson product
structure.
Let $(\mathfrak{g},\mathfrak{g}^*)$ be the tangent Lie bialgebra of $G$.
According to the standard theory $(\mathfrak{g^*},\mathfrak{g})$ is also
a Lie bialgebra, and hence the Lie group associated with $\mathfrak{g}^*$ is
again a Poisson-Lie group called the \emph{dual Poisson-Lie group} of $G$ and denoted $G^*$.
If the Lie bialgebra $(\mathfrak{g},\mathfrak{g}^*)$ is factorizable,
the Poisson-Lie group $G^*$ can be embedded as a dense subset $G_0$ of
$G$, when this one is given the appropriate Poisson structure.
The symplectic leaves of $G$ are then the $G^*$-orbits on $G$ via the dressing
transformations and the symplectic leaves of $G^*$ are
the conjugacy classes in $G$ \cite{STS}. Let us denote this appropriate
Poisson structure $\pi_*$ when $G$ is a complex semi-simple
Lie group given with the standard Poisson structure $\pi_G$,
that is a Sklyanin bracket associated to the standard $r$-matrix
of the Belavin-Drinfeld classification. In that case,
the dual of the Poisson-Lie group $(G,\pi_G)$ may be identified
with a subgroup in the direct product of two opposite Borel subgroups
$B$ and $B_-$ of $G$, and we denote it $(G^*,\pi_{G^*})$.

When $G$ is a real split semisimple Lie group with trivial center, the geometry of
$(G,\pi_G)$ has been described by Fock and Goncharov via the combinatorics involved
in cluster $\mathcal X$-variety. Recall that a cluster $\mathcal X$-variety is a Poisson variety
obtained by gluing a set of tori
along some specific bi-rational isomorphisms called ($\mathcal X$-)mutations.
{Each torus is given a log-canonical Poisson structure, that is a set of
coordinates $x_i$ and a skew-symmetric matrix $\widehat{\varepsilon}$, with
generic integer values, such that $\{x_i,x_j\}=\widehat{\varepsilon}_{ij}x_ix_j$}.
Because mutations are Poisson maps relative to these log-canonical Poisson
structures, cluster $\mathcal X$-varieties are naturally given a kind of
Darboux coordinates. In \cite{FGcluster}, using a coarser Poisson stratification
of $(G,\pi_G)$ into {double Bruhat cells} $G^{u,v}$, defined as the intersection
of the cells $BuB$ and $B_-vB_-$, where $u,v$ belong to the Weyl group $W$ of $G$,
Fock and Goncharov have constructed canonical Poisson birational maps, called evaluation maps,
of cluster $\mathcal X$-varieties into $(G^{u,v},\pi_G)$ (one map $\ev_{\mathbf i}$ for each seed
$\mathcal X$-torus ${\mathcal X}_{\mathbf i}$ associated with a double reduced word
$\mathbf i$ associated to the pair $u,v\in W$); this construction provides for $(G,\pi_G)$ a
natural set of rational canonical coordinates. Canonical maps $\mu_{\mathbf i\to\mathbf j}$
associated with different double reduced words $\mathbf i$, $\mathbf j$ are given
by a composition of mutations simply related to the composition of generalized $d$-moves
linking the double reduced words $\mathbf i$ and $\mathbf j$.

In the present paper, using a key result of Evens and Lu \cite{EL},
we adapt the construction of Fock and Goncharov
to study the dual Poisson-Lie group $(G^*,\pi_{G^*})$ of $(G,\pi_G)$
when $G$ is a complex semi-simple of adjoint type. It turns that the
description of $(G^*,\pi_{G^*})$ requires not one but a family of cluster
$\mathcal X$-varieties indexed by the Weyl group $W$ of $G$.
This family is in fact described by the $W$-permutohedron associated to the
Lie algebra $\mathfrak g$ of $G$: vertices being labeled by cluster
$\mathcal X$-varieties and edges by new Poisson birational isomorphisms,
on appropriate seed $\mathcal X$-tori, called \emph{saltation}.
Roughly speaking, we associate to every cluster variety of this family a
twisted evaluation, i.e. a composition of an evaluation map as above with a
new map called twisted maps which generalizes the birational isomorphisms constructed
by Evens and Lu in their study of Grothendieck resolutions \cite{EL}, and use
saltations to relate them.
The combinatorics underlying this result rely on two new moves on double words
added to the previous generalized $d$-moves; these moves are
called \emph{$\tau$-moves} and \emph{dual moves}. The maps on seed
$\mathcal X$-tori associated to {dual moves} are {saltations}, whereas
the maps on seed $\mathcal X$-tori associated to {$\tau$-moves} are obtained by
a tropicalization of the mutation formulas and therefore called
\emph{tropical mutations}. In fact, one of the key technical result here is
an explicit factorization of the Fomin-Zelevinsky twist maps given in
\cite{FZtotal} in terms mutations and tropical mutations.

In the sequel \cite{RB2} of this paper, we will see how to use tropical
mutations to include the De-Concini-Kac-Procesi Poisson automorphisms on
$(G_0,\pi_*)$ in the story.

Here is the organization of the paper. We fix the notation, give backgrounds
on semi-simple Lie algebras and dual Poisson-Lie groups, and recall the basic
definitions leading to the notion of cluster $\mathcal X$-variety
in Section \ref{section:Preliminaries}.
We show, in a way useful for our purposes, how to naturally attach a cluster
$\mathcal X$-variety ${\mathcal X}_{|\mathbf i|}$ to every double Bruhat cells
$(G^{u,v},\pi_G)$ via evaluation maps associated to any double (reduced) word
$\mathbf i$ in Section
\ref{section:ClusterG} (this section sums-up results of \cite{FGcluster}).
In Section \ref{subsection:evdual}, we introduce new evaluation maps and new
seeds related to double reduced
words to state an analog of the previous construction of Fock and Goncharov for
the dual Poisson-Lie group $(G_0,\pi_*)$;
although the result in this section are strongly generalized
in Section \ref{section:Loop}, it is the occasion to give a flavor
of our construction without using the machinery of
generalized cluster transformations and saltations later developed.
In Section \ref{section:tropical},
we enlarge the combinatorics on double words and on their related
seed $\mathcal X$-tori by introducing respectively new moves
called $\tau$-moves and related birational
Poisson isomorphisms on seed $\mathcal X$-tori called tropical
mutations; this enables us to describe the Fomin-Zelevinsky twist maps
and their variations in terms on mutations and tropical mutations.
In Section \ref{section:taucombi}, we use the $W$-permutehedron associated
to the Lie algebra $\mathfrak g$ to study
the combinatorics on double reduced words generated by generalized $d$-moves
and enriched with tropical moves, as well as the related combinatorics on cluster
$\mathcal X$-varieties; the idea is to prepare the ground for the
cluster combinatorics related to twisted evaluations and dual Poisson-Lie
groups, developed in Section \ref{section:Loop}.
In Section \ref{section:twistedev}, we generalized the results of Section
\ref{subsection:evdual}: we introduce {twisted evaluation} and adapt the
combinatorics of the previous section
to get a family of cluster $\mathcal X$-varieties ${\mathcal X}_{w}$, associated
to each element $w$ of $W$ parameterizing $(BB_-,\pi_*)$. (The results of Section
\ref{subsection:evdual} are rediscovered by setting $w=1$.)
In Section \ref{section:Loop}, we relate the previous twisted evaluations
by cluster transformations and the birational
Poisson isomorphisms called saltations; as a corollary, we get a parametrization of
the dual Poisson Lie-group $(BB_-,\pi_*)$ by a family of cluster
$\mathcal X$-varieties; moreover, the cluster $\mathcal X$-varieties of this
family are related by saltations described by the $1$-skeleton
of the $W$-permutohedron $P_W$.
In Section \ref{section:evaluationdual},
we start by giving an alternative way to describe twist maps with mutations
and tropical mutations and provide evaluations for $(G^*,\pi_{G^*})$ in the
spirit of the Kirillov-Reshetikhin multiplicative formula for the quantum $R$-matrix
associated to ${\mathcal U}_q(\mathfrak{g})$;
moreover, birational Poisson isomorphisms using to pass from the positive part
to the negative part of $(G^*,\pi_{G^*})$ (and vice-versa) are easily encoded by paths
on the $1$-skeleton of $W$-permotohedron relating the identity and the longest element $w_0$
of $W$. Finally, we apply all our construction to
the very special case of $G=SL(2,\mathbb C)$ in Section \ref{section:SL2},
and, as a conclusion, we give the quantization of this elementary construction by
considering the cluster combinatorics associated to the quantized universal enveloping
algebra $\mathcal{U}_q(\mathfrak{g})$ of the Lie algebra
$\mathfrak{g}=\sl(2,\mathbb C)$.

\section{Preliminaries}
\label{section:Preliminaries}
We fix the notation, give backgrounds on semi-simple Lie algebras and dual
Poisson-Lie groups, and recall the basic definitions leading to the notion
of cluster $\mathcal X$-variety.
\subsection{Backgrounds on semi-simple Lie algebras}\label{section:Preliminaries1}
Let $\mathfrak g$ be a complex semi-simple Lie algebra  of rank $l$,
$A$ its Cartan matrix, {and $G$ its Lie group of adjoint type}. Fix a
Borel subgroup $B\subset G$, let $B_-$ be the opposite Borel subgroup, $H=B\cap B_-$
the associated Cartan subgroup and $N$ (resp. $N_-$) the unipotent radical of $B$ (resp. $B_-$).
Let $\mathfrak{h},\mathfrak{n},\mathfrak{n}_-\subset \mathfrak{g}$ be the
Cartan and nilpotent subalgebras of  $\mathfrak{g}$, corresponding respectively
to $H$, $N$ and $N_-$. In the following, we will denote
$[1,l]:=\{1,\dots,l\}$.

Let
$\alpha_1,\dots,\alpha_l$ be the simple roots of $\mathfrak g$, and let
$\omega_1, \omega_2, \ldots, \omega_l\in{\mathfrak h}^*$ be the corresponding
{fundamental weights}.
For every $i\in[1,l]$, let $(e_i,f_i,h_i)$ be the Chevalley generators
of $\mathfrak g$; they generate a Lie subalgebra $\mathfrak{g}_{\alpha_i}$
of $\mathfrak{g}$. In particular, we have $\omega_j(h_k) = \delta_{jk}$
for every $j, k \in[1,l]$. Let us recall that the \emph{weight lattice}
$P$ is the set of all weights $\gamma \in \mathfrak{h}^*$
such that $\gamma (h_i) \in \mathbb{Z}$ for all $i$. So the group $P$ has a
$\mathbb{Z}$-basis formed by the {fundamental weights}.
Every weight $\gamma \in P$ gives rise to a multiplicative character
$a \mapsto a^\gamma$ of the maximal torus $H$; this character
is given by $\exp (h)^\gamma = e^{\gamma (h)}$, with $h \in \mathfrak{h}$.

The Lie algebra $\mathfrak g$
being semi-simple, its Cartan matrix $A$ is invertible and we can introduce
a new basis $\{h^i,1\leq i\leq l\}$ on $\mathfrak h$ putting
\begin{equation}\label{equ:hbasis}
h^i:=\sum{(A^{-1})}_{ij}\ h_j\ .
\end{equation}

Let  $D=\diag(d_1,\dots,d_l)$ be the diagonal matrix associated with the set of
Cartan symmetrizers; we put $\widehat{a}_{ij}=d_ia_{ij}=a_{ij}d_j$.
For every $x\in\mathbb C$ and $i\in[1,l]$, we define the group elements
$$\begin{array}{cccccc}
E^i=\exp(e_i),& F^i=\exp(f_i),& H_{i}(x)=\exp(\log(x)h_{i}),&
H^{i}(x)=\exp(\log(x)h^{i})
\end{array}$$
related respectively
to the generators $e_i$, $f_i$, $h_i$ and $h^i$ of $\mathfrak{g}$.
The canonical inclusions $\varphi_i: \SL(2,\mathbb C)
\hookrightarrow G$ give in particular the following equalities for every nonzero
complex number $x$.
\begin{equation}\label{equ:defphi}
\begin{array}{cc}
\varphi_i\left(
\begin{array}{cc}
1 & x\\
0 & 1
\end{array}
\right) =H^i(x)E^iH^i(x^{-1}),
& \varphi_i\left(
\begin{array}{cc}
1 & 0\\
x & 1
\end{array}
\right) =H^i(x^{-1})F^iH^i(x)\ ,
\end{array}
\end{equation}
\begin{equation}\label{equ:defphi2}
\begin{array}{cc}
\mbox{and}&
\varphi_i\left(
\begin{array}{cc}
x & 0\\
0 & x^{-1}
\end{array}
\right) =H_i(x)=\displaystyle\prod_{j\in[1,l]}H^j(x)^{a_{ij}}\ .
\end{array}
\end{equation}

We denote by $W$ the Weyl group of $G$.
As an abstract group, $W$ is a finite Coxeter group of rank $l$ generated by
the set of \emph{simple reflections} $S=\{s_1, \dots, s_l\}$; it acts on
$\mathfrak h^*$, $\mathfrak h$ and the Cartan subgroup $H$ by
\begin{equation}\label{equ:actionW}
\begin{array}{ccccc}
s_i (\gamma) = \gamma - \gamma (\alpha_i^\vee) \alpha_i&,&
s_i (h) = h - \alpha_i (h) \alpha_i^\vee&\mbox{and}&
a^{w(\gamma)}=(\widehat w^{-1}a\widehat{w})^{\gamma}
\end{array}
\end{equation}
for every $\gamma \in \mathfrak h^*$, $h \in \mathfrak h$,
$w\in W$ and $a\in H$.
Recall now that a \emph{reduced word}
for $w\in W$ is an expression for $w$ in the generators belonging to $S$, which is
minimal in length among all such expressions for $w$. Let us denote $\ell(w)$
this minimal length and $R(w)$ the set of reduced words associated to $w$. As
usual, the notation $w_0$ will refer to the longest word of $W$.

Let us denote $\Pi$ the set of positive roots of the Lie algebra $\mathfrak g$.
It is well-known that if $i_1\dots i_{\ell(w_0)}$ is a reduced
expression for $w_0$, then
$$\Pi=\{\alpha_{i_1},s_{i_1}(\alpha_{i_2}),\dots, s_{i_1}
\dots s_{i_{\ell(w_0)-1}}(\alpha_{i_{\ell(w_0)}})\}\ ,$$
each positive root occurring exactly one in the right-hand side. There are
automorphisms $T_1,\dots T_l$ of $\mathfrak g$ such that
\begin{equation}\label{equ:classicalbraid}
\begin{array}{lcc}
T_i(e_i)=-f_i\ ,\quad T_i(f_i)=-e_i\ ,\quad T_i(h_j)=h_j-a_{ji}h_i\ , \\
\\
T_i(e_j)=(-a_{ij})!^{-1}(\ad_{e_i})^{-a_{ij}}(e_j)\ ,\quad\mbox{if } i\neq j\ ,\\
\\
T_i(f_j)=(-a_{ij})!^{-1}(\ad_{f_i})^{-a_{ij}}(f_j)\ ,\quad\mbox{if } i\neq j\ ,
\end{array}
\end{equation}
where $\ad_a(b)=[a,b]$ for every $a,b\in\mathfrak g$.
To any positive root $\beta=s_{i_1}s_{i_2}\dots s_{i_{k-1}}({\alpha}_{i_k})\in\Pi$,
$i_1\dots i_{\ell(w_0)}$ being a reduced expression of the longest word $w_0$ of $W$,
we associate the \emph{positive} and \emph{negative root vectors}
\begin{equation}\label{equ:autoLusztig}
\begin{array}{ccc}
e_{\beta}=T_{i_1}T_{i_2}\dots T_{i_{k-1}}(e_{i_k})&\mbox{and}
&f_{\beta}=T_{i_1}\dots T_{i_{k-1}}(f_{i_k})\ .
\end{array}
\end{equation}
Let us also recall that $W$ can also be seen as the subgroup $\Norm_G(H)/H$ of $G$.
Thus, to every simple reflection $s_i\in S$ we associate the group element
$$\widehat{s_i}=\varphi_i\left(
\begin{array}{rr}
0&-1\\
1&0
\end{array}
\right).
$$
We can choose  representatives in $G$ for every element of $W$ by setting
$\widehat{w_1w_2}=\widehat{w_1}\widehat{w_2}$ for every $w_1,w_2\in W$
as long as $\ell(w_1)+\ell(w_2)=\ell(w_1w_2)$. We finally have
the \emph{Bruhat decompositions} associated to $G$:
\begin{equation}\label{bruhatdec}
G=\bigcup_{u\in W}B\widehat{u}B=\bigcup_{v\in W}B_-\widehat{v}B_-
=\bigcup_{w\in W}B\widehat{w}B_-\ .
\end{equation}

\subsection{Backgrounds on dual Poisson-Lie groups}
A Lie group $G$ given with a Poisson structure is called
\emph{Poisson-Lie group} if the multiplication $m:G\times G\longrightarrow G$
is a Poisson map, when the set $G\times G$ is given the Poisson product
structure. In the following, we will focus on the following
Poisson-Lie groups.
\subsubsection{The Poisson-Lie group $(G,\pi_{G})$}
According to the Belavin-Drinfel'd classification, the so-called
\emph{standard} classical $r$-matrix is given by the formula
$$r=\sum e_{\alpha}\wedge f_{\alpha}\in\mathfrak{g}\wedge\mathfrak{g}\ ,$$
where the summation is done over all positive roots.
Let $\langle\ ,\ \rangle$ be the Killing form on $\mathfrak{g}$.
For every $x\in\mathfrak{g}$, $X\in G$ and every function $f\in{\mathcal F}(G)$,
the left and right gradients are defined respectively by
$$
\begin{array}{ccc}
\langle \nabla f(X), x\rangle =\frac{d}{dt}|_{t=0}f(e^{tx}X)& \text{and}&
\langle \nabla 'f(X), x\rangle =\frac{d}{dt}|_{t=0}f(Xe^{tx})\ .
\end{array}$$
If $f,g\in{\mathcal F}(G)$, let $\pi_G$ be the following Poisson structure
on $G$ given by the \emph{Sklyanin bracket} which transforms $G$ into a Poisson-Lie group.
\begin{equation}\label{equ:Sbracket}
\{f,g\}_G=\frac{1}{2}(\langle \nabla f\otimes\nabla g,r\rangle
-\langle \nabla ' f\otimes\nabla ' g,r\rangle )\ .
\end{equation}
A Poisson stratification of $(G,\pi_G)$ is obtained by using the Bruhat decompositions
given by (\ref{bruhatdec}). As in \cite[Section 1.2]{FZtotal}, we associate to any
$u,v\in W$ the \emph{double Bruhat cell} $G^{u,v}\subset G$ defined by
$$
G^{u,v}=B\widehat{u}B\cap B_-\widehat{v}B_-\ .
$$
Each double Bruhat cell $G^{u,v}$ has dimension $\ell(u)+\ell(v)+l$ by
\cite[Theorem 1.1]{FZtotal}.
The following result leads to the Poisson stratification
\begin{equation}\label{equ:doubleBruhatstrat}
G=\displaystyle\bigcup_{u,v\in W}G^{u,v}\ .
\end{equation}

\begin{prop}\cite{HKKR, KZ, Rh}\label{prop:decG} For every $u,v\in W$,
the double Bruhat cells $G^{u,v}$ are the $H$-orbits, by the right-multiplication
action, of the symplectic leaves of $(G,\pi_G)$.
\end{prop}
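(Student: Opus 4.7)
The plan is to establish the proposition in two main steps: (i) verify that each cell $G^{u,v}$ is a Poisson subvariety of $(G,\pi_G)$ on which right multiplication by $H$ is a free Poisson action, and (ii) compute the rank of $\pi_G\big|_{G^{u,v}}$ and show it equals $\ell(u)+\ell(v)$, so that the $l$-dimensional right $H$-orbits are transverse to the symplectic foliation of $G^{u,v}$ and the $H$-saturation of any symplectic leaf contained in $G^{u,v}$ fills the whole $(\ell(u)+\ell(v)+l)$-dimensional cell.

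For step (i), I would first invoke the Bruhat decompositions (\ref{bruhatdec}) together with the standard fact that both Borel subgroups $B$ and $B_-$ are Poisson-Lie subgroups of $(G,\pi_G)$: this forces each individual Bruhat piece $B\widehat u B$ and $B_-\widehat v B_-$ to be a union of symplectic leaves of $\pi_G$, and hence so is their intersection $G^{u,v}$. The $H$-stability of $G^{u,v}$ under right multiplication is immediate from $H\subset B\cap B_-$. To upgrade the set-theoretic right $H$-action to a Poisson one, I would manipulate (\ref{equ:Sbracket}) to show that for every $h\in H$ and $f,g\in{\mathcal F}(G)$ one has $\{f\circ R_h,g\circ R_h\}_G = \{f,g\}_G\circ R_h$; this reduces to the $\operatorname{Ad}(H)$-invariance of the standard $r$-matrix, which is automatic because each summand satisfies $(\operatorname{Ad}(h)\otimes\operatorname{Ad}(h))(e_\alpha\wedge f_\alpha) = h^\alpha h^{-\alpha}\,e_\alpha\wedge f_\alpha = e_\alpha\wedge f_\alpha$.

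Step (ii) is the core of the argument and the main obstacle. I would fix a double reduced word $\mathbf i$ for the pair $(u,v)$ and use the Fomin--Zelevinsky factorization built from (\ref{equ:defphi})--(\ref{equ:defphi2}) to write down a birational parameterization $(\mathbb C^\ast)^{\ell(u)+\ell(v)+l}\dashrightarrow G^{u,v}$. Pulling $\pi_G$ back to these factorization coordinates yields an essentially log-canonical expression whose skew-symmetric coefficient matrix I would show has rank exactly $\ell(u)+\ell(v)$, with a distinguished $l$-dimensional kernel aligned with the Cartan directions, i.e., with the tangent directions of the right $H$-orbit through the base point of the chart. An alternative route---more intrinsic but requiring heavier machinery---is to identify symplectic leaves with orbits of the dressing $G^*$-action on $G$ and to compute dimensions of those orbits cell by cell, as in \cite{HKKR,KZ,Rh}.

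Granted this rank computation, every symplectic leaf $L\subset G^{u,v}$ has dimension $\ell(u)+\ell(v)$, and the free, transverse right $H$-action produces an $H$-saturation $L\cdot H$ of dimension $\ell(u)+\ell(v)+l=\dim G^{u,v}$, which is moreover a union of symplectic leaves by step (i). Since $G^{u,v}$ is irreducible (as the image of the factorization chart) and $L\cdot H$ is open in it, the saturation must equal the whole cell, proving the proposition. The structural pieces of the argument are essentially formal; the genuine difficulty lies in extracting the precise rank $\ell(u)+\ell(v)$ of $\pi_G$ on $G^{u,v}$, which is exactly the input provided by the references \cite{HKKR,KZ,Rh}.
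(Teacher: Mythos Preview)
The paper does not prove this proposition at all: it is stated with the citation \cite{HKKR,KZ,Rh} and no argument is given. So there is no ``paper's own proof'' to compare against; your proposal is an independent sketch of why the cited result holds.

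Your two-step strategy is the right one, and you correctly isolate the rank computation in factorization coordinates as the technical heart (this is indeed the content of \cite{KZ}). There is, however, a genuine gap in your closing paragraph. From ``$L\cdot H$ is open in the irreducible variety $G^{u,v}$'' you cannot conclude $L\cdot H=G^{u,v}$: irreducible varieties have many proper nonempty open subsets. The correct finish is: once you know that \emph{every} leaf in $G^{u,v}$ has dimension $\ell(u)+\ell(v)$ and that the free Poisson right $H$-action is transverse to leaves, then \emph{every} $H$-saturation $L\cdot H$ is open. These saturations are the classes of the equivalence relation ``same $H$-orbit of leaves'', hence they partition $G^{u,v}$ into nonempty open sets; connectedness of $G^{u,v}$ then forces the partition to be trivial. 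Your argument as written only produces one open piece and does not rule out a nontrivial complement.

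A smaller point of phrasing: ``kernel aligned with the Cartan directions'' is ambiguous. The kernel of $\pi_G^\#:T^*G^{u,v}\to TG^{u,v}$ lives in the cotangent bundle, not the tangent bundle. What you actually need (and what the log-canonical computation in \cite{KZ} gives) is that the differentials of the $l$ Cartan factorization coordinates span this kernel, i.e., those coordinates are Casimirs on $G^{u,v}$; transversality of the right $H$-orbits to the symplectic leaves then follows because right $H$-translation moves exactly those coordinates.
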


\subsubsection{The Poisson-Lie group $(G^*,\pi_{G^*})$}Let us recall
that any multiplicative Poisson bracket on $G$ identically vanishes at
its unit  element $e\in G$; its linearization at $e$ gives rise to
the structure of a Lie algebra on the dual space $\mathfrak{g}^*$;
multiplicativity then implies that the dual of the commutator map
$[\,,\,]\colon \mathfrak{g}^*\times \mathfrak{g}^*\to
\mathfrak{g}^*$ is a 1-cocycle on $\mathfrak{g}$. A pair
$(\mathfrak{g},\, \mathfrak{g}^*)$ with these properties is called a
\emph{Lie bialgebra}. Because of an equivalence
between the category of Poisson-Lie groups (whose morphisms are Lie
group homomorphisms which are also Poisson mappings) and the
category of Lie bialgebras (whose morphisms are homomorphisms of Lie
algebras such that their duals are homomorphisms of the dual
algebras), finite dimensional
Poisson-Lie groups always come by pair. The Poisson-Lie group
associated to the Lie bialgebra $(\mathfrak{g}^*,\mathfrak{g})$ is
called \emph{the Poisson-Lie group dual to $G$}.

Let $\mathfrak{b}_{\pm}\subset\mathfrak g$ be the opposite Borel subalgebras
associated to $B_{\pm}$. The dual Lie algebra $\mathfrak{g}^*$ associated to
the standard $r$-matrix may be identified with the following subalgebra of
$\mathfrak{b}_{+}\oplus\mathfrak{b}_{-}$:
$$ \mathfrak{g}^* =\{(X_+,X_-)\in \mathfrak{b}_+\oplus \mathfrak{b}_-\;|\;\diag X_++\diag X_-=0\}\ .$$
Here, the application  $\diag:\mathfrak{g}\to\mathfrak{h}$ denotes the projection
of $\mathfrak{g}$ on its Cartan subalgebra. It can be lifted-up to give a projection
from the Lie group $G$ to its Cartan subgroup $H$, also denoted $\diag$.
 The Lie group $G^*$ associated with $\mathfrak{g}^*$ may be
identified with the following subgroup in $B_+\times B_-$,
\begin{equation}\label{equ:defG*double}
G^*=\{(b_+,b_-)\in B_+\times B_-| \diag b_+\cdot \diag b_-=I\}\ .
\end{equation}
It carries a natural Poisson bracket which makes it a Poisson-Lie
group; this is the dual Poisson-Lie group $(G^*,\pi_{G^*})$ of $G$.
Let $t\in{\mathfrak g}\otimes{\mathfrak g}$ be the Casimir element
of $\mathfrak{g}$, given by the following formula, and let us denote $r_{\pm}=r\pm t$.
$$t=\frac{1}{2}(\sum_{i,j\in[1,l]}({\widehat A}^{-1})_{ij}
h_i\otimes h_j+\sum_{i\in[1,l]}(e_i\otimes f_i+f_i\otimes e_i))\ .$$

\begin{prop}[\cite{STS}]\label{prop:STSPoisson}
Let us equip $G$ with the Poisson structure $\pi_*$ given by
$$
\{f,g\}_*=\frac{1}{2}(\langle \nabla f\otimes\nabla g,r\rangle
+\langle \nabla ' f\otimes\nabla ' g,r\rangle )
-\langle \nabla f\otimes\nabla' g,r_+\rangle -\langle \nabla ' f\otimes\nabla g,r_-\rangle\ .
$$
The map $\phi:(G^*,\pi_{G^*})\to(BB_-,\pi_*):(b_+,b_-)\mapsto b_+b_-^{-1}$
is a Poisson covering of degree $2^l$ of Poisson manifolds.
\end{prop}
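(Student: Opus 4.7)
The plan is to verify three claims: (i) $\phi$ lands in $BB_-$; (ii) every fibre has exactly $2^l$ points; and (iii) $\phi$ is a Poisson map.

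Claim (i) is immediate since $b_+\in B$ and $b_-^{-1}\in B_-$. For (ii) and the surjectivity part of (i), I would argue via the Gauss decomposition of the big cell: every $X\in BB_-$ admits a unique factorisation $X=u_+hu_-^{-1}$ with $u_\pm\in N_\pm$ and $h\in H$. Writing a candidate preimage $(b_+,b_-)\in G^*$ as $b_+=u_+'h_+$, $b_-=u_-'h_-$, the equation $\phi(b_+,b_-)=X$, after conjugating $h_-^{-1}$ through $u_-^{-1}$, forces $u_+'=u_+$, $u_-'=u_-$ and $h_+h_-^{-1}=h$; combined with the defining constraint $\diag b_+\cdot\diag b_-=h_+h_-=I$ from~(\ref{equ:defG*double}), this yields $h_+^2=h$. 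Since $H\cong(\mathbb C^\times)^l$, the equation $h_+^2=h$ has exactly $2^l$ solutions, exhibiting the fibre as a torsor under the $2$-torsion subgroup $H[2]\cong(\mathbb Z/2\mathbb Z)^l$ acting diagonally by $(b_+,b_-)\mapsto(b_+\varepsilon,b_-\varepsilon)$. This takes care of the covering part of the statement.

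For (iii), I would pull back test functions: given $f_1,f_2\in\mathcal F(BB_-)$, set $F_i=f_i\circ\phi$ and express the gradients $\nabla F_i,\nabla' F_i$ at $(b_+,b_-)$ in terms of $\nabla f_i,\nabla' f_i$ at $X=b_+b_-^{-1}$. Direct differentiation shows that a right-invariant variation $(Y_+,Y_-)\in\mathfrak g^*$ on $G^*$ moves $X$ by $b_+(Y_+-Y_-)b_-^{-1}$, while a left-invariant variation moves $X$ by $Y_+X-XY_-$. Using the constraint $\diag Y_++\diag Y_-=0$ that defines $\mathfrak g^*\subset\mathfrak b_+\oplus\mathfrak b_-$ together with the identities $r=\tfrac12(r_++r_-)$ and $t=\tfrac12(r_+-r_-)$, one substitutes these gradient expressions into the Sklyanin-like bracket defining $\pi_{G^*}$ and rearranges the result: the four pieces obtained match the four terms of $\pi_*$, the ``mixed'' cross terms $-\langle\nabla f\otimes\nabla' g,r_+\rangle$ and $-\langle\nabla' f\otimes\nabla g,r_-\rangle$ arising precisely from the Casimir contributions hidden inside $r_\pm$.

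The main obstacle is the bookkeeping in (iii): tracking the projections onto $\mathfrak b_\pm$ implicit in passing from $\nabla f_i$ to $\nabla F_i$, and matching them with the left/right positioning that distinguishes $r_+$ from $r_-$ on the target. The underlying algebraic identity is the content of Semenov-Tian-Shansky's classical construction in~\cite{STS}, so in practice I would import that computation rather than redo it; the task specific to the present formulation is then the verification of the $2^l$-to-one property in (ii), which, as shown above, follows cleanly from the Gauss decomposition and the diagonal constraint built into the definition~(\ref{equ:defG*double}) of $G^*$.
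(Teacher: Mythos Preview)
The paper does not supply a proof of this proposition: it is stated with the attribution \cite{STS} and immediately followed by the next cited proposition, so there is nothing to compare your argument against. Your sketch is a correct and standard reconstruction of the result. Part~(ii) is clean; note only that the remark about ``conjugating $h_-^{-1}$ through $u_-^{-1}$'' is unnecessary, since with $b_+=u_+'h_+$ and $b_-=u_-'h_-$ one has $b_+b_-^{-1}=u_+'\,(h_+h_-^{-1})\,(u_-')^{-1}$ already in $N_+\cdot H\cdot N_-$ form, and uniqueness of the Gauss factorisation on $BB_-$ gives the conclusions directly. For part~(iii) you are right that the computation is precisely the content of Semenov-Tian-Shansky's construction, and deferring to \cite{STS} is exactly what the paper does.
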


\begin{prop}[\cite{STS}]\label{prop:conj}The following conjugation
action is a Poisson action.
$$
\begin{array}{rcl}
(G,\pi_G)\times(G,\pi_0)&\longrightarrow&(G,\pi_0)\\
(g,h)&\longmapsto&ghg^{-1}
\end{array}$$
\end{prop}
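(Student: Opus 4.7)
The plan is to verify directly, for all $F, G \in \mathcal F(G)$, that
$$\{F \circ \mu, G \circ \mu\}_{\pi_G \oplus \pi_*} = \{F, G\}_* \circ \mu,$$
where $\mu(g, h) = ghg^{-1}$ (interpreting $\pi_0$ as the structure $\pi_*$ of Proposition \ref{prop:STSPoisson}). Set $m = ghg^{-1}$ and $\tilde F = F \circ \mu$. The first step is a chain-rule calculation of the four partial left/right gradients of $\tilde F$ at $(g, h)$, using the identity $\nabla' F|_m = \operatorname{Ad}_m^{*} \nabla F|_m$ that comes from $m e^{tx} = e^{t \operatorname{Ad}_m x} m$. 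A direct differentiation yields
\begin{equation*}
\nabla_g \tilde F = \nabla F|_m - \nabla' F|_m, \qquad \nabla'_g \tilde F = \operatorname{Ad}_g^{*} (\nabla F|_m - \nabla' F|_m),
\end{equation*}
\begin{equation*}
\nabla_h \tilde F = \operatorname{Ad}_g^{*} \nabla F|_m, \qquad \nabla'_h \tilde F = \operatorname{Ad}_g^{*} \nabla' F|_m,
\end{equation*}
and analogously for $\tilde G$. Note that only the $\operatorname{Ad}$-twist by $g$ appears; the twist by $m$ is absorbed into the relation between $\nabla$ and $\nabla'$.

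Next I substitute these into the formula (\ref{equ:Sbracket}) for $\pi_G$ and the formula of Proposition \ref{prop:STSPoisson} for $\pi_*$, writing $r_\pm = r \pm t$ throughout. The pullback becomes a linear combination of pairings of $\nabla F \otimes \nabla G$, $\nabla F \otimes \nabla' G$, $\nabla' F \otimes \nabla G$, $\nabla' F \otimes \nabla' G$ (all evaluated at $m$) against the three tensors $r$, $(\operatorname{Ad}_g \otimes \operatorname{Ad}_g) r$ and $t$. Two facts do all the work. First, $r \in \mathfrak g \wedge \mathfrak g$ is antisymmetric, as is $(\operatorname{Ad}_g \otimes \operatorname{Ad}_g) r$, so $\langle a \otimes b + b \otimes a, \rho \rangle = 0$ for any antisymmetric $\rho$. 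Second, the Casimir $t$ is $\operatorname{Ad}$-invariant, so $(\operatorname{Ad}_g \otimes \operatorname{Ad}_g) r_\pm = (\operatorname{Ad}_g \otimes \operatorname{Ad}_g) r \pm t$, separating the $g$-dependence from the Casimir. Collecting terms, the $(\operatorname{Ad}_g \otimes \operatorname{Ad}_g) r$-contributions from the $\pi_G$-half and the $\pi_*$-half of the product bracket cancel pairwise, and the surviving $r$- and $t$-pieces match $\mu^*\{F, G\}_*$ after one further application of antisymmetry.

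The main obstacle is purely bookkeeping: four quadratic terms from $\pi_*$ and two from $\pi_G$, each expanded via the four gradient formulas, yield a long polynomial in the three tensors above which must cancel exactly. The conceptual content is modest but pleasant: antisymmetry of $r$ annihilates the $g$-dependent $\operatorname{Ad}$-twisted contributions, while $\operatorname{Ad}$-invariance of the Casimir $t$ is precisely what separates them from a $g$-free remainder against which that antisymmetry can be brought to bear.
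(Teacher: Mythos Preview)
The paper does not prove this proposition; it is quoted from \cite{STS} without argument, so there is no paper proof to compare against. (Note also that the symbol $\pi_0$ is never defined in the paper; your reading of it as $\pi_*$ is the natural one given the surrounding context and the citation.)

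Your gradient formulas are correct, and the strategy---substitute into the two brackets and let antisymmetry of $r$ and $\operatorname{Ad}$-invariance of $t$ do the work---is sound in outline. The gap is in the claimed cancellation. Writing $a=\nabla F|_m$, $a'=\nabla'F|_m$ (and likewise $b,b'$) and $r_g=(\operatorname{Ad}_g\otimes\operatorname{Ad}_g)r$, a careful expansion shows that the $r_g$-contributions from the $\pi_G$-half and the $\pi_*$-half do \emph{not} cancel pairwise; collecting everything and subtracting $\mu^*\{F,G\}_*$, one is left with
\[
\tfrac12\,\bigl\langle\, a\otimes b' + a'\otimes b,\; r - r_g\,\bigr\rangle,
\]
and antisymmetry of $r-r_g$ does not annihilate this (the tensor $a\otimes b'+a'\otimes b$ is not symmetric). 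Evaluating at $h=e$ (so $m=e$, hence $a=a'$ and $b=b'$) the residual becomes $\langle a\otimes b,\, r-r_g\rangle$, which is nonzero for generic $g$ since $r$ is not $\operatorname{Ad}$-invariant. So as written the argument does not close. To repair it you would need either to exploit the constraint $a'=\operatorname{Ad}_m^{*}a$ together with $m=ghg^{-1}$ in a more substantial way than ``one further application of antisymmetry'', or---more likely---to recheck the exact normalization of the $\pi_*$ formula you are working with against the one in \cite{STS}: the form of the residual above points to a discrepancy of conventions rather than a failure of the theorem.
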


Following \cite{EL}, let us now give a Poisson stratification for $(G,\pi_*)$.
A \emph{regular class function}
on $G$ is a regular function on $G$ that is
invariant under conjugation. Two elements $g_1, g_2 \in G$ are said to be in the
same \emph{Steinberg fiber} if $f(g_1) = f(g_2)$ for
every regular class function $f$ on $G$.
For $t \in H$, let $F_t$ be the Steinberg fiber containing $t$.
By the Jordan decomposition
of elements in $G$, every Steinberg fiber is of the form
$F_t$ for some $t \in H$. Moreover, the equality
$F_{t'} = F_t$ is satisfied if and only if there exists
$w\in W$ such that $t'=w(t)$, where $W$ acts on $H$ by the formula
(\ref{equ:actionW}).
The group $G$ has therefore the decompositions
\begin{equation}\label{equ:decG^*}
\begin{array}{cccc}
G=\displaystyle\bigcup_{t\in H,w\in W}F_{t,w}
=\displaystyle\bigsqcup_{t\in H\backslash W,w\in W}F_{t,w}
&&\mbox{where}&F_{t,w}:=B\widehat{w}B_-\bigcap F_t\ .
\end{array}
\end{equation}

\begin{prop}\cite[Proposition 3.3]{EL} For every $t\in H$ and $v\in W$:
\begin{itemize}
\item
$F_{t,w}$ is a non empty irreducible subvariety of $G$ with dimension equal to
$\dim(G)-l-\ell(w)$.
\item
$F_{t,v}$ is a finite union of $H$-orbits, for the conjugation action,
of the symplectic leaves of $(G,\pi_*)$.
\end{itemize}
\end{prop}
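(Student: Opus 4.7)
The plan is to prove each bullet in turn. For the first, the mixed Bruhat stratum $B\widehat{w}B_-$ is the orbit of $\widehat{w}$ under the $B\times B_-$-action $(b_+,b_-)\cdot x=b_+xb_-^{-1}$; its stabilizer is isomorphic to $B_-\cap \widehat{w}^{-1}B\widehat{w}$, whose Lie algebra has dimension $l+\ell(w)$ (with $l$ coming from $\mathfrak{h}$ and $\ell(w)$ from the positive roots $\alpha$ such that $w^{-1}(\alpha)<0$). Hence $\dim(B\widehat{w}B_-)=\dim(G)-\ell(w)$. By Steinberg's theorem, the adjoint quotient $\chi\colon G\to H/W\cong\mathbb{A}^l$ is flat and surjective with geometrically irreducible fibers of dimension $\dim(G)-l$. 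I would then establish that $\chi|_{B\widehat{w}B_-}$ remains flat and surjective by exhibiting, for every $t\in H$, an explicit representative in $F_{t,w}$ through a Steinberg-type cross-section tailored to the factorization $b_+\widehat{w}b_-^{-1}$. Non-emptiness, irreducibility, and the dimension count $\dim(F_{t,w})=\dim(G)-l-\ell(w)$ then propagate by flat base change.

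For the second bullet I would proceed in three steps. \emph{Step (a):} show that the Casimirs of $\pi_*$ are exactly the regular class functions on $G$. This can be seen by pulling back through the Poisson covering $\phi\colon(G^*,\pi_{G^*})\to(BB_-,\pi_*)$ of Proposition~\ref{prop:STSPoisson} and invoking the Semenov-Tian-Shansky description of the symplectic leaves of $(G^*,\pi_{G^*})$ as conjugacy classes in $G$ via dressing transformations; alternatively, one may verify it directly by substituting $\nabla'f=\operatorname{Ad}_{X^{-1}}\nabla f$ for conjugation-invariant $f$ into the formula of Proposition~\ref{prop:STSPoisson} and using $\operatorname{Ad}$-invariance of the Casimir $t=\tfrac12(r_+-r_-)$. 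In either case, every symplectic leaf of $\pi_*$ is contained in a single Steinberg fiber $F_t$. \emph{Step (b):} show that every mixed Bruhat cell $B\widehat{w}B_-$ is saturated by symplectic leaves of $\pi_*$; again this can be derived from the covering $\phi$ by analyzing how the stratification $\{B\widehat{w}B_-\}$ of $BB_-$ is refined by the images of the conjugacy classes in $G$. Combining (a) and (b), every symplectic leaf sits inside some $F_{t,w}$.

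\emph{Step (c):} By Proposition~\ref{prop:conj}, the conjugation action of $H\subset G$ on $(G,\pi_*)$ is Poisson, so $H$-orbits of symplectic leaves are themselves unions of leaves and stratify the irreducible variety $F_{t,w}$. A single $H$-orbit of a leaf of maximal dimension is already dense open in $F_{t,w}$, while the remaining orbits lie in a proper closed $H$-stable subvariety; Noetherian induction on the finitely many irreducible components of this subvariety then yields the finiteness of $H$-orbits.

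The main obstacle is Step~(b): the saturation of the mixed Bruhat cells by symplectic leaves of $\pi_*$ is not transparent from the bracket formula alone, and a careful analysis of the interplay between the covering $\phi$, the dressing orbits, and the Bruhat stratification is required. This is precisely where the specific structure of the dual Poisson-Lie group enters and is likely to be the technically heaviest part of the argument.
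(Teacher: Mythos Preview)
The paper does not prove this proposition; it is simply quoted from \cite[Proposition 3.3]{EL} with no argument given. So there is no ``paper's own proof'' to compare against, and what you have written is an independent attempt at a result the author takes as input.

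As a sketch, your outline is broadly along the right lines, but two steps are softer than you acknowledge. In the first bullet, exhibiting a point in each fiber of $\chi|_{B\widehat wB_-}$ gives surjectivity but not flatness; you would still need to show that all fibers have the expected dimension (equidimensionality plus smoothness of the base, or a genuine transversality argument), and this is precisely where Evens--Lu use the Grothendieck resolution rather than a direct cross-section in $B\widehat wB_-$. In Step~(c) of the second bullet, the claim that ``a single $H$-orbit of a leaf of maximal dimension is already dense open in $F_{t,w}$'' is not automatic: you need to know that the generic leaf in $F_{t,w}$ has codimension at most $l=\dim H$ inside $F_{t,w}$, and that the $H$-conjugation action is transversal to such leaves. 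Both facts are true but require the detailed analysis in \cite{EL} of how $\pi_*$ interacts with the Grothendieck simultaneous resolution; they do not fall out of a Noetherian induction alone. Your identification of Step~(b) as the crux is correct, and in \cite{EL} it is handled by lifting everything to the resolution $\widetilde G\to G$, where the Poisson structure and the stratification become more tractable.
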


\subsection{Backgrounds on cluster $\mathcal{X}$-varieties}\label{section:backcluster}
We recall the definitions
introduced by Fock and Goncharov underlying cluster $\mathcal X$-varieties. We add the
notion of \emph{erasing map}, which has been already used
in \cite{FGcluster} without be named.

\begin{definition}\cite[Definition 1.4]{FGdilog} A \emph{seed}
${\mathbf I}$ is a quadruple $(I, I_0, \varepsilon, d)$ where
\begin{itemize}
\item $I$ is a finite set;

\item $I_0\subset I$;

\item $\varepsilon$ is a matrix $\varepsilon_{ij}$, $i,j\in I$, such
that $\varepsilon_{ij}\in \mathbb{Z}$, unless $i,j\in I_0$;

\item $d=\{d_i\}$, $i \in I$, is a subset of positive integers such that
the matrix $\widehat{\varepsilon}_{ij}=\varepsilon_{ij}d_j$ is skew-symmetric.
\end{itemize}
\end{definition}

Elements of the set $I_0$ are sometimes called \emph{frozen vertices}.
Here, we will not use this terminology, however, for the simple reason that, in
the next, we will allow some birational Poisson isomorphisms in the direction of
these frozen vertices, called {tropical mutations}.
For every real number $x\in\mathbb R$, let us denote
$[x]_+= \max(x,0)$ and
$$
\sgn(x)=
\left\{\begin{array}{cc}
-1 & \text{if $x<0$ ;}\\
0  & \text{if $x=0$ ;}\\
 1 & \text{if $x>0$ .}
\end{array}
\right.
$$

\begin{definition}\cite[Section 1.2]{FGdilog},\cite[Definition 4.2]{FZ1}
Let $\mathbf{I}=(I,I_0,\varepsilon,d)$, $\mathbf {I'}=(I',{I'}_0,{\varepsilon}',d')$
be two seeds, and $k\in I\backslash I_0$. A $\emph{mutation in the direction k}$ is a map
$\mu_k:I\longrightarrow I'$ satisfying the following conditions:
\begin{itemize}
\item
$\mu_k(I_0)={I'}_0$;
\item
${d'}_{\mu_k(i)}=d_i$;
\item
${\varepsilon^{'}}_{\mu_k(i)\mu_k(j)}=\left\{ \begin{array}{lll}
-\varepsilon_{ij}&   \mbox{if}\ i=k\ \mbox{or}\ j=k\ ;\\
\varepsilon_{ij}+\sgn(\varepsilon_{ik})[\varepsilon_{ik}\varepsilon_{kj}]_+& \mbox{if}\  i,j\neq k\ .
\end{array}\right.$
\end{itemize}
\end{definition}

\begin{definition}\cite[Section 1.2]{FGdilog}\label{def:symmetry}
A $\emph{symmetry}$ on a seed $\mathbf{I}=(I,I_0,\varepsilon,d)$ is an
automorphism $\sigma$ of the set $I$ preserving the subset $I_0$, the matrix
$\varepsilon$, and the numbers $d_i$. That is to say:
\begin{itemize}
\item
$\sigma (I_0)=I_0$;
\item
$d_{\sigma (i)}=d_i$;
\item
${\varepsilon}_{\sigma (i)\sigma (j)}={\varepsilon}_{ij}.$
\end{itemize}
\end{definition}

Let $| I|$ be the cardinal of every finite set $I$ and
${\mathbb C}_{\neq 0}$ be the set of non-zero complex numbers.

\begin{definition}\cite[Section 1.2]{FGdilog}\label{def:mutation}
Let ${\mathbf I}$ be a seed. The related \emph{seed} $\mathcal X$-\emph{torus}
$\mathcal{X}_{\mathbf I}$ is the torus $(\mathbb C_{\neq 0})^{| I|}$ with
the Poisson bracket $$\{x_i,x_j\}=\widehat{\varepsilon}_{ij}x_ix_j,$$
where $\{x_i| i\in I\}$ are the standard coordinates on the factors.
The \emph{exchange part} of the seed $\mathcal X$-torus $\mathcal{X}_{\mathbf I}$
is the subtorus obtained by keeping only the $x_j$ for $j\in I\backslash I_0$.

Symmetries and mutations on seeds induce involutive maps between the
corresponding seed $\mathcal X$-tori, which are denoted by the same symbols
${\mu}_k$ and $\sigma$, and given by
\begin{itemize}
\item $
x_{\sigma(i)}=x_i$
\item
$x_{\mu_k(i)}=\left\{ \begin{array}{lll}
{x_k}^{-1}&  \mbox{if}\ i=k\ ;\\
x_ix_k^{[\varepsilon_{ik}]_+}(1+x_k)^{-\varepsilon_{ik}}& \mbox{if}\ i\not=k\ .
\end{array}\right.
$
\end{itemize}
\end{definition}

\begin{definition}\cite[Section 1.2]{FGdilog} A \emph{cluster transformation } linking two seeds
(and two seed $\mathcal X$-tori) is a composition of symmetries and mutations.
Let $\mathbf I$ be a seed.
A $\emph{cluster}$ ${\mathcal X}$-$\emph{variety}$ ${\mathcal X}_{|\mathbf I|}$
is obtained by taking every seed $\mathcal X$-tori obtained from
${\mathcal X}_{\mathbf I}$ by cluster transformations, and gluing them via
the previous bi-rational isomorphisms.
\end{definition}

%The correspondence between these definitions and the set-up
%given by Fomin and Zelevinsky in \cite{FZ4} is easily done. In particular,
%we have the following properties.
%\begin{itemize}
%\item
%Each seed $\mathcal X$-torus ${\mathcal X}_{\mathbf I}$ associated
%to a seed $\mathbf I=(I,I_0,\varepsilon,d)$ leads to a labeled $Y$-seed
%$(\mathbf y, B)$ in the semi-field $\mathbb C_{\neq 0}$
%\cite[Definition 2.3]{FZ4} by setting
%$$\begin{array}{ccc}
%\mathbf y=\{x_i| i\in I\backslash I_0\}
%&\mbox{and}& B=(\varepsilon_{ij})_{i,j\in I\backslash I_0}\ .
%\end{array}$$
%\item
%The $Y$-seed mutations \cite[Definition 2.4]{FZ4} associated to the labeled
%$Y$-seed $(\mathbf y, B)$ are given by restriction
%on the subset $I\backslash I_0$ of the mutations, relative to the seed
%$\mathcal X$-torus ${\mathcal X}_{\mathbf I}$, given by Definition
%\ref{def:mutation}.
%\item
%The exchange graph \cite[Definition 4.2]{FZ4} of the $Y$-pattern
%\cite[Definition 2.9]{FZ4} associated to $(\mathbf y, B)$ is the graph
%whose vertices are the seed $\mathcal X$-tori constituting the cluster
%$\mathcal X$-variety $\mathcal X$-torus ${\mathcal X}_{|\mathbf I|}$,
%modulo the symmetries given in Definition \ref{def:symmetry},
%and whose edge are the restriction of mutations to the subset $I\backslash I_0$.
%Thus, paths in this associated exchange graph are given by restriction
%of cluster transformations to this subset $I\backslash I_0$.
%\end{itemize}

\begin{definition}\cite{FGcluster}\label{def:amalg}
Let $\mathbf I=(I,I_0,\eta,c)$ and $\mathbf J = (J,J_0,\varepsilon,d)$
be two seeds and let $L$ be a set embedded into both $I_0$ and $J_0$
in a such a way that for any $i,j \in L$ we have $c(i)=d(i)$. Then the
amalgamation of  $\mathbf J$ and $\mathbf I$ is a seed $\mathbf K =
(K,K_0,\zeta,b)$, such that $K=I \cup_L J$, $K_0 = I_0\cup_L J_0$ and
$$\zeta_{ij}=\left\{\begin{array}{lcl}
               0 &\mbox{if}& i\in I\backslash L \mbox{ and } j\in J\backslash L\ ;\\
               0 &\mbox{if}& i\in J\backslash L \mbox{ and } j\in I\backslash L\ ;\\
               \eta_{ij}&\mbox{if}&i\in I\backslash L \mbox{ and } j\in I\backslash L\ ;\\
               \varepsilon_{ij}&\mbox{if}&i\in J\backslash L \mbox{ and } j\in J\backslash L\ ;\\
               \eta_{ij}+\varepsilon_{ij}&\mbox{if}&i,j\in L\ .
                    \end{array}
             \right.
$$
This operation induces a homomorphism ${\mathcal X}_{\mathbf J} \times
{\mathcal X}_{\mathbf I} \to {\mathcal X}_{\mathbf K}$ between the
corresponding seed ${\mathcal X}$-tori given by the rule
$$
z_i = \left\{\begin{array}{lcl}x_i&\mbox{if}&i\in I\backslash L\ ;\\
                               y_i&\mbox{if}&i\in J\backslash  L\ ;\\
                               x_iy_i&\mbox{if}&i\in L\ .
              \end{array}
       \right.
$$
It is easy to check that it respects the Poisson structure and
 commutes with cluster transformations,
thus is defined for the cluster ${\mathcal X}$-varieties, and not only for the seeds.
\end{definition}

\begin{definition}\label{def:erasing} Let $\mathbf{I}=(I,I_0,\varepsilon,d)$ be a seed and
$k\in I$. A \emph{$k$-erasing map} on a seed
$\mathbf{I}=(I,I_0,\varepsilon,d)$ is a
morphism $\varsigma_k$ on $\mathbf{I}$ such that
\begin{itemize}
\item
$\varsigma_k (I_0)=I_0$  and  $\varsigma_k (I)=I\backslash\{k\}$;
\item
$d_{\varsigma_k (i)}=d_i$;
\item
${\varepsilon}_{\varsigma_k (i)\varsigma_k (j)}={\varepsilon}_{ij}.$
\end{itemize}
erasing maps on seeds induce maps between the corresponding seed $\mathcal X$-tori,
which are denoted by the same symbols $\varsigma_k$, and given by
$$x_{\varsigma_k(i)}=x_i\ .$$
\end{definition}

\section{{Cluster $\mathcal X$-varieties} related to $(G,\pi_G)$}
\label{section:ClusterG}
We show, in a way useful for our purposes, how to naturally attach a cluster
$\mathcal X$-variety ${\mathcal X}_{|\mathbf i|}$ to every double Bruhat cells
$(G^{u,v},\pi_G)$ via evaluation maps associated to any double (reduced) word
$\mathbf i$. This section sums-up results of \cite{FGcluster}.

\subsection{Combinatorics on double words of $W$}We start by recalling the combinatorics
on double words of $W$, which is derived from a well-known result of Tits.
Following \cite{FZtotal}, a (reduced) word of
$W\times W$ is called a \emph{double (reduced) word}.
In order to avoid confusions we denote ${\overline 1}, \dots,{\overline l}$ the
indices of the reflections associated to the first copy of $W$, and
$1,\dots,l$ the indices of the reflections associated to the second copy.
A double (reduced) word of $(u,v)$ is nothing else that a shuffle of
a (reduced) word of $u$, written in the alphabet $[{\overline 1},{\overline l}]$,
and of a (reduced) word of  $v$, written in the alphabet $[1,l]$.
We denote $D(u,v)$ and $R(u,v)$ the set of double words and double reduced words of $(u,v)$, respectively.
(Therefore we have the inclusion $R(u,v)\subset D(u,v)$.)
In particular, let $\mathbf 1\in R(1,1)$ be the double word associated
to the unity of $W\times W$.

Let $w\in W$ and $\mathbf i$ be a word of $w$. Following \cite[Section 7]{BZtensor},
we call a \emph{$d$-move} (also named "$\braid$-move" in \cite{BB})
a transformation of $\mathbf i$ that replaces $d$ consecutive entries
$i,j,i,j, \ldots$ by $j,i,j,i,\ldots$,
for some $i$ and $j$ such that $d$ is the order of $s_i s_j\,$, that is:
if $a_{ij}a_{ji} = 0$ (resp.\ $1,2,3$), then $d=2$ (resp.\ $3,4,6$).
We call \emph{$nil$-move} a transformation of $\mathbf i$ that replaces
the string $i\ i$ by the elementary string $i$ for some $i\in[1,l]$.
The following result, called \emph{the Tits theorem}, is standard and can be
found, for example, in \cite[Theorem 3.3.1]{BB}.

\begin{thm}\label{thm:Tits}Let $(W,S)$ be a Coxeter group and $w\in W$.
\begin{itemize}
\item
Any expression for $w$ can be transformed into a
reduced expression for $w$ by a sequence of $nil$-moves and $d$-moves.
\item
Every two reduced words for $w$ can be connected by a sequence of $d$-moves.
\end{itemize}
\end{thm}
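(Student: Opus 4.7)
The plan is to prove both statements by induction on $\ell(w)$, establishing (2) first and then deriving (1) by combining the deletion condition in $W$ with repeated use of $d$- and nil-moves.

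For (2), fix two reduced expressions $\mathbf i=(i_1,\ldots,i_r)$ and $\mathbf j=(j_1,\ldots,j_r)$ for $w$, where $r=\ell(w)$. The cases $r\le 1$ are immediate. If $i_1=j_1$, the suffixes $(i_2,\ldots,i_r)$ and $(j_2,\ldots,j_r)$ are reduced words for $s_{i_1}w$ and induction applies, lifting the resulting sequence of $d$-moves back to $\mathbf i$ and $\mathbf j$. The essential case is $i_1=i\neq j=j_1$. Let $m$ denote the order of $s_is_j$. Since $\ell(s_iw)<\ell(w)$ and $\ell(s_jw)<\ell(w)$, an analysis inside the dihedral parabolic $\langle s_i,s_j\rangle$ (equivalently, one invocation of the strong exchange condition) produces a reduced expression $\mathbf a$ for $w$ whose first $m$ letters form the alternating word $i,j,i,\ldots$ of length $m$; applying the braid relation in place yields the reduced expression $\mathbf b$ starting with $j,i,j,\ldots$. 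A single $d$-move links $\mathbf a$ to $\mathbf b$, and the ``same first letter'' case combined with induction on $\ell(w)$ connects $\mathbf i$ to $\mathbf a$ and $\mathbf b$ to $\mathbf j$ by $d$-moves.

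For (1), given any word $\mathbf i$ representing $w$, induct on the excess $|\mathbf i|-\ell(w)$. If the excess is zero, $\mathbf i$ is already reduced. Otherwise, the deletion condition for Coxeter groups yields positions $p<q$ such that removing both $i_p$ and $i_q$ still leaves a word for $w$. A secondary induction on $q-p$, performed by pushing the letter at position $p$ rightward through the intermediate entries by means of $d$-moves in the relevant rank-$2$ parabolics, transforms $\mathbf i$ into a word in which the two letters in question occupy adjacent positions carrying the same index; a nil-move then shortens the word, and the outer induction concludes.

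The main obstacle, common to both parts, is the dihedral lemma that allows one to bring a prescribed alternating pattern $i,j,i,\ldots$ of length $m_{ij}$ into a specified position of a (partially) reduced word by means of $d$-moves alone. Once this lemma is established by a direct computation in the rank-$2$ Coxeter group $\langle s_i,s_j\rangle$, together with careful bookkeeping of how the moves commute with the surrounding entries, both statements of the theorem follow by the inductive schemes described above.
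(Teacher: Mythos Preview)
The paper does not supply a proof of this theorem; it simply records it as standard and refers to \cite[Theorem~3.3.1]{BB}. So there is no argument in the paper to compare against, and your proposal has to be judged on its own.

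Your treatment of part~(2) is the standard one and is correct: once one knows that two left descents $s_i,s_j$ force $w$ to admit a reduced expression beginning with the longest element of the dihedral parabolic $\langle s_i,s_j\rangle$, the induction on $\ell(w)$ you describe goes through without difficulty.

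Part~(1), however, has a genuine gap. The phrase ``pushing the letter at position $p$ rightward through the intermediate entries by means of $d$-moves in the relevant rank-$2$ parabolics'' is not a well-defined operation: a $d$-move rewrites an entire alternating block $iji\cdots$ of length $m_{ij}$ as $jij\cdots$, and unless $m_{ij}=2$ there is no move that slides a single letter past its neighbour. Your secondary induction on $q-p$ therefore stalls at the very first step whenever $s_{i_p}$ and $s_{i_{p+1}}$ do not commute; the ``dihedral lemma'' you invoke at the end is tailored to part~(2) and does not supply the missing mechanism here. The clean fix is to \emph{use} part~(2), which you have already established: take the least $k$ with $i_1\cdots i_k$ non-reduced; the exchange condition produces $j<k$ for which $i_j\cdots i_{k-1}$ and $i_{j+1}\cdots i_k$ are both \emph{reduced} expressions of the same element; by part~(2) the first can be rewritten into the second by $d$-moves alone, and performing these moves inside $\mathbf i$ creates the adjacent pair $i_k\,i_k$, to which a nil-move applies. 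Induction on the length of $\mathbf i$ then finishes the argument.
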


Let us say that a letter $i$ of $\mathbf i$ is \emph{positive} if $i\in[1,l]$
and \emph{negative} if $i\in[\overline 1,\overline l]$; a double word $\mathbf i$
will be said to be \emph{positive} (resp. \emph{negative}) if all its letters are
positive (resp. negative). Considering the group $W \times W$,
we conclude that every two double reduced words $\mathbf{i}, \mathbf{j} \in
R(u,v)$ can be obtained from each other by a sequence of
\emph{generalized $d$-moves}, listed in
Figure~\ref{fig:d-moves}. They contains
\begin{itemize}
\item
\emph{positive} $d$-moves for the alphabet $[1,l]$;
\item
\emph{negative} $d$-moves for the alphabet $[\overline 1,\overline l]$;
\item
\emph{mixed} $2$-moves
that interchange two consecutive indices of opposite signs.
\end{itemize}
In the same way, any double word for the couple $(u,v)\in W\times W$ can
be transformed to give any double reduced word of $R(u,v)$ by a sequence of
\emph{generalized $dn$-moves}, including
\begin{itemize}
\item
\emph{positive} $nil$-moves for the alphabet $[1,l]$;
\item
\emph{negative} $nil$-moves for the alphabet $[\overline 1,\overline l]$;
\item
generalized $d$-moves.
\end{itemize}
Positive and negative $\nil$-moves are given by Figure \ref{fig:dn-moves}.
%The generalized $dn$-moves are obtained by considering Figure \ref{fig:d-moves}
%and Figure \ref{fig:dn-moves}.
\begin{figure}[ht]
$$\begin{array}{cccccccc}
&\dots\ i\ \overline{j}\ \dots & \rightsquigarrow
& \dots\ \overline{j}\ i\ \dots\\
         \mbox{or}
        & \dots\ \overline{i}\ j\ \dots & \rightsquigarrow
        & \dots\ j\ \overline{i}\ \dots& \mbox{for every }i,j\in[1,l]
        &\\
        &\\
&\dots\ i\ j\ \dots & \rightsquigarrow & \dots\ j\ i\ \dots\\
        \mbox{or}
        & \dots\ \overline i\ \overline j\ \dots
         & \rightsquigarrow & \dots\ \overline j\ \overline i\ \dots
        & \mbox{when }a_{ij}a_{ji}=0\\
        &\\
&\dots\ i\ j\ i\ \dots & \rightsquigarrow & \dots\ j\ i\ j\ \dots\\
        \mbox{or}
        & \dots\ \overline i\ \overline j\ \overline i\ \dots
        & \rightsquigarrow & \dots\ \overline j\ \overline i\ \overline j\ \dots
        & \mbox{when }a_{ij}a_{ji}=1\\
        &\\
&\dots\ i\ j\ i\ j\ \dots & \rightsquigarrow & \dots\ j\ i\ j\ i\ \dots\\
        \mbox{or}
        & \dots\ \overline i\ \overline j\ \overline i\ \overline j\ \dots
        & \rightsquigarrow & \dots\ \overline j\ \overline i\
        \overline j\ \overline i\ \dots
        & \mbox{when }a_{ij}a_{ji}=2\\
        &\\
&\dots\ i\ j\ i\ j\ i\ j\ \dots & \rightsquigarrow
& \dots\ j\ i\ j\ i\ j\ i\ \dots\\
        \mbox{or}
        & \dots\ \overline i\ \overline j\ \overline i\
        \overline j\ \overline i\ \overline j\ \dots
        & \rightsquigarrow & \dots\ \overline j\ \overline i\ \overline j\ \overline i\ \overline j\ \overline i\ \dots
        & \mbox{when }a_{ij}a_{ji}=3
\end{array}
$$
\vspace{-.1in}
\label{fig:d-moves}
\caption{The generalized $d$-moves}
\end{figure}

\begin{figure}[ht]
$$\begin{array}{cccccc}
&\dots\ i\ i\ \dots & \rightsquigarrow
& \dots\ i\ \dots\\
         \mbox{and}
        & \dots\ \overline{i}\ \overline{i}\ \dots & \rightsquigarrow
        & \dots\ \overline{i}\ \dots& \mbox{for every }i\in[1,l]
\end{array}$$
\vspace{-.1in}
\label{fig:dn-moves}
\caption{The positive and negative $nil$-moves}
\end{figure}

\subsection{Quivers and seeds associated to a double word}
\label{section:graph} We then reformulate the procedure to attach
a seed to a double word given by \cite{FGcluster} via gluing
on quivers in the spirit of \cite[Section 13]{FST}.

\subsubsection{Dynkin quivers}
Let $\Gamma_{\mathfrak{g}}$ be the Dynkin diagram of $\mathfrak{g}$, denote its vertex
by $(^{1}_{0})\dots (^{l}_{0})$ and choose some $i\in[1,l]$.
The \emph{elementary Dynkin quiver} $\Gamma_{\mathfrak g}(i)$ is the directed graph
obtained from $\Gamma$ by the following procedure.
\begin{itemize}
\item
Create a new vertex $(^{i}_{1})$ and
call \emph{$i$-vertices} the vertices $(^{i}_{0}), (^{i}_{1})$.
and \emph{$j$-vertex} the vertex $(^{j}_{0})$ for any $j\neq i$.
The vertex $(^{i}_{0})$ and the $j$-vertices are then called \emph{left outlets},
whereas $(^{i}_{1})$ and these $j$-vertices are called \emph{right outlets}.
\item
Erase all the edges of the Dynkin diagram except the ones involving the vertex
$(^{i}_{0})$.
\item
Rely the vertices $(^{i}_{1})$ to the vertex $(^{i}_{0})$ by an arrow such that
$(^{i}_{1})$ is the tail of the arrow and $(^{i}_{0})$ its head.
\item
Create as many arrows between $(^{i}_{1})$ and each remaining vertices
there are between $(^{i}_{0})$ and this remaining vertex. The heads and the
tails of the arrows are directed in such a way that the triangle(s) thus
created is/are oriented.
\end{itemize}
The elementary Dynkin quiver $\Gamma_{\mathfrak g}(\overline i)$
is then obtained from $\Gamma_{\mathfrak g}(i)$ by reversing the orientation
of all the arrows. To these elementary Dynkin quivers, we add the
\emph{trivial Dynkin quiver} $\Gamma_{\mathfrak g}(\mathbf 1)$ obtained
from the Dynkin diagram by removing all the edges; as above the vertices of
this quiver are labeled by $(^{i}_{0})$, with $i\in[1,l]$. (For the trivial
Dynkin quiver $\Gamma_{\mathfrak g}(\mathbf 1)$, the set of left outlets
is, by definition, the set of right outlets.)
Figure \ref{fig:block} and Figure \ref{fig:block2} describe respectively
the elementary Dynkin quivers of the cases $\mathfrak{g}=A_3$ and $\mathfrak{g}=B_2$.
Let us stress that in all our examples, outlets will be marked by unfilled circles.
Other conventions in our drawing will be the following: the type of vertices is given
by a kind of height function where vertices of type $1$ are at the top of the quiver
and vertices of type $l$ at the bottom; moreover, left outlets will always be drawn
at the left of right outlets if both are $k$-vertices but different.

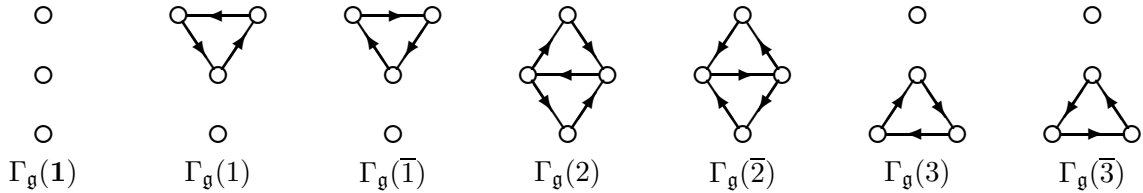
\begin{figure}[htbp]
\begin{center}
\setlength{\unitlength}{1.5pt}
\begin{picture}(20,33)(0,-12)
\thicklines
\put(10,30){\circle{4}}
\put(10,0){\circle{4}}
\put(10,15){\circle{4}}
\put(10,-10){\makebox(0,0){$\Gamma_{\mathfrak{g}}(\mathbf 1)$}}
\end{picture}
\qquad\quad
\begin{picture}(20,33)(0,-12)
\thicklines
\put(2,30){\line(1,0){16}}
\put(18,30){\vector(-1,0){11}}
\put(1,28.5){\line(2,-3){8}}
\put(1,28.5){\vector(2,-3){6}}
\put(11,16.5){\vector(2,3){6}}
\put(11,16.5){\line(2,3){8}}
\put(0,30){\circle{4}}
\put(20,30){\circle{4}}
\put(10,0){\circle{4}}
\put(10,15){\circle{4}}
\put(10,-10){\makebox(0,0){$\Gamma_{\mathfrak{g}}(1)$}}
\end{picture}
\qquad\quad
\begin{picture}(20,33)(0,-12)
\thicklines
\put(2,30){\line(1,0){16}}
\put(2,30){\vector(1,0){11}}
\put(1,28.5){\line(2,-3){8}}
\put(19,28.5){\vector(-2,-3){6}}
\put(9,16.5){\vector(-2,3){6}}
\put(11,16.5){\line(2,3){8}}
\put(0,30){\circle{4}}
\put(20,30){\circle{4}}
\put(10,0){\circle{4}}
\put(10,15){\circle{4}}
\put(10,-10){\makebox(0,0){$\Gamma_{\mathfrak{g}}(\overline 1)$}}
\end{picture}
\qquad\quad
\begin{picture}(20,48)(0,-27)
\thicklines
\multiput(1,1.5)(10,-15){2}{\line(2,3){8}}
\put(1,1.5){\vector(2,3){5}}
\put(11,13.5){\vector(2,-3){6}}
\multiput(1,-1.5)(10,15){2}{\line(2,-3){8}}
\put(1,-1.5){\vector(2,-3){5}}
\put(11,-13.5){\vector(2,3){6}}
\multiput(10,15)(0,-30){2}{\circle{4}}
\multiput(0,0)(20,0){2}{\circle{4}}
\put(2,0){\line(1,0){16}}
\put(18,0){\vector(-1,0){11}}
\put(10,-25){\makebox(0,0){$\Gamma_{\mathfrak{g}}(2)$}}
\end{picture}
\qquad\quad
\begin{picture}(20,48)(0,-27)
\thicklines
\multiput(1,1.5)(10,-15){2}{\line(2,3){8}}
\put(19,1.5){\vector(-2,3){5}}
\put(9,13.5){\vector(-2,-3){6}}
\multiput(1,-1.5)(10,15){2}{\line(2,-3){8}}
\put(19,-1.5){\vector(-2,-3){5}}
\put(9,-13.5){\vector(-2,3){6}}
\multiput(10,15)(0,-30){2}{\circle{4}}
\multiput(0,0)(20,0){2}{\circle{4}}
\put(2,0){\line(1,0){16}}
\put(2,0){\vector(1,0){11}}
\put(10,-25){\makebox(0,0){$\Gamma_{\mathfrak{g}}(\overline 2)$}}
\end{picture}
\qquad\quad
\begin{picture}(20,33)(0,-12)
\thicklines
\put(2,0){\line(1,0){16}}
\put(18,0){\vector(-1,0){11}}
\put(1,1.5){\line(2,3){8}}
\put(1,1.5){\vector(2,3){6}}
\put(11,13.5){\vector(2,-3){6}}
\put(19,1.5){\line(-2,3){8}}
\multiput(0,0)(20,0){2}{\circle{4}}
\put(10,30){\circle{4}}
\put(10,15){\circle{4}}
\put(10,-10){\makebox(0,0){$\Gamma_{\mathfrak{g}}(3)$}}
\end{picture}
\qquad\quad
\begin{picture}(20,33)(0,-12)
\thicklines
\put(2,0){\line(1,0){16}}
\put(2,0){\vector(1,0){11}}
\put(1,1.5){\line(2,3){8}}
\put(19,1.5){\vector(-2,3){6}}
\put(9,13.5){\vector(-2,-3){6}}
\put(19,1.5){\line(-2,3){8}}
\multiput(0,0)(20,0){2}{\circle{4}}
\put(10,30){\circle{4}}
\put(10,15){\circle{4}}
\put(10,-10){\makebox(0,0){$\Gamma_{\mathfrak{g}}(\overline 3)$}}
\end{picture}
\end{center}
\vspace{-.1in}
\caption{Elementary Dynkin quivers for $\mathfrak{g}=A_3$}
\label{fig:block}
\end{figure}

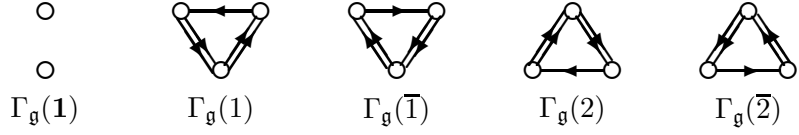
\begin{figure}[htbp]
\begin{center}
\setlength{\unitlength}{1.5pt}
\begin{picture}(20,33)(0,-12)
\thicklines
\put(10,15){\circle{4}}
\put(10,0){\circle{4}}
\put(10,-10){\makebox(0,0){$\Gamma_{\mathfrak{g}}(\mathbf 1)$}}
\end{picture}
\qquad\quad
\begin{picture}(20,33)(0,-12)
\thicklines
\put(2,15){\line(1,0){16}}
\put(18,15){\vector(-1,0){11}}
\put(1,13){\line(2,-3){7.5}}
\put(1,13){\vector(2,-3){6}}
\put(-1,13){\line(2,-3){9}}
\put(-1,13){\vector(2,-3){6.5}}
\put(11,1.5){\vector(2,3){6}}
\put(12,0){\vector(2,3){6.5}}
\put(12,0){\line(2,3){8.5}}
\put(11,1.5){\line(2,3){8}}
\put(0,15){\circle{4}}
\put(20,15){\circle{4}}
\put(10,0){\circle{4}}
\put(10,-10){\makebox(0,0){$\Gamma_{\mathfrak{g}}(1)$}}
\end{picture}
\qquad\quad
\begin{picture}(20,33)(0,-12)
\thicklines
\put(2,15){\line(1,0){16}}
\put(2,15){\vector(1,0){11}}
\put(19,13.5){\line(-2,-3){8}}
\put(19,13.5){\vector(-2,-3){6}}
\put(21,13.5){\line(-2,-3){8.5}}
\put(21,13.5){\vector(-2,-3){6.5}}
\put(9,1.5){\vector(-2,3){6}}
\put(9,1.5){\line(-2,3){8}}
\put(7.5,0.5){\vector(-2,3){6}}
\put(7.5,0.5){\line(-2,3){8.5}}
\put(0,15){\circle{4}}
\put(20,15){\circle{4}}
\put(10,0){\circle{4}}
\put(10,-10){\makebox(0,0){$\Gamma_{\mathfrak{g}}(\overline 1)$}}
\end{picture}
\qquad\quad
\begin{picture}(20,33)(0,-12)
\thicklines
\put(2,0){\line(1,0){16}}
\put(18,0){\vector(-1,0){11}}
\put(1,1.5){\line(2,3){8}}
\put(1,1.5){\vector(2,3){6}}
\put(-1,1.5){\line(2,3){9}}
\put(-1,1.5){\vector(2,3){6.5}}
\put(11,13.5){\vector(2,-3){6}}
\put(11,13.5){\line(2,-3){8}}
\put(12,14.5){\vector(2,-3){6}}
\put(12,14.5){\line(2,-3){8}}
\multiput(0,0)(20,0){2}{\circle{4}}
\put(10,15){\circle{4}}
\put(10,-10){\makebox(0,0){$\Gamma_{\mathfrak{g}}(2)$}}
\end{picture}
\qquad\quad
\begin{picture}(20,33)(0,-12)
\thicklines
\put(2,0){\line(1,0){16}}
\put(2,0){\vector(1,0){11}}
\put(19,1.5){\line(-2,3){8}}
\put(19,1.5){\vector(-2,3){6}}
\put(20.5,2){\line(-2,3){8}}
\put(20.5,2){\vector(-2,3){6}}
\put(9,13.5){\vector(-2,-3){6}}
\put(9,13.5){\line(-2,-3){8}}
\put(8,15){\vector(-2,-3){6.5}}
\put(8,15){\line(-2,-3){8.5}}
\multiput(0,0)(20,0){2}{\circle{4}}
\put(10,15){\circle{4}}
\put(10,-10){\makebox(0,0){$\Gamma_{\mathfrak{g}}(\overline 2)$}}
\end{picture}
\end{center}
\vspace{-.1in}
\caption{Elementary Dynkin quivers for $\mathfrak{g}=B_2$}
\label{fig:block2}
\end{figure}

A quiver $\Gamma$ is called a \emph{Dynkin quiver} if it can
be obtained from a collection of disjoint elementary Dynkin quivers,
coming from the same Dynkin diagram, by the following procedure, called
\emph{amalgamation}. Let $\mathbf i=i_1\dots i_n$ be a double word and
$\Gamma_{\mathfrak g}(i_1),\dots,\Gamma_{\mathfrak g}(i_n)$ be the associated
elementary Dynkin quivers. For every $k\in[1,l]$, we put a total order, called the $k$-order
on the set of the $k$-vertices of all the elementary Dynkin quivers in such a way that
$(^{i_j}_{0})<(^{i_j}_{1})$ for every $j$, and that every $k$-vertices of
$\Gamma_{\mathfrak g}(i_l)$ are lower than the $k$-vertices
of $\Gamma_{\mathfrak g}(i_j)$ when $l<j$.
\begin{itemize}
\item
For every $j\in[1,n-1]$, glue every right outlets of $\Gamma_{\mathfrak g}(i_j)$
to every left outlets of $\Gamma_{\mathfrak g}(i_{j+1})$
in such a way that $k$-vertices are glued together;
\item
relabel each $k$-vertex following the $k$-order, that is $(^{k}_{i})$
is the $i^{\th}$ $k$-vertex, with an increasing numbering, of the set of all
$k$-vertices given by the $k$-order;
\item
redefine the set of outlets: left outlets (resp. right outlets) are vertices
$(^{k}_{i})$ such that $(^{k}_{i})\leq (^{k}_{j})$ (resp. $(^{k}_{j})\leq (^{k}_{i})$)
for every $j$;
\item
if $\Gamma$ contains a pair of edges connecting the same pair of
vertices but going in opposite directions, then remove each such a
pair of edges; (As a result, all arrows linking two vertices point
now in the same direction.)
\item
for every $k_1\neq k_2$, erase arrows linking $k_1$-vertices to
$k_2$-vertices until there is not more arrows linking these vertices
in the resulting graph than edges linking the $k_1^{\th}$ vertex and the
$k_2^{\th}$-vertex in the corresponding Dynkin diagram.
\end{itemize}
The amalgamation process is certainly easier to figure out on examples.
Figure \ref{fig:amalgblock3'}, Figure \ref{fig:amalgblock3''} and
Figure \ref{fig:amalgblock3} describe some amalgamations in the case
$\mathfrak{g}=A_3$.
%(The labeling, obvious, is left to the reader.)

\begin{figure}[htbp]
\begin{center}
\setlength{\unitlength}{1.5pt}
\begin{picture}(20,40)(0,-27)
\thicklines
{\oval(30,30)[l]}
\put(6,13.5){\vector(2,-3){5}}
\put(16,1.5){\vector(2,3){6}}
\put(6,13.5){\line(2,-3){8}}
\put(16,1.5){\line(2,3){8}}
\put(5,15){\circle{4}}
\put(25,15){\circle{4}}
\put(15,0){\circle{4}}
\put(15,-15){\circle{4}}
\put(7,15){\line(1,0){16}}
\put(23,15){\vector(-1,0){11}}
\put(10,-25){\makebox(0,0){$\Gamma_{\mathfrak{g}}(1)$}}
\end{picture}
\quad
\begin{picture}(10,40)(0,-27)
\thicklines
\put(0,0){$,$}
\end{picture}
\begin{picture}(20,40)(0,-27)
\thicklines
\put(20,0){\oval(30,30)[r]}
\multiput(1,1.5)(10,-15){2}{\line(2,3){8}}
\put(19,1.5){\vector(-2,3){5}}
\put(9,13.5){\vector(-2,-3){6}}
\multiput(1,-1.5)(10,15){2}{\line(2,-3){8}}
\put(19,-1.5){\vector(-2,-3){5}}
\put(9,-13.5){\vector(-2,3){6}}
\multiput(10,15)(0,-30){2}{\circle{4}}
\multiput(0,0)(20,0){2}{\circle{4}}
\put(2,0){\line(1,0){16}}
\put(2,0){\vector(1,0){11}}
\put(10,-25){\makebox(0,0){$\Gamma_{\mathfrak{g}}(\overline 2)$}}
\end{picture}
\qquad\qquad
\begin{picture}(20,40)(0,-27)
\thicklines
\put(0,0){\vector(1,0){20}}
\end{picture}
\quad
\begin{picture}(20,40)(0,-27)
\thicklines
\put(0,15){\circle{4}}
\put(20,15){\circle{4}}
\put(0,0){\circle{4}}
\put(20,0){\circle{4}}
\put(10,-15){\circle{4}}
\put(2,15){\line(1,0){16}}
\put(18,15){\vector(-1,0){11}}
\put(2,0){\line(1,0){16}}
\put(2,0){\vector(1,0){11}}
\put(0,13){\line(0,-1){11}}
\put(0,13){\vector(0,-1){9}}
\put(20,2){\line(0,1){11}}
\put(20,2){\vector(0,1){9}}
\put(9,-13.5){\vector(-2,3){6}}
\put(9,-13.5){\line(-2,3){8}}
\put(19,-1.5){\vector(-2,-3){5}}
\put(19,-1.5){\line(-2,-3){8}}
\put(10,-25){\makebox(0,0){$\Gamma_{\mathfrak{g}}(1\overline 2)$}}
\end{picture}
\end{center}
\vspace{-.1in}
\caption{The amalgamation $(\Gamma_{\mathfrak{g}}(1)
,\Gamma_{\mathfrak{g}}(\overline 2))\mapsto \Gamma_{\mathfrak{g}}(1\overline 2)$
for $\mathfrak{g}=A_3$}
\label{fig:amalgblock3'}
\end{figure}

\begin{figure}[htbp]
\begin{center}
\setlength{\unitlength}{1.5pt}
\begin{picture}(20,40)(0,-27)
\thicklines
{\oval(30,30)[l]}
\multiput(1,1.5)(10,-15){2}{\line(2,3){8}}
\put(1,1.5){\vector(2,3){5}}
\put(11,13.5){\vector(2,-3){6}}
\multiput(1,-1.5)(10,15){2}{\line(2,-3){8}}
\put(1,-1.5){\vector(2,-3){5}}
\put(11,-13.5){\vector(2,3){6}}
\multiput(10,15)(0,-30){2}{\circle{4}}
\multiput(0,0)(20,0){2}{\circle{4}}
\put(2,0){\line(1,0){16}}
\put(18,0){\vector(-1,0){11}}
\put(10,-25){\makebox(0,0){$\Gamma_{\mathfrak{g}}(2)$}}
\end{picture}
\quad
\begin{picture}(10,40)(0,-27)
\thicklines
\put(0,0){$,$}
\end{picture}
\begin{picture}(20,40)(0,-27)
\thicklines
\put(20,0){\oval(30,30)[r]}
\multiput(1,1.5)(10,-15){2}{\line(2,3){8}}
\put(1,1.5){\vector(2,3){5}}
\put(11,13.5){\vector(2,-3){6}}
\multiput(1,-1.5)(10,15){2}{\line(2,-3){8}}
\put(1,-1.5){\vector(2,-3){5}}
\put(11,-13.5){\vector(2,3){6}}
\multiput(10,15)(0,-30){2}{\circle{4}}
\multiput(0,0)(20,0){2}{\circle{4}}
\put(2,0){\line(1,0){16}}
\put(18,0){\vector(-1,0){11}}
\put(10,-25){\makebox(0,0){$\Gamma_{\mathfrak{g}}(2)$}}
\end{picture}
\qquad\qquad
\begin{picture}(20,40)(0,-27)
\thicklines
\put(0,0){\vector(1,0){20}}
\end{picture}
\quad
\begin{picture}(20,40)(0,-27)
\thicklines
\put(21,-13.5){\line(3,2){18}}
\put(21,-13.5){\vector(3,2){12}}
\put(21,13.5){\line(3,-2){18}}
\put(21,13.5){\vector(3,-2){12}}
\put(1,1.5){\line(3,2){18}}
\put(1,1.5){\vector(3,2){12}}
\put(1,-1.5){\line(3,-2){18}}
\put(1,-1.5){\vector(3,-2){12}}
\put(20,15){\circle{4}}
\put(20,-15){\circle{4}}
\put(0,0){\circle{4}}
\put(20,0){\circle*{4}}
\put(40,0){\circle{4}}
\put(2,0){\line(1,0){16}}
\put(18,0){\vector(-1,0){11}}
\put(38,0){\line(-1,0){16}}
\put(38,0){\vector(-1,0){11}}
\put(20,-25){\makebox(0,0){$\Gamma_{\mathfrak{g}}(2 2)$}}
\end{picture}
\end{center}
\vspace{-.1in}
\caption{The amalgamation $(\Gamma_{\mathfrak{g}}(2)
,\Gamma_{\mathfrak{g}}( 2))\mapsto \Gamma_{\mathfrak{g}}(2 2)$
for $\mathfrak{g}=A_3$}
\label{fig:amalgblock3''}
\end{figure}

\begin{figure}[htbp]
\begin{center}
\setlength{\unitlength}{1.5pt}
\begin{picture}(20,40)(0,-27)
\thicklines
{\oval(30,30)[l]}
\multiput(1,1.5)(10,-15){2}{\line(2,3){8}}
\put(1,1.5){\vector(2,3){5}}
\put(11,13.5){\vector(2,-3){6}}
\multiput(1,-1.5)(10,15){2}{\line(2,-3){8}}
\put(1,-1.5){\vector(2,-3){5}}
\put(11,-13.5){\vector(2,3){6}}
\multiput(10,15)(0,-30){2}{\circle{4}}
\multiput(0,0)(20,0){2}{\circle{4}}
\put(2,0){\line(1,0){16}}
\put(18,0){\vector(-1,0){11}}
\put(10,-25){\makebox(0,0){$\Gamma_{\mathfrak{g}}(2)$}}
\end{picture}
\quad
\begin{picture}(10,40)(0,-27)
\thicklines
\put(0,0){$,$}
\end{picture}
\begin{picture}(20,40)(0,-27)
\thicklines
\put(20,0){\oval(30,30)[r]}
\multiput(1,1.5)(10,-15){2}{\line(2,3){8}}
\put(19,1.5){\vector(-2,3){5}}
\put(9,13.5){\vector(-2,-3){6}}
\multiput(1,-1.5)(10,15){2}{\line(2,-3){8}}
\put(19,-1.5){\vector(-2,-3){5}}
\put(9,-13.5){\vector(-2,3){6}}
\multiput(10,15)(0,-30){2}{\circle{4}}
\multiput(0,0)(20,0){2}{\circle{4}}
\put(2,0){\line(1,0){16}}
\put(2,0){\vector(1,0){11}}
\put(10,-25){\makebox(0,0){$\Gamma_{\mathfrak{g}}(\overline 2)$}}
\end{picture}
\qquad\qquad
\begin{picture}(20,40)(0,-27)
\thicklines
\put(0,0){\vector(1,0){20}}
\end{picture}
\quad
\begin{picture}(20,40)(0,-27)
\thicklines
\put(39,-1.5){\line(-3,-2){18}}
\put(39,-1.5){\vector(-3,-2){12}}
\put(39,1.5){\line(-3,2){18}}
\put(39,1.5){\vector(-3,2){12}}
\put(1,1.5){\line(3,2){18}}
\put(1,1.5){\vector(3,2){12}}
\put(1,-1.5){\line(3,-2){18}}
\put(1,-1.5){\vector(3,-2){12}}
\put(20,-13){\line(0,1){11}}
\put(20,-13){\vector(0,1){10}}
\put(20,2){\line(0,1){11}}
\put(20,13){\vector(0,-1){10}}
\put(20,15){\circle{4}}
\put(20,-15){\circle{4}}
\put(0,0){\circle{4}}
\put(20,0){\circle*{4}}
\put(40,0){\circle{4}}
\put(2,0){\line(1,0){16}}
\put(18,0){\vector(-1,0){11}}
\put(22,0){\line(1,0){16}}
\put(22,0){\vector(1,0){11}}
\put(20,-25){\makebox(0,0){$\Gamma_{\mathfrak{g}}(2\overline 2)$}}
\end{picture}
\end{center}
\vspace{-.1in}
\caption{The amalgamation $(\Gamma_{\mathfrak{g}}(2)
,\Gamma_{\mathfrak{g}}(\overline 2))\mapsto \Gamma_{\mathfrak{g}}(2\overline 2)$
for $\mathfrak{g}=A_3$}
\label{fig:amalgblock3}
\end{figure}

The resulting graph is called the \emph{Dynkin quiver} $\Gamma_{\mathfrak g}(\mathbf i)$.
Therefore, a Dynkin quiver is associated to every double word $\mathbf i$.
In particular, every elementary Dynkin quiver is a Dynkin quiver, and the elementary Dynkin quiver
$\Gamma_{\mathfrak g}(\mathbf 1)$ does not affect a gluing.

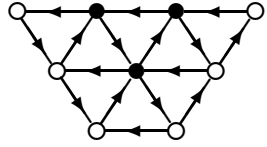
\begin{figure}[htbp]
\begin{center}
\setlength{\unitlength}{1.5pt}
\begin{picture}(20,40)(0,-27)
\thicklines
\put(1,1.7){\line(2,3){8}}
\put(1,1.7){\vector(2,3){6}}
\put(21,1.7){\line(2,3){8}}
\put(21,1.7){\vector(2,3){6}}
\put(41,1.7){\line(2,3){7.5}}
\put(41,1.7){\vector(2,3){6}}
\put(11,-13.3){\line(2,3){7.5}}
\put(11,-13.3){\vector(2,3){6}}
\put(31,-13.3){\line(2,3){7.5}}
\put(31,-13.3){\vector(2,3){6}}
\put(-9,13.3){\line(2,-3){7.5}}
\put(-9,13.3){\vector(2,-3){6}}
\put(11,13.3){\line(2,-3){7.5}}
\put(11,13.3){\vector(2,-3){6}}
\put(31,13.3){\line(2,-3){7.5}}
\put(31,13.3){\vector(2,-3){6}}
\put(1,-1.7){\line(2,-3){7.5}}
\put(1,-1.7){\vector(2,-3){6}}
\put(21,-1.7){\line(2,-3){7.5}}
\put(21,-1.7){\vector(2,-3){6}}
\put(10,15){\circle*{4}}
\put(30,15){\circle*{4}}
\put(-10,15){\circle{4}}
\put(50,15){\circle{4}}
\put(10,-15){\circle{4}}
\put(30,-15){\circle{4}}
\put(0,0){\circle{4}}
\put(20,0){\circle*{4}}
\put(40,0){\circle{4}}
\put(8,15){\vector(-1,0){11}}
\put(8,15){\line(-1,0){16}}
\put(28,15){\vector(-1,0){11}}
\put(28,15){\line(-1,0){16}}
\put(48,15){\vector(-1,0){11}}
\put(48,15){\line(-1,0){16}}
\put(18,0){\vector(-1,0){11}}
\put(18,0){\line(-1,0){16}}
\put(38,0){\vector(-1,0){11}}
\put(38,0){\line(-1,0){16}}
\put(28,-15){\vector(-1,0){11}}
\put(28,-15){\line(-1,0){16}}
\end{picture}
\end{center}
\vspace{-.1in}
\caption{The quiver $\Gamma_{\mathfrak g}(123121)$
for $\mathfrak{g}=A_3$}
\label{fig:amalgblock2}
\end{figure}

It is easy to see that the amalgamation is associative. In particular, the Dynkin
quivers $\Gamma_{\mathfrak g}(\mathbf i)$ and $\Gamma_{\mathfrak g}(\mathbf j)$
can be amalgamated to obtained the Dynkin quiver $\Gamma_{\mathfrak g}(\mathbf{ij})$.
Let us also remark that for every vertex $i$ of a Dynkin quiver $\Gamma$,
there exists $k\in[1,l]$ such that $i$ is a $k$-vertex. Such a $k$ is called the
\emph{vertex-type} of $i$ and is denoted $k(i)$. It is clear that the vertex-type
remains unchanged by the amalgamation. Moreover, for every $j\in [1,l]$, we denote
$N^j(\mathbf i)$ the number of vertices in $\Gamma_{\mathfrak g}(\mathbf i)$ whose
vertex-type is $j$. Stated otherwise, $N^j(\mathbf i)$ is the number of times
the letter $j$ or $\overline j$ appears in the double word $\mathbf i$.

\subsubsection{Seeds associated to double words}\label{section:seeddoublewords}
We derive the seeds constructed by Fock and Goncharov from
the previous Dynkin quivers in the following way.
Let $\mathbf i$ be a double word, and $\Gamma$ be the associated Dynkin quiver.
Let $\overline{B}(\mathbf i)=(\overline{b}_{ij})$ denote the skew-symmetric
matrix whose rows and columns are labeled
by the vertices of~$\Gamma$, and whose entry $\overline{b}_{ij}$
is equal to the number of edges going from $i$ to~$j$
minus the number of edges going from $j$ to~$i$.
The skew-symmetrizable matrix $B(\Gamma)=({b}_{ij})$ is obtained from
$\overline{B}(\mathbf i)$ by the following skew-symmetrizing formula,
involving the vertex-types $k(i)$ and $k(j)$, of $i$ and $j$:
$$d_{k(i)}{b}_{ij}=\overline{b}_{ij}=-\overline{b}_{ji}=-{b}_{ji}d_{k(j)}\ ,$$
where $d_1,\dots d_l$ are the set of non-zero natural numbers that symmetrize the
Cartan matrix.
Let $\mathbf i$ be a double word and $\Gamma_{\mathfrak g}(\mathbf i)$ be
its associated Dynkin quiver. The seed ${\mathbf I}(\mathbf i)=(I(\mathbf i),
I_0(\mathbf i), \varepsilon(\mathbf i), d(\mathbf i))$ is defined in the following
way.
\begin{itemize}
\item
The set $I(\mathbf i)$ is the set of vertices of $\Gamma_{\mathfrak g}(\mathbf i)$,
with a partial order induced by the $k$-orders on the set of $k$-vertices, the sets
$I_0^{\mathfrak R}(\mathbf i),I_0^{\mathfrak L}(\mathbf i)$ are respectively the set
of right outlets and left outlets of $\Gamma_{\mathfrak g}(\mathbf i)$,
and $I_0(\mathbf i)$ is the set of outlets.
Stated otherwise, the set $I(\mathbf i)$
(resp. $I_0^{\mathfrak L}(\mathbf i),I_0^{\mathfrak R}(\mathbf i)$ and $I_0(\mathbf i)$) is the set of all ordered
pairs $(^{j}_{k})$ such that $j\in[1,l]$, and $0\leq k \leq N^{j}({\mathbf i})$ (resp.
$k=0$, $k=N^{j}({\mathbf i})$, and $k\in\{0,N^{j}({\mathbf i})\}$), where $N^{j}({\mathbf i})$
is the number of times the letter $j$ or $\overline j$ appears in $\mathbf i$.
\item
The matrix $\varepsilon(\mathbf i)$ is given by a normalisation of $B(\mathbf i)$:
\begin{equation}\label{equ:b}
\varepsilon(\mathbf i)_{kl}=\left\{
\begin{array}{cc}
\displaystyle\frac{B(\mathbf i)_{kl}}{2}& \mbox{for } k\in I_0(\mathbf i)\mbox{ and } l\in I_0(\mathbf i);\\
\\
B(\mathbf i)_{kl}& \mbox{otherwise}.
\end{array}
\right.
\end{equation}
\item
The multiplier $d(\mathbf i)$ is given by associating to every vertex $j$ the Cartan symetrizer
of its type-vertex, that is:
$$d(\mathbf i)_{j}=d_{k(j)}\ .$$
\end{itemize}

It is easy to translate the amalgamation procedure at the level of seeds:
for every double words $\mathbf i,\mathbf j$,
the \emph{amalgamated seed} $(I(\mathbf i\mathbf j),I_0(\mathbf i\mathbf j),
\varepsilon({\mathbf i\mathbf j}),d({\mathbf i\mathbf j}))$ is defined
in the following way \cite{FGcluster}. The elements of the set
$d({\mathbf i\mathbf j})$ are equal to the corresponding Cartan symmetrizer
as above and the matrix $\varepsilon({\mathbf i\mathbf j})$ is given by

\begin{equation}\label{equ:amaleps}
\varepsilon({\mathbf i\mathbf j})_{\binom i{k}\binom j{l}}=\left\{
\begin{array}{llll}
{\varepsilon}({\mathbf{i}})_{\binom i{k}\binom j{l}}
&\mbox{if}\ k<N^i(\mathbf{i})\mbox{ and }l<N^j(\mathbf{i})\ ;\\
{\varepsilon}({\mathbf{i}})_{\binom i{k}\binom j{l}}
+ {\varepsilon}({\mathbf{j}})_{\binom i{0}\binom j{0}}
&\mbox{if}\ k=N^i(\mathbf{i})\mbox{ and }l=N^j(\mathbf{i})\ ;\\
{\varepsilon}({\mathbf{j}})_{\binom i{k-N^i(\mathbf{i})}
\binom j{l-N^j(\mathbf{i})}}&\mbox{if}\ k>N^i(\mathbf{i})\mbox{ and }l>N^j(\mathbf{i})\ ;\\
0&\mbox{otherwise}\ .
\end{array}
\right.
\end{equation}
In particular, for every $i\in[1,l]$ and $\mathbf i\in\{i,\overline{i}\}$,
the matrices $\varepsilon(\overline i)$ and $\varepsilon(i)$
have their entries labeled by the elements of $I(i)=I(\overline i)$ and are given by
the following equalities and zero otherwise.
\begin{equation}\label{equ:eps}
\begin{array}{cccccc}
\varepsilon(i)_{\binom i{1}\binom j{0}}=\displaystyle\frac{a_{ij}}{2}
=-\varepsilon(i)_{\binom i{0}\binom j{0}},
&&
\varepsilon(\overline i)_{\binom i{1}\binom j{0}}=-\displaystyle\frac{a_{ij}}{2}
=-\varepsilon(\overline i)_{\binom i{0}\binom j{0}}\ .
\end{array}
\end{equation}

\subsection{{Cluster $\mathcal X$-varieties} related to $(G,\pi_G)$}
\label{subsection:FGevaluattion}
We attach a seed $\mathcal X$-torus to each of the previous seeds.
We use them to describe the combinatorics underlying the Poisson geometry
of double Bruhat cells, as in  \cite{FGcluster}.
Let us recall the basis of $\mathfrak h$ given by (\ref{equ:hbasis}) and
the related group elements $H^i(x)$ given by (\ref{equ:defphi2}).
For every $\mathbf i\in\{\mathbf 1,i,\overline i\}$ we
denote ${\mathcal X}_{\mathbf i}$
the seed $\mathcal X$-torus associated to the elementary seed
$(I(\mathbf i),I_0(\mathbf i),\varepsilon({\mathbf i}),d({\mathbf i}))$ and
$\ev_{\mathbf i}:{\mathcal X}_{\mathbf i}\rightarrow G$ the related
evaluation map given by

\begin{align*}
\ev_{\mathbf 1}&\colon \mathbb C_{\neq 0}^{l} \longrightarrow G\hspace{-1,5cm}&&\colon\hspace{-1,5cm}&
\left(x_{\binom{1}{0}},\dots,x_{\binom{i}{0}},\dots,x_{\binom{l}{0}}\right)& \longmapsto \displaystyle\prod_{j}H^j(x_{\binom{j}{0}})\ ,\\
\ev_{i}&\colon \mathbb C_{\neq 0}^{l+1} \longrightarrow G\hspace{+1,5cm}&&\colon\hspace{-1,5cm}&
\left(x_{\binom{1}{0}},\dots,x_{\binom{i}{0}},x_{\binom{i}{1}},x_{\binom{i+1}{0}},\dots,x_{\binom{l}{0}}\right)
 & \longmapsto \displaystyle\prod_{j}H^j(x_{\binom{j}{0}})E^iH^i(x_{\binom{i}{1}})\ ,\\
\ev_{\overline{i}}&\colon \mathbb C_{\neq 0}^{l+1} \longrightarrow G\hspace{-0cm}&&\colon\hspace{-1,5cm} &
\left(x_{\binom{1}{0}},\dots,x_{\binom{i}{0}},x_{\binom{i}{1}},x_{\binom{i+1}{0}},\dots,x_{\binom{l}{0}}\right)
 & \longmapsto \displaystyle\prod_{j}H^j(x_{\binom{j}{0}})F^iH^i(x_{\binom{i}{1}})\ .
\end{align*}

\begin{prop}\cite[Proposition 3.11]{FGcluster}\label{prop:elem}
For every $i\in [1,l]$, $\mathbf i\in\{\mathbf 1,i,\overline{i}\}$
and $(u,v)\in W\times W$ such that $\mathbf i\in R(u,v)$, the evaluation map
$\ev_{\mathbf i}:{\mathcal X}_{\mathbf i}\longrightarrow (G^{u,v},\pi_G)$
is a Poisson birational isomorphism on a Zariski open set of $G^{u,v}$.
\end{prop}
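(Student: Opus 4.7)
The plan is to treat the three cases $\mathbf{i} \in \{\mathbf{1}, i, \overline{i}\}$ separately; the last two are symmetric under $E^i \leftrightarrow F^i$, $B \leftrightarrow B_-$, and the sign flip in~(\ref{equ:eps}). In each case, I reduce the proposition to four verifications: (a) the image of $\ev_{\mathbf{i}}$ lies in the asserted double Bruhat cell; (b) source and target have matching dimensions; (c) the map is birational, via an explicit rational inverse on a Zariski open subset; (d) the pullback of the Sklyanin bracket~(\ref{equ:Sbracket}) agrees with the log-canonical bracket determined by $\widehat{\varepsilon}(\mathbf{i})$.

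For $\mathbf{i} = \mathbf{1}$, the map $\ev_{\mathbf{1}}\colon (\mathbb{C}_{\neq 0})^l \to H$ is the algebraic isomorphism afforded by the basis $\{h^i\}$ of $\mathfrak{h}$, and $G^{1,1} = B \cap B_- = H$. The seed matrix $\varepsilon(\mathbf{1})$ vanishes since $\Gamma_{\mathfrak{g}}(\mathbf{1})$ has no edges, so $\mathcal{X}_{\mathbf{1}}$ carries the trivial Poisson bracket; on the target, $\pi_G$ restricts to zero on $H$ because $r = \sum e_\alpha \wedge f_\alpha$ lies in $\mathfrak{n} \wedge \mathfrak{n}_-$, which is Killing-orthogonal to $\mathfrak{h} \otimes \mathfrak{h}$, while gradients of $H$-functions take values in $\mathfrak{h}$. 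Hence (a)--(d) hold.

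For $\mathbf{i} = i$: (a) $E^i \in N \subset B$ gives $\ev_i(x) \in B$; the $\SL(2,\mathbb{C})$ Bruhat decomposition $\bigl(\begin{smallmatrix}1 & 1 \\ 0 & 1\end{smallmatrix}\bigr) \in N_- H \widehat{s} N_-$, transported via $\varphi_i$, yields $E^i \in B_- \widehat{s_i} B_-$; absorbing the diagonal factors (which lie in $B \cap B_-$) yields $\ev_i(x) \in B \cap B_- \widehat{s_i} B_- = G^{1, s_i}$. (b) Both sides have dimension $l+1$. (c) A generic $g \in G^{1, s_i}$ has a unique factorization $g = n_- t \widehat{s_i} m_-$ with $n_-, m_- \in N_-$ and $t \in H$, from which, combined with the $B$-decomposition of $g$, the coordinates $x_{(j,0)}$ and $x_{(i,1)}$ are recovered explicitly, furnishing the rational inverse. (d) The only non-trivial brackets on $\mathcal{X}_i$ involve $x_{(i,1)}$ and the $x_{(j,0)}$; their pullbacks, computed from~(\ref{equ:Sbracket}) using the $e_i \wedge f_i$ component of $r$ together with the explicit left- and right-invariant gradients of $x_{(i,1)}$, produce coefficients matching the half-integer entries of $\varepsilon(i)$ in~(\ref{equ:eps}), via the symmetrizability $\widehat{a}_{ij} = d_i a_{ij} = d_j a_{ji}$; the factor $1/2$ there reflects the frozen status of the outlets.

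The case $\mathbf{i} = \overline{i}$ follows by the symmetry noted above, with image in $B_- \cap B \widehat{s_i} B = G^{s_i, 1}$. The main obstacle is verification (d) in the case $\mathbf{i} = i$: one must carefully track the left- and right-invariant gradients of the coordinate functions through the Sklyanin formula~(\ref{equ:Sbracket}) and confirm that their pairings with the standard $r$-matrix reproduce precisely the coefficients $\widehat{\varepsilon}(i)_{\alpha\beta}$ of~(\ref{equ:eps}); once this explicit matrix computation is carried out, the remaining steps reduce to routine Bruhat-theoretic book-keeping.
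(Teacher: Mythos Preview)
The paper does not supply its own proof of this proposition: it is quoted verbatim from Fock--Goncharov \cite[Proposition~3.11]{FGcluster} and then used as input for the amalgamation construction. So there is no argument in the paper to compare your proposal against; your direct case-by-case strategy is precisely the kind of verification one would carry out to rederive the cited result from scratch, and the structure (a)--(d) you lay out is the right one.

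That said, as written your proposal is an outline rather than a proof, because step~(d)---the Poisson check---is explicitly deferred (``once this explicit matrix computation is carried out\dots''). Two points to sharpen if you intend to complete it. First, the non-trivial brackets on $\mathcal{X}_i$ are not only those involving $x_{\binom{i}{1}}$: by~(\ref{equ:eps}) one also has $\varepsilon(i)_{\binom{i}{0}\binom{j}{0}}=-a_{ij}/2$, so the pairs $\{x_{\binom{i}{0}},x_{\binom{j}{0}}\}$ with $j$ Dynkin-adjacent to $i$ must be matched as well; your sketch mentions only the gradients of $x_{\binom{i}{1}}$. Second, the assertion that only the $e_i\wedge f_i$ summand of $r$ contributes is morally correct---the image of $\ev_i$ sits inside the rank-one Levi $L_i$ generated by $H$ and $\varphi_i(\SL_2)$, and the standard Poisson structure restricts there---but this needs to be argued, not assumed: either show that $L_i$ is a Poisson--Lie subgroup so that the bracket on $G^{1,s_i}\subset L_i$ is computed from $r_{L_i}=e_i\wedge f_i$, or verify directly that the gradients of your coordinate functions pair trivially with $e_\alpha\wedge f_\alpha$ for $\alpha\neq\alpha_i$. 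Once those two points are pinned down the rest is indeed routine.
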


\begin{lemma}\cite{FGcluster}
The amalgamation procedure induces a Poisson homomorphism
$\mathfrak{m}:{\mathcal X}_{\mathbf i}\times{\mathcal X}_{\mathbf j}
\rightarrow{\mathcal X}_{\mathbf i\mathbf j}$ between the corresponding seed $\mathcal X$-tori
given by
\begin{equation}\label{equ:amal}
\begin{array}{ll}
&{z}_{\binom{i}{k}}  =  \left\{
\begin{array}{lll}
x_{\binom{i}{k}}&\mbox{if}\ 0\leq k<N^i(\mathbf{i})\ ;\\
x_{\binom{i}{k}}y_{\binom{i}{0}}&\mbox{if}\ k=N^i(\mathbf{i})\ ;\\
y_{\binom{i}{k-N^i(\mathbf i)}}&\mbox{if}\ N^i(\mathbf{i})<k\leq N^i(\mathbf{i})+N^i(\mathbf{j})\ ,
\end{array}
\right.\\
\end{array}
\end{equation}
where $x_i$, $y_j$ and $z_k$ are the associated variables.
\end{lemma}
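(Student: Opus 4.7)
The plan is to recognize the quiver-based amalgamation of Section~\ref{section:seeddoublewords} as a particular case of the general seed amalgamation of Definition~\ref{def:amalg}, and thereby reduce the lemma to the assertion of that definition that a seed amalgamation induces a Poisson homomorphism between the corresponding seed $\mathcal X$-tori, given by precisely the formula already written there.

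First I would identify the gluing set. The quiver amalgamation glues right outlets of $\Gamma_{\mathfrak g}(\mathbf i)$ to left outlets of $\Gamma_{\mathfrak g}(\mathbf j)$ by matching vertex-types, so the relevant $L$ is $\{\binom i{N^i(\mathbf i)}\}_{i\in[1,l]}\subset I_0(\mathbf i)$, identified with $\{\binom i{0}\}_{i\in[1,l]}\subset I_0(\mathbf j)$ through the vertex-type bijection. Both sides carry the multiplier $d_i$, so the compatibility of Definition~\ref{def:amalg} holds on $L$. Next I would match formula (\ref{equ:amaleps}) case by case with the formula for $\zeta_{ij}$ in Definition~\ref{def:amalg}: the first and third cases of (\ref{equ:amaleps}), covering indices interior to $\mathbf i$ or interior to $\mathbf j$, reproduce $\eta_{ij}$ and $\varepsilon_{ij}$ respectively; the second case, at the glued locus $L$, gives $\eta_{ij}+\varepsilon_{ij}$; and the fourth case, covering mixed indices, gives zero.

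The delicate point is the interplay with the normalization (\ref{equ:b}), which divides outlet--outlet entries of $B(\mathbf i)$ by two. When a glued vertex $\binom i{N^i(\mathbf i)}$ becomes interior in $\Gamma_{\mathfrak g}(\mathbf{ij})$, that is, when both $N^i(\mathbf i)>0$ and $N^i(\mathbf j)>0$, the sum of two half-entries correctly reassembles the unnormalized entry of $B(\mathbf{ij})$ demanded by (\ref{equ:b}); conversely, in the degenerate cases where the glued vertex remains an outlet of $\Gamma_{\mathfrak g}(\mathbf{ij})$, one of the two contributions vanishes and the $1/2$ factor is automatically preserved. One must also observe that the Dynkin-specific clean-up steps do not spoil this bookkeeping: removing antiparallel edges is transparent since $\overline B$ sees only the signed difference of arrow counts, and the truncation against Dynkin multiplicities does not occur on the glued locus $L$ as long as the amalgamation is taken between Dynkin quivers of the same underlying diagram.

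Once the quiver amalgamation is identified with a seed amalgamation in the sense of Definition~\ref{def:amalg}, formula (\ref{equ:amal}) is literally the map of that definition applied to our $L$: interior $\mathbf i$-vertices go to their $x$-coordinate, interior $\mathbf j$-vertices to their $y$-coordinate, and glued vertices to the product $x_i y_i$. The Poisson homomorphism property of $\mathfrak m$ then follows directly from the conclusion of Definition~\ref{def:amalg}. The main obstacle in carrying out this plan is therefore purely combinatorial bookkeeping around the normalization (\ref{equ:b}) and the two boundary cases $N^i(\mathbf i)=0$ and $N^i(\mathbf j)=0$; everything else is formal.
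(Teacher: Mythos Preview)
The paper does not actually prove this lemma: it is stated with a citation to \cite{FGcluster} and no argument is supplied. Your proposal is correct and is exactly the intended reduction---the double-word amalgamation of Section~\ref{section:seeddoublewords} is a specialization of the abstract seed amalgamation of Definition~\ref{def:amalg}, whose Poisson property is already asserted there (again deferred to \cite{FGcluster}); once that identification is made, formula~(\ref{equ:amal}) is verbatim the torus map of Definition~\ref{def:amalg} with $L=I_0^{\mathfrak R}(\mathbf i)\cong I_0^{\mathfrak L}(\mathbf j)$. Your discussion of the normalization~(\ref{equ:b}) and the degenerate cases $N^i(\mathbf i)=0$ or $N^i(\mathbf j)=0$ is more care than the paper itself takes, and is the only nontrivial point in the check.
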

%Notice that this amalgamated product is associative.
Now, let
$\mathbf i=i_1\dots i_k$ be a double word, ${\mathcal X}_{\mathbf i}$
be the seed $\mathcal X$-torus given by the associated amalgamation
$\mathfrak{m}:{\mathcal X}_{{i_1}}\times \dots\times{\mathcal X}_{{i_k}}\longrightarrow {\mathcal X}_{\mathbf i}$,
and $\mathbf z$ be the amalgamated variable $\mathfrak{m}(\mathbf{x_1},\dots,\mathbf{x_k})$.
We define the \emph{evaluation map}
\begin{equation}\label{equ:defev}
\begin{array}{ccc}\ev_{\mathbf i}:{\mathcal X}_{\mathbf i}\rightarrow G:
\mathbf z\mapsto \ev_{i_1}(\mathbf{x_1})\dots \ev_{i_k}(\mathbf{x_k})
&\mbox{where}&\mathbf z=\mathfrak{m}(\mathbf{x_1},\dots,\mathbf{x_k})\ .
\end{array}
\end{equation}
Using the multiplicative property of the Poisson-Lie group $(G,\pi_G)$,
we see that this evaluation is also a Poisson map.
So Proposition \ref{prop:elem} leads to the Poisson statement of
the following result.

\begin{thm}[\cite{FGcluster}]\label{thm:evG}For any $u,v\in W$ and $\mathbf i\in R(u,v)$ the map
$\ev_{\mathbf i}:{\mathcal X}_{\mathbf i}\rightarrow (G^{u,v},\pi_G)$ is a
Poisson birational isomorphism onto a Zariski open set of the double Bruhat cell
$G^{u,v}$.
\end{thm}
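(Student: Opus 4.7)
The plan is to prove the theorem by induction on the length $\ell(\mathbf i) = \ell(u) + \ell(v)$ of the double reduced word. The base case $\ell(\mathbf i) \leq 1$ covers precisely the elementary evaluations $\ev_{\mathbf 1}$, $\ev_i$ and $\ev_{\overline i}$, and is exactly the content of Proposition \ref{prop:elem}.

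For the inductive step, I would decompose $\mathbf i = \mathbf i' \cdot i_k$ where $\mathbf i' \in R(u', v')$ is a double reduced word with $\ell(\mathbf i') = \ell(\mathbf i) - 1$: concretely, if $i_k$ is a negative letter $\overline{j}$ then $(u', v') = (u s_j, v)$ with $\ell(u) = \ell(u s_j) + 1$, and if $i_k = j$ is positive then $(u', v') = (u, v s_j)$ with the symmetric length drop. By the definition (\ref{equ:defev}) of the evaluation map and the associativity of the amalgamation, one has
\begin{equation*}
\ev_{\mathbf i}(\mathbf z) = \ev_{\mathbf i'}(\mathbf x') \cdot \ev_{i_k}(\mathbf x'')\qquad\text{for any lift}\ \mathbf z = \mathfrak{m}(\mathbf x', \mathbf x'')\ ,
\end{equation*}
so $\ev_{\mathbf i}$ is essentially the multiplication $m_G\circ (\ev_{\mathbf i'} \times \ev_{i_k})$ descended along $\mathfrak{m}$. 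The inductive hypothesis takes care of $\ev_{\mathbf i'}$ and Proposition \ref{prop:elem} of $\ev_{i_k}$.

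For the Poisson statement, the Poisson-Lie property of $(G,\pi_G)$ makes $m_G\circ (\ev_{\mathbf i'} \times \ev_{i_k})$ a Poisson map from $\mathcal X_{\mathbf i'} \times \mathcal X_{i_k}$ into $G$; since this factors as $\ev_{\mathbf i} \circ \mathfrak{m}$ and the amalgamation $\mathfrak{m}$ is itself a dominant Poisson homomorphism (by the amalgamation lemma), the induced map $\ev_{\mathbf i}$ is Poisson on the image of $\mathfrak{m}$, and hence Poisson. Crucially, the descent through $\mathfrak{m}$ is consistent because the only fiber ambiguity of $\mathfrak{m}$ in vertex-type $i$ is the one-parameter scaling $(x_{(i, N^i(\mathbf i'))}, y_{(i,0)}) \mapsto (\lambda\, x_{(i, N^i(\mathbf i'))}, \lambda^{-1}\, y_{(i,0)})$, which is absorbed at the interface of the two evaluations by the elementary toral identity $H^i(a) H^i(b) = H^i(ab)$.

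For birationality, a direct dimension count gives $\dim \mathcal X_{\mathbf i} = \sum_j (N^j(\mathbf i)+1) = \ell(u) + \ell(v) + l = \dim G^{u,v}$ by \cite[Theorem 1.1]{FZtotal}, so it suffices to prove dominance of $\ev_{\mathbf i}$ onto $G^{u,v}$. This is where I expect the main obstacle: one must verify that the image of $\ev_{\mathbf i}$ actually sits inside $G^{u,v}$ (rather than degenerating into a lower Bruhat stratum) and that it exhausts an open subset. The point reduces to the multiplicativity of double Bruhat cells under reduced factorizations, i.e.\ that the product map $G^{u',v'} \times G^{u'',v''} \to G^{u,v}$ is dominant whenever $\ell(u) = \ell(u')+\ell(u'')$ and $\ell(v) = \ell(v')+\ell(v'')$; the fact that $\mathbf i \in R(u,v)$ is \emph{reduced} guarantees these length conditions at every stage of the induction. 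The remaining verifications (compatibility of the amalgamation matrix (\ref{equ:amaleps}) with the Sklyanin bracket, and tracking the exact placement of initial and terminal toral factors) are mechanical once these structural ingredients are in place.
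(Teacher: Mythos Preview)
Your Poisson argument is exactly what the paper does: it offers only the single sentence ``Using the multiplicative property of the Poisson-Lie group $(G,\pi_G)$, we see that this evaluation is also a Poisson map,'' invokes Proposition~\ref{prop:elem} for the elementary pieces, and then attributes the theorem in full to \cite{FGcluster} without supplying an argument for the birational part. Your inductive decomposition $\mathbf i=\mathbf i'\cdot i_k$ and the descent through $\mathfrak m$ simply make this sketch explicit, including the well-definedness check via $H^i(a)H^i(b)=H^i(ab)$.

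For the birational half you are therefore giving more than the paper does, and there is one genuine gap to close. The claim ``it suffices to prove dominance'' is false as stated: equal dimension together with dominance does not yield a birational isomorphism (the squaring map on $\mathbb C^{\times}$ is the standard counterexample). What you should carry through the induction is birationality itself, not merely dominance. Concretely, the $l$-dimensional fiber of $\mathfrak m$ that you identify (the scalings $(x_{\binom{i}{N^i(\mathbf i')}},y_{\binom{i}{0}})\mapsto(\lambda x,\lambda^{-1}y)$) must be matched against the $l$-dimensional fiber of the multiplication map $G^{u',v'}\times G^{u'',v''}\to G^{u,v}$ over a generic point, namely the right $H$-action $(g',g'')\mapsto(g'h,h^{-1}g'')$. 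Once this fiber comparison is made precise, the inductive step gives a birational map, not just a dominant one. Alternatively, one can bypass the induction for this half entirely by observing that $\ev_{\mathbf i}$ differs from the Fomin--Zelevinsky product map $x_{\mathbf i}(a;t_1,\dots,t_m)$ of Theorem~\ref{biregular} by a monomial change of coordinates (commute the intermediate $H^j$-factors to the front), so \cite[Theorem~1.2]{FZtotal} already gives the birational isomorphism onto $U_{\mathbf i}\subset G^{u,v}$.
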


We now introduce cluster transformations in the framework.
Let us say that a double reduced word of length $d$ is \emph{$d$-minimal}
if we can perform a generalized $d$-move on it. Let us call \emph{$n$-minimal}
the double words $i$ and $i\ i$ for every $i\in[1,l]\cup[\overline 1,
\overline l]$. Finally, a double word is said to be \emph{$dn$-minimal}
if it is $d$-minimal or $n$-minimal. {To any two $dn$-minimal double
words $\mathbf i$ and $\mathbf{i'}$ related by a generalized $dn$-move}
$\delta:\mathbf i\mapsto\mathbf{i'}$, we associate a cluster
transformation ${\mu}_{\mathbf{i}\rightarrow\mathbf{i'}}:
{\mathcal X}_{\mathbf i}\rightarrow{\mathcal X}_{\mathbf {i'}}$
in the following way:

{\footnotesize
\begin{equation}\label{equ:elemmut}{\mu}_{\mathbf{i}\rightarrow\mathbf{i'}}=
\begin{cases}
\varsigma_{\binom i{1}}\circ\mu_{\binom i{1}}, &\text{
if $\delta$ is a nil-move;}\\
 \mu_{\binom i{1}}, &\text{
if $\delta$ is a move $i\ \overline i\leftrightarrow\overline i\ i$ or a 3-move;}\\
\mu_{\binom i{1}}\mu_{\binom j{1}}\mu_{\binom i{1}},& \text{
if $\delta$ is a 4-move;}\\
\mu_{\binom j{2}}\mu_{\binom i{1}}\mu_{\binom j{1}}
\mu_{\binom j{2}}\mu_{\binom i{2}}\mu_{\binom j{2}}\mu_{\binom i{1}}\mu_{\binom i{2}}
\mu_{\binom j{1}}\mu_{\binom j{2}},&  \text{if $\delta$ is a 6-move;}\\
 \text{the identity map}&\text{otherwise,}
\end{cases}
\end{equation}
}
where, as in \cite{FGcluster}, we have denoted an expression
$\mu_{\mu_{i}(j)}\mu_{i}$ by $\mu_j\mu_k$, an expression
$\mu_{\mu_{\mu_{i}(j)}\mu_{i}(k)}\mu_{\mu_{i}(j)}\mu_{i}$ by
$\mu_k\mu_j\mu_i$, and so on.

Since mutations commute with amalgamation, we may extend these
definitions to any two double words $\mathbf i,\mathbf{i'}\in D(u,v)$
related by a generalized $dn$-move. Finally, if  $\mathbf i,\mathbf j$
are {double words} linked by a sequence $\delta_{\mathbf i\to\mathbf j}$
of generalized $dn$-moves and  $\mathbf{i}\to\mathbf{i_1}\rightarrow
\dots\rightarrow\mathbf{i_{n-1}}\rightarrow\mathbf{j}$ is the associated
chain of elements, we define the cluster transformation
${\mu}_{\mathbf{i}\rightarrow\mathbf{j}}$ as the composition
${\mu}_{\mathbf{i_{n-1}}\rightarrow\mathbf j}\circ \dots\circ
{\mu}_{\mathbf{i}\rightarrow\mathbf{i_1}}$. The following result is
easily derived from Theorem~\ref{thm:Tits}.

\begin{lemma}\label{lemma:transitivedoubleword}A double reduced word $\mathbf j\in R(u,v)$ can be obtained
from a double reduced word $\mathbf i\in R(u',v')$ by a sequence of generalized
$d$-moves $\delta_{\mathbf i\to\mathbf j}$ if and only if the equalities $u'=u$
and $v'=v$ are satisfied.
\end{lemma}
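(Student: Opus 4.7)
The plan is to prove both directions separately, with the forward direction being a routine invariance check and the reverse direction the content of the lemma.

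For the ``only if'' direction, I would observe that each generalized $d$-move preserves the pair $(u,v)$. Positive $d$-moves act on consecutive positive letters only, so they leave the negative subword unchanged; on the positive subword they are ordinary $d$-moves, which preserve the Weyl group element by the classical theory (cf.\ the second bullet of Theorem~\ref{thm:Tits}). The argument for negative $d$-moves is symmetric. Finally, a mixed $2$-move swaps a positive letter $i$ and a negative letter $\overline j$ (or the reverse) that are adjacent in $\mathbf i$; since positive and negative letters live in commuting copies of $W$ inside $W\times W$, this swap changes neither the positive subword (viewed as a word in $v$) nor the negative subword (viewed as a word in $u$). Hence $(u',v')=(u,v)$.

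For the ``if'' direction, the plan is to bring any $\mathbf i\in R(u,v)$ into a canonical form where all negative letters precede all positive letters by repeatedly applying mixed $2$-moves. Concretely, I would argue by induction on the number of inversions in the sign pattern: as long as some positive letter appears before some negative letter, one can find an adjacent pair of opposite signs and apply a mixed $2$-move, strictly decreasing the number of such inversions. After finitely many steps we reach a word $\mathbf{i}_-\mathbf{i}_+$ where $\mathbf{i}_-\in R(u)$ and $\mathbf{i}_+\in R(v)$ (reducedness is preserved because $\ell(u)+\ell(v)$ equals the length of the double word). Doing the same for $\mathbf j$ gives $\mathbf{j}_-\mathbf{j}_+$. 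By Tits' theorem applied to $W$ twice, $\mathbf{i}_-$ and $\mathbf{j}_-$ are connected by ordinary $d$-moves, which here play the role of negative $d$-moves, and likewise $\mathbf{i}_+$ and $\mathbf{j}_+$ are connected by positive $d$-moves. Concatenating these chains gives a sequence of generalized $d$-moves $\mathbf i \leadsto \mathbf{i}_-\mathbf{i}_+ \leadsto \mathbf{j}_-\mathbf{j}_+ \leadsto \mathbf j$.

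The main obstacle I expect is purely cosmetic: carefully book-keeping that mixed $2$-moves never obstruct each other and that the ``canonical form'' is reached in finitely many steps; but since the two alphabets $[1,l]$ and $[\overline 1,\overline l]$ are disjoint and mixed $2$-moves are unconditional (no Cartan hypothesis is required, unlike positive or negative $d$-moves), the sign-inversion count provides a clean strictly-decreasing monovariant and there is no subtlety. The reducedness of $\mathbf{i}_\pm$ is automatic from $\mathbf i\in R(u,v)$ together with the equality $\ell(\mathbf i)=\ell(u)+\ell(v)$, so no additional argument is needed there.
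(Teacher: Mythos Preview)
Your proposal is correct and is essentially the same approach the paper has in mind: the paper simply states that the lemma ``is easily derived from Theorem~\ref{thm:Tits}'' without further detail, and what you have written is precisely the natural unpacking of that remark---separating the positive and negative alphabets via mixed $2$-moves and then invoking Tits' theorem on each factor.
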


Let $u,v\in W$ and $\mathbf i,\mathbf j\in R(u,v)$.
Because the birational Poisson isomorphism $\mu_{\mathbf i\to\mathbf j}:
{\mathcal X}_{\mathbf i}\to{\mathcal X}_{\mathbf j}$, associated to a sequence
$\delta_{\mathbf i\to\mathbf j}$ of generalized $d$-moves, is a cluster
transformation, we will denote ${\mathcal X}^{u,v}$ the cluster
$\mathcal X$-variety associated to the set $R(u,v)$. Stated otherwise, to any double reduced
word $\mathbf i\in R(u,v)$ corresponds a local chart $({\mathcal X}_{\mathbf i},\varphi_{\mathbf i})$
in the cluster $\mathcal X$-variety ${\mathcal X}^{u,v}$, and the cluster transformation
$\mu_{\mathbf i\to\mathbf j}$ is the transition map between the local charts
$({\mathcal X}_{\mathbf i},\varphi_{\mathbf i})$ and
$({\mathcal X}_{\mathbf j},\varphi_{\mathbf j})$ for any $\mathbf j\in R(u,v)$.
Let us also denote $\iota_{\mathbf i}:R(u,v)\to\mathbf i$ every time we choose
the element $\mathbf i\in R(u,v)$. The diagrams
in Figure \ref{fig:clustervariety0} are therefore commutative.

\begin{figure}[htbp]
\begin{center}
\setlength{\unitlength}{1.5pt}
\qquad
\xymatrix{
&{R(u,v)}\ar@/^0pc/[ld]_{\iota_{\mathbf i}}\ar@/^0pc/[rd]^{\iota_{\mathbf j}}&\\
{\mathbf i}\ar@/_0pc/[rr]_{\delta_{\mathbf i\to\mathbf j}}&&{\mathbf {j}}
}
\qquad
\xymatrix{
&{{\mathcal X}^{u,v}}\ar@/^0pc/[ld]_{\varphi_{\mathbf i}}\ar@/^0pc/[rd]^{\varphi_{\mathbf j}}&\\
{\mathcal X}_{\mathbf i}\ar@/_0pc/[rr]_{\mu_{\mathbf i\to\mathbf j}}&&{\mathcal X}_{\mathbf {j}}
}
\end{center}
\vspace{-.1in}
\caption{The set $R(u,v)$ and the cluster $\mathcal X$-variety ${\mathcal X}^{u,v}$}
\label{fig:clustervariety0}
\end{figure}
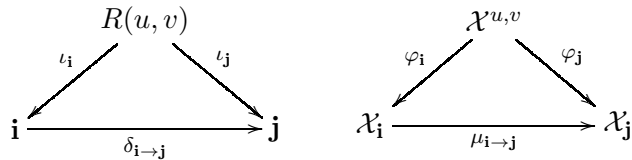

We finally give the way to attach the cluster $\mathcal X$-variety
$\mathcal X^{u,v}$ to the double Bruhat cell $(G^{u,v},\pi_G)$, for
every $u,v\in W$.

\begin{lemma}[\cite{FGcluster}]\label{lemma:fg}For any $u,v\in W$
and $\mathbf i,\mathbf j\in D(u,v)$ such that $\mathbf j$ is obtained from
$\mathbf i$ by a sequence of generalized $dn$-moves, we have
$\ev_{\mathbf{i}}=\ev_{\mathbf{j}}\circ{\mu}_{\mathbf{i}\rightarrow\mathbf j}$.
\end{lemma}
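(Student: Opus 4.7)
The plan is to reduce to the case of a single elementary generalized $dn$-move applied to a $dn$-minimal double subword, and then verify that case by matching the formula (\ref{equ:elemmut}) against a specific group-theoretic identity. Three structural facts make the reduction work: first, by definition a sequence $\delta_{\mathbf i\to\mathbf j}$ is a composition of elementary generalized $dn$-moves, so it suffices to establish the statement for a single such move; second, mutations commute with amalgamation (as used in defining $\mu_{\mathbf i\to\mathbf j}$ on general double words just before Lemma \ref{lemma:transitivedoubleword}); third, the evaluation on an amalgamated seed factors as the product of evaluations on its building blocks by (\ref{equ:defev}). Combining these, if $\ev_{\mathbf i_0} = \ev_{\mathbf i_0'} \circ \mu_{\mathbf i_0 \to \mathbf i_0'}$ holds for the minimal subword $\mathbf i_0 \to \mathbf i_0'$ on which the move acts, then the identity extends to the full words $\mathbf i,\mathbf i'$ by left- and right-multiplication with the unaffected flanking evaluations, which are untouched by the corresponding mutations.

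Once reduced to the elementary case, I would treat each type of generalized $dn$-move in turn, matching the prescribed cluster transformation in (\ref{equ:elemmut}) with an identity in the rank-two subgroup of $G$ generated by the Chevalley generators involved. For a mixed $2$-move $i\,\overline j\leftrightarrow\overline j\,i$ with $i\neq j$, the identity at the group level is a simple rewriting of $E^i H^i(y)F^j H^j(z)$ using $[e_i,f_j]=0$ and the commutation (\ref{equ:defphi2}) between Cartan factors and $E^i$, $F^j$; the single mutation $\mu_{\binom i{1}}$ is precisely designed to absorb the rescaling that arises. For a same-sign $2$-move with $a_{ij}a_{ji}=0$, the Chevalley-Serre relations give $[e_i,e_j]=0$ (respectively $[f_i,f_j]=0$), so the evaluations already coincide and the companion mutation formula on the exchange part is the identity since $\varepsilon_{ij}=0$.

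For the braid moves with $d\in\{3,4,6\}$, the required group identity is the classical braid relation inside the rank-two subgroup generated by $\varphi_i(\SL(2,\mathbb C))$ and $\varphi_j(\SL(2,\mathbb C))$, rewritten as a reordering of products of the form $H^{\bullet}E^{\bullet}H^{\bullet}E^{\bullet}\cdots$. The lengthy sequence of mutations in (\ref{equ:elemmut}) is precisely the cluster sequence realising this reordering at the level of Fock-Goncharov coordinates, so the verification amounts to explicit calculations inside $SL(3,\mathbb C)$, $Sp(4,\mathbb C)$, and the split form of $G_2$. For the nil-moves $i\,i\to i$ and $\overline i\,\overline i\to\overline i$, the composition $\varsigma_{\binom i{1}}\circ\mu_{\binom i{1}}$ first mutates the dummy vertex and then erases it; on the group side one reads off from (\ref{equ:defphi}) that $(I+yE)(I+zE)=I+(y+z)E$ inside $\varphi_i(\SL(2,\mathbb C))$, and conjugating through the Cartan gives the change-of-variable that matches the mutation-then-erasing formula.

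The main obstacle is the explicit bookkeeping for the $4$- and $6$-moves, where one must verify that a composition of four and ten mutations respectively reproduces a nontrivial rank-two factorization identity. Since this section is presented as a review of \cite{FGcluster}, the cleanest strategy is to invoke their verification directly, noting that the mutation sequences used in (\ref{equ:elemmut}) are the ones they derived (with our sign and outlet conventions) exactly so that the Poisson birational isomorphisms they define intertwine with the corresponding factorization identities in $G$. The extension from $dn$-minimal double subwords to arbitrary ones in $D(u,v)$ is then purely formal, thanks to the amalgamation-compatibility of both sides.
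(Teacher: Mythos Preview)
The paper does not supply its own proof of this lemma: it is stated with the citation \cite{FGcluster} and no argument is given, since Section~\ref{section:ClusterG} is explicitly a summary of results from that reference. Your outline is therefore not being compared against anything in the paper itself, but it is the correct strategy and matches the approach of the cited source: reduce to a single elementary move via the compatibility of mutations with amalgamation and the multiplicativity (\ref{equ:defev}) of the evaluation, then verify each move type by a rank-two identity, deferring the long $d=4,6$ computations to \cite{FGcluster}.

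One small correction: for a mixed $2$-move $i\,\overline j\leftrightarrow\overline j\,i$ with $i\neq j$, the prescription in (\ref{equ:elemmut}) assigns the identity map, not $\mu_{\binom i1}$; the mutation $\mu_{\binom i1}$ is reserved for the case $i\,\overline i\leftrightarrow\overline i\,i$ (same index, opposite signs) and for $3$-moves. With $i\neq j$ the two Cartan factors $H^i$ and $H^j$ slide past $F^j$ and $E^i$ with compensating rescalings that cancel, so no cluster transformation is needed. Apart from this slip your reduction and case analysis are sound.
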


\begin{thm}[\cite{FGcluster}]\label{fg}For any $u,v\in W$
and $\mathbf i,\mathbf j\in R(u,v)$, we have
$\ev_{\mathbf{i}}=\ev_{\mathbf{j}}\circ{\mu}_{\mathbf{i}\rightarrow\mathbf j}$.
\end{thm}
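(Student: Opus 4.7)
The plan is to observe that Theorem~\ref{fg} is a direct consequence of the two preceding results, Lemma~\ref{lemma:fg} and Lemma~\ref{lemma:transitivedoubleword}, combined with the obvious fact that generalized $d$-moves form a subset of generalized $dn$-moves. So once those lemmas are in hand, the argument is essentially a two-line reduction; the real content lives in Lemma~\ref{lemma:fg}, whose proof strategy I will also sketch since it is what makes the theorem work.

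Concretely, given $u,v \in W$ and $\mathbf i, \mathbf j \in R(u,v)$, I first invoke Lemma~\ref{lemma:transitivedoubleword}, which guarantees the existence of a sequence of generalized $d$-moves
$\mathbf i = \mathbf i_0 \to \mathbf i_1 \to \dots \to \mathbf i_{n-1} \to \mathbf i_n = \mathbf j$
passing entirely through $R(u,v)$. By construction, every $d$-move is in particular a $dn$-move, so this is also a sequence of generalized $dn$-moves between elements of $D(u,v)$. The cluster transformation ${\mu}_{\mathbf i \to \mathbf j}$ decomposes as the composition ${\mu}_{\mathbf i_{n-1}\to \mathbf j} \circ \dots \circ {\mu}_{\mathbf i \to \mathbf i_1}$. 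Applying Lemma~\ref{lemma:fg} step by step to each elementary transition yields
$\ev_{\mathbf i} = \ev_{\mathbf i_1} \circ {\mu}_{\mathbf i \to \mathbf i_1} = \dots = \ev_{\mathbf j} \circ {\mu}_{\mathbf i \to \mathbf j}$,
which is the desired identity.

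The substantive work, which I expect to be the main obstacle, is already packaged inside Lemma~\ref{lemma:fg}. To establish it one must verify the compatibility $\ev_{\mathbf i} = \ev_{\mathbf i'} \circ {\mu}_{\mathbf i \to \mathbf i'}$ separately for each type of elementary move listed in the definition~(\ref{equ:elemmut}): mixed $2$-moves $i\,\overline j \leftrightarrow \overline j\, i$, positive/negative $d$-moves for $d \in \{2,3,4,6\}$, and positive/negative $nil$-moves. Since mutations commute with amalgamation (Definition~\ref{def:amalg}), one can reduce to checking each elementary move on the local piece of the Dynkin quiver where the move takes place, i.e.\ on two or a few consecutive elementary Dynkin quivers $\Gamma_{\mathfrak g}(i_k)$. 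For the mixed $2$-move and the $nil$-move the check is a short computation with the defining formulas~(\ref{equ:defphi})--(\ref{equ:defphi2}) and the mutation rule of Definition~\ref{def:mutation}; for the $d$-moves with $d \in \{3,4,6\}$ one uses the classical Lusztig-style rank-two identities in $G$, rewritten through $\varphi_i$, matched against the explicit composition of mutations prescribed in~(\ref{equ:elemmut}).

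Given the reduction above, the only \emph{new} thing needed for Theorem~\ref{fg} beyond Lemma~\ref{lemma:fg} is the observation that when both $\mathbf i$ and $\mathbf j$ are reduced and share the same Weyl-group pair $(u,v)$, they can be linked purely by $d$-moves, so no $nil$-move ever has to be traversed and one never leaves the stratum $R(u,v)$. That is precisely the content of Lemma~\ref{lemma:transitivedoubleword}, itself a direct application of the Tits theorem (Theorem~\ref{thm:Tits}) to the group $W \times W$. Since the cluster transformations associated to $d$-moves are bijective (they are compositions of mutations and symmetries, hence bi-rational), no degeneration issue appears, and the equality $\ev_{\mathbf i} = \ev_{\mathbf j} \circ {\mu}_{\mathbf i \to \mathbf j}$ holds on the common Zariski-open subset of $\mathcal X_{\mathbf i}$ where both sides are defined, completing the proof.
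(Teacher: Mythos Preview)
Your reduction is correct and matches the paper's treatment: both Lemma~\ref{lemma:fg} and Theorem~\ref{fg} are stated as results from \cite{FGcluster} without explicit proof, and the theorem is visibly the specialization of the lemma to reduced words, using Lemma~\ref{lemma:transitivedoubleword} (Tits) to connect any two elements of $R(u,v)$ by generalized $d$-moves. Your sketch of how Lemma~\ref{lemma:fg} itself is proved---by localizing each elementary move via amalgamation and checking rank-two identities---is also the standard argument from \cite{FGcluster}.
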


The cluster $\mathcal X$-variety ${\mathcal X}^{u,v}$ is therefore
attached to the double Bruhat cell $(G^{u,v},\pi_G)$ for every $u,v\in W$.
Let us denote ${\mathcal X}_{.}$ the application which associate to any
double word $\mathbf i$ the corresponding seed $\mathcal X$-torus
${\mathcal X}_{\mathbf i}$. We then sum-up Theorem \ref{thm:evG}
and Theorem \ref{fg} by abusively
saying that there exists a Poisson map
$\ev^{u,v}:{\mathcal X}^{u,v}\to(G^{u,v},\pi_{G})$.
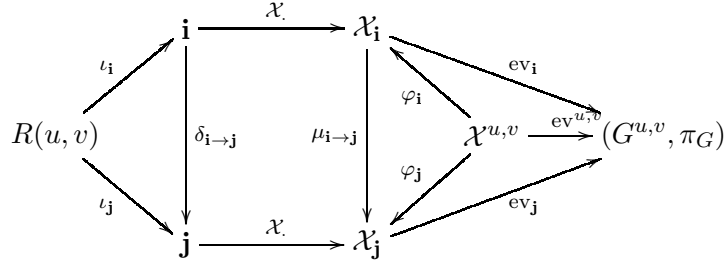
\begin{figure}[htbp]
\begin{center}
\setlength{\unitlength}{1.5pt}
\qquad\qquad
\xymatrix{
&\mathbf i\ar@/_0pc/[dd]^{\delta_{\mathbf i\to\mathbf j}}\ar@/_0pc/[rr]^{{\mathcal X}_.}&&{\mathcal X}_{\mathbf i}
\ar@/_0pc/[dd]_{\mu_{\mathbf i\to\mathbf j}}\ar@/^0pc/[rrd]^{\ev_{\mathbf i}}&&\\
R(u,v)\ar@/^0pc/[ur]^{\iota_{\mathbf i}}\ar@/^0pc/[rd]_{\iota_{\mathbf j}}&&&&{{\mathcal X}^{u,v}}
\ar@/^0pc/[lu]^{\varphi_{\mathbf i}}\ar@/^0pc/[ld]_{\varphi_{\mathbf j}}
\ar@/^0pc/[r]^{\ev^{u,v}}&(G^{u,v},\pi_G)\\
&\mathbf j\ar@/_0pc/[rr]^{{\mathcal X}_.}&&{\mathcal X}_{\mathbf {j}}\ar@/^0pc/[rru]_{\ev_{\mathbf j}}&&
}
\end{center}
\vspace{-.1in}
\caption{The cluster $\mathcal X$-variety ${\mathcal X}^{u,v}$ associated
to $(G^{u,v},\pi_G)$}
\label{fig:clustervariety}
\end{figure}
%The commutative diagrams in Figure \ref{fig:clustervariety0} can finally
%be related and completed to get the commutative diagram given in Figure
%\ref{fig:clustervariety}.

\section{Truncation maps and cluster $\mathcal X$-varieties related to $(G, \pi_*)$}
\label{subsection:evdual}
We introduce new evaluation maps and new seeds related to double reduced
words to state analogs of Theorem \ref{thm:evG} and Theorem \ref{fg} for
the dual Poisson-Lie group $G^*\subset(G,\pi_*)$.
Although the result in this section will be strongly generalized
in Section \ref{section:Loop}, it is the occasion to introduce truncation
maps and to give a flavor of what will be later called twisted evaluations
and its related combinatorics, without using the machinery of
generalized cluster transformations and saltations that we start to develop
in Section \ref{section:tropical}.

\subsection{Double reduced Bruhat cells and reduced evaluations}
\label{subsection:reducedevaluation}
According to \cite[Section 4.3]{BZtensor}, let $L^{u,v}$ be the
\emph{reduced double Bruhat cell} associated to every $u,v\in W$, that is:
the quotient of double Bruhat cell $G^{u,v}$ by the $H$-right multiplication:
\begin{equation}\label{equ:double reduced}L^{u,v}=G^{u,v}/H.
\end{equation}
We are going to slightly modify the cluster $\mathcal X$-varieties previously
constructed in order to evaluate these double reduced Bruhat cells.
For every double word $\mathbf i$, let $\varsigma_{\mathfrak{R},\mathbf i}$
be the erasing map associated to the set $I^{\mathfrak R}_0(\mathbf i)$ of right
outlets, defined as the product over the set $I_0^{\mathfrak R}(\mathbf i)$ of
the erasing maps $\varsigma_j$ given by Definition \ref{def:erasing}.
\begin{equation}\label{equ:proderase}
\varsigma_{\mathfrak{R},\mathbf i}=\displaystyle\prod_{j\in I^{\mathfrak R}_0(\mathbf i)}
\varsigma_j\ .
\end{equation}
We denote $\mathbf i^{\red}$ the image of
the seed $\mathbf I(\mathbf{i})$ by $\varsigma_{\mathfrak{R},\mathbf i}$, and
$\mathcal{X}_{\mathbf i}^{\red}$ the seed $\mathcal{X}$-torus
associated to the seed $\mathbf i^{\red}$. Therefore, we have
$\varsigma_{\mathfrak{R},\mathbf i}:\mathcal{X}_{\mathbf i}\to
\mathcal{X}_{\mathbf i}^{\red}\ .$
Every cluster transformation ${\mu}_{\mathbf{i}\rightarrow\mathbf j}:
{\mathcal X}_{\mathbf i}\to{\mathcal X}_{\mathbf j}$ canonically leads
to a cluster transformation between the associated seed $\mathcal X$-tori
${\mu}_{\mathbf{i}\rightarrow\mathbf j}^{\red}:
{\mathcal X}_{\mathbf i}^{\red}\to{\mathcal X}_{\mathbf j}^{\red}$ by the relation
$${\mu}_{\mathbf{i}\rightarrow\mathbf j}^{\red}\circ\varsigma_{\mathfrak{R},\mathbf i}
=\varsigma_{\mathfrak{R},\mathbf j}\circ{\mu}_{\mathbf{i}\rightarrow\mathbf j}\ .$$

\begin{definition}Let $\mathbf i$ be a double reduced word.
It is clear that the map
defined on $\mathcal{X}_{\mathbf i}$ and given by
$$\mathbf{x}\mapsto\ev_{\mathbf i}(\mathbf x)\displaystyle
\prod_{j\in[1,l]}H^j(x_{\binom{j}{N^j(\mathbf i)}}^{-1})$$
doesn't depend on $x_k$ when $k\in I^{\mathfrak R}_0(\mathbf i)$, thus we get an evaluation
$\ev_{\mathbf i}^{\red}:{\mathcal X}_{\mathbf i}^{\red}\rightarrow G$ called a
\emph{reduced evaluation}. Stated in a rougher way, the reduced evaluation associated to
the double word $\mathbf i$ is obtained from the evaluation map $\ev_{\mathbf i}$
by setting the cluster variables $x_j$ to $1$ (or forgetting the related Cartan
element $H^i(x_j)$) for every $j\in I_0^{\mathfrak R}(\mathbf i)$.
\end{definition}

When the double word $\mathbf i$ is a double reduced word, Theorem
\ref{thm:evG} and Theorem \ref{fg} are easily adapted to reduced double Bruhat
cells using reduced evaluations. We get the following result.

\begin{cor}\label{cor:evredmut}\label{cor:redmut}For any $u,v\in W$ and $\mathbf i\in R(u,v)$ the map
$\ev_{\mathbf i}^{\red}:{\mathcal X}_{\mathbf i}^{\red}\rightarrow (L^{u,v},\pi_G)$ is a
Poisson birational isomorphism onto a Zariski open set of the double Bruhat cell
$L^{u,v}$.
And for any $u,v\in W$ and $\mathbf i,
\mathbf j\in R(u,v)$, we have $\ev_{\mathbf{i}}^{\red}=
\ev_{\mathbf{j}}^{\red}\circ{\mu}_{\mathbf{i}\rightarrow\mathbf j}^{\red}$.
\end{cor}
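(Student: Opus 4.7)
The plan is to deduce both assertions directly from Theorem \ref{thm:evG} and Theorem \ref{fg} by identifying, at the level of seed $\mathcal X$-tori, the canonical projection $p_{u,v}:G^{u,v}\to L^{u,v}=G^{u,v}/H$ with the erasing map $\varsigma_{\mathfrak R,\mathbf i}$. The key fact is the factorization
\[p_{u,v}\circ\ev_{\mathbf i}=\ev_{\mathbf i}^{\red}\circ\varsigma_{\mathfrak R,\mathbf i}\qquad(\star)\]
which is essentially the content of the definition of $\ev_{\mathbf i}^{\red}$: composing $\ev_{\mathbf i}$ with $p_{u,v}$ trivializes the rightmost Cartan factor $\prod_j H^j(x_{\binom{j}{N^j(\mathbf i)}})$, which is exactly what the product $\prod_j H^j(x_{\binom{j}{N^j(\mathbf i)}}^{-1})$ appearing in the definition of $\ev_{\mathbf i}^{\red}$ is designed to eliminate before descent through $\varsigma_{\mathfrak R,\mathbf i}$.

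For the first assertion, Theorem \ref{thm:evG} already gives that $\ev_{\mathbf i}$ is a Poisson birational isomorphism onto a Zariski open of $G^{u,v}$. The erasing $\varsigma_{\mathfrak R,\mathbf i}$ is a surjective torus morphism whose kernel is the $l$-dimensional right-outlet subtorus, and $p_{u,v}$ is a principal $H$-bundle of the same rank. To upgrade $(\star)$ to a birational isomorphism between quotients, I would verify that the right $H$-action on the image of $\ev_{\mathbf i}$ pulls back, through $\ev_{\mathbf i}$, to translation in the right-outlet variables on $\mathcal X_{\mathbf i}$. This reduces to a calculation inside the product formula (\ref{equ:defev}) combined with the amalgamation rule (\ref{equ:amal}): right multiplication by $a\in H$ affects only the rightmost Cartan factor of each type $j$, which is labeled precisely by the right-outlet variable $x_{\binom{j}{N^j(\mathbf i)}}$. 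Poisson-ness then descends for free, since the right $H$-action on $(G^{u,v},\pi_G)$ is Poisson (Proposition \ref{prop:decG}) so that $\pi_G$ pushes forward to $L^{u,v}$, while the erasing map preserves the log-canonical brackets by Definition \ref{def:erasing}.

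For the second assertion, I would simply chain together $(\star)$ for both $\mathbf i$ and $\mathbf j$, Theorem \ref{fg}, and the defining relation $\mu_{\mathbf i\to\mathbf j}^{\red}\circ\varsigma_{\mathfrak R,\mathbf i}=\varsigma_{\mathfrak R,\mathbf j}\circ\mu_{\mathbf i\to\mathbf j}$ of the reduced cluster transformation:
\[\ev_{\mathbf j}^{\red}\circ\mu_{\mathbf i\to\mathbf j}^{\red}\circ\varsigma_{\mathfrak R,\mathbf i}
=\ev_{\mathbf j}^{\red}\circ\varsigma_{\mathfrak R,\mathbf j}\circ\mu_{\mathbf i\to\mathbf j}
=p_{u,v}\circ\ev_{\mathbf j}\circ\mu_{\mathbf i\to\mathbf j}
=p_{u,v}\circ\ev_{\mathbf i}
=\ev_{\mathbf i}^{\red}\circ\varsigma_{\mathfrak R,\mathbf i},\]
and the surjectivity of $\varsigma_{\mathfrak R,\mathbf i}$ yields the desired equality $\ev_{\mathbf i}^{\red}=\ev_{\mathbf j}^{\red}\circ\mu_{\mathbf i\to\mathbf j}^{\red}$.

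The only delicate step in the whole argument is confirming that the right $H$-action corresponds exactly to translation by the right-outlet subtorus on $\mathcal X_{\mathbf i}$; this is pure book-keeping through the product form of the evaluation and the amalgamation rule, and once it is in place the remainder is a formal consequence of the machinery already developed for $(G^{u,v},\pi_G)$ in Section \ref{section:ClusterG}.
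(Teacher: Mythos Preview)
Your proposal is correct and follows essentially the same approach the paper intends: the paper offers no detailed proof, only the remark that Theorem~\ref{thm:evG} and Theorem~\ref{fg} are ``easily adapted to reduced double Bruhat cells using reduced evaluations,'' and your argument is precisely that adaptation made explicit via the factorization $(\star)$ and the compatibility of $\varsigma_{\mathfrak R,\mathbf i}$ with $\mu_{\mathbf i\to\mathbf j}$. One small caveat: Proposition~\ref{prop:decG} does not literally assert that the right $H$-action is Poisson; the cleaner justification is that $H$ is a Poisson-Lie subgroup of $(G,\pi_G)$ with trivial bracket, so right translation by $H$ is Poisson and $\pi_G$ descends to $L^{u,v}$.
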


\begin{rem}This corollary remains valid even if the Lie group $G$ is not
of adjoint type but simply connected.
\end{rem}

\subsection{Truncation maps on cluster $\mathcal X$-varieties} The cluster
$\mathcal X$-variety we are going to associate to the dual Poisson-Lie group
$(BB_-,\pi_*)$ in the next subsection can be easily obtained from the cluster
$\mathcal X$-variety ${\mathcal X}^{w_0,w_0}$ of Section \ref{section:ClusterG}
by the notions of truncation map and truncated cluster $\mathcal X$-varieties we
are going to introduce now. The underlying idea is to force the apparition of
the Casimir of $(BB_-,\pi_*)$. We start by giving the general setting.

\begin{definition}\label{def:tronc}\label{def:truncatedtorus}Let $\mathbf{I}=(I,I_0,\varepsilon,d)$
be a seed and $J\subset I$. The \emph{truncation map associated to $J$}
is a map $\mathfrak{t}_J:\mathbf I\longrightarrow \mathbf I_J$ such that the seed
$\mathbf I_J=(I,{I}_0,{\varepsilon}',d)$ is given by:
\begin{equation}\label{equ:defJ2}
{{\varepsilon}'}_{ij}=\left\{
\begin{array}{ll}
\varepsilon_{ij}&\text{if } i,j\in I\backslash J;\\
0& \text{otherwise.}
\end{array}
\right.
\end{equation}
To any finite set $J$, let us denote $\mathcal X_J^0$ the seed $\mathcal X$-torus
associated to a seed $(J,J,0,d)$. Let $\mathbf{I}=(I,I_0,\varepsilon,d)$ be a seed
such that $J\subset I$, and $\mathbf{I'}=(I,I_0,\varepsilon',d)$ be the image of
$\mathbf{I}$ by $\mathfrak{t}_J$. To every $\mathbf t\in{\mathcal X}_{J}$,
we associate the subtorus $\mathcal X_{\mathbf I'}(t)\subset\mathcal X_{\mathbf I'}$
constituted of elements $\mathbf x\in\mathcal X_{\mathbf I'}$ such that $x_i=t_i$
for every $i\in J$. It is a Poisson subtorus because of the formula (\ref{equ:defJ2}).
The map $\mathfrak{t}_J$ induces an homomorphism $\mathfrak{t}_{J(t)}:{\mathcal X}_{\mathbf I}
\to{\mathcal X}_{\mathbf{I'}}(t)$ associated to any $t\in\mathcal X_J^0$ and given by
$$x_{\mathfrak{t}_{J(t)}(i)}=\left\{
\begin{array}{ll}
x_{i}&\text{if } i\in I\backslash J\ ;\\
t_i& \text{if } i\in J\ .
\end{array}
\right.
$$
In particular, the \emph{trivial truncated map}, associated to
the empty set $J=\emptyset$, is the identity map.
Now, let $\mathbf I=(I,I_0,\varepsilon,d)$ be a seed and $J$ be
a subset of $I$.
To any cluster transformation $\phi_{\mathbf I\to\mathbf J}:
{\mathcal X}_{\mathbf I}\to{\mathcal X}_{\mathbf J}$, we associate the
following Poisson birational isomorphism $\phi_{\mathbf I_J\to\mathbf J_J}:
{\mathcal X}_{\mathbf I_J}\to{\mathcal X}_{\mathbf J_J}$, called
\emph{truncated cluster transformation} and given by
$$x_{\phi_{\mathbf I_J\to\mathbf J_J}(i)}=\left\{
\begin{array}{ll}
x_{\phi_{\mathbf I\to\mathbf J}(i)}&\text{if } i\in I\backslash J\ ;\\
x_i& \text{if } i\in J\ .
\end{array}
\right.
$$
It is clear that for every $t\in\mathcal X_J^0$ this map admits a restriction
$\phi_{\mathbf I_J\to\mathbf J_J}:
{\mathcal X}_{\mathbf I_J}(t)\to{\mathcal X}_{\mathbf J_J}(t)$
which is also a birational Poisson isomorphism.
\end{definition}

We would like to define truncation maps at the level of cluster
$\mathcal X$-varieties. A sufficient condition is given by the
following immediate result.

\begin{lemma}\label{lemma:obstrunc}Let $\mathbf I=(I,I_0,\varepsilon,d)$ be a seed and
$J\subset I_0$. The following equality is satisfied for every cluster
transformation $\phi_{\mathbf I\to\mathbf J}:{\mathcal X}_{\mathbf I}
\to{\mathcal X}_{\mathbf J}$ and every $t\in{\mathcal X}_J^0$.
\begin{equation}\label{equ:truncmut}\phi_{\mathbf I_J\to\mathbf J_J}
\circ\mathfrak{t}_{J(t)}=\mathfrak{t}_{\phi_{\mathbf I\to\mathbf J}
(J)(t)}\circ\phi_{\mathbf I\to\mathbf J}\ .
\end{equation}
\end{lemma}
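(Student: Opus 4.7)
The plan is to reduce the statement to the case of a single elementary cluster transformation (a mutation or a symmetry) and then verify the commutativity by a short case analysis based on the mutation formulas of Definition~\ref{def:mutation}.

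First I would establish that it suffices to prove the identity when $\phi_{\mathbf{I}\to\mathbf{J}}$ is either a mutation $\mu_k$ or a symmetry $\sigma$. Indeed, suppose $\phi = \phi_2 \circ \phi_1$ with intermediate seed $\mathbf{I}'$, and that both $\phi_1$ and $\phi_2$ individually satisfy the claim. Writing $J' := \phi_1(J)$, which is still contained in the frozen part $I'_0$ because mutations and symmetries preserve the set of frozen vertices, one has
\[
\phi_{\mathbf{I}_J\to\mathbf{J}_{\phi(J)}}\circ\mathfrak{t}_{J(t)}
= (\phi_2)_{\mathbf{I}'_{J'}\to\mathbf{J}_{\phi(J)}}\circ(\phi_1)_{\mathbf{I}_J\to\mathbf{I}'_{J'}}\circ\mathfrak{t}_{J(t)}
= (\phi_2)_{\mathbf{I}'_{J'}\to\mathbf{J}_{\phi(J)}}\circ\mathfrak{t}_{J'(t)}\circ\phi_1
= \mathfrak{t}_{\phi(J)(t)}\circ\phi,
\]
so a straightforward induction on the length of the cluster transformation completes the reduction.

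The case of a symmetry $\sigma$ is immediate from Definition~\ref{def:symmetry}: since $\sigma$ only relabels indices and preserves $\varepsilon$ and $d$, the truncated matrix $\varepsilon'$ is preserved as well and both compositions act by $x_{\sigma(i)}=x_i$ outside $J$ and by $t_i$ on $J$. The serious case is a mutation $\mu_k$. The essential observation is that the hypothesis $J\subset I_0$ forces $k\in I\setminus I_0$ to satisfy $k\notin J$, so we never mutate along a truncated direction. I would then verify the identity entry-by-entry using the mutation formula of Definition~\ref{def:mutation}: for $i=k$ both sides give $x_k^{-1}$; for $i\in I\setminus(J\cup\{k\})$ both sides give $x_i x_k^{[\varepsilon_{ik}]_+}(1+x_k)^{-\varepsilon_{ik}}$, because the truncated exchange matrix $\varepsilon'$ agrees with $\varepsilon$ on pairs outside $J$; and for $i\in J$ the left-hand side uses the truncated entry $\varepsilon'_{ik}=0$ (by (\ref{equ:defJ2})) to return $\mathfrak{t}_{J(t)}(x_i)=t_i$, while the right-hand side first applies the full mutation formula and then the truncation $\mathfrak{t}_{\mu_k(J)(t)}$ overrides the resulting expression with $t_i$, so the two sides agree.

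The only subtlety, which I view as the main (minor) obstacle, is keeping the indexing of $t$ consistent along the cluster transformation: a priori $t$ lives in $\mathcal{X}_J^0$ and $\mathcal{X}_{\phi(J)}^0$ on the two sides, and one must use the canonical bijection $J\xrightarrow{\sim}\phi(J)$ induced by $\phi$ to identify them, which is harmless because both sets consist of frozen labels. Once this bookkeeping is settled, the three cases above are purely formal and the induction on composition length yields the lemma.
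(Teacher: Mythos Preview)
Your argument is correct. The paper does not actually supply a proof: the lemma is introduced as ``the following immediate result'' and left unproven. Your write-up spells out exactly the verification the paper has in mind, namely reducing to a single mutation (symmetries being trivial), using $J\subset I_0$ to ensure the mutated vertex $k$ lies outside $J$, and then checking the three cases $i=k$, $i\in I\setminus(J\cup\{k\})$, $i\in J$ directly from Definition~\ref{def:mutation} and Definition~\ref{def:tronc}. One small remark: in the case $i\in J$ you justify the left-hand side via $\varepsilon'_{ik}=0$, but in the paper's setup the truncated transformation $\phi_{\mathbf I_J\to\mathbf J_J}$ is \emph{defined} to act as the identity on the $J$-coordinates (see Definition~\ref{def:tronc}), so no appeal to $\varepsilon'$ is needed there; your conclusion is of course the same.
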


\begin{rem}The formula (\ref{equ:truncmut}) is not necessarily true if
the condition $J\subset I_0$ is omitted. Indeed, consider the seed
$\mathbf I(i)$ for any $i\in[1,l]$, and the set $J=\{(^i_1)\}$, with
the cluster transformation $\mu_{\binom{i}{1}}$.
In fact, a generalization of the formula (\ref{equ:truncmut}) will be
the starting point for the definition of the Poisson birational isomorphisms
called saltations and given in Subsection \ref{subsection:saltation}.
\end{rem}

\begin{definition}Let $\mathbf I=(I,I_0,\varepsilon,d)$ be a seed, $J$ be a subset of $I$,
and $\mathbf I'$ be the truncated seed associated to the seed $\mathbf I$ and the set $J$.
The \emph{truncated cluster} ${\mathcal X}$-$\emph{variety}$ ${\mathcal X}_{|\mathbf I'|}$
of the cluster ${\mathcal X}$-variety ${\mathcal X}_{|\mathbf I|}$
is obtained by taking every seed $\mathcal X$-tori obtained from
${\mathcal X}_{\mathbf I'}$ by cluster transformations, and gluing them as usual.
Moreover, if we denote ${\mathcal X}_{|\mathbf I'|}(t)$ the cluster $\mathcal X$-variety
obtained from every seed $\mathcal X$-torus ${\mathcal X}_{\mathbf I'}(t)\subset
{\mathcal X}_{\mathbf I'}$, we have the following Poisson stratification, because
of (\ref{equ:defJ2}).
$${\mathcal X}_{|\mathbf I'|}=\displaystyle\bigcup_{t\in\mathcal X_J^0}
{\mathcal X}_{|\mathbf I'|}(t)\ .$$
\end{definition}

The truncated map $\mathfrak{t}_{J(t)}$ is therefore well-defined at the
level of cluster $\mathcal X$-varieties for every fixed $t\in\mathcal X_J^0$
if $J$ is a subset of $I_0$. It is denoted $\mathfrak{t}_{J(t)}:
{\mathcal X}_{|\mathbf I|}\to{\mathcal X}_{|\mathfrak{t}_{J}(\mathbf I)
|}(t)$.

%\begin{rem}The exchange graph \cite[Definition 4.2]{FZ4} associated to an initial
%seed always covers the exchange graph associated to any truncated seed, in the sense
%of \cite[Definition 4.5]{FZ4}.
%\end{rem}

We now focus on particular truncations on cluster $\mathcal X$-varieties associated
to double words.
For every double word $\mathbf i$, let $[\mathbf i]_{\mathfrak R}$ be the image of the seed
$\mathbf I(\mathbf i)$ by the \emph{right truncation map} $\mathfrak{t}_{\mathbf i_{\mathfrak R}}$ associated
to the set of right outlets $I^{\mathfrak R}_0(\mathbf i)$ of $I(\mathbf i)$. Stated otherwise, for every double
word $\mathbf i$, the seed $[\mathbf i]_{\mathfrak R}=(I(\mathbf i),I_0(\mathbf i),
\eta({\mathbf i}),d({\mathbf i}))$ is the seed defined by the values
\begin{equation}\label{equ:defJ}\eta(\mathbf i)_{ij}=\left\{
\begin{array}{ll}
\varepsilon(\mathbf i)_{ij}&\text{if } i,j\in I(\mathbf i)\backslash I^{\mathfrak R}_0(\mathbf i);\\
0& \text{otherwise.}
\end{array}
\right.
\end{equation}
An example of right truncation map is given by Figure \ref{fig:troncation}.
\begin{figure}[htbp]
\begin{center}
\setlength{\unitlength}{1.5pt}
\begin{picture}(20,48)(0,-27)
\thicklines
\put(1,1.7){\line(2,3){8}}
\put(1,1.7){\vector(2,3){6}}
\put(21,1.7){\line(2,3){8}}
\put(21,1.7){\vector(2,3){6}}
\put(41,1.7){\line(2,3){7.5}}
\put(41,1.7){\vector(2,3){6}}
\put(11,-13.3){\line(2,3){7.5}}
\put(11,-13.3){\vector(2,3){6}}
\put(31,-13.3){\line(2,3){7.5}}
\put(31,-13.3){\vector(2,3){6}}
\put(-9,13.3){\line(2,-3){7.5}}
\put(-9,13.3){\vector(2,-3){6}}
\put(11,13.3){\line(2,-3){7.5}}
\put(11,13.3){\vector(2,-3){6}}
\put(31,13.3){\line(2,-3){7.5}}
\put(31,13.3){\vector(2,-3){6}}
\put(1,-1.7){\line(2,-3){7.5}}
\put(1,-1.7){\vector(2,-3){6}}
\put(21,-1.7){\line(2,-3){7.5}}
\put(21,-1.7){\vector(2,-3){6}}
\put(10,15){\circle*{4}}
\put(30,15){\circle*{4}}
\put(-10,15){\circle{4}}
\put(50,15){\circle{4}}
\put(10,-15){\circle{4}}
\put(30,-15){\circle{4}}
\put(0,0){\circle{4}}
\put(20,0){\circle*{4}}
\put(40,0){\circle{4}}
\put(8,15){\vector(-1,0){11}}
\put(8,15){\line(-1,0){16}}
\put(28,15){\vector(-1,0){11}}
\put(28,15){\line(-1,0){16}}
\put(48,15){\vector(-1,0){11}}
\put(48,15){\line(-1,0){16}}
\put(18,0){\vector(-1,0){11}}
\put(18,0){\line(-1,0){16}}
\put(38,0){\vector(-1,0){11}}
\put(38,0){\line(-1,0){16}}
\put(28,-15){\vector(-1,0){11}}
\put(28,-15){\line(-1,0){16}}
\end{picture}
\qquad\qquad\qquad
\begin{picture}(20,48)(0,-27)
\thicklines
\put(0,0){\vector(1,0){20}}
\put(6,5){$\mathfrak t_{\mathbf i_{\mathfrak R}}$}
\end{picture}
\quad\qquad
\begin{picture}(20,48)(0,-27)
\thicklines
\put(1,1.7){\line(2,3){8}}
\put(1,1.7){\vector(2,3){6}}
\put(21,1.7){\line(2,3){8}}
\put(21,1.7){\vector(2,3){6}}
\put(11,-13.3){\line(2,3){7.5}}
\put(11,-13.3){\vector(2,3){6}}
\put(-9,13.3){\line(2,-3){7.5}}
\put(-9,13.3){\vector(2,-3){6}}
\put(11,13.3){\line(2,-3){7.5}}
\put(11,13.3){\vector(2,-3){6}}
\put(1,-1.7){\line(2,-3){7.5}}
\put(1,-1.7){\vector(2,-3){6}}
\put(10,15){\circle*{4}}
\put(30,15){\circle*{4}}
\put(-10,15){\circle{4}}
\put(50,15){\circle{4}}
\put(10,-15){\circle{4}}
\put(30,-15){\circle{4}}
\put(0,0){\circle{4}}
\put(20,0){\circle*{4}}
\put(40,0){\circle{4}}
\put(8,15){\vector(-1,0){11}}
\put(8,15){\line(-1,0){16}}
\put(28,15){\vector(-1,0){11}}
\put(28,15){\line(-1,0){16}}
\put(18,0){\vector(-1,0){11}}
\put(18,0){\line(-1,0){16}}
\end{picture}
\end{center}
\vspace{-.1in}
\caption{The right truncation map $\mathfrak t_{\mathbf i_{\mathfrak R}}:\Gamma_{A_3}(\mathbf i)
\to\Gamma_{A_3}([\mathbf i]_{\mathfrak R})$
for $\mathbf i=123121$}
\label{fig:troncation}
\end{figure}
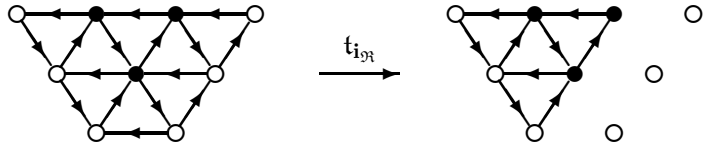
The \emph{right truncated seed $\mathcal X$-torus}
${\mathcal X}_{[\mathbf{i}]_{\mathfrak R}}(t)$ associated to any $t\in H$ is the subset
of ${\mathcal X}_{[\mathbf{i}]_{\mathfrak R}}$ obtained by fixing the
following cluster variables $\mathbf x(\mathfrak R)$ associated to right
outlets
\begin{equation}\label{equ:xR}
\mathbf x(\mathfrak R)=\{x_j|{j\in I^{\mathfrak R}_0(\mathbf i)}\}
\end{equation}
via the equality
\begin{equation}\label{equ:ev1}
\ev_{\mathbf 1}(x_{\binom{1}{N^1(\mathbf i)}},\dots
,x_{\binom{l}{N^l(\mathbf i)}})=t\ .
\end{equation}
Equation (\ref{equ:defJ}) implies that ${\mathcal X}_{\mathbf{i}}(t)$ is a
Poisson submanifold of ${\mathcal X}_{[\mathbf{i}]_{\mathfrak R}}$ and
according to Definition \ref{def:truncatedtorus} the right truncation map
$\mathfrak t_{\mathbf i_{\mathfrak R}}$ on seeds induces
\emph{right truncation maps} on seed $\mathcal X$-tori
$\mathfrak t_{\mathbf i_{\mathfrak R}(t)}:{\mathcal X}_{\mathbf{i}}
\to{\mathcal X}_{[\mathbf{i}]_{\mathfrak R}}(t)$. The following result
is clear.

\begin{lemma}\label{lemma:muR}
Let $\mathbf i$ be a double word. The right truncated seed $\mathcal X$-torus
${\mathcal X}_{[\mathbf{i}]_{\mathfrak R}}$ is Poisson isomorphic to the
direct product ${\mathcal X}_{\mathbf{i}}^{\red}\times{\mathcal X}_{\mathbf{1}}$ of seed
$\mathcal X$-tori. Moreover, the following equality is satisfied for any double word
$\mathbf j$ obtained from $\mathbf i$ by a sequence of generalized $dn$-moves.
\begin{equation}\label{equ:muR}
\begin{array}{rl}
\mu_{[\mathbf i]_{\mathfrak R}\to[\mathbf j]_{\mathfrak R}}:&{\mathcal X}_{[\mathbf{i}]_{\mathfrak R}}
\to{\mathcal X}_{[\mathbf{j}]_{\mathfrak R}}\\
\\
x_{\mu_{[\mathbf i]_{\mathfrak R}\to[\mathbf j]_{\mathfrak R}}(i)}=&\left\{
\begin{array}{lll}
x_{\mu_{\mathbf i\to\mathbf j}(i)}&\mbox{if }i\in I\backslash J\ ;\\
x_i&\mbox{if }i\in J\ .
\end{array}
\right.
\end{array}
\end{equation}
\end{lemma}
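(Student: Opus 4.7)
The plan is to treat the two assertions separately. For the first, the Poisson splitting follows directly from the defining formula (\ref{equ:defJ}): the skew-symmetrized matrix $\widehat{\eta}_{ij} = \eta_{ij}d_{k(j)}$ vanishes whenever $i$ or $j$ lies in $I^{\mathfrak{R}}_0(\mathbf i)$, so the log-canonical bracket decouples. By the definition of $I_0^{\mathfrak{R}}(\mathbf i)$ there is exactly one right outlet per vertex-type, namely $(j,N^j(\mathbf i))$ for each $j\in[1,l]$, with multiplier $d_j$; this factor is therefore canonically identified with the seed $\mathcal X$-torus $\mathcal X_{\mathbf 1}$. On the complement, (\ref{equ:defJ}) gives $\eta_{ij}=\varepsilon(\mathbf i)_{ij}$ for $i,j\in I(\mathbf i)\setminus I_0^{\mathfrak{R}}(\mathbf i)$, and since the reduced seed $\mathbf i^{\red}$ was obtained precisely by applying the erasing map $\varsigma_{\mathfrak{R},\mathbf i}$ of (\ref{equ:proderase}), this block agrees with the seed defining $\mathcal X_{\mathbf i}^{\red}$.

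For the second assertion, I would proceed by induction on the length of the sequence $\delta_{\mathbf i\to\mathbf j}$. It is enough to treat a single elementary mutation $\mu_k$ in some direction $k\in I\setminus I_0$ (nil-moves bring in an erasing map $\varsigma_{\binom i{1}}$, but this only removes a non-outlet vertex and commutes trivially with truncation on $I_0^{\mathfrak R}$). Two facts must be checked. First, for the cluster variables: the $\mathcal X$-mutation formula of Definition~\ref{def:mutation} uses $\eta$ in place of $\varepsilon$ in the truncated seed, and since $k\notin I_0\supset I_0^{\mathfrak R}$, we have $\eta_{ik}=\varepsilon_{ik}$ for $i\notin I_0^{\mathfrak R}$ and $\eta_{ik}=0$ for $i\in I_0^{\mathfrak R}$; hence the mutation acts as $x_{\mu_k(i)}=x_{\mu_{\mathbf i\to\mathbf j}(i)}$ off the right outlets and as the identity on them, matching (\ref{equ:muR}). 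Second, the block structure is preserved by mutation: applying the matrix mutation rule with $i$ or $j$ in $I_0^{\mathfrak R}$ gives $\eta'_{ij}=\eta_{ij}+\sgn(\eta_{ik})[\eta_{ik}\eta_{kj}]_+=0$ since one of $\eta_{ik},\eta_{kj}$ vanishes, while for $i,j\notin I_0^{\mathfrak R}$ the same rule, together with $\eta_{ik}=\varepsilon_{ik}$ and $\eta_{kj}=\varepsilon_{kj}$, gives $\eta'_{ij}=\varepsilon'_{ij}$. Thus after one step the hypothesis of Part~1 is reproduced on the mutated seed, and the induction continues.

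Composite cluster transformations attached to generalized $d$-moves (for example the ten-mutation string for a $6$-move in (\ref{equ:elemmut})) are handled by applying the single-step argument repeatedly, together with the fact that the mutation directions listed in (\ref{equ:elemmut}) all involve non-outlet vertices of the intermediate seeds. The resulting map is precisely the truncated cluster transformation of Definition~\ref{def:tronc} applied to $\mu_{\mathbf i\to\mathbf j}$, so the formula (\ref{equ:muR}) follows. I expect the only potentially delicate point to be keeping track of how the labels of non-outlet vertices are permuted by the intermediate mutations in a $6$-move, so that $\mu_{\mathbf i\to\mathbf j}(i)$ on the right-hand side of (\ref{equ:muR}) is interpreted consistently; this is purely combinatorial bookkeeping and is unwound by comparing with the explicit definitions already in Section~\ref{section:ClusterG}.
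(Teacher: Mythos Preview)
Your argument is correct. The paper does not give a proof of this lemma at all: it is stated with the preface ``The following result is clear,'' relying on the general framework of truncation maps set up immediately before (Definitions~\ref{def:tronc}--\ref{def:truncatedtorus} and in particular Lemma~\ref{lemma:obstrunc}, which says that truncation commutes with cluster transformations whenever $J\subset I_0$; here $J=I_0^{\mathfrak R}(\mathbf i)\subset I_0(\mathbf i)$). Your write-up simply unpacks this in full: the Poisson splitting from the vanishing of $\eta$ on the right-outlet block in~(\ref{equ:defJ}), and the single-mutation induction that verifies both the variable formula and the preservation of the block structure. This is exactly the content the paper leaves implicit, so there is no methodological difference---you have supplied the details, and the only caveat you raise (bookkeeping of intermediate labels in the $6$-move) is indeed purely combinatorial and already absorbed into the notation conventions of~(\ref{equ:elemmut}).
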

It implies in particular that the cluster transformation
$\mu_{[\mathbf i]_{\mathfrak R}\to[\mathbf j]_{\mathfrak R}}$ sends
${\mathcal X}_{[\mathbf{i}]_{\mathfrak R}}(t)$ to ${\mathcal X}_{[\mathbf{j}]_{\mathfrak R}}(t)$ for
every $\mathbf j$ linked to $\mathbf i$ by composition of generalized
$d$-moves. Finally, the cluster $\mathcal X$-variety
${\mathcal X}_{|[\mathbf i]_{\mathfrak R}|}$, constructed from the
seed $\mathcal X$-torus ${\mathcal X}_{[\mathbf i]_{\mathfrak R}}$ associated
to a double word $\mathbf i$ is called a \emph{right truncated cluster
$\mathcal X$-variety}. By Lemma \ref{lemma:obstrunc}, there exists
a \emph{truncation map} $\mathfrak t_{\mathfrak R}$ associating to every
cluster $\mathcal X$-variety ${\mathcal X}_{|\mathbf i|}$ its right truncated
cluster $\mathcal X$-variety ${\mathcal X}_{|[\mathbf i]_{\mathfrak R}|}$:
$$\mathfrak t_{\mathfrak R}:{\mathcal X}_{|\mathbf i|}\mapsto
{\mathcal X}_{|[\mathbf i]_{\mathfrak R}|}\ .$$

\subsection{Dual evaluations and first cluster $\mathcal X$-varieties
related to $(G,\pi_*)$}
We use the previous truncated cluster $\mathcal X$-varieties to
start the geometrical combinatorics of the Poisson manifold $(G,\pi_*)$.
For every element $x\in B_-B$, let $x=[x]_-[x]_0[x]_+$ be
its {Gauss decomposition}, that is: $[x]_{\pm}$ belongs to the unipotent parts $N_{\pm}$
of the respective Borel subgroups $B_{\pm}$ and $[x]_0$ to the Cartan part $H$ of $G$.
We will also consider the following notations.
\begin{equation}\label{equ:Gaussdec}
[x]_{-}[x]_{\geq 0}=[x]_-[x]_0[x]_+=[x]_{\leq 0}[x]_+\ .
\end{equation}

\begin{definition}
Let $v\in W$. For every $\mathbf i\in R(v,w_0)$, we define the \emph{dual evaluation map}
$\ev_{\mathbf i}^{\dual}:{\mathcal X}_{[\mathbf{i}]_{\mathfrak R}}\rightarrow G$
by the formula:
\begin{equation}\label{equ:evdual}
\begin{array}{lllll}
\ev_{\mathbf i}^{\dual}(\mathbf{x})=\ev_{\mathbf i}(\mathbf x)\widehat{w_0}
\ [\ev_{\mathbf i}^{\red}(\mathbf x)\widehat{w_0}]_{\leq 0}^{-1}\ .
\end{array}
\end{equation}
\end{definition}
Dual evaluations will be generalized into twisted evaluations
in Subsection \ref{section:twistedeval}. (We refer to Remark \ref{rem:evdualtwistev}
for more details.) For the moment, let us remember the Poisson stratification
(\ref{bruhatdec}) of $(G,\pi_*)$.

\begin{thm}\label{thm:evdual}For every $v\in W$, $t\in H$ and $\mathbf i\in R(v,w_0)$,
the map $\ev^{\dual}_{\mathbf i}:{\mathcal X}_{[\mathbf{i}]_{\mathfrak R}}(t)\rightarrow (F_{t,w_0v^{-1}},\pi_*)$
is a Poisson birational isomorphism onto a Zariski open set $F_{t,w_0v^{-1}}^0$
of $F_{t,w_0v^{-1}}$.
\end{thm}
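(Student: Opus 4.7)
\ The plan is to factor $\ev^{\dual}_{\mathbf i}$ as the composition of the Fock--Goncharov evaluation $\ev_{\mathbf i}$ with an explicit birational twist, and to identify that twist with a specialisation of the Evens--Lu birational isomorphism of \cite{EL}. By Theorem~\ref{thm:evG}, $\ev_{\mathbf i}\colon{\mathcal X}_{\mathbf i}\to(G^{v,w_0},\pi_G)$ is already a Poisson birational isomorphism onto a Zariski open set, so the task reduces to analysing the map
\[
\Phi\colon g\longmapsto g\,\widehat{w_0}\,\bigl[g\,h_R^{-1}\,\widehat{w_0}\bigr]_{\leq 0}^{-1},
\]
where $h_R=\prod_{j}H^{j}(x_{\binom{j}{N^{j}(\mathbf i)}})$ is the Cartan factor that the truncation defining ${\mathcal X}_{[\mathbf i]_{\mathfrak R}}(t)$ constrains to equal~$t$.

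First I would identify the image. Setting $g=\ev_{\mathbf i}(\mathbf x)$ and writing $g\widehat{w_0}=y_-y_0y_+$ for the Gauss decomposition in $N_-HN_+$ (which exists on a Zariski open set, since $g\widehat{w_0}\in B_-\widehat{w_0}B_-\widehat{w_0}=B_-\widehat{w_0}^{2}B=B_-B$), and using the torus-covariance $[\xi h]_{\leq 0}=[\xi]_{\leq 0}h$ valid for $h\in H$, a direct manipulation gives
\[
\ev^{\dual}_{\mathbf i}(\mathbf x)\;=\;y_-\cdot(\widehat{w_0}^{-1}\,t\,\widehat{w_0})\cdot n'_+\cdot y_-^{-1}\qquad\text{for some }n'_+\in N_+.
\]
Hence $\ev^{\dual}_{\mathbf i}(\mathbf x)$ is $N_-$-conjugate to the element $(\widehat{w_0}^{-1}t\widehat{w_0})\,n'_+\in HN_+\subset B$, whose semisimple part $\widehat{w_0}^{-1}t\widehat{w_0}$ is $W$-conjugate to~$t$ by~(\ref{equ:actionW}), so it lies in the Steinberg fiber~$F_t$. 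The Bruhat membership $\ev^{\dual}_{\mathbf i}(\mathbf x)\in B\widehat{w_0v^{-1}}B_-$ then follows from $\ev^{\dual}_{\mathbf i}(\mathbf x)\in g\widehat{w_0}\cdot B_-\subset B\widehat{v}\widehat{w_0}B_-$, after identifying the cell via the chosen representatives in~$N_G(H)$ and absorbing the resulting $H$-ambiguity.

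Second I would establish the Poisson property by invoking \cite[Section~3]{EL}: the map $\Phi$ is a repackaging of the Evens--Lu birational isomorphism that carries the $\pi_G$-stratum $G^{v,w_0}$ (Proposition~\ref{prop:decG}) onto a Zariski open subset of the $\pi_*$-stratum $F_{t,w_0v^{-1}}$ (\cite[Proposition~3.3]{EL}), and is Poisson by their construction. Composition with the Poisson isomorphism $\ev_{\mathbf i}$ then yields the Poisson property for $\ev^{\dual}_{\mathbf i}$. Birationality follows from the dimension match $\dim{\mathcal X}_{[\mathbf i]_{\mathfrak R}}(t)=|I(\mathbf i)|-l=\ell(v)+\ell(w_0)=\dim F_{t,w_0v^{-1}}$, together with an explicit rational inverse: recover $y_-,y_0,y_+$ from the image by Gauss decomposition, reconstruct $g\widehat{w_0}$, and then invert $\ev_{\mathbf i}$ using Theorem~\ref{thm:evG}.

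The main obstacle will be the Poisson intertwining of the twist~$\Phi$. Verifying this requires importing the Evens--Lu Section~3 computations in the right coordinates and tracking representatives of Weyl-group elements, together with the appearance of $\widehat{w_0}^{2}\in H$, carefully enough to pin down the Bruhat index~$w_0v^{-1}$ (rather than the naive $vw_0$ coming out of the product $B\widehat{v}B\widehat{w_0}B_-$). Once this intertwining is secured, the claim that $\ev^{\dual}_{\mathbf i}$ is a Poisson birational isomorphism onto a Zariski open $F^0_{t,w_0v^{-1}}\subset F_{t,w_0v^{-1}}$ is immediate from the two factorization steps above.
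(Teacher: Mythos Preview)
Your factorization $\ev^{\dual}_{\mathbf i}=\Phi\circ\ev_{\mathbf i}$ (equivalently $\rho_{t,(v,e)_v}\circ\ev^{\red}_{\mathbf i}$) is exactly the one the paper records in Remark~\ref{rem:evdualtwistev}, and your Steinberg--fiber computation is sound. The problem lies in the Poisson step for~$\Phi$.

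You assert that ``the map $\Phi$ is a repackaging of the Evens--Lu birational isomorphism'' and invoke \cite[Section~3]{EL}. But the Evens--Lu Poisson isomorphism (which is \cite[Corollary~5.11]{EL}, not Section~3; Section~3 contains only the stratification statement cited here as \cite[Proposition~3.3]{EL}) is the $(e,e)_v$-map
\[
\rho_{t,(e,e)_v}\colon (G^{1,v}\times G^{1,w_0})/(H\times H)\;\longrightarrow\;F_{t,w_0v^{-1}},
\]
whereas your $\Phi$ is the $(v,e)_v$-map $\rho_{t,(v,e)_v}\colon L^{v,w_0}\to F_{t,w_0v^{-1}}$. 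These have genuinely different domains and are not related by an obvious Poisson identification; the paper states explicitly (in the sentence preceding Proposition~\ref{prop:w_1w_2}) that the Poisson property of the general $(w_1,w_2)_v$-maps is \emph{deduced from the forthcoming Theorem~\ref{thm:ev*1}}, not imported from~\cite{EL}. So the step you delegate to \cite{EL} is precisely the step that requires proof.

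The paper closes this gap by a detour. It embeds $R(v,w_0)=W(v,e)_v$ into the larger set $D(v)$ of $(w_1,w_2)_v$-words and defines twisted evaluations $\widehat{\ev}_{\mathbf i}$ for all of them (Section~\ref{section:twistedev}). For $(e,e)_v$-words these coincide with the Evens--Lu map, so \cite[Corollary~5.11]{EL} gives the Poisson birational isomorphism in that base case (Lemmas~\ref{lemme:lu} and~\ref{lemme:lu2}). The remaining cases, and in particular the $(v,e)_v$-case that yields $\ev^{\dual}_{\mathbf i}$, are then connected to the base case by explicit birational Poisson isomorphisms $\widehat{\mu}_{\mathbf i\to\mathbf j}$ built from cluster transformations and the saltations of Section~\ref{section:Loop} (Theorem~\ref{thm:ev*}). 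Composing with the already-Poisson $(e,e)_v$-case proves Theorem~\ref{thm:ev*1}, and Theorem~\ref{thm:evdual} drops out as the special case $\mathbf i\in W(v,e)_v$. In short, the passage from the Evens--Lu domain to $L^{v,w_0}$ is exactly the content the saltation machinery is built to supply; without it your Poisson claim for $\Phi$ is unsupported.
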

Theorem \ref{thm:evdual} will be deduced from Theorem \ref{thm:ev*1}.
The synthesis diagram, given by Figure \ref{fig:clustervariety} and relating,
for every $u,v\in W$, the cluster $\mathcal X$-variety $\mathcal X^{u,v}$
to the double Bruhat cell $G^{u,v}$ can therefore be adapted to get a
cluster $\mathcal X$-variety ${\mathcal X}_{v\leq v}(t)$ associated
to $(F_{t,w_0v^{-1}},\pi_*)$, for every $v\in W$ and every $t\in H$.
It is illustrated in Figure \ref{fig:clustervarietydual1}.
(The weird terminology for "${\mathcal X}_{v\leq v}(t)$" and "$\ev_{v\leq v}$"
will be explained in Subsection \ref{subsection:taucombdual}.)
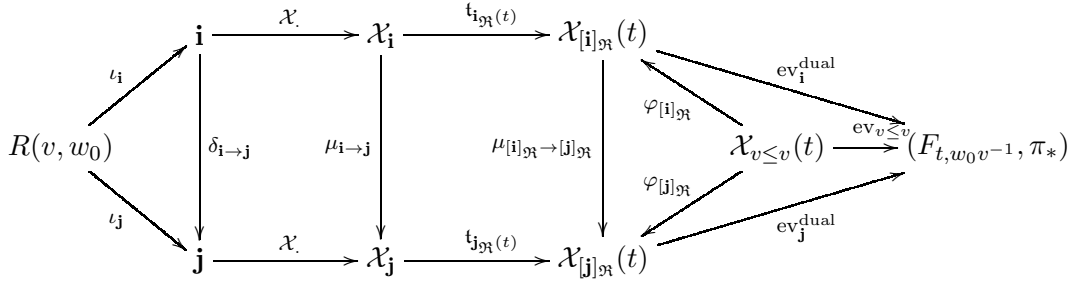
\begin{figure}[htbp]
\begin{center}
\setlength{\unitlength}{1.5pt}
\qquad\qquad
\xymatrix{
&\mathbf i\ar@/_0pc/[dd]^{\delta_{\mathbf i\to\mathbf j}}\ar@/_0pc/[rr]^{{\mathcal X}_.}&&
{\mathcal X}_{\mathbf i}\ar@/_0pc/[dd]_{\mu_{\mathbf i\to\mathbf j}}\ar@/_0pc/[rr]^{{\mathfrak t}_{\mathbf i_{\mathfrak R}(t)}}
&&{\mathcal X}_{[\mathbf i]_{\mathfrak R}}(t)\ar@/_0pc/[dd]_{\mu_{[\mathbf i]_{\mathfrak R}\to[\mathbf j]_{\mathfrak R}}}
\ar@/^0pc/[rrd]^{\ev_{\mathbf i}^{\dual}}&&\\
R(v,w_0)\ar@/^0pc/[ur]^{\iota_{\mathbf i}}\ar@/^0pc/[rd]_{\iota_{\mathbf j}}&&&&&&{{\mathcal X}_{v\leq v}(t)}
\ar@/^0pc/[lu]^{\varphi_{[\mathbf i]_{\mathfrak R}}}\ar@/^0pc/[ld]_{\varphi_{[\mathbf j]_{\mathfrak R}}}
\ar@/^0pc/[r]^{\ev_{v\leq v}}&(F_{t,w_0v^{-1}},\pi_*)\\
&\mathbf j\ar@/_0pc/[rr]^{{\mathcal X}_.}&&{\mathcal X}_{\mathbf {j}}\ar@/_0pc/[rr]^{{\mathfrak t}_{\mathbf j_{\mathfrak R}(t)}}
&&{\mathcal X}_{[\mathbf j]_{\mathfrak R}}(t)\ar@/^0pc/[rru]_{\ev_{\mathbf j}^{\dual}}&&
}
\end{center}
\vspace{-.1in}
\caption{The cluster $\mathcal X$-variety ${\mathcal X}_{v\leq v}(t)$ associated
to $(F_{t,w_0v^{-1}},\pi_*)$}
\label{fig:clustervarietydual1}
\end{figure}
Finally, for every double word $\mathbf i$, let ${\mathcal X}_{\mathbf i}^{\dual}
\subset{\mathcal X}_{[\mathbf{i}]_{\mathfrak R}}$ be {such that the elements of the
set of variables $\mathbf x(\mathfrak R)$ are pairwise disjoint. It is a Poisson
submanifold of ${\mathcal X}_{[\mathbf{i}]_{\mathfrak R}}$ because of (\ref{equ:defJ})}.
Thus, the following corollaries are respectively deduced from the second decomposition of (\ref{equ:decG^*})
with Theorem \ref{thm:evdual}, and Theorem \ref{fg} with Lemma \ref{lemma:muR}
and equation (\ref{equ:evdual}).

\begin{cor}For every $\mathbf i\in R(w_0,w_0)$,
the map $\ev^{\dual}_{\mathbf i}:{\mathcal X}_{\mathbf i}^{\dual}
\rightarrow (BB_-,\pi_*)$ is a Poisson
birational isomorphism on a Zariski open set of $BB_-$.
\end{cor}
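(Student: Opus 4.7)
The plan is to derive this corollary directly from Theorem~\ref{thm:evdual} specialised to $v = w_0$ together with the second (disjoint) decomposition in~(\ref{equ:decG^*}). When $v = w_0$ one has $w_0 v^{-1} = 1$, so Theorem~\ref{thm:evdual} yields, for every $t \in H$, a Poisson birational isomorphism
\[
\ev_{\mathbf i}^{\dual} \colon \mathcal X_{[\mathbf i]_{\mathfrak R}}(t) \longrightarrow F_{t,1}^{0} \subset F_{t,1}
\]
onto a Zariski open subset of $F_{t,1} = BB_- \cap F_t$. Intersecting the disjoint decomposition $G = \bigsqcup_{[t] \in H \backslash W,\, w \in W} F_{t,w}$ with the single Bruhat cell $BB_- = B\widehat{1}B_-$ gives the partition $BB_- = \bigsqcup_{[t] \in H \backslash W} F_{t,1}$, which is the codomain-side picture I want to match.

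Next I would exploit the fact that, by~(\ref{equ:defJ}), the variables in $\mathbf x(\mathfrak R)$ are Poisson-central in $\mathcal X_{[\mathbf i]_{\mathfrak R}}$, so the level sets $\mathcal X_{[\mathbf i]_{\mathfrak R}}(t)$ (cut out by~(\ref{equ:ev1})) form a Poisson foliation of $\mathcal X_{[\mathbf i]_{\mathfrak R}}$. The genericity condition defining $\mathcal X_{\mathbf i}^{\dual}$ — the elements of $\mathbf x(\mathfrak R)$ being pairwise disjoint — picks out the regular locus of $H$ where the $W$-stabiliser is trivial, and after choosing a fundamental domain for the $W$-action on the regular part, this identifies $\mathcal X_{\mathbf i}^{\dual}$ (up to a Zariski closed subset) with the disjoint union $\bigsqcup_{[t] \in H \backslash W} \mathcal X_{[\mathbf i]_{\mathfrak R}}(t)$ of the leaves of this foliation.

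Combining the two previous steps, the leafwise birational isomorphisms of Theorem~\ref{thm:evdual} assemble into a birational isomorphism
\[
\ev_{\mathbf i}^{\dual} \colon \mathcal X_{\mathbf i}^{\dual} \longrightarrow \bigsqcup_{[t] \in H \backslash W} F_{t,1}^{0} \subset BB_-,
\]
whose image is Zariski open because each $F_{t,1}^{0}$ is open in its Steinberg stratum and the stratification is locally finite. The only nontrivial point that I expect to need genuine care is verifying that the assembled map is Poisson \emph{globally} on $\mathcal X_{\mathbf i}^{\dual}$ and not just on each leaf; this reduces to checking that the Poisson-central cluster variables $\mathbf x(\mathfrak R)$ pull back, through $\ev_{\mathbf i}^{\dual}$, to regular class functions on $BB_-$ whose common level sets are exactly the Steinberg fibres, which follows from the definition~(\ref{equ:evdual}) of the dual evaluation together with~(\ref{equ:ev1}) and the description of the symplectic foliation of $\pi_*$ recalled in Proposition~3.3 of~\cite{EL}.
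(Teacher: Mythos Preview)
Your overall strategy is exactly the paper's: invoke Theorem~\ref{thm:evdual} with $v=w_0$ and assemble over the Steinberg-fibre decomposition~(\ref{equ:decG^*}) of $BB_-$. The paper's own proof is nothing more than the sentence citing these two ingredients.

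Where your elaboration goes beyond the paper, however, there is a gap. Your reading of the ``pairwise disjoint'' condition on $\mathbf x(\mathfrak R)$ as cutting out the $W$-regular locus of $H$ is not right (in rank one there is a single variable, so the condition is vacuous while the regular locus is proper), and in any case a fundamental domain for the free $W$-action on $H^{\mathrm{reg}}$ is never Zariski open. Hence the identification you propose between $\mathcal X_{\mathbf i}^{\dual}$ and $\bigsqcup_{[t]\in H/W}\mathcal X_{[\mathbf i]_{\mathfrak R}}(t)$ cannot be made algebraically, and assembling the leafwise isomorphisms as you describe would yield a generically $|W|$-to-one cover of (an open piece of) $BB_-$ rather than a birational isomorphism. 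In fairness the paper's one-line deduction does not address this point either; the precise mechanism by which the ``pairwise disjoint'' condition secures birationality is not spelled out there.
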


\begin{cor}\label{cor:evdual} For any $v\in W$ and $\mathbf i,\mathbf j\in R(v,w_0)$,
we have $\ev_{\mathbf{i}}^{\dual}=\ev_{\mathbf{j}}^{\dual}
\circ{\mu}_{[\mathbf{i}]_{\mathfrak R}\rightarrow[\mathbf j]_{\mathfrak R}}$.
\end{cor}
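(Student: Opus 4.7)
The plan is to reduce Corollary \ref{cor:evdual} to Corollary \ref{cor:evredmut} by exhibiting $\ev_{\mathbf i}^{\dual}$ as a composite in which the two data sets that enter are exactly the ones governed by the existing results: the reduced evaluation $\ev_{\mathbf i}^{\red}$ on one hand, and the Cartan element carried by the right-outlet variables on the other.

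First, I would rewrite \eqref{equ:evdual} by isolating the product of right-outlet factors. Setting $t(\mathbf x)=\prod_{j\in[1,l]}H^j(x_{\binom{j}{N^j(\mathbf i)}})$, the very definition of $\ev_{\mathbf i}^{\red}$ gives $\ev_{\mathbf i}(\mathbf x)=\ev_{\mathbf i}^{\red}(\mathbf x)\,t(\mathbf x)$, so
\begin{equation*}
\ev_{\mathbf i}^{\dual}(\mathbf x)
=\ev_{\mathbf i}^{\red}(\mathbf x)\,t(\mathbf x)\,\widehat{w_0}\,
\bigl[\ev_{\mathbf i}^{\red}(\mathbf x)\,\widehat{w_0}\bigr]_{\leq 0}^{-1}.
\end{equation*}
Thus $\ev_{\mathbf i}^{\dual}$ depends on $\mathbf x$ only through the pair $\bigl(\ev_{\mathbf i}^{\red}(\mathbf x),\,t(\mathbf x)\bigr)$, i.e.\ through the natural factorization $\mathcal{X}_{[\mathbf i]_{\mathfrak R}}\cong\mathcal{X}_{\mathbf i}^{\red}\times\mathcal{X}_{\mathbf 1}$ of Lemma \ref{lemma:muR}.

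Second, I would verify that both factors behave correctly under $\mu_{[\mathbf i]_{\mathfrak R}\to[\mathbf j]_{\mathfrak R}}$. Since $\mathbf i,\mathbf j\in R(v,w_0)$, the multiset of letters of a reduced word depends only on $(v,w_0)$, so $N^j(\mathbf i)=N^j(\mathbf j)$ for every $j$ and the right outlets of $\mathbf i$ and $\mathbf j$ are canonically labelled by the same index set. Lemma \ref{lemma:muR} then tells us that $\mu_{[\mathbf i]_{\mathfrak R}\to[\mathbf j]_{\mathfrak R}}$ acts as the identity on the variables $\{x_{\binom{j}{N^j(\mathbf i)}}\}_{j\in[1,l]}$, so the Cartan element $t(\mathbf x)$ is preserved. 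On the remaining (non right-outlet) variables Lemma \ref{lemma:muR} identifies $\mu_{[\mathbf i]_{\mathfrak R}\to[\mathbf j]_{\mathfrak R}}$ with the reduced cluster transformation $\mu_{\mathbf i\to\mathbf j}^{\red}$, and by Corollary \ref{cor:evredmut} we have $\ev_{\mathbf i}^{\red}=\ev_{\mathbf j}^{\red}\circ\mu_{\mathbf i\to\mathbf j}^{\red}$.

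Finally, substituting these two invariance statements into the displayed formula yields
\begin{equation*}
\ev_{\mathbf j}^{\dual}\bigl(\mu_{[\mathbf i]_{\mathfrak R}\to[\mathbf j]_{\mathfrak R}}(\mathbf x)\bigr)
=\ev_{\mathbf i}^{\red}(\mathbf x)\,t(\mathbf x)\,\widehat{w_0}\,
\bigl[\ev_{\mathbf i}^{\red}(\mathbf x)\,\widehat{w_0}\bigr]_{\leq 0}^{-1}
=\ev_{\mathbf i}^{\dual}(\mathbf x),
\end{equation*}
which is exactly the claim. The only non-routine point is the book-keeping in step two — one must be sure the bijection between right outlets of $\mathbf i$ and of $\mathbf j$ provided by Lemma \ref{lemma:muR} is the identity on indices labelled by $\binom{j}{N^j(\mathbf i)}$; this is where the assumption $\mathbf i,\mathbf j\in R(v,w_0)$ (and not merely in $D(v,w_0)$) enters, via the equality $N^j(\mathbf i)=N^j(\mathbf j)$. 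Everything else is an immediate composition of results already established.
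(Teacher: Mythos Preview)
Your approach is essentially the paper's: the corollary is deduced from equation~\eqref{equ:evdual}, Lemma~\ref{lemma:muR}, and the compatibility of evaluations with cluster transformations (the paper cites Theorem~\ref{fg}; you use its immediate consequence Corollary~\ref{cor:evredmut}). The decomposition $\ev_{\mathbf i}^{\dual}(\mathbf x)=\ev_{\mathbf i}^{\red}(\mathbf x)\,t(\mathbf x)\,\widehat{w_0}\,[\ev_{\mathbf i}^{\red}(\mathbf x)\,\widehat{w_0}]_{\leq 0}^{-1}$ is exactly the right way to see it.

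One point needs correction, though it does not damage the argument. The claim that ``the multiset of letters of a reduced word depends only on $(v,w_0)$, so $N^j(\mathbf i)=N^j(\mathbf j)$'' is false: already in type $A_2$ the reduced words $121$ and $212$ for $w_0$ have different letter multisets, so $N^1$ and $N^2$ differ. What you actually need---and what Lemma~\ref{lemma:muR} already gives you without this claim---is that under the isomorphism ${\mathcal X}_{[\mathbf i]_{\mathfrak R}}\cong{\mathcal X}_{\mathbf i}^{\red}\times{\mathcal X}_{\mathbf 1}$ the right-outlet factor is canonically indexed by the vertex-type $j\in[1,l]$ (not by the label $\binom{j}{N^j(\mathbf i)}$), and $\mu_{[\mathbf i]_{\mathfrak R}\to[\mathbf j]_{\mathfrak R}}$ is the identity on that ${\mathcal X}_{\mathbf 1}$ factor. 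Hence the right-outlet variable of type $j$ in $\mathbf i$ is carried to the right-outlet variable of type $j$ in $\mathbf j$, and $t(\mathbf x)=\ev_{\mathbf 1}(\mathbf x(\mathfrak R))$ is preserved, regardless of whether $N^j(\mathbf i)$ and $N^j(\mathbf j)$ agree. With that fix, your proof is complete.
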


%\begin{rem}For every $\mathbf i\in R(w_0,w_0)$, the set $\mathbf x(\mathfrak R)$
%is the set of Casimir functions on the seed $\mathcal X$-tori
%${\mathcal X}_{[\mathbf{i}]_{\mathfrak R}}$ and, hence, of the associated right
%truncated cluster $\mathcal X$-variety.
%\end{rem}

\section{$\tau$-moves, tropical mutations, and twist maps}
\label{section:tropical}
We enlarge the combinatorics on double words and on their related
seed $\mathcal X$-tori by introducing respectively new moves
based on the involution $i\mapsto\overline{i}$ and new birational
Poisson isomorphisms on seed $\mathcal X$-tori
obtained by a tropicalization of the mutation formula.
This enables us to describe the Fomin-Zelevinsky twist maps and
their variations in terms on cluster transformations and tropical mutations.

\subsection{$\tau$-moves and tropical mutations}
We introduce \emph{$\tau$-moves} and \emph{tropical mutations}. From now on,
we suppose that for every seed $\mathbf I=(I,I_0,\varepsilon,d)$, the related
matrix $\varepsilon$ is such that $\varepsilon_{ij}$ is a rational number, for
every $i,j\in I$.

\subsubsection{The right and left $\tau$-moves}We now consider the map
$i\mapsto\overline{i}$ as an involution on $[1,l]\cup[\overline{1},\overline{l}]$.
Let us first enrich the combinatorics on double words.

\begin{definition}\label{def:tropicalmoves} For every double word
$\mathbf i=i_1\dots i_n$, let ${\mathfrak L}(\mathbf i)$ (resp. ${\mathfrak R}(\mathbf i)$)
be the double word obtained by changing the first letter
(resp. last letter) ${i}$ of $\mathbf i$ into $\overline i$:
$$\begin{array}{ccc}
{\mathfrak L}_{i_1}(\mathbf i)=\overline{i_1}i_2\dots i_n
&\mbox{and}&{\mathfrak R}_{i_n}(\mathbf i)=i_1\dots i_{n-1}\overline{i_n}\ .
\end{array}
$$
The map $\mathbf i\mapsto {\mathfrak L}_{i_1}(\mathbf i)$ (resp. $\mathbf i
\mapsto {\mathfrak R}_{j_n}(\mathbf i)$) is called a \emph{left} (resp.
\emph{right}) \emph{$\tau$-move} on $\mathbf i$. (We will simply
denote these maps $\mathbf i\mapsto {\mathfrak L}(\mathbf i)$ and $\mathbf i
\mapsto {\mathfrak R}(\mathbf i)$ when no confusion occurs.)
\end{definition}

\begin{rem}\label{rem:stabletauset}The set $R(u,v)$ of double reduced words associated to
the elements $u,v\in W$ is generally stable by neither left nor right $\tau$-moves. Even the set
of all the double reduced words associated to $W$ is stable by neither left nor right
$\tau$-moves. However it contains subsets that are stable. For example, consider the
following set $R^{\tau}(w)$ associated to any $w\in W$.
$$R^{\tau}(w)=\displaystyle\bigcup_{w'\leq w\in W}R({w'}^{-1},w{w'}^{-1})\ .$$
It is stable by right $\tau$-moves and will be used to describe the combinatorics
associated to the dual Poisson-Lie group $(G^*,\pi_{G^*})$.
\end{rem}

\subsubsection{Tropical mutations}
The $\tau$-moves on double words lead to a new type of
mutations on the associated seed $\mathcal X$-tori, called
{tropical mutations} and defined in the following way.
We first identify $\mathbb Q$ with the Cartesian product
$\mathbb Z\times\mathbb N\backslash\{0\}$, by decomposing every
element into its numerator $p$ and its positive denominator $q$ with
$(p,q)=1$, and associate to every value $\varepsilon_{ij}$ of
$\varepsilon$ its numerator $b_{ij}$. So we have $b_{ij}=
\varepsilon_{ij}$ unless $i,j\in I_0$; let us, from now on, suppose that
the denominator $q$ is the same for every $i,j\in I_0$.
In particular, we recall that for every double
word $\mathbf i$ we have
\begin{equation}\label{equ:b}
b(\mathbf i)_{kl}=\left\{
\begin{array}{cc}
2\varepsilon(\mathbf i)_{kl}& \mbox{for } k,l\in I_0\ ;\\
\varepsilon(\mathbf i)_{kl}& \mbox{otherwise}\ .
\end{array}
\right.
\end{equation}
Then we need an additional data of the set of outlets of a seed.

\begin{definition}Let $\mathbf I=(I_0,I,\varepsilon,d)$ be a seed
such that $I_0$ is not empty.
A \emph{cover} $\mathfrak{C}$ on $\mathbf I$ is a family of sets
$I_1,\dots,I_n\subset I_0$ such that $I_0=\cup_{i=1}^{n}I_i$.
(The union is not necessary disjoint.)
For every $k\in I_0$, we denote $I_0(k)$ the union
$$I_0(k):=\bigcup_{\{i\mid k\in I_i\}}I_i\ .$$
\end{definition}

\begin{rem}Every seed $\mathbf I=(I_0,I,\varepsilon,d)$ with a non empty set $I_0$
comes from two trivial covers, where the first one is obtained by setting $I_1:=I_0$,
and, on the opposite way, where the second one is given by the union $I_0=\cup_{k\in I_0}\{k\}$.
\end{rem}

\begin{definition}\label{def:trop}
Let $\mathbf{I}=(I,I_0,\varepsilon,d)$ and $\mathbf {I'}=(I',{I'}_0,{\varepsilon}',d')$
be two seeds with covers, and $k\in I_0$. A $\emph{tropical mutation}$ $\emph{in the direction k}$
is an involution $\mu_k:\mathbf I\longrightarrow\mathbf{I'}$
satisfying the following conditions:

(i)$\mu_k(I_0(i))={I'}_0(i)$;

(ii) ${d'}_{\mu_k(i)}=d_i$;

{(iii)$$
\varepsilon'_{\mu_k(i)\mu_k(j)} =
\begin{cases}
-\varepsilon_{ij} & \text{if $i=k$ or $j=k$;} \\[.05in]
\varepsilon_{ij}
& \text{if $i,j\in I_0(k)\backslash\{k\}$};\\
\varepsilon_{ij} -\varepsilon_{ik}b_{kj} &\text{otherwise.}
\end{cases}
$$
Tropical mutations induce maps between the corresponding seed $\mathcal X$-tori,
which are denoted by the same symbols $\mu_k$ and given by
\begin{equation}\label{equ:formuletropmut}
x_{\mu_k(i)}=\left\{ \begin{array}{lll}
{x_k}^{-1}& \mbox{if}\ i=k;\\
x_ix_k^{b_{ki}}& \mbox{if}\ i\in I_0(k)\backslash\{k\};\\
x_i& \mbox{otherwise}.
\end{array}\right.
\end{equation}}
\end{definition}

\begin{rem}
It is easy to see that, as mutations, tropical mutations are involutions.
\end{rem}

%\begin{rem}
%Let us stress that a tropical mutation is not a cluster transformation.
%Indeed, the cluster variable $x_k$ associated to an outlet $k$ is mapped onto
%its inverse $x_k^{-1}$ by the tropical mutation in the direction $k$. Now,
%any symmetry is a permutation of the set of outlets, and mutations don't mix
%cluster variables associated to the set of outlets because of the formula of
%Definition \ref{def:mutation}, so there is no way that we can obtain the inverse
%variable $x_k^{-1}$ from $x_k$, as given by the first line of
%(\ref{equ:formuletropmut}), by a cluster transformation.
%\end{rem}

\begin{rem} Here is the reason underlying the terminology for \emph{tropical mutation}.
Following \cite[Example 5.6]{FZ1},
let us consider the abelian group (written multiplicatively) freely generated
by the cluster variables $x_i \, (i \in I)$, given with the addition
$$
\prod_i x_i^{a_i} \boxplus \prod_i x_i^{b_i} :=
\prod_i x_i^{\min (a_i, b_i)}.
$$
This tropical addition $\boxplus$ has not to be
confused with the tropical addition $\oplus$ in the usual tropical setting:
$
a\odot b = a + b$ and
$a \oplus b ={\rm min} (a,b)
$.
But is easy to see that they are strongly related:
$
\prod_i x_i^{a_i} \boxplus \prod_i x_i^{b_i} =
\prod_i x_i^{a_i\oplus b_i}$.

It turns out that the left and right tropical mutations, associated
respectively to left and right $\tau$-moves $\mathfrak{L}_{\overline{i}}$
and $\mathfrak{R}_j$ (but not $\mathfrak{L}_{{i}}$
and $\mathfrak{R}_{\overline j}$ !),
with $i,j\in[1,l]$ and described in the next subsection,
can alternatively be defined by the following formula, obtained by tropicalizing
the mutation formula in Definition \ref{def:mutation}:
\begin{equation}\label{equ:tropmut}
x_{\mu_k(i)}=
\left\{ \begin{array}{lll}
{x_k}^{-1}& \mbox{if}\ i=k;\\
x_ix_k^{(b_{ik})_+}(1\boxplus x_k)^{-b_{ik}}& \mbox{otherwise}.
\end{array}\right.
\end{equation}
The formula (\ref{equ:tropmut}) has relatives in the cluster algebra literature:
\begin{itemize}
\item
The formula covers the mutation combinatorics \cite{FZ4} attached
to a labeled $Y$-seed $(\mathbf y, B)$
%, in the tropical semi field given above,
defined by $\mathbf y=\{x_i| i\in I_0\}$ and $B=(b_{ij})_{i,j\in  I_0}$.
%In fact, the combinatorics on seed $\mathcal X$-tori induced by mutations
%and tropical mutations is very near from the one induced by mutations on
%special cases of $(M,\mathbb P,\widehat{\mathbb{P}})$-seeds \cite[Remark 6.5]{FZ4}.
\item
Up to the rescaling $\varepsilon\rightarrow b$ given by
(\ref{equ:b}) and the choice of direction, the formula (\ref{equ:tropmut}) is equal
to the monomial part $\mu'_k$ of mutations
respecting to the decomposition given in \cite[Section 2.3]{FGdilog2}.
\item
Compositions of mutations and tropical mutations can be used to describe the
modified octohedron recurrence of Henriques and Kamnitzer \cite{HK}. The related Poisson
dynamics on double Bruhat cells involves twist maps and will be given in a separated paper \cite{RB3}.
\end{itemize}
\end{rem}

\begin{rem}\label{rem:tropamal2}Like the amalgamation product, tropical mutations
act on cluster variables $x_j$ associated to outlets, therefore the commutation
between them is not always satisfied.
In fact a tropical mutation, acting non trivially on a set of cluster variables
$\{x_j\}_{j\in J}$ associated to a set of outlets $J$, commutes with the amalgamation
product if and only we have $L\cap J=\emptyset$, where the set $L$ denotes the set
of amalgamation, as given in Definition~\ref{def:amalg}.
\end{rem}

Finally, in the same way that mutations and symmetries were respecting the set of outlets in the
definitions of Subsection \ref{section:backcluster}, we suppose, from now on, that they respect
covers on seeds.

\begin{definition}
A \emph{generalized cluster transformation} linking two seeds
(and two seed $\mathcal X$-tori) is a composition of symmetries, mutations, and
tropical mutations.
\end{definition}

\subsubsection{Left and right tropical mutations}
We are now ready to describe the way to relate left and $\tau$-moves to particular
tropical mutations, respectively called left and right tropical mutations.

\begin{definition}Let $\mathbf i$ be a double word and recall the subsets
$I_0^{\mathfrak{L}}(\mathbf i)$ and $I_0^{\mathfrak{R}}(\mathbf i)$ of $I_0(\mathbf i)$
defined in Subsection \ref{section:seeddoublewords}. From now on, we will denote
$\mathbf{I(i)}$ the seed $(I(\mathbf i),I_0(\mathbf i),\varepsilon(\mathbf i),d(\mathbf i))$
given with the cover
$$I_0(\mathbf i)=I_0^{\mathfrak{L}}(\mathbf i)\cup I_0^{\mathfrak{R}}(\mathbf i)\ .$$
\end{definition}

\begin{prop}\label{prop:muttrop}The following tropical mutations
are Poisson birational isomorphisms for every double word $\mathbf i=i_1\dots i_n$.
\begin{equation}\label{equprop:muttrop}
\begin{array}{lcr}
\mu_{\binom {i_1}{0}}:{\mathcal X}_{\mathbf i}\rightarrow {\mathcal X}_{{\mathfrak L}(\mathbf i)}
&\mbox{and}&
\mu_{\binom {i_n}{N^{i_n}(\mathbf i)}}:{\mathcal X}_{\mathbf i}\rightarrow {\mathcal X}_{{\mathfrak R}(\mathbf i)}\ .
\end{array}
\end{equation}
\end{prop}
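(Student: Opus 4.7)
The two maps in (\ref{equprop:muttrop}) are mirror images under the reversal of the double word $\mathbf i$, so I focus on $\mu_{\binom{i_1}{0}} : \mathcal X_{\mathbf i} \to \mathcal X_{\mathfrak L(\mathbf i)}$; the argument for the right tropical mutation follows by symmetry, replacing the first elementary Dynkin quiver and left outlets by the last ones and right outlets. The proof has three ingredients: (a) verify that the tropical mutation at $k = \binom{i_1}{0}$, applied to $\mathbf I(\mathbf i)$ in the sense of Definition \ref{def:trop}, produces exactly the seed $\mathbf I(\mathfrak L(\mathbf i))$; (b) verify that the induced map on $\mathcal X$-tori is Poisson; (c) verify birationality.

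Part (a) is combinatorial. Since $\mathbf i$ and $\mathfrak L(\mathbf i)$ differ only in their first letter, the Dynkin quivers $\Gamma_{\mathfrak g}(\mathbf i)$ and $\Gamma_{\mathfrak g}(\mathfrak L(\mathbf i))$ coincide except that the arrows of the first elementary factor are reversed in passing from $\Gamma_{\mathfrak g}(i_1)$ to $\Gamma_{\mathfrak g}(\overline{i_1})$. By (\ref{equ:eps}) this flips the sign of every $\varepsilon(i_1)$-entry, and formula (\ref{equ:amaleps}) then propagates the sign change to the amalgamated seed. For $k=\binom{i_1}{0}$, whose cover class is $I_0(k) = I_0^{\mathfrak L}(\mathbf i)$ (since $\binom{i_1}{0} \notin I_0^{\mathfrak R}(\mathbf i)$), the verification reduces to a case-by-case match of $\varepsilon(\mathfrak L(\mathbf i)) - \varepsilon(\mathbf i)$ against the three branches of Definition \ref{def:trop}(iii). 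Entries with leg at $k$ acquire the prescribed sign flip directly from the identity $\varepsilon(\overline{i_1}) = -\varepsilon(i_1)$; entries between two distinct left outlets $\binom{a}{0}, \binom{b}{0}$ with $a,b \neq i_1$ are unchanged, since no arrow of $\Gamma_{\mathfrak g}(i_1)$ links vertices of types other than $i_1$, matching the middle branch of (iii); the remaining entries, in which one index is $\binom{i_1}{1}$ or an interior $i_1$-type vertex, must equal the prescribed correction $-\varepsilon_{ik}b_{kj}$. I expect this final sub-case to be the main obstacle: it requires identifying $b_{\binom{i_1}{0}\binom{j}{0}}$ as the appropriate multiple of $a_{i_1 j}$ via (\ref{equ:b}) and (\ref{equ:eps}), and then checking that this correction accounts exactly for the amalgamated contribution of the reversed $\Gamma_{\mathfrak g}(i_1)$ to each interior or right-outlet entry of the $\varepsilon$-matrix.

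Parts (b) and (c) then follow cleanly. The Poisson compatibility is built into the tropical mutation formalism: with the convention (\ref{equ:b}) for $b$ and the $\varepsilon$-transformation (iii), the monomial assignment (\ref{equ:formuletropmut}) makes $\{\log x_{\mu_k(i)}, \log x_{\mu_k(j)}\}$ on $\mathcal X_{\mathbf i}$ agree with $\widehat{\varepsilon'}_{\mu_k(i)\mu_k(j)}$; the potentially obstructive cross-terms $b_{kj}\widehat\varepsilon_{ik} + b_{ki}\widehat\varepsilon_{kj}$ that arise for $i,j \in I_0(k)\setminus\{k\}$ cancel because the only such pairs with $\varepsilon_{ki}\varepsilon_{kj}\neq 0$ are left outlets $\binom{a}{0},\binom{b}{0}$ of types adjacent to $i_1$ in the Dynkin diagram, where the skew-symmetry of $\widehat\varepsilon$ combined with the symmetrizability relation $d_i a_{ij}=d_j a_{ji}$ forces the sum to vanish. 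Birationality is immediate from the involution property of tropical mutations noted after Definition \ref{def:trop}, since every branch of (\ref{equ:formuletropmut}) is a Laurent monomial with unit leading coefficient, whose inverse is the tropical mutation at the same vertex in the target seed.
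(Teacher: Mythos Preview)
Your decomposition into parts (a)--(c) is sound and can in principle be completed, but you are working much harder than necessary and have not closed the case you yourself flag as the main obstacle. The paper's proof rests on a single structural observation you did not exploit: a left tropical mutation touches only the left-outlet portion of the seed, and therefore commutes with amalgamation performed on the right (Remark~\ref{rem:tropamal}). This immediately reduces the entire proposition to the elementary double word $\mathbf i=i_1$, where the seed has only $l+1$ vertices, all of them outlets, and $\varepsilon(i_1)$ is given explicitly by~(\ref{equ:eps}). Since $\mathfrak L(i_1)=\overline{i_1}$ and $b=2\varepsilon$ throughout, one checks in a line that the rule of Definition~\ref{def:trop}(iii) sends $\varepsilon(i_1)$ to $\varepsilon(\overline{i_1})=-\varepsilon(i_1)$. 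The sub-case you single out as the obstacle---entries involving $\binom{i_1}{1}$ or interior $i_1$-type vertices---simply does not arise after the reduction, because in the elementary seed there are no interior vertices and the only index outside $I_0(k)=I_0^{\mathfrak L}$ is $\binom{i_1}{1}$ itself.

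Your Poisson verification in (b) also becomes trivial after this reduction, and in fact the justification you give for the cross-term cancellation is more delicate than you suggest: for two left outlets $\binom a0,\binom b0$ with $a,b\ne i_1$, the quantity $b_{ki}\widehat\varepsilon_{kj}+b_{kj}\widehat\varepsilon_{ik}$ is proportional to $\varepsilon_{ki}\varepsilon_{kj}(d_b-d_a)$, which the symmetrizability relation alone does not kill when $a$ and $b$ are adjacent to $i_1$ with $d_a\ne d_b$. In the elementary seed this is harmless because $\varepsilon_{\binom a0\binom b0}=0$ for all such pairs anyway, so both the source and target entries vanish; but in your direct approach on a general amalgamated seed this point would need a more careful treatment than the sentence you wrote. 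The paper's reduction-via-amalgamation sidesteps all of this.
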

\begin{proof}The birational part is clear, so we focus on the Poisson part.
Let us notice that, because right (resp. left) tropical mutations commute with an
amalgamated product done on the left side (resp. right side), as detailed in Remark
\ref{rem:tropamal}, it suffices to show that the proposition
is true for $\mathbf i\in\{i,\overline{i}\}$. Consider the case
$\mathbf i={i}$. So we have ${\mathfrak L}(\mathbf i)={\mathfrak R}(\mathbf i)=\overline{i}$.
and the equality $b(\mathbf i)=2\varepsilon(\mathbf i)$.
It is then straightforward to check that the matrices
$\varepsilon({\mathfrak L}(\mathbf i))$ and $\varepsilon({\mathfrak R}(\mathbf i))$
are equal to $\varepsilon(\overline{i})$.
The case $\mathbf i=\overline{i}$ is proved in the same way.
\end{proof}

\begin{definition}\label{def:RLtropmut}Let $\mathbf i=i_1\dots i_n$ be a double word. The tropical
mutations given by equation (\ref{equprop:muttrop}) are
respectively called \emph{left} and \emph{right tropical mutations}
and we denote the associated directions respectively by
$\lozenge^{\mathfrak L}_{i_1}$ and $\lozenge^{\mathfrak R}_{i_n}$:
$$\begin{array}{ccc}
\lozenge^{\mathfrak L}_{i_1}=\binom {i_1}{0}&\mbox{and}&
\lozenge^{\mathfrak R}_{i_n}=\binom {i_n}{N^{i_n}(\mathbf i)}\ .
\end{array}$$
\end{definition}

\begin{rem}\label{rem:tropamal}Remark \ref{rem:tropamal2} can be
refined in the following way: tropical mutation in a left (resp.
right) outlet direction
applied to a seed $\mathcal X$-torus ${\mathcal X}_{\mathbf I}$ (resp.
${\mathcal X}_{\mathbf J}$) is equal to the tropical mutation in the same
direction applied to the amalgamated seed $\mathcal X$-torus
${\mathcal X}_{\mathfrak{m}(\mathbf I,\mathbf J)}$.
\end{rem}

Left and right tropical mutations are easily described on Dynkin quivers.
Let us take a Dynkin quiver $\Gamma$ and let $\mathbf i=i_1\dots i_n$ be
the associated double word, so we have $\Gamma=\Gamma_{\mathfrak g}(\mathbf i)$.
In particular, $\Gamma$ is obtained by the amalgamation of the elementary Dynkin
quivers $\Gamma_{\mathfrak g}(i_1),\dots,\Gamma_{\mathfrak g}(i_n)$. Let us remember
the behavior between tropical mutations and the amalgamated product given by
Remark \ref{rem:tropamal}; and change the orientation of the arrows of
$\Gamma_{\mathfrak g}(i_1)$ (resp. $\Gamma_{\mathfrak g}(i_m)$) if we have
a left (resp. right) tropical mutation. We get the elementary Dynkin quiver
$\Gamma_{\mathfrak g}(\overline{i_1})$, (resp. $\Gamma_{\mathfrak g}(\overline{i_m})$).
Now, let us perform the amalgamation
$$\begin{array}{ccc}
\mathfrak m:\Gamma_{\mathfrak g}
(\overline{i_1})\times\dots\times\Gamma_{\mathfrak g}(i_n)\to\Gamma_{\mathfrak g}
({\mathfrak L}(\mathbf i))&
(\mbox{resp.}&  \mathfrak m:\Gamma_{\mathfrak g}(i_1)\times\dots
\times\Gamma_{\mathfrak g}(\overline{i_n})\to\Gamma_{\mathfrak g}({\mathfrak R}(\mathbf i)))\ .
\end{array}
$$
The resulting quiver $\Gamma'$ is therefore the image of $\Gamma$ by the left (resp. right)
tropical mutation. This procedure is illustrated in Figure \ref{fig:tropmut}.

\begin{figure}[htbp]
\begin{center}
\setlength{\unitlength}{1.5pt}
\begin{picture}(20,48)(0,-27)
\thicklines
\put(1,1.7){\line(2,3){8}}
\put(1,1.7){\vector(2,3){6}}
\put(21,1.7){\line(2,3){8}}
\put(21,1.7){\vector(2,3){6}}
\put(41,1.7){\line(2,3){7.5}}
\put(41,1.7){\vector(2,3){6}}
\put(11,-13.3){\line(2,3){7.5}}
\put(11,-13.3){\vector(2,3){6}}
\put(31,-13.3){\line(2,3){7.5}}
\put(31,-13.3){\vector(2,3){6}}
\put(-9,13.3){\line(2,-3){7.5}}
\put(-9,13.3){\vector(2,-3){6}}
\put(11,13.3){\line(2,-3){7.5}}
\put(11,13.3){\vector(2,-3){6}}
\put(31,13.3){\line(2,-3){7.5}}
\put(31,13.3){\vector(2,-3){6}}
\put(1,-1.7){\line(2,-3){7.5}}
\put(1,-1.7){\vector(2,-3){6}}
\put(21,-1.7){\line(2,-3){7.5}}
\put(21,-1.7){\vector(2,-3){6}}
\put(10,15){\circle*{4}}
\put(30,15){\circle*{4}}
\put(-10,15){\circle{4}}
\put(50,15){\circle{4}}
\put(10,-15){\circle{4}}
\put(30,-15){\circle{4}}
\put(0,0){\circle{4}}
\put(20,0){\circle*{4}}
\put(40,0){\circle{4}}
\put(8,15){\vector(-1,0){11}}
\put(8,15){\line(-1,0){16}}
\put(28,15){\vector(-1,0){11}}
\put(28,15){\line(-1,0){16}}
\put(48,15){\vector(-1,0){11}}
\put(48,15){\line(-1,0){16}}
\put(18,0){\vector(-1,0){11}}
\put(18,0){\line(-1,0){16}}
\put(38,0){\vector(-1,0){11}}
\put(38,0){\line(-1,0){16}}
\put(28,-15){\vector(-1,0){11}}
\put(28,-15){\line(-1,0){16}}
\put(45,-7.7){\line(2,-3){15}}
\put(45,-7.7){\vector(2,-3){15}}
\end{picture}
\qquad\qquad\qquad\qquad
\begin{picture}(20,48)(0,-27)
\thicklines
\put(0,0){\vector(1,0){40}}
\put(16,8){$\mu_{\binom{1}{3}}$}
\end{picture}
\qquad\qquad\qquad
\begin{picture}(20,48)(0,-27)
\thicklines
\put(1,1.7){\line(2,3){8}}
\put(1,1.7){\vector(2,3){6}}
\put(21,1.7){\line(2,3){8}}
\put(21,1.7){\vector(2,3){6}}
\put(49,13.3){\line(-2,-3){7.5}}
\put(49,13.3){\vector(-2,-3){6}}
\put(11,-13.3){\line(2,3){7.5}}
\put(11,-13.3){\vector(2,3){6}}
\put(31,-13.3){\line(2,3){7.5}}
\put(31,-13.3){\vector(2,3){6}}
\put(-9,13.3){\line(2,-3){7.5}}
\put(-9,13.3){\vector(2,-3){6}}
\put(11,13.3){\line(2,-3){7.5}}
\put(11,13.3){\vector(2,-3){6}}
\put(1,-1.7){\line(2,-3){7.5}}
\put(1,-1.7){\vector(2,-3){6}}
\put(21,-1.7){\line(2,-3){7.5}}
\put(21,-1.7){\vector(2,-3){6}}
\put(10,15){\circle*{4}}
\put(30,15){\circle*{4}}
\put(-10,15){\circle{4}}
\put(50,15){\circle{4}}
\put(10,-15){\circle{4}}
\put(30,-15){\circle{4}}
\put(0,0){\circle{4}}
\put(20,0){\circle*{4}}
\put(40,0){\circle{4}}
\put(8,15){\vector(-1,0){11}}
\put(8,15){\line(-1,0){16}}
\put(28,15){\vector(-1,0){11}}
\put(28,15){\line(-1,0){16}}
\put(32,15){\vector(1,0){11}}
\put(32,15){\line(1,0){16}}
\put(18,0){\vector(-1,0){11}}
\put(18,0){\line(-1,0){16}}
\put(38,0){\vector(-1,0){11}}
\put(38,0){\line(-1,0){16}}
\put(28,-15){\vector(-1,0){11}}
\put(28,-15){\line(-1,0){16}}
\put(-20,-30.7){\line(2,3){15}}
\put(-20,-30.7){\vector(2,3){15}}
\end{picture}
$$ $$
\begin{picture}(20,48)(0,-27)
\thicklines
\put(55,17){\circle{4}}
\put(35,17){\circle{4}}
\put(45,2){\circle{4}}
\put(46,3.7){{\line(2,3){7.5}}}
\put(46,3.7){{\vector(2,3){6}}}
\put(36,15.3){\line(2,-3){7.5}}
\put(36,15.3){\vector(2,-3){6}}
\put(53,17){{\vector(-1,0){11}}}
\put(53,17){{\line(-1,0){16}}}
\put(1,1.7){\line(2,3){8}}
\put(1,1.7){\vector(2,3){6}}
\put(21,1.7){{\line(2,3){8}}}
\put(21,1.7){{\vector(2,3){6}}}
\put(11,-13.3){\line(2,3){7.5}}
\put(11,-13.3){\vector(2,3){6}}
\put(31,-13.3){\line(2,3){7.5}}
\put(31,-13.3){\vector(2,3){6}}
\put(-9,13.3){\line(2,-3){7.5}}
\put(-9,13.3){\vector(2,-3){6}}
\put(11,13.3){\line(2,-3){7.5}}
\put(11,13.3){\vector(2,-3){6}}
\put(31,13.3){\line(2,-3){7.5}}
\put(31,13.3){\vector(2,-3){6}}
\put(1,-1.7){\line(2,-3){7.5}}
\put(1,-1.7){\vector(2,-3){6}}
\put(21,-1.7){\line(2,-3){7.5}}
\put(21,-1.7){\vector(2,-3){6}}
\put(10,15){\circle*{4}}
\put(30,15){\circle{4}}
\put(-10,15){\circle{4}}
\put(10,-15){\circle{4}}
\put(30,-15){\circle{4}}
\put(0,0){\circle{4}}
\put(20,0){\circle*{4}}
\put(40,0){\circle{4}}
\put(8,15){\vector(-1,0){11}}
\put(8,15){\line(-1,0){16}}
\put(28,15){\vector(-1,0){11}}
\put(28,15){\line(-1,0){16}}
\put(18,0){\vector(-1,0){11}}
\put(18,0){\line(-1,0){16}}
\put(38,0){{\vector(-1,0){11}}}
\put(38,0){{\line(-1,0){16}}}
\put(28,-15){\vector(-1,0){11}}
\put(28,-15){\line(-1,0){16}}
\end{picture}
\qquad\qquad
\begin{picture}(20,48)(0,-27)
\thicklines
\put(0,0){\vector(1,0){20}}
\end{picture}
\quad
\begin{picture}(20,48)(0,-27)
\thicklines
\put(55,17){\circle{4}}
\put(35,17){\circle{4}}
\put(45,2){\circle{4}}
\put(54,15.3){\line(-2,-3){7.5}}
\put(54,15.3){\vector(-2,-3){6}}
\put(44,4){{\line(-2,3){7.5}}}
\put(44,4){{\vector(-2,3){6}}}
\put(1,1.7){\line(2,3){8}}
\put(1,1.7){\vector(2,3){6}}
\put(21,1.7){{\line(2,3){8}}}
\put(21,1.7){{\vector(2,3){6}}}
\put(11,-13.3){\line(2,3){7.5}}
\put(11,-13.3){\vector(2,3){6}}
\put(31,-13.3){\line(2,3){7.5}}
\put(31,-13.3){\vector(2,3){6}}
\put(-9,13.3){\line(2,-3){7.5}}
\put(-9,13.3){\vector(2,-3){6}}
\put(11,13.3){\line(2,-3){7.5}}
\put(11,13.3){\vector(2,-3){6}}
\put(31,13.3){\line(2,-3){7.5}}
\put(31,13.3){\vector(2,-3){6}}
\put(1,-1.7){\line(2,-3){7.5}}
\put(1,-1.7){\vector(2,-3){6}}
\put(21,-1.7){\line(2,-3){7.5}}
\put(21,-1.7){\vector(2,-3){6}}
\put(10,15){\circle*{4}}
\put(30,15){\circle{4}}
\put(-10,15){\circle{4}}
\put(10,-15){\circle{4}}
\put(30,-15){\circle{4}}
\put(0,0){\circle{4}}
\put(20,0){\circle*{4}}
\put(40,0){\circle{4}}
\put(8,15){\vector(-1,0){11}}
\put(8,15){\line(-1,0){16}}
\put(28,15){\vector(-1,0){11}}
\put(28,15){\line(-1,0){16}}
\put(37,17){{\vector(1,0){11}}}
\put(37,17){{\line(1,0){16}}}
\put(18,0){\vector(-1,0){11}}
\put(18,0){\line(-1,0){16}}
\put(38,0){{\vector(-1,0){11}}}
\put(38,0){{\line(-1,0){16}}}
\put(28,-15){\vector(-1,0){11}}
\put(28,-15){\line(-1,0){16}}
\end{picture}
\end{center}
\vspace{-.1in}
\caption{The tropical mutation $\mu_{\binom{1}{3}}:
\Gamma_{A_3}(123121)\mapsto\Gamma_{A_3}
(12312\overline 1)$}
\label{fig:tropmut}
\end{figure}
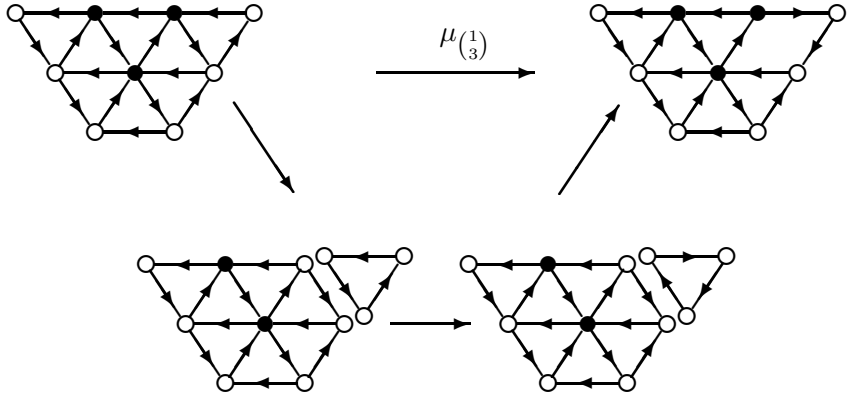

\subsection{Symmetries on seed $\mathcal X$-tori}
\label{section:involution}In this subsection and in the next one,
we define involutions on double words, and the related symmetries on seed
$\mathcal X$-tori. They will be use to describe various automorphisms
and anti-automorphisms on the group $G$.
%\subsubsection{Dynkin automorphism and a related symmetries on
%seed $\mathcal X$-tori}
Let us recall that the \emph{fundamental weights} $\omega_i\in{\mathfrak h}^*$,
given by $\omega_i(h_j)=\delta_{ij}$ for every $i,j\in[1,l]$,
are permuted by the transformation $(-w_0)$.
We denote $i\mapsto i^{\star}$ the induced permutation of the indices of these
weights, that is $\omega_{i^{\star}}=-w_0(\omega_i)$.
%This permutation is non-trivial only for root systems of the
%type $A_n, D_n, E_6$ where it acts as follows.
%\begin{equation}\label{equ:def*}
%\left\{\begin{array}{ll}
%A_n:& i^{\star}= n+1-i,\quad i=1,\dots, n\ ;\\
%\\
%D_n:& i^{\star}= \left\{
%\begin{array}{ll}
%n&\mbox{if }\  i=n-1\ ,\\
%n-1&\mbox{if }\  i=n\ ,\\
%i&\mbox{otherwise;}
%\end{array}\right.\\
%\\
%E_6:& i^{\star}= \left\{
%\begin{array}{ll}
%6&\mbox{if }\  i=1\ ,\\
%1&\mbox{if }\  i=6\ ,\\
%i&\mbox{otherwise}\ .
%\end{array}\right.
%\end{array}
%\right.
%\end{equation}
This automorphism on the Dynkin diagram leads to symmetries
on seed $\mathcal X$-tori described by the use of Dynkin quivers:
for every $u,v\in W$ and $\mathbf i\in R(u,v)$
we define the involutions
$$
\begin{array}{cccccccccc}
\star: & R(u,v)  & \rightarrow & R({v^\star},{u^\star})&&&\op: & R(u,v)
& \rightarrow & R(u^{-1},v^{-1})\\
                                & \mathbf i & \mapsto & \mathbf{i^\star}
                                &&& &\mathbf i &\mapsto &{\mathbf {i^{\op}}}
\end{array}
$$
in the following way: the double reduced word $\mathbf{i^{\star}}\in R({v^\star},{u^\star})$
is obtained by transforming each letter $i$ of
$[1,l]\cup[\overline{1},\overline{l}]$ into $\overline{i^\star}$, so if $\mathbf i=i_1\dots i_n$
then $\mathbf i^{\star}=\overline{i_1}^{\star}\dots\overline{i_n}^{\star}$, and the
double reduced word ${\mathbf{i^{\op}}}\in  R(u^{-1},v^{-1})$ is
obtained by reading $\mathbf i$ backwards: $\mathbf i^{\op}=i_n\dots i_1$.
Now,~let
$$\circlearrowright:
R(u,v)\rightarrow R({v^\star}^{-1},{u^\star}^{-1})$$
be the map such that the double reduced word $\mathbf{i^{\circlearrowright}}
\in  R({v^\star}^{-1},{u^\star}^{-1})$ is
obtained by converting each $i$ into $\overline{i^\star}$ and then
read the result backwards. Stated otherwise, this transformation is
defined by the equality $\circlearrowright=\op\circ \star$.
It induces the following symmetry on seed $\mathcal X$-tori.

%\begin{prop}For every $u,v\in W$ and every double reduced word $\mathbf i\in R(u,v)$,
%the involution $\circlearrowright$ induces the following isomorphism on
%the related seed $\mathcal X$-tori.
\begin{equation}\label{equ:circlearrow}\begin{array}{rcccl}
\circlearrowright:&{\mathcal X}_{\mathbf {i}}&
\longrightarrow& {\mathcal X}_{\mathbf {i^{\circlearrowright}}}\\
&x_{\binom{i}{j}}&\longmapsto & x_{\binom{i^\star}{N^{i^\star}(\mathbf{i^{\circlearrowright}})-j}}
\end{array}.
\end{equation}
%\end{prop}
%\begin{proof}It comes from the fact that a rotation of 180 degrees
%performed on the quiver $\Gamma_{\mathfrak{g}}(\mathbf i)$, as shown
%in Figure \ref{fig:involution2}, gives the quiver
%$\Gamma_{\mathfrak{g}}(\mathbf{i^{\circlearrowright}})$.
%\end{proof}

\begin{rem}\label{rem:rotation} The involutions $\star$ and $\op$ were
already defined on reduced words $\mathbf i\in R(w_0)$ in
\cite[Equation (3.1)]{BZtensor}.
Moreover, the notation $\circlearrowright$ for the last involution
on double reduced words symbolizes a rotation of 180 degrees performed on the
quiver $\Gamma_{\mathfrak{g}}(\mathbf i)$, as illustrated in
Figure \ref{fig:involution2}.
\end{rem}

\begin{figure}[htbp]
\begin{center}
\setlength{\unitlength}{1.5pt}
\begin{picture}(20,48)(0,-27)
\thicklines
\oval(30,30)[l]
\put(-21,0){$\star$}
\put(4,15){\vector(1,0){0}}
\put(6,13){$1$}
\put(6,-2){$2$}
\put(6,-17){$3$}
\put(12,15){\circle*{4}}
\put(12,13){\line(0,-1){12}}
\put(12,0){\circle*{4}}
\put(12,-2){\line(0,-1){12}}
\put(12,-15){\circle*{4}}
\end{picture}
\quad
\begin{picture}(20,48)(0,-27)
\thicklines
\put(0,0){\vector(1,0){20}}
\end{picture}
\quad
\begin{picture}(20,48)(0,-27)
\thicklines
\put(6,13){$3$}
\put(6,-2){$2$}
\put(6,-17){$1$}
\put(12,15){\circle*{4}}
\put(12,13){\line(0,-1){12}}
\put(12,0){\circle*{4}}
\put(12,-2){\line(0,-1){12}}
\put(12,-15){\circle*{4}}
\end{picture}
\end{center}
\vspace{-.1in}
\caption{The $\star$ involution on $\Gamma_{A_3}$}
\label{fig:involution}
\end{figure}
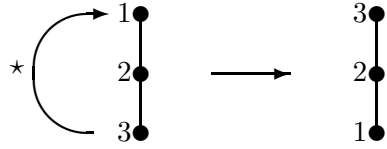

\begin{figure}[htbp]
\begin{center}
\setlength{\unitlength}{1.5pt}
\begin{picture}(20,48)(0,-27)
\thicklines
\put(-43,0){$\circlearrowright$}
\put(-20,0){\oval(30,30)[l]}
\put(-16,15){\vector(1,0){0}}
\put(1,1.7){\line(2,3){8}}
\put(1,1.7){\vector(2,3){6}}
\put(21,1.7){\line(2,3){8}}
\put(21,1.7){\vector(2,3){6}}
\put(41,1.7){\line(2,3){7.5}}
\put(41,1.7){\vector(2,3){6}}
\put(11,-13.3){\line(2,3){7.5}}
\put(11,-13.3){\vector(2,3){6}}
\put(31,-13.3){\line(2,3){7.5}}
\put(31,-13.3){\vector(2,3){6}}
\put(-9,13.3){\line(2,-3){7.5}}
\put(-9,13.3){\vector(2,-3){6}}
\put(11,13.3){\line(2,-3){7.5}}
\put(11,13.3){\vector(2,-3){6}}
\put(31,13.3){\line(2,-3){7.5}}
\put(31,13.3){\vector(2,-3){6}}
\put(1,-1.7){\line(2,-3){7.5}}
\put(1,-1.7){\vector(2,-3){6}}
\put(21,-1.7){\line(2,-3){7.5}}
\put(21,-1.7){\vector(2,-3){6}}
\put(10,15){\circle*{4}}
\put(30,15){\circle*{4}}
\put(-10,15){\circle{4}}
\put(50,15){\circle{4}}
\put(10,-15){\circle{4}}
\put(30,-15){\circle{4}}
\put(0,0){\circle{4}}
\put(20,0){\circle*{4}}
\put(40,0){\circle{4}}
\put(8,15){\vector(-1,0){11}}
\put(8,15){\line(-1,0){16}}
\put(28,15){\vector(-1,0){11}}
\put(28,15){\line(-1,0){16}}
\put(48,15){\vector(-1,0){11}}
\put(48,15){\line(-1,0){16}}
\put(18,0){\vector(-1,0){11}}
\put(18,0){\line(-1,0){16}}
\put(38,0){\vector(-1,0){11}}
\put(38,0){\line(-1,0){16}}
\put(28,-15){\vector(-1,0){11}}
\put(28,-15){\line(-1,0){16}}
\end{picture}
\qquad\qquad\quad
\begin{picture}(20,48)(0,-27)
\thicklines
\put(0,0){\vector(1,0){20}}
\end{picture}
\quad\qquad
\begin{picture}(20,48)(0,-27)
\thicklines
\put(29,13.3){\line(-2,-3){7.5}}
\put(29,13.3){\vector(-2,-3){6}}
\put(9,13.3){\line(-2,-3){7.5}}
\put(9,13.3){\vector(-2,-3){6}}
\put(39,-1.7){\line(-2,-3){7.5}}
\put(39,-1.7){\vector(-2,-3){6}}
\put(19,-1.7){\line(-2,-3){7.5}}
\put(19,-1.7){\vector(-2,-3){6}}
\put(-1,-1.7){\line(-2,-3){7.5}}
\put(-1,-1.7){\vector(-2,-3){6}}
\put(39,2){{\line(-2,3){7.5}}}
\put(39,2){{\vector(-2,3){6}}}
\put(19,2){{\line(-2,3){7.5}}}
\put(19,2){{\vector(-2,3){6}}}
\put(49,-13){{\line(-2,3){7.5}}}
\put(49,-13){{\vector(-2,3){6}}}
\put(29,-13){{\line(-2,3){7.5}}}
\put(29,-13){{\vector(-2,3){6}}}
\put(9,-13){{\line(-2,3){7.5}}}
\put(9,-13){{\vector(-2,3){6}}}
\put(10,15){\circle{4}}
\put(30,15){\circle{4}}
\put(-10,-15){\circle{4}}
\put(50,-15){\circle{4}}
\put(10,-15){\circle*{4}}
\put(30,-15){\circle*{4}}
\put(0,0){\circle{4}}
\put(20,0){\circle*{4}}
\put(40,0){\circle{4}}
\put(-8,-15){\vector(1,0){11}}
\put(-8,-15){\line(1,0){16}}
\put(12,15){\vector(1,0){11}}
\put(12,15){\line(1,0){16}}
\put(32,-15){\vector(1,0){11}}
\put(32,-15){\line(1,0){16}}
\put(2,0){\vector(1,0){11}}
\put(2,0){\line(1,0){16}}
\put(22,0){\vector(1,0){11}}
\put(22,0){\line(1,0){16}}
\put(12,-15){\vector(1,0){11}}
\put(12,-15){\line(1,0){16}}
\end{picture}
\end{center}
\vspace{-.1in}
\caption{The involution $\circlearrowright:\Gamma_{A_3}(123121)
\mapsto\Gamma_{A_3}(\overline{3}\overline 2\overline 3\overline 1\overline 2\overline 3)$}
\label{fig:involution2}
\end{figure}

In the same way, to any double word $\mathbf i$, we associate the double word
$\mathbf{i^{\square}}$ obtained by
reading $\mathbf{i}$ backwards and
applying the involution $i\mapsto\overline{i}$ on every letter of the result.
In particular, for every $u,v\in W$ we get an involution
$$\square: R(u,v)\to R(v^{-1},u^{-1})\ .$$
This involution first induces a symmetry on the related seed: it corresponds
to a rotation along the vertical axis passing by the center of
the Dynkin quiver $\Gamma_{\mathfrak g}(\mathbf i)$, as illustrated in Figure
\ref{fig:involution4}. It also induces a symmetry on seed $\mathcal X$-tori,
also denoted $\square$ and given by:
$$\begin{array}{ccccc}
\square:&{\mathcal X}_{\mathbf {i}}&
\longrightarrow& {\mathcal X}_{\mathbf {i^{\square}}}\\
&x_{\binom{i}{j}}&\longmapsto & x_{\binom{i}{N^{i}(\mathbf{i})-j}}
\end{array}
$$

\begin{figure}[htbp]
\begin{center}
\setlength{\unitlength}{1.5pt}
\begin{picture}(20,58)(0,-37)
\put(20,15){\line(0,1){16}}
\put(20,-15){\line(0,-1){16}}
\thicklines
\put(17, -38){$\square$}
\put(20,-25){\oval(30,10)[b]}
\put(35,-22){\vector(0,1){0}}
\put(1,1.7){\line(2,3){8}}
\put(1,1.7){\vector(2,3){6}}
\put(21,1.7){\line(2,3){8}}
\put(21,1.7){\vector(2,3){6}}
\put(41,1.7){\line(2,3){7.5}}
\put(41,1.7){\vector(2,3){6}}
\put(11,-13.3){\line(2,3){7.5}}
\put(11,-13.3){\vector(2,3){6}}
\put(31,-13.3){\line(2,3){7.5}}
\put(31,-13.3){\vector(2,3){6}}
\put(-9,13.3){\line(2,-3){7.5}}
\put(-9,13.3){\vector(2,-3){6}}
\put(11,13.3){\line(2,-3){7.5}}
\put(11,13.3){\vector(2,-3){6}}
\put(31,13.3){\line(2,-3){7.5}}
\put(31,13.3){\vector(2,-3){6}}
\put(1,-1.7){\line(2,-3){7.5}}
\put(1,-1.7){\vector(2,-3){6}}
\put(21,-1.7){\line(2,-3){7.5}}
\put(21,-1.7){\vector(2,-3){6}}
\put(10,15){\circle*{4}}
\put(30,15){\circle*{4}}
\put(-10,15){\circle{4}}
\put(50,15){\circle{4}}
\put(10,-15){\circle{4}}
\put(30,-15){\circle{4}}
\put(0,0){\circle{4}}
\put(20,0){\circle*{4}}
\put(40,0){\circle{4}}
\put(8,15){\vector(-1,0){11}}
\put(8,15){\line(-1,0){16}}
\put(28,15){\vector(-1,0){11}}
\put(28,15){\line(-1,0){16}}
\put(48,15){\vector(-1,0){11}}
\put(48,15){\line(-1,0){16}}
\put(18,0){\vector(-1,0){11}}
\put(18,0){\line(-1,0){16}}
\put(38,0){\vector(-1,0){11}}
\put(38,0){\line(-1,0){16}}
\put(28,-15){\vector(-1,0){11}}
\put(28,-15){\line(-1,0){16}}
\end{picture}
\qquad\qquad\quad
\begin{picture}(20,58)(0,-37)
\thicklines
\put(0,0){\vector(1,0){20}}
\end{picture}
\quad\qquad
\begin{picture}(20,58)(0,-37)
\thicklines
\put(49,13.3){\line(-2,-3){7.5}}
\put(49,13.3){\vector(-2,-3){6}}
\put(29,13.3){\line(-2,-3){7.5}}
\put(29,13.3){\vector(-2,-3){6}}
\put(9,13.3){\line(-2,-3){7.5}}
\put(9,13.3){\vector(-2,-3){6}}
\put(39,-1.7){\line(-2,-3){7.5}}
\put(39,-1.7){\vector(-2,-3){6}}
\put(19,-1.7){\line(-2,-3){7.5}}
\put(19,-1.7){\vector(-2,-3){6}}
\put(29,-13){{\line(-2,3){7.5}}}
\put(29,-13){{\vector(-2,3){6}}}
\put(9,-13){{\line(-2,3){7.5}}}
\put(9,-13){{\vector(-2,3){6}}}
\put(-1,2){{\line(-2,3){7.5}}}
\put(-1,2){{\vector(-2,3){6}}}
\put(19,2){{\line(-2,3){7.5}}}
\put(19,2){{\vector(-2,3){6}}}
\put(39,2){{\line(-2,3){7.5}}}
\put(39,2){{\vector(-2,3){6}}}
\put(10,15){\circle*{4}}
\put(30,15){\circle*{4}}
\put(-10,15){\circle{4}}
\put(50,15){\circle{4}}
\put(10,-15){\circle{4}}
\put(30,-15){\circle{4}}
\put(0,0){\circle{4}}
\put(20,0){\circle*{4}}
\put(40,0){\circle{4}}
\put(-8,15){\vector(1,0){11}}
\put(-8,15){\line(1,0){16}}
\put(12,15){\vector(1,0){11}}
\put(12,15){\line(1,0){16}}
\put(32,15){\vector(1,0){11}}
\put(32,15){\line(1,0){16}}
\put(2,0){\vector(1,0){11}}
\put(2,0){\line(1,0){16}}
\put(22,0){\vector(1,0){11}}
\put(22,0){\line(1,0){16}}
\put(12,-15){\vector(1,0){11}}
\put(12,-15){\line(1,0){16}}
\end{picture}
\end{center}
\vspace{-.1in}
\caption{The involution $\square:\Gamma_{A_3}(123121)
\mapsto\Gamma_{A_3}(\overline{1}\overline 2\overline 1
\overline 3\overline 2\overline 1)$}
\label{fig:involution4}
\end{figure}

Here are now some related isomorphisms on $(G,\pi_G)$.
Starting with an elementary double word $\mathbf i\in\{\mathbf 1,i,\overline i\}$,
where $i\in[1,l]$, and then applying the properties of the amalgamation product,
we easily prove the following results.

\begin{lemma}\label{lemma:w_0} Let $u,v\in W$ and $\mathbf i\in R(u,v)$.
For every cluster $\mathbf x\in{\mathcal X}_{\mathbf i}$, let $\mathbf{x^{\star}}
\in{\mathcal X}_{\mathbf{i^\star}}$ be the cluster such that the following equality
is satisfied.
$$\widehat{w_0}\ev_{\mathbf i}(\mathbf x)\widehat{w_0}^{-1}=\ev_{\mathbf{i^\star}}
(\mathbf{x^{\star}})\ .$$
Then we have
\begin{equation}\label{equ:w_0}x^{\star}_{\binom i{j}}=\left\{
\begin{array}{rl}
-x_{\binom {i^\star}{j}}^{-1}&\mbox{if } 0=j\neq N^{i^\star}(\mathbf i)
\mbox{ or } 0\neq j= N^{i^\star}(\mathbf i);\\
x_{\binom {i^\star}{j}}^{-1}&\mbox{otherwise}.
\end{array}
\right.
\end{equation}
\end{lemma}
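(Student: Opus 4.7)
The plan is to establish the result by induction on the length of $\mathbf i$, using the amalgamation property of the evaluation map to reduce to the elementary double words $\mathbf i\in\{\mathbf 1,i,\overline i\}$, which are then handled by a direct computation of the conjugation of Cartan and Chevalley factors by $\widehat{w_0}$.

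For the base case $\mathbf i=\mathbf 1$, the key identity is $\widehat{w_0}\,H^k(y)\,\widehat{w_0}^{-1}=H^{k^\star}(y^{-1})$. I would deduce this from $\operatorname{Ad}(\widehat{w_0})(h^k)=-h^{k^\star}$, which in turn follows from $w_0(\alpha_j)=-\alpha_{j^\star}$ and the $\star$-symmetry $(A^{-1})_{k^\star l^\star}=(A^{-1})_{kl}$ of the inverse Cartan matrix. Since $N^{k^\star}(\mathbf 1)=0$, every coordinate falls in the ``otherwise'' branch of (\ref{equ:w_0}), confirming $x^\star_{\binom k 0}=x_{\binom{k^\star}{0}}^{-1}$. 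For $\mathbf i=i$ (and analogously $\mathbf i=\overline i$), iterating the Lusztig braid formulas (\ref{equ:classicalbraid}) along a reduced expression $w_0=s_{i_1}\cdots s_{i_r}$ gives $\operatorname{Ad}(\widehat{w_0})(e_i)=-f_{i^\star}$, so $\widehat{w_0}\,E^i\,\widehat{w_0}^{-1}=\exp(-f_{i^\star})$. Using $\operatorname{Ad}(H^{i^\star}(-1))(f_{i^\star})=-f_{i^\star}$ one writes $\exp(-f_{i^\star})=H^{i^\star}(-1)\,F^{i^\star}\,H^{i^\star}(-1)$, and absorbing the two $H^{i^\star}(-1)$ factors into the adjacent Cartan blocks of $\ev_i$ yields
\[
\widehat{w_0}\,\ev_i(\mathbf x)\,\widehat{w_0}^{-1}=\Bigl(\prod_{k\neq i^\star}H^k(x_{\binom{k^\star}{0}}^{-1})\Bigr)H^{i^\star}(-x_{\binom{i}{0}}^{-1})\,F^{i^\star}\,H^{i^\star}(-x_{\binom{i}{1}}^{-1})=\ev_{\overline{i^\star}}(\mathbf x^\star),
\]
with $\mathbf x^\star$ having the minus signs prescribed by (\ref{equ:w_0}) at the outlets $\binom{i^\star}{0}$ and $\binom{i^\star}{1}$ and no sign elsewhere.

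For the inductive step, writing $\mathbf i=\mathbf{i_1 i_2}$ one has $\mathbf{i^\star}=\mathbf{i_1^\star i_2^\star}$ and $\ev_{\mathbf i}(\mathbf z)=\ev_{\mathbf{i_1}}(\mathbf x_1)\,\ev_{\mathbf{i_2}}(\mathbf x_2)$ with $\mathbf z=\mathfrak m(\mathbf x_1,\mathbf x_2)$, so the inductive hypothesis produces $\widehat{w_0}\,\ev_{\mathbf i}(\mathbf z)\,\widehat{w_0}^{-1}=\ev_{\mathbf{i^\star}}(\mathfrak m(\mathbf x_1^\star,\mathbf x_2^\star))$, and the lemma reduces to the purely combinatorial identity $\mathbf z^\star=\mathfrak m(\mathbf x_1^\star,\mathbf x_2^\star)$. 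This is immediate on interior vertices and on the outer outlets; at a glued middle vertex $k=N^i(\mathbf{i_1})$, the product $z_{\binom i k}=x_{1,\binom i{N^i(\mathbf{i_1})}}\,x_{2,\binom i 0}$ pairs a right outlet of $\mathbf{i_1}$ with a left outlet of $\mathbf{i_2}$, and applying (\ref{equ:w_0}) to each factor produces two minus signs whose product is $+1$, matching the fact that this vertex is interior in $\Gamma_{\mathfrak g}(\mathbf i^\star)$.

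The main obstacle is the sign bookkeeping in $\operatorname{Ad}(\widehat{w_0})(e_i)=-f_{i^\star}$, which depends on the Lusztig representative $\widehat{w_0}=\widehat{s_{i_1}}\cdots\widehat{s_{i_r}}$ and requires checking that the resulting $-1$ is independent of the chosen reduced expression for $w_0$ (this is a standard consequence of $\widehat{w_0}$ being itself well defined via the convention $\widehat{w_1 w_2}=\widehat{w_1}\widehat{w_2}$ when lengths add). The formula (\ref{equ:w_0}) is precisely the translation of this conjugation sign into combinatorial data at the outlets, and the amalgamation step then guarantees that these signs cancel along every glued edge in a manner compatible with (\ref{equ:amal}).
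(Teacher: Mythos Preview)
Your proof is correct and follows essentially the same strategy the paper indicates: verify the formula on the elementary words $\mathbf 1,i,\overline i$ by direct computation of $\widehat{w_0}$-conjugation on the Cartan and Chevalley factors, then propagate to arbitrary $\mathbf i$ via the amalgamation identity $\ev_{\mathbf{i_1 i_2}}=\ev_{\mathbf{i_1}}\ev_{\mathbf{i_2}}$; your check that the two outlet signs cancel at every glued vertex is exactly the ``properties of the amalgamation product'' the paper invokes. One small caveat: when you write that iterating the formulas~(\ref{equ:classicalbraid}) yields $\operatorname{Ad}(\widehat{w_0})(e_i)=-f_{i^\star}$, you are implicitly using $T_j=\operatorname{Ad}(\widehat{s_j})$ on $\mathfrak g$, which is true for the specific lift $\widehat{s_j}=\varphi_j\bigl(\begin{smallmatrix}0&-1\\1&0\end{smallmatrix}\bigr)$ but deserves a one-line check (e.g.\ via the $\varphi_j$-embedding, as you did for $e_i\mapsto -f_i$); once that is noted, the independence of the reduced expression is automatic from the well-definedness of $\widehat{w_0}$.
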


\begin{lemma}\label{lemma:inverse} Let $u,v\in W$ and $\mathbf i\in R(u,v)$.
For every cluster $\mathbf x\in{\mathcal X}_{\mathbf i}$, let $\mathbf{x^{\op}}
\in{\mathcal X}_{\mathbf{i^{\op}}}$ be the cluster such that the following equality
is satisfied.
$$\ev_{\mathbf i}(\mathbf x)^{-1}=\ev_{{\mathbf{i^{\op}}}}(\mathbf{x^{\op}})\ .$$
Then we have
$$x^{\op}_{\binom i{j}}=\left\{
\begin{array}{rl}
-x_{\binom {i}{N^{i}(\mathbf i)-j}}^{-1}&\mbox{if } 0=j\neq N^{i}(\mathbf i)
\mbox{ or } 0\neq j= N^{i}(\mathbf i);\\
x_{\binom {i}{N^{i}(\mathbf i)-j}}^{-1}&\mbox{otherwise}.
\end{array}
\right. $$
\end{lemma}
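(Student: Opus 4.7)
The plan is to mimic the proof sketch preceding Lemma~\ref{lemma:w_0}: establish the formula for the three elementary double words $\mathbf{i}\in\{\mathbf 1,i,\overline i\}$ by direct computation, and then bootstrap to an arbitrary double word via the amalgamation formula~(\ref{equ:amal}) combined with the fact that $\ev_{\mathbf i}$ is defined as an ordered product, so inversion reverses the order of the factors exactly as reading $\mathbf i$ backwards does.

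For the elementary cases I would argue as follows. The case $\mathbf i=\mathbf 1$ is immediate since $\ev_{\mathbf 1}(\mathbf x)^{-1}=\prod_j H^j(x_{\binom j 0}^{-1})$, and here $j=N^j(\mathbf 1)=0$, so the stated formula correctly gives $x_{\binom j 0}^{\op}=x_{\binom j 0}^{-1}$ (the ``otherwise'' branch). For $\mathbf i=i$, one writes
\[
\ev_i(\mathbf x)^{-1}=H^i(x_{\binom i 1})^{-1}(E^i)^{-1}\prod_j H^j(x_{\binom j 0})^{-1},
\]
uses $(E^i)^{-1}=\exp(-e_i)$ and the commutation $H^j(x)\,e_i\,H^j(x)^{-1}=x^{\delta_{ij}}e_i$ (which follows from $\alpha_i(h^j)=\delta_{ij}$, itself a consequence of (\ref{equ:hbasis}) and $\omega_j(h_k)=\delta_{jk}$) to push the Cartan factor past $\exp(-e_i)$, producing $\exp(-x_{\binom i 0}e_i)$. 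Applying the identity $\exp(y e_i)=H^i(y)E^iH^i(y^{-1})$ from~(\ref{equ:defphi}) with $y=-x_{\binom i 0}$ and then merging the Cartan contributions (collecting an overall factor of $H^i(-1)$ from $H^i(x_{\binom i 0})^{-1}H^i(-x_{\binom i 0})$) rewrites the product in the canonical form $\prod_j H^j(y_{\binom j 0})\,E^i\,H^i(y_{\binom i 1})$ with
\[
y_{\binom j 0}=x_{\binom j 0}^{-1}\ (j\neq i),\qquad y_{\binom i 0}=-x_{\binom i 1}^{-1},\qquad y_{\binom i 1}=-x_{\binom i 0}^{-1},
\]
which is exactly the claim. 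The case $\mathbf i=\overline i$ is entirely parallel, using the companion relation in~(\ref{equ:defphi}) for $F^i$.

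For a general double word $\mathbf i=i_1\cdots i_n$, the definition~(\ref{equ:defev}) together with the amalgamation formula gives
\[
\ev_{\mathbf i}(\mathbf z)^{-1}=\ev_{i_n}(\mathbf{x_n})^{-1}\,\ev_{i_{n-1}}(\mathbf{x_{n-1}})^{-1}\cdots\ev_{i_1}(\mathbf{x_1})^{-1},
\]
where $\mathbf z=\mathfrak m(\mathbf{x_1},\dots,\mathbf{x_n})$. Applying the elementary case to each factor and re-amalgamating according to $\mathbf i^{\op}=i_n\cdots i_1$ gives a candidate $\mathbf{z^{\op}}$. The task is then to check that this candidate coincides with the formula stated in the lemma, which reduces to a bookkeeping verification of how the elementary op-formulas interact with the amalgamation rule~(\ref{equ:amal}).

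The main obstacle, as in Lemma~\ref{lemma:w_0}, is controlling the signs at boundary positions. At an \emph{interior} glued position (a $k$-vertex between two consecutive blocks in $\mathbf i^{\op}$), both elementary op-factors independently introduce a minus sign: one at the ``right endpoint'' of one block and one at the ``left endpoint'' of the next. Since the amalgamation formula multiplies these two cluster variables, the two signs cancel and one recovers the unsigned inverse $x_{\binom i {N^i(\mathbf i)-k}}^{-1}$, matching the ``otherwise'' branch. Only at the two genuine endpoints of $\mathbf i^{\op}$ (indices $j=0$ and $j=N^i(\mathbf i)$, and only when these are not simultaneously both endpoints, which is the exclusive-or in the hypothesis) does a single uncancelled minus sign survive, precisely reproducing the stated formula. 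Once this sign accounting is set up cleanly for a single amalgamation step $\mathbf i\mathbf j$, an induction on $n$ finishes the proof.
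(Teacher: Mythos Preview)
Your proposal is correct and follows exactly the approach indicated in the paper, which only says that one starts with the elementary double words $\mathbf i\in\{\mathbf 1,i,\overline i\}$ and then applies the properties of the amalgamation product. You have filled in the details the paper omits, and your sign-cancellation analysis at interior glued vertices versus genuine endpoints is the right way to make the amalgamation step precise.
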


Let us denote $\kappa:G\times G\to G$ the composition of the conjugacy
map associated to the first argument $g\in G$ and the inverse map
$x\mapsto x^{-1}$ for the second argument $x\in G$, that is:
\begin{equation}\label{equ:kappa}
\kappa:(g,x)\mapsto gx^{-1}g^{-1}\ .
\end{equation}

\begin{prop}\label{prop:w0}For every $u,v\in W$, $\mathbf i\in R(u,v)$,
and every $\mathbf x\in{\mathcal X}_{\mathbf i}$, the following equality
is satisfied:
$$\kappa (\widehat{w_0},\ev_{\mathbf i}(\mathbf x))
=\ev_{\mathbf{i^\circlearrowright}}(\mathbf{x^\circlearrowright})\ .$$
\end{prop}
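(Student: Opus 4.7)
The plan is to derive this as a direct composition of the two preceding lemmas, using the identity $\circlearrowright = \op \circ \star$ at both the level of double words and at the level of cluster variables. Indeed, by the definition of $\kappa$ given by (\ref{equ:kappa}),
\[
\kappa(\widehat{w_0},\ev_{\mathbf i}(\mathbf x)) = \widehat{w_0}\,\ev_{\mathbf i}(\mathbf x)^{-1}\,\widehat{w_0}^{-1},
\]
and Lemma \ref{lemma:inverse} rewrites the inverse $\ev_{\mathbf i}(\mathbf x)^{-1}$ as $\ev_{\mathbf{i^{\op}}}(\mathbf{x^{\op}})$. Then applying Lemma \ref{lemma:w_0} to the reversed double word $\mathbf{i^{\op}}\in R(v^{-1},u^{-1})$ gives
\[
\widehat{w_0}\,\ev_{\mathbf{i^{\op}}}(\mathbf{x^{\op}})\,\widehat{w_0}^{-1} = \ev_{(\mathbf{i^{\op}})^\star}\!\bigl((\mathbf{x^{\op}})^\star\bigr).
\]
Since the word-level identities $(\mathbf{i^{\op}})^\star = \mathbf{i^{\circlearrowright}}$ and $\star\circ\op=\op\circ\star$ hold by the very definition recalled in Subsection \ref{section:involution}, it remains only to match the cluster variables, i.e.\ to prove the key identity
\[
(\mathbf{x^{\op}})^\star = \mathbf{x^{\circlearrowright}} \qquad \text{in } {\mathcal X}_{\mathbf{i^{\circlearrowright}}}.
\]

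To verify this last identity I would compute both sides coordinatewise. On the one hand, formula (\ref{equ:circlearrow}) for $\circlearrowright$ gives $x^{\circlearrowright}_{\binom{k}{m}}=x_{\binom{k^\star}{N^{k}(\mathbf{i^{\circlearrowright}})-m}}$ for every index $\binom{k}{m}\in I(\mathbf{i^{\circlearrowright}})$, and one has the elementary counting equality $N^{k}(\mathbf{i^{\circlearrowright}})=N^{k^\star}(\mathbf i)$ because $\op$ preserves letter multiplicities while $\star$ exchanges the types $k\leftrightarrow k^\star$. On the other hand, successively substituting Lemma \ref{lemma:inverse} into Lemma \ref{lemma:w_0} yields
\[
(\mathbf{x^{\op}})^\star_{\binom{k}{m}} = \epsilon_1\cdot \epsilon_2\, x_{\binom{k^\star}{N^{k^\star}(\mathbf i)-m}},
\]
where $\epsilon_1,\epsilon_2\in\{\pm 1\}$ are the signs produced by the two lemmas. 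The crucial observation is that both $\epsilon_1$ and $\epsilon_2$ are equal to $-1$ under exactly the same condition on $m$ --- namely $0=m\neq N^{k^\star}(\mathbf i)$ or $0\neq m= N^{k^\star}(\mathbf i)$, because $N^{k^\star}(\mathbf{i^{\op}})=N^{k^\star}(\mathbf i)$. Hence $\epsilon_1\epsilon_2=+1$ identically, and the two expressions coincide.

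The main obstacle, and essentially the only delicate point, is this coordinate-by-coordinate check. The nontrivial observation is that the endpoint sign conditions appearing in Lemma \ref{lemma:w_0} and Lemma \ref{lemma:inverse} are identical once one recognises that the ``extreme'' indices $j=0$ and $j=N^{k^\star}(\mathbf i)$ are characterised purely by their position in the ordered chain of $k^\star$-vertices, which is invariant under the relabelings induced by $\op$ and $\star$. Once this is established, the signs cancel pairwise, the resulting formula matches (\ref{equ:circlearrow}), and chaining the three equalities above gives the desired conclusion $\kappa(\widehat{w_0},\ev_{\mathbf i}(\mathbf x))=\ev_{\mathbf{i^{\circlearrowright}}}(\mathbf{x^{\circlearrowright}})$. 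In particular, no new case-by-case verification on elementary double words is needed, since both lemmas have already been established in that generality and propagated through the amalgamation procedure.
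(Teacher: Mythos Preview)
Your proof is correct and follows essentially the same approach as the paper: both derive the result by composing Lemma~\ref{lemma:inverse} and Lemma~\ref{lemma:w_0} and then verifying that the seed $\mathcal X$-torus involutions satisfy $(\mathbf{x^{\op}})^\star=\mathbf{x^{\circlearrowright}}$, which the paper states as ``the relation $\circlearrowright=\op\circ\star$ remains valid on seed $\mathcal X$-tori''. Your explicit coordinate check, showing that the two endpoint signs $\epsilon_1,\epsilon_2$ arise under identical conditions and hence cancel, is precisely the verification the paper leaves implicit.
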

\begin{proof}The involutions defined by Lemma \ref{lemma:w_0},
Lemma \ref{lemma:inverse} and equation (\ref{equ:circlearrow})
on seed $\mathcal X$-tori clearly imply that
the relation $\circlearrowright=\op\circ \star$ remains valid on seed
${\mathcal X}$-tori. Therefore Lemma \ref{lemma:w_0}
and Lemma \ref{lemma:inverse} lead to the researched equality.
\end{proof}

\subsection{Chiral dual and other involutions}We introduce other involutions
on double words. These ones will be directly useful to describe the combinatorics
associated to the twist maps.

\begin{definition}\cite{FGdilog2}\label{def:chiraldual} The \emph{chiral dual} of a seed
$\mathbf I=(I,I_0,\varepsilon,d)$ is the seed
$\mathbf I^{\bigcirc}=(I,I_0,-\varepsilon,d)$. Chiral dual
induces an involutive map between the corresponding seed $\mathcal X$-tori,
which is denoted in the same way and given by
$x_{\bigcirc(i)}={x_i^{-1}}$.
\end{definition}

%\begin{prop}\label{prop:twist}Mutations, tropical mutations and symmetries
%commute with the chiral dual involution.
%\end{prop}
%\begin{proof} The proposition is easily checked. Here is the calculus for mutations:
%for every $i\in I$ and every $k\in I\backslash I_0$, we have
%$$\begin{array}{rl}
%x_{\mu_k\circ\bigcirc(i)}&=\left\{ \begin{array}{lll}
%{x_{\bigcirc(k)}}^{-1}&  \mbox{if}\ i=k\ ;\\
%x_{\bigcirc(i)}(1+x_{\bigcirc(k)})^{\varepsilon_{\bigcirc(i)\bigcirc(k)}}
%& \mbox{if}\ \varepsilon_{\bigcirc(i)\bigcirc(k)}\geq0\ \mbox{and}\ i\not=k\ ;\\
%x_{\bigcirc(i)}(1+(x_{\bigcirc(k)})^{-1})^{\varepsilon_{\bigcirc(i)\bigcirc(k)}}
%& \mbox{if}\ \varepsilon_{\bigcirc(i)\bigcirc(k)}\leq0\ \mbox{and}\ i\not=k\ .
%\end{array}\right.\\
%\\
%&=\left\{ \begin{array}{lll}
%{x_{k}}&  \mbox{if}\ i=k\ ;\\
%x_{i}^{-1}(1+x_{k}^{-1})^{-\varepsilon_{ik}}& \mbox{if}\ \varepsilon_{ik}\leq0\ \mbox{and}\ i\not=k\ ;\\
%x_{i}^{-1}(1+x_{k})^{-\varepsilon_{ik}}& \mbox{if}\ \varepsilon_{ik}\geq0\ \mbox{and}\ i\not=k\ .
%\end{array}\right.\\
%\\
%&=x_{\mu_k(i)}^{-1}\\
%\\
%&=x_{\bigcirc\circ\mu_k(i)}\ .
%\end{array}
%$$
%\end{proof}

\begin{rem}It can be checked that the chiral dual commutes with cluster
transformations but not with tropical mutations.
\end{rem}

The chiral dual leads to the following involution on double reduced words,
also denoted~$\bigcirc$.
For every $u,v\in W$ and every double word $\mathbf i\in D(u,v)$,
let $\mathbf{i^{\bigcirc}}\in D(v,u)$ be the double word obtained by
applying the map $i\mapsto\overline{i}$ as a homomorphism on the double word $\mathbf i$.
It is clear that the double word $\mathbf{i^{\bigcirc}}$ is reduced
if and only if $\mathbf i$ is reduced. The following result is immediate.

\begin{lemma}For every double word $\mathbf i$, the chiral dual
$\mathbf{I(i)}^{\bigcirc}$ of the seed $\mathbf{I(i)}$ associated
to the double word $\mathbf i$ is the seed $\mathbf{I(i^{\bigcirc})}$
associated to the double word $\mathbf{i^{\bigcirc}}$.
\end{lemma}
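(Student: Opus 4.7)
The plan is to reduce to the two basic facts: the elementary case established by equation (\ref{equ:eps}), and the fact that the amalgamation formula (\ref{equ:amaleps}) is entry-wise linear in the $\varepsilon$ matrices of its factors.

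First I will check that the underlying combinatorial data of the two seeds in question actually coincide. By the definition of $\mathbf{i^\bigcirc}$, each letter is swapped with its opposite; in particular, for every $j\in[1,l]$ the number $N^j(\mathbf i)$ of times the letter $j$ or $\overline j$ occurs is unchanged, i.e. $N^j(\mathbf i)=N^j(\mathbf{i^\bigcirc})$. Consequently the index sets $I(\mathbf i)=I(\mathbf{i^\bigcirc})$, the outlet sets $I_0(\mathbf i)=I_0(\mathbf{i^\bigcirc})$ (together with their splittings into left and right outlets), and the multipliers $d(\mathbf i)=d(\mathbf{i^\bigcirc})$ all agree, since each of these depends only on vertex types and letter counts. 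The whole content of the lemma is therefore the identity $\varepsilon(\mathbf{i^\bigcirc})=-\varepsilon(\mathbf i)$.

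I will prove this identity by induction on the length of the double word $\mathbf i=i_1\cdots i_n$. For the base case $n=1$ with $\mathbf i\in\{i,\overline i\}$, equation (\ref{equ:eps}) gives directly $\varepsilon(\overline i)=-\varepsilon(i)$, since the two displayed formulas there differ only by the sign of $a_{ij}/2$. (The trivial case $\mathbf i=\mathbf 1$ is vacuous because $\mathbf{1^\bigcirc}=\mathbf 1$ and the associated $\varepsilon$ is zero outside the outlet/outlet block, which is itself zero.) For the inductive step, write $\mathbf i=\mathbf i'\,i_n$ so that $\mathbf{i^\bigcirc}=(\mathbf{i'})^\bigcirc\,\overline{i_n}$. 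The amalgamation formula (\ref{equ:amaleps}) expresses each entry of $\varepsilon(\mathbf i)$ as a sum of an entry of $\varepsilon(\mathbf{i'})$ and an entry of $\varepsilon(i_n)$, where the position in the second matrix is shifted by $N^j(\mathbf{i'})$. Because $N^j(\mathbf{i'})=N^j((\mathbf{i'})^\bigcirc)$, these same shifts compute $\varepsilon(\mathbf{i^\bigcirc})$ from $\varepsilon((\mathbf{i'})^\bigcirc)$ and $\varepsilon(\overline{i_n})$. Applying the inductive hypothesis to $\mathbf{i'}$ and the base case to $i_n$ gives $\varepsilon((\mathbf{i'})^\bigcirc)=-\varepsilon(\mathbf{i'})$ and $\varepsilon(\overline{i_n})=-\varepsilon(i_n)$, so by linearity each entry of $\varepsilon(\mathbf{i^\bigcirc})$ is the negative of the corresponding entry of $\varepsilon(\mathbf i)$.

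There is no real obstacle here: once the bookkeeping of (\ref{equ:amaleps}) is taken seriously, the argument is essentially an observation that the bar-involution on letters acts by a global sign on the contribution of each elementary block, and that the amalgamation preserves this sign because it only adds contributions rather than multiplying them. The only mild subtlety to watch is the case distinction in (\ref{equ:amaleps}) at positions where outlets of $\mathbf{i'}$ and $i_n$ are identified: there the entry is a sum of two terms, each of which is separately negated by the bar-involution, so the sum is negated as required.
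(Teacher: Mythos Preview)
Your proof is correct and is precisely the natural unpacking of what the paper means by ``immediate'': the paper gives no proof of this lemma, and your induction on the length of $\mathbf i$ via the amalgamation formula (\ref{equ:amaleps}) together with the elementary identity $\varepsilon(\overline i)=-\varepsilon(i)$ from (\ref{equ:eps}) is exactly the intended justification.
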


Here is now the link with the group $G$.
Let us recall that the \emph{involutive Cartan group automorphism}
$\theta:G \rightarrow G:x \mapsto  x^{\theta}$ is
the map given by
\begin{equation}\label{equ:Cartan}
a^{\theta}=a^{-1}\in H,\ {E^i}^{\theta}=F^i,\ {F^i}^{\theta}=E^i.
\end{equation}

\begin{prop}\label{prop:chiral}For every $u,v\in W$ and every double reduced
word $\mathbf i\in R(u,v)$, the following diagram commutes. Its vertical edges
are labeled by birational Poisson isomorphisms whereas its horizontal edges
are labeled by birational anti-Poisson isomorphisms.
$$\xymatrix{
{{\mathcal X}_{\mathbf i}}\ar@/^1pc/[r]^{\bigcirc}\ar@/_1pc/[d]_{\ev_{\mathbf i}}
&{{\mathcal X}_{\mathbf {i^{\bigcirc}}}}\ar@/^1pc/[d]^{\ev_{\mathbf{i^{\bigcirc}}}}\\
 (G^{u,v},\pi_G)\ar@/_1pc/[r]_{\theta}&(G^{v,u},\pi_G)
}$$
\end{prop}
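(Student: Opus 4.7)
Since $\ev_{\mathbf i}$ and $\ev_{\mathbf{i}^{\bigcirc}}$ are already Poisson by Theorem \ref{thm:evG}, the substance to establish is (i) that both $\bigcirc$ and $\theta$ are anti-Poisson and (ii) that the square commutes. I would treat (i) by direct computation and (ii) by reducing to elementary double words and invoking the amalgamation procedure.

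For (i) on the cluster side, the chiral dual substitutes $x_i \mapsto x_i^{-1}$ while replacing $\varepsilon$ by $-\varepsilon$, and a one-line Leibniz computation gives $\{x_i^{-1}, x_j^{-1}\} = \widehat\varepsilon_{ij}\, x_i^{-1} x_j^{-1}$; comparing with the bracket on ${\mathcal X}_{\mathbf i^{\bigcirc}}$ yields the anti-Poisson identity $\bigcirc^{*}\{\cdot,\cdot\}_{\mathbf i^{\bigcirc}} = -\{\bigcirc^{*}\cdot,\bigcirc^{*}\cdot\}_{\mathbf i}$. For (i) on the group side, $\theta$ preserves the Killing form, so the transformation rule $\nabla(f\circ\theta)(X) = \theta(\nabla f(\theta X))$ holds for both left and right gradients; the remaining input is $(\theta\otimes\theta)(r) = -r$, and substituting into the Sklyanin formula (\ref{equ:Sbracket}) then produces an overall sign flip, namely $\{f\circ\theta, g\circ\theta\}_G = -\{f,g\}_G\circ\theta$.

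For (ii), the elementary cases $\mathbf i \in\{\mathbf 1, i, \overline i\}$ are direct: using $\theta(E^i)=F^i$, $\theta(F^i)=E^i$ and $\theta(H^j(x)) = H^j(x^{-1})$ (the last following from $\theta(h^j) = -h^j$ and the definition of $H^j$), one verifies that both $\theta\circ\ev_i(\mathbf x)$ and $\ev_{\overline i}\circ\bigcirc(\mathbf x)$ equal $\prod_j H^j(x_{\binom{j}{0}}^{-1})\, F^i\, H^i(x_{\binom{i}{1}}^{-1})$, and the other two cases are symmetric. For a general $\mathbf i = i_1\cdots i_n$, the evaluation splits via (\ref{equ:defev}) as the product $\ev_{i_1}(\mathbf x_1)\cdots\ev_{i_n}(\mathbf x_n)$ after amalgamation; since $\theta$ is a group homomorphism and $\bigcirc$ commutes with the amalgamation map $\mathfrak m$ (the identity $(x_ky_k)^{-1} = x_k^{-1} y_k^{-1}$ shows that $\bigcirc\circ\mathfrak m = \mathfrak m\circ(\bigcirc\times\bigcirc)$ when applied to (\ref{equ:amal})), the elementary identity propagates factor by factor to give the desired commutativity.

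The main obstacle is verifying $(\theta\otimes\theta)(r) = -r$, since (\ref{equ:Cartan}) only specifies $\theta$ on the Chevalley generators whereas $r$ involves $e_\alpha, f_\alpha$ for every positive root $\alpha$. The way to handle this is to show, by induction on the length of a reduced expression via (\ref{equ:autoLusztig}), that $\theta(e_\alpha) = \pm f_\alpha$ and $\theta(f_\alpha) = \pm e_\alpha$ with matching signs; the sign then squares to $+1$ in each antisymmetric summand $e_\alpha\otimes f_\alpha - f_\alpha\otimes e_\alpha$ of $r$, so that $(\theta\otimes\theta)(r) = -r$ follows. Once this Lie-algebraic identity is in hand, the rest of the verification is routine bookkeeping.
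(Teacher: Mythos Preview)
Your argument is correct, and the commutativity portion matches the paper exactly: both reduce to the elementary words $\mathbf i \in \{i, \overline i\}$ using that $\bigcirc$ commutes with amalgamation (\ref{equ:amal}) while $\theta$ is a group homomorphism, and then check directly from the definitions and (\ref{equ:eps}). Where you add content is in supplying an independent proof that $\theta$ is anti-Poisson via the identity $(\theta\otimes\theta)(r) = -r$; the paper's proof says nothing about this point, presumably treating it as standard background (or as deducible a posteriori from the commutative square, since the other three arrows have known Poisson type and the vertical ones are birational onto Zariski-open sets). Your route is fine; incidentally, one checks on generators from (\ref{equ:classicalbraid}) that $\theta$ commutes with each $T_i$, so for the root vectors of (\ref{equ:autoLusztig}) one actually has $\theta(e_\beta) = f_\beta$ on the nose, and no sign bookkeeping is needed.
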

\begin{proof}Because the involution $\bigcirc$ commutes with the
amalgamated product, which intertwines the product on $G$ via the
evaluation map by equation (\ref{equ:defev}), and
because $\theta$ is a group automorphism for this product on $G$,
we just have to focus on the case $\mathbf i\in\{i,\overline{i}\}$.
The result is then easily derived from the definition of the involutions $\bigcirc$
and $\theta$, and the formula (\ref{equ:eps}).
\end{proof}

\subsection{Generalized cluster transformations and twist maps
on $(G,\pi_G)$}\label{section:Factorization}\label{section:twisted}
In this subsection, we use generalized cluster transformations to give the
cluster combinatorics underlying the Fomin-Zelevinsky twist maps.

\subsubsection{Tropical mutations and twist maps}
To $i\in[1,l]\cup[\overline{1},\overline{l}]$, we associated
the positive letter $| i|\in[1,l]$ given by the formula
\begin{equation}\label{equ:midi}
| i|=\left\{\begin{array}{ccc}
i&\mbox{if } i\in[1,l]\ ;\\
\overline{i}&\mbox{otherwise}\ .
\end{array}
\right.
\end{equation}
Let $w, w'\in W$. We denote $w\rightarrow w'$ if and only if we can find
a letter $i\in[1,l]$ such that $w=s_iw'$ and $\ell(w)=\ell(w')+1$, and denote
$\leq$ the \emph{right weak order} on $W$, i.e. $w'\leq w$ if there exists
a chain $w\rightarrow \dots\rightarrow w'$.

Let us recall that for every
$w'\leq w$, a reduced word $\mathbf i=i_1\dots i_{\ell(w)}\in  R(w)$ is
said to be \emph{adapted to $w'$} if we have the equality
$s_{i_1}\dots s_{i_{\ell(w')}}=w'$.
We extend this notation to every double word $\mathbf i=i_1\dots i_n$
by setting $|\mathbf i|:=| i_1|\dots| i_n|$.

\begin{definition}Let $e<w_1\leq u,e<w_2\leq v\in W$. A double reduced word
$\mathbf i=i_1\dots i_n\in R(u,v)$ is said to be \emph{$\mathfrak{L}$-adapted
to $w_1$} (resp. \emph{$\mathfrak{R}$-adapted to $w_2$}) if the reduced word
$$\begin{array}{ccc}
| i_1\dots i_{\ell(w_1)}| \in  R(w_1)&(\mbox{resp.}&| i_{\ell(u)+\ell(w_2^{-1}v)+1}
\dots i_{\ell(u)+\ell(v)}|\in  R(w_2))
\end{array}$$
is {adapted to $w_1$} (resp. adapted to $w_2$). And the double word $\mathbf i$
is \emph{$(w_1,w_2)$-adapted} if it is $\mathfrak{L}$-adapted to $w_1$ and
$\mathfrak{R}$-adapted to $w_2$. In particular, a $(w_1,w_2)$-adapted double
reduced word is $(w'_1,w'_2)$-adapted for every $w'_1\leq w_1,w'_2\leq w_2$.
\end{definition}

For example, the double reduced word $\mathbf i=\overline{2}\overline{1}2$
is $\mathfrak{L}$-adapted for the elements $s_2,s_2s_1\in W$, $\mathfrak{R}$-adapted for $s_2$,
$(s_2s_1,s_2)$-adapted, and $(s_2,s_2)$-adapted; whereas the double
reduced word $\mathbf j=2\overline{2}\overline{1}$ is neither $\mathfrak{L}$-adapted,
nor $\mathfrak{R}$-adapted, hence nor $(w_1,w_2)$-adapted, for any $w_1,w_2\in W$.

\begin{rem}For every $u,v\in W$, if the double reduced word $\mathbf i\in R(u,v)$
is $(u,v)$-adapted, then its first $\ell(u)$ letters (resp. $\ell(v)$ last letters)
are negative (resp. positive) and give a reduced expression for $u$ (resp. $v$).
Moreover, the following assertions are equivalent for every double reduced word
$\mathbf i\in R(u,v)$:
\begin{itemize}
\item
the double reduced word $\mathbf i$ is $(u,v)$-adapted;
\item
the double reduced word $\mathbf i$ is $\mathfrak{L}$-adapted to $u$;
\item
the double reduced word $\mathbf i$ is $\mathfrak{R}$-adapted to $v$.
\end{itemize}
\end{rem}

\begin{prop}\label{equ:trop}The following equalities are satisfied for every
$u,v\in W$, every $(u,v)$-adapted double word $\mathbf i=i_1\dots i_n\in R(u,v)$,
and every $\mathbf x\in{\mathcal X}_{\mathbf i}$.
$$\begin{array}{ccc}
[\ev_{\mathbf i}(\mathbf x)\widehat{v^{-1}}]_{\leq 0}&=&
[\ev_{{\mathfrak R}(\mathbf i)}\circ\mu_{\binom {i_n}{N^{i_n}(\mathbf i)}}
(\mathbf x)\widehat{s_{i_n}v^{-1}}]_{\leq 0}\\
{[\widehat{u}^{-1}\ev_{\mathbf i}(\mathbf x)]_{\geq 0}}&=&
[\widehat{s_{i_1}u}^{-1}\ev_{{\mathfrak L}(\mathbf i)}\circ\mu_{\binom {i_1}{0}}
(\mathbf x)]_{\geq 0}\ .
\end{array}$$
\end{prop}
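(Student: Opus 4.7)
\smallskip\noindent\textbf{Proof plan.}
The plan is to establish the right $\tau$-move equality in detail; the left one will follow by a symmetric argument. Write $\mathbf i=\mathbf i'\cdot i_n$ and use the compatibility of $\ev$ with amalgamation given by~(\ref{equ:defev}) to factor
\[
\ev_{\mathbf i}(\mathbf x) \;=\; \ev_{\mathbf i'}(\mathbf x')\cdot \ev_{i_n}(\mathbf x''),
\]
with $\mathbf x'$ and $\mathbf x''$ the local clusters. Since $\mathbf i$ is $\mathfrak R$-adapted to $v$, the reduced expression of $v$ ends with $s_{i_n}$, so $\ell(vs_{i_n})=\ell(v)-1$ and we have the length-additive factorization $\widehat{v^{-1}}=\widehat{s_{i_n}}\cdot\widehat{s_{i_n}v^{-1}}$.

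The core technical step is a local Gauss-decomposition identity at the last letter: I will show that
\[
\ev_{i_n}(\mathbf x'')\cdot\widehat{s_{i_n}} \;=\; \ev_{\overline{i_n}}\bigl(\mu^{\mathrm{loc}}(\mathbf x'')\bigr)\cdot U^+_{i_n}(c),
\]
where $\mu^{\mathrm{loc}}$ is the local right tropical mutation on $\mathcal X_{i_n}$ at $\lozenge^{\mathfrak R}_{i_n}=\binom{i_n}{1}$, and $U^+_{i_n}(c)\in\exp(\mathbb C\cdot e_{i_n})$ is a one-parameter unipotent (with $c$ a specific rational monomial in $\mathbf x''$). Via the homomorphism $\varphi_{i_n}$, this reduces to the $SL_2$ identity $U^+_i(t)\widehat{s_i}=H_i(t)\,U^-_i(t)\,U^+_i(-t^{-1})$, whose lower-triangular factor is $H_i(t)U^-_i(t)$. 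Combining this with the commutation rules $\mathrm{Ad}(H^j(t))e_i=t^{\delta_{ij}}e_i$ and $\mathrm{Ad}(H^j(t))f_i=t^{-\delta_{ij}}f_i$, together with the expansion $H_i(t)=\prod_j H^j(t)^{a_{ij}}$, one checks that the lower-triangular factor coincides with $\ev_{\overline{i_n}}$ evaluated at $\mu^{\mathrm{loc}}(\mathbf x'')$. This bookkeeping---matching Cartan exponents to the $b$-coefficients read from the Dynkin quiver (Figure~\ref{fig:block}) in the tropical mutation formula of Definition~\ref{def:trop}---is the main obstacle.

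Given the local identity, I assemble the global one. By Remark~\ref{rem:tropamal}, right tropical mutations commute with left-side amalgamation, so amalgamating $\mathbf x'$ with $\mu^{\mathrm{loc}}(\mathbf x'')$ yields $\mu_{\binom{i_n}{N^{i_n}(\mathbf i)}}(\mathbf x)$. Hence
\[
\ev_{\mathbf i}(\mathbf x)\,\widehat{s_{i_n}} \;=\; \ev_{\mathfrak R(\mathbf i)}\bigl(\mu(\mathbf x)\bigr)\cdot U^+_{i_n}(c).
\]
Right-multiplying by $\widehat{s_{i_n}v^{-1}}$ and conjugating $U^+_{i_n}(c)$ across it gives
\[
\ev_{\mathbf i}(\mathbf x)\,\widehat{v^{-1}} \;=\; \ev_{\mathfrak R(\mathbf i)}\bigl(\mu(\mathbf x)\bigr)\,\widehat{s_{i_n}v^{-1}}\cdot U_\beta(c'),
\]
where $U_\beta(c')$ is supported on the root $\beta=(s_{i_n}v^{-1})^{-1}(\alpha_{i_n})=-v(\alpha_{i_n})$. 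The $\mathfrak R$-adapted hypothesis forces $\ell(vs_{i_n})<\ell(v)$, hence $v(\alpha_{i_n})<0$, so $\beta$ is a positive root and $U_\beta(c')\in N_+$. Since right-multiplication by $N_+$ preserves $[\,\cdot\,]_{\leq 0}$, the first equality follows. The left $\tau$-move equality is proved analogously using the mirror local identity $\widehat{s_{|i_1|}}^{-1}\ev_{i_1}(\mathbf x'')=U^-_{|i_1|}(c)\cdot\ev_{\overline{i_1}}(\mu^{\mathrm{loc}}_{\binom{|i_1|}{0}}(\mathbf x''))$, the factorization $\widehat u^{-1}=\widehat{s_{i_1}u}^{-1}\widehat{s_{i_1}}^{-1}$, and the fact that left-multiplication by $N_-$ preserves $[\,\cdot\,]_{\geq 0}$.
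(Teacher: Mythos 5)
Your proposal is correct and follows essentially the same route as the paper: the key ingredients are the local $\SL_2$ Gauss-decomposition identity (the paper's~(\ref{equ:Wsl2}) and~(\ref{equ:si2})), the commutation of right/left tropical mutations with amalgamation (Remark~\ref{rem:tropamal}), and the observation that the $(u,v)$-adaptedness hypothesis forces the conjugated root to be positive (resp.\ negative), so the residual unipotent factor is absorbed by $[\,\cdot\,]_{\leq 0}$ (resp.\ $[\,\cdot\,]_{\geq 0}$). You phrase the argument by peeling off the last (resp.\ first) letter and then reassembling, while the paper states the resulting global identity~(\ref{equ:si}) directly; you also spell out the root-theoretic absorption step ($\beta=-v(\alpha_{i_n})>0$) in slightly more detail, but the substance is identical.
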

\begin{proof}Let us remember the map $\varphi_j:\SL(2,\mathbb C)\hookrightarrow G$
defined in Subsection~\ref{section:Preliminaries1}. For any nonzero $t \in \mathbb C$
and any $i\in [1,l]$, let us denote
\begin{equation}
\label{eq:xnegative}
\begin{array}{cccccc}
x_{i} (t) =\varphi_i\left(
\begin{array}{cc}
1 & t\\
0 & 1
\end{array}
\right)
&,&
x_{\overline{i}} (t)=\varphi_i\left(
\begin{array}{cc}
1 & 0\\
t & 1
\end{array}
\right).
\end{array}
\end{equation}
For every $j\in[1,l]$, the following equality, easily checked on $\SL(2,\mathbb C)$ by
elementary matrix calculus, can be extended on $G$ using the map $\varphi_j$ of
Subsection~\ref{section:Preliminaries1}.
\begin{equation}\label{equ:Wsl2}{\widehat{s_j}}^{-1}x_{\overline{j}}(t)
=x_{\overline{j}}(-t^{-1})t^{h_j}x_j(t^{-1})\ .
\end{equation}
Using the definition of tropical mutation,
and the fact that tropical mutations commute with
the amalgamated product according to Remark \ref{rem:tropamal}, we deduce:
\begin{equation}\label{equ:si}{\widehat{s_{i_1}}}^{-1}\ev_{\mathbf{i}}(\mathbf {x})
=x_{\overline{i_1}}(-x^{-1}_{\binom{i_1}{0}})\ \ \ev_{{\mathfrak L}(\mathbf{i})}
\circ\mu_{\binom {i_1}{0}}(\mathbf {x})\ .
\end{equation}
Moreover, we have the inequality $\ell(s_{i_1}u)<\ell(u)$
because the double word $\mathbf i$ is $\mathfrak{L}$-adapted to $u$. This inequality
implies that $\widehat{s_{i_1}u}^{-1}x_{\overline{j}}(t)
\widehat{s_{i_1}u}\in N_-$ for every $t\in\mathbb C$.
Therefore the second equation is proved.
The first equation is proved in the same way, using the following equality
instead of (\ref{equ:Wsl2}).
\begin{equation}\label{equ:si2}x_{{i}}(t){\widehat{s_i}}=x_{\overline{i}}
(t^{-1})t^{h_i}x_i(-t^{-1})\ .
\end{equation}
\end{proof}

\begin{definition}\cite[Definition 1.5]{FZtotal}\label{def:twistmap} Let $u,v\in W$.
The $\emph{twist map}$ $\zeta_{\theta}^{u,v}:x\mapsto x'$ is the map defined by
\begin{equation}\label{equ:FZtwist}
x'=([\widehat{u}^{-1}x]_-^{-1}\widehat{u}^{-1}x\widehat{v^{-1}}
[x\widehat{v^{-1}}]_+^{-1})^{\theta}\ .
\end{equation}
Because of \cite[Theorem 1.6]{FZtotal}, the right side of (\ref{equ:FZtwist})
is well defined for every $x\in G^{u,v}$ and the twist map
$\zeta_{\theta}^{u,v}$ establishes a biregular isomorphism between
$G^{u,v}$ and $G^{u^{-1},v^{-1}}$. Let us define the related map
\begin{equation}\label{equ:defxi}
\begin{array}{cccc}
\zeta^{u,v}: & G^{u,v} & \longrightarrow & G^{v^{-1},u^{-1}}\\
                    & x & \longmapsto & [\widehat{u}^{-1}x]_-^{-1}
                    \widehat{u}^{-1}x\widehat{v^{-1}}[x\widehat{v^{-1}}]_+^{-1}.
\end{array}
\end{equation}
So we get the equality $\zeta_{\theta}^{u,v}=\theta\circ\zeta^{u,v}$.
Let us notice that for every $x\in G^{u,1}$ and $y\in G^{1,v}$ we
have the relations
$\zeta^{u,1}(x)=[\widehat{u}^{-1}x]_{\geq 0}$ {and} $\zeta^{1,v}(y)=[y\widehat{v}]_{\leq 0}$.
\end{definition}

\begin{rem}
Because the map (\ref{equ:defxi}) will appear a lot in what
follow, it will be useful to denote it also by the expression "twist map".
When we will need to avoid confusion, we will refer to the
map (\ref{equ:FZtwist}) as the "Fomin-Zelevinsky twist map".
\end{rem}

\subsubsection{Generalized cluster transformations and twist maps
on $(B_{\pm},\pi_G)$}
We are going to describe twist maps at the level of seed $\mathcal X$-tori.
Let us start to associate a generalized cluster transformation to any
twist map on $(B_{\pm},\pi_G)$.
To do that, we first need to sharpen the preceding involution $\square$
on double words. For every positive reduced word
$\mathbf i=i_1\dots i_n$, every  negative reduced word
$\mathbf j=j_1\dots j_n$, and every $k\in[1,n+1]$, we introduce the double
words
\begin{equation}\label{equ:xiword}
\begin{array}{ccllll}
\mathbf i(k)=\mathbf i(k)_-\mathbf i(k)_+& \mbox{where}&\mathbf i(k)_+
=i_1\dots i_{k-1} &\mbox{and}& \mathbf i(k)_-=\overline{i_n}\dots\overline{i_{k}}\\
\mathbf j(k)=\mathbf j(k)_-\mathbf j(k)_+& \mbox{where}&\mathbf j(k)_+
=j_{k-1}\dots j_{1} &\mbox{and}& \mathbf j(k)_-=\overline{j_{k}}\dots\overline{j_{n}}
\end{array}.
\end{equation}
The following insight on these double reduced words will be developed in
the next section, by considering the $W$-permutohedron. It is derived from
an easy induction on the number $k\in[1,\ell(w)]$ that appear in the statement.

\begin{lemma}\label{lemma:p65}Let $w\in W$, $\mathbf i=i_1\dots i_{\ell(w)}\in R(1,w)$
be a positive reduced word, and $w_{\geq k}=s_{i_k}\dots s_{i_{\ell(w)}}$
be the element of $w$ associated to any $k\in[1,k]$. The double reduced
word $\mathbf i(k)$ belongs to the set $R(w_{\geq k}^{-1},ww_{\geq k}^{-1})$.
\end{lemma}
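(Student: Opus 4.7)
The plan is to verify directly the two defining properties of a double reduced word in $R(w_{\geq k}^{-1},ww_{\geq k}^{-1})$: that $\mathbf i(k)_-$ is a negative reduced expression of $w_{\geq k}^{-1}$, that $\mathbf i(k)_+$ is a positive reduced expression of $ww_{\geq k}^{-1}$, and that the total length matches $\ell(w_{\geq k}^{-1})+\ell(ww_{\geq k}^{-1})$. The induction on $k$ suggested in the statement is convenient but not essential; what does all the work is the standard fact that prefixes and suffixes of a reduced word stay reduced, together with the factorization $w=(s_{i_1}\cdots s_{i_{k-1}})w_{\geq k}$ with matching length additivity.

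First I would analyze the positive half. The word $\mathbf i(k)_+=i_1\dots i_{k-1}$ is a prefix of the reduced expression $\mathbf i\in R(w)$, hence itself a reduced expression of the element $s_{i_1}\cdots s_{i_{k-1}}$ of length $k-1$. By definition $w_{\geq k}=s_{i_k}\cdots s_{i_n}$, so $w=(s_{i_1}\cdots s_{i_{k-1}})\,w_{\geq k}$, from which $s_{i_1}\cdots s_{i_{k-1}}=ww_{\geq k}^{-1}$; in particular $\ell(ww_{\geq k}^{-1})=k-1$ and $\mathbf i(k)_+$ is a positive reduced word for $ww_{\geq k}^{-1}$.

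Next I would treat the negative half. The suffix $i_ki_{k+1}\dots i_n$ of $\mathbf i$ is again reduced, so it is a reduced expression of $w_{\geq k}=s_{i_k}\cdots s_{i_n}$, with length $\ell(w)-k+1$. Reversing a reduced expression yields a reduced expression for the inverse element, so $i_n i_{n-1}\dots i_k$ is a reduced expression of $w_{\geq k}^{-1}$; decorating each letter with a bar simply records that it lies in the first copy of $W$. Thus $\mathbf i(k)_-=\overline{i_n}\cdots\overline{i_k}$ is a negative reduced word for $w_{\geq k}^{-1}$, of length $\ell(w_{\geq k}^{-1})=\ell(w)-k+1$.

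Combining the two halves, $\mathbf i(k)=\mathbf i(k)_-\mathbf i(k)_+$ has total length $(\ell(w)-k+1)+(k-1)=\ell(w)=\ell(w_{\geq k}^{-1})+\ell(ww_{\geq k}^{-1})$, which is precisely the condition for $\mathbf i(k)$ to be a double \emph{reduced} word in $R(w_{\geq k}^{-1},ww_{\geq k}^{-1})$. There is no real obstacle to overcome: the only nontrivial input is the reducedness of prefixes and suffixes of reduced words, and the only care needed is in correctly reading the negative part (reverse order of the indices, together with the inversion induced by the bar convention) so that the Weyl group element it represents is indeed $w_{\geq k}^{-1}$ and not $w_{\geq k}$.
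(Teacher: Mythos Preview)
Your proof is correct and essentially matches the paper's approach: the paper simply states that the lemma ``is derived from an easy induction on the number $k\in[1,\ell(w)]$'' without giving any details, and your direct verification (using that prefixes and suffixes of reduced words remain reduced, together with the factorization $w=(s_{i_1}\cdots s_{i_{k-1}})w_{\geq k}$) is exactly what underlies that induction. If anything, your write-up is more complete than the paper's one-line justification.
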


\begin{rem}
The involution $\square$ on positive or negative words
is rediscovered from equation (\ref{equ:xiword}) because of
the following equalities
$\mathbf{i}=\mathbf i(n+1)$, $\mathbf{j}=\mathbf j(1)$
{and} $\mathbf{i^{\square}}=\mathbf i(1)$,
$\mathbf{j^{\square}}=\mathbf j(n)$.
\end{rem}

\begin{ex}Let us choose the positive reduced word $\mathbf i=121\in R(1,w_0)$,
when $\mathfrak g=A_2$. We then get the following double reduced words
$\mathbf i(4)=121$,
$\mathbf i(3)=\overline{1}12$, $\mathbf i(2)=\overline{1}\overline{2}1$,
and $\mathbf i(1)=\overline{1}\overline{2}\overline{1}$.
In the same way, if we consider the negative double word
$\mathbf j=\overline{1}\overline{2}\overline{1}\in R(w_0,1)$, we get
the following double reduced words
$\mathbf j(1)=\overline{1}\overline{2}\overline{1}$,
$\mathbf j(2)=\overline{1}\overline{2}1$, $\mathbf j(3)=\overline{1}12$
and $\mathbf j(4)=121$.
\end{ex}

For every positive reduced word
$\mathbf i=i_1\dots i_n$, every  negative reduced word
$\mathbf j=j_1\dots j_n$ and every $k\in[1,n]$, we define
the generalized cluster transformations
$\zeta_{\mathbf i(k)}:{\mathcal X}_{\mathbf i(k)}
\to{\mathcal X}_{\mathbf i(k-1)}$ and $\zeta_{\mathbf j(k)}:{\mathcal X}_{\mathbf j(k)}
\to{\mathcal X}_{\mathbf j(k-1)}$  by the following formulas
\begin{equation}\label{equ:defxi+2}
\begin{array}{clc}
&\zeta_{\mathbf i(k)}=\mu_{\binom{i_k}{N^{i_k}(\mathbf i(k)_-)}}\circ
\mu_{\binom{i_k}{N^{i_k}(\mathbf i(k)_-)+1}}\circ\dots
\circ\mu_{\binom{i_k}{N^{i_k}(\mathbf i)}}\\
\mbox{and}\\
&\zeta_{\mathbf j(k)}=\mu_{\binom{j_k}{0}}\circ
\mu_{\binom{j_k}{1}}\circ\dots
\circ\mu_{\binom{j_k}{N^{j_k}(\mathbf j(k)_-)-1}}\ .
\end{array}
\end{equation}
\begin{ex}
Here are the generalized cluster transformations related
to the double reduced words of the previous example.
$$\begin{array}{clc}
&\zeta_{\mathbf i(4)}=\mu_{\binom{1}{1}}\circ\mu_{\binom{1}{2}},\
\zeta_{\mathbf i(3)}=\mu_{\binom{2}{1}},\ \zeta_{\mathbf i(2)}=\mu_{\binom{1}{2}}\ ,\\
\mbox{and}\\
&\zeta_{\mathbf j(1)}=\mu_{\binom{1}{1}}\circ\mu_{\binom{1}{0}},\
\zeta_{\mathbf j(2)}=\mu_{\binom{2}{0}},\ \zeta_{\mathbf j(3)}=\mu_{\binom{1}{0}}\ .
\end{array}$$
\end{ex}

\begin{cor}\label{cor:tor} For every $u,v\in W$, $\mathbf i\in R(1,v)$
and $\mathbf j\in R(u,1)$, the following maps are Poisson birational
isomorphisms.
\begin{equation}\label{equ:tor}
\begin{array}{rccl}
\zeta_{\mathbf i(\geq k)}:&{\mathcal X}_{\mathbf i}&\longrightarrow &{\mathcal X}_{\mathbf i(k-1)}\\
           & \mathbf x          &\longmapsto     &\zeta_{\mathbf i(k)}\circ\dots\circ\zeta_{\mathbf i(n)}(\mathbf x)
\end{array}
\ \
\begin{array}{rccl}
\zeta_{\mathbf j(\leq k)}:&{\mathcal X}_{\mathbf j}&\longrightarrow &{\mathcal X}_{\mathbf j(k-1)}\\
           & \mathbf x          &\longmapsto     &\zeta_{\mathbf j(k)}\circ\dots\circ\zeta_{\mathbf j(1)}(\mathbf x)
\end{array} .
\end{equation}
\end{cor}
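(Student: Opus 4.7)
The plan is to derive this corollary directly from the elementary nature of the building blocks in the definition (\ref{equ:defxi+2}). Each $\zeta_{\mathbf i(k)}$ (resp.\ $\zeta_{\mathbf j(k)}$) is by construction a finite composition of elementary maps of the form $\mu_{\binom{m}{p}}$. My first step would be to verify, vertex by vertex along the chain, that each such $\mu_{\binom{m}{p}}$ is a Poisson birational isomorphism: if $\binom{m}{p}$ happens to be an outlet of the Dynkin quiver attached to the current double word then this is a tropical mutation, and the conclusion is Proposition \ref{prop:muttrop}; otherwise it is a regular mutation, and the conclusion is Definition \ref{def:mutation}.

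Second, I would check that the codomain of each elementary step matches the domain of the next, so that the compositions actually make sense. The natural picture is the following: in the product defining $\zeta_{\mathbf i(k)}$, the rightmost (first-applied) factor is a right tropical mutation performing the right $\tau$-move that flips the last positive letter $i_{k-1}$ of $\mathbf i(k)$ into $\overline{i_{k-1}}$; the remaining factors are regular mutations implementing the mixed $2$-moves that transport this newly negative letter leftward through the positive part of the word, until it is seated at the right end of the negative part and we reach $\mathbf i(k-1)$. Thus the chain really does go from ${\mathcal X}_{\mathbf i(k)}$ to ${\mathcal X}_{\mathbf i(k-1)}$. A dual analysis, using left tropical mutations, handles the case of $\zeta_{\mathbf j(k)}:{\mathcal X}_{\mathbf j(k)}\to{\mathcal X}_{\mathbf j(k-1)}$.

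Once these two steps are settled, the corollary is immediate: the compositions
\[
\zeta_{\mathbf i(\geq k)}=\zeta_{\mathbf i(k)}\circ\dots\circ\zeta_{\mathbf i(n)}
\qquad\text{and}\qquad
\zeta_{\mathbf j(\leq k)}=\zeta_{\mathbf j(k)}\circ\dots\circ\zeta_{\mathbf j(1)}
\]
are finite compositions of Poisson birational isomorphisms between the relevant seed $\mathcal X$-tori, and are therefore themselves Poisson birational isomorphisms.

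The main obstacle is the bookkeeping in the second step: one has to keep track of how the outlet structure of the associated Dynkin quiver evolves after each elementary mutation, so that every subsequent factor $\mu_{\binom{m}{p}}$ is correctly identified as either a tropical or a regular mutation. This is routine using the pictorial amalgamation rules of Section \ref{section:graph} together with the correspondence, illustrated in Figure \ref{fig:tropmut}, between left/right $\tau$-moves on double words and left/right tropical mutations on quivers, but it is the only place where nontrivial combinatorial work enters the argument.
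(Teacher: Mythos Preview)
Your proposal is correct and follows the same approach as the paper's own proof, which simply invokes Proposition~\ref{prop:muttrop}, equation~(\ref{equ:defxi+2}), and the fact that mutations are Poisson birational isomorphisms. You supply more of the combinatorial bookkeeping than the paper bothers to write out, but the argument is identical in substance.
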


\begin{proof}We use Proposition \ref{prop:muttrop}, equation
(\ref{equ:defxi+2}), and the fact that mutations are Poisson
birational isomorphisms.
\end{proof}

\begin{rem} The generalized cluster transformations (\ref{equ:tor})
will be used in Section \ref{section:evaluationdual} to describe
the unipotent parts of the dual Poisson-Lie group $(G^*,\pi_{G^*})$.
\end{rem}

\begin{ex}\label{ex:twistA2}Let us keep the positive reduced word $\mathbf i=121\in R(1,w_0)$,
when $\mathfrak g=A_2$. The generalized cluster transformations associated
to the double reduced words of the example above are then
the following. (We denote $\mathbf x\in{\mathcal X}_{\mathbf i}$ the cluster
$(x_{\binom{1}{0}},x_{\binom{1}{1}},x_{\binom{1}{2}},x_{\binom{2}{0}},x_{\binom{2}{1}})$.)
$$\begin{array}{rrr}
\zeta_{\mathbf i(\geq 3)}(\mathbf x)=\zeta_{\mathbf i(3)}(\mathbf x)=\left(
\begin{array}{c}
x_{\binom{1}{0}}({1+x_{\binom{1}{1}}})
,\ x_{\binom{1}{1}}^{-1},\ x_{\binom{1}{2}}^{-1}({1+x_{\binom{1}{1}}}),\\
\\
x_{\binom{2}{0}}({1+x_{\binom{1}{1}}^{-1}})^{-1},\ x_{\binom{2}{1}}x_{\binom{1}{2}}
\end{array}
\right)\ ;\\
\\
\zeta_{\mathbf i(\geq 2)}(\mathbf x)=\zeta_{\mathbf i(2)}
\circ\zeta_{\mathbf i(3)}(\mathbf x)=\left(
\begin{array}{c}
x_{\binom{1}{0}}({1+x_{\binom{1}{1}}})
,\ x_{\binom{1}{1}}^{-1},\ x_{\binom{2}{1}}({1+x_{\binom{1}{1}}}),\\
\\
x_{\binom{2}{0}}({1+x_{\binom{1}{1}}^{-1}})^{-1},\ x_{\binom{2}{1}}^{-1}
x_{\binom{1}{2}}^{-1}
\end{array}
\right)\ ;\\
\\
\zeta_{\mathbf i(\geq 1)}(\mathbf x)=\zeta_{\mathbf i(1)}
\circ\zeta_{\mathbf i(2)}
\circ\zeta_{\mathbf i(3)}(\mathbf x)=\left(
\begin{array}{c}
x_{\binom{1}{0}}({1+x_{\binom{1}{1}}})
,\ x_{\binom{1}{1}}^{-1},\ x_{\binom{2}{1}}^{-1}({1+x_{\binom{1}{1}}})^{-1},\\
\\
x_{\binom{2}{0}}({1+x_{\binom{1}{1}}^{-1}})^{-1},
\ x_{\binom{1}{2}}^{-1}({1+x_{\binom{1}{1}}})
\end{array}
\right)\ .
\end{array}$$
\end{ex}

Special cases of the generalized cluster transformations (\ref{equ:tor})
are given by the following birational Poisson isomorphisms. These are the
ones that we are going to associate to twist maps on $(G,\pi_G)$.
\begin{equation}\label{equ:defxi+}
\begin{array}{lcl}
\zeta_{\mathbf i}:{\mathcal X}_{\mathbf{i}}
\to{\mathcal X}_{\mathbf{i^{\square}}}:\mathbf x\mapsto\zeta_{\mathbf i(\geq 1)}(\mathbf x)&
\mbox{and}&\zeta_{\mathbf j}:{\mathcal X}_{\mathbf{j}}\to{\mathcal X}_{\mathbf{j^{\square}}}:
\mathbf x\mapsto\zeta_{\mathbf j(\leq \ell(u))}(\mathbf x)\ .
\end{array}
\end{equation}

\begin{prop}\label{prop:twist-} Let $w'\leq u,w\leq v\in W$,
$\mathbf i\in R({w'}^{-1}u,{w'}^{-1})$ and $\mathbf j\in R(w^{-1},vw^{-1})$
be double reduced words respectively $({w'}^{-1}u,{w'}^{-1})$-adapted and
$(w^{-1},vw^{-1})$-adapted. The following equalities are satisfied
for every cluster $\mathbf x\in {\mathcal X}_{\mathbf i}$ and
$\mathbf y\in {\mathcal X}_{\mathbf j}$:
$$\begin{array}{ccc}
[\widehat{w'^{-1}u}^{-1}\ev_{\mathbf{i}}(\mathbf x)]_{\geq 0}
=\ev_{\mathbf{i_-^{\square}}\mathbf{i_+}}\circ\zeta_{\mathbf{i_-}}(\mathbf x)
&\mbox{and}&[\ev_{\mathbf{j}}(\mathbf y)\widehat{wv^{-1}}]_{\leq 0}
=\ev_{\mathbf{j_-}\mathbf{j_+^{\square}}}\circ\zeta_{\mathbf{j_+}}(\mathbf y)\ .
\end{array}
$$
\end{prop}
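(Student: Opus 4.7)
The plan is to prove the first equation by induction on $m=\ell({w'}^{-1}u)$; the second follows by the symmetric induction on $\ell(vw^{-1})$, using the first equation of Proposition~\ref{equ:trop} and right tropical mutations. The base case $m=0$ is immediate: then $\mathbf{i_-}$ is empty, $\zeta_{\mathbf{i_-}}$ is the identity, and $\mathbf i=\mathbf{i_+}$ is purely positive, so $\ev_{\mathbf i}(\mathbf x)\in HN=B$, its Gauss decomposition has trivial $N_-$ component, and both sides equal $\ev_{\mathbf i}(\mathbf x)$.

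For the inductive step, write $\mathbf i=\overline{a_1}\cdots\overline{a_m}\,\mathbf{i_+}$ with $a_j\in[1,l]$. Since $\mathbf i$ is $({w'}^{-1}u,{w'}^{-1})$-adapted and $\ell(s_{a_1}{w'}^{-1}u)=m-1$, one application of the second equation of Proposition~\ref{equ:trop} gives
$$[\widehat{{w'}^{-1}u}^{-1}\ev_{\mathbf i}(\mathbf x)]_{\geq 0}=[\widehat{s_{a_1}{w'}^{-1}u}^{-1}\ev_{\mathfrak L(\mathbf i)}(\mu_{\binom{a_1}{0}}(\mathbf x))]_{\geq 0}.$$
The word $\mathfrak L(\mathbf i)=a_1\,\overline{a_2}\cdots\overline{a_m}\,\mathbf{i_+}$ begins with a positive letter and hence is no longer adapted. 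I would transport it to the adapted form $\mathbf{i''}=\overline{a_2}\cdots\overline{a_m}\,a_1\,\mathbf{i_+}$ by commuting $a_1$ rightward through the negative block via generalized $d$-moves: the mixed $2$-moves $a_1\overline{a_j}\leftrightarrow\overline{a_j}a_1$ for $a_j\neq a_1$ induce the identity cluster transformation by~(\ref{equ:elemmut}), whereas each occurrence $a_j=a_1$ (with $j>1$) contributes a mutation $\mu_{\binom{a_1}{1}}$. Let $\sigma:{\mathcal X}_{\mathfrak L(\mathbf i)}\to{\mathcal X}_{\mathbf{i''}}$ denote the resulting composition; Lemma~\ref{lemma:fg} gives $\ev_{\mathfrak L(\mathbf i)}=\ev_{\mathbf{i''}}\circ\sigma$.

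Setting $w''=w's_{a_1}$, the length-additivity $\ell(u)=\ell(w')+m$ implied by $w'\leq u$ forces $\ell(w'')=\ell(w')+1$ and $w''\leq u$ with $\ell({w''}^{-1}u)=m-1$, and makes $\mathbf{i''}$ a $({w''}^{-1}u,{w''}^{-1})$-adapted double reduced word. Applying the inductive hypothesis to $\mathbf{i''}$ and noting that $\mathbf{i''}_-^\square\,\mathbf{i''}_+=a_m\cdots a_2\cdot a_1\,\mathbf{i_+}=\mathbf{i_-^\square}\,\mathbf{i_+}$ yields
$$[\widehat{{w'}^{-1}u}^{-1}\ev_{\mathbf i}(\mathbf x)]_{\geq 0}=\ev_{\mathbf{i_-^\square}\,\mathbf{i_+}}\bigl(\zeta_{\mathbf{i''}_-}\circ\sigma\circ\mu_{\binom{a_1}{0}}(\mathbf x)\bigr).$$
The remaining task is the purely combinatorial factorization identity
$$\zeta_{\mathbf{i_-}}=\zeta_{\mathbf{i''}_-}\circ\sigma\circ\mu_{\binom{a_1}{0}},$$
which expresses the recursive structure of $\zeta_{\mathbf{i_-}}$ as defined in~(\ref{equ:defxi+2})--(\ref{equ:defxi+}): the outermost step $\zeta_{\mathbf{i_-}(1)}$ collects exactly the tropical mutations $\mu_{\binom{a_1}{k}}$ for $k\in[0,N^{a_1}(\mathbf{i_-})-1]$ at all $a_1$-vertices of $\Gamma_{\mathfrak g}(\mathbf{i_-})$, matching $\sigma\circ\mu_{\binom{a_1}{0}}$.

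The hard part will be verifying this last identity rigorously. One must track how the vertex-labelling in the amalgamated Dynkin quiver $\Gamma_{\mathfrak g}(\mathbf{i''})$ corresponds, after the sequence of $2$-moves, to that of $\Gamma_{\mathfrak g}(\mathbf i)$, and confirm that the remaining factors $\zeta_{\mathbf{i_-}(k)}$ for $k\geq 2$ assemble into $\zeta_{\mathbf{i''}_-}$. The commutation of tropical mutations with amalgamation (Remark~\ref{rem:tropamal}) and the involutivity of tropical mutations should reduce this to a direct but tedious bookkeeping exercise, with the delicate case being when some $a_j$ ($j>1$) coincides with $a_1$, so that $\sigma$ genuinely involves the mutations $\mu_{\binom{a_1}{1}},\ldots,\mu_{\binom{a_1}{N^{a_1}(\mathbf{i_-})-1}}$.
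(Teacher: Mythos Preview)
Your approach is correct and follows essentially the same inductive structure as the paper's proof. The paper is more telegraphic: it records the group-level identity $[\widehat{s_j}^{-1}[\widehat{w}^{-1}g]_{\geq 0}]_{\geq 0}=[\widehat{s_j}^{-1}\widehat{w}^{-1}g]_{\geq 0}$ to justify the iteration and then simply cites equation~(\ref{equ:si}), Theorem~\ref{fg}, and the definition~(\ref{equ:defxi+}) at each step, without spelling out the word rearrangement $\sigma$ that you make explicit. The ``hard part'' you flag---checking that $\zeta_{\mathbf{i_-}(1)}$ coincides with $\sigma\circ\mu_{\binom{a_1}{0}}$---is precisely the bookkeeping the paper elides: as the positive letter $a_1$ travels rightward through the negative block, each crossing $a_1\overline{a_1}\rightsquigarrow\overline{a_1}a_1$ contributes one mutation at the successive $a_1$-vertices $\binom{a_1}{1},\binom{a_1}{2},\ldots,\binom{a_1}{N^{a_1}(\mathbf{i_-})-1}$, and together with the initial tropical mutation $\mu_{\binom{a_1}{0}}$ this is exactly the composite defining $\zeta_{\mathbf{i_-}(1)}$ in~(\ref{equ:defxi+2}). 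After that, since $\mathbf{i_-}(2)_-=\overline{a_2}\cdots\overline{a_m}=\mathbf{i''}_-$, the remaining steps $\zeta_{\mathbf{i_-}(k)}$ for $k\geq 2$ line up with $\zeta_{\mathbf{i''}_-}$, closing the induction.
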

\begin{proof}Let $w\in W$ and $j\in[1,l]$ be such that $w<ws_j\leq v$ for
the right weak order. The following equality is satisfied for every $g\in G^{v,1}$.
$$[\widehat{s_j}^{-1}[\widehat{w}^{-1}g]_{\geq 0}]_{\geq 0}
=[\widehat{s_j}^{-1}\widehat{w}^{-1}g]_{\geq 0}\ .$$
An induction on the length of $v$, involving at each step the equation
(\ref{equ:si}), Theorem \ref{fg} and the definition (\ref{equ:defxi+})
leads to the second equality. The first equality is proved
in the same way.
\end{proof}

\begin{cor}\label{cor:twistborel} For every $u,v\in W$, every (double) reduced words $\mathbf i\in R(1,u)$
and $\mathbf j\in R(v,1)$, and every cluster $\mathbf x\in {\mathcal X}_{\mathbf i}$ and
$\mathbf y\in {\mathcal X}_{\mathbf j}$, the following equalities are
satisfied:
\begin{equation}\label{equ:torev}\begin{array}{ccc}
[\widehat{u}^{-1}\ev_{\mathbf{i}}(\mathbf x)]_{\geq 0}
=\ev_{\mathbf{i^{\square}}}\circ\zeta_{\mathbf i}(\mathbf x)
&\mbox{and}&[\ev_{\mathbf{j}}(\mathbf y)\widehat{v^{-1}}]_{\leq 0}
=\ev_{\mathbf{j^{\square}}}\circ\zeta_{\mathbf j}(\mathbf y)\ .
\end{array}
\end{equation}
\end{cor}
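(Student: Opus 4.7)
The plan is to deduce both equations as the degenerate specializations of Proposition~\ref{prop:twist-} in which one of the adapted Weyl group elements collapses to the identity, so that the mixed decomposition $\mathbf i = \mathbf i_-\mathbf i_+$ from (\ref{equ:xiword}) becomes single-signed.

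For the first identity I would set $w' = e$ in Proposition~\ref{prop:twist-}. The double reduced word then lies in $R(w'^{-1}u,w'^{-1}) = R(u,1)$, so it contains only negative letters, and the decomposition $\mathbf i = \mathbf i_-\mathbf i_+$ collapses to $\mathbf i_- = \mathbf i$, $\mathbf i_+ = \emptyset$. Consequently $\mathbf i_-^{\square}\mathbf i_+$ reduces to $\mathbf i^{\square}$ and, by a direct comparison of (\ref{equ:defxi+2}) with (\ref{equ:defxi+}), the generalized cluster transformation $\zeta_{\mathbf i_-}$ coincides with the one-sided map $\zeta_{\mathbf i}$. The conclusion of Proposition~\ref{prop:twist-} then reads exactly $[\widehat{u}^{-1}\ev_{\mathbf i}(\mathbf x)]_{\geq 0} = \ev_{\mathbf i^{\square}}\circ \zeta_{\mathbf i}(\mathbf x)$, which is the first identity of the corollary.

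For the second identity I would take $w = e$, forcing $\mathbf j \in R(e,v) = R(1,v)$ to be entirely positive. Then $\mathbf j_- = \emptyset$, $\mathbf j_+ = \mathbf j$, the composite word $\mathbf j_-\mathbf j_+^{\square}$ reduces to $\mathbf j^{\square}$, $\zeta_{\mathbf j_+}$ equals $\zeta_{\mathbf j}$, and $\widehat{wv^{-1}} = \widehat{v^{-1}}$, so Proposition~\ref{prop:twist-} delivers the second identity verbatim.

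All the substantive work is already carried out in Proposition~\ref{prop:twist-} itself, whose proof proceeds by induction on $\ell(w'^{-1}u)$ (respectively $\ell(vw^{-1})$), peeling off one reflection at a time via equation~(\ref{equ:si}) (respectively its right-sided analogue~(\ref{equ:si2})) together with Theorem~\ref{fg}. The corollary therefore raises no genuine obstacle; the only care needed is a brief unwinding of the indexing in $\mathbf i(k)$, $\mathbf j(k)$ and of the mutation ranges in (\ref{equ:defxi+2}) in order to confirm that under the single-sign degeneration they really do produce the one-sided $\zeta$-maps of (\ref{equ:defxi+}).
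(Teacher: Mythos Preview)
Your approach is correct and matches the paper's own proof, which is the single line ``We apply Proposition~\ref{prop:twist-} with $w=w'=e$.'' Your additional unpacking of why the single-sign degeneration collapses $\mathbf i_-^{\square}\mathbf i_+$ to $\mathbf i^{\square}$ and $\zeta_{\mathbf i_-}$ to $\zeta_{\mathbf i}$ is exactly the verification the paper leaves implicit.
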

\begin{proof}We apply Proposition \ref{prop:twist-} with $w=w'=e$.
\end{proof}

\begin{cor}\label{lemma:tormut} The following
equality is satisfied for every $v\in W$ and every reduced words
$\mathbf i,\mathbf j\in R(1,v)$, or $\mathbf i,\mathbf j\in R(v,1)$
$$\mu_{\mathbf{i^{\square}}\rightarrow
\mathbf{j^{\square}}}\circ\zeta_{\mathbf i}=\zeta_{\mathbf j}\circ
\mu_{\mathbf i\rightarrow\mathbf j}\ .$$
\end{cor}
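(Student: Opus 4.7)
The plan is to prove the equality by testing both sides against the evaluation map $\ev_{\mathbf{j}^{\square}}$, and then cancelling it using the fact that it is a birational isomorphism onto a Zariski open set of a double Bruhat cell.

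Concretely, suppose first that $\mathbf i, \mathbf j \in R(1,v)$. I would start by applying Corollary~\ref{cor:twistborel} to each of $\mathbf i$ and $\mathbf j$: for any $\mathbf x \in \mathcal X_{\mathbf i}$,
\[
\ev_{\mathbf{i}^{\square}} \circ \zeta_{\mathbf i}(\mathbf x) = [\widehat{v}^{-1}\ev_{\mathbf i}(\mathbf x)]_{\geq 0},
\qquad
\ev_{\mathbf{j}^{\square}} \circ \zeta_{\mathbf j}(\mathbf y) = [\widehat{v}^{-1}\ev_{\mathbf j}(\mathbf y)]_{\geq 0}
\]
for any $\mathbf y \in \mathcal X_{\mathbf j}$. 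Then, putting $\mathbf y = \mu_{\mathbf i \to \mathbf j}(\mathbf x)$ and applying Theorem~\ref{fg}, the right-hand sides agree because $\ev_{\mathbf i}(\mathbf x) = \ev_{\mathbf j}(\mathbf y)$. This gives
\[
\ev_{\mathbf{i}^{\square}} \circ \zeta_{\mathbf i} = \ev_{\mathbf{j}^{\square}} \circ \zeta_{\mathbf j} \circ \mu_{\mathbf i \to \mathbf j}.
\]
Now applying Theorem~\ref{fg} once more, this time to the pair $\mathbf{i}^{\square}, \mathbf{j}^{\square} \in R(v^{-1},1)$ (which are connected by generalized $d$-moves by Lemma~\ref{lemma:transitivedoubleword}), the left-hand side equals $\ev_{\mathbf{j}^{\square}} \circ \mu_{\mathbf{i}^{\square} \to \mathbf{j}^{\square}} \circ \zeta_{\mathbf i}$, so
\[
\ev_{\mathbf{j}^{\square}} \circ \mu_{\mathbf{i}^{\square} \to \mathbf{j}^{\square}} \circ \zeta_{\mathbf i} = \ev_{\mathbf{j}^{\square}} \circ \zeta_{\mathbf j} \circ \mu_{\mathbf i \to \mathbf j}.
\]

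The final step is cancellation: by Theorem~\ref{thm:evG}, $\ev_{\mathbf{j}^{\square}}$ is a birational isomorphism onto a Zariski open subset of $G^{v^{-1},1}$, so it has a rational left inverse. Both sides of the displayed equality are rational maps between seed $\mathcal X$-tori, hence their equality after composing with $\ev_{\mathbf{j}^{\square}}$ forces equality as rational maps on a Zariski open set, and therefore everywhere they are defined. This yields the desired identity in the case $\mathbf i, \mathbf j \in R(1,v)$. The case $\mathbf i, \mathbf j \in R(v,1)$ is entirely symmetric, using the second equality of Corollary~\ref{cor:twistborel} and the fact that $\ev_{\mathbf{j}^{\square}}$ is still a birational isomorphism onto a Zariski open set of the appropriate double Bruhat cell $G^{1,v^{-1}}$.

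No real obstacle arises: the argument is purely formal manipulation of the two preceding theorems, and the only point requiring care is checking that the $\mu_{\mathbf{i}^{\square} \to \mathbf{j}^{\square}}$ that appears is well-defined, which is ensured by Lemma~\ref{lemma:transitivedoubleword} (since $\mathbf i$ and $\mathbf j$ lie in the same $R(1,v)$ or $R(v,1)$, so do their images under $\square$).
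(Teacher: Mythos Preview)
Your proposal is correct and follows essentially the same approach as the paper: both combine Corollary~\ref{cor:twistborel} with Theorem~\ref{fg} to obtain equality after composing with an evaluation map, and then cancel the evaluation via its birationality (Theorem~\ref{thm:evG}). The only cosmetic differences are that the paper tests against $\ev_{\mathbf{i}^{\square}}$ rather than $\ev_{\mathbf{j}^{\square}}$, and for the case $\mathbf i,\mathbf j\in R(1,v)$ it invokes the second identity of (\ref{equ:torev}) (with $[\,\cdot\,\widehat{v^{-1}}]_{\leq 0}$) rather than the first.
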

\begin{proof}We suppose that $\mathbf i,\mathbf j\in R(1,v)$.
Let us recall that the involution $\square$ maps double reduced
words to double reduced words, and that the evaluation map $\ev_{\mathbf j}$ associated
to any double reduced word $\mathbf j$ is birational because of Theorem
\ref{thm:evG}. Therefore an equality $\mathbf y=\mathbf z$ between cluster variables on
${\mathcal X}_{\mathbf{i^{\square}}}$ is satisfied if and only if
the equality $\ev_{\mathbf{i^{\square}}}(\mathbf y)
=\ev_{\mathbf{i^{\square}}}(\mathbf z)$ is satisfied on $G$. Now, it suffices
to apply Theorem \ref{fg} and the second equation of (\ref{equ:torev}) to
obtain the following equality for every $\mathbf x\in {\mathcal X}_{\mathbf i}$.
The case $\mathbf i,\mathbf j\in R(v,1)$ is proved in the same way.
$$\ev_{\mathbf{i^{\square}}}\circ\zeta_{\mathbf i}(\mathbf x)
=[\ev_{\mathbf{j}}\circ\mu_{\mathbf i\rightarrow\mathbf j}
(\mathbf x)\widehat{v^{-1}}]_{\leq 0}=\ev_{\mathbf{j^{\square}}}
\circ\zeta_{\mathbf j}\circ\mu_{\mathbf i\to \mathbf j}(\mathbf x)
=\ev_{\mathbf{i^{\square}}}\circ\mu_{\mathbf{j^{\square}}\rightarrow
\mathbf{i^{\square}}}\circ\zeta_{\mathbf j}\circ
\mu_{\mathbf i\rightarrow\mathbf j}(\mathbf x)\ .$$
\end{proof}

\subsubsection{From twist maps on $(B_{\pm},\pi_G)$ to twist maps on $(G,\pi_G)$}
\label{def:posnegpart}
We are now ready to give the generalized cluster transformations associated to any
twist maps on $(G,\pi_G)$.
For every double word $\mathbf i$, let $\mathbf{i_+}$ and $\mathbf{i_-}$ be
respectively the words obtained by erasing all the negative and positive letters
of $\mathbf{i}$, without changing the order of the remaining letters. The word $\mathbf{i_+}$ (resp.
$\mathbf{i_-}$) is called the \emph{positive part} (resp. \emph{negative
part}) of $\mathbf i$. In particular, the double word $\mathbf{i}$
is linked by compositions of mixed $2$-moves to the double words
$\mathbf{i_+}\mathbf{i_-}$ and $\mathbf{i_-}\mathbf{i_+}$.
(This definition is compatible with the notation used in
equation (\ref{equ:xiword}).)
Following Corollary \ref{lemma:tormut}, we then introduce for every
$\mathbf i\in R(u,v)$ the maps $\zeta_{\mathbf{i}}:
{\mathcal X}_{\mathbf i}\rightarrow{\mathcal X}_{\mathbf{i^{\square}}}$
and $\zeta_{\mathbf{i_{\bigcirc}}}:{\mathcal X}_{\mathbf i}
\rightarrow{\mathcal X}_{\mathbf{i^{\op}}}$ by the following formulas
and state the main result of this subsection.
\begin{equation}\label{equ:extzeta}
\begin{array}{ccccc}
{\zeta}_{\mathbf i}=\mu_{\mathbf{i_-^{\square}}\mathbf{i_+^{\square}}
\rightarrow\mathbf {i^{\square}}}\circ\zeta_{\mathbf{i_-}}\circ
\zeta_{\mathbf{i_+}}\circ\mu_{\mathbf i\rightarrow\mathbf{i_-}\mathbf{i_+}}
&&\mbox{    and    }&&
{{\zeta}_{\mathbf i}}_{\bigcirc}=\bigcirc\circ\zeta_{\mathbf{i}}\ .
\end{array}
\end{equation}

\begin{thm}\label{thm:twist}For every $u,v\in W$ and every double
reduced word $\mathbf i\in R(u,v)$, the following diagrams are commutative.
All edges of the left diagram are Poisson birational isomorphisms,
whereas vertical edges of the right diagram are Poisson birational
isomorphisms and horizontal edges are anti-Poisson birational
isomorphisms.

$$\xymatrix{
{{\mathcal X}_{\mathbf i}}\ar@/^1pc/[r]^{\zeta_{\mathbf i}}
\ar@/_1pc/[d]_{\ev_{\mathbf i}}&{{\mathcal X}_{\mathbf {i^{\square}}}}
\ar@/^1pc/[d]^{\ev_{\mathbf{i^{\square}}}}\\
 (G^{u,v},\pi_G)\ar@/_1pc/[r]_{\zeta^{u,v}}&(G^{{v}^{-1},{u}^{-1}},\pi_G)
}
\ \ \ \ \ \ \ \ \ \ \ \ \
\xymatrix{
{{\mathcal X}_{\mathbf i}}\ar@/^1pc/[r]^{{\zeta_{\mathbf i}}_{\bigcirc}}
\ar@/_1pc/[d]_{\ev_{\mathbf i}}&{{\mathcal X}_{\mathbf {i^{\op}}}}
\ar@/^1pc/[d]^{\ev_{\mathbf{i^{\op}}}}\\
 (G^{u,v},\pi_G)\ar@/_1pc/[r]_{\zeta^{u,v}_{\theta}}&(G^{{u}^{-1},{v}^{-1}},\pi_G)
}$$
\end{thm}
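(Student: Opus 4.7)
The plan is to verify the left diagram first and then deduce the right one from Proposition \ref{prop:chiral}. For the left diagram, I first reduce to the case where $\mathbf i = \mathbf{i_-}\mathbf{i_+}$ has all negative letters preceding all positive ones. Since $\mathbf i$ is linked to $\mathbf{i_-}\mathbf{i_+}$ by a sequence of mixed $2$-moves, and $\mathbf{i_-^\square}\mathbf{i_+^\square}$ is linked to $\mathbf{i^\square}$ in the same way, Lemma \ref{lemma:fg} applied to the corresponding cluster transformations, combined with the definition (\ref{equ:extzeta}) of $\zeta_{\mathbf i}$, shows that the general statement reduces to this case.

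Assuming now $\mathbf i = \mathbf{i_-}\mathbf{i_+}$, the amalgamation property of the evaluation map lets me write $\ev_{\mathbf i}(\mathbf x) = x_- x_+$ where $x_- := \ev_{\mathbf{i_-}}(\mathbf x_-) \in G^{u,1}\subset B_-$ and $x_+ := \ev_{\mathbf{i_+}}(\mathbf x_+) \in G^{1,v}\subset B$, with $\mathbf x_-, \mathbf x_+$ the partial clusters sharing the amalgamated variable. Because $x_-\in B_-$ and $x_+\in B$, the two Gauss decompositions appearing in the definition (\ref{equ:defxi}) of $\zeta^{u,v}$ split cleanly: the negative unipotent part of $\widehat u^{-1}x_- x_+$ equals that of $\widehat u^{-1}x_-$, and the positive unipotent part of $x_-x_+\widehat{v^{-1}}$ equals that of $x_+\widehat{v^{-1}}$. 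Substituting into the twist formula yields
$$\zeta^{u,v}(\ev_{\mathbf i}(\mathbf x)) = [\widehat u^{-1}x_-]_{\geq 0}\cdot[x_+\widehat{v^{-1}}]_{\leq 0}\ .$$

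I then apply Corollary \ref{cor:twistborel} (i.e.\ the purely negative and purely positive cases of Proposition \ref{prop:twist-}) to each factor: for the purely negative $\mathbf{i_-}\in R(u,1)$, $[\widehat u^{-1}x_-]_{\geq 0} = \ev_{\mathbf{i_-^\square}}\circ\zeta_{\mathbf{i_-}}(\mathbf x_-)$; for the purely positive $\mathbf{i_+}\in R(1,v)$, $[x_+\widehat{v^{-1}}]_{\leq 0} = \ev_{\mathbf{i_+^\square}}\circ\zeta_{\mathbf{i_+}}(\mathbf x_+)$. Since $\mathbf{i_-^\square}$ is positive and $\mathbf{i_+^\square}$ is negative, the product of these two evaluations is, by amalgamation, $\ev_{\mathbf{i_-^\square}\mathbf{i_+^\square}}$ applied to the amalgamated cluster; invoking Lemma \ref{lemma:fg} on the cluster transformation $\mu_{\mathbf{i_-^\square}\mathbf{i_+^\square}\to\mathbf{i^\square}}$ appearing in (\ref{equ:extzeta}) identifies this with $\ev_{\mathbf{i^\square}}\circ\zeta_{\mathbf i}(\mathbf x)$, proving the left diagram.

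For the right diagram, Proposition \ref{prop:chiral} gives $\theta\circ\ev_{\mathbf{i^\square}} = \ev_{(\mathbf{i^\square})^{\bigcirc}}\circ\bigcirc = \ev_{\mathbf{i^{\op}}}\circ\bigcirc$, using that $(\mathbf{i^\square})^{\bigcirc} = \mathbf{i^{\op}}$; combined with the already-proven left diagram and the definitions $\zeta^{u,v}_\theta = \theta\circ\zeta^{u,v}$ and ${\zeta_{\mathbf i}}_\bigcirc = \bigcirc\circ\zeta_{\mathbf i}$, this immediately yields the commutativity. The Poisson claims follow because mutations, tropical mutations (Proposition \ref{prop:muttrop}), evaluations (Theorem \ref{thm:evG}) and the Fomin--Zelevinsky twist are all Poisson maps, whereas the horizontal edges of the right diagram are anti-Poisson since $\bigcirc$ reverses the bracket (Definition \ref{def:chiraldual}) and so does $\theta$ on $G$ (Proposition \ref{prop:chiral}). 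The main technical hurdle I anticipate is the amalgamation bookkeeping at the junction of $\mathbf{i_-}$ and $\mathbf{i_+}$: both $\zeta_{\mathbf{i_-}}$ and $\zeta_{\mathbf{i_+}}$ act on the shared amalgamated variable (a right outlet of $\mathbf{i_-}$ and a left outlet of $\mathbf{i_+}$), so their compatibility with the amalgamation product has to be controlled via Remark \ref{rem:tropamal}.
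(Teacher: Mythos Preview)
Your proof is correct and follows the same overall architecture as the paper: reduce to $\mathbf i=\mathbf{i_-}\mathbf{i_+}$ via (\ref{equ:extzeta}) and Theorem~\ref{fg}, treat the two Borel pieces by Corollary~\ref{cor:twistborel}, reassemble via amalgamation and Remark~\ref{rem:tropamal}, and deduce the right diagram from Proposition~\ref{prop:chiral}.

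The one genuine difference is the group-level factorization. The paper works with the Gauss decomposition $x=[x]_-[x]_0[x]_+$ and derives, through the chain (\ref{equ:gsv}) borrowed from \cite{GSV},
\[
\zeta^{u,v}(x)=\zeta^{u,1}([x]_{\leq 0})\,[x]_0^{-1}\,\zeta^{1,v}([x]_{\geq 0})\,,
\]
which then forces the introduction of the folding maps $\pi_{\mathbf j}$ and the auxiliary clusters $\mathbf{x_{\leq 0}},\mathbf{x_0},\mathbf{x_{\geq 0}}$ to match this triple product at the torus level. You instead exploit the amalgamation splitting $x=x_-x_+$ directly and obtain the cleaner identity
\[
\zeta^{u,v}(x_-x_+)=[\widehat u^{-1}x_-]_{\geq 0}\,[x_+\widehat{v^{-1}}]_{\leq 0}=\zeta^{u,1}(x_-)\,\zeta^{1,v}(x_+)\,,
\]
which sidesteps the middle Cartan factor and the $\pi$-machinery entirely. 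The two formulas are of course equivalent (write $x_-=n_-h_-$, $x_+=h_+n_+$ and cancel), but your route is shorter.

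On your flagged ``hurdle'': note that the (tropical) mutations in $\zeta_{\mathbf{i_+}}$ occur at indices $\binom{i_k}{j}$ with $j\ge 1$ in the $\mathbf{i_+}$-numbering, and those in $\zeta_{\mathbf{i_-}}$ at indices $\binom{j_k}{j}$ with $j\le N^{j_k}(\mathbf{i_-})-1$; neither family mutates \emph{at} the junction vertex, though both do modify the junction \emph{variable}. The needed compatibility is therefore exactly the commutation of ordinary mutations with amalgamation (Definition~\ref{def:amalg}) together with Remark~\ref{rem:tropamal} for the tropical ones at the outer outlets, which yields $\zeta_{\mathbf{i_-}}\circ\zeta_{\mathbf{i_+}}\bigl(\mathfrak m(\mathbf x_-,\mathbf x_+)\bigr)=\mathfrak m\bigl(\zeta_{\mathbf{i_-}}(\mathbf x_-),\zeta_{\mathbf{i_+}}(\mathbf x_+)\bigr)$ as you need.
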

\begin{proof}Let first notice that Proposition \ref{prop:chiral} and the definition
of $\zeta^{u,v}$ and $\zeta^{u,v}_{\theta}$ imply that the commutativity
of the right diagram can be derived from the commutativity of the
left one. So, let us focus on the left one. The cases $(u,v)=(u,1)$
and $(u,v)=(1,v)$ are proved by Proposition \ref{prop:twist-}. Moreover,
among all the remaining cases, it suffices to prove the case $\mathbf i=\mathbf{i_-}\mathbf{i_+}$
(with $\mathbf{i_-}\in R(u,1)$ and $\mathbf{i_+}\in R(1,v)$),
because of the definition (\ref{equ:extzeta}) of $\zeta_{\mathbf i}$ and Theorem \ref{fg}.
The demonstration relies on the following equality (\ref{equ:gsv}), borrowed to \cite[Theorem 3.1]{GSV}. For every
$x\in G^{u,v}$, the definition (\ref{equ:defxi}) leads to:

\begin{equation}\label{equ:gsv}
\begin{array}{ccl}
\zeta^{u,v}(x)& =& [\widehat u^{-1}x]_-^{-1}\widehat u^{-1}x
                \widehat{v^{-1}}[x\widehat{v^{-1}}]_+^{-1}\\
                  & =& {[\widehat u^{-1}x]}_0{[\widehat u^{-1}x]}_+v^{-1}
                  {{[x\widehat{v^{-1}}]}_+}^{-1}\\
                  & =& {[\widehat u^{-1}[x]_-[x]_0[x]_+]}_0{[\widehat u^{-1}[x]_-[x]_0[x]_+]}_+
                  (x^{-1}x)\widehat{v^{-1}}{{[x\widehat{v^{-1}}]}_+}^{-1}\\
                  & =& {[\widehat u^{-1}[x]_{\leq 0}]}_0{[\widehat u^{-1}[x]_-]}_+
                  [x]_+x^{-1}{[x\widehat{v^{-1}}]}_{\leq 0}\\
                  & =& {[\widehat u^{-1}[x]_{\leq 0}]}_{\geq 0}[x]_+x^{-1}[x]_-
                  {[[x]_{\geq 0}\widehat{v^{-1}}]}_{\leq 0}\\
                  & =& \zeta^{u,1}([x]_{\leq 0})[x]_0^{-1}\zeta^{1,v}([x]_{\geq 0})\ .
\end{array}
\end{equation}
Let $\mathbf i=\mathbf {i_-}\mathbf {i_+}\in R(u,v)$ and
$\mathbf x\in{\mathcal X}_{\mathbf i}$, $\mathbf{x_-}\in{\mathcal X}_{\mathbf{i_-}}$,
$\mathbf{x_+}\in{\mathcal X}_{\mathbf{i_+}}$ be the cluster variables such
that the equalities $x=\ev_{\mathbf i}(\mathbf x)$
and $\mathfrak{m}(\mathbf{x_-},\mathbf{x_+})=\mathbf x$ are satisfied.
We start by introducing the following maps $\pi_{\mathbf j}:{\mathcal X}_{\mathbf j}
\to{\mathcal X}_{\mathbf 1}$, for every double word $\mathbf j$, given by:
\begin{equation}\label{equ:foldpi}
\begin{array}{cccc}
\pi_{\mathbf j}:&{\mathcal X}_{\mathbf j}&\to&{\mathcal X}_{\mathbf 1}\\
&x_{\pi_{\mathbf i}(\binom{i}{j})}&=&x_{\binom{i}{0}}x_{\binom{i}{1}}
\dots x_{\binom{i}{N^i(\mathbf i)}}\ .
\end{array}
\end{equation}
We use these maps to define the elements
$\mathbf{x_{\leq 0}}\in{\mathcal X}_{\mathbf {i_-}}$,
$\mathbf{x_0}\in{\mathcal X}_{\mathbf 1}$ and
$\mathbf{x_{\geq 0}}\in{\mathcal X}_{\mathbf {i_+}}$ related
to $\mathbf x$ in the following way. (The associated equalities are
easily proved.)

\begin{equation}\label{equ:evgauss}
\begin{array}{lllll}
\mathbf{x_{\leq 0}}=\mathfrak{m}(\mathbf{x_-},\pi_{\mathbf{i_+}}(\mathbf {x_+}))&,&
\mathbf{x_0}=\pi_{\mathbf i}(\mathbf x)&\mbox{and} &
\mathbf{x_{\geq 0}}=\mathfrak{m}(\pi_{\mathbf{i_-}}(\mathbf {x_-}),\mathbf{x_+})\ .\\
\\
{[x]}_{\leq 0}=\ev_{\mathbf{i_-}}(\mathbf{x_{\leq 0}})&, &
{[x]}_0^{-1}=\ev_{1}(\mathbf{x_0^{\bigcirc}})&\mbox{and} &
{[x]}_{\geq 0}=\ev_{\mathbf{i_+}}(\mathbf{x_{\geq 0}})\ .
\end{array}
\end{equation}
From Remark \ref{rem:tropamal} we now get the following relations:
$$
\begin{array}{lllll}
\zeta_{\mathbf{i_-}}(\mathbf{x_{\leq 0}})=\mathfrak{m}(\zeta_{\mathbf{i_-}}
(\mathbf{x_-}),\pi_{\mathbf{i_+}}(\mathbf {x_+}))&\mbox{and} &
\zeta_{\mathbf{i_+}}(\mathbf{x_{\geq 0}})=\mathfrak{m}(\pi_{\mathbf{i_-}}(\mathbf {x_-})
,\zeta_{\mathbf{i_+}}(\mathbf{x_+}))\ ;
\end{array}
$$
$$
\begin{array}{c}
\zeta_{\mathbf{i}}(\mathbf x)=\zeta_{\mathbf{i_-}}\circ\zeta_{\mathbf{i_+}}(\mathbf {x})
=\mathfrak{m}(\zeta_{\mathbf{i_-}}(\mathbf{x_{\leq 0}}),\mathbf{x_0^{\bigcirc}}
,\zeta_{\mathbf{i_+}}(\mathbf{x_{\geq 0}}))
=\zeta_{\mathbf{i_+}}\circ\zeta_{\mathbf{i_-}}(\mathbf {x})\ .\\
\\
\end{array}$$
Proposition \ref{prop:twist-}
and equalities (\ref{equ:gsv}), (\ref{equ:evgauss}) then lead to
$\zeta^{u,v}(\ev_{\mathbf i}(\mathbf x))=\ev_{\mathbf{i^{\square}}}
(\zeta_{\mathbf{i}}(\mathbf{x}))$.
Finally, the Poisson and birational statements are clear from
Theorem \ref{fg}, Proposition \ref{cor:twistborel} and Proposition
\ref{prop:chiral}.
\end{proof}

\section{$\tau$-combinatorics, $W$-permutohedron, and evaluations
on $(G,\pi_G)$}
\label{section:taucombi}
We will refer to \emph{$\tau$-combinatorics} as the combinatorics on
double reduced words generated by generalized $d$-moves and enriched
with right tropical moves. The idea is to prepare the ground for the
cluster combinatorics related to twisted evaluations and dual Poisson-Lie
groups, developed in Section \ref{section:Loop}. Here, we associate a
family of cluster $\mathcal X$-varieties to every double Bruhat cell
$G^{w,1}$ by linking cluster $\mathcal{X}$-varieties with tropical
mutations via the $W$-permutehedron associated to the Lie algebra
$\mathfrak g$.

\subsection{The $W$-permutohedron, $\uparrow$-paths and $\downarrow$-paths}
We recall here that any reduced expression of any element of $W$ can be described
as a monotone paths on a particular polytope: the $W$-permutohedron.
(or moment polytope, or weight polytope \cite{PPermutohedra}).
Let us recall that $\Lambda$ denotes the integer weight lattice
associated to $\mathfrak g$ and denote $\Lambda_{\mathbb R}= \Lambda\otimes{\mathbb R}$
the weight space.  The roots in $\Pi$ span the root lattice $L\subseteq \Lambda$.
The associated  Weyl group $W$ acts on the weight space $\Lambda_{{\mathbb R}}$.
For $x\in \Lambda_{{\mathbb R}}$, we can define the {\it $W$-permutohedron}
$P_W(x)$ as the convex hull of a Weyl group orbit:
$$
P_W(x) :=\mathrm{ConvexHull}(w(x)| w\in W)\subset \Lambda_{{\mathbb R}}.
$$
For the Lie type $A_n$, the $W$-permutohedron
$P_W(x)$ is the permutohedron $P_{n+1}(x)$  defined as the convex hull
of all vectors obtained from $(x_1,\dots,x_{n+1})$
by permutations of the coordinates:
$$
P_{n+1}(x_1,\dots,x_{n+1}):=\mathrm{ConvexHull}((x_{w(1)},\dots,x_{w(n+1)})
| w\in S_{n+1}).
$$
From now on, we fix a generic $x\in \Lambda_{{\mathbb R}}$ such that the
associated $W$-permutohedron $P_W(x)$ has maximal dimension. This
$W$-permutohedron will be (abusively) denoted $P_W$.
%
%\subsubsection{$\uparrow$-paths and $\downarrow$-paths on $P_W$}
It is well-known that we can label vertices and edges of $P_W$ respectively
by the set of elements of $W$ and the set of elementary reflections $s_i\in S$
that generates $W$, in such a way that
\begin{itemize}
\item
every vertex has a different label;
\item
every labeled vertices $w_1$ and $w_2$ of $P_W$ are
related by a labeled edge $s_i$ if and only if the equality $w_2=w_1s_i$ is satisfied.
\end{itemize}
In particular, the number of vertices of $P_W$ is given by the cardinal of $W$.
When we draw a picture of the $W$-permutohedron $P_W$, the bottom vertex can be
associated with the identity element $1\in W$, so that the top vertex is
the longest element $w_0\in W$. As remarked in \cite{FR}, a reduced word
for $w$ then corresponds to a path along edges from $1$ to $w$ which moves
up in a monotone fashion.
Let us call  \emph{$\uparrow$-path} a path along edges of $P_W$ which moves up in a monotone
fashion on $P_W$.
In the same way, a path which moves down in a monotone fashion will be called
a \emph{$\downarrow$-path}. A $\uparrow$-path relating the vertex $w$ to the
vertex $w'$ is called a \emph{$w\nearrow w'$-path} and the corresponding $\downarrow$-path
is is called a \emph{$w'\searrow w$-path}. In particular, a $\uparrow$-path along edges from
$1$ to $w$ is called a \emph{$\uparrow_w$-path}. The following result is clear.

\begin{prop}Let $u\leq v\in W$. The $u\nearrow v$-paths (resp. $v\searrow u$-paths)
are in bijection with reduced
expressions of the element $vu^{-1}\in W$ (resp. $uv^{-1}\in W$).
In particular, for a given $w\in W$, the number of $\uparrow_w$-paths
is equal to the number of reduced expressions of $w$.
\end{prop}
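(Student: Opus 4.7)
The plan is to exhibit the bijection directly via the edge-labeling of the $W$-permutohedron. Given a $u \nearrow v$-path $u = u_0 \to u_1 \to \dots \to u_k = v$, each successive pair $(u_{j-1}, u_j)$ is joined by an edge of $P_W$ carrying a simple-reflection label $s_{i_j}$, and monotonicity of the path forces $\ell(u_j) = \ell(u_{j-1}) + 1$ for every $j$. In particular $k = \ell(v) - \ell(u)$, and iterating the edge convention produces partial products $s_{i_1}, s_{i_1}s_{i_2}, \dots, s_{i_1}\dots s_{i_k}$ whose terminal value equals $vu^{-1}$. Since each partial product increases the Coxeter length by exactly one, the resulting word $s_{i_1}\dots s_{i_k}$ is automatically reduced.

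Conversely, I would start from an arbitrary reduced expression $s_{j_1}\dots s_{j_k}$ of $vu^{-1}$ and reconstruct a chain $u = u_0, u_1, \dots, u_k = v$, where each $u_r$ is obtained from $u_{r-1}$ by the single multiplication by $s_{j_r}$ dictated by the edge convention $w_2 = w_1 s_i$ on $P_W$. Reducedness of the expression forces $\ell(u_r) = \ell(u_{r-1}) + 1$, so the chain is genuinely monotone-increasing, and each consecutive pair $(u_{r-1}, u_r)$ is joined by the edge of $P_W$ labeled $s_{j_r}$; this produces a bona fide $u \nearrow v$-path. The two constructions are immediately seen to be mutual inverses, hence bijective.

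The $v \searrow u$ case is completely symmetric: reading edge labels along a monotone-decreasing path from $v$ to $u$ accumulates to $uv^{-1}$, and the same argument, with lengths decreasing by one at each step, provides the inverse direction. The final assertion about $\uparrow_w$-paths is then nothing but the specialization $u = 1$, for which $vu^{-1} = w$.

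The only delicate point in the whole argument is bookkeeping of left-versus-right multiplication conventions, which must be fixed consistently with the edge rule $w_2 = w_1 s_i$ on $P_W$ and with the right weak order $\leq$ on $W$ used in the paper; once this is done, the proposition reduces to the standard correspondence between saturated chains in the weak order of a Coxeter group and reduced words in its simple generators, which is why the authors can declare the statement ``clear''.
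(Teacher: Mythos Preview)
Your argument is correct and is precisely the standard bijection between saturated chains in the weak order and reduced words; the paper offers no proof beyond declaring the result ``clear'', so there is nothing further to compare. One bookkeeping remark worth making explicit: with the edge rule $w_2=w_1 s_i$, your recursion gives $u_j=u\,s_{i_1}\cdots s_{i_j}$, so the terminal product of edge labels is $u^{-1}v$ rather than $vu^{-1}$ --- this is exactly the left/right inconsistency you anticipate in your final paragraph, and it is already present in the paper's own statement of the proposition.
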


Some $\uparrow$-path and $\downarrow$-path on the permutohedron
$P_3$ are given by Figure \ref{fig:pathWpermutohedre}. Let us notice that
the $\uparrow$-path at the left of Figure \ref{fig:pathWpermutohedre} is
a $\uparrow_{s_1s_2}$-path and that it is the only one.
In fact the only $w\in W$ such that the related $\uparrow_w$-path is not unique
is $w=w_0$ and there are then two $\uparrow_{w_0}$-paths.

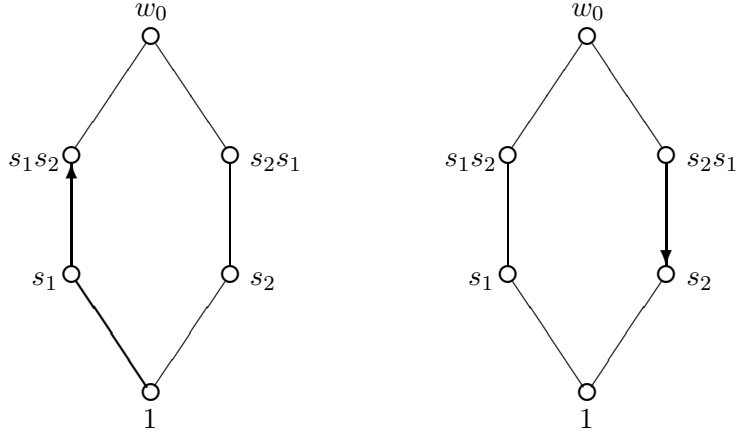
\begin{figure}[htbp]
\begin{center}
\setlength{\unitlength}{1.5pt}
\qquad\qquad\qquad\qquad\qquad
\begin{picture}(20,110)(0,-80)
\put(-29,-63.5){\line(2,3){17.7}}
\put(-10,-33){\line(0,1){26}}
\put(-11,-3.5){\line(-2,3){17.7}}
\put(-49,-3.5){\line(2,3){17.7}}
\thicklines
\put(-31,-63.5){\line(-2,3){17.7}}
\put(-50,-33){\vector(0,1){26}}
\put(-30,25){\circle{4}}
\put(-34,30){$w_0$}
\put(-50,-5){\circle{4}}
\put(-66,-8){$s_1s_2$}
\put(-10,-5){\circle{4}}
\put(-5,-8){$s_2s_1$}
\put(-50,-35){\circle{4}}
\put(-60,-38){$s_1$}
\put(-10,-35){\circle{4}}
\put(-5,-38){$s_2$}
\put(-30,-65){\circle{4}}
\put(-32,-74){$1$}
\end{picture}
\qquad\qquad\qquad\qquad\qquad
\qquad
\begin{picture}(20,110)(0,-80)
\put(-29,-63.5){\line(2,3){17.7}}
\put(-31,-63.5){\line(-2,3){17.7}}
\put(-50,-33){\line(0,1){26}}
\put(-49,-3.5){\line(2,3){17.7}}
\put(-11,-3.5){\line(-2,3){17.7}}
\thicklines
\put(-10,-7){\vector(0,-1){26}}
\put(-30,25){\circle{4}}
\put(-34,30){$w_0$}
\put(-50,-5){\circle{4}}
\put(-66,-8){$s_1s_2$}
\put(-10,-5){\circle{4}}
\put(-5,-8){$s_2s_1$}
\put(-50,-35){\circle{4}}
\put(-60,-38){$s_1$}
\put(-10,-35){\circle{4}}
\put(-5,-38){$s_2$}
\put(-30,-65){\circle{4}}
\put(-32,-74){$1$}
\end{picture}
\end{center}
\vspace{-.1in}
\caption{A $\uparrow$-path and a $\downarrow$-path on the $W$-permutohedron
$P_3$}
\label{fig:pathWpermutohedre}
\end{figure}

\subsection{The set $R^{\tau}(w)$}
It turns out, as seen in Remark \ref{rem:stabletauset}, that we can defined
stable subsets $R^{\tau}(w)$ of double reduced words. In fact, their combinatorics,
involving right $\tau$-moves and generalized $d$-moves (or left $\tau$-moves and
generalized $d$-moves) is given by the $W$-permutohedron associated
to $\mathfrak{g}$. In this subsection, we choose to focus on the right $\tau$-moves,
but the same combinatorics could be developed by considering left $\tau$-moves.

\begin{definition}
Let $\mathbf i$ be a double word. A \emph{right $d^\tau$-move}
(or simply a \emph{$d^\tau$-move}) on $\mathbf i$ is given by one
of these transformations:
\begin{itemize}
\item
a generalized $d$-move;
\item
a right $\tau$-move.
\end{itemize}
For every $w\in W$, let $R^{\tau}(w)$ be the set of
all the double words obtained from a word $\mathbf i\in R(1,w)$ by
composition of $d^\tau$-moves. (The choice of the double word
$\mathbf i$ doesn't matter, because of Theorem \ref{thm:Tits}.)
\end{definition}

It is easy to see that this definition coincides with the one given in
Remark \ref{rem:stabletauset}: the set $R^{\tau}(w)$ is the union of the disjoint sets
$R({w'}^{-1},w{w'}^{-1})$ for every $w'\leq w\in W$, that is
\begin{equation}\label{equ:Rwdoublered}
R^{\tau}(w)=\displaystyle\bigcup_{w'\leq w\in W}R({w'}^{-1},w{w'}^{-1})\ .
\end{equation}
(In particular, to every $\mathbf i\in R^{\tau}(w)$ there exists $w'\in W$
such that $\mathbf i\in R({w'}^{-1},w{w'}^{-1})$.)

In the same way Lemma \ref{lemma:transitivedoubleword} relates generalized
$d$-moves to the set $R(u,v)$ of double reduced words associated to the elements
$u,v\in W$, the link between $d^{\tau}$-moves to the set $R^{\tau}(w)$ associated to
$w\in W$ is the following, whose proof is immediate.

\begin{lemma}A double reduced word $\mathbf j\in R^{\tau}(w)$ can be
obtained from a double reduced word $\mathbf i\in R^{\tau}(w')$ by a
sequence of $d^{\tau}$-moves $\delta_{\mathbf i\to\mathbf j}^{\tau}$ if
and only if the equality $w'=w$ on $W$ is satisfied.
\end{lemma}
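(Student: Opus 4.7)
The plan is to find an invariant of $d^{\tau}$-moves on double reduced words that characterizes each $R^{\tau}(w)$, and then to establish connectivity within each such level set. For $\mathbf i\in R(u,v)$, the candidate invariant I would introduce is the element $vu^{-1}\in W$.

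For the only-if direction, I would first check that $vu^{-1}$ is preserved under every $d^{\tau}$-move. Generalized $d$-moves leave the pair $(u,v)$ unchanged, so the claim is immediate for them. For a right $\tau$-move acting on a last positive letter $j\in[1,l]$, the reducedness of $\mathbf i$ forces $v=v's_j$ with $\ell(v)=\ell(v')+1$, and the resulting word lies in $R(us_j,v')$; this pair satisfies $v'(us_j)^{-1}=v's_ju^{-1}=vu^{-1}$, using $s_j^{-1}=s_j$. The case of a last negative letter $\overline j$ is entirely symmetric: then $u=u's_j$, the new word belongs to $R(u',vs_j)$, and $(vs_j){u'}^{-1}=vs_j(us_j)^{-1}=vu^{-1}$. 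By equation~(\ref{equ:Rwdoublered}), any $\mathbf j\in R^{\tau}(w)$ lies in some $R({w'}^{-1},w{w'}^{-1})$, so the invariant takes the value $(w{w'}^{-1})w'=w$; hence $w$ is a well-defined attribute of $R^{\tau}(w)$, forcing $w'=w$ in the only-if direction.

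For the if direction, it suffices to show that every $\mathbf j\in R({w'}^{-1},w{w'}^{-1})$ is linked by $d^{\tau}$-moves to some word of $R(1,w)$: Tits' theorem (Theorem~\ref{thm:Tits}), applied via generalized $d$-moves to each fixed pair $(u,v)$ through Lemma~\ref{lemma:transitivedoubleword}, would then connect any two elements of $R^{\tau}(w)$. I would proceed by induction on $\ell(w')$. The base case $\ell(w')=0$ is trivial. Otherwise, using mixed $2$-moves and negative $d$-moves, I would first bring $\mathbf j$ to the form $\mathbf j_+\mathbf j_-$ where $\mathbf j_-=\overline{j_1}\cdots\overline{j_k}$ is a reduced expression of ${w'}^{-1}$ and $\mathbf j_+\in R(1,w{w'}^{-1})$. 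Applying the right $\tau$-move to the last letter $\overline{j_k}$ then produces a word of $R((s_{j_k}w')^{-1},w(s_{j_k}w')^{-1})$; since $\ell(s_{j_k}w')=\ell(w')-1$, we have $s_{j_k}w'<w'\leq w$, so $s_{j_k}w'\leq w$ and the inductive hypothesis applies.

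The only technical point—genuinely minor, which is why the excerpt calls the proof immediate—is verifying that a negative letter can always be brought to the last position without leaving the pair $({w'}^{-1},w{w'}^{-1})$. This is transparent because mixed $2$-moves freely interchange letters of opposite signs, so the negative subword of $\mathbf j$ can be pushed to the end, and negative $d$-moves exhaust all reduced expressions of ${w'}^{-1}$; together they allow me to arrange any desired final letter $\overline{j_k}$ appearing in some reduced expression of ${w'}^{-1}$.
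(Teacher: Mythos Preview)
Your argument is correct. However, the paper calls the proof immediate because both directions follow more directly from what has already been set up. For the ``if'' direction, $R^{\tau}(w)$ is \emph{defined} as the set of double words reachable from $R(1,w)$ by $d^{\tau}$-moves, and since every $d^{\tau}$-move is an involution (and $R(1,w)$ is itself connected by Tits' theorem), any two elements of $R^{\tau}(w)$ are automatically connected; your induction is effectively re-proving the inclusion $\supseteq$ of~(\ref{equ:Rwdoublered}), which you already cite as known in the other direction. For the ``only if'' direction, orbits of a group of invertible moves are disjoint, so one only needs $R^{\tau}(w)\cap R^{\tau}(w')\neq\emptyset\Rightarrow w=w'$; this is immediate from~(\ref{equ:Rwdoublered}) together with the pairwise disjointness of the sets $R(u,v)$ (for instance $R(1,w)\subset R^{\tau}(w')$ forces $1={w''}^{-1}$ and $w=w'{w''}^{-1}$ for some $w''\leq w'$, hence $w=w'$). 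Your invariant $vu^{-1}$ packages this same observation in an explicit form and is arguably more transparent, but the paper's route avoids the case analysis on the sign of the last letter and explains why no details are given.
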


%Using Proposition \ref{prop:tropreflex},
The link between double reduced words and the set $R^{\tau}(w)$ given
by equation (\ref{equ:Rwdoublered}) can be strengthened by relating
$R^{\tau}(w)$ to the vertices of the $W$-permutohedron in the
following way. Let us recall that to $i\in[1,l]\cup[\overline{1},\overline{l}]$,
we can associate the positive letter $| i|\in[1,l]$ given by the formula
(\ref{equ:midi}). Let us denote, for every double word $\mathbf i=i_1\dots i_n$,
$\mathfrak{R}_{|i_n|}(\mathbf i)$ (resp. $\mathfrak{L}_{|i_1|}(\mathbf i)$)
the result of a right (resp. left) $\tau$-move on~$\mathbf i$.

\begin{lemma}\label{lemma:PW}
We have the following statements for every $w\in W$.
\begin{itemize}
\item
The set $R^{\tau}(w)$ is the disjoint union of the
labels $R({w'}^{-1},w{w'}^{-1})$ associated to the vertices $w'$
of the $W$-permutohedron $P_W$ that are crossed or reached by a
$\uparrow_w$-path.
\item
Two labeled vertices $R(u,v)\subset R^{\tau}(w)$ and $R(u',v')
\subset R^{\tau}(w)$ of $P_W$ are related by the edge $s_j$ if and only
if there exist double reduced words $\mathbf i\in R(u,v)$ and
$\mathbf j\in R(u',v')$ such that $\mathbf j={\mathfrak R}_j(\mathbf i)$.
\end{itemize}
\end{lemma}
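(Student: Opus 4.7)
The statement decomposes into two parts, both resting on equation (\ref{equ:Rwdoublered}) together with Lemma \ref{lemma:p65}. For Part 1, I would start from (\ref{equ:Rwdoublered}), which already expresses $R^{\tau}(w)$ as the union $\bigcup_{w' \leq w} R(w'^{-1}, ww'^{-1})$. The union is automatically disjoint because a double reduced word determines the pair $(u,v)$: reading the positive subword left-to-right yields a reduced expression for $v$, and the negative subword yields one for $u$. It therefore suffices to identify the indexing set $\{w' : w' \leq w\}$ with the set of vertices of $P_W$ that lie on some $\uparrow_w$-path.

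This identification is essentially classical. A $\uparrow_w$-path from $1$ to $w$ corresponds bijectively to a reduced expression $s_{i_1}\cdots s_{i_{\ell(w)}}$ of $w$ (Subsection 5.1). Lemma \ref{lemma:p65} then makes the dictionary precise: to the reduced expression $\mathbf{i}=i_1\dots i_{\ell(w)}\in R(1,w)$ and each $k\in[1,\ell(w)+1]$ it assigns a double reduced word $\mathbf{i}(k)\in R(w_{\geq k}^{-1}, ww_{\geq k}^{-1})$, and the element $w_{\geq k}=s_{i_k}\cdots s_{i_{\ell(w)}}$ is the label $w'$ attached to the vertex reached at step $k-1$ of the corresponding path. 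As $\mathbf{i}$ ranges over $R(1,w)$ and $k$ over $[1,\ell(w)+1]$, the $w_{\geq k}$ exhaust $\{w':w'\leq w\}$ by the standard characterization of the paper's weak order in terms of suffixes of reduced expressions. Combined with (\ref{equ:Rwdoublered}), this yields Part 1.

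For Part 2, suppose $w_1$ and $w_2=w_1 s_j$ with $\ell(w_2)=\ell(w_1)+1$ are adjacent vertices on some $\uparrow_w$-path. Choosing a reduced expression of $w$ whose path passes through $w_1$ and then $w_2$ at consecutive steps, Lemma \ref{lemma:p65} simultaneously produces $\mathbf{i}(k)\in R(w_1^{-1},ww_1^{-1})$ and $\mathbf{i}(k\pm 1)\in R(w_2^{-1},ww_2^{-1})$. A direct inspection of the explicit construction of $\mathbf{i}(k)$ in (\ref{equ:xiword}) shows that the two differ precisely by the right $\tau$-move $\mathfrak{R}_j$ applied to the final letter of $\mathbf{i}(k)_-$ or $\mathbf{i}(k)_+$ (according to whether the transition between $w_1$ and $w_2$ sits on the negative or positive side of the double word). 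Conversely, if $\mathbf{j}=\mathfrak{R}_j(\mathbf{i})$ with $\mathbf{i}\in R(u,v)$ and $\mathbf{j}\in R(u',v')$ both contained in $R^{\tau}(w)$, then flipping the sign of the final letter of $\mathbf{i}$ modifies the positive-subword product and the negative-subword product in a way that sends $(u,v)=(w_1^{-1}, ww_1^{-1})$ to $(u',v')=(w_2^{-1}, ww_2^{-1})$ with $w_2=w_1 s_j$, so the two vertices are adjacent via the edge labeled $s_j$.

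The main obstacle will be the careful sign-and-orientation bookkeeping in Part 2: identifying which letter of $\mathbf{i}$ is flipped, tracking how this flip modifies the products of the positive and negative subwords, and verifying that the convention for edge-labels of $P_W$ aligns with the action of $\mathfrak{R}_j$ through Lemma \ref{lemma:p65}. Every other step reduces to invoking (\ref{equ:Rwdoublered}), the classical structure of the weak order, and the combinatorial content of (\ref{equ:xiword}).
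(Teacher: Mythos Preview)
Your approach matches the paper's: Part~1 is handled via equation~(\ref{equ:Rwdoublered}) together with the bijection $w'\mapsto R(w'^{-1},ww'^{-1})$ from $W_w=\{w':w'\leq w\}$ onto the pieces (the paper calls this ``just a translation''), and Part~2 is handled by invoking Lemma~\ref{lemma:p65} in both directions, exactly as you do.

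One small correction in your Part~2: the words $\mathbf i(k)$ and $\mathbf i(k+1)$ from (\ref{equ:xiword}) do \emph{not} literally differ by a single $\mathfrak R_j$. Applying $\mathfrak R_{i_k}$ to $\mathbf i(k+1)=\overline{i_n}\cdots\overline{i_{k+1}}\,i_1\cdots i_k$ yields $\overline{i_n}\cdots\overline{i_{k+1}}\,i_1\cdots i_{k-1}\,\overline{i_k}$, which lies in the same set $R(w_{\geq k}^{-1},ww_{\geq k}^{-1})$ as $\mathbf i(k)=\overline{i_n}\cdots\overline{i_k}\,i_1\cdots i_{k-1}$ but differs from it by a sequence of mixed $2$-moves. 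This is harmless for the lemma --- you only need \emph{some} pair $(\mathbf i,\mathbf j)$ with $\mathbf j=\mathfrak R_j(\mathbf i)$ in the two prescribed sets, and $\mathbf i=\mathbf i(k+1)$, $\mathbf j=\mathfrak R_{i_k}(\mathbf i(k+1))$ does the job --- so your argument goes through once you drop the claim that $\mathbf j$ coincides with $\mathbf i(k)$ on the nose.
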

\begin{proof}Let $W_w\subset~W$ be the set of elements $w'$ such that $w'\leq w$.
Noticing that the map $W_w\to R^{\tau}(w):w'\mapsto R({w'}^{-1},w{w'}^{-1})$
is a bijection for every $w\in W$, the first statement is just a translation
of equation (\ref{equ:Rwdoublered}). So let us consider the second
statement. It is easy to see that if double reduced words $\mathbf i\in R(u,v)$
and $\mathbf j\in R(u',v')$ are related by a right $\tau$-move ${\mathfrak R}_j$,
then there exist $w_1$ and $w_2$ such that $\mathbf i\in R({w_1}^{-1},w_0{w_1}^{-1})$
and $\mathbf j\in R({w_2}^{-1},w_0{w_2}^{-1})$ and $w_2=w_1s_j$ by using
Lemma \ref{lemma:p65}. Now, if the vertices $w'_1$ and $w'_2$ are linked by the edge $s_j$,
we have the equality $w'_2=w'_1s_j$. The associated set of double reduced words are
therefore $R({w'_1}^{-1},w_0{w'_1}^{-1})$ and $R(s_j{w'_1}^{-1},w_0s_j{w'_1}^{-1})$,
and we still use Lemma \ref{lemma:p65} to get some double reduced words $\mathbf i$
and $\mathbf j$ such that $\mathbf j=\mathfrak{R}_j(\mathbf i)$.
\end{proof}

\begin{ex}\label{ex:WRstable}
Let $\mathfrak g=A_2$. Figure \ref{fig:Wpermutohedre} gives
examples of sets $R^{\tau}(w)$ and their link with the permutohedron $P_3$.
\end{ex}

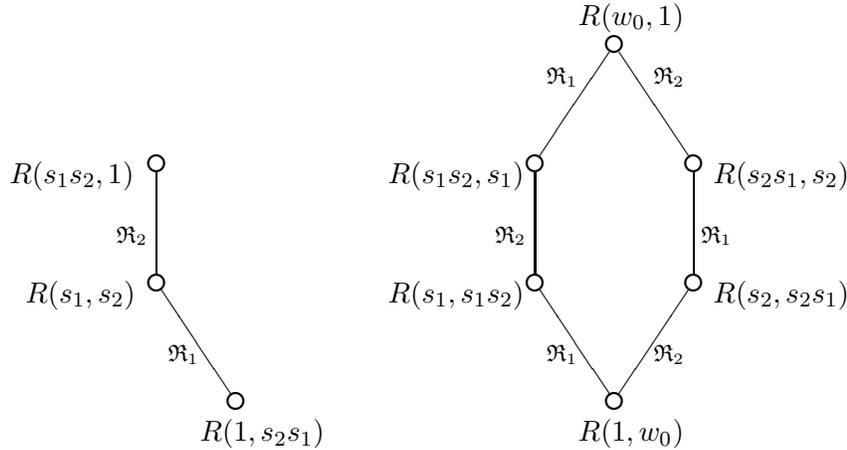
\begin{figure}[htbp]
\begin{center}
\setlength{\unitlength}{1.5pt}
\qquad\qquad\qquad\qquad\qquad
\begin{picture}(20,105)(0,-80)
\put(-31,-63.5){\line(-2,3){17.7}}
\put(-50,-33){\line(0,1){26}}
\thicklines
\put(-50,-5){\circle{4}}
\put(-87,-10){$R({s_1s_2,1})$}
\put(-50,-35){\circle{4}}
\put(-83,-40){$R({s_1,s_2})$}
\put(-30,-65){\circle{4}}
\put(-39,-75){$R(1,s_2s_1)$}
\put(-60,-25){\footnotesize ${\mathfrak R}_2$}
\put(-47,-55){\footnotesize ${\mathfrak R}_1$}
\end{picture}
\qquad\qquad\qquad\qquad\qquad
\begin{picture}(20,105)(0,-80)
\put(-29,-63.5){\line(2,3){17.7}}
\put(-10,-33){\line(0,1){26}}
\put(-11,-3.5){\line(-2,3){17.7}}
\put(-31,-63.5){\line(-2,3){17.7}}
\put(-50,-33){\line(0,1){26}}
\put(-49,-3.5){\line(2,3){17.7}}
\thicklines
\put(-30,25){\circle{4}}
\put(-39,30){$R({w_0,1})$}
\put(-50,-5){\circle{4}}
\put(-87,-10){$R({s_1s_2,s_1})$}
\put(-10,-5){\circle{4}}
\put(-5,-10){$R({s_2s_1,s_2})$}
\put(-50,-35){\circle{4}}
\put(-87,-40){$R({s_1,s_1s_2})$}
\put(-10,-35){\circle{4}}
\put(-5,-40){$R({s_2,s_2s_1})$}
\put(-30,-65){\circle{4}}
\put(-39,-75){$R(1,w_0)$}
\put(-20,15){\footnotesize ${\mathfrak R}_2$}
\put(-47,15){\footnotesize ${\mathfrak R}_1$}
\put(-60,-25){\footnotesize ${\mathfrak R}_2$}
\put(-8,-25){\footnotesize ${\mathfrak R}_1$}
\put(-47,-55){\footnotesize ${\mathfrak R}_1$}
\put(-20,-55){\footnotesize ${\mathfrak R}_2$}
\end{picture}
\end{center}
\vspace{-.1in}
\caption{The sets $R^{\tau}(s_2s_1)$ and $R^{\tau}(w_0)$ when $\mathfrak{g}=A_2$}
\label{fig:Wpermutohedre}
\end{figure}

\subsection{The family $\mathcal{X}^{\tau(w)}$ of cluster $\mathcal X$-varieties
related to $G^{1,w}$}
\label{section:mutau}
The same ideas can be applied at the level of cluster $\mathcal X$-varieties.
Let us remember the notations of Definition \ref{def:RLtropmut}.
To any double reduced words $\mathbf i,\mathbf{i'}\in R^{\tau}(w)$ such that
there exists a $d^{\tau}$-move $\delta:\mathbf{i}\rightarrow\mathbf{i'}$ we associate the
generalized cluster transformation $\mu^{\tau}_{\mathbf i\rightarrow \mathbf{i'}}:
{\mathcal X}_{\mathbf i}\rightarrow{\mathcal X}_{\mathbf{i'}}$ given by
\begin{itemize}
\item
the cluster transformation $\mu_{\mathbf i\rightarrow \mathbf{i'}}$ if $\delta$ is
a generalized $d$-move;
\item
the tropical mutation $\mu_{\lozenge^{\mathfrak R}_i}$ if
$\delta$ is the $\tau$-move ${\mathfrak R}_i$.
\end{itemize}
We extend this definition to every $\mathbf i,\mathbf{j}\in  R^{\tau}(w)$
in the following way. If $\mathbf i,\mathbf j$ are double words linked by a
sequence $\delta_{\mathbf i\to\mathbf j}^{\tau}$ of $d^{\tau}$-moves and
$\mathbf{i}\to\mathbf{i_1}\rightarrow\dots\rightarrow\mathbf{i_{n-1}}
\rightarrow\mathbf{j}$ is the associated chain of elements, we define the map
${\mu}^{\tau}_{\mathbf{i}\rightarrow\mathbf{j}}$ as the composition
${\mu}^{\tau}_{\mathbf{i_{n-1}}\rightarrow\mathbf j}\circ \dots\circ
{\mu}^{\tau}_{\mathbf{i}\rightarrow\mathbf{i_1}}$.

Because the birational Poisson isomorphism $\mu_{\mathbf i\to\mathbf j}^{\tau}:
{\mathcal X}_{\mathbf i}\to{\mathcal X}_{\mathbf j}$ associated to such a sequence
$\delta_{\mathbf i\to\mathbf j}^{\tau}$ is a generalized cluster transformation for every
$\mathbf i,\mathbf j\in R^{\tau}(w)$, we get a family of cluster
$\mathcal X$-varieties $\mathcal{X}_{|\mathbf i|}$ associated to the set $R^{\tau}(w)$
and related by tropical mutations, that we denote ${\mathcal X}^{\tau}_w$.
The combinatorics is in fact encoded
by the $W$-permutohedron $P_W$, as stated by the following
result, straightforwardly deduced from
%Proposition \ref{prop:tropreflex} and
Lemma \ref{lemma:PW}.

\begin{lemma}\label{lemma:Wpermtrop}Let $w\in W$. Replace each label
$w'\in W$ of a vertex of the $W$-permutohedron $P_W$
by the cluster $\mathcal X$-variety ${\mathcal X}^{{w'}^{-1},w{w'}^{-1}}$.
We then have the following properties.
\begin{itemize}
\item
The family ${\mathcal X}_{w}^\tau$ of cluster $\mathcal X$-varieties
contains the cluster $\mathcal X$-variety $\mathcal X^{{w'}^{-1},w{w'}^{-1}}$
associated to any $w'\in W$ of the $W$-permutohedron that can be crossed
or reached by a $\uparrow_w$ path.
\item
For every $i\in[1,l]$, if two vertices respectively related to the labels
${\mathcal X}^{u,v},{\mathcal X}^{u',v'}\subset{\mathcal X}_{w}^\tau$ of $P_W$
are related by the edge $s_i\in W$, then there exist two double reduced words
$\mathbf i,\mathbf j\in R^{\tau}(w_0)$ such that the associated seed $\mathcal X$-tori
${\mathcal X}_{\mathbf i}$ and ${\mathcal X}_{\mathbf j}$ are related by the
right tropical mutation $\mu_{\lozenge^{\mathfrak R}_i}$ associated to $i$.
\end{itemize}
\end{lemma}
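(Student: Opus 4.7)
The proof is essentially a direct translation of Lemma \ref{lemma:PW} from the combinatorial level of double reduced words to the geometric level of cluster $\mathcal X$-varieties, using the definitions established in the preceding subsection.

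For the first bullet, the plan is to start from the disjoint decomposition
\[
R^{\tau}(w) \;=\; \bigsqcup_{w' \le w} R({w'}^{-1}, w{w'}^{-1})
\]
given by equation (\ref{equ:Rwdoublered}). By Lemma \ref{lemma:transitivedoubleword}, two double reduced words in the same set $R({w'}^{-1}, w{w'}^{-1})$ are linked by a sequence of generalized $d$-moves (no $\tau$-move is ever required, and in fact no $\tau$-move preserves the pair $(u,v)$), so the generalized cluster transformations $\mu^{\tau}_{\mathbf i \to \mathbf j}$ internal to $R({w'}^{-1}, w{w'}^{-1})$ reduce to the usual cluster transformations $\mu_{\mathbf i \to \mathbf j}$. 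By the construction of Subsection \ref{subsection:FGevaluattion}, these glue the corresponding seed $\mathcal X$-tori into the cluster $\mathcal X$-variety ${\mathcal X}^{{w'}^{-1}, w{w'}^{-1}}$. Applying the first bullet of Lemma \ref{lemma:PW}, the indexing set $\{w' \le w\}$ is exactly the set of vertices of $P_W$ crossed or reached by some $\uparrow_w$-path, which gives the statement.

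For the second bullet, the plan is to let two vertices of $P_W$ with labels $\mathcal X^{u,v}$ and $\mathcal X^{u',v'}$ belonging to $\mathcal X^{\tau}_w$ be joined by the edge $s_i$. The second bullet of Lemma \ref{lemma:PW} produces double reduced words $\mathbf i \in R(u,v)$ and $\mathbf j \in R(u',v')$ with $\mathbf j = \mathfrak R_i(\mathbf i)$, i.e.\ linked by a right $\tau$-move in the direction $i$. By the defining clause of $\mu^{\tau}_{\mathbf i \to \mathbf j}$ in Subsection \ref{section:mutau}, such a $\tau$-move corresponds on seed $\mathcal X$-tori to the tropical mutation $\mu_{\lozenge^{\mathfrak R}_i}$ in the direction $\lozenge^{\mathfrak R}_i = \binom{i}{N^i(\mathbf i)}$ introduced in Definition \ref{def:RLtropmut}, and this is a Poisson birational isomorphism by Proposition \ref{prop:muttrop}. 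Thus $\mathcal X_{\mathbf i}$ and $\mathcal X_{\mathbf j} = \mathcal X_{\mathfrak R_i(\mathbf i)}$ are related by $\mu_{\lozenge^{\mathfrak R}_i}$, as claimed (with the mild correction that the double reduced words in the statement live in $R^\tau(w)$, not $R^\tau(w_0)$).

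Since both bullets are essentially bookkeeping on top of Lemma \ref{lemma:PW} and Proposition \ref{prop:muttrop}, there is no real obstacle; the only thing that needs care is verifying that the family ${\mathcal X}^\tau_w$, defined as a gluing along all generalized cluster transformations in $R^\tau(w)$, actually decomposes cluster-variety-wise along the fibres of the projection $\mathbf i \mapsto w'$ of equation (\ref{equ:Rwdoublered}), with the only ``inter-fibre'' gluings being the tropical mutations $\mu_{\lozenge^{\mathfrak R}_i}$. This is exactly what the two bullets of Lemma \ref{lemma:PW} encode, so the proof amounts to invoking that lemma twice and unpacking the definitions.
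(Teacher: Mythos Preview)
Your proposal is correct and follows exactly the same approach as the paper, which simply states that the lemma is ``straightforwardly deduced from Lemma~\ref{lemma:PW}.'' You have merely unpacked this one-line justification in detail, and your observation that the statement should read $R^{\tau}(w)$ rather than $R^{\tau}(w_0)$ is a genuine correction of a typo in the paper.
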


We finally define new evaluation maps to associate the family $\mathcal{X}^{\tau}_w$
to every double Bruhat cell $G^{w,1}$.
To any $w\in W$ and any double word $\mathbf i\in R^{\tau}(w)$,
we associate the evaluation map
$\ev^{\tau}_{\mathbf i}:{\mathcal X}_{\mathbf i}\rightarrow (G^{w,1},\pi_G)$
by the formula
\begin{equation}\label{equ:tropev}
\begin{array}{ccc}
\ev_{\mathbf i}^{\tau}(\mathbf x)=[\ev_{\mathbf i}(\mathbf x)\widehat{w'w^{-1}}]_{\leq 0},
&\text{for every}&\mathbf x\in{\mathcal X}_{\mathbf i}\ .
\end{array}
\end{equation}

\begin{lemma}\label{lemma:taumut'} For every $w\in W$ and every $\mathbf i\in R^{\tau}(w)$
the map $\ev^{\tau}_{\mathbf i}$ is a birational Poisson isomorphism
on a Zarisky open set of $G^{w,1}$.
\end{lemma}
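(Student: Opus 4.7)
The plan is to reduce the statement to the positive half established in Proposition~\ref{prop:twist-}, then propagate the conclusion through the piece of the family $R^{\tau}(w)$ indexed by a fixed $w'\le w$ using the cluster transitivity of Theorem~\ref{fg}. Recall that by equation~(\ref{equ:Rwdoublered}) one has $R^{\tau}(w)=\bigsqcup_{w'\le w}R({w'}^{-1},w{w'}^{-1})$, so any $\mathbf i\in R^{\tau}(w)$ lies in exactly one piece $R({w'}^{-1},w{w'}^{-1})$, and by Theorem~\ref{thm:evG} the evaluation $\ev_{\mathbf i}\colon{\mathcal X}_{\mathbf i}\to(G^{{w'}^{-1},w{w'}^{-1}},\pi_G)$ is already a Poisson birational isomorphism onto a Zariski open set.

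First I would treat the ``adapted model'' in each piece. Since $w'\le w$, the length identity $\ell(w')+\ell(w{w'}^{-1})=\ell(w)$ holds; choose any double reduced word $\mathbf j\in R({w'}^{-1},w{w'}^{-1})$ of the form $\mathbf j=\mathbf j_-\mathbf j_+$ with $\mathbf j_-\in R({w'}^{-1},1)$ and $\mathbf j_+\in R(1,w{w'}^{-1})$, which is automatically $({w'}^{-1},w{w'}^{-1})$-adapted. Applying Proposition~\ref{prop:twist-} with its parameters set to the proposition's $w:=w'$ and $v:=w$ yields the identity
\[
[\ev_{\mathbf j}(\mathbf y)\,\widehat{w'w^{-1}}]_{\le 0}
=\ev_{\mathbf j_-\,\mathbf j_+^{\square}}\circ\zeta_{\mathbf j_+}(\mathbf y),
\qquad \mathbf y\in{\mathcal X}_{\mathbf j}.
\]
The concatenated word $\mathbf j_-\mathbf j_+^{\square}$ is a negative double reduced word whose Weyl-group data record the cell $G^{w,1}$ (read off via $\mathbf j_-\in R({w'}^{-1},1)$ and $\mathbf j_+^{\square}\in R(w{w'}^{-1}\cdot w',1)$, using Lemma~\ref{lemma:p65} together with the length additivity above), so Theorem~\ref{thm:evG} identifies $\ev_{\mathbf j_-\mathbf j_+^{\square}}$ with a Poisson birational isomorphism onto a Zariski open subset of $G^{w,1}$. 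Corollary~\ref{cor:tor} tells us that $\zeta_{\mathbf j_+}$ is a Poisson birational isomorphism of seed $\mathcal X$-tori, and hence $\ev_{\mathbf j}^{\tau}$ is Poisson birational onto a Zariski open set of $G^{w,1}$.

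To remove the adaptedness hypothesis, I would use the transitivity of cluster transformations. By Lemma~\ref{lemma:transitivedoubleword} any $\mathbf i\in R({w'}^{-1},w{w'}^{-1})$ is linked to such a $\mathbf j$ by a sequence $\delta_{\mathbf i\to\mathbf j}$ of generalized $d$-moves, and Theorem~\ref{fg} gives $\ev_{\mathbf i}=\ev_{\mathbf j}\circ\mu_{\mathbf i\to\mathbf j}$. The operator $[\,\cdot\,\widehat{w'w^{-1}}]_{\le 0}$ depends only on the $G^{{w'}^{-1},w{w'}^{-1}}$-valued point $\ev_{\mathbf i}(\mathbf x)$, so $\ev_{\mathbf i}^{\tau}=\ev_{\mathbf j}^{\tau}\circ\mu_{\mathbf i\to\mathbf j}$. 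Since $\mu_{\mathbf i\to\mathbf j}$ is a Poisson birational isomorphism, the conclusion for $\mathbf j$ transfers to $\mathbf i$.

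The only delicate point I expect is the bookkeeping in the first paragraph, namely identifying the double Bruhat cell carrying the image: one has to match the negative double reduced word $\mathbf j_-\mathbf j_+^{\square}$ with $(w,1)$ rather than with one of the nearby cells such as $(w^{-1},1)$, which is a matter of carefully tracking which Weyl element is recorded by each factor under the involution $\square$ and the amalgamation in~(\ref{equ:amaleps}). Once this identification is done, the Poisson and birational parts are immediate consequences of Theorem~\ref{thm:evG}, Corollary~\ref{cor:tor}, and Theorem~\ref{fg}, so the real content of the argument is Proposition~\ref{prop:twist-}.
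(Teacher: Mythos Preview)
Your approach is correct in spirit and more explicit than the paper's, but the ``delicate bookkeeping'' step contains an error that happens to match what is very likely a typo in the lemma's target cell.

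Concretely, you claim $\mathbf j_+^{\square}\in R(w{w'}^{-1}\cdot w',1)=R(w,1)$, but the involution $\square$ sends $R(1,v)$ to $R(v^{-1},1)$, so in fact $\mathbf j_+^{\square}\in R((w{w'}^{-1})^{-1},1)=R(w'w^{-1},1)$. Concatenating with $\mathbf j_-\in R({w'}^{-1},1)$ and checking lengths gives $\mathbf j_-\mathbf j_+^{\square}\in R({w'}^{-1}\cdot w'w^{-1},1)=R(w^{-1},1)$, so the image of $\ev^{\tau}_{\mathbf j}$ is a Zariski open subset of $G^{w^{-1},1}$, not $G^{w,1}$. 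This is consistent with the extreme case $w'=w$ (where $\ev^{\tau}_{\mathbf j}=\ev_{\mathbf j}$ lands in $G^{w^{-1},1}$) and with the paper's own formula~(\ref{equ:defxi}), which gives $\zeta^{1,w}\colon G^{1,w}\to G^{w^{-1},1}$; the announced target $G^{w,1}$ appears to be a misprint (harmless for $w=w_0$, which is the case of interest).

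By contrast, the paper's proof is a two-line sketch: it cites \cite[Theorem~1.6]{FZtotal} for the biregularity of $g\mapsto[g\widehat{w}]_{\leq 0}$ and then invokes Theorem~\ref{thm:evG} together with Proposition~\ref{prop:muttrop}. Your route through Proposition~\ref{prop:twist-} and Corollary~\ref{cor:tor} is more self-contained---it does not appeal to \cite{FZtotal} directly---and has the merit of delivering the Poisson and birational statements simultaneously, since $\zeta_{\mathbf j_+}$ is already known to be a Poisson birational isomorphism. Your reduction from an arbitrary $\mathbf i$ in a piece $R({w'}^{-1},w{w'}^{-1})$ to an adapted $\mathbf j$ via Theorem~\ref{fg} is correct and parallels the paper's implicit reduction to the base case via tropical mutations.
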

\begin{proof}Let us recall from \cite[Theorem 1.6]{FZtotal} that
the map $g\mapsto[g\widehat{w}]_{\leq 0}$ is biregular on $G^{1,w}$.
Therefore the statement is implied by Theorem \ref{thm:evG} and
Proposition \ref{prop:muttrop}.
\end{proof}

\begin{lemma}\label{lemma:taumut} For every $w\in W$ and every
$\mathbf i,\mathbf{j}\in R^{\tau}(w)$, the  equality
$\ev^{\tau}_{\mathbf i}=\ev^{\tau}_{\mathbf{j}}\circ
\mu_{\mathbf i\rightarrow\mathbf{j}}^{\tau}$
is satisfied.
\end{lemma}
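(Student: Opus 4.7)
The plan is to establish the equality by induction on the length of the chain $\delta^\tau_{\mathbf i \to \mathbf j}$ of $d^\tau$-moves relating $\mathbf i$ and $\mathbf j$, reducing to the case of a single elementary move. Two elementary situations arise. In the first, the move is a generalized $d$-move linking $\mathbf i,\mathbf j \in R(u,v) \subset R^\tau(w)$ with $u = {w'}^{-1}$ and $v = w{w'}^{-1}$; the associated $\mu^\tau_{\mathbf i \to \mathbf j}$ is a cluster transformation, and the equality is immediate: applying $g \mapsto [g\widehat{v^{-1}}]_{\leq 0}$ to the identity $\ev_{\mathbf i} = \ev_{\mathbf j}\circ\mu_{\mathbf i\to\mathbf j}$ of Theorem~\ref{fg} yields $\ev^\tau_{\mathbf i} = \ev^\tau_{\mathbf j}\circ\mu^\tau_{\mathbf i\to\mathbf j}$.

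The second, more substantial case is a single right $\tau$-move $\mathbf j = \mathfrak R(\mathbf i)$, for which $\mu^\tau_{\mathbf i \to \mathbf j} = \mu_{\lozenge^{\mathfrak R}_{|i_n|}}$. Writing $\mathbf i \in R({w'}^{-1}, w{w'}^{-1})$ with $v = w{w'}^{-1}$ and invoking Lemma~\ref{lemma:PW}, I identify $\mathbf j \in R((s_{|i_n|} w')^{-1}, w(s_{|i_n|}w')^{-1})$, so that the twisting element in the definition of $\ev^\tau_{\mathbf j}$ equals $\widehat{s_{i_n}v^{-1}}$. The desired equality then takes the form
\[
[\ev_{\mathbf i}(\mathbf x)\widehat{v^{-1}}]_{\leq 0} = [\ev_{\mathbf j}(\mu_{\lozenge^{\mathfrak R}_{|i_n|}}(\mathbf x))\widehat{s_{i_n} v^{-1}}]_{\leq 0},
\]
which is exactly the first equation of Proposition~\ref{equ:trop}, provided its adaptedness hypothesis holds.

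The hard part will be precisely this adaptedness hypothesis. When $\mathbf i$ fails to be $\mathfrak R$-adapted to $v$, I will route the chain through an adapted intermediate. If $i_n$ is positive, the reducedness of $\mathbf i$ forces $v$ to admit a reduced expression ending in $s_{i_n}$, so an $\mathfrak R$-adapted $\mathbf i' \in R(u,v)$ ending in $i_n$ exists; the chain $\mathbf i \to \mathbf i' \to \mathfrak R(\mathbf i') \to \mathbf j$ consists of two generalized $d$-move legs flanking a single $\tau$-move now applied to an adapted word, and composing the three elementary identities yields the result with $\mu^\tau_{\mathbf i \to \mathbf j}$ equal to the composition $\mu_{\mathfrak R(\mathbf i')\to\mathbf j}\circ\mu_{\lozenge^{\mathfrak R}_{|i_n|}}\circ\mu_{\mathbf i \to \mathbf i'}$. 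If $i_n = \overline k$ is negative, no adapted $\mathbf i' \in R(u,v)$ ending in $\overline k$ need exist, but the reducedness of $\mathbf j$ forces $\ell(vs_k) = \ell(v)+1$, so that $vs_k$ admits a reduced expression ending in $s_k$, and one finds an $\mathfrak R$-adapted $\mathbf j' \in R(us_k,vs_k)$ ending in the positive letter $k$; the reverse chain $\mathbf i \to \mathfrak R(\mathbf j') \to \mathbf j' \to \mathbf j$ then reduces the argument in the same way, Proposition~\ref{equ:trop} being now applied to the adapted $\mathbf j'$ and the tropical step $\mathbf j' \to \mathfrak R(\mathbf j')$.
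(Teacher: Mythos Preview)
Your overall strategy—induction on the length of the chain, handling generalized $d$-moves via Theorem~\ref{fg} and right $\tau$-moves via Proposition~\ref{equ:trop}—is exactly the paper's. You are also right that Proposition~\ref{equ:trop} is stated under a full $(u,v)$-adaptedness hypothesis that an arbitrary $\mathbf i\in R^{\tau}(w)$ need not satisfy. However, your re-routing workaround does not close the gap you found: for the base case of a \emph{single} $\tau$-move $\mathbf i\to\mathbf j=\mathfrak R(\mathbf i)$, the map $\mu^{\tau}_{\mathbf i\to\mathbf j}$ is \emph{by definition} the single tropical mutation $\mu_{\lozenge^{\mathfrak R}_{|i_n|}}$, so what must be proved is
\[
\ev^{\tau}_{\mathbf i}=\ev^{\tau}_{\mathbf j}\circ \mu_{\lozenge^{\mathfrak R}_{|i_n|}}.
\]
Your detour through an adapted $\mathbf i'$ only establishes
$\ev^{\tau}_{\mathbf i}=\ev^{\tau}_{\mathbf j}\circ\bigl(\mu_{\mathfrak R(\mathbf i')\to\mathbf j}\circ\mu_{\lozenge^{\mathfrak R}_{|i_n|}}\circ\mu_{\mathbf i\to\mathbf i'}\bigr)$,
i.e.\ the lemma for a \emph{different, longer} chain. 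To conclude the single-step identity you would need the re-routed composite to equal the single tropical mutation—equivalently, a commutation of $\mu_{\lozenge^{\mathfrak R}_{|i_n|}}$ with the cluster transformations $\mu_{\mathbf i\to\mathbf i'}$ and $\mu_{\mathfrak R(\mathbf i')\to\mathbf j}$—which you have not proved and which is not obvious, since mutations in interior directions do modify outlet variables.

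The cleaner repair, and presumably what the paper's one-line proof intends, is to observe that the \emph{proof} of Proposition~\ref{equ:trop} (specifically the argument via (\ref{equ:si2})) uses only that the last letter $i_n$ is positive together with $\ell(vs_{i_n})<\ell(v)$; and this length drop is automatic whenever $\mathbf i\in R(u,v)$ has positive last letter $i_n$, since the positive subword of $\mathbf i$ is a reduced expression for $v$ ending in $s_{i_n}$. So the first equation of Proposition~\ref{equ:trop} applies directly whenever $i_n$ is positive, with no detour. If $i_n=\overline{k}$ is negative, apply that same positive-last-letter case to $\mathbf j=\mathfrak R(\mathbf i)$ (whose last letter $k$ is positive) and the move $\mathbf j\to\mathfrak R(\mathbf j)=\mathbf i$, then use that tropical mutations are involutions.
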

\begin{proof}Derived from Theorem \ref{fg} and Proposition \ref{equ:trop},
involving respectively cluster transformations and tropical mutations.
\end{proof}

When $\mathfrak{g}=A_2$, a synthesis of this section is provided
by Figure \ref{fig:commutativeweakordertau}.

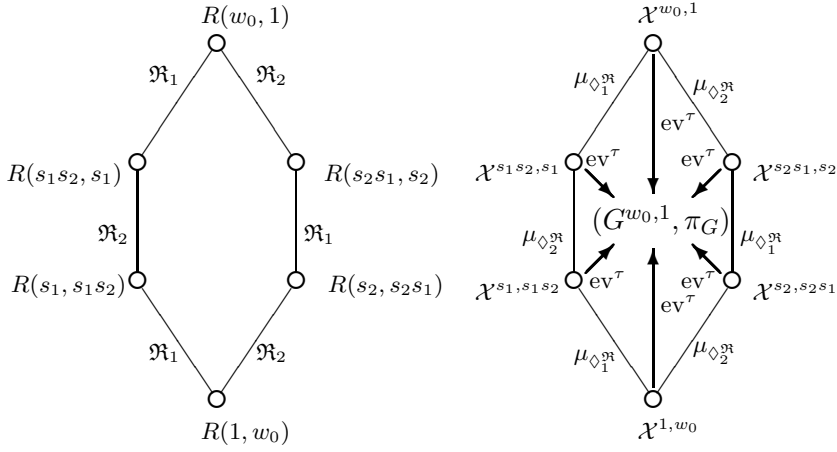
\begin{figure}[htbp]
\begin{center}
\setlength{\unitlength}{1.5pt}
\qquad\qquad\qquad\qquad
\begin{picture}(20,115)(0,-80)
\put(-29,-63.5){\line(2,3){17.7}}
\put(-10,-33){\line(0,1){26}}
\put(-11,-3.5){\line(-2,3){17.7}}
\put(-31,-63.5){\line(-2,3){17.7}}
\put(-50,-33){\line(0,1){26}}
\put(-49,-3.5){\line(2,3){17.7}}
\thicklines
\put(-30,25){\circle{4}}
\put(-34,30){\footnotesize $R({w_0,1})$}
\put(-50,-5){\circle{4}}
\put(-83,-10){\footnotesize $R({s_1s_2,s_1})$}
\put(-10,-5){\circle{4}}
\put(-3,-10){\footnotesize $R({s_2s_1,s_2})$}
\put(-50,-35){\circle{4}}
\put(-82,-38){\footnotesize $R({s_1,s_1s_2})$}
\put(-10,-35){\circle{4}}
\put(-2,-38){\footnotesize $R({s_2,s_2s_1})$}
\put(-30,-65){\circle{4}}
\put(-34,-75){\footnotesize $R(1,w_0)$}
\put(-20,15){\footnotesize ${\mathfrak R}_2$}
\put(-47,15){\footnotesize ${\mathfrak R}_1$}
\put(-60,-25){\footnotesize ${\mathfrak R}_2$}
\put(-8,-25){\footnotesize ${\mathfrak R}_1$}
\put(-47,-55){\footnotesize ${\mathfrak R}_1$}
\put(-20,-55){\footnotesize ${\mathfrak R}_2$}
\end{picture}
\qquad\qquad\qquad\qquad
\qquad\qquad
\begin{picture}(20,115)(0,-80)
\put(-29,-63.5){\line(2,3){17.7}}
\put(-10,-33){\line(0,1){26}}
\put(-11,-3.5){\line(-2,3){17.7}}
\put(-31,-63.5){\line(-2,3){17.7}}
\put(-50,-33){\line(0,1){26}}
\put(-49,-3.5){\line(2,3){17.7}}
\thicklines
\put(-30,25){\circle{4}}
\put(-34,30){\footnotesize ${\mathcal X}^{w_0,1}$}
\put(-50,-5){\circle{4}}
\put(-75,-10){\footnotesize ${\mathcal X}^{s_1s_2,s_1}$}
\put(-10,-5){\circle{4}}
\put(-5,-10){\footnotesize ${\mathcal X}^{s_2s_1,s_2}$}
\put(-50,-35){\circle{4}}
\put(-75,-40){\footnotesize ${\mathcal X}^{s_1,s_1s_2}$}
\put(-10,-35){\circle{4}}
\put(-5,-40){\footnotesize ${\mathcal X}^{s_2,s_2s_1}$}
\put(-30,-65){\circle{4}}
\put(-34,-75){\footnotesize ${\mathcal X}^{1,w_0}$}
\put(-30,22){\line(0,-1){35}}
\put(-30,22){\vector(0,-1){35}}
\put(-28,3){\footnotesize $\ev^{\tau}$}
\put(-47,-7){\line(1,-1){7}}
\put(-47,-7){\vector(1,-1){7}}
\put(-47,-6){\footnotesize $\ev^{\tau}$}
\put(-13,-7){\line(-1,-1){7}}
\put(-13,-7){\vector(-1,-1){7}}
\put(-23,-6){\footnotesize $\ev^{\tau}$}
\put(-47,-33){\line(1,1){7}}
\put(-47,-33){\vector(1,1){7}}
\put(-46,-37){\footnotesize $\ev^{\tau}$}
\put(-13,-33){\line(-1,1){7}}
\put(-13,-33){\vector(-1,1){7}}
\put(-23,-37){\footnotesize $\ev^{\tau}$}
\put(-30,-62){\line(0,1){35}}
\put(-30,-62){\vector(0,1){35}}
\put(-28,-43){\footnotesize $\ev^{\tau}$}
\put(-20,13){\footnotesize $\mu_{\lozenge^{\mathfrak R}_2}$}
\put(-50,15){\footnotesize $\mu_{\lozenge^{\mathfrak R}_1}$}
\put(-63,-25){\footnotesize $\mu_{\lozenge^{\mathfrak R}_2}$}
\put(-8,-25){\footnotesize $\mu_{\lozenge^{\mathfrak R}_1}$}
\put(-50,-55){\footnotesize $\mu_{\lozenge^{\mathfrak R}_1}$}
\put(-20,-53){\footnotesize $\mu_{\lozenge^{\mathfrak R}_2}$}
\put(-48,-22){ $(G^{w_0,1},\pi_G)$}
\end{picture}
\end{center}
\vspace{-.1in}
\caption{The set $R^{\tau}(w_0)$ and related cluster $\mathcal X$-varieties
when $\mathfrak{g}=A_2$}
\label{fig:commutativeweakordertau}
\end{figure}

\section{Twisted evaluations, $(w_1,w_2)_v$-maps, and cluster varieties
related to $(G,\pi_*)$}
\label{section:twistedev}
We continue the evaluation procedure for dual Poisson-Lie groups started in Section
\ref{subsection:evdual}.
For every $v\in W$, we introduce a new set $D(v)$ of double words,
containing the set $R(v,w_0)$. To each double word $\mathbf i$ of this
set, we associate a \emph{twisted evaluation} $\widehat{\ev}_{\mathbf i}:
{\mathcal X}_{[\mathbf{i}]_{\mathfrak R}}\rightarrow G$, which then generalized the
dual evaluation of Section \ref{subsection:evdual}.
These twisted evaluations are obtained by composing the
Fock-Goncharov evaluation maps of Section \ref{section:ClusterG} with
new maps called \emph{$(w_1,w_2)_v$-maps}, where
for $w_1=w_2=e$, we rediscover the Evens-Lu morphisms \cite[Section 5]{EL}.
%In particular, when $v=w_0$, we get birational Poisson isomorphisms
%$\widehat{\ev}_{\mathbf i}:{\mathcal X}_{[\mathbf{i}]_{\mathfrak R}}
%\rightarrow (BB_-,\pi_*)$ providing the dual Poisson-Lie group with log-canonical
%coordinates.
We then use the $\tau$-combinatorics of Section \ref{section:taucombi}
with the truncation maps of Section \ref{subsection:evdual} to get a family of
cluster $\mathcal X$-varieties ${\mathcal X}_{w}$, associated to each element $w$ of
$W$ parameterizing $(BB_-,\pi_*)$. In particular, setting $w=e$, we rediscover the
result of Section \ref{subsection:evdual}. (The way to relate these cluster
$\mathcal X$-varieties will be given in Section \ref{section:Loop} by introducing
the birational Poisson isomorphisms on seed $\mathcal X$-tori called saltations.)

\subsection{The $(w_1,w_2)_v$-maps}\label{section:twistedeval}
The following $(w_1,w_2)_v$-maps generalize the Poisson birational isomorphisms
studied in \cite[Section 5]{EL}, which link direct products of double Bruhat cells
and double reduced Bruhat cells to Steinberg fibers. Let us remember the involution
$i\mapsto i^{\star}$ on double word
%given by the formula (\ref{equ:def*}).
Now, let
$w\in W$ and $s_{i_1}\dots s_{i_n}$ be a reduced decomposition of $w$, then $w^{\star}\in W$
is the element given by $w^{\star}=s_{i_1^{\star}}\dots s_{i_n^{\star}}$. (Using the
Tits theorem, it is easy to see that the result doesn't depend on the choice of the
decomposition of $w$ into simple reflections.)
Remember the notation (\ref{equ:double reduced})
for double reduced Bruhat cells. We denote $\pi_{G\times G}$
the Poisson product structure on the manifold $G\times G$ induced by
the Poisson manifold $(G,\pi_G)$. For every $w_1\leq v,w_2\in W$,
let $({(G,L)}^{(w_1,w_2)_v},\pi_{(w_1,w_2)_v})$ be the quotient of
the direct product $(G^{{w_1^{\star}}^{-1},vw_1^{-1}}{\times}
G^{{w_2}^{-1},w_0w_2^{-1}},\pi_{G\times G})$ by the
$H\times H$-right action given by
\begin{equation}\label{equ:HxHaction}
\begin{array}{cll}
&(g_1,g_2).(h_1,h_2)=(g_1h_1,h_1^{-1}g_2h_2)\\
\mbox{with }\\
&g_1\in G^{{w_1^{\star}}^{-1},vw_1^{-1}},\
g_2\in G^{{w_2}^{-1},w_0w_2^{-1}},\ \mbox{and } h_1,h_2\in H.
\end{array}
\end{equation}
In particular, when  $w_1=v$ and $w_2=e$, the quotient set ${(G,L)}^{(w_1,w_2)_v}$
is the set $L^{v,w_0}$.

\begin{definition}Let $w_1\leq v,w_2\in W$. The \emph{$(w_1,w_2)_v$-right map}
$\varrho_{(w_1,w_2)_v}$ and the \emph{$(w_1,w_2)_v$-left map} $\lambda_{(w_1,w_2)_v}$
are defined by the following formulas
$$\begin{array}{clll}
&\varrho_{(w_1,w_2)_v}:&{(G,L)}^{(w_1,w_2)_v}\to L^{w_0,vw_1^{-1}}:&
(b_1,bH)\mapsto b_1[b\widehat{w_2w_0}]_{\leq 0}H\ ,\\
\mbox{and}\\
&\lambda_{(w_1,w_2)_v}:&{(G,L)}^{(w_1,w_2)_v}\to L^{{w_1^{\star}}^{-1},w_0}:&
(b_1,bH)\mapsto b_1[[b\widehat{w_2w_0}]_{\leq 0}^{\theta}\widehat{w_0}]_{\leq 0}^{\theta}H\ ,
\end{array}$$
where $\theta$ denotes the Cartan involution on $G$ given by (\ref{equ:Cartan}).
For every $t\in H$, the \emph{$(w_1,w_2)_v$-maps} (or simply
\emph{$(w_1,w_2)$-maps} when no confusion occurs) are then the maps given by
$$\begin{array}{rcl}
\rho_{t,(w_1,w_2)_v}:&{(G,L)}^{(w_1,w_2)_v}\to G:
&(b_1,bH)\mapsto\lambda_{(w_1,w_2)_v}(b_1,bH)\ \widehat{w_0} t
\ \varrho_{(w_1,w_2)_v}(b_1,bH)^{-1}\ .
\end{array}$$
\end{definition}

\begin{ex}The $(w_1,w_2)_v$-maps associated to $\mathfrak{g}=A_2$
are obtained in the following way. Let us first describe, for every $v\in W$,
the set $W_{\leq v}$ of elements $w_1\in W$ such that $w_1\leq v$.
They are the following: $W_{\leq e}=\{e\}$, $W_{\leq s_i}=\{e,s_i\}$,
$W_{\leq s_is_j}=\{e,s_i,s_is_j\}$ and $W_{\leq s_is_js_i}=W$,
$i,j\in[1,2]$ being different numbers.
Then, let $g_1\in G^{{w_1^{\star}}^{-1},vw_1^{-1}}$,
$b\in G^{1,w_0}$, $g\in G^{s_i,s_is_j}$, $g'\in G^{s_is_j,s_i}$ and
$c\in G^{w_0,1}$; the different $(w_1,w_2)_v$-maps are given by
$$\begin{array}{lcl}
\rho_{t,(w_1,e)_v}(g_1,b)&=&g_1b\ t\widehat{w_0}\ (g_1[b\widehat{w_0}]_{\leq 0})^{-1}\ ;\\
\\
\rho_{t,(w_1,s_i)_v}(g_1,g)&=&g_1[[g\widehat{s_js_i}]_{\leq 0}^{\theta}\widehat{w_0}]_{\leq 0}^{\theta}
\ t\widehat{w_0}\ (g_1[g\widehat{s_js_i}]_{\leq 0})^{-1}\ ;\\
\\
\rho_{t,(w_1,s_is_j)_v}(g_1,g')&=&g_1[[g'\widehat{s_i}]_{\leq 0}^{\theta}\widehat{w_0}]_{\leq 0}^{\theta}
\ t\widehat{w_0}\ (g_1[g'\widehat{s_i}]_{\leq 0})^{-1}\ ;\\
\\
\rho_{t,(w_1,w_0)_v}(g_1,c)&=&g_1[c^{\theta}\widehat{w_0}]_{\leq 0}^{\theta}
\ t\widehat{w_0}\ (g_1c)^{-1}\ .
\end{array}$$
\end{ex}

The way to relate the geometries of $(G,\pi_G)$ and $(G,\pi_*)$ is given by the
following result, which is directly deduced from Theorem \ref{thm:evG}, Lemma
\ref{lemma:w_1w_2} and the forthcoming Theorem~\ref{thm:ev*1}.

\begin{prop}\label{prop:w_1w_2}For every $t\in H$ and $w_1\leq v,w_2\in W$,
the $(w_1,w_2)_v$-map $\rho_{t,(w_1,w_2)_v}$ is a birational Poisson isomorphism of
$({(G,L)}^{(w_1,w_2)_v},\pi_{(w_1,w_2)_v})$
on a Zariski open set of $(F_{t,w_0v^{-1}},\pi_*)$.
\end{prop}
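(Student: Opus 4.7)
The plan is to reduce the statement to Theorem~\ref{thm:ev*1} by realizing $\rho_{t,(w_1,w_2)_v}$, on a suitable cluster chart, as a twisted evaluation. First, I would choose double reduced words $\mathbf i_1\in R({w_1^\star}^{-1},vw_1^{-1})$ and $\mathbf i_2\in R(w_2^{-1},w_0 w_2^{-1})$; by Theorem~\ref{thm:evG}, the evaluations $\ev_{\mathbf i_1}$ and $\ev_{\mathbf i_2}$ are Poisson birational isomorphisms of $\mathcal X_{\mathbf i_1}$ and $\mathcal X_{\mathbf i_2}$ onto Zariski open subsets of $G^{{w_1^\star}^{-1},vw_1^{-1}}$ and $G^{w_2^{-1},w_0 w_2^{-1}}$ respectively, so their product is a Poisson birational isomorphism onto a Zariski open subset of the direct product equipped with $\pi_{G\times G}$.

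Next, I would translate the $H\times H$-quotient defined by (\ref{equ:HxHaction}) into cluster language. The right action of $h_2$ on the second factor corresponds, via $\ev_{\mathbf i_2}$, to the right-erasing map $\varsigma_{\mathfrak R,\mathbf i_2}$ of~(\ref{equ:proderase}), producing the reduced seed $\mathcal X$-torus $\mathcal X_{\mathbf i_2}^{\red}$ parametrizing the corresponding reduced double Bruhat cell (Corollary~\ref{cor:evredmut}). The diagonal action $(h_1,h_1^{-1})$ identifies the right outlets of $\mathbf i_1$ with the left outlets of $\mathbf i_2$; this is precisely the amalgamation of Definition~\ref{def:amalg}. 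Writing $\mathbf j=\mathbf i_1\mathbf i_2$, the amalgamated, right-reduced seed $\mathcal X$-torus $\mathcal X_{\mathbf j}^{\red}$ should then provide a birational parametrization of the quotient $(G,L)^{(w_1,w_2)_v}$.

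On this common cluster chart I would identify $\rho_{t,(w_1,w_2)_v}$ with a twisted evaluation $\widehat{\ev}_{\mathbf j}:\mathcal X_{[\mathbf j]_{\mathfrak R}}(t)\to G$ of the kind constructed in Section~\ref{section:twistedev}, using Lemma~\ref{lemma:w_1w_2} to match the Gauss factors $[b\widehat{w_2 w_0}]_{\leq 0}$ and $[[b\widehat{w_2 w_0}]_{\leq 0}^\theta\widehat{w_0}]_{\leq 0}^\theta$ appearing in $\varrho_{(w_1,w_2)_v}$ and $\lambda_{(w_1,w_2)_v}$ with the twisting prescribed by that section, the extra factor $\widehat{w_0}t$ corresponding to the shift by $\widehat{w_0}$ already present in the definition~(\ref{equ:evdual}) of dual evaluations. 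The parameter $t\in H$ enters through the right-truncation data that fixes the cluster variables associated with right outlets via the equality~(\ref{equ:ev1}). Theorem~\ref{thm:ev*1} then yields the desired Poisson birational isomorphism onto a Zariski open set of $(F_{t,w_0 v^{-1}},\pi_*)$.

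The main obstacle I anticipate is the Poisson compatibility in the second step: verifying that the log-canonical Poisson structure on the amalgamated reduced seed $\mathcal X$-torus $\mathcal X_{\mathbf j}^{\red}$ genuinely descends from $\pi_{G\times G}$ through the $H\times H$-quotient to match $\pi_{(w_1,w_2)_v}$, rather than merely some birationally equivalent bracket. This comes down to checking that amalgamation (which adds the cross-terms $\eta_{ij}+\varepsilon_{ij}$ of Definition~\ref{def:amalg} along the identified outlets) and right-erasing interact correctly with the Sklyanin bracket on each factor under the specific twisted diagonal action~(\ref{equ:HxHaction}). The birational aspect and the identification of the image with $F_{t,w_0 v^{-1}}$ should then follow by a dimension count using Proposition~\ref{prop:decG}, \cite[Proposition 3.3]{EL}, and Theorem~\ref{thm:evG}.
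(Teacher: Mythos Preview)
Your approach is essentially the same as the paper's: the proposition is stated as a direct consequence of Theorem~\ref{thm:evG}, Lemma~\ref{lemma:w_1w_2}, and Theorem~\ref{thm:ev*1}, and your sketch is a correct unpacking of how those three ingredients fit together. Regarding the obstacle you flag, it is not a genuine gap: the amalgamation map $\mathfrak m:\mathcal X_{\mathbf i_1}\times\mathcal X_{\mathbf i_2}\to\mathcal X_{\mathbf i_1\mathbf i_2}$ is already stated to be a Poisson homomorphism (the lemma after~(\ref{equ:amaleps}), from \cite{FGcluster}), and identifying right outlets of $\mathbf i_1$ with left outlets of $\mathbf i_2$ is exactly the quotient by the twisted diagonal $(h_1,h_1^{-1})$ in~(\ref{equ:HxHaction}); the remaining $h_2$-action is handled by Corollary~\ref{cor:evredmut}, so the log-canonical bracket on $\mathcal X_{[\mathbf j]_{\mathfrak R}}(t)$ does descend from $\pi_{G\times G}$ as you need.
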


\begin{rem}\label{rem:evdualtwistev}For every $v\in W$, the $(e,e)_v$-maps
are the maps denoted $\mu\rho$ in
\cite[Section 5]{EL} and Proposition \ref{prop:w_1w_2} below then rephrases
\cite[Corollary~5.11]{EL}. Another interesting case is given by $w_1=v$ and $w_2=e$;
in this case, the related $(w_1,w_2)_v$-maps are the following maps, strongly related
to the dual evaluations of Section \ref{subsection:evdual}.
$$\begin{array}{cccc}
{\rho}_{t,(v,e)_v}: & (L^{v,w_0},\pi_{G\backslash H})
& \rightarrow & (F_{t,w_0v^{-1}},\pi_*)\\
        & gH & \mapsto & g\widehat{w_0}\ t\ [g\widehat{w_0}]_{\leq 0}^{-1}
\end{array}\ .$$
Indeed, the equality $\ev_{\mathbf i}^{\dual}={\rho}_{t,(v,e)_v}\circ\ev_{\mathbf i}^{\red}$
is clearly satisfied on ${\mathcal X}_{[\mathbf i]_{\mathfrak{R}}}(t)$
for every $t\in H$ and every $\mathbf i\in R(v,w_0)$.
\end{rem}

\subsection{Twisted evaluations on $(G,\pi_*)$}\label{section:twistedeval}
As it will be stated by Lemma \ref{lemma:w_1w_2}, a composition of evaluations and
reduced evaluations with $(w_1,w_2)_v$-maps leads to a generalization
of dual evaluations, called twisted evaluations and given by the
formula (\ref{equ:evhat}).
%\subsubsection{The set $W(w_1,w_2)_v$ of $(w_1,w_2)_v$-words and
%$\mathcal X$-splits on seed $\mathcal X$-tori}
But, before, we introduce new sets
of double words, denoted $W(w_1,w_2)_v$, and a new operation
on seed $\mathcal X$-tori called $\mathcal X$-split, as preliminaries
to the definition of twisted evaluations.

\begin{definition}\label{def:Wwords}
Let $w_1\leq v,w_2\in W$. A \emph{$(w_1,w_2)_v$-word} $\mathbf i$
is a double word linked to a product $\mathbf{i_1}\mathbf{i_2}$, with
$\mathbf {i_1}\in R({w_1^{\star}}^{-1},vw_1^{-1})$ and
$\mathbf{i_2}\in R({w_2}^{-1},w_0w_2^{-1})$, by a sequence of mixed $2$-moves.
The product $\mathbf{i_1}\mathbf{i_2}$ is called a \emph{trivial $(w_1,w_2)_v$-word}
and the decomposition $(\mathbf{i_1},\mathbf{i_2})$ associated to $\mathbf i$
is called the \emph{$(w_1,w_2)_v$-decomposition} of $\mathbf i$. (For example, every
$(e,e)_v$-word is a trivial $(e,e)_v$-word.) The set of $(w_1,w_2)_v$-words will
be denoted $W(w_1,w_2)_v$.

In particular, the set $R(v,w_0)$ is the set of $(v,e)_v$-words. Let $D(v)$
be the (disjoint) union over $w_1\leq v,w_2\in W$ of all the $(w_1,w_2)_v$-words.
$$D(v)=\displaystyle\bigsqcup_{w_1\leq v,w_2\in W}W(w_1,w_2)_v\ .$$
Therefore, we have the inclusion $R(v,w_0)\subset D(v)$ for every $v\in W$.
\end{definition}

A complete description of the sets $D(v)$ will be given in Subsection
\ref{subsection:dualmove} via the $W$-permutohedron.
Here is, for the moment, an example with ${\mathfrak g}=A_2$ and $v=s_1$. The associated
sets $R({w_1^{\star}}^{-1},vw_1^{-1})$ and $R({w_2}^{-1},w_0w_2^{-1})$ are
then given by the following list.
$$\begin{array}{ccc}
R(1,s_1),\ R(s_2,1)&\mbox{and}&R(1,w_0),\ R(s_1,s_1s_2),\ R(s_1s_2,s_1),\\
&&R(s_2,s_2s_1),\ R(s_2s_1,s_2),\ R(w_0,1)\ .
\end{array}
$$Trivial
$(w_1,w_2)_v$-words are therefore products $\mathbf{i_1}\mathbf{i_2}$ of
some double words
$$\begin{array}{ccc}
\mathbf{i_1}\in\{1,\overline{2}\}&\mbox{and}& \mathbf{i_2}\in\{121,212,
\overline{1}12,1\overline{1}2,12\overline{1},\overline{1}\overline{2}1,
\overline{1}1\overline{2},1\overline{1}\overline{2},
\overline{2}21,2\overline{2}1,21\overline{2},\overline{2}\overline{1}2,\\
&&\overline{2}2\overline{1},2\overline{2}\overline{1},
\overline{1}\overline{2}\overline{1},\overline{2}\overline{1}\overline{2}\}\ .
\end{array}$$
We then introduce a new operation on seed $\mathcal X$-tori that will be useful
to describe the oncoming combinatorics related to the following twisted
evaluations.

\begin{definition}A \emph{split} of a seed $\mathbf I$ is a pair
of seeds $(\mathbf I_1,\mathbf I_2)$ such that $\mathbf I$ is their
amalgamated product, that is $\mathbf I=\mathfrak{m}(\mathbf I_1,\mathbf I_2)$.
An associated \emph{$\mathcal X$-split}
is a section of the amalgamation map $\mathfrak{m}:{\mathcal X}_{\mathbf I_1}
\times{\mathcal X}_{\mathbf I_2}\rightarrow{\mathcal X}_{\mathbf I}$, i.e.
a map $\mathfrak{s}:{\mathcal X}_{\mathbf I}\rightarrow{\mathcal X}_{\mathbf I_1}
\times{\mathcal X}_{\mathbf I_2}$ such that the product $\mathfrak{m}\circ\mathfrak{s}$
gives the identity map on ${\mathcal X}_{\mathbf I}$.
For every $\mathcal X$-split $\mathfrak{s}$ associated to the decomposition
$\mathbf I\to(\mathbf I_1,\mathbf I_2)$, we will associate to any
$\mathbf x\in{\mathcal X}_{\mathbf I}$, the elements $\mathbf x_{(1)}
\in{\mathcal X}_{\mathbf{I_1}}$ and $\mathbf x_{(2)}\in{\mathcal X}_{\mathbf{I_2}}$
given by
$$
\mathfrak{s}(\mathbf{x})=(\mathbf x_{(1)},\mathbf x_{(2)})\ .
$$
\end{definition}

Figure \ref{fig:amalgblock} describes the split associated to the decomposition
$2\overline{2}\to(2,\overline{2})$ when $\mathfrak{g}=A_3$.
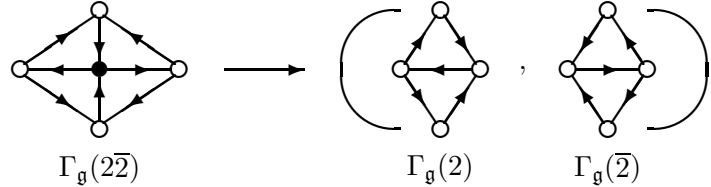
\begin{figure}[htbp]
\begin{center}
\setlength{\unitlength}{1.5pt}
\begin{picture}(20,48)(0,-27)
\thicklines
\put(39,-1.5){\line(-3,-2){18}}
\put(39,-1.5){\vector(-3,-2){12}}
\put(39,1.5){\line(-3,2){18}}
\put(39,1.5){\vector(-3,2){12}}
\put(1,1.5){\line(3,2){18}}
\put(1,1.5){\vector(3,2){12}}
\put(1,-1.5){\line(3,-2){18}}
\put(1,-1.5){\vector(3,-2){12}}
\put(20,-13){\line(0,1){11}}
\put(20,-13){\vector(0,1){10}}
\put(20,2){\line(0,1){11}}
\put(20,13){\vector(0,-1){10}}
\put(20,15){\circle{4}}
\put(20,-15){\circle{4}}
\put(0,0){\circle{4}}
\put(20,0){\circle*{4}}
\put(40,0){\circle{4}}
\put(2,0){\line(1,0){16}}
\put(18,0){\vector(-1,0){11}}
\put(22,0){\line(1,0){16}}
\put(22,0){\vector(1,0){11}}
\put(20,-25){\makebox(0,0){$\Gamma_{\mathfrak{g}}(2\overline 2)$}}
\end{picture}
\qquad\qquad
\begin{picture}(20,48)(0,-27)
\thicklines
\put(0,0){\vector(1,0){20}}
\end{picture}
\quad\qquad
\begin{picture}(20,48)(0,-27)
\thicklines
{\oval(30,30)[l]}
\multiput(1,1.5)(10,-15){2}{\line(2,3){8}}
\put(1,1.5){\vector(2,3){5}}
\put(11,13.5){\vector(2,-3){6}}
\multiput(1,-1.5)(10,15){2}{\line(2,-3){8}}
\put(1,-1.5){\vector(2,-3){5}}
\put(11,-13.5){\vector(2,3){6}}
\multiput(10,15)(0,-30){2}{\circle{4}}
\multiput(0,0)(20,0){2}{\circle{4}}
\put(2,0){\line(1,0){16}}
\put(18,0){\vector(-1,0){11}}
\put(10,-25){\makebox(0,0){$\Gamma_{\mathfrak{g}}(2)$}}
\end{picture}
\quad
\begin{picture}(10,48)(0,-27)
\thicklines
\put(0,0){$,$}
\end{picture}
\begin{picture}(20,48)(0,-27)
\thicklines
\put(20,0){\oval(30,30)[r]}
\multiput(1,1.5)(10,-15){2}{\line(2,3){8}}
\put(19,1.5){\vector(-2,3){5}}
\put(9,13.5){\vector(-2,-3){6}}
\multiput(1,-1.5)(10,15){2}{\line(2,-3){8}}
\put(19,-1.5){\vector(-2,-3){5}}
\put(9,-13.5){\vector(-2,3){6}}
\multiput(10,15)(0,-30){2}{\circle{4}}
\multiput(0,0)(20,0){2}{\circle{4}}
\put(2,0){\line(1,0){16}}
\put(2,0){\vector(1,0){11}}
\put(10,-25){\makebox(0,0){$\Gamma_{\mathfrak{g}}(\overline 2)$}}
\end{picture}
\end{center}
\vspace{-.1in}
\caption{The split $\mathfrak s:\Gamma_{\mathfrak{g}}(2\overline 2)\mapsto(\Gamma_{\mathfrak{g}}(2)
,\Gamma_{\mathfrak{g}}(\overline 2))$
for $\mathfrak{g}=A_3$}
\label{fig:amalgblock}
\end{figure}
Let us stress that, because the amalgamated product is not an isomorphism
of seed $\mathcal X$-tori, different $\mathcal X$-splits
can be associated to a given split of seed. Indeed,
let $\mathbf i,\mathbf{i_1},\mathbf{i_2}$ be double words
such that $\mathbf i=\mathbf{i_1}\mathbf{i_2}$, if $\mathfrak s$ is a
$\mathcal X$-split associated to the decomposition
$\mathbf i\to(\mathbf{i_1},\mathbf{i_2})$, then for every
$\mathbf t\in{\mathcal X}_{\mathbf 1}$ the following map ${\mathfrak s}_{\mathbf t}$
is also a $\mathcal X$-split.
$$\begin{array}{rcl}
{\mathfrak{s}}_{\mathbf t}(\mathbf{x})=({\mathfrak m}(\mathbf x_{(1)},\mathbf t),
{\mathfrak m}({\mathbf t}^{-1},\mathbf x_{(2)}))&\mbox{where}&\mathfrak{s}(\mathbf{x})
=(\mathbf x_{(1)},\mathbf x_{(2)})\\
\\
\mbox{and } {\mathbf t}^{-1}=(t_1^{-1},\dots,t_l^{-1})&\mbox{when}&
{\mathbf t}=(t_1,\dots,t_l)\ .
\end{array}
$$
However, in what follows, nearly each time we will need a
$\mathcal X$-split, this freedom of choice will not affect
the related result.
%\subsubsection{Left, right and twisted evaluations}
We are now ready to define twisted evaluations on $(G,\pi_*)$.

\begin{definition}
Let $w_1\leq v,w_2\in W$, $\mathbf i=\mathbf{i_1}\mathbf{i_2}$ be a trivial
$(w_1,w_2)_v$-word and $\mathfrak s$ be a $\mathcal X$-split associated
to the decomposition $\mathbf i\to(\mathbf{i_1},\mathbf{i_2})$.
We define the \emph{left} and \emph{right evaluations} $\ev^{\mathfrak L}_{\mathbf i},
\ev^{\mathfrak R}_{\mathbf i}:{\mathcal X}_{\mathbf i}\rightarrow G$ by
the formulas:
\begin{equation}\label{equ:evRL}
\begin{array}{cl}
&\ev^{\mathfrak R}_{\mathbf i}(\mathbf x)=\ev_{\mathbf{i_1}}(\mathbf{x}_{(1)})
[\ev_{\mathbf{i_2}}^{\red}(\mathbf{x}_{(2)})\widehat{w_2w_0}]_{\leq 0}\\
\mbox{and}\\
&\ev^{\mathfrak L}_{\mathbf i}(\mathbf x)=\ev_{\mathbf{i_1}}(\mathbf{x}_{(1)})
[(\ev^{\mathfrak R}_{\mathbf{i_2}}(\mathbf{x}_{(2)}))^{\theta}\widehat{w_0}]_{\leq 0}^{\theta}\ ,
\end{array}
\end{equation}
where the reduced evaluations $\ev^{\red}$ are the evaluation maps given
in Section \ref{subsection:evdual}. (Let us notice that these left and right
evaluations don't depend on the choice of the $\mathcal X$-split
$\mathfrak{s}$.)
These maps are extended to every
$\mathbf i\in D(v)$ by setting
\begin{equation}\label{equ:mutsidedev}
\begin{array}{ccc}\ev^{\mathfrak L}_{\mathbf i}=\ev^{\mathfrak L}_{\mathbf{i_1}\mathbf{i_2}}
\circ\mu_{\mathbf i\to\mathbf{i_1}\mathbf{i_2}}& \mbox{and}&
\ev^{\mathfrak R}_{\mathbf i}=\ev^{\mathfrak R}_{\mathbf{i_1}\mathbf{i_2}}
\circ\mu_{\mathbf i\to\mathbf{i_1}\mathbf{i_2}}\ .
\end{array}
\end{equation}
Finally, for every $v\in W$ and $\mathbf i\in D(v)$, we define
the \emph{twisted evaluation}
\begin{equation}\label{equ:evhat}
\begin{array}{c}
\widehat{\ev}_{\mathbf i}:{\mathcal X}_{[\mathbf{i}]_{\mathfrak R}}
\rightarrow G:\mathbf{x}\mapsto
\ev_{\mathbf{i}}^{\mathfrak L}(\mathbf{x})\ x_{\mathbf 1}({\mathbf x(\mathfrak R)})\widehat{w_0}\
{\ev_{\mathbf{i}}^{\mathfrak R}(\mathbf{x})}^{-1}\ ,\\
\mbox{where }\ev_{\mathbf 1}({\mathbf x(\mathfrak R)})=\displaystyle
\prod_{j=1}^lH^j(x_{\binom{j}{N^j(\mathbf i)}})\ .
\end{array}
\end{equation}
\end{definition}

The relations between twisted evaluations, $(w_1,w_2)$-maps and the Fock-Goncharov
evaluation maps is given by the following lemma, straightforwardly checked.

\begin{lemma}\label{lemma:w_1w_2}Let $w_1\leq v,w_2\in W$, $\mathbf i$ be a trivial
$(w_1,w_2)_v$-word, and $\mathfrak s$ be a $\mathcal X$-split associated
to the $(w_1,w_2)_v$-decomposition $\mathbf i\to(\mathbf{i_1},\mathbf{i_2})$.
The following equalities are satisfied.
$$\begin{array}{ll}
\ev^{\mathfrak R}_{\mathbf i}(\mathbf x)=\varrho_{(w_1,w_2)_v}(\ev_{\mathbf{i_1}}(\mathbf{x}_{(1)}),
\ev_{\mathbf{i_2}}^{\red}(\mathbf{x}_{(2)}))\ ,\\
\\
\ev^{\mathfrak L}_{\mathbf i}(\mathbf x)=\lambda_{(w_1,w_2)_v}(\ev_{\mathbf{i_1}}(\mathbf{x}_{(1)}),
\ev^{\red}_{\mathbf{i_2}}(\mathbf{x}_{(2)}))\ ,\\
\\
\widehat{\ev}_{\mathbf i}(\mathbf x)=\rho_{\ev_{\mathbf 1}(\mathbf x(\mathfrak R)),(w_1,w_2)_v}
(\ev_{\mathbf{i_1}}(\mathbf{x}_{(1)}),
\ev_{\mathbf{i_2}}^{\red}(\mathbf{x}_{(2)}))\ .
\end{array}$$
\end{lemma}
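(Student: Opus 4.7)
The plan is to verify the three claimed identities by unfolding the definitions, first in the basic case of a trivial $(w_1,w_2)_v$-word $\mathbf{i}=\mathbf{i_1}\mathbf{i_2}$; the extension to general $\mathbf{i}\in D(v)$ follows from (\ref{equ:mutsidedev}) together with Theorem~\ref{fg}, since both sides of each identity transform compatibly under the cluster transformation $\mu_{\mathbf{i}\to\mathbf{i_1}\mathbf{i_2}}$.

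For the first equality, Theorem~\ref{thm:evG} and Corollary~\ref{cor:evredmut} ensure that $\ev_{\mathbf{i_1}}(\mathbf x_{(1)})\in G^{{w_1^{\star}}^{-1},vw_1^{-1}}$ and $\ev^{\red}_{\mathbf{i_2}}(\mathbf x_{(2)})\in L^{w_2^{-1},w_0w_2^{-1}}$, so that the pair $(\ev_{\mathbf{i_1}}(\mathbf x_{(1)}),\ev^{\red}_{\mathbf{i_2}}(\mathbf x_{(2)})H)$ represents a well-defined point of ${(G,L)}^{(w_1,w_2)_v}$; substituting $b_1=\ev_{\mathbf{i_1}}(\mathbf x_{(1)})$ and $b=\ev^{\red}_{\mathbf{i_2}}(\mathbf x_{(2)})$ into the defining formula of $\varrho_{(w_1,w_2)_v}$ then reproduces exactly the first line of (\ref{equ:evRL}). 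For the second equality, I observe that $\mathbf{i_2}$ itself is a trivial $(e,w_2)_e$-word with empty left part, which collapses (\ref{equ:evRL}) to $\ev^{\mathfrak R}_{\mathbf{i_2}}(\mathbf x_{(2)})=[\ev^{\red}_{\mathbf{i_2}}(\mathbf x_{(2)})\widehat{w_2w_0}]_{\leq 0}$; inserting this into the second line of (\ref{equ:evRL}) for $\ev^{\mathfrak L}_{\mathbf{i}}$ reproduces the defining formula of $\lambda_{(w_1,w_2)_v}$ applied to the same pair.

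The third equality is then deduced by combining the first two with formula (\ref{equ:evhat}): with $t=\ev_{\mathbf 1}(\mathbf x(\mathfrak R))\in H$, the product $\ev^{\mathfrak L}_{\mathbf{i}}(\mathbf x)\cdot t\cdot\widehat{w_0}\cdot(\ev^{\mathfrak R}_{\mathbf{i}}(\mathbf x))^{-1}$ is rewritten as $\ev^{\mathfrak L}_{\mathbf{i}}(\mathbf x)\cdot\widehat{w_0}\cdot(\widehat{w_0}^{-1}t\widehat{w_0})\cdot(\ev^{\mathfrak R}_{\mathbf{i}}(\mathbf x))^{-1}$ using the normalization $\widehat{w_0}^{-1}H\widehat{w_0}=H$, and identified with $\rho_{t,(w_1,w_2)_v}$ evaluated at the same pair by absorbing the Weyl-conjugated Cartan factor into the $H\times H$-ambiguity (\ref{equ:HxHaction}) inherent in the choice of representatives of the cosets $\lambda(b_1,bH)$ and $\varrho(b_1,bH)$.

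The main--but mild--obstacle is precisely the bookkeeping of these coset representatives and their compatibility with (\ref{equ:HxHaction}); this reduces to the elementary Gauss-decomposition identities $[h^{-1}g]_{\leq 0}=h^{-1}[g]_{\leq 0}$ and $[gh]_{\leq 0}=[g]_{\leq 0}\cdot h$ for $h\in H$, together with their $\theta$-twisted counterparts required for $\lambda_{(w_1,w_2)_v}$. Once these are in place, each of the three identities is a purely formal rewriting of the definitions.
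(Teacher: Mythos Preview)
Your approach is exactly what the paper intends: it asserts the lemma is ``straightforwardly checked,'' and unfolding the definitions of $\varrho$, $\lambda$, $\rho_t$ against~(\ref{equ:evRL}) and~(\ref{equ:evhat}) is indeed all that is needed. Your treatment of the first two equalities is correct; in particular, your interpretation of $\ev^{\mathfrak R}_{\mathbf{i_2}}$ via the $(e,w_2)_e$-decomposition of $\mathbf{i_2}$ with trivial left part is the right reading of~(\ref{equ:evRL}).

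The gap is in your handling of the third equality. You correctly notice that the order of the Cartan element and $\widehat{w_0}$ differs between the definition of $\rho_{t,(w_1,w_2)_v}$ (which reads $\widehat{w_0}t$) and~(\ref{equ:evhat}) (which reads $t\,\widehat{w_0}$). Your proposed fix---absorbing the factor $t^{-1}\,(\widehat{w_0}^{-1}t\widehat{w_0})$ into the $H\times H$-ambiguity~(\ref{equ:HxHaction})---does not work: that action is on the \emph{domain} $(G,L)^{(w_1,w_2)_v}$, and one checks directly (using $[hg h']_{\leq 0}=h[g]_{\leq 0}h'$ for $h,h'\in H$ and the identity $(h')^{w_0}\widehat{w_0}=\widehat{w_0}h'$) that $\rho_{t}$ is \emph{invariant} under it. Hence no choice of representatives can swallow the discrepancy, and your argument would in effect assert $\rho_t=\rho_{t^{w_0}}$ as maps, which is false.

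The discrepancy is simply a typo in the general definition of $\rho_{t,(w_1,w_2)_v}$: the order there should read $t\,\widehat{w_0}$, not $\widehat{w_0}t$. This is confirmed by the explicit $A_2$ formulas immediately following the definition (all written with $t\widehat{w_0}$), by Remark~\ref{rem:evdualtwistev}, and by Lemma~\ref{prop:tau*moves}. With that correction, the third equality follows at once from the first two and~(\ref{equ:evhat}), and your last paragraph on the $H$-bookkeeping becomes unnecessary.
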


Here is finally the analog of Theorem \ref{thm:evG} for $(G,\pi_*)$.

\begin{thm}\label{thm:ev*1}For every $v\in W$, $t\in H$ and $\mathbf i\in D(v)$,
the restriction $\widehat{\ev}_{\mathbf i}:{\mathcal X}_{[\mathbf i]_{\mathfrak{R}}}(t)\rightarrow (F_{t,w_0v^{-1}},\pi_*)$
is a Poisson birational isomorphism onto a Zariski open set $F_{t,w_0v^{-1}}^0$ of $F_{t,w_0v^{-1}}$.
\end{thm}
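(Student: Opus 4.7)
The plan is first to reduce to the case where $\mathbf i = \mathbf{i_1}\mathbf{i_2}$ is a trivial $(w_1,w_2)_v$-word. Any $\mathbf i \in W(w_1,w_2)_v$ is obtained from such a trivial product by mixed $2$-moves, and the associated cluster transformation uses only mutations in non-outlet directions, so Lemma~\ref{lemma:muR} yields a Poisson birational isomorphism ${\mathcal X}_{[\mathbf i]_{\mathfrak R}}(t) \to {\mathcal X}_{[\mathbf{i_1}\mathbf{i_2}]_{\mathfrak R}}(t)$, and equation~(\ref{equ:mutsidedev}) provides the intertwining $\widehat{\ev}_{\mathbf i} = \widehat{\ev}_{\mathbf{i_1}\mathbf{i_2}} \circ \mu_{[\mathbf i]_{\mathfrak R}\to [\mathbf{i_1}\mathbf{i_2}]_{\mathfrak R}}$.

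For trivial $\mathbf i = \mathbf{i_1}\mathbf{i_2}$, I would pick an $\mathcal X$-split $\mathfrak s$ adapted to the right truncation, so the frozen variables $\mathbf x(\mathfrak R)$ encoding $t$ sit entirely in $\mathbf x_{(2)}$. Lemma~\ref{lemma:w_1w_2} then identifies
$$\widehat{\ev}_{\mathbf i}(\mathbf x) = \rho_{t,(w_1,w_2)_v}\bigl(\ev_{\mathbf{i_1}}(\mathbf x_{(1)}),\,\ev^{\red}_{\mathbf{i_2}}(\mathbf x_{(2)})\bigr)$$
on ${\mathcal X}_{[\mathbf i]_{\mathfrak R}}(t)$. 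Theorem~\ref{thm:evG} and Corollary~\ref{cor:redmut} give that $\ev_{\mathbf{i_1}}$ and $\ev^{\red}_{\mathbf{i_2}}$ are Poisson birational isomorphisms onto Zariski open subsets of $G^{w_1^{\star}{}^{-1},vw_1^{-1}}$ and $L^{w_2^{-1},w_0w_2^{-1}}$ respectively, and Lemma~\ref{lemma:muR} shows their product is compatible with the Poisson structure of ${\mathcal X}_{[\mathbf i]_{\mathfrak R}}(t)$. Passing to the quotient by the twisted $(H\times H)$-action~(\ref{equ:HxHaction}) therefore produces a Poisson birational isomorphism from ${\mathcal X}_{[\mathbf i]_{\mathfrak R}}(t)$ onto a Zariski open subset of $({(G,L)}^{(w_1,w_2)_v},\pi_{(w_1,w_2)_v})$.

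The remaining step is to verify that $\rho_{t,(w_1,w_2)_v}$ itself is a Poisson birational isomorphism onto a Zariski open subset $F^0_{t,w_0v^{-1}}$ of $(F_{t,w_0v^{-1}},\pi_*)$; a dimension count gives $\ell(v)+\ell(w_0)$ on both sides. The case $w_1=w_2=e$ is exactly the Evens-Lu parametrization of Steinberg fibers from \cite[Corollary~5.11]{EL}. For general $(w_1,w_2)$ my plan is to factor $\rho_{t,(w_1,w_2)_v}$ through $\rho_{t,(e,e)_v}$ by exhibiting a Poisson birational isomorphism $(G,L)^{(w_1,w_2)_v} \to (G,L)^{(e,e)_v}$ built from the Bruhat decomposition~(\ref{bruhatdec}) applied to the first factor and the Fomin-Zelevinsky twist of Theorem~\ref{thm:twist} applied to the second, both chosen so as to intertwine the projections to $F_{t,w_0v^{-1}}$.

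The main obstacle I anticipate is precisely this last reduction: the Evens-Lu argument relies essentially on the first factor lying in $G^{e,v}$ and the second in $L^{e,w_0}$, using the factorization of $\pi_*$ given in Proposition~\ref{prop:STSPoisson}, so checking Poissonness for general $(w_1,w_2)$ will require careful tracking of how the Sklyanin bracket on $G\times G$ interacts with the twisted $(H\times H)$-action, probably by reduction to the rank-one case via amalgamation and the Chevalley embeddings $\varphi_i$ of Subsection~\ref{section:Preliminaries1}.
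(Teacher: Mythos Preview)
Your reduction to trivial $(w_1,w_2)_v$-words and the identification via Lemma~\ref{lemma:w_1w_2} are correct and match the paper. The $(e,e)$ case is also handled the same way, by invoking \cite[Corollary~5.11]{EL} (stated here as Lemma~\ref{lemme:lu}) together with the evaluation theorems; the paper packages this as Lemma~\ref{lemme:lu2}, and Lemma~\ref{lemma:twistinvol} is used to check that for $(e,e)_v$-words the twisted evaluation really agrees with the Evens--Lu parametrization.

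The divergence is in the passage from $(e,e)$ to general $(w_1,w_2)$. You propose to build a Poisson birational isomorphism $(G,L)^{(w_1,w_2)_v}\to(G,L)^{(e,e)_v}$ at the group level, intertwining the $\rho$-maps, and you correctly flag this as the main obstacle: the Poisson compatibility of such a map with both $\pi_{G\times G}$ and $\pi_*$ is not obvious, and an amalgamation argument via the $\varphi_i$ does not immediately settle it because the twisted $(H\times H)$-action mixes the two factors. The paper does \emph{not} attempt this direct geometric reduction. Instead it postpones the general case to Theorem~\ref{thm:ev*}, which asserts $\widehat{\ev}_{\mathbf i}=\widehat{\ev}_{\mathbf j}\circ\widehat{\mu}_{\mathbf i\to\mathbf j}$ for any $\mathbf i,\mathbf j\in D(v)$; here $\widehat{\mu}_{\mathbf i\to\mathbf j}$ is a composite of cluster transformations and the saltations $\Xi_{s_i}$ of Section~\ref{section:Loop}, all of which are manifestly Poisson birational on the truncated tori (Corollary~\ref{cor:dualsalt}). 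Taking $\mathbf j$ to be an $(e,e)_v$-word then pulls the result back from the already-established base case. In effect, the Poisson isomorphism you seek between the different $(G,L)^{(w_1,w_2)_v}$ is constructed \emph{a posteriori} at the cluster level rather than directly on the group; this is why Proposition~\ref{prop:w_1w_2} is recorded as a \emph{consequence} of Theorem~\ref{thm:ev*1}, not as an input to it. Your plan would yield an independent proof of Proposition~\ref{prop:w_1w_2}, but the missing Poisson verification is essentially equivalent in difficulty to the saltation construction itself.
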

\begin{proof}We are going to use the three following lemmas, the
first one being crucial to our construction.
\begin{lemma}\cite[Corollary~5.11]{EL}\label{lemme:lu} Let $v\in W$.
The $(e,e)_{v}$-map associated to every $t\in H$
is a birational Poisson isomorphism of $((G,L)^{(e,e)_{v}},\pi_{(e,e)_{v}})$
on a Zariski open set of~$(F_{t,w_0v^{-1}},\pi_*)$.
\end{lemma}
\begin{lemma}\label{lemme:lu2}Let $t\in H$, $v\in W$, $\mathbf i$ be a
$(e,e)_v$-word and  $\mathfrak s$ be a $\mathcal X$-split associated
to the $(e,e)_v$-decomposition $\mathbf i\to(\mathbf{i_1},\mathbf{i_2})$.
The following evaluation map is a birational Poisson isomorphism onto
a Zariski open set $F_{t,w_0v^{-1}}^{0}\subset F_{t,w_0v^{-1}}$.
$$\begin{array}{crcl}
\widehat{\ev}_{t,\mathbf i}:&\mathcal{X}_{\mathbf i}^{\red}\to(F_{t,w_0v^{-1}},\pi_*):
&\mathbf x\mapsto{\ev}_{\mathbf i}^{\red}(\mathbf x)
\widehat{w_0}t(\ev_{\mathbf{i_1}}(\mathbf{x}_{(1)})
[\ev_{\mathbf{i_2}}^{\red}(\mathbf{x}_{(2)})\widehat{w_0}]_{\leq 0})^{-1}\ .
\end{array}$$
\end{lemma}

Lemma \ref{lemme:lu2} is easily deduced from Lemma \ref{lemme:lu}
and Theorem \ref{fg}.

\begin{lemma}\label{lemma:twistinvol}The map $b\mapsto[b\widehat{w_0}]_{\leq 0}^{\theta}$
is an involution on $G^{e,w_0}$.
\end{lemma}

Lemma \ref{lemma:twistinvol} is well-known; to prove it, we have to remember that the map
$\theta$ is an involution, and use successively the facts that
$\widehat{w_0}^{\theta}=\widehat{w_0}^{-1}$,
$b^{\theta}\in G^{w_0,e}$, and $\widehat{w_0}^{-1}[b\widehat{w_0}]_+^{\theta}\widehat{w_0}\in N$ to get
$$b^{\theta}=(b\widehat{w_0})^{\theta}\widehat{w_0}=[(b\widehat{w_0})^{\theta}
\widehat{w_0}]_{\leq 0}=[[b\widehat{w_0}]_{\leq 0}^{\theta}
\widehat{w_0}]_{\leq 0}\ .$$
Let us now choose $\mathbf x(\mathfrak R)\in\mathcal{X}_{\mathbf 1}$
such that $\ev_{\mathbf 1}(\mathbf x(\mathfrak R))=t$.
From Lemma \ref{lemme:lu2} and Lemma \ref{lemma:twistinvol}, we
deduce that when the $(w_1,w_2)_v$-word $\mathbf i\in D(v)$ is such
that $w_1=w_2=e$ the evaluation $\widehat{\ev}_{t,\mathbf i}$ gives the restriction of
the map $\widehat{\ev}_{\mathbf i}$ on the set ${\mathcal X}_{[\mathbf i]_{\mathfrak{R}}}(t)$.
The general case will be deduced from Theorem \ref{thm:ev*} by noticing that
the associated map ${\widehat{\mu}}_{\mathbf{i}\rightarrow\mathbf j}$, defined
in Section \ref{section:Loop}, is a birational Poisson
isomorphism for every double words $\mathbf{i},\mathbf j\in D(v)$.
\end{proof}

In particular, Theorem \ref{thm:evdual} is deduced from Theorem
\ref{thm:ev*1} because $\widehat{\ev}_{\mathbf i}$ and $\ev^{\dual}_{\mathbf i}$
coincide for every $\mathbf i\in R(v,w_0)$, that is for every $(w_1,w_2)_v$-word
$\mathbf i$ such that $w_1=v$ and $w_2=e$.
Finally, let us recall that for every double word $\mathbf i$, ${\mathcal X}_{\mathbf i}^{\dual}
\subset{\mathcal X}_{[\mathbf{i}]_{\mathfrak R}}$ is, as a set, such that the elements of the
set of variables $\mathbf x(\mathfrak R)$ are pairwise disjoint.
Thus, we get the following corollary from the second decomposition of (\ref{equ:decG^*})
with Theorem \ref{thm:ev*1}.

\begin{cor}For every $\mathbf i\in D(w_0)$, the map $\widehat{\ev}_{\mathbf i}:
{\mathcal X}_{\mathbf i}^{\dual}\rightarrow (BB_-,\pi_*)$ is a Poisson
birational isomorphism on a Zariski open set of $BB_-$.
\end{cor}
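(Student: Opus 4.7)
The plan is to deduce this corollary from Theorem \ref{thm:ev*1} by letting the right outlet variables $\mathbf x(\mathfrak R)$ vary and assembling the family of fiberwise isomorphisms into a single birational Poisson isomorphism. Specializing Theorem \ref{thm:ev*1} to $v=w_0$ gives, for every $t\in H$ and every $\mathbf i\in D(w_0)$, a Poisson birational isomorphism
$$\widehat{\ev}_{\mathbf i}\big|_{{\mathcal X}_{[\mathbf{i}]_{\mathfrak R}}(t)}:\ {\mathcal X}_{[\mathbf{i}]_{\mathfrak R}}(t)\longrightarrow F_{t,e}^0\subset F_{t,e}\ ,$$
since $w_0v^{-1}=e$. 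The second (disjoint) decomposition of \eqref{equ:decG^*} specialized at $w=e$ reads $BB_-=B\widehat{e}B_-=\bigsqcup_{t\in H\backslash W}F_{t,e}$, so as $t$ ranges over $H$ the fibers $F_{t,e}$ cover $BB_-$.

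First I would identify the source: by definition ${\mathcal X}_{\mathbf i}^{\dual}\subset{\mathcal X}_{[\mathbf i]_{\mathfrak R}}$ is obtained by letting the $l$ right outlet coordinates $\mathbf x(\mathfrak R)=(x_{\binom{j}{N^j(\mathbf i)}})_{j\in[1,l]}$ vary freely. Because $G$ is of adjoint type and the $\{h^j\}$ form a basis of $\mathfrak h$ dual (after the matrix $A^{-1}$) to the Chevalley $\{h_j\}$, the map $\mathbf x(\mathfrak R)\mapsto\ev_{\mathbf 1}(\mathbf x(\mathfrak R))=\prod_j H^j(x_{\binom{j}{N^j(\mathbf i)}})$ is a biregular isomorphism from the right outlet torus onto $H$. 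Combining this with the previous paragraph yields the set-theoretic decomposition
$${\mathcal X}_{\mathbf i}^{\dual}=\bigsqcup_{t\in H}{\mathcal X}_{[\mathbf{i}]_{\mathfrak R}}(t)\ ,$$
and the fiberwise isomorphisms assemble into the candidate map $\widehat{\ev}_{\mathbf i}:{\mathcal X}_{\mathbf i}^{\dual}\to BB_-$.

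Next I would verify the two global properties. For the Poisson statement, formula \eqref{equ:defJ} shows that in the truncated seed every right outlet variable is a Casimir; thus ${\mathcal X}_{\mathbf i}^{\dual}$ is Poissonly fibered over $H$ by the map $\mathbf x\mapsto\ev_{\mathbf 1}(\mathbf x(\mathfrak R))$. On the target side, the Steinberg fibration restricts to a Poisson fibration of $(BB_-,\pi_*)$ with fibers $F_{t,e}\cap BB_-$, the Casimirs being the regular class functions; these two Casimir fibrations are intertwined by $\widehat{\ev}_{\mathbf i}$ by construction of \eqref{equ:evhat}, so the fiberwise Poisson statements of Theorem \ref{thm:ev*1} upgrade to a global Poisson statement. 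For birationality, I would note that $\widehat{\ev}_{\mathbf i}$ is given by the regular formula \eqref{equ:evhat}, whose denominators (the principal minors coming from the Gauss decompositions in $\ev^{\mathfrak L}_{\mathbf i}$ and $\ev^{\mathfrak R}_{\mathbf i}$) are regular in all the cluster variables, including those in $\mathbf x(\mathfrak R)$; consequently the fiberwise Zariski opens $F_{t,e}^0$ piece together into a Zariski open subset of $BB_-$. The main technical point — and the one step deserving care — is precisely this last upgrade from fiberwise to global open density, which amounts to observing that the denominators defining the birational locus deform algebraically with $t\in H$ and hence cut out an open subvariety of ${\mathcal X}_{\mathbf i}^{\dual}$ rather than only of each fiber.
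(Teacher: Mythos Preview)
Your approach is essentially the same as the paper's: the paper states only that the corollary follows ``from the second decomposition of \eqref{equ:decG^*} with Theorem~\ref{thm:ev*1}'', and you have spelled out exactly that deduction --- specializing Theorem~\ref{thm:ev*1} at $v=w_0$ so that $w_0v^{-1}=e$, and then varying $t\in H$ to sweep out $BB_-$ via the Steinberg-fiber decomposition. One small inaccuracy: your description of ${\mathcal X}_{\mathbf i}^{\dual}$ as the locus where the right outlet coordinates ``vary freely'' omits the paper's ``pairwise disjoint'' condition on $\mathbf x(\mathfrak R)$, but since that condition cuts out a Zariski open subset of ${\mathcal X}_{[\mathbf i]_{\mathfrak R}}$, it does not affect the birational Poisson argument.
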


\begin{rem}\label{rem:twitevlaconnect}
The same construction can be done (with the same results) if $G$ is no more
of adjoint type but simply connected. In this case, the twisted
evaluations we have to consider are the following
$$\begin{array}{c}
\widehat{\underline{\ev}}_{\mathbf i}:{\mathcal X}_{[\mathbf{i}]_{\mathfrak R}}
\rightarrow G:\mathbf{x}\mapsto
\ev_{\mathbf{i}}^{\mathfrak L}(\mathbf{x})\ x_{\mathbf 1}({\mathbf x(\mathfrak R)})\widehat{w_0}\
{\ev_{\mathbf{i}}^{\mathfrak R}(\mathbf{x})}^{-1}\ ,\\
\mbox{where }x_{\mathbf 1}({\mathbf x(\mathfrak R)})=\displaystyle
\prod_{j=1}^lH_j(x_{\binom{j}{N^j(\mathbf i)}})\ ,
\end{array}$$
the generators $H_j(.)$ being given by (\ref{equ:defphi2}).
\end{rem}

\subsection{$\tau$-combinatorics and cluster $\mathcal X$-varieties related to $(G,\pi_*)$}
\label{subsection:taucombdual}
We now relate twisted evaluations by cluster transformations to get Poisson
parameterizations related to $(G,\pi_*)$ by cluster $\mathcal X$-varieties.
This is achieved by mixing the truncation maps of Section \ref{subsection:evdual} with the
$\tau$-combinatorics developed in Section \ref{section:taucombi}. We then
get a family of $\mathcal X$-varieties $\mathcal X_w$, indexed by the
Weyl group $W$ of $G$, evaluating the dual Poisson Lie-group $(BB_-,\pi_*)$.

\subsubsection{Double reduced words, the set $D_{w_1}(v)$, and the
$W$-permutohedron associated to $\mathfrak{g}$}
As it was done in Section \ref{section:ClusterG}, we start by the study
of the combinatorics on double words before to consider the related birational
Poisson isomorphisms on seed $\mathcal X$-tori.
We fix $w_1\leq v\in W$,
and denote $D_{w_1}(v)$ the set of $(w_1,w_2)_v$-words
for every $w_2\in W$. Therefore, the set $D(v)$ is the union
over all $w_1\leq v\in W$ of the sets $D_{w_1}(v)$.
\begin{equation}\label{equ:Dsets}
\begin{array}{rrcccl}
&D_{w_1}(v)=\displaystyle\bigcup_{w_2\in W}W(w_1,w_2)_v
&\mbox{and}
&D(v)=\displaystyle\bigcup_{w_1\leq v}D_{w_1}(v)\ .
\end{array}
\end{equation}

The following result is clear from Definition \ref{def:Wwords}. It uses amalgamation
to relate these sets $D_{w_1}(v)$ to the set $R^{\tau}(w_0)$ already studied in
the Section \ref{section:taucombi}.

\begin{lemma}\label{lemma:amalRtauD}The set $D_{w_1}(v)$ is the set of double words that
can be obtained by a composition of mixed $2$-moves from an amalgamation
of a double reduced word $\mathbf{i_1}\in R({w_1^{\star}}^{-1},vw_1^{-1})$ with any double
reduced word $\mathbf{i_2}\in R^{\tau}(w_0)$.
\end{lemma}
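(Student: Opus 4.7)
The plan is to unfold the definitions and match them with formula (\ref{equ:Rwdoublered}). First, I would observe that since every $w_2 \in W$ satisfies $w_2 \leq w_0$, the definition of $R^{\tau}(w_0)$ given by (\ref{equ:Rwdoublered}) becomes
$$R^{\tau}(w_0) = \bigsqcup_{w_2 \in W} R(w_2^{-1}, w_0 w_2^{-1}),$$
so that a choice of $\mathbf{i_2} \in R^{\tau}(w_0)$ is exactly the same data as a choice of $w_2 \in W$ together with a double reduced word $\mathbf{i_2} \in R(w_2^{-1}, w_0 w_2^{-1})$. This is the key identification that will let me merge the union over $w_2$ appearing in (\ref{equ:Dsets}) into a single quantifier over $R^{\tau}(w_0)$.

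Next, I would substitute this into the defining equation (\ref{equ:Dsets}) of $D_{w_1}(v)$:
$$D_{w_1}(v) = \bigcup_{w_2 \in W} W(w_1, w_2)_v.$$
By Definition \ref{def:Wwords}, an element of $W(w_1, w_2)_v$ is precisely a double word obtained from some product $\mathbf{i_1}\mathbf{i_2}$ with $\mathbf{i_1} \in R({w_1^{\star}}^{-1}, v w_1^{-1})$ and $\mathbf{i_2} \in R(w_2^{-1}, w_0 w_2^{-1})$ by a sequence of mixed $2$-moves. Taking the union over $w_2 \in W$ and applying the identification of the previous paragraph produces exactly the class of double words obtained, by mixed $2$-moves, from a product $\mathbf{i_1}\mathbf{i_2}$ with $\mathbf{i_1} \in R({w_1^{\star}}^{-1}, v w_1^{-1})$ and $\mathbf{i_2} \in R^{\tau}(w_0)$ arbitrary.

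The only remaining point is to justify the word \emph{amalgamation} used in the statement: concatenation of double words is precisely what produces the amalgamated Dynkin quiver, by the amalgamation procedure of Section \ref{section:graph} (and the matching seed-level formulas (\ref{equ:amaleps})). Hence concatenating $\mathbf{i_1}$ and $\mathbf{i_2}$ and amalgamating the corresponding quivers/seeds are the same operation on double words, so the two formulations of the lemma are identical. No genuine obstacle appears here; the content is purely a reindexing argument, and the main thing to be careful about is that the union $\bigsqcup_{w_2}$ in (\ref{equ:Rwdoublera}) is indeed indexed by \emph{all} of $W$, which is what justifies replacing "for any $w_2 \in W$" by "for any $\mathbf{i_2} \in R^{\tau}(w_0)$".
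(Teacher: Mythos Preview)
Your proposal is correct and matches the paper's own treatment: the paper simply declares the lemma ``clear from Definition~\ref{def:Wwords}'', and what you have written is precisely the unfolding of that definition together with (\ref{equ:Rwdoublered}) and (\ref{equ:Dsets}). The only blemish is the typo ``(\ref{equ:Rwdoublera})'' in your last sentence, which should read ``(\ref{equ:Rwdoublered})''.
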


We can therefore use the fact that
the set $R^{\tau}(w)$ has been described in Section \ref{section:taucombi}
via the $W$-permutohedron to relate the sets $W(w_1,w_2)_v$ to $D_{w_1}(v)$,
for every $w_1\leq v\in W$.

\begin{lemma}\label{lemma:relPWR}We have the following statements for every
$v,w_1\in W$ such that $w_1\leq v$.
\begin{itemize}
\item
The set  $D_{w_1}(v)$ is the disjoint union of the
labels $W(w_1,w'^{-1})_v$ associated to the vertices $w'$
of the $W$-permutohedron $P_W$ that are crossed or reached by a
$\uparrow_v$-path.
\item
Two labeled vertices $W(w_1,w^{-1})_v,W(w_1,w'^{-1})_v\subset D_{w_1}(v)$
of $P_W$ are related by the edge $s_j$ if and only if there exist double reduced
words $\mathbf i\in W(w_1,w^{-1})_v$ and $\mathbf j\in W(w_1,w'^{-1})_v$ such
that $\mathbf j={\mathfrak R}_j(\mathbf i)$.
\end{itemize}
\end{lemma}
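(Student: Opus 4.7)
The plan is to reduce the assertion to the already-established Lemma \ref{lemma:PW} via the amalgamation description of $D_{w_1}(v)$ provided by Lemma \ref{lemma:amalRtauD}. The key observation is that the second factor in this amalgamation ranges over $R^{\tau}(w_0)$, a set whose combinatorics is precisely encoded by the $W$-permutohedron $P_W$; everything then flows from pushing this structure forward along the amalgamation.

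For the partition assertion, I would argue as follows. By Lemma \ref{lemma:amalRtauD}, every element $\mathbf{i} \in D_{w_1}(v)$ is equivalent, via a sequence of mixed $2$-moves, to a concatenation $\mathbf{i_1}\mathbf{i_2}$ with $\mathbf{i_1} \in R({w_1^\star}^{-1}, vw_1^{-1})$ and $\mathbf{i_2} \in R^{\tau}(w_0)$. The first clause of Lemma \ref{lemma:PW}, applied with $w = w_0$, yields the disjoint decomposition $R^{\tau}(w_0) = \bigsqcup_{w' \in W} R({w'}^{-1}, w_0 {w'}^{-1})$ indexed by the vertices $w'$ of $P_W$. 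Setting $w_2 = {w'}^{-1}$ and invoking Definition \ref{def:Wwords}, this translates to the decomposition $D_{w_1}(v) = \bigsqcup_{w' \in W} W(w_1, {w'}^{-1})_v$. Disjointness is inherited from the disjointness in $R^{\tau}(w_0)$, since a common element of two distinct labels would reduce, under any fixed amalgamation splitting, to a common element of the right factor.

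For the edge correspondence, suppose two vertices $w$ and $w' = ws_j$ of $P_W$ are joined by the edge labeled $s_j$. By the second clause of Lemma \ref{lemma:PW} there exist double reduced words $\mathbf{i_2} \in R(w^{-1}, w_0 w^{-1})$ and $\mathbf{j_2} = \mathfrak{R}_j(\mathbf{i_2}) \in R({w'}^{-1}, w_0 {w'}^{-1})$. Amalgamating on the left with a fixed $\mathbf{i_1} \in R({w_1^\star}^{-1}, vw_1^{-1})$ produces representatives $\mathbf{i} = \mathbf{i_1}\mathbf{i_2} \in W(w_1, w^{-1})_v$ and $\mathbf{j} = \mathbf{i_1}\mathbf{j_2} \in W(w_1, {w'}^{-1})_v$; since the right $\tau$-move affects only the rightmost letter, which belongs to the $\mathbf{i_2}$ block, we have $\mathbf{j} = \mathfrak{R}_j(\mathbf{i})$, as required. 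The converse direction proceeds by reversing this argument: given representatives $\mathbf{i}, \mathbf{j}$ related by $\mathfrak{R}_j$, one rewrites $\mathbf{i}$ in trivial form $\mathbf{i_1}\mathbf{i_2}$ and then applies Lemma \ref{lemma:PW} to the second factor to identify the target class as $W(w_1, {w'}^{-1})_v$ with $w' = ws_j$.

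The main obstacle is the flexibility built into Definition \ref{def:Wwords}: a generic representative of $W(w_1, w_2)_v$ is only equivalent to a concatenation $\mathbf{i_1}\mathbf{i_2}$ via mixed $2$-moves, not literally such a concatenation. For the converse direction of the edge correspondence, one must verify that the rightmost letter of any representative $\mathbf{i}$ can be traced, through inverse mixed $2$-moves, to a letter in the $\mathbf{i_2}$ factor. Since mixed $2$-moves only interchange adjacent letters of opposite sign, the rightmost letter of $\mathbf{i}$ preserves its relative position within its own sign class and hence originates from the rightmost letter of $\mathbf{i_2}$ of that sign; it follows that the action of $\mathfrak{R}_j$ localizes in $\mathbf{i_2}$, which reduces the converse to Lemma \ref{lemma:PW} and closes the argument.
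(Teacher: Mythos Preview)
Your proof is correct and follows essentially the same approach as the paper: both reduce the statement to Lemma~\ref{lemma:PW} by transporting the permutohedron structure on $R^{\tau}(w_0)$ to $D_{w_1}(v)$ via the amalgamation description of Lemma~\ref{lemma:amalRtauD} (the paper phrases this as the bijection $R({w'}^{-1},w_0{w'}^{-1})\mapsto W(w_1,{w'}^{-1})_v$). You are considerably more explicit than the paper's one-line proof, and your discussion of the obstacle posed by mixed $2$-moves in the converse direction of the edge correspondence is a useful addition that the paper leaves implicit.
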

\begin{proof}It suffices to apply Lemma \ref{lemma:PW} and the fact that
the map $R(w'^{-1},vw'^{-1})\mapsto W(w_1,w'^{-1})_v$
gives a bijection from $R^{\tau}(v)$ to $D_{w_1}(v)$.
\end{proof}

\begin{ex}\label{ex:permutoWD}We still take $\mathfrak{g}=A_2$.
From Figure \ref{fig:Wpermutohedre}, we get the $W$-permutohedron
at the right of Figure \ref{fig:commutativeweakordertau+2}.
Let us stress that this picture is valid for \emph{any} $w_1\in W$.
\end{ex}

\begin{figure}[htbp]
\begin{center}
\setlength{\unitlength}{1.5pt}
\qquad\qquad\qquad\qquad
\begin{picture}(20,110)(0,-80)
\put(-29,-63.5){\line(2,3){17.7}}
\put(-10,-33){\line(0,1){26}}
\put(-11,-3.5){\line(-2,3){17.7}}
\put(-31,-63.5){\line(-2,3){17.7}}
\put(-50,-33){\line(0,1){26}}
\put(-49,-3.5){\line(2,3){17.7}}
\thicklines
\put(-30,25){\circle{4}}
\put(-39,30){$R({w_0,1})$}
\put(-50,-5){\circle{4}}
\put(-87,-10){$R({s_1s_2,s_1})$}
\put(-10,-5){\circle{4}}
\put(-5,-10){$R({s_2s_1,s_2})$}
\put(-50,-35){\circle{4}}
\put(-87,-40){$R({s_1,s_1s_2})$}
\put(-10,-35){\circle{4}}
\put(-5,-40){$R({s_2,s_2s_1})$}
\put(-30,-65){\circle{4}}
\put(-39,-75){$R(1,w_0)$}
\put(-20,15){\footnotesize ${\mathfrak R}_2$}
\put(-47,15){\footnotesize ${\mathfrak R}_1$}
\put(-60,-25){\footnotesize ${\mathfrak R}_2$}
\put(-8,-25){\footnotesize ${\mathfrak R}_1$}
\put(-47,-55){\footnotesize ${\mathfrak R}_1$}
\put(-20,-55){\footnotesize ${\mathfrak R}_2$}
\end{picture}
\qquad\qquad\qquad\qquad\qquad
\qquad\qquad\qquad
\begin{picture}(20,110)(0,-80)
\put(-29,-63.5){\line(2,3){17.7}}
\put(-10,-33){\line(0,1){26}}
\put(-11,-3.5){\line(-2,3){17.7}}
\put(-31,-63.5){\line(-2,3){17.7}}
\put(-50,-33){\line(0,1){26}}
\put(-49,-3.5){\line(2,3){17.7}}
\thicklines
\put(-30,25){\circle{4}}
\put(-39,30){$W({w_1,w_0})_{w_0}$}
\put(-50,-5){\circle{4}}
\put(-95,-10){$W({w_1,s_1s_2})_{w_0}$}
\put(-10,-5){\circle{4}}
\put(-5,-10){$W({w_1,s_2s_1})_{w_0}$}
\put(-50,-35){\circle{4}}
\put(-85,-40){$W({w_1,s_1})_{w_0}$}
\put(-10,-35){\circle{4}}
\put(-5,-40){$W({w_1,s_2})_{w_0}$}
\put(-30,-65){\circle{4}}
\put(-39,-75){$W(w_1,1)_{w_0}$}
\put(-20,15){\footnotesize ${\mathfrak R}_2$}
\put(-47,15){\footnotesize ${\mathfrak R}_1$}
\put(-60,-25){\footnotesize ${\mathfrak R}_2$}
\put(-8,-25){\footnotesize ${\mathfrak R}_1$}
\put(-47,-55){\footnotesize ${\mathfrak R}_1$}
\put(-20,-55){\footnotesize ${\mathfrak R}_2$}
\end{picture}
\end{center}
\vspace{-.1in}
\caption{From the set $R^{\tau}(w_0)$ to the set $D_{w_1}(w_0)$ when $\mathfrak{g}=A_2$}
\label{fig:commutativeweakordertau+2}
\end{figure}
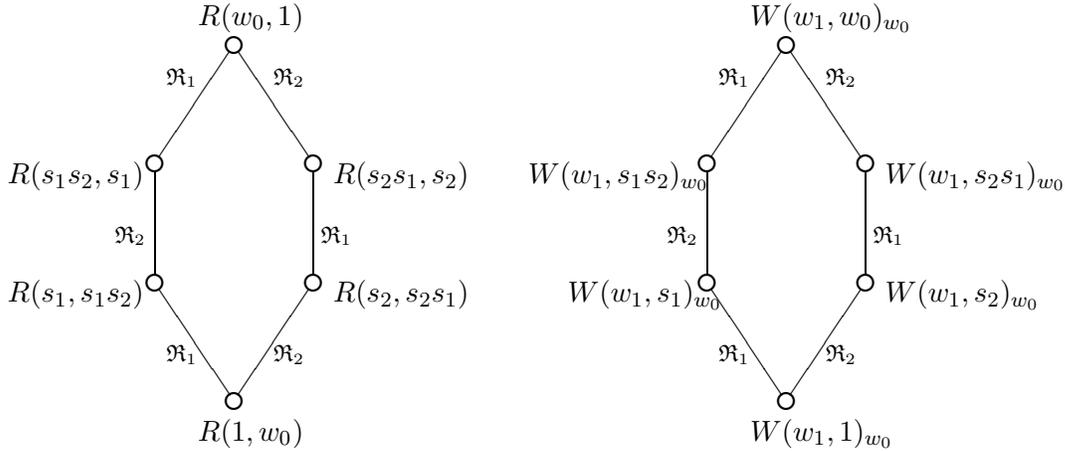

\subsubsection{The cluster $\mathcal X$-varieties ${\mathcal X}_{w_1\leq v}(t)$
associated to the set $D_{w_1}(v)$}
We now consider the related cluster transformations and associate a right
truncated cluster $\mathcal X$-variety ${\mathcal X}_{w_1\leq v}$ to each
set $D_{w_1}(v)$.
Let $w_1\leq v\in W$. To any $d^{\tau}$-move $\delta$
linking two double words $\mathbf i,\mathbf i'\in D_{w_1}(v)$ we associate the
cluster transformation $\mu_{[\mathbf i]_{\mathfrak R}\to[\mathbf i']_{\mathfrak R}}:
{\mathcal X}_{[\mathbf i]_{\mathfrak R}}\to{\mathcal X}_{[\mathbf i']_{\mathfrak R}}$
such that
$$\mu_{[\mathbf i]_{\mathfrak R}\to[\mathbf i']_{\mathfrak R}}\circ
\mathfrak{t}_{\mathbf i_{\mathfrak R}(t)}=\mathfrak{t}_{\mathbf i'_{\mathfrak R}(t)}
\circ{\mu}^{\tau}_{\mathbf{i}\rightarrow\mathbf{i'}}\ ,$$
where, as in Subsection \ref{section:mutau}, the generalized cluster
${\mu}^{\tau}_{\mathbf{i}\rightarrow\mathbf{j}}$ denotes
\begin{itemize}
\item
the cluster transformation $\mu_{\mathbf i\rightarrow \mathbf{i'}}$ if $\delta$ is
a generalized $d$-move;
\item
the tropical mutation $\mu_{\binom{i_{\ell(w)}}{\lozenge^{\mathfrak{R}}_j}}$ if
$\delta$ is the $\tau$-move ${\mathfrak R}_j$.
\end{itemize}
As usual, we extend this definition to every $\mathbf i,\mathbf{j}\in D_{w_1}(v)$:
if $\mathbf i,\mathbf j$ are double words linked by a
sequence $\delta_{\mathbf i\to\mathbf j}^{\tau}$ of $d^{\tau}$-moves and
$\mathbf{i}\to\mathbf{i_1}\rightarrow\dots\rightarrow\mathbf{i_{n-1}}
\rightarrow\mathbf{j}$ is the associated chain of elements, we define the map
${\mu}_{[\mathbf{i}]_{\mathfrak R}\rightarrow[\mathbf{j}]_{\mathfrak R}}:
{\mathcal X}_{[\mathbf i]_{\mathfrak R}}\to{\mathcal X}_{[\mathbf j]_{\mathfrak R}}$ as the composition
$${\mu}_{[\mathbf{i_{n-1}}]_{\mathfrak R}\rightarrow[\mathbf j]_{\mathfrak R}}\circ \dots\circ
{\mu}_{[\mathbf{i}]_{\mathfrak R}\rightarrow[\mathbf{i_1}]_{\mathfrak R}}\ .$$
We will denote ${\mathcal X}_{w_1\leq v}$, or simply ${\mathcal X}_{w}$ if the
equality $v=w_0$ is satisfied, the cluster
$\mathcal X$-variety associated to the set $D_{w_1}(v)$.
Moreover, because of equation (\ref{equ:defJ}), the cluster $\mathcal X$-variety
${\mathcal X}_{w_1\leq v}$ can be Poisson stratified into the disjoint union
over $H$ of cluster $\mathcal X$-varieties ${\mathcal X}_{w_1\leq v}(t)$:
$${\mathcal X}_{w_1\leq v}=\displaystyle\bigsqcup_{t\in H}{\mathcal X}_{w_1\leq v}(t)\ .$$
The way to relate the cluster $\mathcal X$-variety ${\mathcal X}_{w_1\leq v}
(t)$ to the $W$-permutohedron is the following.
As a preliminary, we attach to every set $W(w_1,w_2)_v$ a cluster $\mathcal X$-variety,
denoted ${\mathcal X}_{(w_1,w_2)_v}$ and resulting of the following amalgamation.
$$\mathfrak{m}:{\mathcal X}^{{w_1^{\star}}^{-1},vw_1^{-1}}\times
{\mathcal X}^{w_2^{-1},vw_2^{-1}}\to{\mathcal X}_{(w_1,w_2)_v}\ .$$
The first lemma is directly deduced from the definition of the amalgamated
product, and the definition of the set  $W(w_1,w_2)_v$.

\begin{lemma}Let $w_1\leq v,w_2\in W$. The cluster $\mathcal X$-variety
${\mathcal X}_{(w_1,w_2)_v}$ contains the seed $\mathcal X$-torus
${\mathcal X}_{[\mathbf{i}]_{\mathfrak R}}$ associated to any double
word $\mathbf i\in W(w_1,w_2)_v$.
\end{lemma}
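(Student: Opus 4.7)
The plan is to unpack the definition of $W(w_1,w_2)_v$ and of the amalgamation product, and then track how charts propagate. First, by Definition~\ref{def:Wwords}, any $\mathbf i \in W(w_1,w_2)_v$ is linked by a sequence of mixed $2$-moves to a trivial $(w_1,w_2)_v$-word $\mathbf{i_1}\mathbf{i_2}$, with $\mathbf{i_1} \in R({w_1^{\star}}^{-1},vw_1^{-1})$ and $\mathbf{i_2} \in R({w_2}^{-1},w_0w_2^{-1})$. By the construction recalled in Section~\ref{section:ClusterG} (Theorem~\ref{thm:evG} and the attached commutative diagrams), the seed $\mathcal X$-tori ${\mathcal X}_{\mathbf{i_1}}$ and ${\mathcal X}_{\mathbf{i_2}}$ are respectively local charts of the cluster $\mathcal X$-varieties ${\mathcal X}^{{w_1^{\star}}^{-1},vw_1^{-1}}$ and ${\mathcal X}^{w_2^{-1},w_0w_2^{-1}}$.

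Second, the amalgamation procedure of Definition~\ref{def:amalg} commutes with cluster transformations and hence descends from seed $\mathcal X$-tori to cluster $\mathcal X$-varieties; consequently the amalgamated seed $\mathcal X$-torus ${\mathcal X}_{\mathbf{i_1}\mathbf{i_2}} = \mathfrak m({\mathcal X}_{\mathbf{i_1}}, {\mathcal X}_{\mathbf{i_2}})$ is a chart of ${\mathcal X}_{(w_1,w_2)_v}$. Then, since the mixed $2$-moves relating $\mathbf i$ to $\mathbf{i_1}\mathbf{i_2}$ are particular generalized $d$-moves, the cluster transformation $\mu_{\mathbf{i_1}\mathbf{i_2}\to\mathbf i}$ obtained by composing the elementary maps of formula~(\ref{equ:elemmut}) identifies ${\mathcal X}_{\mathbf{i_1}\mathbf{i_2}}$ with ${\mathcal X}_{\mathbf i}$ inside ${\mathcal X}_{(w_1,w_2)_v}$, so ${\mathcal X}_{\mathbf i}$ is itself a chart.

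Finally, applying the right truncation map $\mathfrak t_{\mathfrak R}$ and invoking Lemma~\ref{lemma:obstrunc} (which gives the compatibility~(\ref{equ:truncmut}) of truncation with any cluster transformation whenever the truncated set lies in $I_0$, as does $I_0^{\mathfrak R}(\mathbf i)$), we deduce that ${\mathcal X}_{[\mathbf i]_{\mathfrak R}}$ is itself a chart of the truncated version of ${\mathcal X}_{(w_1,w_2)_v}$, which is the content of the lemma. The only bookkeeping point that needs attention—really the only place where something could go wrong—is that amalgamation in Definition~\ref{def:amalg} glues along the set $L$ of shared outlets, and one must verify that the set of right outlets $I_0^{\mathfrak R}(\mathbf{i_1}\mathbf{i_2})$ of the amalgamated seed coincides with the right outlets entering the truncation $[\mathbf i]_{\mathfrak R}$ (so that the restriction and the amalgamation are performed in compatible directions). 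This check is immediate from the associativity of amalgamation and from the conventions on $k$-orders laid out in Section~\ref{section:graph}, so no genuine obstacle arises.
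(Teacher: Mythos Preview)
Your proposal is correct and follows exactly the approach the paper indicates: the paper itself states the lemma ``is directly deduced from the definition of the amalgamated product, and the definition of the set $W(w_1,w_2)_v$'', which is precisely what you unpack in your first two paragraphs. Your final paragraph about the right truncation map $\mathfrak t_{\mathfrak R}$ goes slightly beyond what the paper spells out---the paper treats the passage from $\mathcal X_{\mathbf i}$ to $\mathcal X_{[\mathbf i]_{\mathfrak R}}$ as implicit---but the extra care does no harm and the argument is sound.
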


\begin{lemma}\label{lemma:PWtruncX}
Replace each label
$w'\in W$ of a vertex of the $W$-permutohedron $P_W$
by the cluster $\mathcal X$-variety ${\mathcal X}^{{w'}^{-1},w_0{w'}^{-1}}$.
We then have the following properties.
\begin{itemize}
\item
The cluster $\mathcal X$-variety ${\mathcal X}_{w}$, indexed by any element
$w\in W$, contains the seed $\mathcal X$-torus
${\mathcal X}_{[\mathbf{i}]_{\mathfrak R}}$ associated to any double
word $\mathbf i\in D_w(w_0)$.
\item
The cluster $\mathcal X$-variety ${\mathcal X}_{w_1\leq v}$
is the image of the cluster $\mathcal X$-variety ${\mathcal X}_{(w_1,w')_v}$
by the right truncation map ${\mathbf t}_{\mathfrak R}$ for every $w'\in W$.
\item
For every $i\in[1,l]$, if two vertices respectively labeled by the cluster
$\mathcal X$-varieties
${\mathcal X}^{u,v},{\mathcal X}^{u',v'}\subset{\mathcal X}_{w_0}$ of $P_W$
are related by the edge $s_i\in W$, then there exist two double reduced words
$\mathbf i,\mathbf j\in D_{w_1}(v)$ such that the associated seed $\mathcal X$-tori
${\mathcal X}_{\mathbf i}$ and ${\mathcal X}_{\mathbf j}$ are related by the
right tropical mutation associated to $i$ and denoted $\mu_{\lozenge^{\mathfrak R}_i}$.
\end{itemize}
\end{lemma}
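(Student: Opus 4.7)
The proof naturally decomposes along the three bullet points, and I would treat them in the order they are listed, because the later items build on the combinatorial picture established by the earlier ones. The combinatorial backbone is provided by Lemma \ref{lemma:relPWR} and Lemma \ref{lemma:amalRtauD} on the double-word side, together with Lemma \ref{lemma:Wpermtrop} on the seed $\mathcal X$-torus side. All of them identify the variation inside $D_{w_1}(v)$ with the variation inside $R^{\tau}(w_0)$ after the first part $\mathbf{i_1} \in R({w_1^{\star}}^{-1}, v w_1^{-1})$ has been fixed.

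Part (i) is essentially a restatement of how ${\mathcal X}_w$ was just built: the cluster $\mathcal X$-variety ${\mathcal X}_w$ is glued from the right-truncated seed $\mathcal X$-tori ${\mathcal X}_{[\mathbf i]_{\mathfrak R}}$ indexed by $\mathbf i \in D_w(w_0)$ via the cluster transformations $\mu_{[\mathbf i]_{\mathfrak R}\to[\mathbf j]_{\mathfrak R}}$ attached to $d^{\tau}$-moves. Hence each ${\mathcal X}_{[\mathbf i]_{\mathfrak R}}$ appears as a local chart, which is the claim.

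For part (ii) I would invoke Lemma \ref{lemma:amalRtauD} to write every $\mathbf i \in D_{w_1}(v)$ as an amalgamation $\mathbf{i_1}\mathbf{i_2}$, up to mixed $2$-moves, with $\mathbf{i_1}$ fixed and $\mathbf{i_2} \in R^{\tau}(w_0)$; this identifies $W(w_1, w'^{-1})_v$ with $R({w'}^{-1}, w_0{w'}^{-1})$. The associated Dynkin quiver $\Gamma_{\mathfrak g}(\mathbf{i_1}\mathbf{i_2})$ places the right outlets of the amalgamation entirely at the right end, coming from $\mathbf{i_2}$, while all outlets of $\mathbf{i_1}$ that were on its right have been turned into interior vertices of the amalgamated quiver. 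Applying the truncation ${\mathbf t}_{\mathfrak R}$ to ${\mathcal X}_{(w_1,w')_v}$ therefore freezes precisely the right outlets of the amalgamated quiver, producing the seed $\mathcal X$-torus ${\mathcal X}_{[\mathbf i]_{\mathfrak R}}$ associated to $\mathbf i = \mathbf{i_1}\mathbf{i_2}$. Varying $w'$ across $W$ and $\mathbf{i_2}$ across $R(w'^{-1}, w_0 w'^{-1})$ thus covers the whole of ${\mathcal X}_{w_1\leq v}$, and the compatibility of the gluing maps comes from Lemma \ref{lemma:obstrunc}, which tells us that cluster transformations in the interior of the quiver descend unchanged to the truncated tori.

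For part (iii) I combine Lemma \ref{lemma:Wpermtrop} and Lemma \ref{lemma:relPWR}: the edge $s_i$ of $P_W$ between vertices $w$ and $w' = w s_i$ corresponds, via the bijection $R(w'^{-1}, w_0 w'^{-1}) \leftrightarrow W(w_1, w'^{-1})_v$, to double reduced words $\mathbf{i_2}, \mathbf{i_2}'$ related by the right $\tau$-move $\mathfrak R_i$. Since $\mathfrak R_i$ modifies only the last letter of $\mathbf{i_2}$, it extends verbatim to a right $\tau$-move on $\mathbf{i_1}\mathbf{i_2}$, and by Proposition \ref{prop:muttrop} this move is realised on seed $\mathcal X$-tori by the right tropical mutation $\mu_{\lozenge^{\mathfrak R}_i}$. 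Remark \ref{rem:tropamal} guarantees that this tropical mutation commutes with the amalgamation $\mathfrak m$, so the same formula describes the edge either at the level of ${\mathcal X}^{w'^{-1},w_0 w'^{-1}}$ or at the level of the amalgamated cluster $\mathcal X$-variety.

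The main obstacle is the subtlety in part (ii): the very tropical mutations $\mu_{\lozenge^{\mathfrak R}_i}$ that connect the different ${\mathcal X}_{(w_1,w')_v}$'s act non-trivially on cluster variables attached to right outlets, i.e.\ exactly on the coordinates that are being frozen by $\mathbf t_{\mathfrak R}$. One must therefore carefully track that, after passing to the truncated cluster $\mathcal X$-variety, these tropical mutations descend to the well-defined cluster transformations $\mu_{[\mathbf i]_{\mathfrak R}\to[\mathbf j]_{\mathfrak R}}$ used to define ${\mathcal X}_{w_1\leq v}$, and that the resulting variety is indeed independent of the starting vertex $w'\in W$. The bookkeeping is manageable because, once $t\in\mathcal X_{J}^{0}$ is fixed, the restriction of $\mu_{\lozenge^{\mathfrak R}_i}$ to the fiber ${\mathcal X}_{[\mathbf i]_{\mathfrak R}}(t)$ acts only on interior variables, and hence matches the truncated cluster transformation built in Definition \ref{def:truncatedtorus}.
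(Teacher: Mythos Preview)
Your proposal is correct and follows essentially the same approach as the paper: both rely on Lemma~\ref{lemma:Wpermtrop}, the compatibility of right tropical mutations and right truncation with left amalgamation (Remark~\ref{rem:tropamal}), and the observation that right truncation collapses the family of tori connected by right tropical mutations into a single truncated torus. The paper's proof is terser---it simply asserts that seed $\mathcal X$-tori related by a right tropical mutation become identical after right truncation, and then transports Lemma~\ref{lemma:Wpermtrop} along the amalgamation---whereas you spell out more carefully why this collapse occurs and explicitly flag the subtlety about tropical mutations acting on the frozen right-outlet variables; your resolution of that ``obstacle'' is exactly the content the paper hides behind ``it is clear that\ldots''.
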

\begin{proof}It is clear that the seed
$\mathcal X$-tori obtained by right truncation from seed $\mathcal X$-tori
related by a right tropical mutation are the same. Therefore, the cluster
$\mathcal X$-varieties included in the family of cluster $\mathcal X$-varieties
and described by the $W$-permutohedron in Section \ref{section:taucombi} are sent
to the same truncated cluster $\mathcal X$-variety. Therefore, it suffices
to apply Lemma \ref{lemma:Wpermtrop}, because right tropical
mutations and right truncations commute with a left amalgamation; indeed, we change
each cluster $\mathcal X$-variety label ${\mathcal X}^{u,v}$ of Lemma \ref{lemma:Wpermtrop}
into the amalgamated cluster $\mathcal X$-variety ${\mathcal X}_{(w_1,w_2)_v}$.
\end{proof}

This result is illustrated by Figure \ref{fig:commutativeweakordertau+} in the case
$\mathfrak{g}=A_2$.
\begin{figure}[htbp]
\begin{center}
\setlength{\unitlength}{1.5pt}
\qquad\qquad\qquad\qquad
\begin{picture}(20,110)(0,-80)
\put(-29,-63.5){\line(2,3){17.7}}
\put(-10,-33){\line(0,1){26}}
\put(-11,-3.5){\line(-2,3){17.7}}
\put(-31,-63.5){\line(-2,3){17.7}}
\put(-50,-33){\line(0,1){26}}
\put(-49,-3.5){\line(2,3){17.7}}
\thicklines
\put(-30,25){\circle{4}}
\put(-34,30){${\mathcal X}_{w_1,1}$}
\put(-50,-5){\circle{4}}
\put(-70,-10){${\mathcal X}_{w_1,s_1}$}
\put(-10,-5){\circle{4}}
\put(-5,-10){${\mathcal X}_{w_1,s_2}$}
\put(-50,-35){\circle{4}}
\put(-75,-40){${\mathcal X}_{w_1,s_1s_2}$}
\put(-10,-35){\circle{4}}
\put(-5,-40){${\mathcal X}_{w_1,s_2s_1}$}
\put(-30,-65){\circle{4}}
\put(-34,-75){${\mathcal X}_{w_1,w_0}$}
\put(-40,-22){${\mathcal X}_{w_1}(t)$}
\put(-30,22){\line(0,-1){35}}
\put(-30,22){\vector(0,-1){35}}
\put(-28,3){\footnotesize $\mathfrak{t}_{{\mathfrak R}(t)}$}
\put(-47,-7){\line(1,-1){7}}
\put(-47,-7){\vector(1,-1){7}}
\put(-47,-6){\footnotesize $\mathfrak{t}_{{\mathfrak R}(t)}$}
\put(-13,-7){\line(-1,-1){7}}
\put(-13,-7){\vector(-1,-1){7}}
\put(-23,-6){\footnotesize $\mathfrak{t}_{{\mathfrak R}(t)}$}
\put(-47,-33){\line(1,1){7}}
\put(-47,-33){\vector(1,1){7}}
\put(-46,-37){\footnotesize $\mathfrak{t}_{{\mathfrak R}(t)}$}
\put(-13,-33){\line(-1,1){7}}
\put(-13,-33){\vector(-1,1){7}}
\put(-23,-37){\footnotesize $\mathfrak{t}_{{\mathfrak R}(t)}$}
\put(-30,-62){\line(0,1){35}}
\put(-30,-62){\vector(0,1){35}}
\put(-28,-43){\footnotesize $\mathfrak{t}_{{\mathfrak R}(t)}$}
\put(-20,13){\footnotesize $\mu_{\lozenge^{\mathfrak R}_2}$}
\put(-50,15){\footnotesize $\mu_{\lozenge^{\mathfrak R}_1}$}
\put(-63,-25){\footnotesize $\mu_{\lozenge^{\mathfrak R}_2}$}
\put(-8,-25){\footnotesize $\mu_{\lozenge^{\mathfrak R}_1}$}
\put(-50,-55){\footnotesize $\mu_{\lozenge^{\mathfrak R}_1}$}
\put(-20,-53){\footnotesize $\mu_{\lozenge^{\mathfrak R}_2}$}
\end{picture}
\end{center}
\vspace{-.1in}
\caption{Truncation maps and the cluster $\mathcal X$-variety ${\mathcal X}_{w_1}(t)$
when $\mathfrak{g}=A_2$}
\label{fig:commutativeweakordertau+}
\end{figure}
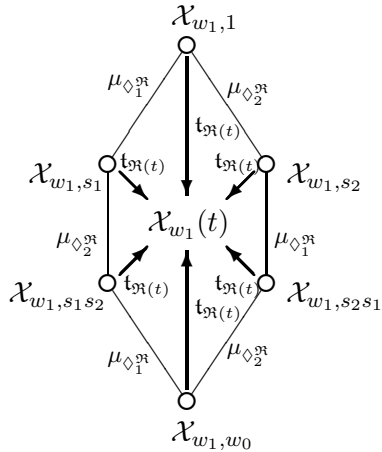

\subsubsection{Twisted evaluation maps relating ${\mathcal X}_{w_1\leq v}(t)$
to $(F_{t,v},\pi_*)$} We finally use twisted evaluation maps to get, for every
$v\in W$, a family of truncated cluster $\mathcal X$-varieties
${\mathcal X}_{w_1\leq v}(t)$, $w_1\leq v\in W$,
parameterizing the Poisson submanifold $(F_{t,w_0v^{-1}},\pi_*)$.
In particular, the family of truncated cluster $\mathcal X$-varieties
${\mathcal X}_{w_1\leq w_0}$ (also denoted ${\mathcal X}_{w_1}$), $w_1\in W$,
will parameterize the dual Poisson-Lie group $(BB_-,\pi_*)$.

%\begin{lemma}\label{lemma:twist} Let $u\in W$ and $\mathbf j\in R(u,1)$.
%For every $\mathbf y\in{\mathcal X}_{\mathbf{j}}$, we have:
%$$\begin{array}{ccc}
%{[(\ev_{\mathbf j}(\mathbf y))^{\theta}\widehat{u^{-1}}]_{\leq 0}^{\theta}}
%=\ev_{\mathbf{j^{\square}}}\circ\zeta_{\mathbf j}(\mathbf y)\ .
%\end{array}$$
%\end{lemma}
%\begin{proof}It suffices to remark that we have the identity $\mu^{\tau}_{\mathbf j
%\to\mathbf j^{\square}}=\zeta_{\mathbf j}$. The lemma is then directly deduced from
%Proposition \ref{prop:twist}, Proposition \ref{prop:chiral} and Proposition
%\ref{prop:twist-}.
%\end{proof}

\begin{prop}\label{lemma:dtaumoves} For every $t\in H$ and $w_1\leq v\in W$ and
every double words $\mathbf i,\mathbf{i'}\in D_{w_1}(v)$, the equality
$\widehat{\ev}_{\mathbf i}=\widehat{\ev}_{\mathbf{i'}}\circ\mu_{[\mathbf i]_{\mathfrak R}
\rightarrow[\mathbf{i'}]_{\mathfrak R}}$ is satisfied.
\end{prop}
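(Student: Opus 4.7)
The plan is to reduce the claim to the case of a single $d^\tau$-move and then split into two cases according to the nature of that move. By construction $\mu_{[\mathbf{i}]_{\mathfrak{R}}\to[\mathbf{i}']_{\mathfrak{R}}}$ is the composition of the elementary truncated transformations attached to the successive moves in some $d^\tau$-chain $\mathbf{i}\to\mathbf{i}_1\to\cdots\to\mathbf{i}'$; combined with Lemma~\ref{lemma:obstrunc} (the right-outlet set $I_0^{\mathfrak{R}}$ contains all the truncation directions and is preserved by the mutations and the right tropical mutations involved), it suffices to establish the identity $\widehat{\ev}_{\mathbf{i}}=\widehat{\ev}_{\mathbf{i}'}\circ\mu_{[\mathbf{i}]_{\mathfrak{R}}\to[\mathbf{i}']_{\mathfrak{R}}}$ when $\mathbf{i}'$ differs from $\mathbf{i}$ by a single elementary move.

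If $\delta:\mathbf{i}\to\mathbf{i}'$ is a generalized $d$-move, fix a trivial representative $\mathbf{i}=\mathbf{i}_1\mathbf{i}_2$ with $\mathbf{i}_1\in R({w_1^{\star}}^{-1},vw_1^{-1})$ and $\mathbf{i}_2\in R(w_2^{-1},w_0w_2^{-1})$, allowing prior mixed $2$-moves (themselves instances of this case) so that $\delta$ acts inside $\mathbf{i}_1$, inside $\mathbf{i}_2$, or is a mixed $2$-move crossing the boundary (which only repartitions the amalgamation). Applying Theorem~\ref{fg} to $\mathbf{i}_1$ and Corollary~\ref{cor:evredmut} to $\mathbf{i}_2$, one gets the invariance of $\ev_{\mathbf{i}_1}(\mathbf{x}_{(1)})$ and $\ev^{\red}_{\mathbf{i}_2}(\mathbf{x}_{(2)})$, hence of $\ev^{\mathfrak{R}}_{\mathbf{i}}$ and $\ev^{\mathfrak{L}}_{\mathbf{i}}$ via (\ref{equ:evRL})--(\ref{equ:mutsidedev}); the Cartan factor $x_{\mathbf{1}}(\mathbf{x}(\mathfrak{R}))\widehat{w_0}$ in (\ref{equ:evhat}) depends only on right-outlet coordinates, which are untouched by a cluster mutation in any non-outlet direction.

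For a right $\tau$-move $\delta=\mathfrak{R}_j$ the elementary map is the right tropical mutation $\mu_{\lozenge^{\mathfrak{R}}_j}$. By Remark~\ref{rem:tropamal} it acts only on the $\mathbf{i}_2$-factor of a trivial decomposition, and by the previous case performed inside $\mathbf{i}_2$ one may assume $\mathbf{i}_2$ is $\mathfrak{R}$-adapted to $w_0w_2^{-1}$, so that the letter flipped by $\mathfrak{R}_j$ is its last one. The first identity of Proposition~\ref{equ:trop} applied to $\mathbf{i}_2$ with $v=w_0w_2^{-1}$ (so $\widehat{v^{-1}}=\widehat{w_2w_0}$) yields
\[
[\ev_{\mathbf{i}_2}(\mathbf{x}_{(2)})\widehat{w_2w_0}]_{\leq 0}=[\ev_{\mathfrak{R}_j(\mathbf{i}_2)}(\mu_{\lozenge^{\mathfrak{R}}_j}(\mathbf{x}_{(2)}))\widehat{s_jw_2w_0}]_{\leq 0}.
\]
Writing $\ev_{\mathbf{i}_2}=\ev^{\red}_{\mathbf{i}_2}\cdot C(\mathbf{x}_{(2)})$ with $C$ the right-outlet Cartan factor, and observing that $\mu_{\lozenge^{\mathfrak{R}}_j}$ inverts exactly the single coordinate $x_{\binom{j}{N^j(\mathbf{i})}}$, this inversion in $C$ is compensated by the shift $\widehat{w_2w_0}\to\widehat{s_jw_2w_0}$ together with the corresponding change in the middle twist factor $x_{\mathbf{1}}(\mathbf{x}(\mathfrak{R}))$ of (\ref{equ:evhat}). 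This gives the identity for $\ev^{\mathfrak{R}}$; the one for $\ev^{\mathfrak{L}}$ follows by the same analysis applied to $(\ev^{\mathfrak{R}}_{\mathbf{i}_2})^{\theta}$, using Lemma~\ref{lemma:twistinvol} for the involutive nature of $b\mapsto[b\widehat{w_0}]_{\leq 0}^{\theta}$. Combining both in (\ref{equ:evhat}) produces $\widehat{\ev}_{\mathbf{i}}=\widehat{\ev}_{\mathbf{i}'}\circ\mu_{\lozenge^{\mathfrak{R}}_j}$.

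The principal obstacle is this $\tau$-move step: the precise bookkeeping of how inverting the single right-outlet coordinate $x_{\binom{j}{N^j(\mathbf{i})}}$ propagates simultaneously through the Gauss truncation $[\,\cdot\,\widehat{w_2w_0}]_{\leq 0}$, through the $\theta$-truncation appearing in $\ev^{\mathfrak{L}}$, and through the central $\widehat{w_0}$-twist in (\ref{equ:evhat}), and verifying that these effects cancel coherently under the Weyl shift $w_2\to s_jw_2$ so that both sides of the desired equality define the same element of $G$.
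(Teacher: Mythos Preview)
Your reduction to a single $d^\tau$-move and your treatment of the generalized $d$-move case are sound, matching the paper's use of Theorem~\ref{fg} and Corollary~\ref{cor:evredmut}. The gap is in the $\tau$-move step, and you essentially flag it yourself in the final paragraph without closing it.

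The compensation argument you sketch does not work as stated. On the truncated torus $\mathcal{X}_{[\mathbf{i}]_{\mathfrak R}}(t)$ the right-outlet coordinates are \emph{fixed} by $t$ before and after applying $\mu_{[\mathbf{i}]_{\mathfrak R}\to[\mathbf{i}']_{\mathfrak R}}$; hence the middle factor $x_{\mathbf 1}(\mathbf{x}(\mathfrak R))\widehat{w_0}$ in~(\ref{equ:evhat}) is literally unchanged, and cannot absorb the inversion you invoke. Proposition~\ref{equ:trop} does give the invariance of $[\ev_{\mathbf{i}_2}(\mathbf{x}_{(2)})\widehat{w_2w_0}]_{\leq 0}$ under the \emph{untruncated} tropical mutation, but the twisted evaluation~(\ref{equ:evRL}) is built from $\ev^{\red}_{\mathbf{i}_2}$, not $\ev_{\mathbf{i}_2}$, and passing between the two while simultaneously replacing $\mu^\tau$ by its truncated counterpart is exactly the bookkeeping you have not carried out.

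The paper sidesteps this entirely. It introduces auxiliary maps $\overline{\ev}^{\mathfrak R}_{\mathbf i},\overline{\ev}^{\mathfrak L}_{\mathbf i}$ built from the \emph{full} evaluation $\ev_{\mathbf{i}_2}$ (not the reduced one), and an associated $\overline{\ev}_{\mathbf i}(\mathbf x,\mathbf t)$ carrying an independent Cartan parameter $\mathbf t$. Lemma~\ref{lemma:taumut} then gives $\overline{\ev}_{\mathbf i}=\overline{\ev}_{\mathbf{i}'}(\mu^\tau_{\mathbf i\to\mathbf{i}'}(\cdot),\mathbf t)$ directly. The key observation is that $\overline{\ev}_{\mathbf i}(\mathbf x,\mathbf t)$ is independent of the right-outlet coordinates of $\mathbf x$: the extra Cartan tail contributed by using $\ev_{\mathbf{i}_2}$ instead of $\ev^{\red}_{\mathbf{i}_2}$ appears identically in $\overline{\ev}^{\mathfrak L}$ and $\overline{\ev}^{\mathfrak R}$ and cancels in $\overline{\ev}^{\mathfrak L}\cdot(\ldots)\cdot(\overline{\ev}^{\mathfrak R})^{-1}$. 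Since right tropical mutations affect only right-outlet variables, one may replace $\mu^\tau$ by $\mu_{[\mathbf i]_{\mathfrak R}\to[\mathbf{i}']_{\mathfrak R}}$ for free. Finally $\widehat{\ev}_{\mathbf i}(\mathbf x)=\overline{\ev}_{\mathbf i}(\mathbf x,\mathbf x(\mathfrak R))$, and $\mathbf x(\mathfrak R)$ is preserved by the truncated map. This decoupling of the Cartan parameter is the missing idea in your argument.
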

\begin{proof}We introduce relatives of the left and right evaluations of (\ref{equ:evRL})
 in the following way. Let $w_1\leq v,w_2\in W$ and $\mathfrak s$ be a $\mathcal X$-split associated
to the $(w_1,w_2)_v$-decomposition $\mathbf i\mapsto(\mathbf{i_1},\mathbf{i_2})$.
We define the maps $\overline{\ev}^{\mathfrak L}_{\mathbf i},\overline{\ev}^{\mathfrak R}_{\mathbf i}:
{\mathcal X}_{\mathbf i}\rightarrow G$ and $\overline{\ev}_{\mathbf i}:
{\mathcal X}_{\mathbf{i}}\times {\mathcal X}_{\mathbf{1}}\rightarrow G$ by
the following formulas: we start by a definition on trivial double words
$$
\overline{\ev}^{\mathfrak R}_{\mathbf i}(\mathbf x)=\ev_{\mathbf{i_1}}(\mathbf{x}_{(1)})
[\ev_{\mathbf{i_2}}(\mathbf{x}_{(2)})\widehat{w_2w_0}]_{\leq 0}\ ,\mbox{and }\
\overline{\ev}^{\mathfrak L}_{\mathbf i}(\mathbf x)=\ev_{\mathbf{i_1}}(\mathbf{x}_{(1)})
[(\overline{\ev}^{\mathfrak R}_{\mathbf{i_2}}(\mathbf{x}_{(2)}))^{\theta}\widehat{w_0}]_{\leq 0}^{\theta}\ ,$$
before to use the same idea as in (\ref{equ:mutsidedev}) to extend this definition for every $\mathbf i\in D_{w_1}(v)$ by setting
$\overline{\ev}^{\mathfrak L}_{\mathbf i}=\overline{\ev}^{\mathfrak L}_{\mathbf{i_1}\mathbf{i_2}}
\circ\mu_{\mathbf i\to\mathbf{i_1}\mathbf{i_2}}$ {and}
$\overline{\ev}^{\mathfrak R}_{\mathbf i}=\overline{\ev}^{\mathfrak R}_{\mathbf{i_1}\mathbf{i_2}}
\circ\mu_{\mathbf i\to\mathbf{i_1}\mathbf{i_2}}$, and we introduce finally
$$
\overline{\ev}_{\mathbf i}(\mathbf{x},\mathbf{t})
\mapsto\overline{\ev}_{\mathbf{i}}^{\mathfrak L}(\mathbf{x})\ev_{\mathbf 1}
({\mathbf t})\widehat{w_0}{\overline{\ev}_{\mathbf{i}}^{\mathfrak R}(\mathbf{x})}^{-1}\ .
$$
Lemma \ref{lemma:taumut} with the extension of the kind (\ref{equ:mutsidedev}) just described
then implies that the following equalities are satisfied for every double words
$\mathbf i,\mathbf{i'}\in D_{w_1}(v)$.
$$\begin{array}{ccccc}
\overline{\ev}^{\mathfrak R}_{\mathbf i}=\overline{\ev}^{\mathfrak{R}}_{\mathbf{i'}}\circ
\mu_{\mathbf i\rightarrow\mathbf{i'}}^{\tau}&,&
\overline{\ev}^{\mathfrak L}_{\mathbf i}=\overline{\ev}^{\mathfrak{L}}_{\mathbf{i'}}\circ
\mu_{\mathbf i\rightarrow\mathbf{i'}}^{\tau}&\mbox{and}&
\overline{\ev}_{\mathbf i}=\overline{\ev}_{\mathbf{i'}}
(\mu_{\mathbf i\rightarrow\mathbf{i'}}^{\tau}(\mathbf{x}),\mathbf{t})\ .
\end{array}$$
Moreover, it is easy to see that the element $\overline{\ev}_{\mathbf i}
(\mathbf{x},\mathbf{t})$ doesn't depend on $x_j$ when $j\in I_0^{\mathfrak R}(\mathbf i)$.
Now, let us notice that tropical mutations associated to
right $\tau$-moves affect only the variables associated to right outlets.
Therefore, we get the equality
$$\overline{\ev}_{\mathbf i}(\mathbf x,\mathbf t)=\overline{\ev}_{\mathbf{i'}}
(\mu_{[\mathbf i]_{\mathfrak R}\rightarrow[\mathbf{i'}]_{\mathfrak R}}(\mathbf x),\mathbf t)\ .$$
Finally, it is clear that the relation $\widehat{\ev}_{\mathbf i}(\mathbf x)
=\overline{\ev}_{\mathbf i}(\mathbf x,\mathbf x(\mathfrak R))$ is satisfied for every
$\mathbf i\in D_{w_1}(v)$. The proposition is then proved, because, by equation (\ref{equ:defJ}),
the cluster variables belonging to $\mathbf x(\mathfrak R)$ are invariant by mutations.
\end{proof}

For every $t\in H$ and every $w_1\leq v\in W$, the cluster $\mathcal X$-variety
${\mathcal X}_{w_1\leq v}(t)$, has been therefore attached to the Poisson submanifold
$(F_{t,v},\pi_*)$.
Let us denote
${\mathcal X}_{[.]_{\mathfrak R}}(t)$ the application which associates to any double
word $\mathbf i$ the corresponding seed $\mathcal X$-torus
${\mathcal X}_{[\mathbf i]_{\mathfrak R}}(t)$. Applied to a seed $\mathbf i$, the
application  ${\mathcal X}_{[.]_{\mathfrak R}}(t)$
is therefore the composition of the map ${\mathcal X}_{.}$ and the truncation map
$\mathfrak{t}_{\mathbf i_{\mathfrak R}(t)}$. We then sum-up Theorem \ref{thm:ev*1}
and Proposition \ref{lemma:dtaumoves} by (abusively) saying that there
exists a Poisson map
$\widehat{\ev}_{w_1\leq v}:{\mathcal X}_{w_1\leq v}\to(F_{t,v},\pi_{*})\ .$
We thus get the following commutative diagram, which generalize Figure
\ref{fig:clustervarietydual1}. (The link between these truncated cluster varieties
${\mathcal X}_{w_1\leq v}(t)$, as well as the way to link the different
twisted evaluations, is given in the next section via the
introduction of saltation maps.)

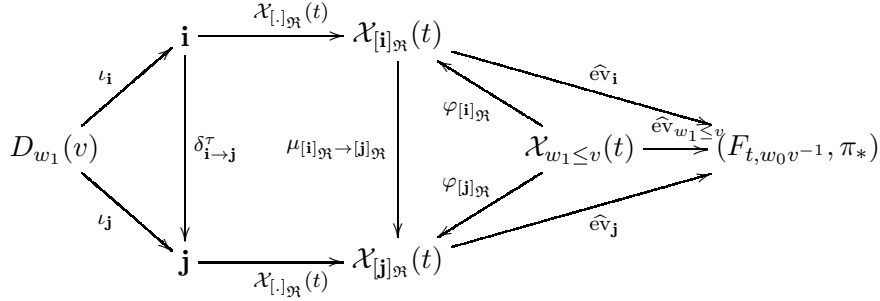
\begin{figure}[htbp]
\begin{center}
\setlength{\unitlength}{1.5pt}
\qquad\qquad
\xymatrix{
&\mathbf i\ar@/_0pc/[dd]^{\delta_{\mathbf i\to\mathbf j}^{\tau}}
\ar@/_0pc/[rr]^{\mathcal{X}_{[.]_{\mathfrak R}}(t)}&&{\mathcal X}_{[\mathbf i]_{\mathfrak R}}(t)
\ar@/_0pc/[dd]_{\mu_{[\mathbf i]_{\mathfrak R}\to[\mathbf j]_{\mathfrak R}}}
\ar@/^0pc/[rrd]^{\widehat{\ev}_{\mathbf i}}&&\\
D_{w_1}(v)\ar@/^0pc/[ru]^{\iota_{\mathbf i}}\ar@/^0pc/[rd]_{\iota_{\mathbf j}}
&&&&{{\mathcal X}_{w_1\leq v}(t)}\ar@/^0pc/[lu]^{\varphi_{[\mathbf i]_{\mathfrak R}}}\ar@/^0pc/[ld]_{\varphi_{[\mathbf j]_{\mathfrak R}}}
\ar@/^0pc/[r]^{\widehat{\ev}_{w_1\leq v}}&(F_{t,w_0v^{-1}},\pi_*)\\
&\mathbf j\ar@/_0pc/[rr]_{\mathcal{X}_{[.]_{\mathfrak R}}(t)}&&{\mathcal X}_{[\mathbf {j}]_{\mathfrak R}}(t)
\ar@/^0pc/[rru]_{\widehat{\ev}_{\mathbf j}}&&
}
\end{center}
\vspace{-.1in}
\caption{The cluster $\mathcal X$-variety ${\mathcal X}_{w_1\leq v}(t)$ associated
to $(F_{t,w_0v^{-1}},\pi_*)$}
\label{fig:clustervariety2}
\end{figure}

\section{Saltations and cluster $\mathcal X$-varieties for $(BB_-,\pi_*)$}
\label{section:Loop}
We relate the twisted evaluations
$\widehat{\ev}_{\mathbf i}:{\mathcal X}_{[\mathbf i]_{\mathfrak R}}\to (G,\pi_*)$ of
Section \ref{section:twistedev} by composition of cluster transformations with new birational
Poisson isomorphisms called saltations. As a corollary, we get a parametrization of
the dual Poisson Lie-group $(BB_-,\pi_*)$ by a family of cluster
$\mathcal X$-varieties; moreover, the cluster $\mathcal X$-varieties of this
family are related by saltations indexed by the $1$-skeleton
of the $W$-permutohedron $P_W$.

\subsection{Various moves on the set $D(v)$}\label{subsection:dualmove}
We sharpen the description of the set $D(v)$ for every $v\in W$. To do that,
we enlarge the combinatorics on double words by introducing \emph{dual moves}
and mix them with the $d^{\tau}$-moves described in Section \ref{section:taucombi}.
%\subsubsection{Right $\tau^{\star}$-moves}
We start by adding a
variation of $\tau$-moves, involving the involution $i\mapsto i^{\star}$
on the set $[1,l]$
% given by (\ref{equ:def*})
.

\begin{definition} Let $\mathbf i=i_1\dots i_n$ be a double word. We denote
${\mathfrak R}^{\star}_{|i_n|}(\mathbf i)$, or simply ${\mathfrak R}^{\star}(\mathbf i)$
when no confusion occurs, the double word obtained by changing the last letter
${i}$ of $\mathbf i$ into~$\overline i^{\star}$:
$$\begin{array}{rcl}
{\mathfrak R}^{\star}_{i_n}(\mathbf i)=i_1\dots i_{n-1}\overline{i_n}^{\star}
&\mbox{if}&i_n\in[1,l]\ ;\\
\\
{\mathfrak R}^{\star}_{\overline{i_n^{\star}}}(\mathbf i)=i_1\dots i_{n-1}
\overline{i_n}^{\star}&\mbox{if}&i_n\in[\overline{1},\overline{l}]\ .
\end{array}
$$
The map $\mathbf i\mapsto {\mathfrak R}^{\star}(\mathbf i)$ is called a
\emph{right} \emph{$\tau^{\star}$-move} on $\mathbf i$.
Because the maps $i\mapsto i^{\star}$ and $i\mapsto\overline{i}$ are
involutions, it is clear that every map $\mathfrak{R}_j^{\star}$ is an involution
on the set of double words.
A \emph{$d^{\tau^{\star}}$-move} is then given by one
of these transformations:
\begin{itemize}
\item
a generalized $d$-move;
\item
a $\tau^{\star}$-move.
\end{itemize}
For every $w\in W$, let $R^{\tau^{\star}}(w)$ be the set of
all the double words obtained from a double reduced word $\mathbf i\in R(1,w)$ by
composition of $d^{\tau^{\star}}$-moves. (The choice of the double word
$\mathbf i$ doesn't matter, because of Theorem \ref{thm:Tits}.)
\end{definition}

\begin{ex}\label{ex:A2taustar}
When $\mathfrak{g}=A_2$, the action of
$\tau^{\star}$-move on the double reduced words $\mathbf{i_1}=121$,
$\mathbf{i_2}=\overline{2}12$ and $\mathbf{i_3}=\overline{2}\overline{1}1$
is given by:
$$\begin{array}{cccc}
{\mathfrak R}^{\star}_1(\mathbf{i_1})=12\overline{2},&
{\mathfrak R}^{\star}_2(\mathbf{i_2})=\overline{2}1\overline{1},
&\mbox{and}&{\mathfrak R}^{\star}_1(\mathbf{i_3})=\overline{2}
\overline{1}\overline{2}\ .
\end{array}$$
\end{ex}

As every set $R^{\tau}(w)$, the set $R^{\tau^{\star}}(w)$, $w\in W$, is
easily described via the $W$-permutohedron. Indeed, we have the following
analog of Lemma \ref{lemma:PW} for the set $R^{\tau^{\star}}(w)$.
It is proved in the same way.

\begin{lemma}\label{lemma:PWRstar}The following statements are satisfied
for every $w\in W$.
\begin{itemize}
\item
The set $R^{\tau^{\star}}(w)$ is the disjoint union of the
labels $R({w'^{\star}}^{-1},w{w'}^{-1})$ associated to the vertices $w'$
of the $W$-permutohedron $P_W$ that are crossed or reached by a
$\uparrow_w$-path.
\item
Two labeled vertices $R(u,v)\subset R^{\tau^{\star}}(w)$ and $R(u',v')
\subset R^{\tau^{\star}}(w)$ of $P_W$ are related by the edge $s_i$ if and only
if there exist double reduced words $\mathbf i\in R(u,v)$ and
$\mathbf j\in R(u',v')$ such that $\mathbf j={\mathfrak R}^{\star}(\mathbf i)$.
\end{itemize}
\end{lemma}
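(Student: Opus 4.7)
The plan is to mirror the proof of Lemma \ref{lemma:PW} exactly, carrying along a single combinatorial twist coming from the Dynkin involution $i\mapsto i^{\star}$. Since $i\mapsto i^{\star}$ is an automorphism of the Dynkin diagram, it extends to an automorphism of $W$ satisfying $(s_{i_1}\cdots s_{i_k})^{\star}=s_{i_1^{\star}}\cdots s_{i_k^{\star}}$ for every reduced expression, and in particular it preserves lengths and the right weak order. This is the only new input we really need.

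First I would establish the analog of equation (\ref{equ:Rwdoublered}), namely the disjoint union
\[
R^{\tau^{\star}}(w)=\bigsqcup_{w'\leq w}R({w'^{\star}}^{-1},w{w'}^{-1}).
\]
The proof is by induction on the number of $\tau^{\star}$-moves, starting from a positive word in $R(1,w)$ and using Theorem~\ref{thm:Tits} to close up under generalized $d$-moves. The only point to check is that a right $\tau^{\star}$-move applied to an element of $R({w'^{\star}}^{-1},w{w'}^{-1})$ produces an element of $R({(w's_j)^{\star}}^{-1},w(w's_j)^{-1})$ whenever the last letter is a positive $j$ with $w'<w's_j\leq w$ in the right weak order; this is immediate from the definition $\mathfrak{R}^{\star}_j(\dots j)=\dots\overline{j^{\star}}$, together with the identity $(w's_j)^{\star}=w'^{\star}s_{j^{\star}}$ which tells us the extra negative letter $\overline{j^{\star}}$ is exactly the one needed to extend the negative reduced word from ${w'^{\star}}^{-1}$ to $(w's_j)^{\star^{-1}}=s_{j^{\star}}{w'^{\star}}^{-1}$. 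The involutivity of $\mathfrak{R}^{\star}_j$ handles the reverse direction, and disjointness is inherited from the disjointness of the sets $R(u,v)$ for distinct pairs $(u,v)$.

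For the first bullet, I would then invoke the same bijection used in Lemma~\ref{lemma:PW}: elements $w'\leq w$ are in bijection with the vertices of $P_W$ that are crossed or reached by a $\uparrow_w$-path, and relabeling $w'\leftrightarrow R({w'^{\star}}^{-1},w{w'}^{-1})$ in the decomposition above gives the statement. For the second bullet, if two such vertices are connected by the edge labeled $s_j$, then $w_2=w_1s_j$ (say with $w_1<w_2$), and the computation carried out in the inductive step produces explicit double reduced words $\mathbf i\in R({w_1^{\star}}^{-1},w{w_1}^{-1})$ and $\mathbf j=\mathfrak{R}^{\star}_j(\mathbf i)\in R({w_2^{\star}}^{-1},w{w_2}^{-1})$; the converse direction uses involutivity of $\mathfrak{R}^{\star}_j$ together with Lemma~\ref{lemma:p65}-type bookkeeping to identify $u,v$ uniquely in terms of the adjacent pair on $P_W$.

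The main obstacle is purely notational: keeping straight that the $\star$ appears on one side only (on $w_1$, not on $w_1^{-1}$ multiplied by $w$) because $\mathfrak{R}^{\star}$ acts as $i\mapsto\overline{i^{\star}}$ rather than $i\mapsto\overline i$, so the positive letters assemble into the element $w{w'}^{-1}$ untouched, while the accumulated negative letters assemble into ${w'^{\star}}^{-1}$. Everything else is a verbatim transcription of the argument for Lemma~\ref{lemma:PW}.
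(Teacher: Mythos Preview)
Your proposal is correct and follows exactly the approach the paper takes: the paper's own proof is the single sentence ``It is proved in the same way'' (i.e., the same way as Lemma~\ref{lemma:PW}), and you have spelled out precisely that transcription, correctly isolating the one new bookkeeping point, namely that under $\mathfrak R^{\star}_j$ the positive letters still assemble into $w{w'}^{-1}$ while the negative letters assemble into ${w'^{\star}}^{-1}$ because $(w's_j)^{\star}=w'^{\star}s_{j^{\star}}$.
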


Here is the variation of Example \ref{ex:WRstable} which has described the
sets $R^{\tau}(s_2s_1)$ and $R^{\tau}(w_0)$ when $\mathfrak g=A_2$:
Figure \ref{fig:Wpermutohedre} is now replaced by Figure \ref{fig:Wpermutohedre*}.
\begin{figure}[htbp]
\begin{center}
\setlength{\unitlength}{1.5pt}
\qquad\qquad\qquad\qquad\qquad
\begin{picture}(20,110)(0,-80)
\put(-31,-63.5){\line(-2,3){17.7}}
\put(-50,-33){\line(0,1){26}}
\thicklines
\put(-50,-5){\circle{4}}
\put(-87,-10){$R({s_2s_1,1})$}
\put(-50,-35){\circle{4}}
\put(-83,-40){$R({s_2,s_2})$}
\put(-30,-65){\circle{4}}
\put(-39,-75){$R(1,s_2s_1)$}
\put(-47,-55){\footnotesize ${\mathfrak R}^{\star}_1$}
\put(-60,-25){\footnotesize ${\mathfrak R}^{\star}_2$}
\end{picture}
\qquad\qquad\qquad\qquad\qquad
\begin{picture}(20,110)(0,-80)
\put(-29,-63.5){\line(2,3){17.7}}
\put(-10,-33){\line(0,1){26}}
\put(-11,-3.5){\line(-2,3){17.7}}
\put(-31,-63.5){\line(-2,3){17.7}}
\put(-50,-33){\line(0,1){26}}
\put(-49,-3.5){\line(2,3){17.7}}
\thicklines
\put(-30,25){\circle{4}}
\put(-39,30){$R({w_0,1})$}
\put(-50,-5){\circle{4}}
\put(-87,-10){$R({s_2s_1,s_1})$}
\put(-10,-5){\circle{4}}
\put(-5,-10){$R({s_1s_2,s_2})$}
\put(-50,-35){\circle{4}}
\put(-87,-40){$R({s_2,s_1s_2})$}
\put(-10,-35){\circle{4}}
\put(-5,-40){$R({s_1,s_2s_1})$}
\put(-30,-65){\circle{4}}
\put(-39,-75){$R(1,w_0)$}
\put(-20,15){\footnotesize ${\mathfrak R}^{\star}_2$}
\put(-47,15){\footnotesize ${\mathfrak R}^{\star}_1$}
\put(-60,-25){\footnotesize ${\mathfrak R}^{\star}_2$}
\put(-8,-25){\footnotesize ${\mathfrak R}^{\star}_1$}
\put(-47,-55){\footnotesize ${\mathfrak R}^{\star}_1$}
\put(-20,-55){\footnotesize ${\mathfrak R}^{\star}_2$}
\end{picture}
\end{center}
\vspace{-.1in}
\caption{The sets $R^{\tau^{\star}}(s_2s_1)$ and $R^{\tau^{\star}}(w_0)$ when $\mathfrak{g}=A_2$}
\label{fig:Wpermutohedre*}
\end{figure}
%\subsubsection{Dual moves}
We now use the previous $\tau^{\star}$-moves and the involution
$\square$ given in Subsection \ref{section:involution} to define
new moves on double words, besides generalized $dn$-moves and $\tau$-moves;
they are called {dual-moves}.

\begin{definition} Let $\mathbf i\in R(1,w_0)\cup R(w_0,1)$ be a positive or
negative reduced word associated to $w_0$ and $\mathbf j$ be a
double word. The following \emph{dual-move} $\Delta_{j}$ associated to the
last letter of $\mathbf j$ transforms the product $\mathbf j\mathbf i$ into
the following double word:
\begin{equation}\label{equ:defdual moves}
\Delta_{j}:\mathbf j\mathbf i\mapsto {\mathfrak R}^{\star}_j(\mathbf j)\
{\mathbf i}^{\square}\ .
\end{equation}
Right $\tau^{\star}$-moves and the map $\square$ being involutions,
dual-moves are in fact involutions on the set of such products
$\mathbf j\mathbf i$. Let $v\in W$ and $\mathbf i\in D(v)$ be a double word.
A \emph{$\widehat{d}$-move} on $\mathbf i$ is one of the following transformations.
(In particular, every $\widehat{d}$-move is a $d^{\tau}$-move when $v=1$.)
\begin{itemize}
\item
a $d^{\tau}$-move;
\item
a dual-move $\Delta_{i}$.
\end{itemize}
\end{definition}

As an example, let us keep the notation of Example \ref{ex:A2taustar} and consider
the double words $\mathbf{j_1}=\mathbf{i_1}\mathbf j$, $\mathbf{j_2}=
\mathbf{i_2}\mathbf j$ and $\mathbf{j_3}=\mathbf{i_3}\mathbf j$, with
$\mathbf j=\overline{1}\overline{2}\overline{1}$. Because the equality
$\mathbf j^{\square}=121$ is satisfied, the action of dual
moves on these double words give:
$$\begin{array}{cccc}
\Delta_{2}(\mathbf{j_1})=12\overline{2}\ 121,&
\Delta_{1}(\mathbf{j_2})=\overline{2}1\overline{1}\ 121,
&\mbox{and}&\Delta_{2}(\mathbf{j_3})=\overline{2}
\overline{1}\overline{2}\ 121\ .
\end{array}$$

\begin{lemma}\label{lemma:Whatdmoves}
The following statements are satisfied for every $v\in W$.
\begin{itemize}
\item
The set $D(v)$ is the disjoint union of the
labels $D_{w_1}(v)$ associated to the vertices $w_1$
of the $W$-permutohedron $P_W$ that are crossed or reached by a
$\uparrow_v$-path.
\item
Two labeled vertices $D_{w_1}(v)\subset D(v)$ and $D_{w'_1}(v)
\subset D(v)$ of $P_W$ are related by the edge $s_i$
if and only if there exist trivial double reduced words $\mathbf i\in D_{w_1}(v)$ and
$\mathbf j\in D_{w'_1}(v)$ such that $\mathbf j=\Delta_i(\mathbf i)$.
\end{itemize}
\end{lemma}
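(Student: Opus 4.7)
My plan is to derive both statements by reducing everything to the $(w_1,w_2)_v$-decomposition of trivial words together with the description of $R^{\tau^{\star}}(v)$ via $P_W$ already provided by Lemma \ref{lemma:PWRstar}, plus the elementary observation that the involution $\square$ interchanges $R(1,w_0)$ and $R(w_0,1)$.

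\smallskip

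For the first statement, I would invoke (\ref{equ:Dsets}), which expresses $D(v)$ as the disjoint union of the $D_{w_1}(v)$ over $w_1\leq v$, and then remark that the condition $w_1\leq v$ in the right weak order on $W$ is by definition equivalent to the existence of a chain $e\to\cdots\to w_1\to\cdots\to v$, i.e.\ to $w_1$ being a vertex crossed or reached by a $\uparrow_v$-path in $P_W$. So the first bullet reduces to a translation from weak order to paths on $P_W$.

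\smallskip

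For the second statement, the key is to track separately the two factors of a trivial decomposition. Given a trivial $(w_1,w_2)_v$-word $\mathbf i=\mathbf{i_1}\mathbf{i_2}$ with $\mathbf{i_1}\in R({w_1^{\star}}^{-1},vw_1^{-1})$ and $\mathbf{i_2}\in R(w_2^{-1},w_0w_2^{-1})$, the definition of $\Delta_j$ requires $\mathbf{i_2}\in R(1,w_0)\cup R(w_0,1)$, i.e.\ $w_2\in\{e,w_0\}$, and then outputs $\mathfrak{R}^{\star}_j(\mathbf{i_1})\,\mathbf{i_2}^{\square}$. Since $\square$ maps $R(1,w_0)$ to $R(w_0,1)$ and vice versa, the second factor's type is swapped between $w_2=e$ and $w_2=w_0$, staying in $R^{\tau}(w_0)$, so the output remains trivial and lies in some $D_{w'_1}(v)$. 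By Lemma \ref{lemma:PWRstar} applied to $R^{\tau^{\star}}(v)$, the right $\tau^{\star}$-move $\mathfrak{R}^{\star}_j$ carries $R({w_1^{\star}}^{-1},vw_1^{-1})$ to $R({(w_1s_j)^{\star}}^{-1},v(w_1s_j)^{-1})$, i.e.\ $w'_1=w_1s_j$, which is exactly the edge $s_j$ of $P_W$. This handles $(\Leftarrow)$. For $(\Rightarrow)$, given $w_1,w'_1\leq v$ connected by the edge $s_i$, I would first use Lemma \ref{lemma:PWRstar} to produce reduced words $\mathbf{i_1}\in R({w_1^{\star}}^{-1},vw_1^{-1})$ and $\mathbf{i'_1}=\mathfrak{R}^{\star}_i(\mathbf{i_1})\in R({w'_1}^{\star-1},v{w'_1}^{-1})$ (choosing $\mathbf{i_1}$ so that its last letter is one on which $\mathfrak{R}^{\star}_i$ acts nontrivially), then pick any $\mathbf{i_2}\in R(1,w_0)$ and form $\mathbf{i}=\mathbf{i_1}\mathbf{i_2}$ and $\mathbf{j}=\mathbf{i'_1}\mathbf{i_2}^{\square}$; by construction $\mathbf{j}=\Delta_i(\mathbf{i})$.

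\smallskip

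The main obstacle I anticipate is the bookkeeping for $(\Rightarrow)$: one must ensure that the reduced word $\mathbf{i_1}$ furnished by Lemma \ref{lemma:PWRstar} does end in a letter whose image under $\mathfrak{R}^{\star}_i$ is precisely the letter needed to land in the target set $R({w'_1}^{\star-1},v{w'_1}^{-1})$, so that the subscript $i$ of $\Delta_i$ agrees with the edge label $s_i$ of $P_W$. This is a local check that follows from unwinding the definition of $\mathfrak{R}^{\star}_j$ and the fact that the edges of $P_W$ adjacent to $w_1$ are labeled by the simple reflections $s_j$ with $s_jw_1$ adjacent to $w_1$; but it is the only step where one cannot simply copy Lemma \ref{lemma:PWRstar} verbatim, because here the $\tau^{\star}$-move acts only on the left factor of a product while the dual-move subscript must coincide with it.
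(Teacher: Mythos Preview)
Your proposal is correct and follows essentially the same route as the paper's own proof, which invokes (\ref{equ:Dsets}) for the first statement and Lemmas \ref{lemma:relPWR} and \ref{lemma:PWRstar} together with the dual-move formula (\ref{equ:defdual moves}) for the second. You have simply spelled out in more detail what the paper leaves to the reader: the explicit tracking of the two factors under $\Delta_j$, with Lemma \ref{lemma:PWRstar} governing the $\tau^{\star}$-move on the first factor and $\square$ swapping $R(1,w_0)$ and $R(w_0,1)$ on the second. The ``obstacle'' you flag concerning the subscript of $\Delta_i$ matching the edge label $s_i$ is indeed absorbed by Lemma \ref{lemma:PWRstar}, whose statement (parallel to Lemma \ref{lemma:PW}) already aligns the $\mathfrak{R}^{\star}$-subscript with the edge; so no additional argument is required there.
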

\begin{proof}The first statement is just a reformulation of the second
relation given by (\ref{equ:Dsets}). The second statement is obtained
by applying Lemma \ref{lemma:relPWR} and Lemma \ref{lemma:PWRstar},
because of the formula (\ref{equ:defdual moves}) describing dual moves.
\end{proof}

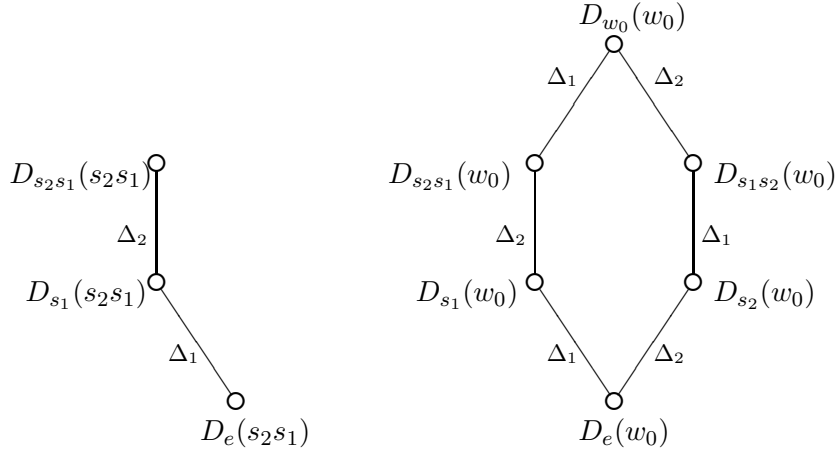
\begin{figure}[htbp]
\begin{center}
\setlength{\unitlength}{1.5pt}
\qquad\qquad\qquad\qquad\qquad
\begin{picture}(20,110)(0,-80)
\put(-31,-63.5){\line(-2,3){17.7}}
\put(-50,-33){\line(0,1){26}}
\thicklines
\put(-50,-5){\circle{4}}
\put(-87,-10){$D_{s_2s_1}(s_2s_1)$}
\put(-50,-35){\circle{4}}
\put(-83,-40){$D_{s_1}(s_2s_1)$}
\put(-30,-65){\circle{4}}
\put(-39,-75){$D_e(s_2s_1)$}
\put(-47,-55){\footnotesize $\Delta_1$}
\put(-60,-25){\footnotesize $\Delta_2$}
\end{picture}
\qquad\qquad\qquad\qquad\qquad
\begin{picture}(20,110)(0,-80)
\put(-29,-63.5){\line(2,3){17.7}}
\put(-10,-33){\line(0,1){26}}
\put(-11,-3.5){\line(-2,3){17.7}}
\put(-31,-63.5){\line(-2,3){17.7}}
\put(-50,-33){\line(0,1){26}}
\put(-49,-3.5){\line(2,3){17.7}}
\thicklines
\put(-30,25){\circle{4}}
\put(-39,30){$D_{w_0}(w_0)$}
\put(-20,15){\footnotesize $\Delta_2$}
\put(-47,15){\footnotesize $\Delta_1$}
\put(-50,-5){\circle{4}}
\put(-87,-10){$D_{s_2s_1}(w_0)$}
\put(-60,-25){\footnotesize $\Delta_2$}
\put(-10,-5){\circle{4}}
\put(-5,-10){$D_{s_1s_2}(w_0)$}
\put(-8,-25){\footnotesize $\Delta_1$}
\put(-50,-35){\circle{4}}
\put(-80,-40){$D_{s_1}(w_0)$}
\put(-47,-55){\footnotesize $\Delta_1$}
\put(-10,-35){\circle{4}}
\put(-5,-40){$D_{s_2}(w_0)$}
\put(-20,-55){\footnotesize $\Delta_2$}
\put(-30,-65){\circle{4}}
\put(-39,-75){$D_e(w_0)$}
\end{picture}
\end{center}
\vspace{-.1in}
\caption{The sets $D(s_2s_1)$ and $D(w_0)$ when $\mathfrak{g}=A_2$}
\label{fig:WpermutohedreD*}
\end{figure}

We continue our running example with the case $\mathfrak g=A_2$. Using Figure
\ref{fig:Wpermutohedre*}, we give in Figure \ref{fig:WpermutohedreD*}
the description of the sets $D(s_2s_1)$ and $D(w_0)$ in terms of subsets
$D_{w_1}(s_2s_1)$ and $D_{w_1}(w_0)$ related by dual moves. Now, we have seen
in Example \ref{ex:permutoWD} that each subset $D_{w_1}(w_0)$ can be decomposed
into sets $W(u,v)$, for appropriate $u,v\in W$, related by right $\tau^{\star}$-moves;
this is described by Figure \ref{fig:commutativeweakordertau+2}.
Therefore, mixing Figure \ref{fig:commutativeweakordertau+2} and Figure
\ref{fig:WpermutohedreD*}, we get in Figure \ref{fig:Dpermutosynthese} a description
of the set $D(w_0)$ as unions of sets $W(u,v)$ related by $\widehat{d}$-moves.
Let us notice the double occurring of the permutohedron $P_3$ in this picture.

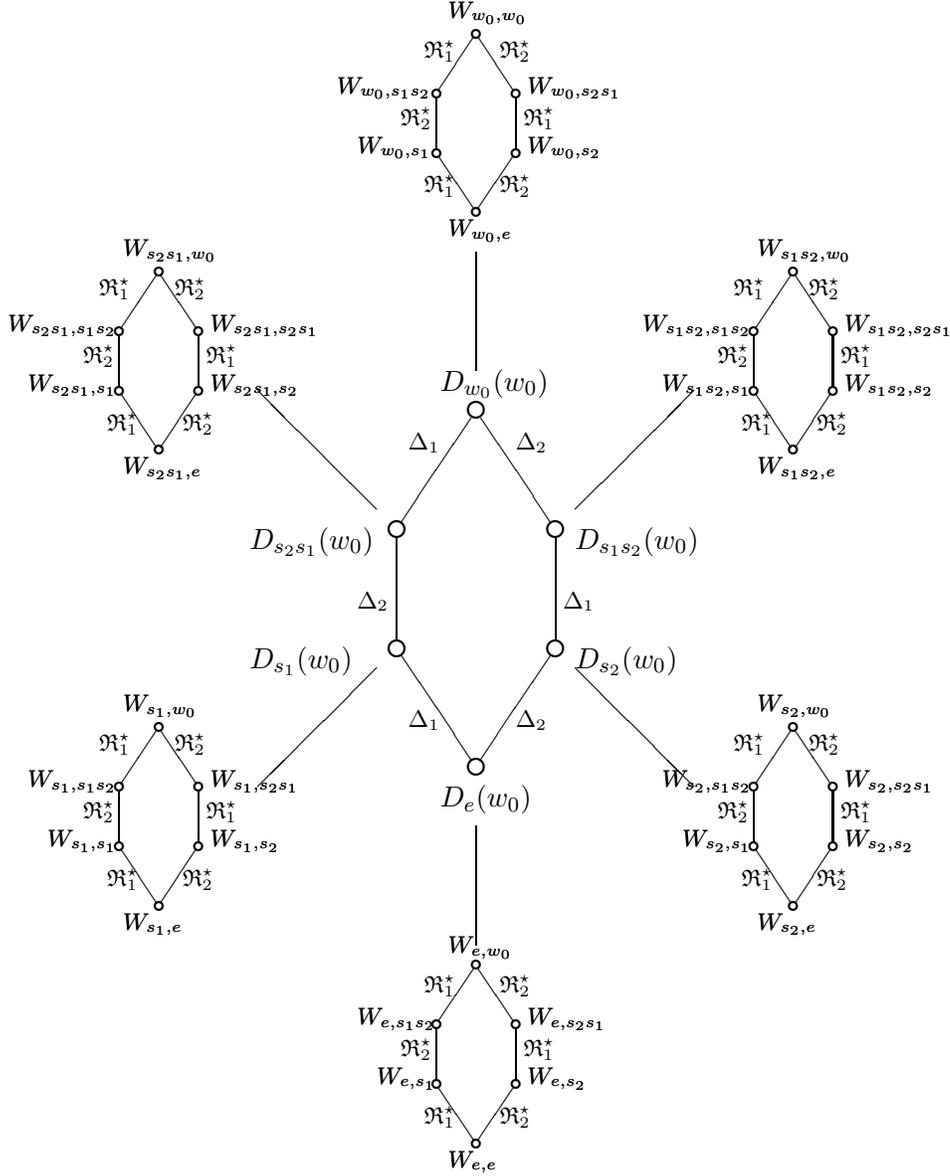
\begin{figure}[htbp]
\begin{center}
\setlength{\unitlength}{1.5pt}
\qquad\qquad\qquad\qquad\qquad
\begin{picture}(20,300)(0,-170)
\put(-29,-63.5){\line(2,3){17.7}}
\put(-10,-33){\line(0,1){26}}
\put(-11,-3.5){\line(-2,3){17.7}}
\put(-31,-63.5){\line(-2,3){17.7}}
\put(-50,-33){\line(0,1){26}}
\put(-49,-3.5){\line(2,3){17.7}}
\put(-30,35){\line(0,1){30}}
\put(-30,-80){\line(0,-1){30}}
\put(-55,0){\line(-1,1){30}}
\put(-5,0){\line(1,1){30}}
\put(-55,-40){\line(-1,-1){30}}
\put(-5,-40){\line(1,-1){30}}
\put(-29.5,75.75){\line(2,3){8.85}}
\put(-20,91){\line(0,1){13}}
\put(-30.5,75.75){\line(-2,3){8.85}}
\put(-20.5,105.75){\line(-2,3){8.85}}
\put(-40,91){\line(0,1){13}}
\put(-39.5,105.75){\line(2,3){8.85}}
\put(-29.5,-159.25){\line(2,3){8.85}}
\put(-20,-144){\line(0,1){13}}
\put(-30.5,-159.25){\line(-2,3){8.85}}
\put(-20.5,-129.25){\line(-2,3){8.85}}
\put(-40,-144){\line(0,1){13}}
\put(-39.5,-129.25){\line(2,3){8.85}}
\put(50.5,-99.25){\line(2,3){8.85}}
\put(60,-84){\line(0,1){13}}
\put(49.5,-99.25){\line(-2,3){8.85}}
\put(59.5,-69.25){\line(-2,3){8.85}}
\put(40,-84){\line(0,1){13}}
\put(40.5,-69.25){\line(2,3){8.85}}
\put(50.5,15.75){\line(2,3){8.85}}
\put(60,31){\line(0,1){13}}
\put(49.5,15.75){\line(-2,3){8.85}}
\put(59.5,45.75){\line(-2,3){8.85}}
\put(40,31){\line(0,1){13}}
\put(40.5,45.75){\line(2,3){8.85}}
\put(-109.5,-99.25){\line(2,3){8.85}}
\put(-120,-84){\line(0,1){13}}
\put(-110.5,-99.25){\line(-2,3){8.85}}
\put(-119.5,-69.25){\line(2,3){8.85}}
\put(-100,-84){\line(0,1){13}}
\put(-100.5,-69.25){\line(-2,3){8.85}}
\put(-109.5,15.75){\line(2,3){8.85}}
\put(-120,31){\line(0,1){13}}
\put(-110.5,15.75){\line(-2,3){8.85}}
\put(-119.5,45.75){\line(2,3){8.85}}
\put(-100,31){\line(0,1){13}}
\put(-100.5,45.75){\line(-2,3){8.85}}
\thicklines
\put(-30,25){\circle{4}}
\put(-39,30){$D_{w_0}(w_0)$}
\put(-50,-5){\circle{4}}
\put(-87,-10){$D_{s_2s_1}(w_0)$}
\put(-10,-5){\circle{4}}
\put(-5,-10){$D_{s_1s_2}(w_0)$}
\put(-50,-35){\circle{4}}
\put(-87,-40){$D_{s_1}(w_0)$}
\put(-10,-35){\circle{4}}
\put(-5,-40){$D_{s_2}(w_0)$}
\put(-30,-65){\circle{4}}
\put(-39,-75){$D_e(w_0)$}
\put(-30,120){\circle{2}}
\put(-37,124){\footnotesize $W_{w_0,w_0}$}
\put(-20,105){\circle{2}}
\put(-17,105){\footnotesize $W_{w_0,s_2s_1}$}
\put(-20,90){\circle{2}}
\put(-17,90){\footnotesize $W_{w_0,s_2}$}
\put(-40,105){\circle{2}}
\put(-65,105){\footnotesize $W_{w_0,s_1s_2}$}
\put(-40,90){\circle{2}}
\put(-60,90){\footnotesize $W_{w_0,s_1}$}
\put(-30,75){\circle{2}}
\put(-37,69){\footnotesize $W_{w_0,e}$}
\put(-110,60){\circle{2}}
\put(-119,64){\footnotesize $W_{s_2s_1,w_0}$}
\put(-100,45){\circle{2}}
\put(-97,45){\footnotesize $W_{s_2s_1,s_2s_1}$}
\put(-100,30){\circle{2}}
\put(-97,30){\footnotesize $W_{s_2s_1,s_2}$}
\put(-120,45){\circle{2}}
\put(-148,45){\footnotesize $W_{s_2s_1,s_1s_2}$}
\put(-120,30){\circle{2}}
\put(-143,30){\footnotesize $W_{s_2s_1,s_1}$}
\put(-110,15){\circle{2}}
\put(-119,9){\footnotesize $W_{s_2s_1,e}$}
\put(50,60){\circle{2}}
\put(41,64){\footnotesize $W_{s_1s_2,w_0}$}
\put(40,45){\circle{2}}
\put(12,45){\footnotesize $W_{s_1s_2,s_1s_2}$}
\put(40,30){\circle{2}}
\put(17,30){\footnotesize $W_{s_1s_2,s_1}$}
\put(60,45){\circle{2}}
\put(63,45){\footnotesize $W_{s_1s_2,s_2s_1}$}
\put(60,30){\circle{2}}
\put(63,30){\footnotesize $W_{s_1s_2,s_2}$}
\put(50,15){\circle{2}}
\put(41,9){\footnotesize $W_{s_1s_2,e}$}
\put(-110,-55){\circle{2}}
\put(-119,-51){\footnotesize $W_{s_1,w_0}$}
\put(-100,-70){\circle{2}}
\put(-97,-70){\footnotesize $W_{s_1,s_2s_1}$}
\put(-100,-85){\circle{2}}
\put(-97,-85){\footnotesize $W_{s_1,s_2}$}
\put(-120,-70){\circle{2}}
\put(-143,-70){\footnotesize $W_{s_1,s_1s_2}$}
\put(-120,-85){\circle{2}}
\put(-138,-85){\footnotesize $W_{s_1,s_1}$}
\put(-110,-100){\circle{2}}
\put(-119,-106){\footnotesize $W_{s_1,e}$}
\put(50,-55){\circle{2}}
\put(41,-51){\footnotesize $W_{s_2,w_0}$}
\put(40,-70){\circle{2}}
\put(17,-70){\footnotesize $W_{s_2,s_1s_2}$}
\put(40,-85){\circle{2}}
\put(22,-85){\footnotesize $W_{s_2,s_1}$}
\put(60,-70){\circle{2}}
\put(63,-70){\footnotesize $W_{s_2,s_2s_1}$}
\put(60,-85){\circle{2}}
\put(63,-85){\footnotesize $W_{s_2,s_2}$}
\put(50,-100){\circle{2}}
\put(41,-106){\footnotesize $W_{s_2,e}$}
\put(-30,-160){\circle{2}}
\put(-37,-166){\footnotesize $W_{e,e}$}
\put(-20,-145){\circle{2}}
\put(-17,-145){\footnotesize $W_{e,s_2}$}
\put(-20,-130){\circle{2}}
\put(-17,-130){\footnotesize $W_{e,s_2s_1}$}
\put(-40,-145){\circle{2}}
\put(-55,-145){\footnotesize $W_{e,s_1}$}
\put(-40,-130){\circle{2}}
\put(-60,-130){\footnotesize $W_{e,s_1s_2}$}
\put(-30,-115){\circle{2}}
\put(-37,-112){\footnotesize $W_{e,w_0}$}
\put(-20,15){\footnotesize $\Delta_2$}
\put(-47,15){\footnotesize $\Delta_1$}
\put(-60,-25){\footnotesize $\Delta_2$}
\put(-8,-25){\footnotesize $\Delta_1$}
\put(-47,-55){\footnotesize $\Delta_1$}
\put(-20,-55){\footnotesize $\Delta_2$}
\put(-37,124){\footnotesize $W_{w_0,w_0}$}
\put(-43,114){\footnotesize ${\mathfrak R}^{\star}_1$}
\put(-24,114){\footnotesize ${\mathfrak R}^{\star}_2$}
\put(-17,105){\footnotesize $W_{w_0,s_2s_1}$}
\put(-49,97){\footnotesize ${\mathfrak R}^{\star}_2$}
\put(-18,97){\footnotesize ${\mathfrak R}^{\star}_1$}
\put(-17,90){\footnotesize $W_{w_0,s_2}$}
\put(-43,80){\footnotesize ${\mathfrak R}^{\star}_1$}
\put(-24,80){\footnotesize ${\mathfrak R}^{\star}_2$}
\put(-65,105){\footnotesize $W_{w_0,s_1s_2}$}
\put(-60,90){\footnotesize $W_{w_0,s_1}$}
\put(-37,69){\footnotesize $W_{w_0,e}$}
\put(-119,64){\footnotesize $W_{s_2s_1,w_0}$}
\put(-125,54){\footnotesize ${\mathfrak R}^{\star}_1$}
\put(-106,54){\footnotesize ${\mathfrak R}^{\star}_2$}
\put(-97,45){\footnotesize $W_{s_2s_1,s_2s_1}$}
\put(-129,37){\footnotesize ${\mathfrak R}^{\star}_2$}
\put(-98,37){\footnotesize ${\mathfrak R}^{\star}_1$}
\put(-97,30){\footnotesize $W_{s_2s_1,s_2}$}
\put(-123,20){\footnotesize ${\mathfrak R}^{\star}_1$}
\put(-104,20){\footnotesize ${\mathfrak R}^{\star}_2$}
\put(-148,45){\footnotesize $W_{s_2s_1,s_1s_2}$}
\put(-143,30){\footnotesize $W_{s_2s_1,s_1}$}
\put(-119,9){\footnotesize $W_{s_2s_1,e}$}
\put(41,64){\footnotesize $W_{s_1s_2,w_0}$}
\put(35,54){\footnotesize ${\mathfrak R}^{\star}_1$}
\put(54,54){\footnotesize ${\mathfrak R}^{\star}_2$}
\put(12,45){\footnotesize $W_{s_1s_2,s_1s_2}$}
\put(17,30){\footnotesize $W_{s_1s_2,s_1}$}
\put(63,45){\footnotesize $W_{s_1s_2,s_2s_1}$}
\put(31,37){\footnotesize ${\mathfrak R}^{\star}_2$}
\put(62,37){\footnotesize ${\mathfrak R}^{\star}_1$}
\put(63,30){\footnotesize $W_{s_1s_2,s_2}$}
\put(37,20){\footnotesize ${\mathfrak R}^{\star}_1$}
\put(56,20){\footnotesize ${\mathfrak R}^{\star}_2$}
\put(41,9){\footnotesize $W_{s_1s_2,e}$}
\put(-119,-51){\footnotesize $W_{s_1,w_0}$}
\put(-125,-61){\footnotesize ${\mathfrak R}^{\star}_1$}
\put(-106,-61){\footnotesize ${\mathfrak R}^{\star}_2$}
\put(-97,-70){\footnotesize $W_{s_1,s_2s_1}$}
\put(-129,-78){\footnotesize ${\mathfrak R}^{\star}_2$}
\put(-98,-78){\footnotesize ${\mathfrak R}^{\star}_1$}
\put(-97,-85){\footnotesize $W_{s_1,s_2}$}
\put(-123,-95){\footnotesize ${\mathfrak R}^{\star}_1$}
\put(-104,-95){\footnotesize ${\mathfrak R}^{\star}_2$}
\put(-143,-70){\footnotesize $W_{s_1,s_1s_2}$}
\put(-138,-85){\footnotesize $W_{s_1,s_1}$}
\put(-119,-106){\footnotesize $W_{s_1,e}$}
\put(41,-51){\footnotesize $W_{s_2,w_0}$}
\put(35,-61){\footnotesize ${\mathfrak R}^{\star}_1$}
\put(54,-61){\footnotesize ${\mathfrak R}^{\star}_2$}
\put(17,-70){\footnotesize $W_{s_2,s_1s_2}$}
\put(22,-85){\footnotesize $W_{s_2,s_1}$}
\put(63,-70){\footnotesize $W_{s_2,s_2s_1}$}
\put(31,-78){\footnotesize ${\mathfrak R}^{\star}_2$}
\put(62,-78){\footnotesize ${\mathfrak R}^{\star}_1$}
\put(63,-85){\footnotesize $W_{s_2,s_2}$}
\put(37,-95){\footnotesize ${\mathfrak R}^{\star}_1$}
\put(56,-95){\footnotesize ${\mathfrak R}^{\star}_2$}
\put(41,-106){\footnotesize $W_{s_2,e}$}
\put(-37,-166){\footnotesize $W_{e,e}$}
\put(-17,-145){\footnotesize $W_{e,s_2}$}
\put(-43,-155){\footnotesize ${\mathfrak R}^{\star}_1$}
\put(-24,-155){\footnotesize ${\mathfrak R}^{\star}_2$}
\put(-17,-130){\footnotesize $W_{e,s_2s_1}$}
\put(-49,-138){\footnotesize ${\mathfrak R}^{\star}_2$}
\put(-18,-138){\footnotesize ${\mathfrak R}^{\star}_1$}
\put(-55,-145){\footnotesize $W_{e,s_1}$}
\put(-60,-130){\footnotesize $W_{e,s_1s_2}$}
\put(-37,-112){\footnotesize $W_{e,w_0}$}
\put(-43,-122){\footnotesize ${\mathfrak R}^{\star}_1$}
\put(-24,-122){\footnotesize ${\mathfrak R}^{\star}_2$}
\end{picture}
\end{center}
\vspace{-.1in}
\caption{The set $D(w_0)$ when $\mathfrak{g}=A_2$}
\label{fig:Dpermutosynthese}
\end{figure}

\subsection{Saltations}\label{subsection:saltation}
We are now ready to introduce the {saltations}, and use them to
describe the cluster combinatorics associated to double words differing
from a dual move. Roughly speaking saltations are a generalization of
generalized cluster transformations involving truncation maps. When we deal
with generalized cluster transformations, the combinatorics giving the formulas
is described by the Poisson bivector of the seed $\mathcal X$-torus (i.e. the seed
matrix usually denoted $\varepsilon$), which, in turn, is transformed
by these generalized cluster transformations. The idea underlying the definition of
the saltations is simple: we allow a little more freedom between the combinatorics on
seed $\mathcal X$-tori and their Poisson geometry.
Let us remember the truncated torus ${\mathcal X}_J^0$ associated to a
truncation map ${\mathfrak t}_J$ and given by Definition \ref{def:truncatedtorus}.

\begin{definition}\label{def:saltation}Let $\mathbf I=(I,I_0,\varepsilon,d)$,
and $\mathbf{I'}=(I',I'_0,\varepsilon',d')$ be two seeds, $J\subset I$,
$J'\subset I'$ be two isomorphic subsets, and $\mathfrak{t}_{J}(\mathbf I)$,
$\mathfrak{t}_{J'}(\mathbf{I'})$ the related truncation maps. A birational
Poisson isomorphism
$\Xi:\mathcal{X}_{\mathfrak{t}_{J}(\mathbf I)}
\to\mathcal{X}_{\mathfrak{t}_{J'}(\mathbf{I'})}$
is said to be a \emph{saltation} (relatively to the subsets $J,J'$) if there exists
a generalized cluster transformation
$\phi_{\mathbf I\to\mathbf{I'}}^{\tau}:\mathcal{X}_{\mathbf I}
\to\mathcal{X}_{\mathbf{I'}}$ which makes the following diagram commutative for every
$\mathbf t\in{\mathcal X}_J^0$.
\begin{equation}\label{equ:saltacom}
\xymatrix{
{{\mathcal X}_{\mathbf{I}}}\ar@/^1pc/[r]^{\phi_{\mathbf I\to\mathbf{I'}}^{\tau}}
\ar@/_1pc/[d]_{\mathfrak{t}_{J}(\mathbf{t})}&{{\mathcal X}_{\mathbf{I'}}}
\ar@/^1pc/[d]^{\mathfrak{t}_{J'}(\mathbf{t})}\\
{\mathcal{X}_{\mathfrak{t}_{J}(\mathbf I)}(\mathbf t)}\ar@/_1pc/[r]_{\Xi}
&{\mathcal{X}_{\mathfrak{t}_{J'}(\mathbf{I'})}(\mathbf t)}
}
\end{equation}
Saltation are easily composed: if $\Xi_1$ is a saltation relatively to the
sets $J,J'$ and to a generalized cluster transformation $\phi_1$ and $\Xi_2$
a saltation relatively to the sets $J',J''$ and to the generalized cluster
transformation $\phi_2$, then the composition $\Xi_2\circ\Xi_1$ is a saltation
relatively to the sets $J,J''$ and to the generalized cluster transformation
$\phi_2\circ\phi_1$.
\end{definition}

A few saltations have already been encountered before.
\begin{itemize}
\item
Every generalized cluster transformation is a saltation:
just take the sets $J$ and $J'$ equal to the empty set $\emptyset$.
\item
For every $u,v\in W$ and every double reduced words $\mathbf i,\mathbf j\in R(u,v)$,
the cluster transformation $\mu_{[\mathbf i]_{\mathfrak R}\to[\mathbf j]_{\mathfrak R}}$ is the saltation
relative to the cluster transformation $\mu_{\mathbf i\to\mathbf j}$, $J$ being the
set of right outlets relative to the seed $\mathbf I(\mathbf i)$ and $J'$ the set
of right outlets relative to the seed $\mathbf I(\mathbf j)$.
\item
For every $w\in W$ and every $\mathbf i\in R^{\tau}(w)$, the cluster transformation
$\zeta_{[\mathbf i]_{\mathfrak R}}$ is the saltation relative to the generalized cluster transformation
$\zeta_{\mathbf i}$, where the set $J$ is the set of right outlets relative to the seed
$\mathbf I(\mathbf i)$ and $J'$ is the set of right outlets relative to the seed
$\mathbf I(\mathbf i^{\square})$.
\item
More generally, the saltation associated to any generalized cluster transformation $\phi$
is always a cluster transformation if the set $J$ contains the directions relative to all the
tropical mutations that are used to factorize $\phi$, and $\phi(J)\subset J'$.
\end{itemize}

\begin{rem}Every generalized cluster transformation $\phi_{\mathbf I
\to\mathbf{I'}}^{\tau}$ is a product of symmetries, mutations and
tropical mutations, and every symmetry of a finite set $J$ can be
decomposed into a product of transpositions. Therefore it is tempting
to decompose every saltation $\Xi$ as a product of elementary saltations
of the following forms.
$$\xymatrix{
{\mathcal X}_{\mathbf I}\ar@/_0pc/[d]_{\mathfrak{t}_J}
\ar@/_0pc/[rr]^{\id}&&{\mathcal X}_{\mathbf I}
\ar@/_0pc/[d]^{\mathfrak{t}_{J'}}&{\mathcal X}_{\mathbf I}\ar@/_0pc/[d]_{\mathfrak{t}_J}
\ar@/_0pc/[rr]^{\mu_k}&&{\mathcal X}_{\mu_k(\mathbf I)}
\ar@/_0pc/[d]^{\mathfrak{t}_{J}}\\
{\mathcal X}_{\mathbf I_J}(t)\ar@/_0pc/[rr]_{{\Xi}_1}&&{\mathcal X}_{\mathbf I_{J'}}(t)
&{\mathcal X}_{\mathbf I_J}(t)\ar@/_0pc/[rr]_{{\Xi}_2}&&{\mathcal X}_{\mu_k(\mathbf I)_J}(t)
}
$$
$$\xymatrix{
{\mathcal X}_{\mathbf I}\ar@/_0pc/[d]_{\mathfrak{t}_J}
\ar@/_0pc/[rr]^{\mu_k^{\tau}}&&{\mathcal X}_{\mu_k^{\tau}(\mathbf I)}
\ar@/_0pc/[d]^{\mathfrak{t}_{J}}\\
{\mathcal X}_{\mathbf I_J}(t)\ar@/_0pc/[rr]_{{\Xi}_3}&&{\mathcal X}_{\mu_k^{\tau}(\mathbf I)_J}(t)}
$$
The problem is that the first map $\Xi_1$ is clearly undefined if the
symmetry $s:J\mapsto J'$ is not the identity!
\end{rem}

Here is the main reason for introducing saltations.
We define for every double reduced word $\mathbf j$, every positive word
$\mathbf i_+\in R(1,w_0)$ and every $i\in[1,l]$, the
map $\Xi_{k}:{\mathcal X}_{[\mathbf j\mathbf{i_+}
\overline{k}]_{\mathfrak R}}\to{\mathcal X}_{[\mathbf j\mathbf i_+^{\square}
k^{\star}]_{\mathfrak R}}$ given by
\begin{equation}\label{equ:defXiI}x_{\Xi_{k}\binom{i}{j}}=\left\{
\begin{array}{lll}
x_{\zeta_{\mathbf i_+}\binom{i}{j}}&\mbox{if }j<N^i(\mathbf j\mathbf i_+)\ ;\\
x_{\zeta_{\mathbf i_+}\binom{i}{j}}x_{\binom{i^{\star}}{N^i(\mathbf j\mathbf i_+\overline{k})}}^{-1}
&\mbox{if }j=N^i(\mathbf j\mathbf i_+)<N^i(\mathbf j\mathbf i_+\overline{k})\ ;\\
x_{\binom{i}{j}}&\mbox{otherwise}\ .
\end{array}
\right.
\end{equation}

\begin{prop}\label{prop:obstruction} The
map $\Xi_{i}:{\mathcal X}_{[\mathbf j\mathbf{i_+}
\overline{i}]_{\mathfrak R}}\to{\mathcal X}_{[\mathbf j\mathbf i_+^{\square}
i^{\star}]_{\mathfrak R}}$ is a saltation, but not a generalized cluster transformation.
\end{prop}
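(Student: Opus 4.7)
The plan is to construct an explicit generalized cluster transformation $\phi:\mathcal X_{\mathbf j\mathbf i_+\overline i}\to\mathcal X_{\mathbf j\mathbf i_+^{\square}i^{\star}}$ witnessing $\Xi_i$ as a saltation via the diagram (\ref{equ:saltacom}), and then to argue structurally that no composition of elementary cluster operations on the truncated torus itself can realize $\Xi_i$, yielding the second assertion.

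To produce $\phi$, I read equation (\ref{equ:defXiI}) as a hint: on interior variables of the positive block, $\Xi_i$ coincides with the twist cluster transformation $\zeta_{\mathbf i_+}$ of (\ref{equ:defxi+}), whose cluster-combinatorial meaning is supplied by Corollary \ref{cor:twistborel} and Theorem \ref{thm:twist}, while the trailing letter $\overline i$ must be converted into $i^{\star}$. The natural composition proceeds in three steps: first, a cluster transformation coming from mixed $2$-moves that isolates $\mathbf i_+$ as a block on which $\zeta_{\mathbf i_+}$ acts via the amalgamation properties of Remark \ref{rem:tropamal}; second, $\zeta_{\mathbf i_+}$ itself; third, a right tropical mutation in direction $\binom{i^{\star}}{N^i(\mathbf j\mathbf i_+\overline i)}$, justified by Proposition \ref{prop:muttrop}, which flips $\overline i \mapsto i^{\star}$ and simultaneously produces the correction factor $x_{\binom{i^{\star}}{N^i(\mathbf j\mathbf i_+\overline i)}}^{-1}$ on the neighbouring variable via the tropical formula (\ref{equ:formuletropmut}). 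Tracking coordinates through these steps reproduces the three cases of (\ref{equ:defXiI}) on the nose, and commutativity of the saltation square with the truncations $\mathfrak t_{\mathfrak R}(t)$ reduces to an entry-by-entry check. The Poisson and birational character of $\Xi_i$ then follows automatically from those of $\phi$ by restriction to the truncated Poisson subtorus.

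For the claim that $\Xi_i$ is not itself a generalized cluster transformation, I exploit the rigidity imposed by truncation. Equation (\ref{equ:defJ2}) forces the truncated exchange matrix $\varepsilon'$, and by (\ref{equ:b}) the matrix $b'$ as well, to vanish on every entry involving a right outlet; consequently every mutation in an interior direction has an exchange formula expressible in interior variables only, and every tropical mutation in a right outlet direction acts trivially on all non-outlet variables. Composing symmetries, mutations, and tropical mutations therefore cannot multiply an interior variable by a non-trivial rational function of a right outlet variable. But the second case of (\ref{equ:defXiI}) does exactly this: it multiplies the interior variable $x_{\binom{i}{N^i(\mathbf j\mathbf i_+)}}$ by the inverse of the right outlet $x_{\binom{i^{\star}}{N^i(\mathbf j\mathbf i_+\overline i)}}$ of the source seed. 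This contradicts any factorization of $\Xi_i$ as a generalized cluster transformation on the truncated torus, in line with the phenomenon anticipated in the remark following Lemma \ref{lemma:obstrunc}. The main obstacle is the verification in step three of the construction of $\phi$: one must show that the tropical mutation sits in exactly the right direction so that its monomial byproduct matches the explicit correction in (\ref{equ:defXiI}); the rest of the argument is structural and essentially forced by the commutative diagram.
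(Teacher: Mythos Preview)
Your argument for the second assertion (not a generalized cluster transformation) is essentially the paper's: it is the Casimir rigidity of Lemma~\ref{lemma:saltcasimir} applied to the right-outlet variables of the truncated seed, contradicted by the second line of (\ref{equ:defXiI}). One small correction: when $i\neq i^{\star}$, the index $\binom{i}{N^i(\mathbf j\mathbf i_+)}$ is in fact a right outlet of the \emph{target} seed, not interior; but the Casimir argument does not need the target index to be interior, only that the target variable is neither equal to the source Casimir nor independent of it, and (\ref{equ:defXiI}) delivers exactly that.

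Your construction of the witnessing $\phi$, however, does not go through as written. The key problem is step~3: Proposition~\ref{prop:muttrop} says the right tropical mutation sends the last letter $\overline{i}$ to $i$, never to $i^{\star}$; there is no tropical mutation in the paper's sense that changes vertex type from $i$ to $i^{\star}$, so for $i\neq i^{\star}$ your $\phi$ lands in $\mathcal X_{\mathbf j\mathbf i_+^{\square} i}$, not $\mathcal X_{\mathbf j\mathbf i_+^{\square} i^{\star}}$, and the saltation square cannot close. There is also a tension between steps~1--2 and step~3: Remark~\ref{rem:tropamal} lets $\zeta_{\mathbf i_+}$ act by amalgamation only when $\mathbf i_+$ is the \emph{rightmost} block (its constituent right tropical mutations do not commute with a right amalgamation by $\overline i$), which forces $\overline i$ to be moved left in step~1 --- but then it is no longer available for a right tropical mutation in step~3. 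The paper resolves this by a genuinely different route: it moves $\overline i$ to the left, applies $\zeta_{\mathbf i_+}$, and then uses the reformulation (\ref{equ:elemsalt}) of $\Xi_i$ via the projection $\pi_{\mathbf i_+\overline i\to\mathbf i_+}$ together with Lemma~\ref{prop:conjtwist}, which expresses that projection as a conjugate of a nil-move cluster transformation. The price is that the target truncation set $J'$ is not the standard $I_0^{\mathfrak R}$ but the shifted set (\ref{equ:I0set}); this index swap is exactly how the $i^{\star}$ enters, and it is what your approach misses.
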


Proposition \ref{prop:obstruction} will be proved in Subsection \ref{subsection:proofProp}.
For the moment, we focus on the link between saltations and dual moves,
given by the following result.

\begin{cor}\label{cor:dualsalt}
Let $i\in[1,l]$ and $\mathbf i$ be a double word such that we can
apply the dual move $\Delta_i$ on it. Then the following product
is a birational Poisson isomorphism.
\begin{equation}\label{equ:Xisi}
\begin{array}{cccl}
\Xi_{s_i}:&{\mathcal X}_{[\mathbf i]_{\mathfrak R}}
&\longrightarrow&{\mathcal X}_{[\Delta_i(\mathbf i)]_{\mathfrak R}}\\
&\mathbf x&\longmapsto&\mu_{[\mathbf i_+^{\square}i^{\star}]_{\mathfrak R}\to[\Delta_{i}
(\mathbf i)]_{\mathfrak R}}\circ\Xi_{i}\circ\mu_{[\mathbf i]_{\mathfrak R}\to
[\mathbf i_+\overline{i}]_{\mathfrak R}}(\mathbf x)\ .
\end{array}
\end{equation}
\end{cor}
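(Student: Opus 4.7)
The plan is to deduce this corollary directly from Proposition~\ref{prop:obstruction} together with the closure properties of birational Poisson isomorphisms under composition. Since saltations are birational Poisson isomorphisms by Definition~\ref{def:saltation}, and since generalized cluster transformations between (truncated) seed $\mathcal{X}$-tori are likewise birational Poisson isomorphisms (Theorem~\ref{fg}, Lemma~\ref{lemma:muR}, and the bullet points following Definition~\ref{def:saltation} on truncated cluster transformations), it suffices to verify that each factor in the displayed composition (\ref{equ:Xisi}) is well-defined.

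First I would unwind what it means for $\Delta_i$ to be applicable to $\mathbf i$: by (\ref{equ:defdual moves}) we must have $\mathbf i = \mathbf j \mathbf{k}$ with $\mathbf k \in R(1,w_0) \cup R(w_0,1)$ a positive or negative reduced expression for $w_0$ and the last letter of $\mathbf j$ being of type $i$ (so that $\mathfrak{R}^{\star}_i$ is legal on $\mathbf j$). Using mixed $2$-moves and positive (resp.\ negative) generalized $d$-moves, one can rewrite $\mathbf i$ as a double reduced word of the form $\mathbf j' \mathbf i_+ \overline i$, lying in the same set $R(u,v)$ as $\mathbf i$ for appropriate $u,v\in W$ (here $\mathbf i_+$ contains the $w_0$-factor, and the final $\overline i$ is produced by the letter of type $i$ at the end of $\mathbf j$). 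Lemma~\ref{lemma:muR} then guarantees that $\mu_{[\mathbf i]_{\mathfrak R}\to[\mathbf i_+\overline i]_{\mathfrak R}}$ is a well-defined birational Poisson isomorphism between the corresponding right-truncated seed $\mathcal X$-tori.

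Second, I would check the symmetric statement on the other side: Lemma~\ref{lemma:Whatdmoves} and the definition of $\Delta_i$ show that $\mathbf j' \mathbf i_+^{\square} i^{\star}$ and $\Delta_i(\mathbf i)$ belong to the same $R(u',v')$, so that the terminal factor $\mu_{[\mathbf i_+^{\square}i^{\star}]_{\mathfrak R}\to[\Delta_i(\mathbf i)]_{\mathfrak R}}$ is again a truncated cluster transformation, hence a birational Poisson isomorphism. Combining the three factors, and invoking Proposition~\ref{prop:obstruction} for the middle one, yields the required conclusion by composition.

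The only real check, and the one I expect to be the main subtlety, is the bookkeeping of the double words: one must make sure that the rearrangement producing the form $\mathbf j' \mathbf i_+ \overline i$ out of the data encoded by $\Delta_i$ is indeed achievable by a sequence of generalized $d$-moves preserving the underlying pair $(u,v)\in W\times W$, and similarly for the target. Once this combinatorial compatibility is in place, the Poisson and birationality statements are automatic, since each elementary factor has already been shown to possess these properties earlier in the paper.
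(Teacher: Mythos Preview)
Your proposal is correct and follows essentially the same approach as the paper: invoke Proposition~\ref{prop:obstruction} for the middle factor $\Xi_i$, and use that the outer truncated cluster transformations $\mu_{[\cdot]_{\mathfrak R}\to[\cdot]_{\mathfrak R}}$ are birational Poisson isomorphisms, then compose. The paper's own proof is a two-line version of yours, leaving implicit the combinatorial bookkeeping you spell out (that the source and target double words of each outer factor are indeed linked by $d^{\tau}$-moves so that the truncated cluster transformation exists).
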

\begin{proof}We use Proposition \ref{prop:obstruction} and the fact that,
for every double words $\mathbf j$ and $\mathbf k$, the cluster transformation
$\mu_{[\mathbf j]_{\mathfrak R}\to[\mathbf k]_{\mathfrak R}}$, when it exists,
is a birational Poisson isomorphism between the seed $\mathcal X$-tori
${\mathcal X}_{[\mathbf j]_{\mathfrak R}}$ and
${\mathcal X}_{[\mathbf k]_{\mathfrak R}}$.
\end{proof}

We finally prove that the relations between the various cluster
$\mathcal X$-varieties $\mathcal X_w$ associated to the different
double words $\mathbf i\in D(w_0)$ involve saltations and are
described by the $W$-permutohedron $P_W$.

\begin{lemma}\label{lemme:saltation} Let us replace every label $w'\in W$
of the vertices of the $W$-permutohedron $P_W$ by the cluster
$\mathcal X$-variety $\mathcal X_w$  associated to a seed $\mathcal X$-torus
${\mathcal X}_{[\mathbf{i}]_{\mathfrak R}}$
related to a double word $\mathbf i\in D_w(w_0)$.
We have the following statements.
\begin{itemize}
\item
The cluster $\mathcal X$-variety ${\mathcal X}_{w}$ related to any
$w\in W$ contains the seed $\mathcal X$-torus
${\mathcal X}_{[\mathbf{i}]_{\mathfrak R}}$ associated to any double
word $\mathbf i\in D_w(w_0)$.
\item
For any $i\in[1,l]$, if two vertices are respectively labeled by
${\mathcal X}_w$ and ${\mathcal X}_{w'}$ of $P_W$ are related by
the edge $s_i\in W$, there exist two trivial double words
$\mathbf i,\mathbf j\in D(w_0)$ such that the seed $\mathcal X$-tori
associated ${\mathcal X}_{[\mathbf i]_{\mathfrak R}}$ and
${\mathcal X}_{[\mathbf j]_{\mathfrak R}}$ are related by the saltation
$\Xi_{s_i}$.
\end{itemize}
\end{lemma}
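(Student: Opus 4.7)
The plan is to combine three ingredients already established earlier in the paper: the definition of $\mathcal X_w$ as the cluster $\mathcal X$-variety obtained by gluing seed $\mathcal X$-tori indexed by $D_w(w_0)$, the combinatorial description of $D(w_0)$ in terms of the $W$-permutohedron provided by Lemma \ref{lemma:Whatdmoves}, and the saltation property of $\Xi_{s_i}$ packaged in Corollary \ref{cor:dualsalt}. Once these are assembled, both statements follow essentially by a definition chase, so the proof should be short.

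For the first bullet, I would simply unfold the construction given in Subsection \ref{subsection:taucombdual}: the cluster $\mathcal X$-variety $\mathcal X_w$ is defined as the result of gluing, via the cluster transformations $\mu_{[\mathbf i]_{\mathfrak R}\to[\mathbf j]_{\mathfrak R}}$ attached to $d^\tau$-moves within $D_w(w_0)$, all the seed $\mathcal X$-tori ${\mathcal X}_{[\mathbf i]_{\mathfrak R}}$ with $\mathbf i \in D_w(w_0)$. Hence for every such $\mathbf i$ the torus ${\mathcal X}_{[\mathbf i]_{\mathfrak R}}$ sits as a local chart of $\mathcal X_w$, giving the desired inclusion.

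For the second bullet, I would pick two vertices of $P_W$ labeled by $\mathcal X_w$ and $\mathcal X_{w'}$ and related by the edge $s_i$. Applying the second item of Lemma \ref{lemma:Whatdmoves} to the set $D(w_0)$ produces trivial double words $\mathbf i \in D_w(w_0)$ and $\mathbf j \in D_{w'}(w_0)$ with $\mathbf j = \Delta_i(\mathbf i)$. Corollary \ref{cor:dualsalt} then ensures that $\Xi_{s_i}:{\mathcal X}_{[\mathbf i]_{\mathfrak R}}\to{\mathcal X}_{[\mathbf j]_{\mathfrak R}}$ is a birational Poisson isomorphism, defined as the composition of two cluster transformations with the saltation $\Xi_i$ of equation (\ref{equ:defXiI}); by the commutative diagram (\ref{equ:saltacom}) and composability of saltations recalled in Definition \ref{def:saltation}, $\Xi_{s_i}$ is itself a saltation. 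The first bullet identifies ${\mathcal X}_{[\mathbf i]_{\mathfrak R}}$ as a local chart of $\mathcal X_w$ and ${\mathcal X}_{[\mathbf j]_{\mathfrak R}}$ as a local chart of $\mathcal X_{w'}$, so this saltation is the one sought.

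The genuinely nontrivial content, namely that $\Xi_i$ is a saltation but not a generalized cluster transformation, is Proposition \ref{prop:obstruction}, which is handled separately in Subsection \ref{subsection:proofProp}; modulo that input the present lemma is just a repackaging of the combinatorial dictionary between edges of $P_W$ and dual moves $\Delta_i$ with the analytic statement of Corollary \ref{cor:dualsalt}. The only care to take is to track that when one crosses an edge $s_i$ of $P_W$, the trivial word $\mathbf i$ delivered by Lemma \ref{lemma:Whatdmoves} is of the form $\mathbf j \mathbf{i_+} \overline{i}$ required by the definition (\ref{equ:defXiI}) of $\Xi_i$; but this is precisely what Lemma \ref{lemma:PWRstar} ensures, since the labels attached to edges of $P_W$ in the stratification of $D_w(w_0)$ by $W(w,w_2)_{w_0}$ encode exactly the positions where the right $\tau^\star$-move, and hence the dual move $\Delta_i$, can be applied.
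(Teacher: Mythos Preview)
Your proof is correct and follows essentially the same route as the paper: the paper's proof cites Lemma \ref{lemma:PWtruncX} for the first bullet (which is precisely the statement you unfold from Subsection \ref{subsection:taucombdual}) and Lemma \ref{lemma:Whatdmoves} together with Corollary \ref{cor:dualsalt} for the second bullet, exactly as you do. Your additional care in the final paragraph about the shape of the trivial word is not spelled out in the paper's terse proof but is a legitimate verification.
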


\begin{proof}The first statement is given by Lemma \ref{lemma:PWtruncX}.
And the second statement comes from Lemma \ref{lemma:Whatdmoves} and
Corollary \ref{cor:dualsalt}.
\end{proof}

\subsection{Cluster ${\mathcal X}$-varieties for $(G,\pi_*)$, saltations and the $W$-permutohedron}
\label{subsection:thmWcluster}
We obtain finally, in Theorem \ref{thm:ev*}, the cluster combinatorics relating the
twisted evaluations of Section \ref{section:twistedev}. This cluster combinatorics
involves cluster transformations and saltations.

To any trivial double words $\mathbf i,\mathbf{i'}\in D(v)$ such that
there exists a $\widehat{d}$-move $\delta:\mathbf{i}\rightarrow\mathbf{i'}$ we associate a
birational Poisson isomorphism $\widehat{\mu}_{\mathbf i\rightarrow \mathbf{i'}}:
{\mathcal X}_{\mathbf i}\rightarrow{\mathcal X}_{\mathbf{i'}}$ given by
\begin{itemize}
\item
the cluster transformation $\mu_{[\mathbf i]_{\mathfrak R}\rightarrow [\mathbf{i'}]_{\mathfrak R}}$
if $\delta$ is a $d^{\tau}$-move;
\item
the birational Poisson isomorphism $\Xi_{s_i}$ if $\delta$ is the dual-move $\Delta_{i}$.
\end{itemize}
From Lemma \ref{lemma:Whatdmoves}, there exists a sequence of $\widehat{d}$-moves
relating any two trivial double words $\mathbf i,\mathbf{i'}\in D(v)$.
We therefore extend this definition to every $\mathbf i,\mathbf{j}\in  D(v)$ in the usual way:
If  $\mathbf i,\mathbf j$ are trivial double words linked be a sequence
of $\widehat{d}$-moves and  $\mathbf{i}\to\mathbf{i_1}\rightarrow
\dots\rightarrow\mathbf{i_{n-1}}\rightarrow\mathbf{j}$ is the associated chain of elements,
we define the map
$\widehat{{\mu}}_{\mathbf{i}\rightarrow\mathbf{j}}$ as the composition
$\widehat{{\mu}}_{\mathbf{i_{n-1}}\rightarrow\mathbf j}\circ \dots\circ
\widehat{{\mu}}_{\mathbf{i}\rightarrow\mathbf{i_1}}$.
Finally, because every double word $\mathbf k\in D(v)$ is related (at least) to
a trivial double word of $D(v)$ by a sequence of generalized $d$-moves, we associate
the cluster transformation $\mu_{\mathbf k\to\mathbf i}$ to complete the picture. Finally,
we get a birational Poisson isomorphism $\widehat{\mu}_{\mathbf i\rightarrow \mathbf{i'}}:
{\mathcal X}_{\mathbf i}\rightarrow{\mathcal X}_{\mathbf{i'}}$ associated to any
double words $\mathbf i,\mathbf j\in D(v)$.
We can now relate the Poisson birational isomorphism of Corollary \ref{cor:dualsalt}
with the twisted evaluations of Section \ref{section:twistedev}.

\begin{prop}\label{prop:salt}For every $i\in[\overline{1},\overline{l}]$
and every double reduced word $\mathbf i\in R(s_i,w_0)$ starting with
the letter $i$, we have the following equality.
$$\widehat{\ev}_{\mathbf i}=\widehat{\ev}_{\Delta_{i}(\mathbf i)}
\circ\Xi_{s_i}\ .$$
\end{prop}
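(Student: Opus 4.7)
Writing $i=\overline{j}$ with $j=|i|\in[1,l]$, the assumption $\mathbf i\in R(s_i,w_0)$ starting with $i$ forces $\mathbf i=i\cdot\mathbf i_+$ for a positive reduced word $\mathbf i_+\in R(1,w_0)$, and (\ref{equ:defdual moves}) then gives $\Delta_i(\mathbf i)=j^\star\cdot\mathbf i_+^\square$. The plan is to first use Corollary \ref{cor:dualsalt} (applied with positive index $j$ to the intermediate form produced by moving $\overline{j}$ from the front to the end of $\mathbf i$ by mixed $2$-moves) to unfold
\[
\Xi_{s_i}\;=\;\mu_{[\mathbf i_+^\square j^\star]_{\mathfrak R}\to[\Delta_i(\mathbf i)]_{\mathfrak R}}\circ\Xi_j\circ\mu_{[\mathbf i]_{\mathfrak R}\to[\mathbf i_+\overline{j}]_{\mathfrak R}},
\]
and then absorb the two outer cluster transformations into the twisted evaluations by invoking Proposition \ref{lemma:dtaumoves}: the pairs $\mathbf i,\mathbf i_+\overline{j}\in W(s_{j^\star},e)_{s_{j^\star}}$ and $\Delta_i(\mathbf i),\mathbf i_+^\square j^\star\in W(e,w_0)_{s_{j^\star}}$ are each related by mixed $2$-moves and hence lie in common $d^{\tau}$-orbits inside $D(s_{j^\star})$. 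This reduces the statement to the key identity
\[
\widehat{\ev}_{\mathbf i_+\overline{j}}\;=\;\widehat{\ev}_{\mathbf i_+^\square j^\star}\circ\Xi_j,
\]
with $\Xi_j$ the fundamental saltation (\ref{equ:defXiI}) taken with empty outer prefix.

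To prove this reduced identity, I would bring each side to its trivial $(w_1,w_2)_v$-form by a single mixed $2$-move: on the left, $\overline{j}\,\mathbf i_+$ with $(w_1,w_2)_v=(s_{j^\star},e)_{s_{j^\star}}$; on the right, $j^\star\,\mathbf i_+^\square$ with $(w_1,w_2)_v=(e,w_0)_{s_{j^\star}}$. Expanding the formulas (\ref{equ:evhat})–(\ref{equ:evRL}) in these trivial decompositions turns the identity into a group equation in $G$ comparing two explicit factorizations of the same element of the Steinberg fiber $F_{t,w_0 s_{j^\star}}$, with the shared factor $\ev_{\mathbf 1}(\mathbf x(\mathfrak R))\,\widehat{w_0}$ sandwiched between dual Gauss factors coming from the $\mathbf i_+$-block and its $\square$-image.

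Three ingredients then drive the matching. First, Corollary \ref{cor:twistborel} (equation (\ref{equ:torev})) realises the cluster transformation $\zeta_{\mathbf i_+}$ appearing in (\ref{equ:defXiI}) as the positive twist $[\widehat{w_0}^{-1}\ev_{\mathbf i_+}(\mathbf x_{(2)})]_{\geq 0}=\ev_{\mathbf i_+^\square}(\zeta_{\mathbf i_+}(\mathbf x_{(2)}))$, converting the Gauss factor on the left into the $\mathbf i_+^\square$-evaluation on the right. Second, the $\SL_2$-identity (\ref{equ:Wsl2}) (equivalently (\ref{equ:si2})) is used to commute $\widehat{w_0}$ past the leading $\overline{j}$-block, which combined with Lemma \ref{lemma:w_0} transforms $\ev_{\overline{j}}(\mathbf x_{(1)})$ into $\ev_{j^\star}(\Xi_j(\mathbf x)_{(1)})$ and releases an extra Cartan term of type $t^{h_j}$. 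Third, Lemma \ref{lemma:twistinvol} is applied to handle the double $\theta$-conjugation hidden inside $\ev^{\mathfrak L}$, so that the left-factor computations become formally dual to the right-factor ones.

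The main obstacle will be the exact book-keeping of the exceptional correction $x_{\binom{j^\star}{N^{j^\star}(\mathbf i_+\overline{j})}}^{-1}$ appearing in the definition (\ref{equ:defXiI}) of $\Xi_j$ — precisely the monomial factor that prevents $\Xi_j$ from being a generalised cluster transformation, as noted in Proposition \ref{prop:obstruction}. This factor must cancel exactly the Cartan term $t^{h_j}$ produced in the previous step by (\ref{equ:Wsl2}): geometrically, it encodes the migration of one Cartan coordinate from the right-outlet variable $\mathbf x(\mathfrak R)$ of $\mathbf i_+\overline{j}$ into the new leading factor $\ev_{j^\star}$ of $\mathbf i_+^\square j^\star$, reflecting the shift of $(w_1,w_2)_v$-labels from $(s_{j^\star},e)$ to $(e,w_0)$ under the dual move. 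Once this calibration is verified by a direct computation in the ambient Borel subgroups, the equality of the two group-theoretic factorisations is automatic; the resulting birational Poisson isomorphism statement on Zariski open sets of $F_{t,w_0 s_{j^\star}}$ follows from Theorem \ref{thm:ev*1} and the multiplicativity of $\pi_*$.
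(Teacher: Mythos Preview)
Your reduction to the key identity
\[
\widehat{\ev}_{\mathbf i_+\overline{j}}\;=\;\widehat{\ev}_{\mathbf i_+^\square j^\star}\circ\Xi_j
\]
via Corollary~\ref{cor:dualsalt} and Proposition~\ref{lemma:dtaumoves} is correct, and your identification of the relevant $(w_1,w_2)_v$-labels is accurate. The divergence from the paper is in how this key identity is proved.

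The paper does not attempt a direct term-by-term matching. Instead it introduces a single \emph{group-level} map
\[
\Xi_t:L^{v,w_0}\longrightarrow L^{w_0,{v^\star}^{-1}},\qquad
\Xi_t(gH)=[[[gH]]_{\geq 0}\widehat{w_0}]_{\leq 0}\,[\kappa((t\widehat{w_0})^{-1},[[gH]]_{<0})]_+H,
\]
and proceeds in two steps. First, a purely group-theoretic computation (Lemma~\ref{prop:tau*moves}) shows that $\rho_{t,(v,e)_v}(gH)$ can be rewritten as $[\Xi_t(gH)^\theta\widehat{w_0}]_{\leq 0}^\theta\,t\widehat{w_0}\,\Xi_t(gH)^{-1}$; this immediately gives $\rho_{t,(e,w_0)_v}=\rho_{t,(v,e)_v}\circ\Lambda_t$. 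Second, a cluster translation (Lemmas~\ref{lemma:Stwistcluster1}--\ref{lemma:Stwistcluster2}, Proposition~\ref{prop:Stwistcluster}) shows that $\Xi_t$ is realised under reduced evaluation by the saltation $\Xi_j$. The proposition then follows via Lemma~\ref{lemma:w_1w_2}.

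Your second ingredient is where the sketch breaks down. The identity~(\ref{equ:Wsl2}) commutes $\widehat{s_j}$, not $\widehat{w_0}$, past an $\overline{j}$-block; and in the twisted evaluation the $\widehat{w_0}$ sits in the middle, separated from the $\overline{j}$-factor by all of $\ev_{\mathbf i_+}$, so the proposed commutation is not geometrically meaningful as stated. What is actually needed (Lemma~\ref{lemma:Stwistcluster2}) is the computation of $[\kappa((t\widehat{w_0})^{-1},F^jH)]_+H=t^\star E^{j^\star}H$, which uses the interaction of $\widehat{w_0}$-conjugation with the $\star$-involution rather than an $\SL_2$ relation. Likewise, the ``calibration'' you defer to a direct computation is precisely the content of Lemma~\ref{prop:tau*moves}, a nontrivial factorisation in $G$; without isolating the map $\Xi_t$, that computation has no organising principle and the cancellation of the exceptional monomial against the Cartan term is not automatic. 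Your ingredients (Corollary~\ref{cor:twistborel}, Lemma~\ref{lemma:twistinvol}) are the right ones and are all used in the paper's argument, but the missing conceptual piece is the geometric intermediary $\Xi_t$, which makes the group-side identity provable in isolation and then transported to clusters.
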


Proposition \ref{prop:salt} is proved in Subsection \ref{subsection:(UB)invol}.
Now, because of Lemma \ref{lemma:Whatdmoves}, there exists a composition of
$\widehat{d}$-moves $\delta$ that satisfy the relation $\delta:\mathbf i\to\mathbf{j}$.
Therefore, it suffices to apply Proposition \ref{lemma:dtaumoves} and Proposition
\ref{prop:salt} to prove the following result, which was the missing argument
to prove Theorem \ref{thm:ev*1}.

\begin{thm}\label{thm:ev*} For every $v\in W$ and $\mathbf i,\mathbf{j}\in D(v)$ the maps
$\widehat{\ev}_{\mathbf i}$ and $\widehat{\ev}_{\mathbf{j}}$ satisfy the equality~$\widehat{\ev}_{\mathbf i}=
\widehat{\ev}_{\mathbf{j}}\circ\widehat{\mu}_{\mathbf i\rightarrow\mathbf{j}}$.
\end{thm}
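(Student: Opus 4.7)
\smallskip\noindent \textbf{Proof plan.} The plan is to reduce the general statement to the two elementary cases already established, namely $d^{\tau}$-moves and dual moves, by an induction along a sequence of $\widehat d$-moves. More precisely, fix $v\in W$ and $\mathbf i,\mathbf j\in D(v)$. By Lemma~\ref{lemma:Whatdmoves} (and the description of $D(v)$ afforded by equation~(\ref{equ:Dsets}) together with the $W$-permutohedron), there exists a finite chain of trivial $(w_1,w_2)_v$-words
$$\mathbf i\;\longrightarrow\;\mathbf{i_1}\;\longrightarrow\;\dots\;\longrightarrow\;\mathbf{i_{n-1}}\;\longrightarrow\;\mathbf j$$
in which each elementary arrow is either a $d^{\tau}$-move (a generalized $d$-move or a right $\tau$-move) or a dual move $\Delta_i$, possibly prefixed/suffixed by generalized $d$-moves needed to pass from an arbitrary $(w_1,w_2)_v$-word to a trivial one. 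By definition of $\widehat\mu_{\mathbf i\to\mathbf j}$ as the composition of the corresponding elementary birational Poisson isomorphisms $\widehat\mu_{\mathbf{i_k}\to\mathbf{i_{k+1}}}$, it is enough to verify the equality at each step, after which the result follows by composing.

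Next I would split the inductive step into its two kinds. If the elementary arrow $\mathbf{i_k}\to\mathbf{i_{k+1}}$ is a $d^{\tau}$-move, then both words belong to the same $D_{w_1}(v)$ (this is exactly the content of the first part of Lemma~\ref{lemma:relPWR}, read via the $W$-permutohedron), and $\widehat\mu_{\mathbf{i_k}\to\mathbf{i_{k+1}}}$ is by construction the cluster transformation $\mu_{[\mathbf{i_k}]_{\mathfrak R}\to[\mathbf{i_{k+1}}]_{\mathfrak R}}$. Proposition~\ref{lemma:dtaumoves} then yields
$$\widehat{\ev}_{\mathbf{i_k}}=\widehat{\ev}_{\mathbf{i_{k+1}}}\circ\widehat\mu_{\mathbf{i_k}\to\mathbf{i_{k+1}}}.$$
For the auxiliary generalized $d$-moves used to reach trivial form, the same proposition applies since $d$-moves are a particular type of $d^{\tau}$-move.

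If instead $\mathbf{i_k}\to\mathbf{i_{k+1}}$ is a dual move $\Delta_i$, then by Corollary~\ref{cor:dualsalt} the elementary birational Poisson isomorphism $\widehat\mu_{\mathbf{i_k}\to\mathbf{i_{k+1}}}$ is the saltation $\Xi_{s_i}$, obtained by conjugating the elementary saltation $\Xi_i$ of Proposition~\ref{prop:obstruction} with the cluster transformations $\mu_{[\mathbf{i_k}]_{\mathfrak R}\to[\mathbf{i_+}\overline i]_{\mathfrak R}}$ and $\mu_{[\mathbf{i_+^{\square}}i^{\star}]_{\mathfrak R}\to[\Delta_i(\mathbf{i_k})]_{\mathfrak R}}$ normalising $\mathbf{i_k}$ and $\mathbf{i_{k+1}}$ to the special shape $\mathbf{i_+}\overline i$, respectively $\mathbf{i_+^{\square}}i^{\star}$, to which Proposition~\ref{prop:salt} applies. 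Combining Proposition~\ref{lemma:dtaumoves} (applied to these normalising cluster transformations) with Proposition~\ref{prop:salt} (applied to the sandwiched saltation $\Xi_i$) then produces the required equality $\widehat{\ev}_{\mathbf{i_k}}=\widehat{\ev}_{\mathbf{i_{k+1}}}\circ\Xi_{s_i}$.

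The main obstacle I expect is the last one: Proposition~\ref{prop:salt} is stated only for a very special shape of double reduced word (one starting with the letter $i$ and belonging to $R(s_i,w_0)$), whereas a dual move can occur in an arbitrary trivial $(w_1,w_2)_v$-word. The clean way around this is precisely the sandwiching described above, but one must verify that the normalising cluster transformations act transparently on the cluster variables $\mathbf x(\mathfrak R)$ attached to right outlets (so that the ``Cartan piece'' $x_{\mathbf 1}(\mathbf x(\mathfrak R))\widehat{w_0}$ in the definition~(\ref{equ:evhat}) of $\widehat{\ev}$ is not disturbed), and that the saltation $\Xi_i$, which by construction only adjusts the variables indexed by the newly created right outlet of type $i^{\star}$, intertwines correctly with both the left and right evaluations of (\ref{equ:evRL}). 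Once this compatibility is checked once and for all in the proof of Proposition~\ref{prop:salt}, the remainder of the induction is formal.
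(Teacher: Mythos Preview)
Your proposal is correct and follows essentially the same approach as the paper: reduce to a chain of $\widehat d$-moves via Lemma~\ref{lemma:Whatdmoves}, then invoke Proposition~\ref{lemma:dtaumoves} for the $d^{\tau}$-move steps and Proposition~\ref{prop:salt} for the dual-move steps. The paper's own proof is a two-line summary of exactly this strategy; your expanded treatment of the sandwiching by normalising cluster transformations (using Corollary~\ref{cor:dualsalt} and the definition~(\ref{equ:Xisi}) of $\Xi_{s_i}$) makes explicit what the paper leaves implicit in the phrase ``it suffices to apply.''
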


Figure \ref{fig:commutativeweakorder}
describes, in the case $\mathfrak g=A_2$, the full picture of cluster combinatorics we obtain for $(BB_-,\pi_*)$
from the cluster $\mathcal X$-varieties related to $(G,\pi_G)$ and described in
Section \ref{section:ClusterG}.

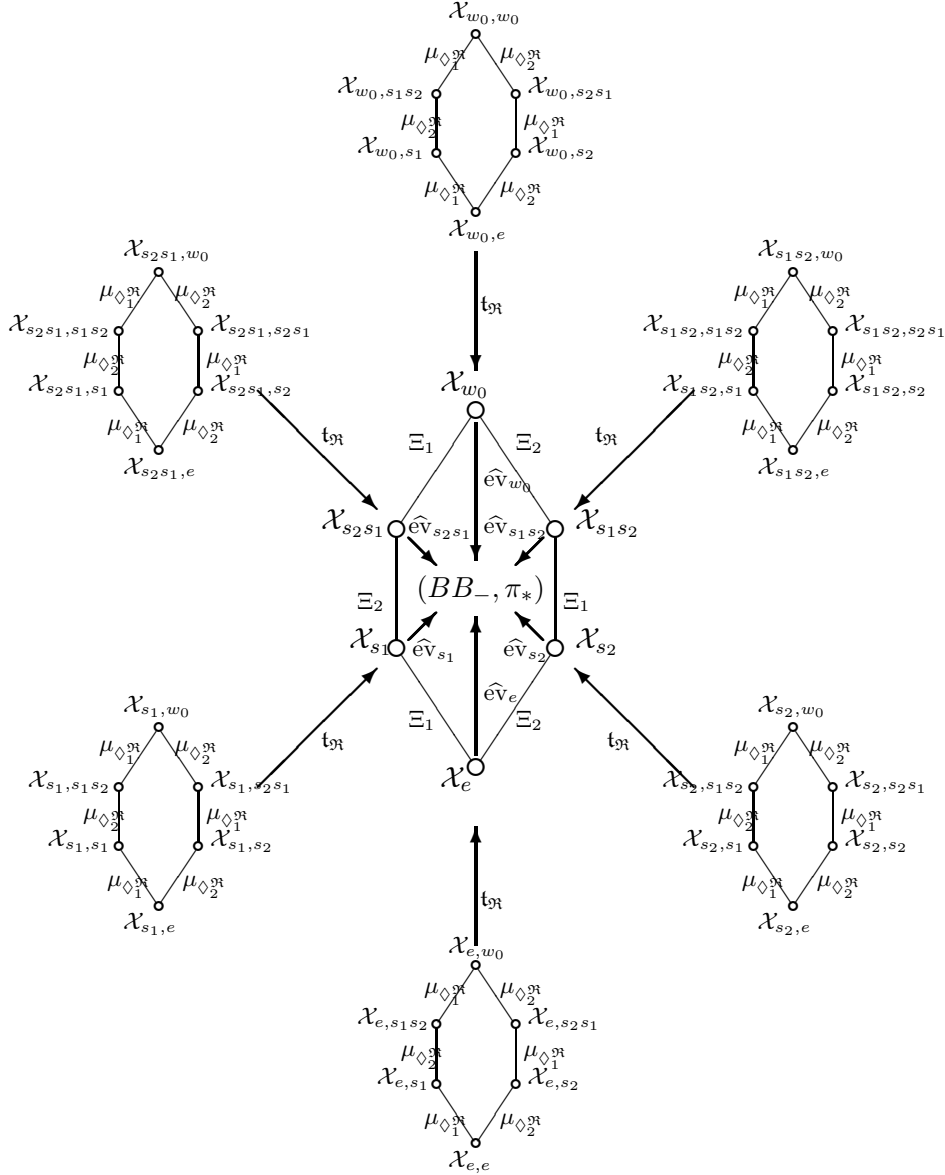
\begin{figure}[htbp]
\begin{center}
\setlength{\unitlength}{1.5pt}
\qquad\qquad\qquad\qquad\qquad
\begin{picture}(20,300)(0,-170)
\put(-29,-63.5){\line(2,3){17.7}}
\put(-10,-33){\line(0,1){26}}
\put(-11,-3.5){\line(-2,3){17.7}}
\put(-31,-63.5){\line(-2,3){17.7}}
\put(-50,-33){\line(0,1){26}}
\put(-49,-3.5){\line(2,3){17.7}}
\put(-29.5,75.75){\line(2,3){8.85}}
\put(-20,91){\line(0,1){13}}
\put(-30.5,75.75){\line(-2,3){8.85}}
\put(-20.5,105.75){\line(-2,3){8.85}}
\put(-40,91){\line(0,1){13}}
\put(-39.5,105.75){\line(2,3){8.85}}
\put(-29.5,-159.25){\line(2,3){8.85}}
\put(-20,-144){\line(0,1){13}}
\put(-30.5,-159.25){\line(-2,3){8.85}}
\put(-20.5,-129.25){\line(-2,3){8.85}}
\put(-40,-144){\line(0,1){13}}
\put(-39.5,-129.25){\line(2,3){8.85}}
\put(50.5,-99.25){\line(2,3){8.85}}
\put(60,-84){\line(0,1){13}}
\put(49.5,-99.25){\line(-2,3){8.85}}
\put(59.5,-69.25){\line(-2,3){8.85}}
\put(40,-84){\line(0,1){13}}
\put(40.5,-69.25){\line(2,3){8.85}}
\put(50.5,15.75){\line(2,3){8.85}}
\put(60,31){\line(0,1){13}}
\put(49.5,15.75){\line(-2,3){8.85}}
\put(59.5,45.75){\line(-2,3){8.85}}
\put(40,31){\line(0,1){13}}
\put(40.5,45.75){\line(2,3){8.85}}
\put(-109.5,-99.25){\line(2,3){8.85}}
\put(-120,-84){\line(0,1){13}}
\put(-110.5,-99.25){\line(-2,3){8.85}}
\put(-119.5,-69.25){\line(2,3){8.85}}
\put(-100,-84){\line(0,1){13}}
\put(-100.5,-69.25){\line(-2,3){8.85}}
\put(-109.5,15.75){\line(2,3){8.85}}
\put(-120,31){\line(0,1){13}}
\put(-110.5,15.75){\line(-2,3){8.85}}
\put(-119.5,45.75){\line(2,3){8.85}}
\put(-100,31){\line(0,1){13}}
\put(-100.5,45.75){\line(-2,3){8.85}}
\thicklines
\put(-30,35){\line(0,1){30}}
\put(-30,35){\vector(0,-1){0}}
\put(-30,-80){\line(0,-1){30}}
\put(-30,-80){\vector(0,1){0}}
\put(-55,0){\line(-1,1){30}}
\put(-55,0){\vector(1,-1){0}}
\put(-5,0){\line(1,1){30}}
\put(-5,0){\vector(-1,-1){0}}
\put(-55,-40){\line(-1,-1){30}}
\put(-55,-40){\vector(1,1){0}}
\put(-5,-40){\line(1,-1){30}}
\put(-5,-40){\vector(-1,1){0}}
\put(-30,25){\circle{4}}
\put(-30,22){\line(0,-1){35}}
\put(-30,22){\vector(0,-1){35}}
\put(-28,6){\footnotesize $\widehat{\ev}_{w_0}$}
\put(-39,30){${\mathcal X}_{w_0}$}
\put(-50,-5){\circle{4}}
\put(-47,-7){\line(1,-1){7}}
\put(-47,-7){\vector(1,-1){7}}
\put(-47,-6){\footnotesize $\widehat{\ev}_{s_2s_1}$}
\put(-69,-5){${\mathcal X}_{s_2s_1}$}
\put(-10,-5){\circle{4}}
\put(-13,-7){\line(-1,-1){7}}
\put(-13,-7){\vector(-1,-1){7}}
\put(-28,-6){\footnotesize $\widehat{\ev}_{s_1s_2}$}
\put(-5,-5){${\mathcal X}_{s_1s_2}$}
\put(-50,-35){\circle{4}}
\put(-47,-33){\line(1,1){7}}
\put(-47,-33){\vector(1,1){7}}
\put(-46,-37){\footnotesize $\widehat{\ev}_{s_1}$}
\put(-62,-35){${\mathcal X}_{s_1}$}
\put(-10,-35){\circle{4}}
\put(-13,-33){\line(-1,1){7}}
\put(-13,-33){\vector(-1,1){7}}
\put(-23,-37){\footnotesize $\widehat{\ev}_{s_2}$}
\put(-5,-35){${\mathcal X}_{s_2}$}
\put(-30,-65){\circle{4}}
\put(-30,-62){\line(0,1){35}}
\put(-30,-62){\vector(0,1){35}}
\put(-28,-48){\footnotesize $\widehat{\ev}_{e}$}
\put(-39,-70){${\mathcal X}_e$}
\put(-30,120){\circle{2}}
\put(-20,105){\circle{2}}
\put(-20,90){\circle{2}}
\put(-40,105){\circle{2}}
\put(-40,90){\circle{2}}
\put(-30,75){\circle{2}}
\put(-110,60){\circle{2}}
\put(-100,45){\circle{2}}
\put(-100,30){\circle{2}}
\put(-120,45){\circle{2}}
\put(-120,30){\circle{2}}
\put(-110,15){\circle{2}}
\put(50,60){\circle{2}}
\put(40,45){\circle{2}}
\put(40,30){\circle{2}}
\put(60,45){\circle{2}}
\put(60,30){\circle{2}}
\put(50,15){\circle{2}}
\put(-110,-55){\circle{2}}
\put(-100,-70){\circle{2}}
\put(-100,-85){\circle{2}}
\put(-120,-70){\circle{2}}
\put(-120,-85){\circle{2}}
\put(-110,-100){\circle{2}}
\put(50,-55){\circle{2}}
\put(40,-70){\circle{2}}
\put(40,-85){\circle{2}}
\put(60,-70){\circle{2}}
\put(60,-85){\circle{2}}
\put(50,-100){\circle{2}}
\put(-30,-160){\circle{2}}
\put(-20,-145){\circle{2}}
\put(-20,-130){\circle{2}}
\put(-40,-145){\circle{2}}
\put(-40,-130){\circle{2}}
\put(-30,-115){\circle{2}}
\put(-45,-22){$(BB_-,\pi_*)$}
\put(-20,15){\footnotesize $\Xi_2$}
\put(-47,15){\footnotesize $\Xi_1$}
\put(-60,-25){\footnotesize $\Xi_2$}
\put(-8,-25){\footnotesize $\Xi_1$}
\put(-47,-55){\footnotesize $\Xi_1$}
\put(-20,-55){\footnotesize $\Xi_2$}
\put(-29,-100){\footnotesize ${\mathfrak t}_{{\mathfrak R}}$}
\put(-29,50){\footnotesize ${\mathfrak t}_{{\mathfrak R}}$}
\put(-69,17){\footnotesize ${\mathfrak t}_{{\mathfrak R}}$}
\put(0,17){\footnotesize ${\mathfrak t}_{{\mathfrak R}}$}
\put(-69,-60){\footnotesize ${\mathfrak t}_{{\mathfrak R}}$}
\put(3,-60){\footnotesize ${\mathfrak t}_{{\mathfrak R}}$}
\put(-37,124){\footnotesize ${\mathcal X}_{w_0,w_0}$}
\put(-43,114){\footnotesize $\mu_{{\lozenge}^{\mathfrak R}_1}$}
\put(-24,114){\footnotesize $\mu_{{\lozenge}^{\mathfrak R}_2}$}
\put(-17,105){\footnotesize ${\mathcal X}_{w_0,s_2s_1}$}
\put(-49,97){\footnotesize $\mu_{{\lozenge}^{\mathfrak R}_2}$}
\put(-18,97){\footnotesize $\mu_{{\lozenge}^{\mathfrak R}_1}$}
\put(-17,90){\footnotesize ${\mathcal X}_{w_0,s_2}$}
\put(-43,80){\footnotesize $\mu_{{\lozenge}^{\mathfrak R}_1}$}
\put(-24,80){\footnotesize $\mu_{{\lozenge}^{\mathfrak R}_2}$}
\put(-65,105){\footnotesize ${\mathcal X}_{w_0,s_1s_2}$}
\put(-60,90){\footnotesize ${\mathcal X}_{w_0,s_1}$}
\put(-37,69){\footnotesize ${\mathcal X}_{w_0,e}$}
\put(-119,64){\footnotesize ${\mathcal X}_{s_2s_1,w_0}$}
\put(-125,54){\footnotesize $\mu_{{\lozenge}^{\mathfrak R}_1}$}
\put(-106,54){\footnotesize $\mu_{{\lozenge}^{\mathfrak R}_2}$}
\put(-97,45){\footnotesize ${\mathcal X}_{s_2s_1,s_2s_1}$}
\put(-129,37){\footnotesize $\mu_{{\lozenge}^{\mathfrak R}_2}$}
\put(-98,37){\footnotesize $\mu_{{\lozenge}^{\mathfrak R}_1}$}
\put(-97,30){\footnotesize ${\mathcal X}_{s_2s_1,s_2}$}
\put(-123,20){\footnotesize $\mu_{{\lozenge}^{\mathfrak R}_1}$}
\put(-104,20){\footnotesize $\mu_{{\lozenge}^{\mathfrak R}_2}$}
\put(-148,45){\footnotesize ${\mathcal X}_{s_2s_1,s_1s_2}$}
\put(-143,30){\footnotesize ${\mathcal X}_{s_2s_1,s_1}$}
\put(-119,9){\footnotesize ${\mathcal X}_{s_2s_1,e}$}
\put(41,64){\footnotesize ${\mathcal X}_{s_1s_2,w_0}$}
\put(35,54){\footnotesize $\mu_{{\lozenge}^{\mathfrak R}_1}$}
\put(54,54){\footnotesize $\mu_{{\lozenge}^{\mathfrak R}_2}$}
\put(12,45){\footnotesize ${\mathcal X}_{s_1s_2,s_1s_2}$}
\put(17,30){\footnotesize ${\mathcal X}_{s_1s_2,s_1}$}
\put(63,45){\footnotesize ${\mathcal X}_{s_1s_2,s_2s_1}$}
\put(31,37){\footnotesize $\mu_{{\lozenge}^{\mathfrak R}_2}$}
\put(62,37){\footnotesize $\mu_{{\lozenge}^{\mathfrak R}_1}$}
\put(63,30){\footnotesize ${\mathcal X}_{s_1s_2,s_2}$}
\put(37,20){\footnotesize $\mu_{{\lozenge}^{\mathfrak R}_1}$}
\put(56,20){\footnotesize $\mu_{{\lozenge}^{\mathfrak R}_2}$}
\put(41,9){\footnotesize ${\mathcal X}_{s_1s_2,e}$}
\put(-119,-51){\footnotesize ${\mathcal X}_{s_1,w_0}$}
\put(-125,-61){\footnotesize $\mu_{{\lozenge}^{\mathfrak R}_1}$}
\put(-106,-61){\footnotesize $\mu_{{\lozenge}^{\mathfrak R}_2}$}
\put(-97,-70){\footnotesize ${\mathcal X}_{s_1,s_2s_1}$}
\put(-129,-78){\footnotesize $\mu_{{\lozenge}^{\mathfrak R}_2}$}
\put(-98,-78){\footnotesize $\mu_{{\lozenge}^{\mathfrak R}_1}$}
\put(-97,-85){\footnotesize ${\mathcal X}_{s_1,s_2}$}
\put(-123,-95){\footnotesize $\mu_{{\lozenge}^{\mathfrak R}_1}$}
\put(-104,-95){\footnotesize $\mu_{{\lozenge}^{\mathfrak R}_2}$}
\put(-143,-70){\footnotesize ${\mathcal X}_{s_1,s_1s_2}$}
\put(-138,-85){\footnotesize ${\mathcal X}_{s_1,s_1}$}
\put(-119,-106){\footnotesize ${\mathcal X}_{s_1,e}$}
\put(41,-51){\footnotesize ${\mathcal X}_{s_2,w_0}$}
\put(35,-61){\footnotesize $\mu_{{\lozenge}^{\mathfrak R}_1}$}
\put(54,-61){\footnotesize $\mu_{{\lozenge}^{\mathfrak R}_2}$}
\put(17,-70){\footnotesize ${\mathcal X}_{s_2,s_1s_2}$}
\put(22,-85){\footnotesize ${\mathcal X}_{s_2,s_1}$}
\put(63,-70){\footnotesize ${\mathcal X}_{s_2,s_2s_1}$}
\put(31,-78){\footnotesize $\mu_{{\lozenge}^{\mathfrak R}_2}$}
\put(62,-78){\footnotesize $\mu_{{\lozenge}^{\mathfrak R}_1}$}
\put(63,-85){\footnotesize ${\mathcal X}_{s_2,s_2}$}
\put(37,-95){\footnotesize $\mu_{{\lozenge}^{\mathfrak R}_1}$}
\put(56,-95){\footnotesize $\mu_{{\lozenge}^{\mathfrak R}_2}$}
\put(41,-106){\footnotesize ${\mathcal X}_{s_2,e}$}
\put(-37,-166){\footnotesize ${\mathcal X}_{e,e}$}
\put(-17,-145){\footnotesize ${\mathcal X}_{e,s_2}$}
\put(-43,-155){\footnotesize $\mu_{{\lozenge}^{\mathfrak R}_1}$}
\put(-24,-155){\footnotesize $\mu_{{\lozenge}^{\mathfrak R}_2}$}
\put(-17,-130){\footnotesize ${\mathcal X}_{e,s_2s_1}$}
\put(-49,-138){\footnotesize $\mu_{{\lozenge}^{\mathfrak R}_2}$}
\put(-18,-138){\footnotesize $\mu_{{\lozenge}^{\mathfrak R}_1}$}
\put(-55,-145){\footnotesize ${\mathcal X}_{e,s_1}$}
\put(-60,-130){\footnotesize ${\mathcal X}_{e,s_1s_2}$}
\put(-37,-112){\footnotesize ${\mathcal X}_{e,w_0}$}
\put(-43,-122){\footnotesize $\mu_{{\lozenge}^{\mathfrak R}_1}$}
\put(-24,-122){\footnotesize $\mu_{{\lozenge}^{\mathfrak R}_2}$}
\end{picture}
\end{center}
\vspace{-.1in}
\caption{Cluster $\mathcal X$-varieties evaluating $(BB_-,\pi_*)$
when $\mathfrak{g}=A_2$}
\label{fig:commutativeweakorder}
\end{figure}

\subsection{Proof of Proposition \ref{prop:obstruction}}
\label{subsection:proofProp}
We need a few preliminaries to prove Proposition \ref{prop:obstruction}.
Let us remember the map $\pi_{\mathbf i}$ associated to any double
$\mathbf i$ given by (\ref{equ:foldpi}). Here is a generalization.
Let $\mathfrak{s}$ be a $\mathcal{X}$-split associated to the decomposition
$\mathbf i\mathbf j\to(\mathbf i,\mathbf j)$: for
every $\mathbf x\in\mathcal{X}_{\mathbf i\mathbf j}$, we define
\begin{equation}\label{equ:pi2}{equ:pi}
\begin{array}{ccc}
\pi_{\mathbf i\mathbf j\to\mathbf i}(\mathbf x)=\mathfrak{m}(\mathbf x_{(1)},
\pi_{\mathbf j}(\mathbf x_{(2)}))&\mbox{and}&\pi_{\mathbf i\mathbf j\to\mathbf j}
(\mathbf x)=\mathfrak{m}(\pi_{\mathbf i}(\mathbf x_{(1)}),\mathbf x_{(2)})\ .
\end{array}
\end{equation}
(It is clear that these equalities don't depend on the choice of $\mathfrak s$ and that
they are also satisfied for every $\mathbf x\in\mathcal{X}_{[\mathbf i\mathbf j]_{\mathfrak R}}$.)
Figure \ref{fig:folding} and  Figure \ref{fig:folding2} give examples
of these maps in the case $\mathfrak g=A_3$, whereas the following result is straightforward.

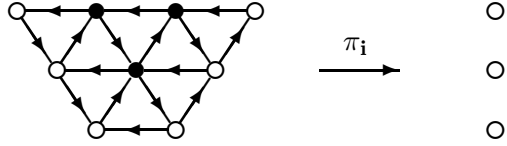
\begin{figure}[htbp]
\begin{center}
\setlength{\unitlength}{1.5pt}
\begin{picture}(20,48)(0,-27)
\thicklines
\put(1,1.7){\line(2,3){8}}
\put(1,1.7){\vector(2,3){6}}
\put(21,1.7){\line(2,3){8}}
\put(21,1.7){\vector(2,3){6}}
\put(41,1.7){\line(2,3){7.5}}
\put(41,1.7){\vector(2,3){6}}
\put(11,-13.3){\line(2,3){7.5}}
\put(11,-13.3){\vector(2,3){6}}
\put(31,-13.3){\line(2,3){7.5}}
\put(31,-13.3){\vector(2,3){6}}
\put(-9,13.3){\line(2,-3){7.5}}
\put(-9,13.3){\vector(2,-3){6}}
\put(11,13.3){\line(2,-3){7.5}}
\put(11,13.3){\vector(2,-3){6}}
\put(31,13.3){\line(2,-3){7.5}}
\put(31,13.3){\vector(2,-3){6}}
\put(1,-1.7){\line(2,-3){7.5}}
\put(1,-1.7){\vector(2,-3){6}}
\put(21,-1.7){\line(2,-3){7.5}}
\put(21,-1.7){\vector(2,-3){6}}
\put(10,15){\circle*{4}}
\put(30,15){\circle*{4}}
\put(-10,15){\circle{4}}
\put(50,15){\circle{4}}
\put(10,-15){\circle{4}}
\put(30,-15){\circle{4}}
\put(0,0){\circle{4}}
\put(20,0){\circle*{4}}
\put(40,0){\circle{4}}
\put(8,15){\vector(-1,0){11}}
\put(8,15){\line(-1,0){16}}
\put(28,15){\vector(-1,0){11}}
\put(28,15){\line(-1,0){16}}
\put(48,15){\vector(-1,0){11}}
\put(48,15){\line(-1,0){16}}
\put(18,0){\vector(-1,0){11}}
\put(18,0){\line(-1,0){16}}
\put(38,0){\vector(-1,0){11}}
\put(38,0){\line(-1,0){16}}
\put(28,-15){\vector(-1,0){11}}
\put(28,-15){\line(-1,0){16}}
\end{picture}
\qquad\qquad\qquad
\begin{picture}(20,48)(0,-27)
\thicklines
\put(0,0){\vector(1,0){20}}
\put(6,5){$\pi_{\mathbf i}$}
\end{picture}
\quad\qquad
\begin{picture}(20,48)(0,-27)
\thicklines
\put(0,15){\circle{4}}
\put(0,-15){\circle{4}}
\put(0,0){\circle{4}}
\end{picture}
\end{center}
\vspace{-.1in}
\caption{The generalized folding $\pi_{\mathbf i}:
\Gamma_{A_3}(\mathbf i)\to\Gamma_{A_3}(\mathbf 1)$
for $\mathbf i=123121$}
\label{fig:folding}
\end{figure}
\begin{figure}[htbp]
\begin{center}
\setlength{\unitlength}{1.5pt}
\begin{picture}(20,48)(0,-27)
\thicklines
\put(1,1.7){\line(2,3){8}}
\put(1,1.7){\vector(2,3){6}}
\put(21,1.7){\line(2,3){8}}
\put(21,1.7){\vector(2,3){6}}
\put(41,1.7){\line(2,3){7.5}}
\put(41,1.7){\vector(2,3){6}}
\put(11,-13.3){\line(2,3){7.5}}
\put(11,-13.3){\vector(2,3){6}}
\put(31,-13.3){\line(2,3){7.5}}
\put(31,-13.3){\vector(2,3){6}}
\put(-9,13.3){\line(2,-3){7.5}}
\put(-9,13.3){\vector(2,-3){6}}
\put(11,13.3){\line(2,-3){7.5}}
\put(11,13.3){\vector(2,-3){6}}
\put(31,13.3){\line(2,-3){7.5}}
\put(31,13.3){\vector(2,-3){6}}
\put(1,-1.7){\line(2,-3){7.5}}
\put(1,-1.7){\vector(2,-3){6}}
\put(21,-1.7){\line(2,-3){7.5}}
\put(21,-1.7){\vector(2,-3){6}}
\put(10,15){\circle*{4}}
\put(30,15){\circle*{4}}
\put(-10,15){\circle{4}}
\put(50,15){\circle*{4}}
\put(70,15){\circle{4}}
\put(10,-15){\circle{4}}
\put(30,-15){\circle{4}}
\put(0,0){\circle{4}}
\put(20,0){\circle*{4}}
\put(40,0){\circle{4}}
\put(8,15){\vector(-1,0){11}}
\put(8,15){\line(-1,0){16}}
\put(28,15){\vector(-1,0){11}}
\put(28,15){\line(-1,0){16}}
\put(48,15){\vector(-1,0){11}}
\put(48,15){\line(-1,0){16}}
\put(52,15){\vector(1,0){11}}
\put(52,15){\line(1,0){16}}
\put(69,14){\vector(-2,-1){17}}
\put(69,14){\line(-2,-1){27}}
\put(18,0){\vector(-1,0){11}}
\put(18,0){\line(-1,0){16}}
\put(38,0){\vector(-1,0){11}}
\put(38,0){\line(-1,0){16}}
\put(28,-15){\vector(-1,0){11}}
\put(28,-15){\line(-1,0){16}}
\end{picture}
\qquad\qquad\qquad\qquad
\begin{picture}(20,48)(0,-27)
\thicklines
\put(0,0){\vector(1,0){20}}
\put(2,5){$\pi_{\mathbf i\mathbf j\to\mathbf i}$}
\end{picture}
\quad\qquad
\begin{picture}(20,48)(0,-27)
\thicklines
\put(1,1.7){\line(2,3){8}}
\put(1,1.7){\vector(2,3){6}}
\put(21,1.7){\line(2,3){8}}
\put(21,1.7){\vector(2,3){6}}
\put(41,1.7){\line(2,3){7.5}}
\put(41,1.7){\vector(2,3){6}}
\put(11,-13.3){\line(2,3){7.5}}
\put(11,-13.3){\vector(2,3){6}}
\put(31,-13.3){\line(2,3){7.5}}
\put(31,-13.3){\vector(2,3){6}}
\put(-9,13.3){\line(2,-3){7.5}}
\put(-9,13.3){\vector(2,-3){6}}
\put(11,13.3){\line(2,-3){7.5}}
\put(11,13.3){\vector(2,-3){6}}
\put(31,13.3){\line(2,-3){7.5}}
\put(31,13.3){\vector(2,-3){6}}
\put(1,-1.7){\line(2,-3){7.5}}
\put(1,-1.7){\vector(2,-3){6}}
\put(21,-1.7){\line(2,-3){7.5}}
\put(21,-1.7){\vector(2,-3){6}}
\put(10,15){\circle*{4}}
\put(30,15){\circle*{4}}
\put(-10,15){\circle{4}}
\put(50,15){\circle{4}}
\put(10,-15){\circle{4}}
\put(30,-15){\circle{4}}
\put(0,0){\circle{4}}
\put(20,0){\circle*{4}}
\put(40,0){\circle{4}}
\put(8,15){\vector(-1,0){11}}
\put(8,15){\line(-1,0){16}}
\put(28,15){\vector(-1,0){11}}
\put(28,15){\line(-1,0){16}}
\put(48,15){\vector(-1,0){11}}
\put(48,15){\line(-1,0){16}}
\put(18,0){\vector(-1,0){11}}
\put(18,0){\line(-1,0){16}}
\put(38,0){\vector(-1,0){11}}
\put(38,0){\line(-1,0){16}}
\put(28,-15){\vector(-1,0){11}}
\put(28,-15){\line(-1,0){16}}
\end{picture}
\end{center}
\vspace{-.1in}
\caption{The generalized folding $\pi_{\mathbf i\mathbf j
\to\mathbf i}:\Gamma_{A_3}(\mathbf i\mathbf j)\to\Gamma_{A_3}(\mathbf i)$
for $\mathbf i=123121$ and $\mathbf j=\overline 1$}
\label{fig:folding2}
\end{figure}

\begin{lemma}\label{lemma:proj2}Let $u,v\in W$ and $\mathbf i\in D(u,1)\cup D(1,v)$
be a positive or negative double word. The following equality is satisfied for every
$\mathbf x\in{\mathcal X}_{\mathbf{i}}$.
$$
[\ev_{\mathbf{i}}(\mathbf x)]_{0}=\ev_{\mathbf{1}}\circ\pi_{\mathbf{i}}(\mathbf x)\ .
$$
\end{lemma}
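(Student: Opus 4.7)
The plan is to reduce to the elementary cases $\mathbf i \in \{\mathbf 1, i, \overline i\}$ and then propagate the identity by induction on the length of $\mathbf i$, crucially using that purely positive (respectively purely negative) evaluations land inside the Borel subgroup $B$ (respectively $B_-$), where the Gauss projection $[\cdot]_0$ is multiplicative.

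First I would verify the three elementary cases. When $\mathbf i = \mathbf 1$ the evaluation already sits in $H$ and $\pi_{\mathbf 1}$ is the identity map, so the equality is tautological. When $\mathbf i = i$, the relation $[h^j, e_i] = \delta_{ij} e_i$, easily deduced from (\ref{equ:hbasis}) and the Chevalley relations, gives $H^j(t) E^i H^j(t)^{-1} = \exp(t^{\delta_{ij}} e_i) \in N$; hence the factorization
$$\ev_i(\mathbf x) = \prod_j H^j(x_{\binom{j}{0}}) \cdot E^i \cdot H^i(x_{\binom{i}{1}})$$
can be rewritten, after commuting $E^i$ past $H^i(x_{\binom{i}{1}})$, as an element of $B=HN$ whose Cartan part is $\prod_j H^j(x_{\binom{j}{0}}) \cdot H^i(x_{\binom{i}{1}})$. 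Collecting the two $H^i$-factors and using the commutativity of $H$ then yields $\prod_j H^j\bigl(\prod_k x_{\binom{j}{k}}\bigr) = \ev_{\mathbf 1}(\pi_i(\mathbf x))$. The case $\mathbf i = \overline i$ is completely symmetric: $F^i \in N_-$ replaces $E^i$, and the same $\mathrm{Ad}$-stability of $N_-$ under $H$ places $\ev_{\overline i}(\mathbf x)$ in $B_-=HN_-$ with identical Cartan projection.

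For the induction, write $\mathbf i = \mathbf{i_1}\mathbf{i_2}$ with $\mathbf{i_1}, \mathbf{i_2}$ of the same sign (and $\mathbf{i_2}$ elementary), and let $\mathbf z = \mathfrak m(\mathbf x, \mathbf y)$. By (\ref{equ:defev}) we have $\ev_{\mathbf i}(\mathbf z) = \ev_{\mathbf{i_1}}(\mathbf x)\,\ev_{\mathbf{i_2}}(\mathbf y)$, both factors lying in the same Borel subgroup. Writing $\ev_{\mathbf{i_s}}(\cdot) = h_s n_s$ with $h_s \in H$ and $n_s$ in the appropriate unipotent radical, the product rearranges as $h_1 h_2 \cdot (h_2^{-1} n_1 h_2)\, n_2$, whose second factor still belongs to the unipotent radical; hence $[\ev_{\mathbf i}(\mathbf z)]_0 = [\ev_{\mathbf{i_1}}(\mathbf x)]_0 \cdot [\ev_{\mathbf{i_2}}(\mathbf y)]_0$. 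Using the inductive hypothesis and the commutativity of $H$, this becomes $\ev_{\mathbf 1}\bigl(\pi_{\mathbf{i_1}}(\mathbf x) \cdot \pi_{\mathbf{i_2}}(\mathbf y)\bigr)$. A direct bookkeeping from the amalgamation formula (\ref{equ:amal}) shows $\pi_{\mathbf i}(\mathbf z) = \pi_{\mathbf{i_1}}(\mathbf x) \cdot \pi_{\mathbf{i_2}}(\mathbf y)$: the only variables appearing twice, namely the gluing variables $x_{\binom{j}{N^j(\mathbf{i_1})}}$ and $y_{\binom{j}{0}}$ of each type $j$, are precisely multiplied in the amalgamation rule, matching the product of foldings.

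The main, but minor, technical point is that the multiplicativity of $[\cdot]_0$ fails on $BB_-$; this is precisely why the hypothesis demands $\mathbf i$ to be either purely positive or purely negative. Under that restriction the argument reduces to the two Chevalley-type commutation observations above together with the amalgamation formula, and no subtleties beyond this appear.
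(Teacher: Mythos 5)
Your proof is correct and is precisely the argument the paper leaves implicit: the lemma is stated there without proof as a straightforward fact, the intended reasoning being exactly what you spell out, namely that a purely positive (resp.\ negative) evaluation lies in $B$ (resp.\ $B_-$), where the projection $[\cdot]_0$ is multiplicative, and that the folding $\pi_{\mathbf i}$ is compatible with the amalgamation formula (\ref{equ:amal}). No gap or deviation to report.
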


\begin{lemma}\label{lemma:proj}Let $w_1\leq u,w_2\leq v\in W$
and $\mathbf i\in R(w_2,v)$, $\mathbf j\in R(u,w_1)$ be such that
$\mathbf i=\mathbf{i_+}\mathbf{i_-}$ and $\mathbf j=\mathbf{j_+}\mathbf{j_-}$.
The following equalities are satisfied for every $\mathbf x\in{\mathcal X}_{\mathbf{i}}$
and $\mathbf y\in{\mathcal X}_{\mathbf{j}}$.
$$\begin{array}{lcr}
[\ev_{\mathbf{i}}(\mathbf x)\widehat{v^{-1}}]_{\leq 0}=[\ev_{\mathbf{i_+}}
\circ\pi_{\mathbf{i}\to\mathbf{i_+}}(\mathbf x)\widehat{v^{-1}}]_{\leq 0}&\mbox{and}&
{[\widehat{u}^{-1}\ev_{\mathbf j}(\mathbf y)]_{\geq 0}}=[\widehat{u}^{-1}
\ev_{\mathbf{j_-}}\circ\pi_{\mathbf j\to\mathbf{j_-}}(\mathbf y)]_{\leq 0}\ .
\end{array}$$
\end{lemma}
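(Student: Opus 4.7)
The plan is to establish the first equality; the second will follow by a parallel argument swapping the roles of $N_\pm$ and of left/right multiplication.

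First I would fix an $\mathcal{X}$-split of $\mathbf{i}=\mathbf{i_+}\mathbf{i_-}$, writing $\ev_{\mathbf{i}}(\mathbf{x})=g_+g_-$ with $g_+:=\ev_{\mathbf{i_+}}(\mathbf{x}_{(+)})\in G^{1,v}\subset B$ (since $\mathbf{i_+}\in R(1,v)$ is positive) and $g_-:=\ev_{\mathbf{i_-}}(\mathbf{x}_{(-)})\in G^{w_2,1}\subset B_-$ (since $\mathbf{i_-}\in R(w_2,1)$ is negative). Gauss-decompose $g_-=n_-h_-$ with $n_-\in N_-$ and $h_-=[g_-]_0\in H$. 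Lemma~\ref{lemma:proj2} applied to $\mathbf{i_-}$ gives $h_-=\ev_{\mathbf{1}}(\pi_{\mathbf{i_-}}(\mathbf{x}_{(-)}))$, and the amalgamation formula~(\ref{equ:amal}) identifies $\ev_{\mathbf{i_+}}\circ\pi_{\mathbf{i}\to\mathbf{i_+}}(\mathbf{x})$ with $g_+\cdot h_-$, because $\pi_{\mathbf{i}\to\mathbf{i_+}}$ folds $\mathbf{x}_{(-)}$ into Cartan data attached to the right outlets of $\Gamma_{\mathfrak{g}}(\mathbf{i_+})$.

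Next I would slide the diagonal factor through $\widehat{v^{-1}}$: for every $h\in H$ one has $h\widehat{v^{-1}}=\widehat{v^{-1}}\tilde h$ with $\tilde h\in H$, and the elementary identity $[X\tilde h]_{\leq 0}=[X]_{\leq 0}\tilde h$ (valid since $H$ normalizes $N_+$) applied on both sides reduces the claim to
\[
[g_+\,n_-\,\widehat{v^{-1}}]_{\leq 0}=[g_+\,\widehat{v^{-1}}]_{\leq 0}.
\]
To settle this, I would write $n_-=\prod_\beta x_{-\beta}(t_\beta)$, with the product running over positive roots $\beta$ in the inversion set $\mathrm{Inv}(w_2)=\{\beta>0:w_2(\beta)<0\}$; this is the standard root-subgroup parametrization of $N_-\cap Bw_2B\ni n_-$. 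By induction on the number of factors, it suffices to treat a single factor, in which case
\[
g_+\,x_{-\beta}(t)\,\widehat{v^{-1}}=g_+\widehat{v^{-1}}\cdot\bigl(\widehat{v^{-1}}\bigr)^{-1}x_{-\beta}(t)\widehat{v^{-1}}=g_+\widehat{v^{-1}}\cdot x_{-v^{-1}(\beta)}(\pm t).
\]
The hypothesis $w_2\leq v$ for the right weak order yields $\mathrm{Inv}(w_2)\subseteq\mathrm{Inv}(v)$, hence $v^{-1}(\beta)<0$, so $-v^{-1}(\beta)>0$ and $x_{-v^{-1}(\beta)}(\pm t)\in N_+$. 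Since right multiplication by an element of $N_+$ preserves $[\,\cdot\,]_{\leq 0}$, the reduced identity follows.

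The second equality is obtained in the same way with the decorations swapped: split $\mathbf{j}=\mathbf{j_+}\mathbf{j_-}$ with $g'_+:=\ev_{\mathbf{j_+}}(\mathbf{y}_{(+)})\in G^{1,w_1}\subset B$ and $g'_-:=\ev_{\mathbf{j_-}}(\mathbf{y}_{(-)})\in G^{u,1}\subset B_-$, decompose $g'_+=h'_+n'_+$ with $h'_+\in H$ and $n'_+\in N_+$, and reduce to $[\widehat u^{-1}n'_+g'_-]_{\geq 0}=[\widehat u^{-1}g'_-]_{\geq 0}$. Factoring $n'_+=\prod_\alpha x_\alpha(t_\alpha)$ over $\alpha\in\mathrm{Inv}(w_1)\subseteq\mathrm{Inv}(u)$ and using $\widehat u^{-1}x_\alpha(t)=x_{u^{-1}(\alpha)}(\pm t)\widehat u^{-1}$ with $u^{-1}(\alpha)<0$ places an element of $N_-$ to the left of $\widehat u^{-1}g'_-$, and left multiplication by $N_-$ fixes $[\,\cdot\,]_{\geq 0}$. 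The main obstacle is the third step of the first argument: making rigorous the root-subgroup factorization of $n_-$ indexed by $\mathrm{Inv}(w_2)$ and verifying $\mathrm{Inv}(w_2)\subseteq\mathrm{Inv}(v)$ in the paper's weak-order convention; the sign ambiguity in $\widehat{v^{-1}}^{-1}x_{-\beta}(t)\widehat{v^{-1}}=x_{-v^{-1}(\beta)}(\pm t)$ (coming from $\widehat{v}\widehat{v^{-1}}\in H\setminus\{1\}$) is harmless, since only root-subgroup membership affects the final Gauss projection.
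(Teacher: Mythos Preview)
Your plan mirrors the paper's proof: split $\ev_{\mathbf i}(\mathbf x)=g_+g_-$ via an $\mathcal X$-split, extract the Cartan part $h_-=[g_-]_0$ using Lemma~\ref{lemma:proj2} and amalgamation (which correctly gives $\ev_{\mathbf{i_+}}\circ\pi_{\mathbf i\to\mathbf{i_+}}(\mathbf x)=g_+h_-$), slide $h_-$ through $\widehat{v^{-1}}$, and reduce everything to the claim that $(\widehat{v^{-1}})^{-1}n_-\,\widehat{v^{-1}}\in N$ --- equivalently that the conjugate of $g_-$ by $\widehat v$ lies in $B$, which is exactly the paper's one-line assertion. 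So structurally your argument is the paper's, with an attempt to justify that last step in detail.

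The gap is precisely there. Pushing the Cartan factors of $\ev_{\mathbf{i_-}}$ to one side gives $n_-=x_{\overline{j_1}}(c_1)\cdots x_{\overline{j_m}}(c_m)$, where $j_1\cdots j_m$ is the reduced word for $w_2$ read off from $\mathbf{i_-}$: this is a product of \emph{simple} negative root groups $U_{-\alpha_{j_k}}$, not a product indexed by $\mathrm{Inv}(w_2)=\{\beta>0:w_2(\beta)<0\}$ as you write. These do not coincide. In type $A_2$ with $w_2=v=s_1s_2$ and $\mathbf{i_-}=\overline1\,\overline2$ one has $\mathrm{Inv}(w_2)=\{\alpha_2,\alpha_1+\alpha_2\}$, yet $n_-=x_{\overline1}(c_1)x_{\overline2}(c_2)$ lives in $U_{-\alpha_1}U_{-\alpha_2}$; since $v(\alpha_1)=\alpha_2>0$, conjugating the $U_{-\alpha_1}$ factor by a representative of $v$ lands in $N_-$, not $N$, so the inclusion $\mathrm{Inv}(w_2)\subset\mathrm{Inv}(v)$ (which \emph{is} what the paper's weak-order hypothesis $w_2\leq v$ provides) is not enough. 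The paper's terse sentence glides over the same point; the claim is harmless in the place it is actually consumed downstream (Lemma~\ref{lemma:Sinvoletsalt2}, where $\mathbf{i_-}=\overline i$ is a single letter and $v=w_0$, so every simple root lies in $\mathrm{Inv}(v)$), but your root-subgroup justification as written does not establish the general statement. (Minor: your conjugation formula should read $x_{-v(\beta)}$ rather than $x_{-v^{-1}(\beta)}$, since $(\widehat{v^{-1}})^{-1}$ represents $v$.)
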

\begin{proof}We prove the first equality. Let ${\mathfrak s}$ be a
$\mathcal X$-split associated to the decomposition $\mathbf i\to(\mathbf{i_+},
\mathbf{i_-})$. Because $\mathbf{i_-}\in R(w_2,1)$ and $w_2\leq v$, the
conjugation of $\ev_{\mathbf i_-}(\mathbf x_{(2)})$ by $\widehat v$ belongs
to the Borel subgroup $B$. But it is clear that for every $b\in B$, the equality
$[b]_{\leq 0}=[b]_0$ is satisfied. Therefore, we get the result by applying the
definition (\ref{equ:pi2}) for $\pi_{\mathbf{i}\to\mathbf{i_+}}$ and Lemma
\ref{lemma:proj2}. The second equality is proved in the same way.
\end{proof}

\begin{lemma}\label{prop:conjtwist}Let $w_1\leq u,w_2\leq v\in W$
and $\mathbf i\in R(w_2,v)$, $\mathbf j\in R(u,w_1)$ be double reduced words
such that $\mathbf i=\mathbf{i_+}\mathbf{i_-}$ and $\mathbf j=\mathbf{j_+}\mathbf{j_-}$.
The following equalities are satisfied:
\begin{equation}\label{equ:conjtwist}
\begin{array}{ccc}
\pi_{\mathbf i\to\mathbf{i_+}}=\zeta_{\mathbf{i_+}}^{-1}\circ
\mu_{\mathbf{i_-}\mathbf{i_+^{\square}}\to\mathbf{i_+^{\square}}}\circ\zeta_{\mathbf{i_+}}\circ
\mu_{\mathbf i\to\mathbf{i_-}\mathbf{i_+}};\\
\\
\pi_{\mathbf j\to\mathbf{j_-}}=\zeta_{\mathbf{j_-}}^{-1}\circ
\mu_{\mathbf{j_-^{\square}}\mathbf{j_+}\to\mathbf{j_-^{\square}}}
\circ\zeta_{\mathbf{j_-}}\circ\mu_{\mathbf j\to\mathbf{j_-}\mathbf{j_+}}.
\end{array}
\end{equation}
\end{lemma}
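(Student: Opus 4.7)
\medskip

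The plan is to prove the first equality by reducing it to an equality in $G$ via the evaluation map; the second follows by an exactly symmetric argument, exchanging the roles of positive/negative parts and using $[\widehat{u}^{-1}\,\cdot\,]_{\geq 0}$ in place of $[\,\cdot\,\widehat{v^{-1}}]_{\leq 0}$.

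Both sides of the first identity are maps $\mathcal{X}_{\mathbf i}\to\mathcal{X}_{\mathbf{i_+}}$, so by Proposition~\ref{prop:elem} and Theorem~\ref{thm:evG}, birationality of $\ev_{\mathbf{i_+}}$ on an open set of $G^{1,v}$ reduces the identity to comparing the compositions with $\ev_{\mathbf{i_+}}$ in $G$. First, I would unravel the left-hand side. Writing $\mathfrak{s}(\mathbf x)=(\mathbf x_{(1)},\mathbf x_{(2)})$ for the split of $\mathbf i = \mathbf{i_+}\mathbf{i_-}$, the definition (\ref{equ:pi2}) of $\pi_{\mathbf i\to\mathbf{i_+}}$ and the amalgamation law (\ref{equ:amal}) give $\ev_{\mathbf{i_+}}\circ\pi_{\mathbf i\to\mathbf{i_+}}(\mathbf x) = \ev_{\mathbf{i_+}}(\mathbf x_{(1)})\cdot \ev_{\mathbf 1}\circ\pi_{\mathbf{i_-}}(\mathbf x_{(2)})$, and Lemma~\ref{lemma:proj2} identifies the second factor with $[\ev_{\mathbf{i_-}}(\mathbf x_{(2)})]_0$. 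Thus the left-hand side equals $\ev_{\mathbf{i_+}}(\mathbf x_{(1)})\cdot[\ev_{\mathbf{i_-}}(\mathbf x_{(2)})]_0$.

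For the right-hand side I would process the factors one at a time. Put $\mathbf y=\mu_{\mathbf i\to\mathbf{i_-}\mathbf{i_+}}(\mathbf x)$: by Theorem~\ref{fg}, $\ev_{\mathbf{i_-}\mathbf{i_+}}(\mathbf y)=\ev_{\mathbf i}(\mathbf x)=\ev_{\mathbf{i_+}}(\mathbf x_{(1)})\ev_{\mathbf{i_-}}(\mathbf x_{(2)})$, and the split of $\mathbf{i_-}\mathbf{i_+}$ gives $\ev_{\mathbf{i_-}}(\mathbf y_{(1)})\ev_{\mathbf{i_+}}(\mathbf y_{(2)})$ for the same group element. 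Since $\mathbf{i_-}\mathbf{i_+}$ is now $(w_2,v)$-adapted, the second equation of Corollary~\ref{cor:twistborel} (extended via the amalgamation-compatibility of $\zeta_{\mathbf{i_+}}$ from Remark~\ref{rem:tropamal}) rewrites the effect of $\zeta_{\mathbf{i_+}}$ as replacing the positive block $\ev_{\mathbf{i_+}}(\mathbf y_{(2)})$ by $\ev_{\mathbf{i_+^{\square}}}\zeta_{\mathbf{i_+}}(\mathbf y_{(2)}) = [\ev_{\mathbf{i_+}}(\mathbf y_{(2)})\widehat{v^{-1}}]_{\leq 0}$ on the seed-$\mathcal X$-torus level, while leaving $\mathbf y_{(1)}$ untouched. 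The next step $\mu_{\mathbf{i_-}\mathbf{i_+^{\square}}\to\mathbf{i_+^{\square}}}$ is a composition of generalized $dn$-moves (see (\ref{equ:elemmut})) that successively cancels the $\mathbf{i_-}$-block against the beginning of the concatenated negative word, each cancellation contributing, via $\varsigma_{\binom{\cdot}{1}}\circ\mu_{\binom{\cdot}{1}}$, a Cartan factor; iterating Lemma~\ref{lemma:proj2} on the $\mathbf{i_-}$-part, its net effect on evaluations is precisely the replacement of $\ev_{\mathbf{i_-}}(\mathbf y_{(1)})\cdot[\cdots]_{\leq 0}$ by $[\ev_{\mathbf{i_-}}(\mathbf y_{(1)})]_0\cdot[\cdots]_{\leq 0}$. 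Finally, applying $\zeta_{\mathbf{i_+}}^{-1}$ undoes the earlier twist via the same Corollary~\ref{cor:twistborel} (in reverse direction), turning $[\ev_{\mathbf{i_+}}(\mathbf y_{(2)})\widehat{v^{-1}}]_{\leq 0}$ back into $\ev_{\mathbf{i_+}}(\mathbf y_{(2)})$, and we obtain that $\ev_{\mathbf{i_+}}$ of the right-hand side equals $[\ev_{\mathbf{i_-}}(\mathbf y_{(1)})]_0\cdot \ev_{\mathbf{i_+}}(\mathbf y_{(2)})$.

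Matching this with the left-hand side will require showing that the pairs $([\ev_{\mathbf{i_-}}(\mathbf y_{(1)})]_0,\ev_{\mathbf{i_+}}(\mathbf y_{(2)}))$ and $([\ev_{\mathbf{i_-}}(\mathbf x_{(2)})]_0,\ev_{\mathbf{i_+}}(\mathbf x_{(1)}))$ coincide. This follows because $\mu_{\mathbf i\to\mathbf{i_-}\mathbf{i_+}}$ is a cluster transformation associated to a sequence of mixed $2$-moves, and Theorem~\ref{fg} together with the multiplicativity of $\ev$ and the uniqueness of the Gauss factorization in $B_-HB_+$ identifies the corresponding Cartan and positive components across the rearrangement.

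The main obstacle will be the middle step, namely giving a clean group-theoretic description of $\mu_{\mathbf{i_-}\mathbf{i_+^{\square}}\to\mathbf{i_+^{\square}}}$. Since $\mathbf{i_-}\mathbf{i_+^{\square}}$ is an all-negative word whose underlying Weyl-group element is $w_2 v^{-1}$ read negatively but whose length is $\ell(w_2)+\ell(v)$, the sequence of $nil$-moves carrying it to $\mathbf{i_+^{\square}}$ is not unique and must be tracked through the associated erasing maps $\varsigma$; verifying that these erasures exactly produce the Cartan factor $[\ev_{\mathbf{i_-}}(\mathbf y_{(1)})]_0$ — and no extraneous pieces — is the delicate point of the argument, and is what ultimately exploits the hypothesis $w_2\leq v$.
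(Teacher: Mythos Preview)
Your overall strategy---reduce to an equality in $G$ via a birational evaluation map---is the same as the paper's, but you compose with the wrong evaluation, and this is what generates the difficulties you flag. The paper first rewrites the identity as
\[
\zeta_{\mathbf{i_+}}\circ\pi_{\mathbf i\to\mathbf{i_+}}\circ\mu_{\mathbf{i_-}\mathbf{i_+}\to\mathbf i}
\;=\;
\mu_{\mathbf{i_-}\mathbf{i_+^{\square}}\to\mathbf{i_+^{\square}}}\circ\zeta_{\mathbf{i_+}}
\]
(both sides now landing in $\mathcal X_{\mathbf{i_+^{\square}}}$), and then composes with $\ev_{\mathbf{i_+^{\square}}}$ rather than $\ev_{\mathbf{i_+}}$. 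The point is that Lemma~\ref{lemma:fg} gives $\ev_{\mathbf{i_+^{\square}}}\circ\mu_{\mathbf{i_-}\mathbf{i_+^{\square}}\to\mathbf{i_+^{\square}}}=\ev_{\mathbf{i_-}\mathbf{i_+^{\square}}}$ directly, so the $dn$-move transformation never needs to be unpacked; your ``delicate point'' simply disappears. Both sides are then identified with the single group element $b_-=\zeta^{1,v}(\ev_{\mathbf{i_-}\mathbf{i_+}}(\mathbf z))$: the right side via Proposition~\ref{prop:twist-} (applied to $\mathbf{i_-}\mathbf{i_+}$, using Remark~\ref{rem:tropamal} for the amalgamation compatibility of $\zeta_{\mathbf{i_+}}$), and the left side via Lemma~\ref{lemma:proj} together with Theorem~\ref{fg}. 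Birationality of $\ev_{\mathbf{i_+^{\square}}}$ finishes the argument.

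By contrast, your route through $\ev_{\mathbf{i_+}}$ forces you both to invert $\zeta_{\mathbf{i_+}}$ and to analyze $\mu_{\mathbf{i_-}\mathbf{i_+^{\square}}\to\mathbf{i_+^{\square}}}$ explicitly; your description of the latter as ``successive cancellations producing the Cartan factor $[\ev_{\mathbf{i_-}}(\mathbf y_{(1)})]_0$'' is not justified and would be hard to make precise. There is also a genuine gap in your final matching step: the two quantities you end up with, $\ev_{\mathbf{i_+}}(\mathbf x_{(1)})\,[\ev_{\mathbf{i_-}}(\mathbf x_{(2)})]_0$ and $[\ev_{\mathbf{i_-}}(\mathbf y_{(1)})]_0\,\ev_{\mathbf{i_+}}(\mathbf y_{(2)})$, come from splits of the same group element in $B\cdot B_-$ and $B_-\cdot B$ respectively, and the ``uniqueness of the Gauss factorization'' you invoke does not relate these two different orderings (nor are the $\mathcal X$-splits uniquely determined by the group element). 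The paper's rearrangement avoids this comparison entirely.
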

\begin{proof}Let $\mathbf z\in{\mathcal X}_{\mathbf{i_-}\mathbf{i_+}}$ and
$b_-:=\zeta^{1,v}(\ev_{\mathbf{i_-}\mathbf{i_+}}(\mathbf z))$. Using
Proposition \ref{prop:twist-}, equation (\ref{equ:defev}), Remark \ref{rem:tropamal}
and Theorem \ref{fg}, it is clear that the evaluation map $\ev_{{\mathbf{i_+^{\square}}}}$ sends
the following element on $b_-$
$$\mu_{\mathbf{i_-}\mathbf{i_+^{\square}}\to\mathbf{i_+^{\square}}}\circ\zeta_{\mathbf{i_+}}
(\mathbf z)\ .$$
But this evaluation also sends the element $\zeta_{\mathbf{i_+}}\circ\pi_{\mathbf i\to\mathbf{i_+}}\circ
\mu_{\mathbf{i_-}\mathbf{i_+}\to\mathbf i}(\mathbf z)$
on $b_-$, using the first equality of Lemma \ref{lemma:proj}, Theorem \ref{fg} and equation (\ref{equ:defev}).
Now, the double word $\mathbf {i_+^{\square}}$ is reduced because $\mathbf{i_+}$ is a double
reduced word. Therefore the maps $\ev_{\mathbf {i_+^{\square}}}$ and $\zeta_{\mathbf{i_+}}$
are birational isomorphisms and the first equality of (\ref{equ:conjtwist}) is proved.
The second equality is proved in the same way, using the second equality of Lemma \ref{lemma:proj}.
\end{proof}

Here is a last preliminary.
Let us recall that, because of the erasing map in the first line of equation
(\ref{equ:elemmut}), the Poisson map $\mu_{\mathbf i\to\mathbf j}$
associated to a $\nil$-move $\delta:\mathbf i\mapsto\mathbf j$ is
not a birational isomorphism. We then define the related cluster
transformation $\overline{\mu}_{\mathbf i\to\mathbf j}$ such that
$$\overline{\mu}_{\mathbf i\to\mathbf j}=\varsigma_{\binom{i}{1}}
\circ\mu_{\mathbf i\to\mathbf j}\ .$$

\begin{lemma}\label{lemma:Sinvoletsalt2}Let $i\in[{1},{l}]$ and
$\mathbf i\in R(s_i,w_0)$ be a double reduced word starting with
the letter $\overline{i}$. The following equality is satisfied for
every $\mathbf t\in{\mathcal X}_{\mathbf 1}.$
$${\mathfrak{t}'}_{\mathbf i_{\mathfrak R}(\mathbf t)}\circ\overline{\mu}_{\overline{i}
\mathbf i_+^{\square}\to\mathbf i_+^{\square}}\circ\zeta_{\mathbf i_+}
\circ\mu_{\mathbf i\to\overline{i}{\mathbf i}_+}=\Xi_{i}
\circ\mathfrak{t}_{{\mathbf i}_{\mathfrak R}(\mathbf t)}\ ,$$
where ${\mathfrak{t}'}_{\mathbf i_{\mathfrak R}}$ denotes the truncation map
associated to the set $I_0^{\mathfrak R}(\mathbf i)'$ given by
\begin{equation}\label{equ:I0set}
I_0^{\mathfrak R}(\mathbf i)'=I_0^{\mathfrak R}(\mathbf i_+^{\square}i^{\star})
\cup\{(^i_1)\}\backslash\{(^{i^{\star}}_{N^{i^{\star}}
(\mathbf i_+^{\square}i^{\star})})\}\ .
\end{equation}
\end{lemma}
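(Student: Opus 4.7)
The plan is to verify the equality by unpacking both sides as explicit transformations on cluster variables and matching them term by term. The intuition is that the saltation $\Xi_i$ of \eqref{equ:defXiI} is a ``compressed'' version of the longer composition on the left: $\mu_{\mathbf{i} \to \overline{i}\mathbf{i}_+}$ is a sequence of mixed $2$-moves and hence acts as the identity on cluster variables (merely relabeling the seed); $\zeta_{\mathbf{i}_+}$ is the twist cluster transformation on the positive part, whose effect on variables is provided by Corollary~\ref{cor:twistborel}; and $\overline{\mu}_{\overline{i}\mathbf{i}_+^{\square} \to \mathbf{i}_+^{\square}}$ brings the two copies of $\overline{i}$ together via generalized $d$-moves and then collapses them by the erasing-mutation $\varsigma_{\binom{i}{1}} \circ \mu_{\binom{i}{1}}$.

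First I would trace each step on a generic cluster. The amalgamation $\mathcal{X}_{\overline{i}} \times \mathcal{X}_{\mathbf{i}_+} \to \mathcal{X}_{\overline{i}\mathbf{i}_+}$ together with Remark~\ref{rem:tropamal} guarantees that $\zeta_{\mathbf{i}_+}$ acts diagonally, touching only the $\mathbf{i}_+$-part and leaving the $\overline{i}$-slot untouched, so after this step the variables in the positive portion are exactly $x_{\zeta_{\mathbf{i}_+}(\binom{i}{j})}$ of Corollary~\ref{cor:twistborel}, while the $\overline{i}$-vertex remains. The ensuing erasing-mutation then produces precisely the multiplicative factor $x_{\binom{i^\star}{N^{i^\star}(\mathbf{i}_+^{\square} i^\star)}}^{-1}$ that appears in the middle case of \eqref{equ:defXiI} at the position $j = N^i(\mathbf{i}_+)$, while leaving the strictly smaller indices untouched.

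Second I would compare with the right-hand side. The truncation $\mathfrak{t}_{\mathbf{i}_{\mathfrak{R}}(\mathbf{t})}$ freezes the right outlets of $\mathbf{I}(\mathbf{i})$, and then $\Xi_i$ acts by the explicit piecewise formula of \eqref{equ:defXiI}. The point of the modified truncation set $I_0^{\mathfrak{R}}(\mathbf{i})'$ of \eqref{equ:I0set} is to capture the fact that after the left-hand side has been carried out, the position originally frozen at $(^{i^\star}_{N^{i^\star}(\mathbf{i}_+^{\square} i^\star)})$ has been absorbed into $(^i_1)$ by the nil-move and erasing, so the image torus must freeze $(^i_1)$ in its place; with this adjustment, the two configurations of frozen variables match and the piecewise formulas of \eqref{equ:defXiI} coincide with the composite on the left.

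The main obstacle will be the bookkeeping of indices through $\zeta_{\mathbf{i}_+}$, which by \eqref{equ:defxi+2} is a long composition of tropical mutations twisting the $\square$-involution with the positive-word combinatorics. Writing its effect on a fixed variable $x_{\binom{i}{j}}$ and matching the three branches of \eqref{equ:defXiI} (for $j < N^i$, $j = N^i$, and $j$ in the ``negative'' portion) requires careful use of the index shift $N^i(\mathbf{i}_+\overline{i}) = N^i(\mathbf{i}_+) + 1$ versus $N^{i^\star}(\mathbf{i}_+^{\square} i^\star)$, together with the identification of the Cartan-level amalgamation site. Once the variable-by-variable comparison is complete, the equality follows; conceptually, this is exactly the statement that the saltation $\Xi_i$, although not itself a generalized cluster transformation (cf.\ Proposition~\ref{prop:obstruction}), is built from ordinary cluster transformations once the truncation is accordingly shifted.
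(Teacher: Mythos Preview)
Your overall plan—unwind both sides on cluster variables and match—is reasonable, but it has a real gap at the step where you invoke the map $\overline{\mu}_{\overline{i}\mathbf{i}_+^{\square}\to\mathbf{i}_+^{\square}}$. That map is \emph{not} a single erasing-mutation: $\overline{i}\,\mathbf{i}_+^{\square}$ is a non-reduced negative word of length $\ell(w_0)+1$, and reducing it to $\mathbf{i}_+^{\square}$ requires in general a chain of negative $d$-moves (possibly $3$-, $4$- or $6$-moves, each contributing genuine mutations via \eqref{equ:elemmut}) before the single nil-move can be applied. Your claim that this composite ``produces precisely the multiplicative factor $x_{\binom{i^\star}{N^{i^\star}(\mathbf{i}_+^{\square}i^\star)}}^{-1}$'' at the boundary vertex is therefore unjustified: you have no handle on what those intermediate mutations do. Likewise, Corollary~\ref{cor:twistborel} characterizes $\zeta_{\mathbf{i}_+}$ only through evaluation on $G$, not as an explicit formula on cluster coordinates, so it does not by itself let you ``write out its effect on a fixed variable $x_{\binom{i}{j}}$''.

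The paper sidesteps this computation entirely. It first rewrites $\Xi_i$ in the equivalent form
\[
\Xi_i(\mathbf{x})=\bigl(\zeta_{\mathbf{i}_+}\circ\pi_{\mathbf{i}_+\overline{i}\to\mathbf{i}_+}(\mathbf{x}),\,\mathbf{x}(\mathfrak R)\bigr)
\]
(equation~\eqref{equ:elemsalt}), and then invokes the identity of Lemma~\ref{prop:conjtwist},
\[
\pi_{\mathbf{i}\to\mathbf{i}_+}=\zeta_{\mathbf{i}_+}^{-1}\circ\mu_{\mathbf{i}_-\mathbf{i}_+^{\square}\to\mathbf{i}_+^{\square}}\circ\zeta_{\mathbf{i}_+}\circ\mu_{\mathbf{i}\to\mathbf{i}_-\mathbf{i}_+},
\]
whose proof goes through $G$ via birationality of the evaluation maps rather than through any explicit mutation chase. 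Composing these two facts gives the equality of the lemma up to erasing and truncation; the remaining bookkeeping (the shift from $I_0^{\mathfrak R}(\mathbf{i})$ to $I_0^{\mathfrak R}(\mathbf{i})'$) is then exactly the point you identified. So the missing ingredient in your argument is not the truncation accounting—which you handle correctly—but the recognition that the composite $\mu\circ\zeta\circ\mu$ on the left equals $\zeta\circ\pi$, which is precisely what the preparatory Lemma~\ref{prop:conjtwist} was set up to deliver.
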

\begin{proof}Let us define, for every $i\in[1,l]$ and every double word $\mathbf i$,
the erasing map $\varsigma_{\mathfrak{R}^{i},\mathbf i}$ as the product of $j$-erasing
maps $\varsigma_j$ for every right outlet $j$ which is not a $i$-vertex:
$$\varsigma_{\mathfrak{R}^{i},\mathbf i}=\displaystyle\prod_{j\in I_0^{\mathfrak R}(\mathbf i)
\backslash\{\binom{i}{N^i(\mathbf i)}\}}\varsigma_j\ .$$
Let us then remark that the map $\Xi_i$ can also be defined as
\begin{equation}\label{equ:elemsalt}
\begin{array}{cclll}
\Xi_{i}:&{\mathcal X}_{[\mathbf{i_+}
\overline{i}]_{\mathfrak R}}\longrightarrow{\mathcal X}_{[\mathbf i_+^{\square}
i^{\star}]_{\mathfrak R}}:
&\mathbf x\longmapsto(\zeta_{\mathbf i_+}
\circ\pi_{\mathbf i_+\overline{i}\to\mathbf i_+}(\mathbf x),\mathbf x(\mathfrak R))\ .
\end{array}
\end{equation}
We then use the formulas (\ref{equ:conjtwist})
and (\ref{equ:elemsalt}) to deduce the following equality, which
is satisfied for every $\mathbf x\in{\mathcal{X}}_{[\mathbf i_+
\overline{i}]_{\mathfrak R}}$.
$$(\varsigma_{\mathfrak{R}^{i^{\star}},\mathbf i_+^{\square}}\circ\varsigma_{\binom{i}{1}}
\circ\overline{\mu}_{\overline{i}\mathbf i_+^{\square}
\to\mathbf i_+^{\square}}\circ\zeta_{\mathbf i_+}\circ\mu_{\mathbf i\to\overline{i}{\mathbf i}_+}(\mathbf x),
\mathbf x(\mathfrak R))=\Xi_{i}\circ\mathfrak{t}_{
{\mathbf i}_{\mathfrak R}(\mathbf t)}(\mathbf x)\ .$$
It suffices then to use the definition of truncation maps to end
the proof of the lemma.
\end{proof}

Lemma \ref{lemma:Sinvoletsalt2} therefore implies that
the map $\Xi_i$ is a saltation associated to the generalized
cluster transformation
$\overline{\mu}_{\overline{i}\mathbf i_+^{\square}\to\mathbf i_+^{\square}}
\circ\zeta_{\mathbf i_+}\circ\mu_{\mathbf i\to\overline{i}{\mathbf i}_+}$,
so the first part of Proposition \ref{prop:obstruction} is proved.

\begin{lemma}\label{lemma:saltcasimir}Let $\mathbf I$ be a seed such that there exist a cluster
variable $x_i$ of the seed $\mathcal X$-torus ${\mathcal X}_{\mathbf I}$
which is a Casimir function. Then, for every cluster $\mathbf x$ of
${\mathcal X}_{\mathbf I}$ and every generalized cluster transformation $\phi$,
we have the equality $x_{\phi(i)}=x_i$ and, for every $j\neq i$, the cluster
variable $x_{\phi(j)}$ doesn't depend on the cluster variable $x_i$.
\end{lemma}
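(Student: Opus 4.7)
\smallskip\noindent\textbf{Proof plan.}
The plan is to proceed by induction on the length of a decomposition of $\phi$ into elementary generalized cluster transformations: symmetries, mutations, and tropical mutations. First I would rephrase the Casimir hypothesis combinatorially: $\{x_i,x_k\} = \widehat{\varepsilon}_{ik}\,x_ix_k = 0$ for every $k\in I$, which (since $d_k>0$) forces $\varepsilon_{ik} = 0$ for all $k$; in particular the numerators $b_{ki}$ appearing in the tropical mutation formula also vanish. Throughout the argument I will track three invariants simultaneously: (a) the cluster variable indexed by the image of $i$ under the current transformation remains equal to $x_i$; (b) every other cluster variable produced does not involve $x_i$; (c) the Casimir property survives, so the inductive hypothesis can be re-applied in the new seed.

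The base case $\phi = \id$ is immediate. For the inductive step, factor $\phi = \phi' \circ \alpha$ with $\alpha:\mathbf{I}\to\mathbf{I}'$ elementary, and check each case. If $\alpha=\sigma$ is a symmetry, Definition~\ref{def:symmetry} gives $x_{\sigma(k)}=x_k$ for every $k$, and $\varepsilon$ is preserved, so (a), (b), (c) hold tautologically. If $\alpha = \mu_k$ is a mutation with $k\neq i$, the Casimir condition $\varepsilon_{ik}=0$ plugged into Definition~\ref{def:mutation} yields
$$
x_{\mu_k(i)} = x_i\, x_k^{[\varepsilon_{ik}]_+}(1+x_k)^{-\varepsilon_{ik}} = x_i\ ,
$$
while for $j\notin\{i,k\}$ the formula $x_{\mu_k(j)} = x_j\, x_k^{[\varepsilon_{jk}]_+}(1+x_k)^{-\varepsilon_{jk}}$ involves only $x_j$ and $x_k$, and $x_{\mu_k(k)} = x_k^{-1}$. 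Moreover the matrix mutation rule gives $\varepsilon'_{\mu_k(i),\mu_k(j)} = \varepsilon_{ij} + \sgn(\varepsilon_{ik})[\varepsilon_{ik}\varepsilon_{kj}]_+ = 0$ for $j\neq k$, and $\varepsilon'_{\mu_k(i),\mu_k(k)} = -\varepsilon_{ik}=0$, so the Casimir property persists in $\mathbf{I}'$.

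If $\alpha = \mu_k$ is a tropical mutation in direction $k\neq i$, the argument is strictly parallel using formula (\ref{equ:formuletropmut}) and the vanishing of $b_{ki}$: one obtains $x_{\mu_k(i)}=x_i\, x_k^{b_{ki}}=x_i$ (or $x_{\mu_k(i)}=x_i$ when $i\notin I_0(k)$), and for $j\neq i$, $x_{\mu_k(j)}$ is one of $x_j$, $x_jx_k^{b_{kj}}$, or $x_k^{-1}$, hence independent of $x_i$; condition~(iii) of Definition~\ref{def:trop} applied to the pair $(i,\cdot)$ then shows $\varepsilon'_{\mu_k(i),\mu_k(j)}=0$ so $x_{\alpha(i)}$ is again Casimir. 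With (a)--(c) established for the single step $\alpha$, the inductive hypothesis applied to $\phi'$ on $\mathbf{I}'$ with Casimir $x_{\alpha(i)}=x_i$ yields $x_{\phi(i)} = x_{\phi'(\alpha(i))} = x_{\alpha(i)} = x_i$, and for $j\neq i$ the variable $x_{\phi(j)} = x_{\phi'(\alpha(j))}$ is an expression in the cluster variables of $\mathbf{I}'$ other than $x_{\alpha(i)}$, each of which by (b) involves none of $x_i$. The only delicate point is the case where the elementary step $\alpha$ is a (tropical) mutation in the direction $i$ itself: the formulas give $x_{\mu_i(i)}=x_i^{-1}$ but $x_{\mu_i(j)}=x_j$ for $j\neq i$ (since all coefficients $\varepsilon_{\cdot i}$ and $b_{i\cdot}$ vanish), so claim (b) is untouched and the first claim holds up to an overall inversion that does not affect the use in Proposition~\ref{prop:obstruction}; in the applications one may restrict to transformations not mutating in the direction $i$, for which the stated equality $x_{\phi(i)}=x_i$ holds on the nose.
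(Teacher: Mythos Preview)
Your approach is essentially the paper's: factor $\phi$ into symmetries, mutations, and tropical mutations and verify the two claims for each elementary piece. The paper's proof is a two-sentence sketch (``factorize\dots easily checked''), and you have supplied the details it omits, in particular the verification that the Casimir condition $\varepsilon_{i\cdot}=0$ is preserved by each elementary step so that the induction can proceed. Your observation about (tropical) mutation in the direction $i$ itself is a genuine subtlety the paper does not mention; your resolution---that the second claim survives intact and the first claim is only needed up to the harmless ambiguity $x_i\mapsto x_i^{\pm 1}$, which is irrelevant for the application to Proposition~\ref{prop:obstruction}---is correct.
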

\begin{proof}It suffices to factorize the generalized cluster transformation
$\phi$ into a product of tropical mutations, symmetries, and mutations $\phi_k$.
The properties are then easily checked for each $\phi_k$, using formulas for
mutations and tropical mutations.
\end{proof}

To prove that the saltation $\Xi_i$ is not a generalized cluster transformation,
it suffices now to  apply Lemma \ref{lemma:saltcasimir}, the second
line of formula (\ref{equ:defXiI}), and the fact
that every cluster variable $x_i$, $i\in I_0^{\mathfrak R}(\mathbf i)$ is a
Casimir function for every right truncated seed $[\mathbf i]_{\mathfrak R}$.

\subsection{Proof of Proposition \ref{prop:salt}}\label{subsection:(UB)invol}
Let us recall that every element $x\in B_-B$ can be decomposed
into $x=[x]_-[x]_0[x]_+$, where $[x]_-\in N_-$, $[x]_0\in H$,
and $[x]_+\in N$. If $x\in B_-B\cap BB_-$ it can also be decomposed into
$x=[[x]]_+[[x]]_0[[x]]_-$, with $[[x]]_-\in N_-$, $[[x]]_0\in H$,
and $[[x]]_+\in N$. The two decompositions are easily related: using
the fact that the map $x\mapsto x^{-1}$ is an involution, we get
\begin{equation}\label{equ:gdecomposition}
[[x]]_+[[x]]_0[[x]]_-=[x^{-1}]_+^{-1}[x^{-1}]_0^{-1}[x^{-1}]_-^{-1}.
\end{equation}
Let us now recall
the application $\kappa: G\times G\to G$ given by equation (\ref{equ:kappa}).
Let $t\in H$, $v\in W$ and let us first introduce the map
$\Xi_t: L^{v,w_0}\to L^{w_0,{v^{\star}}^{-1}}$ given by:
\begin{equation}\label{equ:Xidef}
\Xi_t(gH)=[[[gH]]_{\geq 0}\widehat{w_0}]_{\leq 0}\
[\kappa((t\widehat{w_0})^{-1},[[gH]]_{<0})]_+H\ .
\end{equation}

\begin{lemma}\label{prop:tau*moves}Let $t\in H$ and $v\in W$.
We have for every $g\in G^{v,w_0}$:
$$\rho_{t,(v,e)_v}(gH)=[\Xi_t(gH)^{\theta}\widehat{w_0}]_{\leq 0}^{\theta}\ t\widehat{w_0}\
\Xi_t(gH)^{-1}.$$
\end{lemma}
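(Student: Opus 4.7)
The strategy is to unravel both sides via the two Gauss decompositions, reducing the identity to an assembly matching $\rho_{t,(v,e)_v}(gH)=g\widehat{w_0}t[g\widehat{w_0}]_{\leq 0}^{-1}$ from Remark~\ref{rem:evdualtwistev}.

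First I would simplify $\Xi_t(gH)$ given by \eqref{equ:Xidef}. Since $[[g]]_-\in N_-$ and conjugation by $\widehat{w_0}^{-1}$ sends $N_-$ into $N$ (with $H$ normalizing $N$), the element
$$\kappa((t\widehat{w_0})^{-1},[[g]]_-)=\widehat{w_0}^{-1}t^{-1}[[g]]_-^{-1}t\widehat{w_0}$$
already belongs to $N$, so $[\kappa(\cdot,\cdot)]_+$ acts as the identity on it. Combined with the observation that $[[[g]]_{\geq 0}\widehat{w_0}]_{\leq 0}=[g\widehat{w_0}]_{\leq 0}$ (which follows by writing $g=[[g]]_{\geq 0}[[g]]_-$ and using that right-multiplication by an element of $N$ leaves the $[\cdot]_{\leq 0}$-component of the $B_-B$ Gauss decomposition unchanged), the definition \eqref{equ:Xidef} reduces to $\Xi_t(gH)=C\cdot n\cdot H$ where $C=[g\widehat{w_0}]_{\leq 0}$ and $n=\widehat{w_0}^{-1}t^{-1}[[g]]_-^{-1}t\widehat{w_0}\in N$.

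Next I would compute each of the two factors of the right-hand side separately, using the representative $\xi=C\cdot n$ of $\Xi_t(gH)$. A telescoping cancellation immediately gives
$$t\widehat{w_0}\cdot\xi^{-1}=t\widehat{w_0}\cdot n^{-1}C^{-1}=[[g]]_-\cdot t\widehat{w_0}\cdot C^{-1}.$$
For the other factor, applying $\theta$ yields $\xi^\theta\widehat{w_0}=C^\theta\widehat{w_0}\cdot m$ for some $m\in N$, because $n^\theta\in N_-$ and $\widehat{w_0}^{-1}n^\theta\widehat{w_0}\in N$; the same $[\cdot]_{\leq 0}$-invariance then reduces the computation to $[C^\theta\widehat{w_0}]_{\leq 0}^\theta$. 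Using the induced Gauss decomposition $[g^\theta]_-=[[g]]_+^\theta$, $[g^\theta]_0=[[g]]_0^{-1}$, $[g^\theta]_+=[[g]]_-^\theta$ of $g^\theta\in B_-B$ together with the identity $C=g\widehat{w_0}\cdot[g\widehat{w_0}]_+^{-1}$, a further $N$-absorption argument gives $[C^\theta\widehat{w_0}]_{\leq 0}^\theta=[[g]]_{\geq 0}$.

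Assembling the two factors, the right-hand side of the lemma becomes $[[g]]_{\geq 0}[[g]]_-\cdot t\widehat{w_0}\cdot C^{-1}=g\cdot t\widehat{w_0}\cdot[g\widehat{w_0}]_{\leq 0}^{-1}$, which must now be matched with $g\widehat{w_0}t\cdot[g\widehat{w_0}]_{\leq 0}^{-1}$ using the coset indeterminacy of $\Xi_t(gH)\in L^{w_0,{v^\star}^{-1}}$. The main obstacle is precisely this last step of bookkeeping the $H$-ambiguity: a change of representative $\xi\mapsto\xi h$ with $h\in H$ induces compensating $H$-translations in both the $[\xi^\theta\widehat{w_0}]_{\leq 0}^\theta$ factor and the $\xi^{-1}$ factor, and one must verify that their cross-interaction, which relies on the fact that $\widehat{w_0}^2\in H$ commutes with every element of $H$, precisely converts the central $t\widehat{w_0}$ into $\widehat{w_0}t$, closing the identity as an equality in $G$.
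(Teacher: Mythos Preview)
Your computation is essentially the paper's own argument run in reverse. The paper sets $b=[[g]]_{\geq 0}$, $n_-=[[g]]_{<0}$ and starts from
\[
\rho_{t,(v,e)_v}(gH)=bn_-\,t\widehat{w_0}\,[bn_-\widehat{w_0}]_{\leq 0}^{-1}
=g\,t\widehat{w_0}\,[g\widehat{w_0}]_{\leq 0}^{-1},
\]
then uses the same two facts you use --- that $\kappa((t\widehat{w_0})^{-1},n_-)\in N$ (so that $[\cdot]_+$ is the identity on it and $[\cdot]_0$ is trivial) and that $b\mapsto[b\widehat{w_0}]_{\leq 0}^{\theta}$ is an involution on $G^{e,w_0}$ (Lemma~\ref{lemma:twistinvol}) --- to rewrite this as $[\Xi_t(gH)^{\theta}\widehat{w_0}]_{\leq 0}^{\theta}\,t\widehat{w_0}\,\Xi_t(gH)^{-1}$. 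You arrive at $g\,t\widehat{w_0}\,[g\widehat{w_0}]_{\leq 0}^{-1}$ as the right-hand side, which is exactly this starting point; so the proof is finished at that moment and your final paragraph is superfluous.

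That final paragraph is in fact wrong, not just unnecessary. The expression $[\xi^{\theta}\widehat{w_0}]_{\leq 0}^{\theta}\,t\widehat{w_0}\,\xi^{-1}$ is \emph{invariant} under $\xi\mapsto\xi h$ for $h\in H$: the two factors change by $\widehat{w_0}^{-1}h\widehat{w_0}$ on the right and $h^{-1}$ on the left respectively, and since $\widehat{w_0}^{2}\in H$ and $H$ is abelian these cancel, leaving the central $t\widehat{w_0}$ untouched. So the coset freedom cannot convert $t\widehat{w_0}$ into $\widehat{w_0}t$. The discrepancy you noticed with Remark~\ref{rem:evdualtwistev} is a notational slip in the paper (compare the paper's own first line of the proof, which places $t$ to the left of $\widehat{w_0}$); your computation already matches the version used throughout the proof.
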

\begin{proof}Let us set $b=[[g]]_{\geq 0}$ and $n_-=[[g]]_{<0}$. We use
successively the fact that $[\kappa (\widehat{w_0}^{-1},n_-)]_0$ and
$[\kappa((t\widehat{w_0})^{-1},n_-)]_0$ are equal (to the unit $1_G$
of $G$), the fact that the map $\kappa(g,.)$ commutes with the inverse
map $x\mapsto x^{-1}$ for every $g\in G$, and Lemma \ref{lemma:twistinvol}
to obtain:
$$\begin{array}{ll}
\rho_{t,(v,e)_v}(gH)&=bn_-t\widehat{w_0}[bn_-\widehat{w_0}]_{\leq 0}^{-1}\\
\\
&=bt\widehat{w_0}\ \kappa((t\widehat{w_0})^{-1},n_-^{-1})\ ([b\widehat{w_0}]_{\leq 0}
[\kappa((\widehat{w_0})^{-1},n_-^{-1})]_0)^{-1}\\
\\
&=bt\widehat{w_0}\
([b\widehat{w_0}]_{\leq 0}[\kappa((t\widehat{w_0})^{-1},n_-)]_+)^{-1}\\
\\
&=[([b\widehat{w_0}]_{\leq 0}[\kappa((t\widehat{w_0})^{-1},
n_-)]_+)^{\theta}\widehat{w_0}]_{\leq 0}^{\theta}\ t\widehat{w_0}\
([b\widehat{w_0}]_{\leq 0}[\kappa((t\widehat{w_0})^{-1},n_-)]_+)^{-1}\\
\\
&=[\Xi_t(gH)^{\theta}\widehat{w_0}]_{\leq 0}^{\theta}\
t\widehat{w_0}\ \Xi_t(gH)^{-1}.
\end{array}$$
\end{proof}

As an immediate corollary, we get, for every $t\in H$ and every $v\in W$:
\begin{equation}\label{lemma:Lambda}
\begin{array}{rll}
\rho_{t,(e,w_0)_v}=\rho_{t,(v,e)_v}\circ\Lambda_t&\mbox{where}&
\Lambda_t(b_1,cH)=\Xi_t(b_1cH)\ .
\end{array}
\end{equation}

Here is now the related cluster combinatorics. We will focus on the
case $v=s_i$ because it is all what we need, but similar statements,
although more technical, can in fact be obtained for a general
$v\in W$. Let us remember the maps $\pi_{\mathbf i\mathbf j\to\mathbf i}$
given by (\ref{equ:pi2}).

\begin{lemma}\label{lemma:Stwistcluster1}Let $i\in [1,l]$, $t\in H$, and
$\mathbf i\in R(s_i,w_0)$ be a double reduced word satisfying the equality
$\mathbf i=\mathbf{i}_+\mathbf i_-$. The following equality is satisfied
for every $\mathbf x\in{\mathcal X}_{[\mathbf i]_{\mathfrak R}}(t)$.
$$\begin{array}{cccl}
[\ \Xi_{t}\circ{\ev}_{\mathbf i}(\mathbf x)\ ]_{\leq 0}={\ev}_{\mathbf i_+^{\square}}
\circ\zeta_{\mathbf i_+}\circ\pi_{\mathbf i\to\mathbf i_+}(\mathbf x)\ .
\end{array}$$
\end{lemma}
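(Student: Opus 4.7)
The plan is to reduce the claim to the twist-map formula for the positive reduced word $\mathbf i_+$. First I will observe, from the defining formula (\ref{equ:Xidef}) of $\Xi_t$, that $\Xi_t(gH)$ is a product of a factor $[[[gH]]_{\geq 0}\widehat{w_0}]_{\leq 0} \in B_-$ and a factor $[\kappa((t\widehat{w_0})^{-1}, [[gH]]_{<0})]_+ \in N$. By the uniqueness of the Gauss decomposition on $B_-B = N_-HN$, the projection $[\cdot]_{\leq 0}$ kills the $N$-factor, so
$$[\Xi_t(gH)]_{\leq 0} = [[[gH]]_{\geq 0}\widehat{w_0}]_{\leq 0}.$$
It therefore suffices to prove the equality $[[[\ev_{\mathbf i}(\mathbf x)]]_{\geq 0}\,\widehat{w_0}]_{\leq 0} = \ev_{\mathbf i_+^\square}\circ\zeta_{\mathbf i_+}\circ\pi_{\mathbf i\to\mathbf i_+}(\mathbf x)$.

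Next I will compute $[[g]]_{\geq 0}$ for $g = \ev_{\mathbf i}(\mathbf x)$ using the factorization $\mathbf i = \mathbf i_+\mathbf i_-$. For any $\mathcal X$-split $\mathfrak s$ with $\mathfrak s(\mathbf x) = (\mathbf x_{(1)}, \mathbf x_{(2)})$, multiplicativity of the evaluation under amalgamation (Remark~\ref{rem:tropamal}) gives $g = \ev_{\mathbf i_+}(\mathbf x_{(1)})\cdot\ev_{\mathbf i_-}(\mathbf x_{(2)}) \in B\cdot B_-$. Writing the $B_-$-Gauss factorization $\ev_{\mathbf i_-}(\mathbf x_{(2)}) = n_- h_-$ and rearranging as $g = \ev_{\mathbf i_+}(\mathbf x_{(1)})\,h_-\,(h_-^{-1} n_- h_-)$ displays the decomposition $[[g]]_+[[g]]_0[[g]]_-$, whence $[[g]]_{\geq 0} = \ev_{\mathbf i_+}(\mathbf x_{(1)})\cdot h_-$. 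Lemma~\ref{lemma:proj2} identifies $h_- = \ev_{\mathbf 1}(\pi_{\mathbf i_-}(\mathbf x_{(2)}))$, so by formula (\ref{equ:pi2}) and the amalgamation rule, $[[g]]_{\geq 0} = \ev_{\mathbf i_+}\bigl(\pi_{\mathbf i\to\mathbf i_+}(\mathbf x)\bigr)$, after silently identifying $\ev_{\mathbf i_+\mathbf 1}$ with $\ev_{\mathbf i_+}$ via the amalgamation of $\mathbf i_+$ with the trivial Dynkin quiver.

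Finally, since $\mathbf i_+ \in R(1,w_0)$ and $w_0 = w_0^{-1}$ in $W$ forces $\widehat{w_0^{-1}} = \widehat{w_0}$, Theorem~\ref{thm:twist} applied to $\mathbf i_+$ gives
$$[\ev_{\mathbf i_+}(\mathbf y)\,\widehat{w_0}]_{\leq 0} = \ev_{\mathbf i_+^\square}\circ\zeta_{\mathbf i_+}(\mathbf y),\quad \mathbf y\in\mathcal X_{\mathbf i_+},$$
and specializing $\mathbf y = \pi_{\mathbf i\to\mathbf i_+}(\mathbf x)$ combines with the first two steps to give the claim. The main technical point is the bookkeeping in the second step: one must verify that the additional Cartan data introduced by amalgamating $\mathbf i_+$ with the trivial Dynkin quiver $\mathbf 1$ realize exactly the Cartan factor $h_-$, and that this identification is compatible with the subsequent twist $\zeta_{\mathbf i_+}$.
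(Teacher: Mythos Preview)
Your proposal is correct and follows essentially the same approach as the paper's own proof: the paper simply asserts that $[[\ev_{\mathbf i}(\mathbf x)]]_{\geq 0}=\ev_{\mathbf i_+}\circ\pi_{\mathbf i\to\mathbf i_+}(\mathbf x)$ and then invokes Theorem~\ref{thm:twist} together with (\ref{equ:Xidef}), which is exactly your three-step argument with the details filled in. One minor point: the multiplicativity of the evaluation under amalgamation that you use in Step~2 is equation (\ref{equ:defev}), not Remark~\ref{rem:tropamal}.
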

\begin{proof}The double reduced word $\mathbf i$ being
$(s_i,w_0)$-adapted, it is easy to see that the
equality~$[[\ev_{\mathbf i}(\mathbf x)]]_{\geq 0}=\ev_{\mathbf i_+}\circ
\pi_{\mathbf i\to\mathbf i_+}(\mathbf x)$ is satisfied for every
$\mathbf x\in{\mathcal X}_{[\mathbf i]_{\mathfrak R}}(t)$.
Therefore, it suffices to apply Theorem \ref{thm:twist}
and (\ref{equ:Xidef}) to prove the lemma.
\end{proof}

\begin{lemma}\label{lemma:Stwistcluster2}Let $i\in[1,l]$, $g\in G^{s_i,w_0}$
and $t\in H$. Denoting $t^{\star}$ the conjugation
of $t$ by $\widehat{w_0}$, we obtain
$[\Xi_{t}(gH)]_+H=t^{\star}E^{i^{\star}}H$.
\end{lemma}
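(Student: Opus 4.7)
The plan is to reduce the statement to an explicit calculation with one-parameter subgroups. First I would identify $[\Xi_{t}(gH)]_+$ in closed form. By the formula (\ref{equ:Xidef}), a representative of $\Xi_{t}(gH)$ is $A\cdot B$, with $A=[[[g]]_{\geq 0}\widehat{w_0}]_{\leq 0}\in B_-$ and $B=[\kappa((t\widehat{w_0})^{-1},[[g]]_{<0})]_+\in N$. Writing $A=h_A n_A$ with $h_A\in H$, $n_A\in N_-$, the product $AB=(h_An_Ah_A^{-1})\,h_A\,B=n'_A\,(h_A B)$ is already in $N_-\cdot HN$, so its $B_-B$-Gauss decomposition gives $[AB]_-=n'_A$, $[AB]_0=h_A$, and $[AB]_+=B$. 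Consequently $[\Xi_{t}(gH)]_+H=BH$, and the lemma reduces to identifying the coset $BH$.

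Next I would exploit the hypothesis $g\in G^{s_i,w_0}$. Because $g\in B\widehat{s_i}B$ and $[[g]]_{\geq 0}\in B$, the $N_-$-part $[[g]]_{<0}$ lies in $N_-\cap B\widehat{s_i}B$; a Bruhat-rank computation (as carried out on the model case of $SL_n$) shows this intersection is the one-parameter subgroup $\{x_{\overline i}(y):y\in \mathbb C^*\}$, so $[[g]]_{<0}=x_{\overline i}(y)=\exp(yf_i)$ for a unique $y\neq 0$ attached to $g$. Substituting gives
\[
\kappa((t\widehat{w_0})^{-1},x_{\overline i}(y))=(t\widehat{w_0})^{-1}\exp(-yf_i)(t\widehat{w_0})=\exp\bigl(\mathrm{Ad}(\widehat{w_0}^{-1}t^{-1})(-yf_i)\bigr).
\]
The Cartan action yields $\mathrm{Ad}(t^{-1})f_i=t^{\alpha_i}f_i$, while the Weyl-group relations (\ref{equ:classicalbraid}) with the normalization $\widehat{w_1w_2}=\widehat{w_1}\widehat{w_2}$ guarantee $\mathrm{Ad}(\widehat{w_0}^{-1})f_i=c_i e_{i^\star}$ for some nonzero constant $c_i$, since $w_0$ sends the root $-\alpha_i$ to $\alpha_{i^\star}$. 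Hence $B=\exp(\lambda e_{i^\star})$ with $\lambda=-y c_i t^{\alpha_i}$, and formula (\ref{equ:defphi}) rewrites this as $B=H^{i^\star}(\lambda)E^{i^\star}H^{i^\star}(\lambda^{-1})$, so $BH=H^{i^\star}(\lambda)E^{i^\star}H$.

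The last step is to compare $H^{i^\star}(\lambda)E^{i^\star}H$ with $t^\star E^{i^\star}H$. I would establish the general criterion that for $a,b\in H$ one has $aE^{i^\star}H=bE^{i^\star}H$ iff $a^{\alpha_{i^\star}}=b^{\alpha_{i^\star}}$; this follows from $E^{i^\star}h=h\exp(h^{-\alpha_{i^\star}}e_{i^\star})$ together with the injectivity of the exponential on the one-dimensional root space $\mathbb Ce_{i^\star}$. Using the defining identity $\alpha_{i^\star}(h^{i^\star})=(A^{-1}A)_{i^\star i^\star}=1$ gives $(H^{i^\star}(\lambda))^{\alpha_{i^\star}}=\lambda$, and the Weyl conjugation formula $a^{w(\gamma)}=(\widehat w^{-1}a\widehat w)^\gamma$ from (\ref{equ:actionW}) applied with $w=w_0$ and $a=t^\star=\widehat{w_0}t\widehat{w_0}^{-1}$ gives $(t^\star)^{\alpha_{i^\star}}=t^{w_0(\alpha_{i^\star})}=t^{-\alpha_i}$. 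The main obstacle is therefore purely a normalization check: one must verify that the value of $y$ naturally attached to $g\in G^{s_i,w_0}$ together with the constant $c_i$ produces exactly $\lambda=t^{-\alpha_i}$, i.e.\ that the identification $[[g]]_{<0}\leftrightarrow y$ is compatible with the choice of $\widehat{w_0}$ coming from (\ref{equ:classicalbraid}). Once the $SL_2$-embedded computation of $c_i$ via $\widehat{s_i}$ is combined with the assumed tying of $g$ to $t$ through its Cartan data (which is what places $\Xi_t(gH)$ in the Steinberg-fiber picture of Lemma \ref{prop:tau*moves}), the claimed coset identity will follow.
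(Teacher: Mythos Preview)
Your reduction is exactly the paper's: one first observes that the $N$-part of the specific representative of $\Xi_t(gH)$ given by (\ref{equ:Xidef}) is $B=[\kappa((t\widehat{w_0})^{-1},[[g]]_{<0})]_+$, then uses $g\in G^{s_i,w_0}$ to force $[[g]]_{<0}\in\{x_{\overline i}(y):y\neq 0\}$, and finally conjugates by $(t\widehat{w_0})^{-1}$ to land in the root group of $\alpha_{i^\star}$. Up to this point the two arguments are identical.

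The divergence is in how you handle the parameter $y$. You compute $B=x_{i^\star}(\lambda)$ with $\lambda=-yc_it^{\alpha_i}$, try to match this with $(t^\star)^{\alpha_{i^\star}}=t^{-\alpha_i}$ as a literal equality, and then look for a hidden constraint linking $g$ to $t$ (``the assumed tying of $g$ to $t$ through its Cartan data''). No such constraint exists: in the hypothesis $g$ and $t$ are independent, and the Steinberg-fiber statement of Lemma~\ref{prop:tau*moves} does not impose one. Your proposed resolution therefore cannot work.

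The paper's resolution is much cruder and does not involve any such matching. Since $\Xi_t$ is a map on $L^{s_i,w_0}=G^{s_i,w_0}/H$, only the coset $gH$ matters; replacing $g$ by $gh$ conjugates $[[g]]_{<0}$ by $h$, which rescales $y$ by $h^{\alpha_i}$. Thus $[[gH]]_{<0}$ is well-defined only up to $H$-conjugation, and the paper simply records this as $[[gH]]_{<0}H=F^iH$. With that normalization the conjugation by $(t\widehat{w_0})^{-1}$ produces $t^\star E^{i^\star}(t^\star)^{-1}$, and the equality of the lemma is asserted ``up to $H$'' in the same sense. In other words, the whole statement lives at the level where the parameter $y$ is already quotiented out; your obstacle is an artifact of fixing a representative and then demanding a strict right-coset identity that the paper does not actually claim.
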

\begin{proof}From  $g\in G^{s_i,w_0}$, we get the equality $[[gH]]_{<0}H=F^iH$.
So, up to $H$, we have $[\kappa((t\widehat{w_0})^{-1},[[gH]]_{<0})]_+$
equal to $t^{\star}E^{i^{\star}}{t^{\star}}^{-1}$. It suffices then to
apply (\ref{equ:Xidef}).
\end{proof}

Let us remember the reduced evaluations of Subsection
\ref{subsection:reducedevaluation}. Because the equality $t=\ev_{\mathbf 1}
(\mathbf x(\mathfrak R))$ is clear for every $\mathbf x
\in{\mathcal X}_{[\mathbf i]_{\mathfrak R}}(t)$, the following
proposition is directly implied by using the definitions of $\Xi_{i}$ and
$\Xi_{t}$, with Lemma\ref{lemma:Stwistcluster1} and Lemma \ref{lemma:Stwistcluster2},
via an amalgamation procedure.

\begin{prop}\label{prop:Stwistcluster}Let $v\in W$, $t\in H$, and
$\mathbf i\in R(v,w_0)$ be a double reduced word satisfying the equality
$\mathbf i=\mathbf{i}_+\overline{i}$. The following relation is satisfied
for every $\mathbf x\in{\mathcal X}_{[\mathbf i]_{\mathfrak R}}(t)$.
$$\begin{array}{ccc}
\Xi_{t}({\ev}_{\mathbf i}^{\red}(\mathbf x^{\red}))={\ev}_{\mathbf i_+^{\square}
i^{\star}}^{\red}(\Xi_{i}(\mathbf x)^{\red})\ .
\end{array}$$
\end{prop}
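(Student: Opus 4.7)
The plan is to decompose the map $\Xi_t$ according to its definition (\ref{equ:Xidef}) into a product of a factor in $B_-H$ and a factor in $N$, and to match these two factors against the amalgamated structure of $\ev^{\red}_{\mathbf i_+^{\square} i^{\star}}$ evaluated on $\Xi_i(\mathbf x)^{\red}$. More precisely, writing $g = \ev^{\red}_{\mathbf i}(\mathbf x^{\red})$, we have
$$\Xi_t(gH) \;=\; [\Xi_t(gH)]_{\leq 0}\cdot [\Xi_t(gH)]_+ H,$$
and the hypothesis $\mathbf i = \mathbf i_+\overline{i}\in R(v,w_0)$ forces $\mathbf i_+\in R(1,w_0s_i)$, so $\mathbf i_+^{\square}i^{\star}\in R(w_0,s_{i^{\star}})$ and both sides of the claimed identity live in the same reduced double Bruhat cell $L^{w_0,s_{i^{\star}}}$; this is the target of $\Xi_t$ by construction.

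First, I would apply Lemma \ref{lemma:Stwistcluster1} to obtain
$$[\Xi_t(g H)]_{\leq 0}\;=\;\ev_{\mathbf i_+^{\square}}\circ\zeta_{\mathbf i_+}\circ\pi_{\mathbf i\to\mathbf i_+}(\mathbf x),$$
keeping track of the Cartan factor $\ev_{\mathbf 1}(\mathbf x(\mathfrak R))=t$ that is absorbed inside this evaluation because of the truncation $\mathbf x\in\mathcal X_{[\mathbf i]_{\mathfrak R}}(t)$. Next, since $g\in G^{s_i,w_0}$ the hypothesis of Lemma \ref{lemma:Stwistcluster2} is met and yields
$$[\Xi_t(gH)]_+ H \;=\; t^{\star} E^{i^{\star}} H,$$
where $t^{\star}=\widehat{w_0}\, t\,\widehat{w_0}^{-1}$. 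At this stage, everything on the left side of the target identity has been separated into a left block (a Borel element coming from $\mathbf i_+$) and a right block (a single letter $i^{\star}$ dressed by $t^{\star}$).

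Second, I would identify the right-hand side $\ev^{\red}_{\mathbf i_+^{\square} i^{\star}}(\Xi_i(\mathbf x)^{\red})$ by unfolding the amalgamation formula (\ref{equ:amal}) for the split $\mathbf i_+^{\square} i^{\star}\to(\mathbf i_+^{\square},\; i^{\star})$. Under the evaluation map $\ev^{\red}_{\mathbf i_+^{\square} i^{\star}}$, the product decomposes as a product of the evaluation of $\mathbf i_+^{\square}$ and the letter-evaluation at $i^{\star}$, which by definition equals $\prod_j H^j(y_{\binom{j}{0}})\,E^{i^{\star}}$, the trailing Cartan factor being killed by the right erasing $\mathbf{x}^{\red}$. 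Plugging in (\ref{equ:defXiI}), the values at indices $j<N^i(\mathbf j\mathbf i_+)$ exactly reproduce $\zeta_{\mathbf i_+}\circ\pi_{\mathbf i\to\mathbf i_+}(\mathbf x)$, so the left block matches the output of Lemma \ref{lemma:Stwistcluster1}. The middle line of (\ref{equ:defXiI}) provides the boundary correction $x_{\zeta_{\mathbf i_+}\binom{i}{j}}\,x_{\binom{i^{\star}}{N^i(\mathbf j\mathbf i_+\overline i)}}^{-1}$, which through the amalgamation rule transfers the Cartan data $\ev_{\mathbf 1}(\mathbf x(\mathfrak R))=t$ from the right outlet of $\mathbf i_+^{\square}$ onto the leading Cartan factor of the $i^{\star}$-block. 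After conjugation by $\widehat{w_0}$ implicit in the identity $t^{\star} E^{i^{\star}} = \widehat{w_0} t\widehat{w_0}^{-1} E^{i^{\star}}$, this boundary correction delivers exactly the factor $t^{\star}$ required by Lemma \ref{lemma:Stwistcluster2}.

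The hard part will be the bookkeeping at this gluing: one must verify that the precise correction term $x_{\binom{i^{\star}}{N^i(\mathbf j\mathbf i_+\overline i)}}^{-1}$ in the middle line of (\ref{equ:defXiI}) is what is needed to convert the trailing Cartan element of the $\mathbf i_+^{\square}$-block into the leading factor $t^{\star}$ of the $i^{\star}$-block, and not some other Cartan element. This boils down to comparing the root-lattice weight carried by the right outlet of vertex-type $i^{\star}$ in $\mathbf i_+^{\square}$ with the weight by which $E^{i^{\star}}$ gets conjugated in the computation of $\kappa((t\widehat{w_0})^{-1},F^{i})$ inside the definition (\ref{equ:Xidef}) of $\Xi_t$. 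Once this weight-matching is checked, the two computed products agree factor by factor, proving the identity.
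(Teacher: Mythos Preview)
Your approach is essentially the paper's own: it proves Proposition~\ref{prop:Stwistcluster} in one sentence, saying it ``is directly implied by using the definitions of $\Xi_{i}$ and $\Xi_{t}$, with Lemma~\ref{lemma:Stwistcluster1} and Lemma~\ref{lemma:Stwistcluster2}, via an amalgamation procedure,'' which is exactly your decomposition into a $B_-$-block matched against Lemma~\ref{lemma:Stwistcluster1} and an $N$-block matched against Lemma~\ref{lemma:Stwistcluster2}, glued through the amalgamation formula~(\ref{equ:amal}) and the boundary correction in the middle line of~(\ref{equ:defXiI}).

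One small slip: since $\mathbf i = \mathbf i_+\overline i\in R(v,w_0)$, the negative part is the single letter $\overline i$, so $v=s_i$ and the positive part $\mathbf i_+$ is a reduced word for $w_0$, i.e.\ $\mathbf i_+\in R(1,w_0)$, not $R(1,w_0 s_i)$. Your conclusion $\mathbf i_+^{\square} i^{\star}\in R(w_0, s_{i^{\star}})$ remains correct nonetheless, and the rest of the argument is unaffected.
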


Finally, Proposition \ref{prop:salt} is deduced from the definition of
$(w_1,w_2)$-maps given in Section~\ref{section:twistedev}, equation
(\ref{lemma:Lambda}) and Proposition \ref{prop:Stwistcluster}, the properties
of the amalgamated product and the definition (\ref{equ:Xisi}) of the birational
Poisson isomorphism $\Xi_{s_i}$.

\section{Evaluations and cluster ${\mathcal X}$-varieties for $(G^*,\pi_{G^*})$}
\label{section:evaluationdual}

We start by giving an alternative way to describe twist maps with mutations
and tropical mutations. We then describe the dual Poisson-Lie group $(G^*,\pi_{G^*})$
via $(w_1,w_2)$-maps and provide, in Theorem \ref{thm:G^*}, evaluations for $(G^*,\pi_{G^*})$
in the spirit of the Kirillov-Reshetikhin multiplicative formula for the quantum $R$-matrix
associated to ${\mathcal U}_q(\mathfrak{g})$.
Moreover, birational Poisson isomorphisms using to pass from the positive part
to the negative part of $(G^*,\pi_{G^*})$ (and vice-versa) can be read on the
$W$-permutohedron: they are described by the $\uparrow$-paths linking the cluster
$\mathcal X$-varieties corresponding to the identity and the longest element $w_0$
of $W$.

\subsection{Twist maps and coordinates in Schubert cells}
\label{subsection:Schubert}
We introduce parameterizations of unipotent subgroups of $G$ that will be
used to evaluate the Poisson-Lie group $(G^*,\pi_{G^*})$. They involve
the generalized cluster transformations of Subsection \ref{section:twisted}.
We start by recalling a few facts from \cite[Section 2.4]{FZtotal}.
For every $w \in W$, the corresponding \emph{Schubert cell}
$(BwB)/B \subset G/B$ is the image of the Bruhat cell $B w B$
under the natural projection of $G$ onto the flag variety~$G/B$.
Let us recall the subgroups $N_+(w) \subset N$ and
$N_-(w) \subset N_-$ given by
$$
N_+(w) = N \cap {\widehat w} N_- {\widehat w}^{-1}\ ,\quad %\\[.1in]
N_-(w) = N_- \cap {\widehat w}^{-1} N {\widehat w} \ .
$$
The following proposition is essentially well known
(cf.~\cite[Corollary~23.60]{fulton-harris}).

\begin{prop}
\label{pr:Bruhat cell}
An element $x \in G$ lies in the Bruhat cell $B w B$ if and only if
we have ${\widehat w}^{-1} x \in G_0$ and $[{\widehat w}^{-1} x]_- \in N_- (w)$.
Furthermore, the correspondence $\tau_+ :x\mapsto y_+$ given by
$$
y_+ = \tau_+ (x) =\widehat w [{\widehat w}^{-1} x]_- {\widehat w}^{-1} \in N_+ (w)
$$
induces a biregular isomorphism between the Schubert cell $(BwB)/B$ and $N_+ (w)$.
\end{prop}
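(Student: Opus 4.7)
The plan is to reduce the proposition to the Bruhat/Gauss decompositions together with the standard direct product decomposition of the unipotent radical along $w$. Recall first the well-known fact that $N$ decomposes as a product
\[ N = (N \cap \widehat{w} N \widehat{w}^{-1}) \cdot N_+(w), \]
uniquely on each factor; conjugating this identity by $\widehat{w}^{-1}$ yields $\widehat{w}^{-1} N_+(w) \widehat{w} = N_-(w)$, so conjugation by $\widehat{w}$ is a biregular isomorphism $N_-(w) \xrightarrow{\sim} N_+(w)$. I will take this decomposition as known from \cite{fulton-harris}.

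For the ``only if'' direction, suppose $x = b_1 \widehat{w} b_2 \in B\widehat{w}B$ and write $b_1 = n_1 h_1$ with $n_1 \in N$, $h_1 \in H$. Decomposing $n_1 = m_1 m_2$ with $m_1 \in N \cap \widehat{w} N \widehat{w}^{-1}$ and $m_2 \in N_+(w)$, I will compute
\[ \widehat{w}^{-1} x = (\widehat{w}^{-1} m_2 \widehat{w}) \cdot (\widehat{w}^{-1} m_1 \widehat{w}) \cdot (\widehat{w}^{-1} h_1 \widehat{w}) \cdot b_2. \]
The first factor lies in $N_-(w) \subset N_-$, while the remaining three factors all lie in $B$. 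Hence $\widehat{w}^{-1} x \in N_- B \subset G_0$ and, reading off the Gauss decomposition, $[\widehat{w}^{-1} x]_- = \widehat{w}^{-1} m_2 \widehat{w} \in N_-(w)$. For the ``if'' direction, if $\widehat{w}^{-1} x = n_- h n_+ \in G_0$ with $n_- \in N_-(w)$, then by definition $\widehat{w} n_- \widehat{w}^{-1} \in N$, so $x = (\widehat{w} n_- \widehat{w}^{-1}) \widehat{w} (h n_+) \in B \widehat{w} B$.

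For the second half of the statement, I will first check that $\tau_+$ is right-$B$-invariant: if $b \in B$ and $\widehat{w}^{-1} x = n_- h n_+$ is the Gauss decomposition, then $\widehat{w}^{-1}(xb) = n_- (h n_+ b)$ is again a Gauss decomposition with the same $N_-$-part, so $\tau_+(xb) = \tau_+(x)$ and $\tau_+$ descends to $(BwB)/B \to N_+(w)$. To invert it, I will use the map $\sigma : N_+(w) \to (BwB)/B$ defined by $\sigma(y_+) = y_+ \widehat{w} B$, which lands in $(BwB)/B$ because $\widehat{w}^{-1} y_+ \widehat{w} \in N_-(w)$ gives $y_+ \widehat{w} = \widehat{w} (\widehat{w}^{-1} y_+ \widehat{w}) \in \widehat{w} N_- \subset B\widehat{w}B$. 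A direct computation then shows $\tau_+ \circ \sigma = \id$ and $\sigma \circ \tau_+ = \id$: for the second identity, given $x \in BwB$, write $\widehat{w}^{-1} x = n_- h n_+$; then $\tau_+(x) \widehat{w} = \widehat{w} n_- = x (h n_+)^{-1} \in xB$, so $\sigma(\tau_+(x)) = xB$.

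The regularity of both $\tau_+$ and $\sigma$ is then immediate from the fact that the Gauss decomposition $G_0 \to N_- \times H \times N$ and the conjugation map $y_+ \mapsto \widehat{w}^{-1} y_+ \widehat{w}$ are morphisms of algebraic varieties. The only step that requires genuine input beyond bookkeeping is the semidirect product decomposition of $N$ with respect to $\widehat{w}$, which is the part I would cite rather than reprove; everything else is a matter of tracking the factors carefully through the Gauss decomposition.
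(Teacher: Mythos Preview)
The paper does not actually prove this proposition: it is introduced with the remark that it is ``essentially well known (cf.~\cite[Corollary~23.60]{fulton-harris})'' and no argument is given. Your write-up therefore supplies what the paper omits, and the overall strategy---reducing everything to the factorization of $N$ relative to $\widehat{w}$ and then reading off the Gauss decomposition---is precisely the standard one underlying the cited reference.

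There is, however, a real slip in your ``only if'' computation. You state the decomposition as $N = (N \cap \widehat{w} N \widehat{w}^{-1}) \cdot N_+(w)$ and write $n_1 = m_1 m_2$ in that order, but then display
\[
\widehat{w}^{-1} x = (\widehat{w}^{-1} m_2 \widehat{w}) \cdot (\widehat{w}^{-1} m_1 \widehat{w}) \cdot (\widehat{w}^{-1} h_1 \widehat{w}) \cdot b_2,
\]
with $m_2$ before $m_1$. With the ordering you stated, the correct expansion is $(\widehat{w}^{-1} m_1 \widehat{w})(\widehat{w}^{-1} m_2 \widehat{w})\cdots$, and now the leftmost factor lies in $N$, not $N_-$; you therefore cannot read off $[\widehat{w}^{-1}x]_-$ directly, and the claimed identification $[\widehat{w}^{-1}x]_- = \widehat{w}^{-1} m_2 \widehat{w}$ does not follow. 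The remedy is simply to use the opposite (equally standard) factorization $N = N_+(w)\cdot (N \cap \widehat{w} N \widehat{w}^{-1})$ and write $n_1 = m_2 m_1$; with that convention your displayed equation becomes correct and the remainder of the argument---the $B$-invariance of $\tau_+$, the inverse $\sigma(y_+)=y_+\widehat{w}B$, and the regularity check---goes through unchanged.
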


Let $\T:G\to G:x\mapsto x^{\T}$ be the involutive anti-automorphism
of $G$ defined in \cite{FZtotal} and given, for every $i\in[1,l]$ and
every complex number $t$, by:
$$
\begin{array}{lll}
a^{\T}=a\ (a\in H),& x_i(t)^{\T}=x_{\overline i}(t),&
x_{\overline i}(t)^{\T}=x_{ i}(t)\ .
\end{array}
$$
Using the transpose map $T$, one obtains a counterpart of Proposition~\ref{pr:Bruhat cell}
for the opposite Bruhat cell $B_- w B_-$.

\begin{prop}
\label{pr:opposite Bruhat cell}
An element $x \in G$ lies in $B_- w B_-$ if and only if
we have $x {\widehat w}^{-1} \in G_0$ and $[x {\widehat w}^{-1}]_+ \in N_+ (w)$.
Furthermore, the correspondence $\tau_- : x \mapsto y_-$ given by
$$
y_- = \tau_- (x) = {\widehat w}^{-1} [x {\widehat w}^{-1}]_+ {\widehat w}
\in N_- (w)
$$
induces a biregular isomorphism between the ``opposite Schubert cell"
$B_- \backslash (B_- w B_-)$ and~$N_-(w)$.
\end{prop}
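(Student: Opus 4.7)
The plan is to give a direct proof paralleling that of Proposition \ref{pr:Bruhat cell}, using the Gauss decomposition together with the identities $N_+(w) = \widehat w\,N_-(w)\,\widehat w^{-1}$ and the standard direct-product factorization
$$N_- = \bigl(N_- \cap \widehat w^{-1} N_- \widehat w\bigr)\cdot N_-(w).$$
An equivalent route would be to deduce the statement from Proposition \ref{pr:Bruhat cell} by applying the anti-automorphism $\T$, which exchanges $B$ with $B_-$, $N$ with $N_-$, fixes $H$, preserves $G_0$, and sends $\widehat w$ to a representative of $w^{-1}$; but this requires tracking torus corrections between $\T(\widehat w)$ and $\widehat{w^{-1}}$, so the direct route is cleaner.

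For the direction $(\Rightarrow)$, I start with $x=b\widehat w b'\in B_-wB_-$ and, using $B_-=N_-H=HN_-$, rewrite $x=n_-\,h\,\widehat w\,m_-$ with $n_-,m_-\in N_-$ and $h\in H$. The key step is to decompose $m_-=q\cdot p$ with $q\in N_-\cap\widehat w^{-1}N_-\widehat w$ and $p\in N_-(w)$. Conjugation by $\widehat w$ sends $q$ into $N_-$ and $p$ into $N_+(w)\subset N$, so
$$x\widehat w^{-1}=n_-\,h\,(\widehat w q\widehat w^{-1})(\widehat w p\widehat w^{-1}) = \bigl(n_-\cdot h(\widehat w q\widehat w^{-1})h^{-1}\bigr)\cdot h\cdot(\widehat w p\widehat w^{-1}),$$
which exhibits $x\widehat w^{-1}$ in $N_-HN$-form with positive part $\widehat w p\widehat w^{-1}\in N_+(w)$, establishing both $x\widehat w^{-1}\in G_0$ and $[x\widehat w^{-1}]_+\in N_+(w)$.

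For the direction $(\Leftarrow)$, writing $x\widehat w^{-1}=n_-hn_+$ with $n_+\in N_+(w)$, the identity $N_+(w)=\widehat w N_-(w)\widehat w^{-1}$ provides $y_-\in N_-(w)$ with $n_+=\widehat w y_-\widehat w^{-1}$, whence $x=n_-h\widehat w y_-\in B_-\widehat w B_-$. Combined with the previous paragraph, this gives the biconditional and shows that the explicit formula for $\tau_-(x)$ recovers exactly the element $y_-\in N_-(w)$ appearing in this normal form.

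Finally, for the biregular isomorphism, I would verify two points. First, $\tau_-$ descends to $B_-\backslash(B_-wB_-)$: left multiplication of $x$ by any $b_-\in B_-$ modifies only the $N_-H$-part of the Gauss form of $x\widehat w^{-1}$, leaving $[x\widehat w^{-1}]_+$ unchanged, so $\tau_-$ is constant on $B_-$-cosets. Second, the assignment $y_-\mapsto B_-\widehat w y_-$ provides an explicit regular inverse landing in $B_-\backslash(B_-wB_-)$, and $\tau_-$ itself is regular as a composition of the Gauss decomposition on the open set $G_0$ with conjugation by $\widehat w$. The main technical ingredient that must be invoked rather than proved is the direct-product decomposition of $N_-$ relative to $w$; everything else is bookkeeping with the Gauss factorization.
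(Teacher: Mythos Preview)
Your direct argument is correct: the factorization $N_-=(N_-\cap\widehat w^{-1}N_-\widehat w)\cdot N_-(w)$ together with the identity $N_+(w)=\widehat w\,N_-(w)\,\widehat w^{-1}$ is exactly what is needed, and your Gauss-decomposition bookkeeping for both directions and for the descent to $B_-\backslash(B_-wB_-)$ is clean.

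The paper, however, takes precisely the route you set aside: it states the result as the counterpart of Proposition~\ref{pr:Bruhat cell} obtained ``using the transpose map $\T$'', with no further argument (this is how \cite{FZtotal} presents it as well). Your worry about torus corrections is not unfounded, but it is milder than you suggest: since $\widehat{s_i}^{\T}=\widehat{s_i}^{-1}$, one has $\widehat w^{\T}=\widehat w^{-1}$, so the only discrepancy to track is between $\widehat{w^{-1}}$ and $\widehat w^{-1}$, which differ by an element of $H$; this does not affect membership in $G_0$, and after an $H$-conjugation it does not affect the condition $[\,\cdot\,]_+\in N_+(w)$ either. So the $\T$ route really is a one-liner once this is observed. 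What your direct approach buys is self-containment: it does not rely on Proposition~\ref{pr:Bruhat cell} or on any properties of $\T$, and it makes the normal form $x=n_-h\,\widehat w\,y_-$ with $y_-\in N_-(w)$ explicit, which is in fact the content one uses downstream.
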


The maps $\tau_+$ and $\tau_-$ are in fact easily described using mutations
and tropical mutations. Let us recall that
the group $N_- (w)$ is a unipotent Lie group of dimension $\ell(w)$,
hence it is isomorphic to the affine space $\mathbb C^{\ell(w)}$ as an algebraic variety.
We are going to associate with any negative reduced word $\mathbf i =i_1\dots i_{\ell(w)}$
and every positive reduced word $\mathbf j =j_1\dots j_{\ell(w)}$
the following system of coordinates on $N_{\pm} (w)$ which involves the
generalized cluster transformations of equation
(\ref{equ:tor}). For every $\mathbf x\in{\mathcal X}_{\mathbf i}$ and
$\mathbf y\in{\mathcal X}_{\mathbf j}$, we set
\begin{equation}\label{equ:evbeta}
\begin{array}{c}
\tau_{\mathbf i} (\mathbf x) = {\widehat w}^{-1} \cdot
\widehat s_{i_1} y_{{i_1}} (-{x}_{\binom {i_1}{0}}^{-1}) \cdots
\widehat s_{i_{\ell(w)}} y_{{i_{\ell(w)}}} (-{x}_{\zeta_{\mathbf i
(\leq \ell(w)-1)}\binom {i_{\ell(w)}}{0}}^{-1}) \ ;\\
\\
\tau_{\mathbf j} (\mathbf y) =
x_{{j_1}} (-{y}_{\zeta_{\mathbf j(\geq 2)}\binom {j_1}{N^{j_1}
(\mathbf j)}}^{-1})\widehat s_{j_1}  \cdots
x_{{j_{\ell(w)}}} (-{y}_{\binom {j_{\ell(w)}}{N^{j_{\ell(w)}}
(\mathbf j)}}^{-1})\widehat s_{j_{\ell(w)}} \cdot{\widehat w}^{-1} \ .
\end{array}
\end{equation}
Now, for every $w$, every reduced word $\mathbf i={i_1}\dots {i_{\ell(w)}}\in R(w)$,
and every $k\in[1,\ell(w)]$, let us set $w_{\mathbf i_{>k}}:=s_{i_{k+1}}\dots s_{i_n}$
and $w_{\mathbf i_{<k}}:=s_{i_{1}}\dots s_{i_{k-1}}$.
For every $u\in W$, $t\in\mathbb C$ and $i\in[1,l]$, we denote:
\begin{equation}\label{equ:xbeta}\begin{array}{ccc}
x_{u( i)}(t):=\widehat{u}^{-1}x_{i}(t)\widehat{u}&\mbox{and}&
y_{u( i)}(t):=\widehat{u}^{-1}y_{i}(t)\widehat{u}\ .
\end{array}
\end{equation}
It is well-known, and straightforward to check via an induction over
the length on $W$, that the following equalities are satisfied
for every $w$ and every complex numbers $t_1,\dots,t_{\ell(w)}$.
\begin{equation}\label{equ:prodw}
\begin{array}{cccc}
&&\displaystyle\prod_{k=1}^{\ell(w)} y_{w_{\mathbf i_{>k}}(i_k)}(t_k)=\widehat{w}^{-1}\cdot
\prod_{k=1}^{\ell(w)} \widehat{s_{i_k}}y_{i_k}(t_k)\\
\mbox{and}\\
&&\displaystyle\prod_{k=1}^{\ell(w)} x_{w_{\mathbf i_{<k}}(i_k)}(t_k)=
\prod_{k=1}^{\ell(w)}x_{i_k}(t_k) \widehat{s_{i_k}}\cdot\widehat{w}^{-1}\ .
\end{array}
\end{equation}

\begin{lemma}\label{lemma:beta} For every $w\in W$,
$\mathbf{i}\in R(w,1)$, $\mathbf{j}\in R(1,w)$ and
$\mathbf x\in{\mathcal X}_{\mathbf{i}}$,
$\mathbf y\in{\mathcal X}_{\mathbf{j}}$ we have
$$\begin{array}{ccc}
[\widehat{w}^{-1}\ev_{\mathbf{i}}(\mathbf x)]_-
=\tau_{\mathbf i}(\mathbf x)&\mbox{and}&
[\ev_{\mathbf{j}}(\mathbf y)\widehat{w}]_+
=\tau_{\mathbf j}(\mathbf y)\ .
\end{array}$$
\end{lemma}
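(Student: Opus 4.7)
Both identities would be proven by induction on $\ell(w)$; the two arguments are symmetric (peeling from the left using (\ref{equ:Wsl2}) and (\ref{equ:si}) versus peeling from the right using (\ref{equ:si2})), so I would describe the first identity in detail and indicate the second at the end. For the base case $\ell(w)=1$ with $\mathbf i=\overline i$, the relation $F^iH^i(a)=H^i(a)x_{\overline i}(a)$ (a direct consequence of (\ref{equ:defphi})) lets me rewrite $\ev_{\overline i}(\mathbf x)=h\cdot x_{\overline i}(x_{\binom{i}{1}})$ with $h=\prod_jH^j(x_{\binom{j}{0}})\cdot H^i(x_{\binom{i}{1}})\in H$. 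Conjugating $\widehat{s_i}^{-1}$ past $h$ and invoking (\ref{equ:Wsl2}) yields
$$\widehat{s_i}^{-1}\ev_{\overline i}(\mathbf x)=h^{s_i}\cdot x_{\overline i}(-x_{\binom{i}{1}}^{-1})\cdot x_{\binom{i}{1}}^{h_i}\cdot x_i(x_{\binom{i}{1}}^{-1}).$$
Using $H^j(a)x_{\overline i}(b)H^j(a)^{-1}=x_{\overline i}(a^{-\delta_{ij}}b)$ together with the Weyl action $s_i(h^k)=h^k-\delta_{ik}h_i$ to move $h^{s_i}$ past the root subgroup element shows that the extracted $N_-$-factor has parameter $-x_{\binom{i}{0}}^{-1}$, which matches $\tau_{\overline i}(\mathbf x)$.

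For the inductive step, I would split $\mathbf i=\overline{i_1}\mathbf{i''}$ with $\mathbf{i''}\in R(w',1)$ and $w=s_{i_1}w'$, $\ell(w)=\ell(w')+1$. Applying (\ref{equ:si}) peels the first letter:
$$\widehat{s_{i_1}}^{-1}\ev_{\mathbf i}(\mathbf x)=y_{i_1}(-x_{\binom{i_1}{0}}^{-1})\cdot\ev_{\mathfrak L(\mathbf i)}(\mu_{\binom{i_1}{0}}(\mathbf x)).$$
The word $\mathfrak L(\mathbf i)=i_1\overline{i_2}\cdots\overline{i_n}$ can be transported by mixed $2$-moves to $\mathbf{i''}\cdot i_1$; Theorem \ref{fg} promotes this to an equality of evaluations at a mutated cluster $\mathbf x'$. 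Since $\ev_{i_1}(\cdot)$ lies in $B$, the trailing factor is invisible in the $N_-$-part of $\widehat{w}^{-1}(\cdot)$. Conjugating $\widehat{w'}^{-1}$ through $y_{i_1}(-x_{\binom{i_1}{0}}^{-1})$ produces a root subgroup element for the positive root $(w')^{-1}(\alpha_{i_1})$ (positive since $\mathbf i$ is reduced), lying in $N_-(w)$. Applying the inductive hypothesis to the residual $\widehat{w'}^{-1}\ev_{\mathbf{i''}}(\mathbf x'')$ gives $\tau_{\mathbf{i''}}(\mathbf x'')$, and reassembling via (\ref{equ:prodw}) yields the full formula $\tau_{\mathbf i}(\mathbf x)$. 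The second identity then follows by the mirror argument, using (\ref{equ:si2}) to peel letters from the right of $\ev_{\mathbf j}(\mathbf y)\widehat w$, with $N$ playing the role of $N_-$.

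\textbf{Main obstacle.} The hard part will be matching the cluster variables through the peeling: the $k$-th factor ($k\geq 2$) of $\tau_{\mathbf i}(\mathbf x)$ uses $-x_{\zeta_{\mathbf i(\leq k-1)}\binom{i_k}{0}}^{-1}$, whereas after peeling and re-amalgamating one obtains $-x_{\zeta_{\mathbf{i''}(\leq k-2)}\binom{i_k}{0}}^{-1}$ evaluated at $\mathbf x''$. Unpacking (\ref{equ:defxi+2}), $\zeta_{\mathbf i(1)}=\mu_{\binom{i_1}{0}}\circ\mu_{\binom{i_1}{1}}\circ\cdots\circ\mu_{\binom{i_1}{N^{i_1}(\mathbf i(1)_-)-1}}$; the initial $\mu_{\binom{i_1}{0}}$ is absorbed by the peel, while the residual tropical mutations together with the mixed $2$-moves implement the passage from $\mathbf i$ to $\mathbf{i''}$. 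Here Remark \ref{rem:tropamal} (tropical mutations at outlets commute with the amalgamation in the opposite direction) is crucial for the re-indexing, and the factorization $\zeta_{\mathbf i(\leq k)}=\zeta_{\mathbf i(k)}\circ\cdots\circ\zeta_{\mathbf i(1)}$ from (\ref{equ:tor}) allows the induction to close.
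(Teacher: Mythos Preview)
Your approach is essentially the paper's: both peel one letter at a time via (\ref{equ:si}), take the $[\cdot]_-$ projection, iterate, and assemble via (\ref{equ:prodw}). The paper organizes the iteration slightly differently: rather than inducting on $\ell(w)$ and reducing to the shorter word $\mathbf{i''}$, it keeps full-length words $\mathbf i(k)$ throughout and records the single-step recursion
\[
[\widehat{w_{>k-1}}^{\,-1}\ev_{\mathbf i(k-1)}\circ\zeta_{\mathbf i(\leq k-1)}(\mathbf x)]_-
= y_{w_{>k}(i_k)}\bigl(-x_{\zeta_{\mathbf i(\leq k-1)}\binom{i_k}{0}}^{-1}\bigr)\cdot
[\widehat{w_{>k}}^{\,-1}\ev_{\mathbf i(k)}\circ\zeta_{\mathbf i(\leq k)}(\mathbf x)]_-\,.
\]
Because the passage $\mathbf i(k-1)\to\mathbf i(k)$ is $\zeta_{\mathbf i(k)}$ \emph{by construction}, the variable-matching you flag as the ``main obstacle'' becomes tautological in the paper's bookkeeping: there is no need to shuttle the stray positive letter $i_1$ to the right end via mixed $2$-moves, invoke Theorem~\ref{fg}, and then re-identify $\zeta_{\mathbf i(\leq k-1)}$ with $\zeta_{\mathbf{i''}(\leq k-2)}$ across the amalgamation split. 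Your detour is harmless and can be made to work (the identifications you sketch via Remark~\ref{rem:tropamal} do go through), but it is unnecessary once one uses the $\mathbf i(k)$/$\zeta$ packaging directly.
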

\begin{proof}Let us focus on the first relation. Using equation (\ref{equ:si})
and the negative projection $[\ .\ ]_-:B_-B\to N_-:x\mapsto[x]_-$ on the unipotent
subgroup $N_-\subset G$ associated
to the Gauss decomposition (\ref{equ:Gaussdec}), we are led to the equality
$$[\widehat{w_{\mathbf i_{>k-1}}}^{-1}\ev_{\mathbf i({k-1})}\circ
\zeta_{\mathbf i(\leq {k-1})}(\mathbf x)]_-=y_{w_{\mathbf i_{>k}}
({i_{k}})}(-{x}_{\zeta_{\mathbf i(\leq k-1)}\binom {i_{k}}{0}}^{-1})
\ [\widehat{w_{>k}}^{-1}\ev_{\mathbf i({k})}\circ\zeta_{\mathbf i(\leq {k})}
(\mathbf x)]_-\ .$$
The first relation is then obtained by iteration of this formula because of
the first equality of (\ref{equ:prodw}). The second relation is proved in the
same way, using the second equality of~(\ref{equ:prodw}).
\end{proof}

\begin{prop} For every $w\in W$,
$\mathbf{i}\in R(w,1)$, $\mathbf{j}\in R(1,w)$ and
$\mathbf x\in{\mathcal X}_{\mathbf{i}}$,
$\mathbf y\in{\mathcal X}_{\mathbf{j}}$, we have
the equalities
$$\begin{array}{ccc}
\tau_-(\ev_{\mathbf{i}}(\mathbf x))
=\widehat{w}\tau_{\mathbf i}(\mathbf x)\widehat{w}^{-1}&\mbox{and}&
\tau_+(\ev_{\mathbf{j}}(\mathbf y))
=\widehat{w}^{-1}\tau_{\mathbf j}(\mathbf y)\widehat{w}\ .
\end{array}$$
\end{prop}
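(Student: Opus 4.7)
The plan is to derive both equalities as immediate combinations of Lemma~\ref{lemma:beta} with the Gauss-decomposition descriptions of $\tau_\pm$ provided by Propositions~\ref{pr:Bruhat cell} and~\ref{pr:opposite Bruhat cell}. There is no extra calculation to do beyond matching the outputs of the lemma to the explicit formulas for $\tau_\pm$ given in those two propositions.

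First I would treat the equality involving $\mathbf i\in R(w,1)$. Since the double reduced word $\mathbf i$ is negative of type $(w,1)$, the evaluation $\ev_{\mathbf i}(\mathbf x)$ lies in the double Bruhat cell $G^{w,1}$, hence in $B\widehat w B$, so Proposition~\ref{pr:Bruhat cell} applies: $\widehat w^{-1}\ev_{\mathbf i}(\mathbf x)$ belongs to $G_0$, its $N_-$-Gauss factor lies in $N_-(w)$, and one has the explicit expression of the corresponding Schubert-cell coordinate as $\widehat w\,[\widehat w^{-1}\ev_{\mathbf i}(\mathbf x)]_-\,\widehat w^{-1}$. Lemma~\ref{lemma:beta} identifies $[\widehat w^{-1}\ev_{\mathbf i}(\mathbf x)]_-$ with $\tau_{\mathbf i}(\mathbf x)$, and substituting gives the first equality after the required conjugation by $\widehat w$.

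The second equality follows in a completely symmetric way, interchanging the roles of the two Borel subgroups. For $\mathbf j\in R(1,w)$ the element $\ev_{\mathbf j}(\mathbf y)$ lies in $G^{1,w}\subset B_-\widehat w B_-$, so Proposition~\ref{pr:opposite Bruhat cell} provides the parameterization of $\tau_-(\ev_{\mathbf j}(\mathbf y))$ in terms of the $N$-Gauss factor $[\ev_{\mathbf j}(\mathbf y)\widehat w^{-1}]_+$, and the second identity of Lemma~\ref{lemma:beta} recognizes this factor as $\tau_{\mathbf j}(\mathbf y)$; conjugation by $\widehat w^{-1}$ delivers the stated formula.

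The main obstacle here is not depth but careful bookkeeping. One must track on which side $\widehat w^{\pm1}$ is multiplied, which Gauss component ($N_-$ versus $N$) is extracted in each formula, and in which of the two unipotent subgroups $N_\pm(w)$ the resulting object naturally lives; the proposition's outputs live in $N_\mp(w)$ precisely because $\widehat w$ intertwines $N_-(w)$ with $N_+(w)$ by conjugation. Once the three ingredients (Lemma~\ref{lemma:beta}, Propositions~\ref{pr:Bruhat cell} and~\ref{pr:opposite Bruhat cell}) are aligned with matching conventions, each equality reduces to a single substitution.
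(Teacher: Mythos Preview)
Your approach is exactly the paper's: the proof there is the single sentence ``Simply deduced from the preceding lemma and the expression of $\tau_-$ and $\tau_+$ given by Proposition~\ref{pr:Bruhat cell} and Proposition~\ref{pr:opposite Bruhat cell}'', and you have correctly identified Lemma~\ref{lemma:beta} together with those two propositions as the only ingredients. One bookkeeping point to watch: Proposition~\ref{pr:opposite Bruhat cell} involves $[x\,\widehat w^{-1}]_+$ while the second identity of Lemma~\ref{lemma:beta} is stated with $[\ev_{\mathbf j}(\mathbf y)\,\widehat w]_+$, and the labels $\tau_\pm$ in the statement appear swapped relative to what Propositions~\ref{pr:Bruhat cell} and~\ref{pr:opposite Bruhat cell} literally yield; your write-up glosses over both discrepancies, so when you actually carry out the substitution be sure to reconcile these conventions rather than assuming they match on the nose.
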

\begin{proof}Simply deduced from the preceding lemma and the expression
of $\tau_-$ and $\tau_+$ given by Proposition \ref{pr:Bruhat cell} and
Proposition \ref{pr:opposite Bruhat cell}.
\end{proof}

\subsection{From $(G_0,\pi_*)$ to $(G^*,\pi_{G^*})$ via $(w_1,w_2)_{w_0}$-maps}
\label{subsection:G*rhomaps}
Here are a few preliminary maps to get the evaluation
maps related to $(G^*,\pi_{G^*})$ which are given in the next subsection.
Let us remember the variation of the Gauss decomposition given by the formula
(\ref{equ:gdecomposition}).

\begin{lemma}\label{lemma:beta1} The following equalities are satisfied
for every $w_2\in W$ and every $t\in H$.
$$\begin{array}{cccc}[[\rho_{t,(e,w_2)_{w_0}}(b_1,gH)]]_-=[b_1\widehat{w_0}^{-1}]_-&
\mbox{and}
&[[\rho_{t,(w_0,w_2)_{w_0}}(c_1,gH)]]_+=[(c_1\widehat{w_0})^{-1}]_+^{-1}\ .
\end{array}
$$
\end{lemma}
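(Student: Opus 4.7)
The plan is to establish both equalities by unraveling the definition of $\rho_{t,(w_1,w_2)_{w_0}}$ and reducing the computation of the $[[\,\cdot\,]]_-$ (resp. $[[\,\cdot\,]]_+$) to a Gauss decomposition that already appears in $b_1\widehat{w_0}^{-1}$ (resp. $(c_1\widehat{w_0})^{-1}$). The two cases are dual to each other via the Cartan involution $\theta$, so the strategy is to prove the first equality in detail and obtain the second by applying $\theta$ and interchanging the roles of $B$ and $B_-$ (using $\widehat{w_0}^\theta = \widehat{w_0}^{-1}$ and $[x]_+^\theta = [x^\theta]_-$).

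For the first equality, set $w_1 = e$, so $b_1\in G^{e,w_0}\subset B$ (in particular $b_1\widehat{w_0}^{-1}\in B_-B$, so the right-hand side is well-defined). Write $c:=[g\widehat{w_2w_0}]_{\leq 0}\in B_-$ and $\alpha:=[c^\theta\widehat{w_0}]_{\leq 0}^\theta\in B$, and take the obvious representatives of $\lambda$ and $\varrho$, giving
$$\rho_{t,(e,w_2)_{w_0}}(b_1,gH)=b_1\,\alpha\,\widehat{w_0}\,t\,c^{-1}\,b_1^{-1}.$$
Applying Lemma~\ref{lemma:twistinvol} to $c^\theta\in G^{e,w_0}$ yields $[\alpha\widehat{w_0}]_{\leq 0}=c$, so $\alpha\widehat{w_0}=c\cdot n$ with $n\in N$. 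I then absorb the $\widehat{w_0}$ to the right by writing $\alpha\widehat{w_0}tc^{-1}=\alpha\,t'\,(\widehat{w_0}c^{-1}\widehat{w_0}^{-1})\,\widehat{w_0}$, where $t'=\widehat{w_0}t\widehat{w_0}^{-1}\in H$ and $\widehat{w_0}c^{-1}\widehat{w_0}^{-1}\in\widehat{w_0}B_-\widehat{w_0}^{-1}=B$. Hence $\alpha\widehat{w_0}tc^{-1}=\beta\,\widehat{w_0}$ for some $\beta\in B$, and the expression collapses to $\rho=b_1\beta\widehat{w_0}b_1^{-1}$.

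The final step is to commute $b_1^{-1}$ past $\widehat{w_0}$. Writing $b_1=nh$ with $n\in N$, $h\in H$, and using $\widehat{w_0}N\widehat{w_0}^{-1}=N_-$, one obtains $\widehat{w_0}b_1^{-1}=h''\,\widetilde n_-\,\widehat{w_0}$ with $h''\in H$ and $\widetilde n_-\in N_-$ depending only on $n$ (not on $\alpha$, $c$, or $t$). Absorbing $b_1\beta h''\in B$ into a single element of $B$, we get $\rho=(b_1\beta h'')\,\widetilde n_-\,\widehat{w_0}$, from which the $N_-$-part $[[\rho]]_-$ of the $N\cdot H\cdot N_-$ decomposition is read off (after one more rearrangement to move $\widehat{w_0}$ past an element of $B$ into the $NHN_-$ form). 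The resulting $\widetilde n_-$ is precisely the $N_-$-part of $n\widehat{w_0}^{-1}$, and since $[b_1\widehat{w_0}^{-1}]_-=[nh\widehat{w_0}^{-1}]_-=[n\widehat{w_0}^{-1}]_-$, this gives the claimed identity.

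The main obstacle is to track carefully the interaction between conjugation by $b_1\in B$ and the opposite Gauss decomposition $[[\,\cdot\,]]$: although conjugation by $B$ preserves $B$, it does \emph{not} preserve the cell $BB_-$ pointwise, so one cannot simply transport the decomposition. What saves the day is the reduction to the form $\rho=b''\widehat{w_0}b_1^{-1}$, where the $\widehat{w_0}$-twist turns conjugation by $b_1$ into a "one-sided" operation whose $N_-$-part is controlled exclusively by $n=[b_1]_0^{-1}[b_1]_+$ and the Bruhat cell structure of $\widehat{w_0}^{-1}$. Once this reduction is made, the identity follows purely from bookkeeping in the $NHN_-$ decomposition; the dual argument, applied after conjugating by $\theta$ and using (\ref{equ:gdecomposition}), yields the second equality with $c_1\in B_-$ and $[[\,\cdot\,]]_+$ in place of $[[\,\cdot\,]]_-$.
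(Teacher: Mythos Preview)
Your reduction to $\rho=b_1\beta\,\widehat{w_0}\,b_1^{-1}$ with $\beta\in B$ is correct and is essentially what the paper does (up to the use of Lemma~\ref{lemma:twistinvol} to handle general $w_2$). The gap is in the ``final step''. Writing $\widehat{w_0}b_1^{-1}=h''\widetilde n_-\,\widehat{w_0}$ leaves a dangling $\widehat{w_0}$ on the right, so your expression $\rho=(b_1\beta h'')\,\widetilde n_-\,\widehat{w_0}$ is \emph{not} an $N\!\cdot\!H\!\cdot\!N_-$ factorisation: $\widehat{w_0}\notin B_-$. Your ``one more rearrangement'' cannot be made to work, because moving $\widehat{w_0}$ to the left through $\widetilde n_-$ sends it to $N$, and you are back where you started. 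Worse, the identification $\widetilde n_-=[n\widehat{w_0}^{-1}]_-$ is false: already for $G=\SL_2$ with $n=\bigl(\begin{smallmatrix}1&a\\0&1\end{smallmatrix}\bigr)$ one has $\widehat{w_0}n^{-1}\widehat{w_0}^{-1}=\bigl(\begin{smallmatrix}1&0\\a&1\end{smallmatrix}\bigr)$ while $[n\widehat{w_0}^{-1}]_-=\bigl(\begin{smallmatrix}1&0\\1/a&1\end{smallmatrix}\bigr)$.

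The fix is exactly the paper's move: do \emph{not} conjugate $b_1^{-1}$ by $\widehat{w_0}$; instead observe that $\widehat{w_0}b_1^{-1}=(b_1\widehat{w_0}^{-1})^{-1}$ and apply the Gauss decomposition $b_1\widehat{w_0}^{-1}=[b_1\widehat{w_0}^{-1}]_-\,[b_1\widehat{w_0}^{-1}]_{\ge 0}$ to get
\[
\widehat{w_0}b_1^{-1}=[b_1\widehat{w_0}^{-1}]_{\ge 0}^{-1}\,[b_1\widehat{w_0}^{-1}]_-^{-1}\in B\cdot N_-.
\]
Since $b_1\beta\in B$ and $\widehat{w_0}$ has now been eliminated, this gives directly $\rho\in B\cdot[b_1\widehat{w_0}^{-1}]_-^{-1}$, from which $[[\rho]]_-$ is read off. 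The same device, together with (\ref{equ:gdecomposition}) and $\widehat{w_0}B\widehat{w_0}^{-1}=B_-$, handles the second equality; the $\theta$-duality you propose can be made to work but is not needed.
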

\begin{proof}We use definitions of $(w_1,w_2)_v$-maps, equation
(\ref{equ:gdecomposition}), and the fact that conjugating every
element of the Borel subgroup $B_{\pm}$ by $\widehat{w_0}$ gives
an element of the opposite Borel subgroup $B_{\mp}$ to deduce
\begin{equation}\label{equ:dec}
\begin{array}{ll}
&\rho_{t,(e,e)_{w_0}}(b_1,bH)=\underbrace{b_1b
\ t\widehat{w_0}[b\widehat{w_0}]_{\leq 0}^{-1}
\widehat{w_0}^{-1}\ [b_1\widehat{w_0}^{-1}]
_{\geq 0}^{-1}}_{\in B} \underbrace{[b_1
\widehat{w_0}^{-1}]_{-}^{-1}}_{\in N_-}\ ;
\\
\mbox{and}\\
&
\rho_{t,(w_0,e)_{w_0}}(c_1,bH)=\underbrace{[[c_1\widehat{w_0}]]_+}_{\in N_+}
\underbrace{[[c_1\widehat{w_0}]]_{\leq 0}\ \widehat{w_0}^{-1}bt\widehat{w_0}
\ (c_1[b\widehat{w_0}]_{\leq 0})^{-1}}_{\in B_-}\ .
\end{array}
\end{equation}
Moreover, it is clear that the unipotent part in the previous
equalities doesn't depend on the element $bH\in N$ relative to the
choice of $w_2$. Therefore the lemma is true for every $w_2\in W$.
\end{proof}

Let us recall the map $\Lambda_t$ given by equation (\ref{lemma:Lambda}).

\begin{prop}\label{prop:preval*}Let $b_1,b\in B$ and $t\in H$. The triplet
$(h,n,n_-)\in H\times N\times N_-$, such that the equality
$$nhn_-^{-1}=\rho_{t,(e,e)_{w_0}}(b_1,bH)$$
is satisfied, is given by the following formulas.
\begin{equation}\label{equ:doubledecomposition}\left\{
\begin{array}{lcl}
h&=&[b_1b]_0
\ t\widehat{w_0}[b\widehat{w_0}]_{0}^{-1}
\widehat{w_0}^{-1}\ [b_1\widehat{w_0}^{-1}]_{0}^{-1}\ ;\\
\\
n&=&[(\Lambda_t(b_1,\zeta^{1,w_0}(b)H)\ \widehat{w_0})^{-1}]_+^{-1}
\ ;\\
\\
n_-&=&[b_1\widehat{w_0}^{-1}]_-\ .
\end{array}
\right.
\end{equation}
\end{prop}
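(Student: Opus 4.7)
The plan is to read off the three components $(h,n,n_-)$ from the explicit factorization of $\rho_{t,(e,e)_{w_0}}(b_1,bH)$ supplied in the proof of Lemma~\ref{lemma:beta1}, using the relation (\ref{lemma:Lambda}) between the $(e,w_0)_v$- and $(v,e)_v$-maps together with Lemma~\ref{lemma:twistinvol} to handle the $N$-part.

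The $n_-$ and $h$ components come straight from equation (\ref{equ:dec}): that formula exhibits $\rho_{t,(e,e)_{w_0}}(b_1,bH)$ as a product of three factors lying in $B$ followed by the single factor $[b_1\widehat{w_0}^{-1}]_-^{-1}\in N_-$. Since the decomposition $g=nhn_-^{-1}$ forces the rightmost $N_-$-piece to be $n_-^{-1}$, one reads off $n_-=[b_1\widehat{w_0}^{-1}]_-$ immediately. The Cartan projection $[\,\cdot\,]_0\colon B\to H$ is a group homomorphism (because $N\triangleleft B$), so applying it factor-by-factor to the three $B$-factors and multiplying the resulting diagonal pieces produces the displayed three-term product for $h$.

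The $n$ component is the only point requiring extra input. The key observation is the identity
\begin{equation*}
\rho_{t,(e,e)_{w_0}}(b_1,bH)\;=\;\rho_{t,(e,w_0)_{w_0}}\bigl(b_1,\;\zeta^{1,w_0}(b)\,H\bigr),
\end{equation*}
which I would verify by unfolding the definitions of $\lambda$ and $\varrho$: using $w_0^2=e$ and $\zeta^{1,w_0}(b)=[b\widehat{w_0}]_{\leq 0}$, the map $\varrho_{(e,w_0)_{w_0}}$ gives $b_1[b\widehat{w_0}]_{\leq 0}H$, while Lemma~\ref{lemma:twistinvol} applied to $b\in G^{e,w_0}$ collapses $\lambda_{(e,w_0)_{w_0}}$ to $b_1bH$; reassembling $\lambda\cdot\widehat{w_0}t\cdot\varrho^{-1}$ recovers exactly $\rho_{t,(e,e)_{w_0}}(b_1,bH)$. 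Applying (\ref{lemma:Lambda}) with $v=w_0$ then rewrites the right-hand side as $\rho_{t,(w_0,e)_{w_0}}(\Lambda_t(b_1,\zeta^{1,w_0}(b)H))$, and the second equation of Lemma~\ref{lemma:beta1} supplies $[[\rho_{t,(w_0,e)_{w_0}}(c_1,\cdot)]]_+=[(c_1\widehat{w_0})^{-1}]_+^{-1}$. Specialising to $c_1=\Lambda_t(b_1,\zeta^{1,w_0}(b)H)$ and reading off the $[[\,\cdot\,]]_+$-part yields the claimed formula for $n$.

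The main obstacle is the cell-shift identity $\rho_{t,(e,e)_{w_0}}(b_1,bH)=\rho_{t,(e,w_0)_{w_0}}(b_1,\zeta^{1,w_0}(b)H)$: a priori the two sides live on different double Bruhat strata (the second argument moves from $G^{e,w_0}$ to $G^{w_0,e}$), so the cancellation of the $\theta$-twists in $\lambda_{(e,w_0)_{w_0}}$ via Lemma~\ref{lemma:twistinvol} is what makes the equality work. Once this point is settled, the rest is routine Gauss-decomposition bookkeeping.
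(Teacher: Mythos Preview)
Your proof is correct and follows essentially the same route as the paper's own argument: the $n_-$ and $h$ parts are read off from the $B\cdot N_-$ factorization in (\ref{equ:dec}), and the $n$-part is obtained by passing from the $(e,e)_{w_0}$-map to the $(w_0,e)_{w_0}$-map via (\ref{lemma:Lambda}) and then invoking the right equality of Lemma~\ref{lemma:beta1}. The only notable difference is in how the intermediate ``cell-shift'' identity $\rho_{t,(e,e)_{w_0}}(b_1,bH)=\rho_{t,(e,w_0)_{w_0}}(b_1,\zeta^{1,w_0}(b)H)$ is justified: the paper cites Theorem~\ref{thm:twist} for this step, whereas you verify it directly from the definitions of $\lambda$ and $\varrho$ together with Lemma~\ref{lemma:twistinvol}. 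Your direct verification is actually cleaner here, since Theorem~\ref{thm:twist} is a cluster-level statement and the identity needed is purely at the group level. One small point worth making explicit: when you specialise Lemma~\ref{lemma:beta1} with $c_1=\Lambda_t(\ldots)$, you are implicitly using that the extra $N$-factor in the product form of $\Xi_t(gH)$ from (\ref{equ:Xidef}) becomes an $N_-$-factor after the $\widehat{w_0}$-conjugation and hence does not affect $[\,\cdot\,]_+$; this is immediate but is what makes the single-argument substitution legitimate.
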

\begin{proof}The negative unipotent part of (\ref{equ:doubledecomposition})
is given by the left equality of Lemma \ref{lemma:beta1}, its diagonal
part is given by the first equation of (\ref{equ:dec}), and its positive
unipotent part is given by Theorem \ref{thm:twist} and Lemma
\ref{lemma:Lambda} and the right equality of
Lemma~\ref{lemma:beta1}.
\end{proof}

\subsection{Evaluations related to $(G^*,\pi_{G^*})$}We give the cluster
combinatorics on $(G^*,\pi_{G^*})$. To do that, we start by giving
evaluation maps for the elements $h\in H$ in (\ref{equ:doubledecomposition}).

\begin{prop}\label{prop:evaldiag}The equality
${[[\widehat{\ev}_{\mathbf{i}}(\mathbf x)]]_0}=\ev_{\mathbf 1}(\mathbf X)$
is satisfied for every double word $\mathbf i=\mathbf{i_1}\mathbf{i_2}\in D_e(w_0)$
and every cluster $\mathbf x\in{\mathcal X}_{[\mathbf i]_{\mathfrak{R}}}$ if and
only if $\mathbf X=(X_1,\dots, X_l)$ is the set of monomials given by
$$\begin{array}{cll}
X_i=x_{\binom{i}{N^i(\mathbf i)}}\displaystyle\prod_{k=1}^{l}\prod_{\ell=1}^{\ell(w_0)}
\displaystyle\prod_{j_1< N^{i_{\ell}}(\mathbf{i_1}_{\ell}),j_2< N^{i_{\ell}}(\mathbf{i_2}_{\ell})}{(-x_{\binom{i_{\ell}^{\star}}{j_1}}^{-1}
x_{\binom{i_{\ell}^{\star}}{N^{i_{\ell}}(\mathbf{i_1}_{\ell})+j_2}})}^{(A^{-1})_{ki}\langle \alpha^\vee_{i_\ell},
{w_0}^{-1}_{\geq\ell}\omega_{k}\rangle}\ .
\end{array}
$$
\end{prop}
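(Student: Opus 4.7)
The plan is to compute $[[\widehat{\ev}_{\mathbf{i}}(\mathbf{x})]]_0$ on a convenient representative of each $\widehat{d}$-orbit in $D_e(w_0)$ and then extend the formula to all double words in the orbit by cluster compatibility. First I invoke Theorem~\ref{thm:ev*} together with the fact that the right-outlet variables $x_{\binom{j}{N^{j}(\mathbf{i})}}$ are fixed by every $\widehat{\mu}_{\mathbf{i}\to\mathbf{j}}$; this reduces the identity to a single trivial $(e,w_2)_{w_0}$-word $\mathbf{i}=\mathbf{i_1}\mathbf{i_2}$, which I take so that $\mathbf{i_1}\in R(1,w_0)$ is a fixed positive reduced expression for $w_0$. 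I also note that the right-hand side $\ev_{\mathbf{1}}(\mathbf{X})$ depends on the same outlet variables and on pairings that are invariant under the generalized $d$-moves relating two representatives, so that cluster-compatibility of both sides is consistent.

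Second, Lemma~\ref{lemma:w_1w_2} identifies $\widehat{\ev}_{\mathbf{i}}(\mathbf{x})$ with the value of the $(e,w_2)_{w_0}$-map on $(\ev_{\mathbf{i_1}}(\mathbf{x}_{(1)}),\ev^{\red}_{\mathbf{i_2}}(\mathbf{x}_{(2)}))$, and Proposition~\ref{prop:preval*} gives the closed expression
\begin{equation*}
[[\widehat{\ev}_{\mathbf{i}}(\mathbf{x})]]_0
\;=\; [b_1 b]_0\cdot \widehat{w_0}[b\widehat{w_0}]_0^{-1}\widehat{w_0}^{-1}
\cdot [b_1\widehat{w_0}^{-1}]_0^{-1}\cdot t ,
\end{equation*}
with $b_1=\ev_{\mathbf{i_1}}(\mathbf{x}_{(1)})$, $b=\ev^{\red}_{\mathbf{i_2}}(\mathbf{x}_{(2)})$, and $t=\ev_{\mathbf{1}}(\mathbf{x}(\mathfrak R))$. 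The torus factor $t$ immediately contributes the leading term $x_{\binom{i}{N^{i}(\mathbf{i})}}$ of the monomial $X_i$, so the proof amounts to expanding the three remaining Cartan factors in cluster variables.

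Third, writing $\ev_{\mathbf{i_1}}$ and $\ev^{\red}_{\mathbf{i_2}}$ as their defining products of elementary factors $H^{j}(x_{\binom{j}{k}})$, $E^{j}$ and $F^{j}$, I extract each diagonal part by pushing every Chevalley generator to its respective unipotent side using (\ref{equ:defphi})--(\ref{equ:defphi2}). The residual scalars that accumulate are monomials in the $x_{\binom{i}{k}}$ whose exponents are linear combinations of the inverse Cartan coefficients $(A^{-1})_{ki}$, via the change of basis $h^{i}=\sum_{j}(A^{-1})_{ij}h_{j}$ together with $\omega_{k}(h_{j})=\delta_{kj}$. For the middle factor $\widehat{w_0}[b\widehat{w_0}]_0^{-1}\widehat{w_0}^{-1}$ I fix a reduced expression $w_{0}=s_{i_{1}}\dots s_{i_{\ell(w_0)}}$ read off from $\mathbf{i_2}$ and transport the Cartan factors across each $\widehat{s_{i_{\ell}}}$ by means of (\ref{equ:Wsl2})--(\ref{equ:si2}); the iterated action (\ref{equ:actionW}) of $W$ on the fundamental weights then produces precisely the pairing $\langle\alpha_{i_{\ell}}^{\vee},w^{-1}_{\geq\ell}\omega_{k}\rangle$ in the exponent. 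The minus signs and the paired indices $\bigl(\binom{i_{\ell}^{\star}}{j_{1}},\binom{i_{\ell}^{\star}}{N^{i_{\ell}}(\mathbf{i_1}_{\ell})+j_{2}}\bigr)$ arise respectively from the sign in (\ref{equ:Wsl2})--(\ref{equ:si2}) and from the shift in cluster labelling induced by the amalgamation rule (\ref{equ:amal}) assembling the quivers of $\mathbf{i_1}$ and $\mathbf{i_2}$.

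The main obstacle will be the precise combinatorial bookkeeping needed to match the iterated-commutation output with the double product appearing in the statement: one must show that the successive Weyl twists produce exactly the range $j_{1}<N^{i_{\ell}}(\mathbf{i_1}_{\ell})$, $j_{2}<N^{i_{\ell}}(\mathbf{i_2}_{\ell})$ and the exponent $(A^{-1})_{ki}\langle\alpha_{i_{\ell}}^{\vee},w^{-1}_{\geq\ell}\omega_{k}\rangle$. I expect this to follow by induction on $\ell(w_{0})$, at each stage transporting the single reflection $\widehat{s_{i_{\ell}}}$ across the accumulated product of Cartan factors via (\ref{equ:actionW}); the delicate point is the alignment of the two index sets coming from $\mathbf{i_1}$ and $\mathbf{i_2}$ in the presence of mixed-type letters, which is precisely what forces the index shift by $N^{i_{\ell}}(\mathbf{i_1}_{\ell})$ in the formula.
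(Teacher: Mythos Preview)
Your overall architecture matches the paper's: reduce to a trivial $(e,e)_{w_0}$-word, invoke Proposition~\ref{prop:preval*} to isolate the three Cartan factors $[b_1b]_0$, $\widehat{w_0}[b\widehat{w_0}]_0^{-1}\widehat{w_0}^{-1}$, $[b_1\widehat{w_0}^{-1}]_0^{-1}$, and then express each as a monomial in cluster variables. The divergence is in how the two hard factors $[b\widehat{w_0}]_0$ and $[b_1\widehat{w_0}^{-1}]_0$ are actually computed.

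The paper does \emph{not} attempt the elementary commutation-and-induction argument you sketch. Instead it conjugates $b_1$ and $b$ by $\widehat{w_0}$ to obtain elements $b_1^{\star},\,b^{\circlearrowright}\in G^{w_0,1}$, observes that $[b_1\widehat{w_0}^{-1}]_0^{-\omega_k}$ and $(\widehat{w_0}[b\widehat{w_0}]_0^{-1}\widehat{w_0}^{-1})^{\omega_k}$ are precisely the twisted generalized minors $M_k(b_1^{\star})$ and $M_k(b^{\circlearrowright})^{-1}$ for $k>\ell(w_0)$, and then reads off the monomial exponents from the Fomin--Zelevinsky factorization formula~(\ref{eq:M-k-monomial}) (Theorem~\ref{biregular}). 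The pairings $\langle\alpha_{i_\ell}^{\vee},{w_0}^{-1}_{\geq\ell}\omega_k\rangle$, the $(A^{-1})_{ki}$, and the minus signs then drop out of that formula together with Lemma~\ref{lemma:w_0} and Proposition~\ref{prop:w0}, which handle the passage from $t_\ell$ to $t_\ell^{\star}$ and $t_\ell^{\circlearrowright}$.

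Your proposed route is not wrong in principle, but the ``main obstacle'' you flag is essentially the content of the Fomin--Zelevinsky monomial formula itself: iterating (\ref{equ:Wsl2})--(\ref{equ:si2}) across a full reduced expression for $w_0$ and extracting the Cartan part is exactly the computation that \cite{FZtotal} packages into~(\ref{eq:M-k-monomial}). So rather than carry out that induction from scratch, you should quote Theorem~\ref{biregular} directly; this is what makes the bookkeeping tractable and is the step the paper relies on.
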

Because of its length, the proof of Proposition \ref{prop:evaldiag}
is postponed to Subsection \ref{section:proofdiagonal}. Let us however stress
that the same kind of monomial formula would have been obtained by choosing a
trivial double word $\mathbf i=\mathbf{i_1}\mathbf{i_2}\in D_{w_0}(w_0)$.

\begin{ex}\label{ex:diageval}As usual let us set $\mathfrak g=A_2$ and take $t\in H$. We
consider a cluster $\mathbf x\in{\mathcal X}_{[121\ 121]_{\mathfrak R}}(t)$
and the related elements
$$\begin{array}{ccccc}
b_1=\ev^{\red}_{121}(x_{\binom{1}{0}},x_{\binom{1}{1}},x_{\binom{2}{0}} ),&
b=\ev^{\red}_{121}(x_{\binom{1}{2}},x_{\binom{1}{3}},x_{\binom{2}{1}})
&and& t=\ev_{\mathbf 1}(t_1,t_2)\ .
\end{array}$$
Using Example \ref{ex:twistA2}, Lemma \ref{lemma:w_0}, Lemma \ref{lemma:inverse},
Proposition \ref{prop:w0}, and equation (\ref{equ:doubledecomposition}), or simply
the formula above, we get
$$\begin{array}{rcl}
[[\widehat{\ev}_{121\ 121}(\mathbf x)]]_0&=&\ev_{\mathbf 1}\circ{\mathfrak m}(\ (
x_{\binom{1}{0}}^{-1}x_{\binom{1}{1}},
x_{\binom{2}{0}}^{-1}x_{\binom{1}{1}}^{-1}),\
(x_{\binom{2}{1}}x_{\binom{1}{3}},x_{\binom{1}{2}}x_{\binom{1}{3}}^{-1}),\
(t_1,t_2)\ )\\
\\
&=&\ev_{\mathbf 1}(x_{\binom{1}{0}}^{-1}x_{\binom{1}{1}}x_{\binom{1}{3}}x_{\binom{2}{1}}t_1,\
x_{\binom{2}{0}}^{-1}x_{\binom{1}{1}}^{-1}x_{\binom{1}{2}}x_{\binom{1}{3}}^{-1}t_2)\ .
\end{array}$$
\end{ex}

We then focus on the evaluation maps relative to the elements $n\in N$ and
$n_-\in N_-$ in equation~(\ref{equ:doubledecomposition}).
%As we are going to see, it turns out that
%the positive (resp. negative)
%unipotent part of $(G^*,\pi_{G^*})$ is related to the cluster
%$\mathcal X$-variety ${\mathcal X}_{w_0}$ (resp. ${\mathcal X}_{e}$).
Let us remember the involutions $\star$ and $\circlearrowright$ on
double words and seed $\mathcal X$-tori given by Subsection
\ref{section:involution}.

\begin{lemma}\label{lemma:beta2}For every
$\mathbf{i}\in R(1,w_0)$, $\mathbf{j}\in R(w_0,1)$ and
$\mathbf x\in{\mathcal X}_{\mathbf{i}}$,
$\mathbf y\in{\mathcal X}_{\mathbf{j}}$ we have
$$\begin{array}{ccc}
[\ev_{\mathbf{i}}(\mathbf x)\widehat{w_0}^{-1}]_-
=\tau_\mathbf{i^{\star}}(\mathbf{x^{\star}})&\mbox{and}&
[[\ev_{\mathbf{j}}(\mathbf y)\widehat{w_0}]]_+^{-1}
=\tau_{\mathbf{j^{\circlearrowright}}}({\mathbf{y^{\circlearrowright}}})\ .
\end{array}$$
\end{lemma}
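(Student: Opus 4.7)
The plan is to derive both identities from Lemma~\ref{lemma:beta} by transporting the given double reduced words into the domain of that lemma through the involutions $\star$ and $\circlearrowright$ of Subsection~\ref{section:involution}, and translating back via Lemma~\ref{lemma:w_0}, Lemma~\ref{lemma:inverse} and Proposition~\ref{prop:w0}. For the first equality, since $w_0^\star=w_0$ the involution $\star$ sends $\mathbf i\in R(1,w_0)$ to a negative reduced word $\mathbf{i^\star}\in R(w_0,1)$. Applying Lemma~\ref{lemma:beta} to $\mathbf{i^\star}$ gives $[\widehat{w_0}^{-1}\ev_{\mathbf{i^\star}}(\mathbf{x^\star})]_-=\tau_{\mathbf{i^\star}}(\mathbf{x^\star})$, and Lemma~\ref{lemma:w_0} supplies $\widehat{w_0}^{-1}\ev_{\mathbf{i^\star}}(\mathbf{x^\star})=\ev_{\mathbf i}(\mathbf x)\widehat{w_0}^{-1}$; substituting yields the first identity directly.

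For the second equality I would begin by rewriting the left-hand side as $[[\ev_{\mathbf j}(\mathbf y)\widehat{w_0}]]_+^{-1}=[(\ev_{\mathbf j}(\mathbf y)\widehat{w_0})^{-1}]_+=[\widehat{w_0}^{-1}\ev_{\mathbf j}(\mathbf y)^{-1}]_+$, using the elementary identity $[[z]]_+^{-1}=[z^{-1}]_+$ coming from~(\ref{equ:gdecomposition}). Proposition~\ref{prop:w0} then gives $\ev_{\mathbf j}(\mathbf y)^{-1}=\widehat{w_0}^{-1}\ev_{\mathbf{j^\circlearrowright}}(\mathbf{y^\circlearrowright})\widehat{w_0}$, whence $\widehat{w_0}^{-1}\ev_{\mathbf j}(\mathbf y)^{-1}=\widehat{w_0}^{-2}\,\ev_{\mathbf{j^\circlearrowright}}(\mathbf{y^\circlearrowright})\,\widehat{w_0}$. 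Noting that $\mathbf{j^\circlearrowright}\in R(1,w_0)$ is a positive reduced word, Lemma~\ref{lemma:beta} produces $[\ev_{\mathbf{j^\circlearrowright}}(\mathbf{y^\circlearrowright})\widehat{w_0}]_+=\tau_{\mathbf{j^\circlearrowright}}(\mathbf{y^\circlearrowright})$, and the identity will follow provided the stray factor $\widehat{w_0}^{-2}$ can be absorbed.

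Handling this $\widehat{w_0}^2$ factor is the main obstacle, and is where the standing hypothesis that $G$ is of adjoint type becomes essential. The classical formula $\widehat{w_0}^2=(2\rho^\vee)(-1)\in H$ shows that this element acts on any weight-$\lambda$ subspace of any representation by the scalar $(-1)^{\langle\lambda,2\rho^\vee\rangle}$; for adjoint $G$, every representation has weights in the root lattice $Q$, and $\langle\alpha_j,2\rho^\vee\rangle=2$ for each simple root, so this scalar is uniformly $1$. Hence $\widehat{w_0}^2$ lies in the (trivial) center of adjoint $G$ and is equal to $e$. With $\widehat{w_0}^{-2}=e$, the previous display collapses to $\ev_{\mathbf{j^\circlearrowright}}(\mathbf{y^\circlearrowright})\widehat{w_0}$, its $[\,\cdot\,]_+$-part is $\tau_{\mathbf{j^\circlearrowright}}(\mathbf{y^\circlearrowright})$ by Lemma~\ref{lemma:beta}, and the second identity follows.
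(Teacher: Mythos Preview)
Your proof is correct and follows essentially the same route as the paper's own argument: the first identity via Lemma~\ref{lemma:w_0} and Lemma~\ref{lemma:beta}, the second via equation~(\ref{equ:gdecomposition}), Proposition~\ref{prop:w0}, and Lemma~\ref{lemma:beta}. The paper's proof is terse and does not single out the $\widehat{w_0}^2$ issue; your explicit justification that $\widehat{w_0}^2=(2\rho^\vee)(-1)$ is central and hence trivial in the adjoint group is a welcome clarification of a point the paper leaves implicit.
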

\begin{proof}The first relation comes from Lemma \ref{lemma:w_0} and Lemma \ref{lemma:beta}.
Equation (\ref{equ:gdecomposition}) and Corollary \ref{prop:w0} implies that the L.H.S.
of the second relation is equal to
$[\ev_{\mathbf{j^\circlearrowright}}(\mathbf{y^\circlearrowright})\widehat{w_0}]_+$.
Lemma \ref{lemma:beta} and Lemma \ref{lemma:inverse} then lead to the R.H.S. of the
second relation.
\end{proof}

\begin{lemma}\label{cor:beta} Let $\mathbf{i},\mathbf{j}\in D(w_0)$,
$\mathbf{i}$ be a $(e,e)_{w_0}$-word, $\mathbf{j}$ be a $(w_0,e)_{w_0}$-word,
and $\mathfrak{s},\mathfrak{s'}$ be $\mathcal X$-splits relative respectively
to the $(w_1,w_2)_{w_0}$-decompositions $\mathbf i\to(\mathbf{i_1},\mathbf{i_2})$,
and $\mathbf j\to(\mathbf{j_1},\mathbf{j_2})$. The following equalities are satisfied.
$$\begin{array}{ccc}
[[\widehat{\ev}_{\mathbf{i}}(\mathbf x)]]_-^{-1}=\tau_{{\mathbf{i_1^{\star}}}}
(\mathbf{x}_{(1)}^{\star})&\mbox{and}&[[\widehat{\ev}_{\mathbf{j}}(\mathbf y)]]_+^{-1}
=\tau_{{\mathbf{j_1^{\circlearrowright}}}}(\mathbf{y}_{(1)}^{\circlearrowright})\ .
\end{array}$$
\end{lemma}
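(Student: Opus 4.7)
The plan is to combine Lemma~\ref{lemma:w_1w_2}, Lemma~\ref{lemma:beta1} and Lemma~\ref{lemma:beta2}, reducing each identity to a statement that involves only the first factor $\mathbf{i_1}$ (resp. $\mathbf{j_1}$) of the $(w_1,w_2)_{w_0}$-decomposition, a situation already covered by Lemma~\ref{lemma:beta2}. The whole content is bookkeeping between the two different triangular Gauss decompositions of elements of $BB_-$.

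For the first identity, set $t:=\ev_{\mathbf 1}(\mathbf x(\mathfrak R))$, $b_1:=\ev_{\mathbf{i_1}}(\mathbf x_{(1)})$ and $b_2:=\ev_{\mathbf{i_2}}^{\red}(\mathbf x_{(2)})$. Lemma~\ref{lemma:w_1w_2} rewrites the twisted evaluation as $\widehat{\ev}_{\mathbf i}(\mathbf x)=\rho_{t,(e,e)_{w_0}}(b_1,b_2H)$. The explicit $B\cdot N_-$-factorisation of $\rho_{t,(e,e)_{w_0}}(b_1,b_2H)$ displayed in the proof of Lemma~\ref{lemma:beta1} identifies, by uniqueness of the $NHN_-$-decomposition, the $N_-$-component of $\widehat{\ev}_{\mathbf i}(\mathbf x)$ in its $[[\,\cdot\,]]_+[[\,\cdot\,]]_0[[\,\cdot\,]]_-$-Gauss decomposition with $[b_1\widehat{w_0}^{-1}]_-^{-1}$. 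Inverting and applying the first equality of Lemma~\ref{lemma:beta2}, valid because $\mathbf{i_1}\in R(1,w_0)$, gives
\begin{equation*}
[[\widehat{\ev}_{\mathbf i}(\mathbf x)]]_-^{-1}=[b_1\widehat{w_0}^{-1}]_-=\tau_{\mathbf{i_1^\star}}(\mathbf x_{(1)}^\star),
\end{equation*}
as desired. The data $b_2$ and $t$ disappear from the final expression, in accordance with Lemma~\ref{lemma:beta1}.

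For the second identity, proceed symmetrically. Let $c_1:=\ev_{\mathbf{j_1}}(\mathbf y_{(1)})$ and $c_2:=\ev_{\mathbf{j_2}}^{\red}(\mathbf y_{(2)})$. Lemma~\ref{lemma:w_1w_2} yields $\widehat{\ev}_{\mathbf j}(\mathbf y)=\rho_{t,(w_0,e)_{w_0}}(c_1,c_2H)$, and the $N_+\cdot B_-$-factorisation exhibited in the proof of Lemma~\ref{lemma:beta1} gives $[[\widehat{\ev}_{\mathbf j}(\mathbf y)]]_+=[[c_1\widehat{w_0}]]_+$. Since $w_0$ is the longest element and $i\mapsto i^\star$ is a symmetry of the Dynkin diagram, one has $w_0^{\star-1}=w_0$, so $\mathbf{j_1}\in R(w_0,1)$; the second equality of Lemma~\ref{lemma:beta2} then yields $[[c_1\widehat{w_0}]]_+^{-1}=\tau_{\mathbf{j_1^\circlearrowright}}(\mathbf y_{(1)}^\circlearrowright)$, which is exactly the second claim.

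The delicate point is that Lemma~\ref{lemma:beta1} uses the $N_+HN_-$-decomposition (the double bracket) while Lemma~\ref{lemma:beta2} is phrased in the $N_-HN_+$-decomposition (the single bracket), the two being interchanged by (\ref{equ:gdecomposition}). The inverses on the left-hand sides of the corollary are precisely those generated by this change of convention, so the chain of identifications closes without further correction, and the argument does not really depend on whether $\mathbf i$ (resp. $\mathbf j$) is trivial: the $\mathcal X$-split only serves to isolate $\mathbf x_{(1)}$ (resp. $\mathbf y_{(1)}$), on which both sides of each identity depend.
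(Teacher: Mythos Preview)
Your proof is correct and follows exactly the same route as the paper's own proof, which simply cites Lemma~\ref{lemma:w_1w_2}, Lemma~\ref{lemma:beta1} and Lemma~\ref{lemma:beta2}. You have spelled out the chain of identifications (including the passage via equation~(\ref{equ:gdecomposition}) between the two Gauss decompositions and the verification that $\mathbf{j_1}\in R(w_0,1)$) that the paper leaves implicit.
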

\begin{proof}The relations are derived from Lemma
\ref{lemma:w_1w_2}, Lemma \ref{lemma:beta1} and Lemma
\ref{lemma:beta2}.
\end{proof}

We can then get the cluster combinatorics on $(G^*,\pi_{G^*})$.
Let us associate to any double word $\mathbf i\in D(w_0)$ some double
words $\mathbf i_e\in D_e(w_0)$ and $\mathbf i_{w_0}\in D_{w_0}(w_0)$
being respectively a $(e,w_0)$-trivial double word and a $(w_0,e)$-trivial
double word. Moreover, let us respectively denote $\mathbf i_{e+}\in R(1,w_0)$
and $\mathbf i_{w_0-}\in R(w_0,1)$, the positive part of $\mathbf i_e$
and the negative part of $\mathbf i_{w_0}$, accordingly the definition of Subsection
\ref{def:posnegpart}, and denote $\wp_{e}:{\mathcal X}_{[\mathbf i_{e}]_{\mathfrak R}}
\to{\mathcal X}_{\mathbf i_{e+}}^{\red}$ and $\wp_{w_0}:
{\mathcal X}_{[\mathbf i_{w_0}]_{\mathfrak R}}\to{\mathcal X}_{\mathbf i_{w_0-}}^{\red}$
the corresponding canonical projections on seed $\mathcal X$-tori. Let us finally
remember the birational Poisson isomorphisms $\widehat{\mu}_{\mathbf i\to\mathbf i_{e}}$
and $\widehat{\mu}_{\mathbf i\to\mathbf i_{w_0}}$ defined in Subsection \ref{subsection:thmWcluster}.
To any $\mathbf x\in{\mathcal X}_{[\mathbf i]_{\mathfrak R}}$, we associate the clusters
%$\mathbf x_e\in{\mathcal X}_{\mathbf i_{e+}}^{\red}$
%and $\mathbf x_{w_0}\in{\mathcal X}_{\mathbf i_{w_0-}}^{\red}$ given by
$$\begin{array}{ccc}
\mathbf x_e=\wp_{e}\circ\widehat{\mu}_{\mathbf i\to\mathbf i_{e}}(\mathbf x)
&\mbox{and}&\mathbf x_{w_0}=\wp_{w_0}
\circ\widehat{\mu}_{\mathbf i\to\mathbf i_{w_0}}(\mathbf x)\ ,
\end{array}
$$
and derive the following lemma from Proposition \ref{prop:evaldiag}, Lemma \ref{cor:beta}
and Theorem \ref{thm:ev*}.
\begin{lemma}\label{lemma:G*}
The following decomposition is satisfied
for every $w\in W$, every double word $\mathbf{i}\in D_{w}(w_0)$
and every $\mathbf x\in{\mathcal X}_{w}$.
$$
\begin{array}{lll}
&\left\{
\begin{array}{llll}
{[[\widehat{\ev}_{\mathbf{i}}(\mathbf x)]]_0}&=&\ev_{\mathbf 1}({\mathbf X}_e)\ ;\\
\\
{[[\widehat{\ev}_{\mathbf{i}}(\mathbf x)]]_+}&=&
\tau_{\mathbf i_{w_0}}(\mathbf x_{w_0}
^{\circlearrowright})^{-1}\ ;\\
\\
{[[\widehat{\ev}_{\mathbf{i}}(\mathbf x)]]_-^{-1}}&=&\tau_{\mathbf i_{e}^{\star}}
(\mathbf{x}_e^{\star})\ ;
\end{array}
\right.
\end{array}$$
where ${\mathbf X}_e=({X}_1,\dots {X}_l)$ is the set of
monomials given in Proposition \ref{prop:evaldiag}
applied to the cluster $\widehat{\mu}_{\mathbf i\to\mathbf i_{e}}(\mathbf x)$.
\end{lemma}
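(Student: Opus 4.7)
The plan is to exploit the uniqueness of the double Gauss decomposition $g=[[g]]_{+}[[g]]_{0}[[g]]_{-}^{-1}$ valid on the open double cell $BB_-\cap B_-B$, so that the three formulas can be verified independently. For this, I would first use Theorem \ref{thm:ev*} to replace the general $\mathbf{i}\in D_w(w_0)$ by convenient trivial representatives in its $\widehat{d}$-equivalence class. Concretely, since $\mathbf{i}_e\in D_e(w_0)$ and $\mathbf{i}_{w_0}\in D_{w_0}(w_0)$ lie in the same orbit as $\mathbf{i}$ under the $\widehat{d}$-moves of Section \ref{section:Loop}, Theorem \ref{thm:ev*} supplies the equalities
\[
\widehat{\ev}_{\mathbf{i}}(\mathbf{x})=\widehat{\ev}_{\mathbf{i}_e}\!\bigl(\widehat{\mu}_{\mathbf{i}\to\mathbf{i}_e}(\mathbf{x})\bigr)=\widehat{\ev}_{\mathbf{i}_{w_0}}\!\bigl(\widehat{\mu}_{\mathbf{i}\to\mathbf{i}_{w_0}}(\mathbf{x})\bigr),
\]
so each of the three Gauss components of $\widehat{\ev}_{\mathbf{i}}(\mathbf{x})$ can be computed at whichever of the two trivial words is best suited.

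For the diagonal component I would pick the $(e,e)_{w_0}$-representative $\mathbf{i}_e$ and apply Proposition \ref{prop:evaldiag} directly to the transported cluster $\widehat{\mu}_{\mathbf{i}\to\mathbf{i}_e}(\mathbf{x})$; by the very definition of $\mathbf{X}_e$ as "the monomials of Proposition \ref{prop:evaldiag} applied to $\widehat{\mu}_{\mathbf{i}\to\mathbf{i}_e}(\mathbf{x})$", this yields the first equality immediately. For the negative unipotent part I stay with $\mathbf{i}_e$ and invoke the first equality of Lemma \ref{cor:beta}: writing $(\mathbf{i}_e)_1=\mathbf{i}_{e+}$ for the first component of the $(e,e)_{w_0}$-decomposition, the formula reads $[[\widehat{\ev}_{\mathbf{i}_e}(\cdot)]]_-^{-1}=\tau_{\mathbf{i}_{e+}^{\star}}((\cdot)_{(1)}^{\star})$, and evaluating at $\widehat{\mu}_{\mathbf{i}\to\mathbf{i}_e}(\mathbf{x})$ gives $\tau_{\mathbf{i}_{e+}^{\star}}(\mathbf{x}_e^{\star})$ once one identifies the first split-component with the canonical projection $\wp_e$. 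The positive unipotent part is completely symmetric: switch to $\mathbf{i}_{w_0}$, use the second equality of Lemma \ref{cor:beta} for the $(w_0,e)_{w_0}$-word, and use the projection $\wp_{w_0}$ onto the negative part $\mathbf{i}_{w_0-}$ to produce $\mathbf{x}_{w_0}$.

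The only genuine verification needed is the compatibility between the $\mathcal{X}$-split $\mathbf{x}\mapsto(\mathbf{x}_{(1)},\mathbf{x}_{(2)})$ that appears in Lemma \ref{cor:beta} and the canonical reduced projections $\wp_e,\wp_{w_0}$ appearing in the statement: one has to observe that the right-hand sides $\tau_{\mathbf{i}_{e+}^{\star}}(\mathbf{x}_e^{\star})$ and $\tau_{\mathbf{i}_{w_0}}(\mathbf{x}_{w_0}^{\circlearrowright})^{-1}$ depend only on the cluster variables kept by $\wp_e$ (respectively $\wp_{w_0}$), so that the ambiguity in choosing a split is immaterial. This follows from the explicit product formulas (\ref{equ:evbeta}) for $\tau_{\mathbf{i}}$, which only involve the variables $x_{\binom{i_k}{0}}$ and their images under the $\zeta_{\mathbf{i}(\leq k)}$'s, together with the fact that the generalized cluster transformations $\zeta$ of Corollary \ref{cor:tor} act trivially on the right-outlet variables that are erased by $\wp_e,\wp_{w_0}$. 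This last bookkeeping step is the main obstacle: everything else is a direct synthesis of Theorem \ref{thm:ev*}, Proposition \ref{prop:evaldiag} and Lemma \ref{cor:beta}, but one must trace carefully through the definitions of the truncation, reduction and $\mathcal{X}$-split maps introduced in Sections \ref{subsection:evdual} and \ref{section:twistedev} to make sure they are compatible with the $\tau$-formulas.
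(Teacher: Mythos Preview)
Your proposal is correct and follows exactly the route the paper takes: the paper's own proof is the one-line sentence ``derive the following lemma from Proposition~\ref{prop:evaldiag}, Lemma~\ref{cor:beta} and Theorem~\ref{thm:ev*}'', and you have simply unpacked how these three ingredients combine---Theorem~\ref{thm:ev*} to transport $\mathbf{x}$ to the trivial representatives $\mathbf{i}_e$ and $\mathbf{i}_{w_0}$, Proposition~\ref{prop:evaldiag} for the diagonal part, and the two halves of Lemma~\ref{cor:beta} for the two unipotent parts. Your extra paragraph about the compatibility between the $\mathcal{X}$-split components $\mathbf{x}_{(1)}$ appearing in Lemma~\ref{cor:beta} and the reduced projections $\wp_e,\wp_{w_0}$ is a welcome clarification that the paper leaves implicit.
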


%\begin{rem}\label{rem:variationmonom}It is possible to prove, using the second
%formula of (\ref{equ:dec}) instead of the first one in the proof of Proposition
%\ref{prop:preval*}, that there is a variation
%of Proposition \ref{prop:evaldiag} which leads to a description
%of the element ${[[\widehat{\ev}_{\mathbf{i}}(\mathbf x)]]_0}$ via a set of
%monomials ${\mathbf X}_{w_0}=({X'}_1,\dots {X'}_l)$ associated to the cluster
%$\widehat{\mu}_{\mathbf i\to\mathbf i_{w_0}}(\mathbf x)$, instead of the monomial
%${\mathbf X}_{e}=({X}_1,\dots {X}_l)$ associated to the cluster
%$\widehat{\mu}_{\mathbf i\to\mathbf i_{e}}$.
%\end{rem}

Now, let us recall that the map $\phi:(G^*,\pi_{G^*})\to(BB_-,\pi_*)$
given by the formula $(nh,n_-h^{-1})\mapsto nh^2n_-^{-1}$ is not an
isomorphism but a covering of degree $2^l$.
Example \ref{ex:diageval} in particular shows that in the general case we
cannot expect to directly obtain rational evaluations for the dual Poisson-Lie
group $(G^*,\pi_{G^*})$, because of this covering
$h\mapsto h^2$ on the Cartan subgroup $H$ of $G$.
The remedying idea is to take covers on cluster variables which
mimic $\phi$.
Let $\mathbf I=(I,I_0,\varepsilon,d)$ be a seed;
the seed $\mathcal X$-torus denoted $\mathcal{X}_{\mathbf I^{1/2}}$ is the
torus $(\mathbb C_{\neq 0})^{|I|}$ given with the Poisson bracket
$$\{{x}_i,{x}_j\}=\frac{\widehat{\varepsilon}_{ij}}{4}
{x}_i{x}_j\ ,$$
where $\{{x}_i\mid i\in I\}$ still denote the standard coordinates on the
factors. In particular, the following map is a Poisson covering of degree
$2^{|I|}$.
\begin{equation}\label{equ:covtorus}
\begin{array}{cccc}
{\mathfrak c}_{\mathcal{X}}:&\mathcal{X}_{\mathbf I^{1/2}}\longrightarrow
\mathcal{X}_{\mathbf I}:&(x_1,\dots,x_{|I|})\longmapsto(x_1^2,\dots,x_{|I|}^2)\ .
\end{array}
\end{equation}
Thus, Lemma \ref{lemma:G*} and the fact that the maps
${\mathfrak c}_{\mathcal{X}}$ and $\phi$ are Poisson covering whose degrees are
some powers of $2$ lead us to the following result.

\begin{thm}\label{thm:G^*}Let $\mathbf i\in D(w_0)$. The following evaluation map
$\Ev_{\mathbf i}$ is a Poisson covering of degree $2^n$, for some $n\leq\dim G$, onto
a Zarisky open set of $G^*$.
$$
\begin{array}{cccl}
\Ev_{\mathbf i}:&{\mathcal X}_{[\mathbf i]_{\mathfrak{R}}^{1/2}}\to(G^*,\pi_{G^*}):
\quad{\mathbf x}\mapsto({{\ev}_{\mathbf{i}}^+(\mathbf{{x}})},
{{\ev}_{\mathbf{i}}^-(\mathbf {{x}})})
\end{array}
$$
\begin{equation}\label{equ:Ev*}
\begin{array}{lll}
\mbox{where}&\left\{
\begin{array}{llll}
{{\ev}_{\mathbf{i}}^+(\mathbf {{x}})}&=&
\tau_{\mathbf i_{w_0}}({\mathfrak c}_{\mathcal X}(\mathbf {{x}})_{w_0}
^{\circlearrowright})^{-1}\ev_{\mathbf 1}({\mathbf X}_{e})\ ;\\
\\
{{\ev}_{\mathbf{i}}^-(\mathbf {{x}})}&=&\tau_{\mathbf i_{e}^{\star}}
({\mathfrak c}_{\mathcal X}(\mathbf{{x}})_e^{\star})\ev_{\mathbf 1}
({\mathbf X}_e)^{-1}\ ,
\end{array}
\right.
\end{array}
\end{equation}
and the set ${\mathbf X}_e=({X}_1,\dots {X}_l)$
is the same as in Lemma \ref{lemma:G*}.
\end{thm}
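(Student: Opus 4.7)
The plan is to lift the twisted evaluation $\widehat{\ev}_{\mathbf i}$ of Lemma \ref{lemma:G*} through the two Poisson coverings already at hand, assembling the commutative square
\[
\xymatrix{
\mathcal X_{[\mathbf i]_{\mathfrak R}^{1/2}} \ar[r]^{\Ev_{\mathbf i}} \ar[d]_{\mathfrak{c}_{\mathcal X}} & G^* \ar[d]^{\phi} \\
\mathcal X_{[\mathbf i]_{\mathfrak R}} \ar[r]_{\widehat{\ev}_{\mathbf i}} & BB_-
}
\]
and then transferring properties along its vertical arrows. First I would establish the commutativity $\phi\circ\Ev_{\mathbf i} = \widehat{\ev}_{\mathbf i}\circ\mathfrak{c}_{\mathcal X}$. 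Combining the two formulas of (\ref{equ:Ev*}) gives
\[
\ev_{\mathbf i}^+(\mathbf x)\,\ev_{\mathbf i}^-(\mathbf x)^{-1} = \tau_{\mathbf i_{w_0}}(\mathfrak{c}_{\mathcal X}(\mathbf x)_{w_0}^{\circlearrowright})^{-1}\,\ev_{\mathbf 1}(\mathbf X_e)^2\,\tau_{\mathbf i_e^{\star}}(\mathfrak{c}_{\mathcal X}(\mathbf x)_e^{\star})^{-1}.
\]
Since each $H^j:\mathbb{C}_{\neq 0}\to H$ is a multiplicative homomorphism, squaring every coordinate commutes with the evaluation $\ev_{\mathbf 1}$, so $\ev_{\mathbf 1}(\mathbf X_e(\mathbf x))^2 = \ev_{\mathbf 1}(\mathbf X_e(\mathfrak{c}_{\mathcal X}(\mathbf x)))$, and the right-hand side becomes the Gauss product of Lemma \ref{lemma:G*} evaluated at $\mathfrak{c}_{\mathcal X}(\mathbf x)$.

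Next I would check that $\Ev_{\mathbf i}(\mathbf x)$ actually lies in $G^*$. By Lemma \ref{cor:beta} the factor $\tau_{\mathbf i_{w_0}}(\cdot)^{-1}$ is in $N$ and $\tau_{\mathbf i_e^{\star}}(\cdot)$ is in $N_-$, so $\ev_{\mathbf i}^+(\mathbf x)\in B_+$ with diagonal part $\ev_{\mathbf 1}(\mathbf X_e)$ and $\ev_{\mathbf i}^-(\mathbf x)\in B_-$ with diagonal part $\ev_{\mathbf 1}(\mathbf X_e)^{-1}$. The constraint $\diag(b_+)\,\diag(b_-)=I$ of (\ref{equ:defG*double}) is then immediate, so $\Ev_{\mathbf i}$ indeed lands in $G^*\subset B_+\times B_-$.

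With the square commutative, the remaining properties descend mechanically. Theorem \ref{thm:ev*1} supplies a Poisson birational isomorphism $\widehat{\ev}_{\mathbf i}$ onto a Zariski open $U\subset BB_-$; Proposition \ref{prop:STSPoisson} identifies $\phi$ as a Poisson covering of degree $2^l$; the rescaling $\widehat{\varepsilon}\mapsto\widehat{\varepsilon}/4$ defining $\mathcal X_{[\mathbf i]_{\mathfrak R}^{1/2}}$ makes $\mathfrak{c}_{\mathcal X}$ a Poisson covering of degree $2^{|I(\mathbf i)|}$. Both vertical maps being local Poisson isomorphisms, $\Ev_{\mathbf i}$ inherits the Poisson property, its image is the Zariski open subset $\phi^{-1}(U)\cap G^*$, and the multiplicativity of degrees $2^{|I(\mathbf i)|}\cdot 1 = 2^l\cdot\deg(\Ev_{\mathbf i})$ gives $\deg(\Ev_{\mathbf i}) = 2^{|I(\mathbf i)|-l}$. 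Since any $\mathbf i\in D(w_0)$ is an amalgam of two double reduced words of total length $2\ell(w_0)$, supplemented by $l$ Cartan vertices, one has $n = |I(\mathbf i)|-l = 2\ell(w_0) = \dim G - l \leq \dim G$.

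The main obstacle is the consistent juggling between the two ``half'' structures: the square-root covering $\mathfrak{c}_{\mathcal X}$ on the cluster side and the $2^l$-covering $\phi$ on the group side. The trick hidden in the statement is that the Cartan monomials $\mathbf X_e$ in (\ref{equ:Ev*}) are evaluated at $\mathbf x$ (not at $\mathfrak{c}_{\mathcal X}(\mathbf x)$), which is precisely what splits the Cartan factor of $\widehat{\ev}_{\mathbf i}\circ\mathfrak{c}_{\mathcal X}$ into matched square-roots between $\ev_{\mathbf i}^+$ and $\ev_{\mathbf i}^-$, resolving the $\phi$-ambiguity coordinatewise through $\mathfrak{c}_{\mathcal X}$ rather than by an abstract choice of sheet.
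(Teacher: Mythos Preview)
Your argument is correct and follows essentially the same route as the paper: the proof there is the single sentence that Lemma~\ref{lemma:G*} together with the fact that $\mathfrak{c}_{\mathcal X}$ and $\phi$ are Poisson coverings of degree a power of $2$ yields the result, and you have simply made this explicit via the commutative square $\phi\circ\Ev_{\mathbf i}=\widehat{\ev}_{\mathbf i}\circ\mathfrak{c}_{\mathcal X}$. Your explicit computation $n=|I(\mathbf i)|-l=2\ell(w_0)=\dim G-l$ goes slightly beyond what the paper states (it only asserts $n\le\dim G$ and defers the sharpening to Remark~\ref{rem:computcovering}).
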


\begin{rem}\label{rem:computcovering}A careful study of the cluster variables
appearing in the monomial formulas describing ${\mathbf X}_e=({X}_1,\dots{X}_l)$
and given in Proposition \ref{prop:evaldiag} leads to a choice of a subcovering of
the covering ${\mathfrak c}_{\mathcal X}$ that minimizes the value $n$ of
the previous theorem.
\end{rem}

%\begin{rem}Proposition \ref{prop:obstructionplus} implies in particular
%that generalized cluster transformations are not sufficient to get the whole
%picture when we want to evaluate $(G^*,\pi_{G^*})$: we really need saltations.
%This is illustrated in the next example.
%\end{rem}

\begin{ex} When the equality $\mathfrak{g}=A_2$ is satisfied, the
heuristics of Theorem \ref{thm:G^*} is illustrated by Figure
\ref{fig:commutativeweakorder*}, where we have used the notation
$G^*=(G^*_+,G^*_-)$ to abbreviate the description (\ref{equ:defG*double}).
In particular, if we choose the double word $\mathbf i=121121$, then we
can take $\mathbf i_e=\mathbf i$ and $\mathbf i_{w_0}=\overline{2}
\overline{1}\overline{2}121$. Therefore, to any $\mathbf x
\in{\mathcal X}_{[\mathbf i]_{\mathfrak{R}}}$ are associated the elements:
$$\begin{array}{ccc}
\mathbf x_e=\wp_{e}(\mathbf x)&\mbox{and}&
\mathbf x_{w_0}=\wp_{w_0}\circ\Xi_{s_1}\circ\mu_{\overline{2}1\overline{1}121\to \overline{2}\overline{1}1121}
\circ\Xi_{s_2}\circ\mu_{12\overline{2}121\to \overline{2}12121}\circ\Xi_{s_1}(\mathbf x),
\end{array}
$$
whereas $\mathbf X_e$ has already been given in Example \ref{ex:diageval}.
\end{ex}

\begin{figure}[htbp]
\begin{center}
\setlength{\unitlength}{1.5pt}
\begin{picture}(20,110)(0,-80)
\put(-29,-63.5){\line(2,3){17.7}}
\put(-10,-33){\line(0,1){26}}
\put(-11,-3.5){\line(-2,3){17.7}}
\put(-31,-63.5){\line(-2,3){17.7}}
\put(-50,-33){\line(0,1){26}}
\put(-49,-3.5){\line(2,3){17.7}}
\thicklines
\put(-30,25){\circle{4}}
\put(-60,33){${\mathcal X}_{w_0}\supset{\mathcal X}_{[\mathbf i_{w_0}]_{\mathfrak R}}$}
\put(28,25){\vector(-1,0){56}}
\put(28,25){\line(-1,0){56}}
\put(0,30){${\mathfrak c}_{\mathcal{X}}$}
\put(30,25){\circle{4}}
\put(23,33){${\mathcal X}_{[\mathbf i_{w_0}]_{\mathfrak R}^{1/2}}$}
\put(-50,-5){\circle{4}}
\put(-70,-10){${\mathcal X}_{s_2s_1}$}
\put(-7,-10){${\mathcal X}_{s_1s_2}$}
\put(-10,-5){\circle{4}}
\put(-50,-35){\circle{4}}
\put(-62,-40){${\mathcal X}_{s_1}$}
\put(-7,-40){${\mathcal X}_{s_2}$}
\put(-10,-35){\circle{4}}
\put(-30,-65){\circle{4}}
\put(-55,-75){${\mathcal X}_e\supset{\mathcal X}_{[\mathbf i_e]_{\mathfrak R}}$}
\put(28,-65){\vector(-1,0){56}}
\put(28,-65){\line(-1,0){56}}
\put(0,-70){${\mathfrak c}_{\mathcal{X}}$}
\put(30,-65){\circle{4}}
\put(26,-75){${\mathcal X}_{[\mathbf i_e]_{\mathfrak R}^{1/2}}$}
\put(87,-15){$G^*_+$}
\put(87,-30){$G^*_-$}
\put(32,-65){\vector(3,2){55}}
\put(32,-65){\line(3,2){55}}
\put(61,-54){${\ev}^-$}
\put(32,25){\vector(3,-2){55}}
\put(32,25){\line(3,-2){55}}
\put(60,12){${\ev}^+$}
\end{picture}
\end{center}
\vspace{-.1in}
\caption{Evaluations related to $(G^*,\pi_{G^*})$
when $\mathfrak{g}=A_2$}
\label{fig:commutativeweakorder*}
\end{figure}
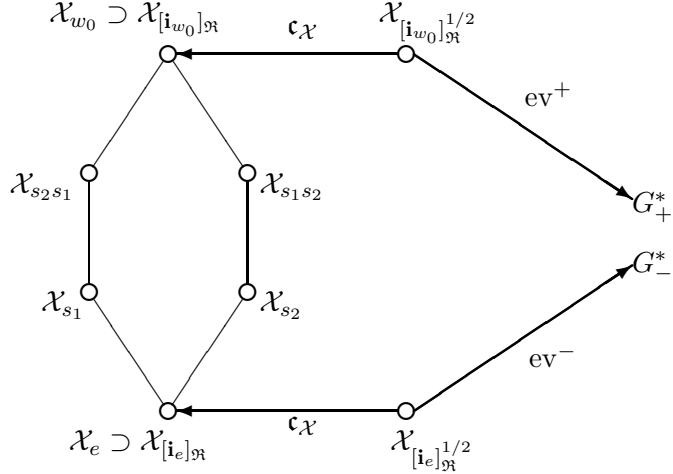

These evaluation maps on $(G^*,\pi_{G^*})$ are, of course, compatible with
the cluster combinatorics already developed.
The following theorem is then derived from Theorem \ref{thm:ev*} and the
definitions (\ref{equ:covtorus}) and (\ref{equ:Ev*}) of the covering
${\mathfrak c}_{\mathcal X}$ and the evaluation map $\Ev_{\mathbf i}$.

\begin{thm}The following diagram is commutative for any double words
$\mathbf i,\mathbf j\in D(w_0)$.
$$
\xymatrix{
&{\mathcal X}_{[\mathbf i]_{\mathfrak{R}}^{1/2}}\ar@/^0pc/[ld]_{\Ev_{\mathbf i}}
\ar@/^0pc/[r]^{\mathfrak{c}_{\mathcal{X}}}&{\mathcal X}_{[\mathbf i]_{\mathfrak{R}}}
\ar@/^0pc/[dd]^{\widehat{\mu}_{\mathbf i\to\mathbf j}}\\
(G^*,\pi_{G^*})&\\
&{\mathcal X}_{[\mathbf j]_{\mathfrak{R}}^{1/2}}\ar@/^0pc/[ul]^{\Ev_{\mathbf j}}
\ar@/^0pc/[r]^{\mathfrak{c}_{\mathcal{X}}}&{\mathcal X}_{[\mathbf j]_{\mathfrak{R}}}
}
$$
\end{thm}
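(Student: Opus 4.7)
The plan is to deduce the commutativity from the already‐established cluster combinatorics for the dual Poisson-Lie group $(BB_-,\pi_*)$, then promote it across the squaring covering ${\mathfrak c}_{\mathcal X}$ and $\phi$. More precisely, combine Theorem~\ref{thm:ev*}, Lemma~\ref{lemma:G*} and Proposition~\ref{prop:STSPoisson} to reduce the statement to a componentwise check on the Gauss decomposition of $\widehat{\ev}$.

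First I would verify the outer ``telescope": for every double word $\mathbf k\in D(w_0)$ the identity
\begin{equation*}
\phi\circ \Ev_{\mathbf k}=\widehat{\ev}_{\mathbf k}\circ {\mathfrak c}_{\mathcal X}
\end{equation*}
holds on ${\mathcal X}_{[\mathbf k]_{\mathfrak R}^{1/2}}$. Indeed, on the one hand $\phi(n_1h,n_2h^{-1})=n_1h^2n_2^{-1}$ squares the diagonal factor; on the other hand, comparing the three formulas in~(\ref{equ:Ev*}) with Lemma~\ref{lemma:G*} shows that $\Ev_{\mathbf k}(\mathbf x)$ has $N_-$-part $\tau_{\mathbf i_e^{\star}}({\mathfrak c}_{\mathcal X}(\mathbf x)_e^{\star})$ and $N$-part $\tau_{\mathbf i_{w_0}}({\mathfrak c}_{\mathcal X}(\mathbf x)_{w_0}^{\circlearrowright})^{-1}$, which are exactly the unipotent parts of $\widehat{\ev}_{\mathbf k}({\mathfrak c}_{\mathcal X}(\mathbf x))$, while the Cartan factor $\ev_{\mathbf 1}(\mathbf X_e)$ squares (by Proposition~\ref{prop:evaldiag}) to $[[\widehat{\ev}_{\mathbf k}({\mathfrak c}_{\mathcal X}(\mathbf x))]]_0$.

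Next, given $\mathbf i,\mathbf j\in D(w_0)$, Theorem~\ref{thm:ev*} yields
$\widehat{\ev}_{\mathbf i}=\widehat{\ev}_{\mathbf j}\circ\widehat{\mu}_{\mathbf i\to\mathbf j}$,
so composing with ${\mathfrak c}_{\mathcal X}$ and using the previous step gives $\phi\circ\Ev_{\mathbf i}=\phi\circ\Ev_{\mathbf j}\circ\widehat{\mu}_{\mathbf i\to\mathbf j}\circ{\mathfrak c}_{\mathcal X}$. To promote this equality from $BB_-$ to $G^*$, I would define the dotted lift $\widehat{\mu}^{1/2}_{\mathbf i\to\mathbf j}:{\mathcal X}_{[\mathbf i]_{\mathfrak R}^{1/2}}\to{\mathcal X}_{[\mathbf j]_{\mathfrak R}^{1/2}}$ that fits in the square, by requiring that (i)~${\mathfrak c}_{\mathcal X}\circ\widehat{\mu}^{1/2}_{\mathbf i\to\mathbf j}=\widehat{\mu}_{\mathbf i\to\mathbf j}\circ{\mathfrak c}_{\mathcal X}$, and (ii)~the resulting monomial $\mathbf X_e$ (associated to $\mathbf j$ via Proposition~\ref{prop:evaldiag}) reproduces on the nose the Cartan part $\ev_{\mathbf 1}(\mathbf X_e)$ computed from $\mathbf i$. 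Since Proposition~\ref{prop:evaldiag} provides $\mathbf X_e$ as a Laurent monomial in the cluster variables, the branch of the square-root is uniquely pinned down (up to the $2^l$-ambiguity matching the degree of~$\phi$), and the resulting map is a rational isomorphism of the appropriate sub-cover. With such a lift in hand, the triangle equality $\Ev_{\mathbf i}=\Ev_{\mathbf j}\circ\widehat{\mu}^{1/2}_{\mathbf i\to\mathbf j}$ follows by comparing the three Gauss components term by term, using Lemma~\ref{lemma:G*} for $[[\,\cdot\,]]_\pm$ and Proposition~\ref{prop:evaldiag} for $[[\,\cdot\,]]_0$.

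The main obstacle is the last step: cluster transformations and tropical mutations contain factors of the form $(1+x_k)^{\pm 1}$, and these do \emph{not} lift directly to rational expressions in the square-root variables. The resolution I expect is that this subtlety is invisible at the level of the maps $\Ev_{\mathbf k}$, because the factor $(1+x_k)$ always enters the $\tau$-formulas~(\ref{equ:evbeta}) through the combinations appearing inside ${\mathfrak c}_{\mathcal X}(\mathbf x)_e^\star$ and ${\mathfrak c}_{\mathcal X}(\mathbf x)_{w_0}^\circlearrowright$, i.e.\ already after squaring; what needs a genuine square root is only the Cartan monomial $\mathbf X_e$, and there Proposition~\ref{prop:evaldiag} guarantees that such a monomial root exists and is unique on the selected sub-cover (cf.\ Remark~\ref{rem:computcovering}). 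Once this bookkeeping is settled the commutativity of the diagram becomes a formal consequence of Theorem~\ref{thm:ev*}, combined with the Poisson part which follows from $\phi$ and ${\mathfrak c}_{\mathcal X}$ being Poisson coverings.
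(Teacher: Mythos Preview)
Your first two steps are essentially the paper's entire argument: the theorem is stated to follow directly from Theorem~\ref{thm:ev*} together with the definitions (\ref{equ:covtorus}) and (\ref{equ:Ev*}). Concretely, the relation $\phi\circ\Ev_{\mathbf k}=\widehat{\ev}_{\mathbf k}\circ{\mathfrak c}_{\mathcal X}$ that you establish in your first step, combined with $\widehat{\ev}_{\mathbf i}=\widehat{\ev}_{\mathbf j}\circ\widehat{\mu}_{\mathbf i\to\mathbf j}$ from Theorem~\ref{thm:ev*}, already gives the commutativity that the diagram encodes.

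The gap in your proposal is a misreading of the diagram. There is \emph{no} arrow between ${\mathcal X}_{[\mathbf i]_{\mathfrak R}^{1/2}}$ and ${\mathcal X}_{[\mathbf j]_{\mathfrak R}^{1/2}}$, so there is no dotted lift $\widehat{\mu}^{1/2}_{\mathbf i\to\mathbf j}$ to construct, and no triangle $\Ev_{\mathbf i}=\Ev_{\mathbf j}\circ\widehat{\mu}^{1/2}_{\mathbf i\to\mathbf j}$ to verify. The diagram records only the arrows drawn: two squarings ${\mathfrak c}_{\mathcal X}$, two evaluations $\Ev$, and one $\widehat{\mu}$; its commutativity is the compatibility of these five maps, which you have already shown. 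Your entire third and fourth paragraphs, including the ``main obstacle'' about $(1+x_k)$ factors failing to lift rationally through the squaring, are therefore addressing a self-imposed difficulty. Drop the attempt to lift $\widehat{\mu}$ and the proof is complete after your step~2.
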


\subsection{Proof of Proposition \ref{prop:evaldiag}}
\label{section:proofdiagonal}
The main ingredient to prove Proposition \ref{prop:evaldiag} is the
factorization theorem \cite[Theorems~1.10
and formula (1.21)]{FZtotal} of Fomin and Zelevinsky.
Here, we mainly follow the exposition of \cite{KZ}.
Let $\widetilde{G}$ be the simply connected cover of $G$, and denote
$\widetilde{B}$, $\widetilde{B}_-$ the Borel subgroups of $\widetilde{G}$ such
that their images in $G$ are respectively $B$ and $B_-$, and the intersection $\widetilde{H}=
\widetilde{B}\cap\widetilde{B}_-$. In the same way, for every $u,v\in  W$,
denote $\widetilde{G}^{u,v}$ the double Bruhat cell in $\widetilde{G}$ whose image
in $G$ is $G^{u,v}$. For $x\in \widetilde{B}\widetilde{B}_-$ and a fundamental
weight $\omega_i$, define $\overline{\Delta}_i(x)=[x]_0^{\omega_i}$.
It is shown in \cite{FZtotal} that $\overline{\Delta}_i$ extends to a regular
function on $\widetilde{G}$. For type $A_n$ (when $\widetilde{G} = SL(n+1,\mathbb C)$), this is
just the principal $i \times i$ minor of a matrix $x$.
For any pair $u,v\in W$, the corresponding \emph{generalized
minor} is a regular function on $\widetilde{G}$ given by
$$
\Delta_{u\omega_i,v\omega_i} (x) =\overline{\Delta}_i({\widehat u}^{\ -1}x
\widehat v)\ .
$$
It is shown in \cite{FZtotal} that these functions are well defined,
that is they depend only on the weights $u\omega_i$ and
$v\omega_i$ and do not depend on the particular choice of $u$ and
$v$.
For $i=1,\dots,l$, we denote $\varepsilon(i)=+1$ and
$\varepsilon(\bar i)=-1$, and recall that $|i|=|\bar i|=i$.
In what follows, we fix $u,v \in W$ and a double reduced word
${\bf i}$ of $(u,v)$. We append $l$ entries
$i_{m+1},\dots,i_{m+l}$ to $\bf i$ by setting $i_{m+j}=\bar j$.
For $k = 1, \dots, m$, we set
$$
u_{\geq k}=\doublesubscript{\prod}{\ell=m,\dots,k}
{\varepsilon(i_\ell) = -1} s_{|i_\ell|}\ , \ \
v_{<k}=\doublesubscript{\prod}{\ell=1,\dots,k-1}
{\varepsilon(i_\ell) = +1} s_{|i_\ell|}\ ,
$$
where the notation implies that the index $\ell$ in the first
(resp.~second) product is decreasing (resp.~increasing). We also
set $u_{\geq k}=e$, $v_{<k}=v$ for $k = m+1, \dots, m+l$. For
example, if ${\bf i}=1\bar 2 2 \bar 3 3 2\bar 1$ then
$u_{\geq 4}=s_1s_3$, $v_{<4}=s_1s_2$.
For every $k = 1, \dots, m+l$, we set
$\gamma^k= u_{\geq k}\omega_{|i_k|}$, $\delta^k = v_{<k}\omega_{|i_k|}$,
and introduce a regular function $M_k$ on $\widetilde{G}^{u,v}$ by setting
\begin{equation}
\label{eq:twisted-minors-def} M_k (x) =\Delta_{\gamma^k\ ,
\delta^k}(x')\ ,
\end{equation}
where $x'$ is the twist of $x$ given by the formula (\ref{equ:FZtwist}).
We refer to the family $M_1, \dots, M_{m+l}$ as \emph{twisted
minors} associated with a reduced word $\bf i$. Their significance
stems from the following result (see \cite[Theorems~1.2, 1.9, 1.10
and formula (1.21)]{FZtotal}). Let us remember the notation given in
(\ref{eq:xnegative}) and let us define,for every double word
$\mathbf i=i_1 \ldots i_m$, the map $x_\mathbf i: \mathbb C_{\neq 0}^m \to G$ by
$$
\begin{array}{ccc}
x_\mathbf i (\mathbf t) =x_{i_1} (t_1) \cdots x_{i_m} (t_m) \,
&\mbox{where}&\mathbf t=(t_1, \ldots, t_m)\ .
\end{array}
$$

\begin{thm}\cite[Theorem 2.3]{KZ}
\label{biregular} The map $x_{\bf i}:\widetilde{H}\times \mathbb C^m \to \widetilde{G}$
given by
$$
x_{\bf i}(a;t_1,\dots,t_m)=ax_{i_1}(t_1)\dots x_{i_m}(t_m)
$$
restricts to a biregular isomorphism between a complex torus
$\widetilde{H}\times (\mathbb C - \{0\})^m$ and a Zariski open subset
$U_{\mathbf i}=\{x \in \widetilde{G}^{u,v} : M_k (x) \neq 0 \text { for } 1
\leq k \leq m+l\}$ of the double Bruhat cell $\widetilde{G}^{u,v}$. Furthermore,
for $k = 1, \dots, m+l$ and $x = x_{\mathbf i}(a;t_1,\dots,t_m) \in U_{\mathbf i}$,
we have
\begin{equation}
\label{eq:M-k-monomial} M_k(x)=a^{-u \gamma^k}
\doublesubscript{\prod}{1 \leq \ell<k}{\varepsilon(i_\ell) = -1}
t_{\ell}^{\langle \alpha^\vee_{|i_\ell|},
u^{-1}_{\geq\ell}\gamma^k\rangle} \doublesubscript{\prod}{k \leq
\ell \leq m}{\varepsilon(i_\ell) = +1} t_{\ell}^{\langle
\alpha^\vee_{|i_\ell|},v^{-1}_{<\ell+1}\delta^k \rangle}\ .
\end{equation}
\end{thm}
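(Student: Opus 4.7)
The plan is to split the theorem into two parts: the biregularity of $x_{\mathbf i}$ and the monomial formula \eqref{eq:M-k-monomial}, treating each in turn.

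For the biregular part, I would invoke \cite[Theorem 1.2]{FZtotal} directly. That result already establishes that for any double reduced word $\mathbf i$ of $(u,v)$, the map $(a;t_1,\dots,t_m)\mapsto a\, x_{i_1}(t_1)\cdots x_{i_m}(t_m)$ is a biregular isomorphism from $\widetilde H\times (\mathbb C_{\neq 0})^m$ onto a Zariski open subset of $\widetilde G^{u,v}$. The only thing to check is that this open subset coincides with $U_{\mathbf i}=\{x:M_k(x)\neq 0\}$; for this I would note that each $M_k$ is the pullback of the generalized minor $\Delta_{\gamma^k,\delta^k}$ by the twist $x\mapsto x'$, and the twist is biregular $\widetilde G^{u,v}\to \widetilde G^{u^{-1},v^{-1}}$ by \cite[Theorem 1.6]{FZtotal}. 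Then the locus where all the $\Delta_{\gamma^k,\delta^k}\circ \zeta^{u,v}$ are nonzero is a Zariski open set whose complement is characterized by the vanishing of the leading factorization parameters, which is precisely the complement of $(\mathbb C_{\neq 0})^m$ inside $\mathbb C^m$.

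For the monomial formula, my approach would be induction on $k$ combined with the multiplicative behavior of generalized minors. The main tool is the pair of identities (all straightforward to check from the $\mathrm{SL}_2$-case via $\varphi_i$):
\begin{equation*}
\Delta_{\gamma,\delta}(x\,x_i(t))=\Delta_{\gamma,\delta}(x)+t\,\Delta_{\gamma,s_i\delta}(x)\ \text{if }\langle\alpha_i^\vee,\delta\rangle=1,\quad \Delta_{\gamma,\delta}(x\,x_i(t))=\Delta_{\gamma,\delta}(x)\ \text{if }s_i\delta=\delta\ \text{or }s_i\delta>\delta,
\end{equation*}
together with their mirror versions for left multiplication by $x_{\bar i}(t)$, and the transformation rule under $\widehat{s_i}$ coming from \eqref{equ:Wsl2} and \eqref{equ:si2}. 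The strategy is to use the formula \eqref{equ:FZtwist} to write the twist as $x'=[\widehat u^{-1}x]_0[\widehat u^{-1}x]_+\widehat{v^{-1}}[x\widehat{v^{-1}}]_+^{-1}$, then interpret $\Delta_{\gamma^k,\delta^k}(x')$ using the left-right $H$-equivariance of minors, so that the factor $a^{-u\gamma^k}$ appears immediately from $[\widehat u^{-1}x]_0=\widehat u^{-1}a\widehat u\cdot(\text{positive correction})$. The remaining contribution is a product of evaluations of $\Delta_{\gamma^k,\delta^k}$ on strings $\widehat{s_{i_\ell}}\, y_{i_\ell}(-t_\ell^{-1})\cdots$ (for the negative letters) or $x_{i_\ell}(t_\ell)\widehat{s_{i_\ell}}\cdots$ (for the positive letters), which by the elementary identities above each contribute exactly a single $t_\ell$ to an explicit power, equal to $\langle\alpha_{|i_\ell|}^\vee,u^{-1}_{\geq\ell}\gamma^k\rangle$ or $\langle\alpha_{|i_\ell|}^\vee,v^{-1}_{<\ell+1}\delta^k\rangle$ depending on sign.

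The main obstacle will be bookkeeping. The precise matching between the indexing conventions (the appended entries $i_{m+j}=\bar j$, the convention $u_{\geq k}=e,v_{<k}=v$ for $k>m$) and the two products in \eqref{eq:M-k-monomial} is delicate, and one must carefully verify at each inductive step that the substrings of $\mathbf i$ which occur at positions $<k$ with negative sign and positions $\geq k$ with positive sign are exactly the ones that survive, while the remaining letters either leave the minor invariant (because they act on a stabilized weight) or contribute the factor $a^{-u\gamma^k}$. Once this bookkeeping is checked on a single elementary move, the induction closes directly, giving \eqref{eq:M-k-monomial} in full generality.
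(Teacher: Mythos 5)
The paper does not prove this statement at all: it is quoted verbatim (as \cite[Theorem 2.3]{KZ}, itself a repackaging of \cite[Theorems 1.2, 1.9, 1.10 and (1.21)]{FZtotal}), so there is no internal argument to compare yours against, and simply citing those sources — as you do for the biregularity half — is exactly the paper's treatment. Where your proposal goes beyond citation, however, it has two real gaps. First, for the identification of the image of $x_{\mathbf i}$ with $U_{\mathbf i}$, the inclusion "image $\subseteq U_{\mathbf i}$" does follow once (\ref{eq:M-k-monomial}) is known (each $M_k$ is then a Laurent monomial in $a$ and the $t_\ell$, hence nonzero), but the reverse inclusion is the hard part: you must show that every $x\in\widetilde G^{u,v}$ with all $M_k(x)\neq 0$ actually factors, and your phrase about the complement being "characterized by the vanishing of the leading factorization parameters" is an assertion, not an argument. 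In \cite{FZtotal} this direction is obtained by explicitly inverting the factorization map, i.e.\ expressing each parameter $t_\ell$ as a Laurent monomial in the twisted minors $M_k$; without some version of those inverse formulas your sketch does not close.

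Second, for the monomial formula itself, the multiplicative identities you list (of the type $\Delta_{\gamma,\delta}(x\,x_i(t))=\Delta_{\gamma,\delta}(x)+t\,\Delta_{\gamma,s_i\delta}(x)$) generically produce \emph{sums} of generalized minors at each step, so an induction along the word would a priori yield a polynomial with many terms, not a single monomial. The entire content of the formula is that, for the specific adapted weights $\gamma^k=u_{\geq k}\omega_{|i_k|}$ and $\delta^k=v_{<k}\omega_{|i_k|}$ attached to the chosen double reduced word, all the extra terms vanish (the relevant minors of the partial products are "primitive" in the sense exploited in \cite{FZtotal}); calling this "bookkeeping" hides the one step that actually requires proof. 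Moreover the twist $x\mapsto x'$ mixes the left factor $[\widehat u^{-1}x]_-^{-1}$ and the right factor $[x\widehat{v^{-1}}]_+^{-1}$, so $\Delta_{\gamma^k,\delta^k}(x')$ does not split into independent local contributions of the letters without first proving identities relating minors of $x'$ to minors of $x$ — this is where the $\theta$-twist and the $H$-equivariance must be used precisely, and your outline only gestures at it. In short: acceptable as a citation (which is all the paper does), but not yet a proof.
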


We are going to use this theorem to prove Proposition \ref{prop:evaldiag}.
Let us first remark that for $u=w_0$ and $v=e$, we have
the equalities $\gamma^k=\omega_{k}$ and $\delta^k=\omega_{k}$
for every $k>\ell(w_0)$. Now, let us notice that if $b_1$ and $b$ belong
to the double Bruhat cell $\widetilde{G}^{e,w_0}$, then the elements
$$\begin{array}{ccc}
b_1^{\star}=\widehat{w_0}^{-1}b_1\widehat{w_0}
&\mbox{and}&b^{\circlearrowright}=\widehat{w_0}^{-1}b^{-1}\widehat{w_0}
\end{array}$$
belong to the double Bruhat cell $\widetilde{G}^{w_0,e}$. Therefore, using the
definition of the twist map $x\mapsto x'$, the formula (\ref{eq:twisted-minors-def}),
and the fact that the relation $a^{\theta}=a^{-1}$ is satisfied for every $a\in \widetilde{H}$,
we get the following equalities for every $k>\ell(w_0)$.
$$\begin{array}{cccc}
M_k(b_1^{\star})=([\widehat{w_0}^{-1}b_1^{\star}]_{\geq 0}^{\theta})^{\omega_{i_k}}
=[b_1\widehat{w_0}^{-1}]_{0}^{-\omega_{k}}&\mbox{and}&
M_k(b^{\circlearrowright})^{-1}=(\widehat{w_0}[b\widehat{w_0}]_0^{-1}\widehat{w_0}^{-1})^{\omega_k}\ .
\end{array}$$
Taking any $\mathbf i\in R(w_0,1)$ to parameterize $b_1^{\star}=ax_{\mathbf i}(t^{\star}_1,\dots, t^{\star}_{\ell(w_0)})$ and
$b^{\circlearrowright}=a'x_{\mathbf i}(t^{\circlearrowright}_1,\dots, t^{\circlearrowright}_{\ell(w_0)})$ and applying the formula
(\ref{eq:M-k-monomial}) lead to:
$$\begin{array}{ccc}
[b_1\widehat{w_0}^{-1}]_{0}^{-\omega_{k}}=a^{\omega_{k^{\star}}}
\displaystyle\prod_{\ell}
{t^{\star}_{\ell}}^{\langle \alpha^\vee_{|i_\ell|},
{w_0}^{-1}_{\geq\ell}\omega_{k}\rangle}&\mbox{and}&
(\widehat{w_0}[b\widehat{w_0}]_0^{-1}\widehat{w_0}^{-1})^{\omega_k}=a'^{-\omega_{k^{\star}}}
\displaystyle\prod_{\ell}
{t^{\circlearrowright}_{\ell}}^{-\langle \alpha^\vee_{|i_\ell|},
{w_0}^{-1}_{\geq\ell}\omega_{k}\rangle}\ .
\end{array}
$$
So we have the following equality.
\begin{equation}\label{equ:Hpreformula}\begin{array}{ccc}
([b_1]_0[b_1\widehat{w_0}^{-1}]_{0}^{-1}[b]_0\widehat{w_0}
[b\widehat{w_0}]_0^{-1}\widehat{w_0}^{-1})^{\omega_k}=\displaystyle\prod_{\ell}
({t}^{\star}_{\ell}{t^{\circlearrowright}_{\ell}}^{-1})^{\langle \alpha^\vee_{|i_\ell|},
{w_0}^{-1}_{\geq\ell}\omega_{k}\rangle}\ .
\end{array}
\end{equation}
Now, let us fix some $\mathbf j=j_1\dots j_{\ell(w_0)}\in R(1,w_0)$ and evaluate
some $b_1,b\in N\cap G^{e,w_0}$ by $\mathbf z,\mathbf{z'}\in{\mathcal X}_{\mathbf i}$,
that is: $b_1=\ev_{\mathbf j}(\mathbf z)$ and $b=\ev_{\mathbf j}(\mathbf{z'})$. Let us
also denote $\mathbf j_{\ell}:=j_1\dots j_{\ell}$ for every $\ell\leq\ell(w_0)$.
We obtain from the relations ${\mathbf x}_{\mathbf j}(t_1,\dots, t_{\ell(w_0)})
=\ev_{\mathbf j}(\mathbf z)$ and ${\mathbf x}_{\mathbf j}(t'_1,\dots, t'_{\ell(w_0)})
=\ev_{\mathbf j}(\mathbf{z'})$ the equalities:
$$\begin{array}{ccc}
t_{\ell}=\displaystyle\prod_{j< N^{j_{\ell}}(\mathbf j_{\ell})}z_{\binom{j_{\ell}}{j}}
&\mbox{and}&t'_{\ell}=\displaystyle\prod_{j< N^{j_{\ell}}(\mathbf j_{\ell})}z'_{\binom{j_{\ell}}{j}}\ .
\end{array}$$
Moreover, because $b\in N$, we have $\pi_{\mathbf i}(\mathbf{z'})=1$.
Thus, adding Lemma \ref{lemma:w_0}, the formula (\ref{equ:circlearrow}) and Proposition
\ref{prop:w0} to the previous equalities gives:
$$\begin{array}{ccc}
t^{\star}_{\ell}{t^{\circlearrowright}_{\ell}}^{-1}=-\displaystyle\prod_{j< N^{j_{\ell}}
(\mathbf j_{\ell})}({z_{\binom{j_{\ell}^{\star}}{j}}z'_{\binom{j_{\ell}^{\star}}
{N^{j_{\ell}}(\mathbf j)-j}}})^{-1}
=-\displaystyle\prod_{j< N^{j_{\ell}}(\mathbf j_{\ell})}
z_{\binom{j_{\ell}^{\star}}{j}}^{-1}z'_{\binom{j_{\ell}^{\star}}{j}}\ .
\end{array}$$
In fact, it is easy to see that the same kind of formula can be obtained when
the evaluation of $b_1$ and $b$ are done respectively for any $\mathbf i_1$ and $\mathbf i_2$
in $R(1,w_0)$.
The setting $\mathbf i=\mathbf{i_1}\mathbf{i_2}\in D_e(w_0)$ and the equality
$b_1bH=\ev_{\mathbf i}^{\red}(\mathbf x)$
then imply that $\mathbf x^{\red}=\mathfrak{m}(\mathbf z,\mathbf{z'}^{\red})$.
We finally apply Proposition \ref{prop:preval*} and the equality (\ref{equ:defphi2}) on
the formula (\ref{equ:Hpreformula}) to end the proof of Proposition~\ref{prop:evaldiag},
that is:
$$\begin{array}{cll}
X_i&=x_{\binom{i}{N^i(\mathbf i)}}\displaystyle\prod_{k=1}^{l}\prod_{\ell=1}^{\ell(w_0)}
\displaystyle\prod_{j< N^{i_{\ell}}(\mathbf{i_1}_{\ell}),j'< N^{i_{\ell}}(\mathbf{i_2}_{\ell})}{(-z_{\binom{i_{\ell}^{\star}}{j}}^{-1}
z'_{\binom{i_{\ell}^{\star}}{j'}})}^{(A^{-1})_{ki}\langle \alpha^\vee_{i_\ell},
{w_0}^{-1}_{\geq\ell}\omega_{k}\rangle}\\
&=x_{\binom{i}{N^i(\mathbf i)}}\displaystyle\prod_{k=1}^{l}\prod_{\ell=1}^{\ell(w_0)}
\displaystyle\prod_{j_1< N^{i_{\ell}}(\mathbf{i_1}_{\ell}),j_2< N^{i_{\ell}}(\mathbf{i_2}_{\ell})}{(-x_{\binom{i_{\ell}^{\star}}{j_1}}^{-1}
x_{\binom{i_{\ell}^{\star}}{N^{i_{\ell}}(\mathbf{i_1}_{\ell})+j_2}})}^{(A^{-1})_{ki}\langle \alpha^\vee_{i_\ell},
{w_0}^{-1}_{\geq\ell}\omega_{k}\rangle}\ .
\end{array}$$

\section{An elementary approach for the case $G=\SL(2,\mathbb C)$.}\label{section:SL2}
To fix the ideas, we consider with full details all the evaluation maps met before,
and the related cluster combinatorics, in the simplest case: the case $G=SL(2,\mathbb C)$.
We thus start by recalling the construction of Fock and Goncharov for
$(\SL(2,\mathbb C),\pi_G)$ and successively consider the models $(\SL(2,\mathbb C),\pi_*)$
and $(\SL(2,\mathbb C)^*,\pi_{G^*})$ for dual Poisson-Lie groups.
And, as a conclusion, we give the quantization of this elementary construction by
considering the cluster combinatorics associated with the quantized universal enveloping
algebra $\mathcal{U}_q(\mathfrak{g})$ of the Lie algebra
$\mathfrak{g}=\sl(2,\mathbb C)$. This section is written to be as self-contained
as possible.
\subsection{Elementary Lie data}
Let us recall that the complex simple Lie group
\begin{equation}\label{equ:sl2}
SL(2, \mathbb{C})=\{\left(
\begin{array}{cc}
t_{11} & t_{12}\\
t_{21} & t_{22}
\end{array}
\right):t_{11}t_{22}-t_{12}t_{21}=1,\ \  t_{ij}\in \mathbb{C}\}\ .
\end{equation}
has its Lie algebra $\mathfrak{g}$ equal to the set $\sl(2,\mathbb C)$ of
$2$-squared complex matrices which have a zero trace. The Chevalley generators
$\{e_1,f_1,h_1\}$ and its related basis $\{e_1,f_1,h^1\}$ are then given by
the following matrices:
$$\begin{array}{llll}
e_1=\left(
\begin{array}{cc}
0 & 1\\
0 & 0
\end{array}
\right), &
f_1=\left(
\begin{array}{cc}
0 & 0\\
1 & 0
\end{array}
\right), &
h_1=\left(
\begin{array}{cc}
1 & 0\\
0 & -1
\end{array}
\right), &
h^1=\left(
\begin{array}{cc}
1/2 & 0\\
0 & -1/2
\end{array}
\right)
\end{array}\ .$$
Using the exponential map $\exp:\mathfrak{g}\rightarrow G$, which, in this
case, associates to a matrix $M\in\mathfrak g$ the usual matrix
$\sum_{n=0}^{\infty}\frac{M^n}{n!}\in G$,
we get the following generators of $G$, the two last ones being associated
to every non-zero complex number $x$.
$$\begin{array}{llll}
E^1=\left(
\begin{array}{cc}
1 & 1\\
0 & 1
\end{array}
\right), &
F^1=\left(
\begin{array}{cc}
1 & 0\\
1 & 1
\end{array}
\right), &
H_1(x)=\left(
\begin{array}{cc}
x & 0\\
0 & x^{-1}
\end{array}
\right),&
H^1(x)=\left(
\begin{array}{cc}
x^{1/2} & 0\\
0 & x^{-1/2}
\end{array}
\right).
\end{array}$$
In particular, these generators of the diagonal subgroup $H$ of $G$
satisfy the relation $H^1(x^2)=H_1(x)$ for every complex number
$x\in{\mathbb C}_{\neq 0}$, which agrees with the formula (\ref{equ:defphi2}),
because the Cartan matrix $A$ is simply here the number $2$.
Let us stress, however, that the generator $H^1(x)$ is generally ill-defined
on $\SL(2,\mathbb C)$. It is because $\SL(2,\mathbb C)$ is not of adjoint type,
but simply connected. The related adjoint group is $\PGL(2,\mathbb C)$, and
$H^1(x)$ is well-defined on $\PGL(2,\mathbb C)$, because of the following identity.
$$H^1(x)=\left(
\begin{array}{cc}
x^{1/2} & 0\\
0 & x^{-1/2}
\end{array}
\right)\stackrel{\PGL(2,\mathbb C)}{=}
\left(
\begin{array}{cc}
x & 0\\
0 & 1
\end{array}
\right)\ .$$
Now, because there is only one simple root $\alpha_1$, the Weyl group $W$ contains only two elements
$\{1,s_1\}$ and the different double reduced words are the double words $1$,
$\overline{1}$, $1\overline{1}$, $\overline{1}1$ without forgetting the trivial
double word $\mathbf 1$ associated to the unity element of the direct product
$W\times W$. Finally, the $r$-matrix $r\in\mathfrak g\wedge\mathfrak g$ associated
to $\sl(2,\mathbb C)$ and its related elements $r_{\pm}\in\mathfrak g\otimes\mathfrak g$
are given by the following formulas.
\begin{equation}\label{equ:elemrmatrix}
\begin{array}{cccc}
r=e_1\wedge f_1,&r_+=\displaystyle\frac{1}{4}h_1\otimes h_1+e_1\otimes f_1&\mbox{and}&
r_-=-\displaystyle\frac{1}{4}h_1\otimes h_1-f_1\otimes e_1\ .
\end{array}\end{equation}

\subsection{Cluster ${\mathcal X}$-varieties related to $(\SL(2,\mathbb C),\pi_G)$}
The evaluation maps of Fock and Goncharov associated to the previous
double reduced words are then the following:
$$\begin{array}{rcllll}
\ev_{\mathbf 1}(x_0)&=&H^1(x_0)\in G^{1,1}\\
&=&\left(
\begin{array}{cc}
x_0^{1/2} & 0\\
0 & x_0^{-1/2}
\end{array}
\right).\\
\\
\ev_{1}(y_0,y_1)&=&H^1(y_0)E^1H^1(y_1)\in G^{1,w_0}\\
&=&\left(\begin{array}{cc}
{y_0}^{1/2}{y_1}^{1/2} & {y_0}^{1/2}{y_1}^{-1/2}\\
0 & {y_0}^{-1/2}{y_1}^{-1/2}
\end{array}
\right)
=\left(\begin{array}{cc}
1 & {y_0}\\
0 & 1
\end{array}
\right)H^1(y_0y_1)\ .\\
\\
\ev_{\overline{1}}(z_0,z_1)&=&H^1(z_0)F^1H^1(z_1)\in G^{w_0,1}\\
&=&\left(\begin{array}{cc}
{z_0}^{1/2}{z_1}^{1/2} & 0\\
{z_0}^{-1/2}{z_1}^{1/2} & {z_0}^{-1/2}{z_1}^{-1/2}
\end{array}
\right)
=\left(\begin{array}{cc}
1 & 0\\
z_0^{-1} & 1
\end{array}
\right)H^1(z_0z_1)\ .
\end{array}$$
$$\begin{array}{rcllll}
\ev_{1\overline{1}}(u_0,u_1,u_2)&=&H^1(u_0)E^1H^1(u_1)F^1H^1(u_2)\in G^{w_0,w_0}\\
&=&\left(\begin{array}{cc}
u_0^{1/2}u_1^{1/2}u_2^{1/2}+u_0^{1/2}u_1^{-1/2}u_2^{1/2} & u_0^{1/2}u_1^{-1/2}u_2^{-1/2}\\
u_0^{-1/2}u_1^{-1/2}u_2^{1/2} & u_0^{-1/2}u_1^{-1/2}u_2^{-1/2}
\end{array}
\right)\\
&=&\left(\begin{array}{cc}
1+u_1^{-1} & u_0\\
u_0^{-1}u_1^{-1} & 1
\end{array}
\right)H^1(u_0u_1u_2)\ .\\
\\
\ev_{\overline{1}1}(v_0,v_1,v_2)&=&H^1(v_0)F^1H^1(v_1)E^1H^1(v_2)\in G^{w_0,w_0}\\
&=&\left(\begin{array}{cc}
v_0^{1/2}v_1^{1/2}v_2^{1/2} & v_0^{1/2}v_1^{1/2}v_2^{-1/2}\\
v_0^{-1/2}v_1^{1/2}v_2^{1/2} & v_0^{-1/2}v_1^{-1/2}v_2^{-1/2}+v_0^{-1/2}v_1^{1/2}v_2^{-1/2}
\end{array}
\right)\\
&=&\left(\begin{array}{cc}
1 & v_0v_1\\
v_0^{-1} & v_1+1
\end{array}
\right)H^1(v_0v_1v_2)\ .
\end{array}
$$
Again, the reader annoyed with the rational powers is free to replace
$\SL(2,\mathbb C)$ by $\PGL(2,\mathbb C)$. Let us remark, however, that
for every $u,v\in W$ and every double reduced word $\mathbf i\in R(u,v)$,
the associated reduced evaluation maps
$\ev_{\mathbf i}^{\red}:{\mathcal X}_{\mathbf i}^{\red}\to G^{u,v}/H$
described in Subsection \ref{subsection:reducedevaluation}
are well-defined birational isomorphisms both on $\SL(2,\mathbb C)$ and
$\PGL(2,\mathbb C)$.
Moreover, let us notice that it is also possible to construct the two last
evaluation maps from the others, using the amalgamated product. Indeed,
according to the formula (\ref{equ:amal}), we get the relations
$$\begin{array}{cccc}
&u_0:=y_0,& u_1:=y_1z_0,&u_2:=z_1\\
\mbox{and}\\
&v_0:=z_0,& v_1:=z_1y_0,& v_2:=y_1\ .
\end{array}$$
From the other hand, if $\ev_{1\overline{1}}(u_0,u_1,u_2)=\ev_{\overline{1}1}(v_0,v_1,v_2)$,
then we have the following relations between the $u_i$ and the $v_j$:
\begin{equation}\label{equ:ex}
\begin{array}{ccc}
\left\{
\begin{array}{l}
v_0=u_0(1+u_1)\\
v_1=u_1^{-1}\\
v_2=u_2(1+u_1)
\end{array}
\right.
&\mbox{and}&\left\{
\begin{array}{l}
u_0=v_0(1+v_1^{-1})^{-1}\\
u_1=v_1^{-1}\\
u_2=v_2(1+v_1^{-1})^{-1}
\end{array}.
\right.
\end{array}
\end{equation}

Now, for every $i,j\in[1,2]$, let $t_{ij}$ be the coordinate function associated to
(\ref{equ:sl2}). Applying the formula (\ref{equ:elemrmatrix}) to the Sklyanin bracket
(\ref{equ:Sbracket}), we can see that the standard Poisson bracket on the Poisson-Lie
group $G$ is given by the following equalities:
$$\left\{
\begin{array}{lll}
\{t_{11},t_{12}\}_G=\frac{1}{2}t_{11}t_{12}, & \{t_{11},t_{21}\}_G=\frac{1}{2}t_{11}t_{21}, \\
\{t_{11},t_{22}\}_G=t_{12}t_{21}, & \{t_{12},t_{21}\}_G=0, \\
\{t_{12},t_{22}\}_G=\frac{1}{2}t_{12}t_{22}, & \{t_{21},t_{22}\}_G=\frac{1}{2}t_{21}t_{22}.
\end{array}
\right.$$
We quickly check that the maps $\ev_{\mathbf 1}$, $\ev_{1}$, and $\ev_{\overline{1}}$
are Poisson when the matrices (resp. quivers) establishing the Poisson structure on the
seed $\mathcal X$-tori is given respectively by:
$$\begin{array}{ccc}
\varepsilon({\mathbf 1})=(0),&
\varepsilon({1})=\left(
\begin{array}{cc}
0 & -1\\
1 & 0
\end{array}
\right),&
\varepsilon({\overline 1})=\left(
\begin{array}{cc}
0 & 1\\
-1 & 0
\end{array}
\right).\\
\\
\begin{picture}(20,18)(0,-7)
\put(0,0){\circle{4}}
\end{picture}
&
\begin{picture}(20,18)(0,-7)
\put(-10,0){\circle{4}}
\put(10,0){\circle{4}}
\put(8,0){\vector(-1,0){11}}
\put(8,0){\line(-1,0){16}}
\end{picture}
&
\begin{picture}(20,18)(0,-7)
\put(-10,0){\circle{4}}
\put(10,0){\circle{4}}
\put(-8,0){\vector(1,0){11}}
\put(-8,0){\line(1,0){16}}
\end{picture}
\end{array}
$$
Then the amalgamation procedure leads to the following the matrices
(resp. quivers) establishing the Poisson structures on the associated
seed $\mathcal X$-tori for which the maps
$\ev_{1\overline{1}}$ and $\ev_{\overline{1}1}$ are Poisson:
\begin{equation}\label{equ:epsilonSL2+}
\begin{array}{cc}
\varepsilon({1\overline{1}})=\left(
\begin{array}{ccc}
0 & -1&0\\
1 & 0&1\\
0&-1&0
\end{array}
\right),&
\varepsilon({\overline 1 1})=\left(
\begin{array}{ccc}
0 & 1&0\\
-1 & 0&-1\\
0&1&0
\end{array}
\right). \\
\\
\begin{picture}(20,18)(0,-7)
\put(-20,0){\circle{4}}
\put(0,0){\circle*{4}}
\put(20,0){\circle{4}}
\put(-2,0){\vector(-1,0){11}}
\put(-2,0){\line(-1,0){16}}
\put(2,0){\vector(1,0){11}}
\put(2,0){\line(1,0){16}}
\end{picture}
&
\begin{picture}(20,18)(0,-7)
\put(-20,0){\circle{4}}
\put(0,0){\circle*{4}}
\put(20,0){\circle{4}}
\put(18,0){\vector(-1,0){11}}
\put(18,0){\line(-1,0){16}}
\put(-18,0){\vector(1,0){11}}
\put(-18,0){\line(1,0){16}}
\end{picture}
\end{array}
\end{equation}
Looking at them, it is clear that the expressions in (\ref{equ:ex}) describe respectively the
cluster transformation $\mu_{1\overline{1}\to\overline{1}1}:{\mathcal X}_{1\overline{1}}
\to{\mathcal X}_{\overline{1}1}$ associated to the variable $u_1$
and the cluster transformation $\mu_{\overline{1}1\to 1\overline{1}}:{\mathcal X}_{\overline{1}1}
\to{\mathcal X}_{1\overline{1}}$  associated to $v_1$:
$$\begin{array}{ccc}
(v_0,v_1,v_2)=\mu_{\binom{1}{1}}(u_0,u_1,u_2)
&\mbox{and}&
(u_0,u_1,u_2)=\mu_{\binom{1}{1}}(v_0,v_1,v_2)\ .
\end{array}$$
Let us also notice that
both are mutations and that there is no other direction of mutation.
We get therefore the following summary.
$$
\xymatrix{
{\mathcal X}_{\mathbf 1}\ar@/^0pc/[r]^{\ev_{\mathbf 1}}
&{(G^{1,1},\pi_G)}&
{\mathcal X}_{1}\ar@/^0pc/[r]^{\ev_{1}}
&{(G^{1,w_0},\pi_G)}\\
&&{\mathcal X}_{1\overline{1}}\ar@/^0pc/[dr]^{\ev_{1\overline{1}}}\\
{\mathcal X}_{\overline{1}}\ar@/^0pc/[r]^{\ev_{\overline{1}}}
&{(G^{w_0,1},\pi_G)}&&{(G^{w_0,w_0},\pi_G)}\\
&&{\mathcal X}_{\overline{1}1}\ar@/^0pc/[ur]_{\ev_{\overline{1}1}}
\ar@{<->}[uu]^{\mu_{\binom{1}{1}}}
}
$$

\subsection{Cluster ${\mathcal X}$-varieties related to $(\SL(2,\mathbb C),\pi_*)$}
Applying still the formula (\ref{equ:elemrmatrix}), but this time on the
Semenov-Tian-Shansky Poisson bracket given by Proposition \ref{prop:STSPoisson},
and still using the previous coordinate functions $t_{ij}$, it is easy to prove
that in the matricial case, the Poisson bracket on $(G,\pi_*)$ is given by the
following equalities:
$$\left\{\begin{array}{lll}
\{t_{11},t_{12}\}_*=t_{12}t_{22}, & \{t_{11},t_{21}\}_*=-t_{21}t_{22}, \\
\{t_{11},t_{22}\}_*=0, & \{t_{12},t_{21}\}_*=t_{11}t_{22}-t_{22}^2, \\
\{t_{12},t_{22}\}_*=t_{12}t_{22}, & \{t_{21},t_{22}\}_*=-t_{21}t_{22}.
\end{array}
\right.$$
\subsubsection{Evaluations maps for $(\SL(2,\mathbb C),\pi_*)$}It is
easy to check that the evaluation
${\ev}_1^{\dual}:{\mathcal X}_{[1]_{\mathfrak R}}\to (G,\pi_*)$,
parameterizing the union, denoted $F_{w_0}$, over $t\in H$ of the
varieties $F_{t,s_1}$, given by (\ref{equ:decG^*}), is Poisson. Indeed,
it is given by the following expression:

$$\begin{array}{rl}
{\ev}_1^{\dual}(x_0,t)&=H^1(x_0)E^1\widehat{w_0}
H_1(t)(F^1)^{-1}H^1(x_0^{-1})\in F_{w_0}\\
&=\left(\begin{array}{cc}
t+t^{-1} & -x_0t^{-1}\\
x_0^{-1}t & 0
\end{array}\right) .
\end{array}$$
The evaluations ${\ev}_{\overline 11}^{\dual}:{\mathcal X}_{[\overline 11]_{\mathfrak R}}\to BB_-$
and ${\ev}_{{1}\overline 1}^{\dual}:{\mathcal X}_{[1\overline 1]_{\mathfrak R}}\to BB_-$,
parameterizing the variety $BB_-$, are then obtained by the following computation:

$$\begin{array}{rl}
{\ev}_{\overline{1}1}^{\dual}(y_0,y_1,t)
&=H^1(y_0)F^1\ {\ev}_{1}^{\dual}(y_1,t)\ (F^1)^{-1}H^1(y_0^{-1})\\
&=\left(\begin{array}{cc}
t^{-1}(1+y_1)+t & -y_0y_1t^{-1}\\
y_0^{-1}(t(1+y_1^{-1})+t^{-1}(1+y_1)) & -y_1t^{-1}
\end{array}\right),\\
\\
{\ev}_{1\overline{1}}^{\dual}(\widetilde{y_0},\widetilde{y_1},t)
&=\left(\begin{array}{cc}
t^{-1/2}(1+\widetilde{y_1}^{-1})+t & -t^{-1}\widetilde{y_0}(1+\widetilde{y_1}^{-1})\\
\widetilde{y_0}^{-1}(t+t^{-1}\widetilde{y_1}^{-1}) & -\widetilde{y_1}^{-1}t
\end{array}\right).
\end{array}$$
And it is straightforward to check  that $\mu_{[\overline 1{1}]_{\mathfrak{R}}
\to[{1}\overline 1]_{\mathfrak{R}}}:(y_0,y_1,t)\mapsto(\widetilde{y_0},
\widetilde{y_1},t)$.

These evaluations are particular cases of the twisted evaluations described in
Subsection \ref{section:twisted}. The remaining twisted
evaluations $\widehat{\ev}_{11},\widehat{\ev}_{1\overline 1}:
{\mathcal X}_{[11]_{\mathfrak R}}\to BB_-$ and
$\widehat{\ev}_{\overline{1}\overline 1}:
{\mathcal X}_{[\overline 11]_{\mathfrak R}}\to BB_-$, which are described in
Subsection \ref{section:twistedeval}, also parameterize the variety $BB_-$. They
are given by the following formulas:

$$\begin{array}{rl}
\widehat{\ev}_{11}(z_0,z_1,t)&=\widehat{\ev}_{1\overline 1}(z_0,z_1,t)\\
&=H^1(z_0)E^1\ \widehat{\ev}_{1}(z_1,t)\ (E^1)^{-1}H^1(z_0^{-1})\\
&=\left(\begin{array}{cc}
(1+z_1^{-1})t+t^{-1} & -z_0((1+z_1^{-1})t+(1+z_1)t^{-1})\\
z_0^{-1}z_1^{-1}t & -z_1^{-1}t
\end{array}\right)\\
\\
\widehat{\ev}_{\overline{1}\overline 1}(y_0,y_1,t)&={\ev}_{\overline{1} 1}^{\dual}(y_0,y_1,t)\\
&=\left(\begin{array}{cc}
t^{-1}(1+y_1)+t & -t^{-1}y_0y_1\\
y_0^{-1}(t(1+y_1^{-1})+t^{-1}(1+y_1)) & -y_1t^{-1}
\end{array}\right)
\end{array}$$
It is easy to check that all these maps are Poisson when the
matrices (resp. quivers) establishing the Poisson structure
on the related seed ${\mathcal X}$-tori are given respectively
by the matrices (resp. quivers):
\begin{equation}\label{equ:epsilonSL2}
\begin{array}{ccc}
\eta({1{1}})=\eta({1\overline{1}})=\left(
\begin{array}{ccc}
0 & -1&0\\
1 & 0&0\\
0&0&0
\end{array}
\right),&
\eta({\overline 1 1})=\eta({\overline 1 \overline 1})=\left(
\begin{array}{ccc}
0 & 1&0\\
-1 & 0&0\\
0&0&0
\end{array}
\right). \\
\\
\begin{picture}(20,18)(0,-7)
\put(-20,0){\circle{4}}
\put(0,0){\circle*{4}}
\put(20,0){\circle{4}}
\put(-2,0){\vector(-1,0){11}}
\put(-2,0){\line(-1,0){16}}
\end{picture}
&
\begin{picture}(20,18)(0,-7)
\put(-20,0){\circle{4}}
\put(0,0){\circle*{4}}
\put(20,0){\circle{4}}
\put(-18,0){\vector(1,0){11}}
\put(-18,0){\line(1,0){16}}
\end{picture}
\end{array}
\end{equation}
Therefore, the truncation map (\ref{equ:defJ}) gives the way to pass
from the Poisson structures defined by (\ref{equ:epsilonSL2+}) to the
Poisson structures defined by (\ref{equ:epsilonSL2}). We thus get a cluster
$\mathcal X$-variety, denoted ${\mathcal X}_{e\leq e}$, for the variety $F_{w_0}$ and two
isomorphic cluster $\mathcal X$-varieties for the variety $BB_-$, denoted ${\mathcal X}_{e}$
and ${\mathcal X}_{w_0}$, and respectively associated to the cluster variables
$(y_0,y_1,t)$ and $(z_0,z_1,t)$.

\subsubsection{Remarks about evaluations maps for $(\PGL(2,\mathbb C),\pi_*)$}
The careful reader will have noticed that the evaluation maps related to
$(\SL(2,\mathbb C),\pi_*)$ we have just obtained slightly differ from the
twisted evaluation maps of Section \ref{section:twistedev}. Again, it is because
the Lie group $\SL(2,\mathbb C)$ is not of adjoint type. According to Remark
\ref{rem:twitevlaconnect}, the corresponding evaluation maps for
$G=\PGL(2,\mathbb C)$ are the following.
$$\begin{array}{rl}
{\underline{\ev}}_1^{\dual}(x_0,t)&=H^1(x_0)E^1\widehat{w_0}
H^1(t)(F^1)^{-1}H^1(x_0^{-1})\in F_{w_0}\\
&=\left(\begin{array}{cc}
t^{1/2}+t^{-1/2} & -x_0t^{-1/2}\\
x_0^{-1}t^{1/2} & 0
\end{array}\right) .
\end{array}$$

$$\begin{array}{rl}
{\underline{\ev}}_{\overline{1}1}^{\dual}(y_0,y_1,t)
&=H^1(y_0)F^1\ {\ev}_{1}^{\dual}(y_1,t)\ (F^1)^{-1}H^1(y_0^{-1})\\
&=\left(\begin{array}{cc}
t^{-1/2}(1+y_1)+t^{1/2} & -y_0y_1t^{-1/2}\\
y_0^{-1}(t^{1/2}(1+y_1^{-1})+t^{-1/2}(1+y_1)) & -y_1t^{-1/2}
\end{array}\right),\\
\\
{\underline{\ev}}_{1\overline{1}}^{\dual}(\widetilde{y_0},\widetilde{y_1},t)
&=\left(\begin{array}{cc}
t^{-1/2}(1+\widetilde{y_1}^{-1/2})+t^{1/2} & -t^{-1/2}\widetilde{y_0}(1+\widetilde{y_1}^{-1})\\
\widetilde{y_0}^{-1}(t^{1/2}+t^{-1/2}\widetilde{y_1}^{-1}) & -\widetilde{y_1}^{-1}t^{1/2}
\end{array}\right).
\end{array}$$

$$\begin{array}{rl}
\widehat{\underline{\ev}}_{11}(z_0,z_1,t)&=\widehat{\underline{\ev}}_{1\overline 1}(z_0,z_1,t)\\
&=H^1(z_0)E^1\ \widehat{\ev}_{1}(z_1,t)\ (E^1)^{-1}H^1(z_0^{-1})\\
&=\left(\begin{array}{cc}
(1+z_1^{-1})t^{1/2}+t^{-1/2} & -z_0((1+z_1^{-1})t^{1/2}+(1+z_1)t^{-1/2})\\
z_0^{-1}z_1^{-1}t^{1/2} & -z_1^{-1}t^{1/2}
\end{array}\right)\\
\\
\widehat{\underline{\ev}}_{\overline{1}\overline 1}(y_0,y_1,t)&=
{\underline{\ev}}_{\overline{1} 1}^{\dual}(y_0,y_1,t)\\
&=\left(\begin{array}{cc}
t^{-1/2}(1+y_1)+t^{1/2} & -t^{-1/2}y_0y_1\\
y_0^{-1}(t^{1/2}(1+y_1^{-1})+t^{-1/2}(1+y_1)) & -y_1t^{-1/2}
\end{array}\right)
\end{array}$$
\subsubsection{How to use the saltation map}If the
evaluations ${\ev}_{\overline{1}1}^{\dual}(y_0,y_1,t)$ and
$\widehat{\ev}_{11}(z_0,z_1,t)$  parameterize the same element, we quickly check
with the expressions above that the map $\varphi:(y_0,y_1,t)
\longmapsto (z_0,z_1,t)$ is given by:
\begin{equation}\label{equ:y->z}
\left\{\begin{array}{rcl}
z_0&=&y_0{(1+y_1^{-1})}^{-1}{(1+y_1^{-1}t^2)}^{-1}\\
z_1&=&t^2y_1^{-1}
\end{array}\right. .
\end{equation}
Before to link the map $\varphi$ with the cluster combinatorics
we have developed, let us stress (again) that saltations are really
needed in the story because, the variable $t$ being a Casimir function,
you cannot expect to obtain a formula such as $z_1=t^2y_1^{-1}$
by only cluster transformation. Now, let us remark that the cover
$\mathfrak{p}_{\mathcal X}
:{\mathcal X}_{[\overline{1}1]_{\mathfrak{R}}}\to
{\mathcal X}_{[\overline{1}1]_{\mathfrak{R}}}$
is of degree $2$ and given by the formula
$$\mathfrak{p}_{\mathcal X}:(y_1,y_2,t)\mapsto(y_1,y_2,t^2)\ .$$
We are going to prove that the equality $\mathfrak{p}_{\mathcal X}\circ\varphi=
\Xi_{s_1}\circ \mathfrak{p}_{\mathcal X}$
is satisfied, where $\Xi_{s_1}$ denotes the birational Poisson isomorphism
given by Corollary \ref{cor:dualsalt}. To get it, let us first describe
the saltation $\Xi_1$ given by (\ref{equ:elemsalt}). It is associated
to the following generalized cluster transformation, acting on every element
$(x_0,x_1,x_2)\in{\mathcal X}_{1\overline{1}}$.
$$\begin{array}{rll}
\overline{\mu}_{\overline{1}\overline{1}}\circ\zeta_1\circ
\mu_{1\overline{1}\to\overline{1}1}(x_0,x_1,x_2)
&=\mu_{\binom{1}{1}}\circ\mu_{\binom{1}{2}}\circ\mu_{\binom{1}{1}}(x_0,x_1,x_2)\\
\\
&=\mu_{\binom{1}{1}}\circ\mu_{\binom{1}{2}}(x_0(1+x_1),x_1^{-1},x_2(1+x_1))\\
\\
&=\mu_{\binom{1}{1}}(x_0(1+x_1),x_1^{-1},x_2^{-1}(1+x_1)^{-1})\\
\\
&=(x_0,x_1,x_2^{-1}x_1^{-1})\ .\\
\\
\mbox{Therefore\ \ \ \ \ \ \ \  }\Xi_1(x_0,x_1,t)&=(x_0,x_1^{-1}t^{-1},t)\ ,\\
\\
\mbox{because\ \ \ \ \ \ \ \ \ \ \ \  }\Xi_1\circ\mathfrak{t}_{\binom{1}{2}(t)}&
=\mathfrak{t}_{\binom{1}{1}(t)}\circ\mu_{\binom{1}{1}}\circ\mu_{\binom{1}{2}}
\circ\mu_{\binom{1}{1}}\ .
\end{array}$$
We then get the following formula for the birational isomorphism $\Xi_{s_1}$.
\begin{equation}\label{equ:compsalt}
\begin{array}{rll}
\Xi_{s_1}(y_0,y_1,t)&=\mu_{[{1}\overline 1]_{\mathfrak R}\to
[\overline 1{1}]_{\mathfrak R}}\circ\Xi_1\circ\mu_{[\overline{1}1]_{\mathfrak R}
\to [1\overline{1}]_{\mathfrak R}}(y_0,y_1,t)\\
\\
&=\mu_{[{1}\overline 1]_{\mathfrak R}\to
[\overline 1{1}]_{\mathfrak R}}\circ\Xi_1(y_0{(1+y_1^{-1})}^{-1},y_1^{-1},t)\\
\\
&=\mu_{[{1}\overline 1]_{\mathfrak R}\to
[\overline 1{1}]_{\mathfrak R}}(y_0{(1+y_1^{-1})}^{-1},y_1t^{-1},t)\\
\\
&=(y_0{(1+y_1^{-1})}^{-1}{(1+y_1^{-1}t)}^{-1},y_1^{-1}t,t)\ .
\end{array}
\end{equation}
And it is now clear that the equality $\mathfrak{p}_{\mathcal X}\circ\varphi=
\Xi_{s_1}\circ \mathfrak{p}_{\mathcal X}$ is satisfied. Moreover, the way the
cluster varieties $\mathcal X_e$ and $\mathcal X_{s_1}$ are related by the
saltation $\Xi_1$ is described by the elementary $W$-permutohedron pictured
in Figure \ref{fig:amalgblock2}.
\begin{figure}[htbp]
\begin{center}
\setlength{\unitlength}{1.5pt}
\begin{picture}(20,33)(0,-17)
\thicklines
\multiput(0,10)(0,-20){2}{\circle{4}}
\put(0,-8){\line(0,1){16}}
\put(-3,15){${\mathcal X}_{s_1}$}
\put(3,-2){$\Xi_1$}
\put(-3,-20){${\mathcal X}_{e}$}
\end{picture}
\end{center}
\vspace{-.1in}
\caption{Saltation and cluster $\mathcal X$-varieties for $\mathfrak{g}=A_1$}
\label{fig:amalgblock2}
\end{figure}
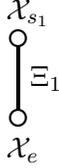
Let us notice, for the reader who prefers to deal with $\PGL(2,\mathbb C)$,
that things are much simpler for him because the covering ${\mathfrak p}_{\mathcal X}$
is then the identity map on seed $\mathcal X$-tori while the formulas for the saltation
and cluster transformations remain unchanged.

\subsection{Evaluation maps for $(\SL(2,\mathbb C)^*,\pi_{G^*})$}
We first recall that the set $\SL(2,\mathbb C)^*$ has the following
description
\begin{equation}\label{equ:sl2*}
\begin{array}{rl}
SL(2, \mathbb{C})^*&=\{\left(\left(
\begin{array}{cc}
t_{11}^+ & t_{12}^+\\
0 & t_{22}^+
\end{array}
\right),
\left(
\begin{array}{cc}
t_{11}^- & 0\\
t_{21}^- & t_{22}^-
\end{array}
\right)\right)\\
\\
&:t_{11}^-=(t_{11}^+)^{-1},\ \ \ t_{22}^-=(t_{22}^+)^{-1},
\ \ \ t_{11}^+t_{22}^+=1,\ \  t_{ij}^{\pm}\in \mathbb{C}\}\ .
\end{array}
\end{equation}
Now, let us recall its Poisson structure $\pi_{G^*}$. Again, we denote
$t^{\pm}_{ij}$ the corresponding coordinate functions. The Poisson bracket
we are looking for on $SL(2, \mathbb{C})^*$ is given by the following equalities:
$$\left\{
\begin{array}{lll}
\{t_{11}^{\pm},t_{12}^+\}_{G^*}=\pm t_{11}^{\pm}t_{12}^+,
& \{t_{11}^{\pm},t_{21}^-\}_{G^*}=\mp t_{11}^{\pm}t_{21}^-, \\
\{t_{11}^{\pm},t_{22}^{\pm}\}_{G^*}=0, & \{t_{12}^+,t_{21}^-\}_{G^*}=t_{22}^+t_{11}^-, \\
\{t_{12}^+,t_{22}^{\pm}\}_{G^*}=\pm t_{12}^+t_{22}^{\pm},
& \{t_{21}^-,t_{22}^{\pm}\}_{G^*}=\mp t_{21}^-t_{22}^{\pm}.
\end{array}
\right.$$
Because of the Poisson covering $\phi:(\SL(2,\mathbb C)^*,\pi_{G^*})
\to(\SL(2,\mathbb C),\pi_{*})$ of degree $2$, we use the previous
evaluation maps on $(\SL(2,\mathbb C),\pi_{*})$ to get the evaluation
maps on $(\SL(2,\mathbb C)^*,\pi_{G^*})$. To do that, let us introduce
the following notations for every non-zero complex
number $x$.
$$
\begin{array}{lll}
E^1(x)=H^1(x)E^1H^1(x^{-1})&
\mbox{and}
&F^1(x)=H^1(x^{-1})F^1H^1(x)\ .
\end{array}
$$
Now, if the evaluations ${\ev}_{\overline{1}1}^{\dual}(y_0,y_1,t)$
and $\widehat{\ev}_{11}(z_0,z_1,t)$ gives the same element $nh^2n_-^{-1}$
such that $n$ (resp. $n_-$) is an upper (resp. lower) triangular matrix with the
number $1$ on the diagonal entries and $h$ a diagonal matrix, then we can evaluate
$h$, $n$ and $n_-$ is the following way, using for example a computation in the
spirit of Lemma \ref{lemma:beta1}.
$$
\left\{
\begin{array}{l}
h=H^1(-t^{-1}z_1)=H^1(-ty_1^{-1})\\
n=[[F^1({y_0})\left(
\begin{array}{rr}
0 & -1\\
1 & 0
\end{array}
\right)]]_+=E^1({y_0})\\
n_-=[E^1(z_0)\left(
\begin{array}{rr}
0 & 1\\
-1 & 0
\end{array}
\right)]_-=F^1(z_0^{-1})\ .
\end{array}
\right.
$$
And because the map $\Xi_{s_1}$ is such that the equality
$\mathfrak{p}_{\mathcal X}\circ\varphi=\Xi_{s_1}\circ \mathfrak{p}_{\mathcal X}$
is satisfied, where $\varphi:(y_0,y_1,t)\mapsto (z_0,z_1,t)$, these formulas
are in agreement with  Theorem \ref{thm:G^*} which states that:
$$
\begin{array}{lll}
\left\{
\begin{array}{l}
{\ev_{\overline{1}1}^+(\mathbf y)}=E^1({y_0})H^1(-ty_1^{-1})\\
{\ev_{\overline{1}1}^-(\mathbf y)}=F^1(\Xi_{s_1}(\mathbf y)_0^{-1})H^1(-ty_1^{-1})^{-1}
\end{array}
\right.
&\mbox{and}&
\left\{
\begin{array}{l}
{\ev_{11}^+(\mathbf z)}=E^1(\Xi_{s_1}^{-1}(\mathbf z)_0)H^1(-t^{-1}z_1)\\
{\ev_{11}^-(\mathbf z)}=F^1(z_0^{-1})H^1(-t^{-1}z_1)^{-1}
\end{array}
\right. .
\end{array}
$$
We finally the following description of $G^*$ using matrices, analogous to the
one given by (\ref{equ:sl2*}), which involves our cluster variables
and their relation $\varphi:(y_0,y_1,t)\mapsto (z_0,z_1,t)$ given by
the relation (\ref{equ:y->z}).
$$\begin{array}{ccccc}
\left(\left(
\begin{array}{cc}
(-ty_1^{-1})^{1/2}&(-ty_1^{-1})^{-1/2}y_0\\
0&(-ty_1^{-1})^{-1/2}
\end{array}
\right)
,
\left(
\begin{array}{cc}
(-ty_1^{-1})^{-1/2}&0\\
(-ty_1^{-1})^{-1/2}z_0^{-1}&(-ty_1^{-1})^{1/2}
\end{array}
\right)\right)
\end{array}
\ .$$
We are thus in an optimal position for the quantization process. Indeed, the $R$-matrix
associated to the quantized universal enveloping algebra $\mathcal{U}_q(\mathfrak{g})$
associated to the Lie algebra $\mathfrak{g}=\sl(2,\mathbb C)$ and the related quantum
group $\mathcal{F}_q(\SL(2,\mathbb C)^{*})$ are respectively given by the following
formulas.
\begin{equation}\label{equ:Rsl2}
{\mathcal R}=q^{\frac{1}{2}H\otimes H}E\otimes F\ .
\end{equation}
$$\begin{array}{llllll}
L^+=\left(
\begin{array}{cc}
(qK)^{1/2}&(qK)^{1/2}(q-q^{-1})F\\
0&(qK)^{-1/2}
\end{array}
\right)
,\\
\\
L^-=\left(
\begin{array}{cc}
(qK)^{-1/2}&0\\
(qK)^{-1/2}(q-q^{-1})E&(qK)^{1/2}
\end{array}
\right)
\end{array}
\ .$$

\subsection{Quantum evaluation maps for ${\mathcal U}_q(\sl(2,\mathbb C))$}
The dual Poisson-Lie group
$(G^*,\pi_{G^*})$ is the semi-classical limit of the
quantum group $\mathcal{F}_q(G^*)$ which is isomorphic (as Hopf
algebra) to the very famous quantized universal enveloping algebra
$\mathcal{U}_q(\mathfrak{g})$.
As a conclusion to this work, we give the quantum picture
for $\mathfrak{g}=\sl(2,\mathbb C)$. For technical reasons,
we consider the quantized universal enveloping algebra
$\mathcal{U}_{q^{-1}}(\mathfrak{g})$ instead of $\mathcal{U}_q(\mathfrak{g})$.
It is the ${\mathbb C}(q)$-algebra generated by $E$, $F$, and $K$
with relations
$$\begin{array}{cccc}
KE=q^{-2}EK,&KF=q^2FK,
&\mbox{and}&EF-FE=\displaystyle\frac{K-K^{-1}}{q^{-1}-q}\ .
\end{array}$$
We now define the quantum tori
${\mathcal X}^q_{[\overline{1}\overline{1}]_{\mathfrak R}}$
and ${\mathcal X}^q_{[{1}{1}]_{\mathfrak R}}$ as the ${\mathbb C}(q)$-algebra
generated respectively by the elements $Y_0,Y_1,T$ and $Z_01,Z_1,T$
with the $q$-commutation relations:
$$\begin{array}{lcl}
Y_0Y_1=q^2Y_1Y_0&\mbox{and}& Z_0Z_1=q^{-2}Z_1Z_0\ ;\\
\\
Y_0T=TY_0&\mbox{and}& Z_0T=TZ_0\ ;\\
\\
TY_1=Y_1T&\mbox{and}& TZ_1=Z_1T\ .
\end{array}$$
In particular, it is clear that the seed $\mathcal X$-tori
${\mathcal X}_{[\overline{1}\overline{1}]_{\mathfrak R}}$
and ${\mathcal X}_{[{1}{1}]_{\mathfrak R}}$, whose Poisson structures are
given by (\ref{equ:epsilonSL2}), are respectively
the semi-classical limits of these quantum tori.
Luckily, the quantum evaluation maps for $\mathcal{U}_{q^{-1}}(\mathfrak{g})$
come without effort from the semi-classical evaluation maps
we have just obtained for $(\SL(2,\mathbb C)^*,\pi_{G^*})$.
(We don't forget to switch $q$ into $q^{-1}$ in the formula (\ref{equ:Rsl2})
according to the switch $\mathcal{U}_{q}(\mathfrak{g})\leftrightarrow
\mathcal{U}_{q^{-1}}(\mathfrak{g})$.) In particular, it is
straightforward to check that the following map is an algebra morphism.
$$\begin{array}{rlll}
\left\{
\begin{array}{l}
\ev^{q^{-1}}(K)=-qTY_1^{-1}=-qT^{-1}Z_1\ ;\\
\ev^{q^{-1}}(F)=-(q^{-1}-q)^{-1}T^{-1}Y_1Y_0\ ;\\
\ev^{q^{-1}}(E)=(q^{-1}-q)^{-1}Z_0^{-1}\ .
\end{array}
\right.
&\mbox{if}&
\left\{\begin{array}{rcl}
Z_0&=&Y_0{(1+qY_1^{-1})}^{-1}{(1+qY_1^{-1}T^2)}^{-1}\\
Z_1&=&T^2Y_1^{-1}
\end{array}\right.
\end{array}
$$
Moreover, the link between the $Y_i$ and the $Z_j$ is given by quantizing
the birational Poisson isomorphism $\varphi$. To see that, we use the
quantization formulas of \cite{FGdilog}: in the case $|\varepsilon_{ik}|=1$,
we get the following \emph{quantum mutations}:
\begin{equation}\label{equ:qmut}
X_{\mu_k^q(i)}=\left\{ \begin{array}{lll}
{X_k}^{-1}&  \mbox{if}\ i=k;\\
X_iX_k^{[\varepsilon_{ik}]_+}(1+qX_k)^{-\varepsilon_{ik}}& \mbox{if}\ i\not=k\ .
\end{array}\right.
\end{equation}
Therefore, the quantization of the computation (\ref{equ:compsalt}) gives us
the following equalities, using a still mysterious map which we denote $\Xi_1^q$.
$$
\begin{array}{rll}
\Xi_{s_1}^q(Y_0,Y_1,T)&=\mu_{[{1}\overline 1]_{\mathfrak R}\to
[\overline 1{1}]_{\mathfrak R}}^q\circ\Xi_1^q\circ\mu_{[\overline{1}1]_{\mathfrak R}^q
\to [1\overline{1}]_{\mathfrak R}}(Y_0,Y_1,T)\\
\\
&=\mu_{[{1}\overline 1]_{\mathfrak R}\to
[\overline 1{1}]_{\mathfrak R}}^q\circ\Xi_1^q(Y_0{(1+qY_1^{-1})}^{-1},Y_1^{-1},T)\\
\\
&=\mu_{[{1}\overline 1]_{\mathfrak R}\to
[\overline 1{1}]_{\mathfrak R}}^q(Y_0{(1+qY_1^{-1})}^{-1},Y_1T^{-1},T)\\
\\
&=(Y_0{(1+qY_1^{-1})}^{-1}{(1+qY_1^{-1}t)}^{-1},Y_1^{-1}T,T)\ .
\end{array}
$$
Moreover we have the equality $\mathfrak{p}_{\mathcal X}\circ\varphi^q=
\Xi_{s_1}^q\circ \mathfrak{p}_{\mathcal X}$, where $\varphi^q$ is the quantization of
the map $\varphi$, that is $\varphi^q:(Y_0,Y_1,T)\to(Z_0,Z_1,T)$.
Let us stress, however, that there is still something strange in this story.
Indeed, by keeping the tropicalization formula, we therefore also get tropical
quantum mutations from (\ref{equ:qmut});
we use them to introduce quantum saltation $\widetilde{\Xi}_{1}^q$, defined by
intertwining generalized quantum cluster transformations with truncation maps,
by mimicking the previous computation. We thus get:
$$\begin{array}{rll}
\mu_{\binom{1}{1}}^q\circ\mu_{\binom{1}{2}}^q\circ\mu_{\binom{1}{1}}^q(X_0,X_1,X_2)
&=\mu_{\binom{1}{1}}^q\circ\mu_{\binom{1}{2}}^q(X_0(1+qX_1),X_1^{-1},X_2(1+qX_1))\\
\\
&=\mu_{\binom{1}{1}}^q(X_0(1+qX_1),X_1^{-1},X_2^{-1}(1+qX_1)^{-1})\\
\\
&=(X_0,X_1,qX_2^{-1}X_1^{-1})\ .\\
\\
\mbox{Therefore\ \ \ \ \ \ \ \  }\widetilde{\Xi}_1^q(X_0,X_1,T)&=(X_0,qX_1^{-1}T^{-1},T)\ ,\\
\\
\mbox{because\ \ \ \ \ \ \ \ \ \ \ \ \ \ \  }\widetilde{\Xi}_1^q\circ
\mathfrak{t}_{\binom{1}{2}(t)}
&=\mathfrak{t}_{\binom{1}{1}(t)}\circ\mu_{\binom{1}{1}}^q\circ\mu_{\binom{1}{2}}^q
\circ\mu_{\binom{1}{1}}^q\ .
\end{array}$$
But, unfortunately, it is clear that we have $\widetilde{\Xi}_{1}^q\neq{\Xi}_{1}^q$.

\section*{Acknowledgments}
This paper and its sequel \cite{RB2} come from my Ph D. thesis. I am
grateful to my advisor M. Semenov-Tian-Shansky, who has
suggested to me the cluster combinatorics study of dual Poisson-Lie groups, for
his support and patience. I would like also
to thank Vladimir Fock  for useful discussions and and Jiang-Hua Lu for
having kindly explained to me the morphisms in \cite{EL}.

\end{document}